\renewcommand{\thesection}{\arabic{section}}
\newtheorem{theorem}{Theorem}[section]
\newtheorem*{thma}{Theorem A}
\newtheorem*{thmb}{Theorem B}
\newtheorem*{thmc}{Theorem C}
\newtheorem*{thmd}{Theorem D}
\newtheorem{lemma}[theorem]{Lemma}
\newtheorem{prop}[theorem]{Proposition}
\newtheorem{defi}[theorem]{Definition}
\newtheorem{corollary}[theorem]{Corollary}
\newtheorem{remark}[theorem]{Remark}
\renewcommand{\theequation}{\thesection .\arabic{equation}}
\let\subs\subsection
\renewcommand\subsection{\setcounter{equation}{0}
\gdef\theequation{\thesubsection \arabic{equation}}\subs}
\let\sect\section
\renewcommand\section{\setcounter{equation}{0}
\gdef\theequation{\thesection .\arabic{equation}}\sect}
\newcommand{\cA}{{\mathcal{A}}}
\newcommand{\cD}{{\mathcal{D}}}
\newcommand{\cH}{{\mathcal{H}}}
\newcommand{\cN}{{\mathcal{N}}}
\newcommand{\cR}{{\mathcal{R}}}
\newcommand{\cM}{{\mathcal{M}}}
\newcommand{\cS}{{\mathcal{S}}}
\newcommand{\cK}{{\mathcal{K}}}
\newcommand{\cL}{{\mathcal{L}}}
\newcommand{\cP}{{\mathcal{P}}}
\newcommand{\IC}{{\mathbb{C}}}
\newcommand{\IR}{{\mathbb{R}}}
\newcommand{\IZ}{{\mathbb{Z}}}
\newcommand{\zv}{\IZ^\nu}
\newcommand{\rv}{\IR^\nu}
\newcommand{\uE}{{\underline{E}}}
\newcommand{\be}{\begin{equation}}
\newcommand{\ee}{\end{equation}}
\newcommand{\nn}{\nonumber}
\newcommand{\diam}{\mathop{\rm{diam}}}
\newcommand{\dist}{\mathop{\rm{dist}}}
\newcommand{\Res}{\mathop{\rm{Res}}}
\newcommand{\Ree}{\mathop{\rm{Re}}}
\newcommand{\imm}{\mathop{\rm{Im}}}
\newcommand{\spec}{\mathop{\rm{spec}}}
\newcommand{\sgn}{\mathop{\rm{sgn}}}
\newcommand{\La}{\Lambda}
\newcommand{\la}{\langle}
\newcommand{\ra}{\rangle}
\newcommand{\ve}{\varepsilon}
\newcommand{\hle}{H_{\La,\ve}}
\newcommand{\vp}{\varphi}
\newcommand{\ka}{\kappa}
\def\Ga{\Gamma}
\def\omap{(\omega_1, \omega_2, \dots, \omega_\nu)}
\def\xumap{(x_0, u_0)}
\def\wmap{\omega = (\omega_1,\dots, \omega_\nu)}
\def\zero{{(0)}}
\def\zo{z^{(0)}}
\def\ro{R^{(0)}}
\def\one{{(1)}}
\def\two{{(2)}}
\def\es{{(s)}}
\def\ar{{(r)}}
\def\esone{{(s-1)}}
\def\two{{(2)}}
\begin{document}

\smallskip

\title{On the Inverse Spectral Problem for the Quasi-Periodic Schr\"odinger Equation}

\author{David Damanik}

\address{Department of Mathematics, Rice University, 6100 S. Main St. Houston TX 77005-1892, U.S.A.}

\email{damanik@rice.edu}

\author{Michael Goldstein}

\address{Department of Mathematics, University of Toronto, Bahen Centre, 40 St. George St., Toronto, Ontario, CANADA M5S 2E4}

\email{gold@math.toronto.edu}

\thanks{The first author was partially supported by a Simons Fellowship and NSF grants DMS--0800100 and DMS--1067988. The second author was partially supported by a Guggenheim Fellowship and by an NSERC grant.}

\begin{abstract}
We study the quasi-periodic Schr\"{o}dinger equation
$$
-\psi''(x) + V(x) \psi(x) = E \psi(x), \qquad x \in \IR
$$
in the regime of ``small'' $V$. Let $(E_m',E''_m)$, $m \in \zv$, be the standard labeled gaps in the spectrum. Our main result says that if $E''_m - E'_m \le \ve \exp(-\kappa_0 |m|)$ for all $m \in \zv$, with $\ve$ being small enough, depending on $\kappa_0 > 0$ and the frequency vector involved, then the Fourier coefficients of $V$ obey $|c(m)| \le \ve^{1/2} \exp(-\frac{\kappa_0}{2} |m|)$ for all $m \in \zv$. On the other hand we prove that if $|c(m)| \le \ve \exp(-\kappa_0 |m|)$ with $\ve$ being small enough, depending on $\kappa_0 > 0$ and the frequency vector involved, then $E''_m - E'_m \le 2 \ve \exp(-\frac{\kappa_0}{2} |m|)$.
\end{abstract}

\maketitle

\tableofcontents

\section{Introduction and Statement of the Main Result}\label{sec.1}

In the last thirty five years after the classical pioneering work \cite{DiSi} by Dinaburg and Sinai the theory of quasi-periodic Schr\"odinger equations has been extensively developed. Despite that there are still a number of basic problems which seem to be hard to access. Here are a few such problems:

\textsc{Problem 1}\quad\textit{Consider the  Schr\"odinger equations}

\begin{equation} \label{eq:1-1P}
-\psi''(x) +  [c_1\cos (2\pi x)+c_2\cos(2\pi\alpha x)] \psi(x) = E\psi(x)\ ,\qquad x \in \IR\ ,
\end{equation}
\textit{where $\alpha$ is irrational with ``nice'' Dipohantine properties and $c_1,c_2$ are constants. Describe the eigenfunctions and the instability intervals of the equation.}

\smallskip

\textsc{Problem 2}\quad\textit{Find all functions of the form }
\begin{equation} \label{eq:1.UP}
V(x) = \sum_{n,m \in \IZ}\, c(m,n) e^{2\pi i( m +\alpha n)x}\ ,
\end{equation}
\textit{such that the equation}
\begin{equation} \label{eq:1-1PI}
-\psi''(x) +  V(x) \psi(x) = E\psi(x)
\end{equation}
\textit{has the same instability intervals as equation \eqref{eq:1-1P}.}

\smallskip

\textsc{Problem 3}\quad\textit{Give a sufficient condition for a subset $\mathfrak{S}\subset \IR$ to be the spectrum of the equation \eqref{eq:1-1PI} with some function $V$ as in \eqref{eq:1.UP}.}

\smallskip

\textsc{Problem 4}\quad\textit{Solve the KdV equation }
\begin{equation} \label{eq:1.KdV}
\partial_t u + \partial_x^3 u + u\partial_x u = 0
\end{equation}
\textit{with the initial data}
\begin{equation} \label{eq:1-1KdV}
u(x,0) = c_1 \cos (2\pi x) + c_2 \cos (2 \pi \alpha x).
\end{equation}

\smallskip

Here are two comments regarding these problems.

$(1)$ It is known that for $c_1,c_2$ small, all generalized eigenfunctions are Floquet-like. On the other hand, for $c_1,c_2$ large, there is a collection of exponentially decaying eigenfunctions with eigenvalues which are dense in a Cantor set of positive measure. The problem is to find a method that will work for all values of $c_1,c_2$. In the discrete case, Avila has recently made significant progress in this direction in a series of papers \cite{Av1, Av2, Av3}.

$(2)$ We state the problems for the function $c_1 \cos (2\pi x) + c_2 \cos (2\pi\alpha x)$ just for the sake of simplicity of the statement. In fact the problems are as hard for this function as for any quasi-periodic function
\begin{equation} \label{eq:1.UU}
V(x) = \sum_{n \in \zv}\, c(n) e (x n \omega),
\end{equation}
$\wmap \in \rv$, $n\omega = \sum n_j \omega_j$, $e(x) := \exp(2\pi ix)$. In this work we study the latter case.

\bigskip

Let us state the main results of this work. We consider the Schr\"odinger equation
\begin{equation} \label{eq:1-1}
-\psi''(x) +  V(x) \psi(x) = E \psi(x), \qquad x \in \IR,
\end{equation}
where $V(x)$ is a real quasi-periodic function as in \eqref{eq:1.UU}. We assume that the Fourier coefficients $c(m)$ obey
\begin{equation}\label{eq:1-fouriercoeff}
|c(m)| \le \ve \exp(-\kappa_0 |m|)
\end{equation}
with $\ve$ being small. We assume that the vector $\omega$ satisfies the following Diophantine condition:
\begin{equation}\label{eq:1-8}
|n\omega| \ge a_0 |n|^{-b_0}, \quad n \in \zv \setminus \{0\}
\end{equation}
with some $0 < a_0 < 1$, $\nu-1 < b_0 < \infty$. Set
\begin{equation}\label{eq:1K.1}
\begin{split}
k_n = -n\omega/2, \quad n \in \zv \setminus \{0\}, \quad \mathcal{K}(\omega) = \{ k_n : n \in \zv \setminus \{0\} \}, \\
\mathfrak{J}_n = ( k_n - \delta(n), k_n + \delta(n) ), \quad \delta(n) = a_0 (1 + |n|)^{-b_0-3}, \quad n \in \zv \setminus \{0\}, \\
\mathfrak{R}(k) = \{ n \in \zv \setminus \{0\} : k \in \mathfrak{J}_n \}, \quad \mathfrak{G} = \{ k : |\mathfrak{R}(k)| < \infty \},
\end{split}
\end{equation}
where $a_0,b_0$ are as in the Diophantine condition \eqref{eq:1-8}. Let $k \in \mathfrak{G}$ be such that $|\mathfrak{R}(k)| > 0$. Due to the Diophantine condition, one can enumerate the points of $\mathfrak{R}(k)$ as $n^{(\ell)}(k)$, $\ell = 0, \dots, \ell(k)$, $1 + \ell(k) = |\mathfrak{R}(k)|$, so that $|n^{(\ell)}(k)| < |n^{(\ell+1)}(k)|$; see Lemma~\ref{lem:10resetdiscr1} in Section~\ref{sec.10}. Set
\begin{equation}\label{eq:1mjdefi}
\begin{split}
T_{m}(n) = m - n ,\quad m, n \in \mathbb{Z}^\nu, \\
\mathfrak{m}^{(0)}(k) = \{ 0, n^{(0)}(k) \}, \quad \mathfrak{m}^{(\ell)}(k) = \mathfrak{m}^{(\ell-1)}(k) \cup T_{n^{(\ell)}(k)}(\mathfrak{m}^{(\ell-1)}(k)), \quad \ell = 1, \dots, \ell(k).
\end{split}
\end{equation}

\begin{thma}
There exists $\ve_0 = \ve_0(\ka_0, a_0, b_0) > 0$ such that if $\ve \le \ve_0$, then for any $k \in \mathfrak{G} \setminus \frac{\omega}{2}(\zv\setminus \{0\})$, there exist $E(k) \in \mathbb{R}$ and $\vp(k) := (\vp(n;k))_{n \in \zv}$ such that the following conditions hold:

$(a)$ $\vp(0 ;k) = 1$,
\begin{equation} \label{eq:1-17evdecay1A}
\begin{split}
|\vp(n ;k)| \le \ve^{1/2} \sum_{m \in \mathfrak{m}^{(\ell)}} \exp \Big( -\frac{7}{8} \kappa_0 |n-m| \Big), \quad \text{ $n \notin \mathfrak{m}^{(\ell(k))}(k)$}, \\
|\vp(m;k)| \le 2, \quad \text{for any $m \in \mathfrak{m}^{(\ell(k))}(k)$.}
\end{split}
\end{equation}

$(b)$ The function
$$
\psi(k, x) = \sum_{n \in \zv} \vp(n; k) e(x (n\omega+k))
$$
is well-defined and obeys equation \eqref{eq:1-1} with $E = E(k)$, that is,
\begin{equation}\label{eq:1.sco}
H \psi(k,x) \equiv  - \psi''(k,x) + V(x) \psi(k,x) = E(k) \psi(k,x).
\end{equation}

$(c)$
$$
E(k) = E(-k), \quad \varphi(n;-k) = \overline{\varphi(-n; k)},\quad \psi(-k, x)=\overline{\psi(k, x)},
$$
\begin{equation}\label{eq:1Ekk1EGT11}
\begin{split}
(k^0)^2 (k - k_1)^2  < E(k) - E(k_1) < 2k (k - k_1) + 2 \ve \sum_{k_1 < k_{n} < k} \delta(n), \quad 0 < k - k_1 < 1/4, \; k_1 > 0,
\end{split}
\end{equation}
where $k^\zero:=\min(\ve_0, k/1024)$.

$(d)$ The spectrum of $H$ consists of the following set,
$$
\cS = [E(0) , \infty) \setminus \bigcup_{m \in \zv \setminus \{0\} : E^-( k_m) < E^+( k_m)} (E^-( k_m), E^+( k_m)),
$$
where
$$
E^\pm(k_m) = \lim_{k \to k_m \pm 0, \; k \in \mathfrak{G} \setminus \mathcal{K}(\omega)} E(k), \quad \text{ for $k_m>0$.}
$$
\end{thma}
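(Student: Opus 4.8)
The first step is to pass to Fourier space: inserting the Ansatz $\psi(k,x)=\sum_{n\in\zv}\vp(n;k)\,e(x(n\omega+k))$ into $H\psi=E\psi$ converts the differential equation into the infinite linear system
\[
\bigl[(n\omega+k)^2-E\bigr]\vp(n;k)+\sum_{m\in\zv}c(m)\,\vp(n-m;k)=0,\qquad n\in\zv,
\]
i.e.\ an eigenvalue problem for $H_k=D_k+S$ on $\ell^2(\zv)$ (or a suitable exponentially weighted space), with $D_k$ diagonal with entries $(n\omega+k)^2$ and $S$ convolution by $c$, which by \eqref{eq:1-fouriercoeff} is small and exponentially off-diagonal. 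The eigenvalue to be tracked is the perturbation of the site-$0$ value $k^2$; it resonates with the site-$n$ value when $(n\omega+k)^2\approx k^2$, i.e.\ when $k$ is near $k_n=-n\omega/2$, which is the source of $\mathcal K(\omega)$, the windows $\mathfrak J_n$, the resonance set $\mathfrak R(k)$, and the good set $\mathfrak G$ on which only finitely many resonances occur.

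Next I would run an inductive (multiscale) construction over the finitely many resonances $n^{(0)}(k),\dots,n^{(\ell(k))}(k)$, taken in order of increasing modulus. At step $\ell$ one performs a Schur complement (Feshbach) reduction with respect to the splitting $\zv=\mathfrak m^{(\ell)}(k)\sqcup(\mathfrak m^{(\ell)}(k))^{c}$: on the complement $|(n\omega+k)^2-E|$ is bounded below, since all the still-relevant resonances have been absorbed into $\mathfrak m^{(\ell)}(k)$, so $(D_k-E)$ is invertible there and a Neumann series in $S$ combined with a Combes--Thomas type bound yields a Green's function decaying exponentially at a rate only slightly below $\kappa_0$. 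The problem reduces to a finite matrix $M^{(\ell)}(E;k)$ on $\mathfrak m^{(\ell)}(k)$; one then shows $\det M^{(\ell)}(E;k)=0$ has a root $E^{(\ell)}(k)$ near $k^2$ depending continuously on $k$ (real-analytically off $\mathcal K(\omega)$), and advances to step $\ell+1$ using $\mathfrak m^{(\ell+1)}(k)=\mathfrak m^{(\ell)}(k)\cup T_{n^{(\ell+1)}(k)}(\mathfrak m^{(\ell)}(k))$. Putting $E(k)=E^{(\ell(k))}(k)$, taking $\vp(k)$ to be the associated eigenvector normalized by $\vp(0;k)=1$, with its bounded part on $\mathfrak m^{(\ell(k))}(k)$ and its off-cluster part produced by the decaying Green's function, gives $(a)$; feeding the decay back shows $\sum_n|\vp(n;k)|<\infty$, so $\psi(k,\cdot)$ is a well-defined bounded continuous function solving \eqref{eq:1-1} classically, which is $(b)$.

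For $(c)$, reality of $V$ gives $c(-m)=\overline{c(m)}$, and $n\mapsto -n$ carries the resonance structure at $k$ to that at $-k$; uniqueness of the constructed branch then forces $E(-k)=E(k)$, $\vp(n;-k)=\overline{\vp(-n;k)}$, and $\psi(-k,x)=\overline{\psi(k,x)}$. The two-sided estimate on $E(k)-E(k_1)$ comes from the expansion $E(k)=k^2+O(\ve^2)$ valid away from resonances: differentiating gives $E'(k)=2k+(\text{small})$, integration over $(k_1,k)$ produces the main term $2k(k-k_1)$ and the strictly positive lower bound $(k^{(0)})^2(k-k_1)^2$ (with $k^{(0)}$ absorbing the small loss), while the error is controlled by the accumulated splittings of $E$ across the resonances $k_n$ with $k_1<k_n<k$, each of size $O(\ve\,\delta(n))$, giving $2\ve\sum\delta(n)$.

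Finally, for $(d)$: on each connected component of $\mathfrak G\setminus\mathcal K(\omega)$ the map $k\mapsto E(k)$ is continuous and, by $(c)$, strictly increasing, and $\psi(k,\cdot)$ is a bounded generalized eigenfunction, so $E(k)\in\spec(H)$; letting $k\to k_m\pm0$ adds the endpoints, which shows $\cS\subset\spec(H)$, the open gaps being precisely the intervals $(E^-(k_m),E^+(k_m))$ between the two branches meeting at $k_m$ (empty when these coincide), and $E(0)$ the bottom. For the reverse inclusion, if $E\notin\cS$ then $E$ is off-resonant at every scale, so the same Schur/Green's-function machinery constructs a bounded $(H-E)^{-1}$ — equivalently, a Weyl sequence for such $E$ would have to concentrate on a resonance cluster, forcing $E$ to be a limit of values $E(k)$. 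The main obstacle throughout is the uniform-in-$k$ bookkeeping of the multiscale step: controlling how the clusters $\mathfrak m^{(\ell)}(k)$ merge and overlap as resonances accumulate near $k$, while keeping the complement Green's function exponentially localized at the prescribed rate $\tfrac{7}{8}\kappa_0$ and the finite determinant possessing a simple, continuously varying root, across all $\ell(k)$ stages.
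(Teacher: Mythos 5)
Your route is the paper's route in outline: the Aubry-dual operator $H_k$ in Fourier space, a scale-by-scale Schur-complement (Feshbach) reduction organized around the resonance clusters $\mathfrak m^{(\ell)}(k)$, exponential decay of the off-cluster Green's function giving $(a)$ and $(b)$, the $n\mapsto -n$ symmetry giving the first part of $(c)$, and the identification of $\spec H$ via bounded generalized eigenfunctions and gap invertibility giving $(d)$. That much is correct.

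There is, however, a genuine gap in your treatment of the lower bound in \eqref{eq:1Ekk1EGT11}. You argue by writing $E'(k)=2k+(\text{small})$ and integrating, claiming the constant $(k^{(0)})^2$ absorbs the loss. This is incorrect near the gap edges: at each resonance $k_{n}$ with a nontrivial gap one has $\partial_k E\to 0$ as $k\to k_n^{\pm}$ (the band function flattens at a gap edge), so the estimate $E'(k)\ge 2k-\text{small}>0$ simply fails there, and a naive integration through or near a resonance gives no lower bound at all. The quadratic form $(k^{(0)})^2(k-k_1)^2$ is not a softened linear bound; it is the genuine behavior forced by $E'\to 0$, $E''\gtrsim$ const at gap edges, and proving it uniformly across the accumulating resonances is one of the central technical points of the paper. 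The argument there rests on the reflection symmetry $T(n)=n_0-n$ of the resonance cluster $\La^{(s)}_k(0)$ (which enforces the ordering $a_1>a_2$ in the quadratic $\chi(\ve,E)=(E-a_1)(E-a_2)-b^2$) and the ``continued-fraction-function'' machinery of Section~\ref{sec.4}, culminating in $\partial_\theta E^{(s,+)}>(k^\zero)^2\theta$ (Proposition~\ref{prop:8.1}, \eqref{eq:8Ekderivatives}, via Lemma~\ref{lem:4zetatheta}). Your sketch invokes none of this; in particular it does not say how the finite determinant keeps a \emph{simple, monotone} root when two of its diagonal entries are nearly degenerate, which is exactly the resonance problem. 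The same omission affects your claim that the determinant ``has a root $E^{(\ell)}(k)$ near $k^2$ depending continuously on $k$'': at a near-degenerate cluster the implicit function theorem in the naive form fails, and the paper replaces it by the analysis of $\chi^{(f)}=0$ for $f\in\mathfrak F^{(\ell)}$ together with the symmetry-induced ordering. Without these ingredients the lower bound and the monotonicity used in $(d)$ do not follow.
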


One of the central results of the current work is the following:

\begin{thmb}
$(1)$ The gaps $(E^-(k_m), E^+(k_m))$ in Theorem~A obey $E^+(k_m) - E^-(k_m) \le 2 \ve \exp(-\frac{\kappa_0}{2} |m|)$.

$(2)$ Using the notation from Theorem~A, there exists $\ve^\zero$ such that if the gaps $(E^-(k_m), E^+(k_m))$ obey $E^+(k_m) - E^-(k_m) \le \ve \exp(-\kappa |m|)$ with $\ve < \ve^\zero$, $\kappa>4\kappa_0$, then, in fact, the Fourier coefficients $c(m)$ obey $|c(m)| \le \ve^{1/2} \exp(-\frac{\kappa}{2} |m|)$.
\end{thmb}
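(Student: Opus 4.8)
The plan is to reduce Theorem~B to a single analytic fact which one reads off from the multiscale construction behind Theorem~A: for each $m\in\zv\setminus\{0\}$ the two one-sided limits $E^\pm(k_m)$ are the eigenvalues of the effective (limiting) Hermitian block carried by the resonant cluster $\mathfrak m^{(\ell(k_m))}(k_m)$, and since $k_m=-m\omega/2$ is a symmetry point of the problem (the sites $0$ and $m$ are interchanged by $k\mapsto -k$), after the reduction this block is $2\times2$ with equal diagonal entries $e(m)\in\IR$ and off--diagonal entry $q(m)$, so that
\begin{equation*}
E^+(k_m)-E^-(k_m)=2\,|q(m)|,\qquad q(m)=c(m)+\mathfrak r(m),
\end{equation*}
where $\mathfrak r(m)$ is a sum of contributions of resonant chains joining $0$ to $m$ of combinatorial length $\ge2$. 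Each such chain carries a factor $\ve^2$ at least, and by the triangle inequality together with the weight in \eqref{eq:1-fouriercoeff}---degraded by the small--divisor sums to the rate appearing in Theorem~A$(a)$---it also carries $\exp(-\tfrac78\kappa_0|m|)$, so that $|\mathfrak r(m)|\le C\ve^2\exp(-\tfrac78\kappa_0|m|)$. Part~(1) is then immediate: $E^+(k_m)-E^-(k_m)\le 2|c(m)|+2C\ve^2\exp(-\tfrac78\kappa_0|m|)\le 2\ve\exp(-\tfrac{\kappa_0}{2}|m|)$ for $\ve$ small, using $|m|\ge1$ and $\tfrac78\kappa_0>\tfrac{\kappa_0}{2}$.

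For Part~(2) the same identity gives $|c(m)|\le\tfrac12\bigl(E^+(k_m)-E^-(k_m)\bigr)+|\mathfrak r(m)|\le\tfrac{\ve}{2}\exp(-\kappa|m|)+|\mathfrak r(m)|$, so it is enough to prove $|\mathfrak r(m)|\le\tfrac12\ve^{1/2}\exp(-\tfrac{\kappa}{2}|m|)$; granting this one gets $|c(m)|\le\ve^{1/2}\exp(-\tfrac{\kappa}{2}|m|)$ because $\tfrac{\ve}{2}\le\tfrac12\ve^{1/2}$ and $\exp(-\kappa|m|)\le\exp(-\tfrac{\kappa}{2}|m|)$. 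I would prove the bound on $\mathfrak r(m)$ by strong induction on $|m|$ (equivalently, scale by scale in the scheme of Theorem~A), carrying as inductive hypothesis $|c(n)|\le\ve^{1/2}\exp(-\tfrac{\kappa}{2}|n|)$ for all $0<|n|<|m|$ alongside the standing a priori bound $|c(n)|\le\ve\exp(-\kappa_0|n|)$. The structural point that makes this work is that, by the Diophantine condition \eqref{eq:1-8} and the choice of $\delta(n)$ in \eqref{eq:1K.1}, every $n\in\mathfrak R(k_m)\setminus\{m\}$ satisfies $|n|\lesssim|m|^{b_0/(b_0+3)}\ll|m|$, so that $\mathfrak m^{(\ell(k_m))}(k_m)$ lies in two bounded--radius neighbourhoods, one of $0$ and one of $m$, of radius $O(|m|^{b_0/(b_0+3)})$. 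Hence each chain contributing to $\mathfrak r(m)$ splits into ``short'' steps $c(p)$ with $|p|<|m|$ (to which the inductive bound applies at the full target rate $\tfrac{\kappa}{2}$) and at least one ``bridge'' step $c(p)$ with $|p|\ge|m|-O(|m|^{b_0/(b_0+3)})$ (to which only the a priori bound, rate $\kappa_0$, applies). Since a bridge brings an extra factor $\ve$ beyond the bare $c(m)$, and the surplus rate $\tfrac{\kappa}{2}-\kappa_0>\kappa_0>0$ granted by $\kappa>4\kappa_0$---together with this extra $\ve$ and the $\ve^{1/2}$ of slack in the target bound---absorbs the polynomial and combinatorial factors generated by the small--divisor sums, one obtains $|\mathfrak r(m)|\le\tfrac12\ve^{1/2}\exp(-\tfrac{\kappa}{2}|m|)$, closing the induction.

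The hard part is precisely the control of the bridge steps, and it is there that the full strength of the forward construction (not a naive perturbation expansion) is needed. A priori a factor $c(p)$ with $|p|\approx|m|$ is known only with the slow rate $\kappa_0$, while the target rate $\tfrac{\kappa}{2}>2\kappa_0$ is more than twice as large; a brute--force chain expansion loses powers of $|m|$ both through the small divisors and through the convolutions over intermediate sites, and would therefore confine the argument to $|m|$ at most a power of $1/\ve$. To reach all $m$ one must push this step through the resolvent estimates on the resonant blocks established in the proof of Theorem~A---which are designed not to incur those polynomial losses---so as to see that the bridge modes enter $\mathfrak r(m)$ only in combination with genuinely small factors and with enough surplus exponential decay; carrying out that bookkeeping against the two length scales $|m|$ and $|m|^{b_0/(b_0+3)}$ is, I expect, the technical heart of the proof.
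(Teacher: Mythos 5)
Your high-level strategy matches the paper's, and you correctly flag the bridge steps as the technical heart, but the asserted identity $E^+(k_m)-E^-(k_m)=2|q(m)|$ is an oversimplification that hides exactly the arithmetic feature you would have to explain, namely the hypothesis $\kappa>4\kappa_0$. The effective two-site block is a nonlinear ($E$-dependent) Schur complement, and because $\ell(k_m)$ can be arbitrarily large, the diagonal function $f_1(E)=E-e(E)$ is a continued-fraction function in the sense of Section~\ref{sec.4}: what has $O(1)$ slope is $\chi^{(f_1)}=\mu^{(f_1)}f_1$, while $f_1$ itself can be very steep. So the estimate the construction actually gives (Lemma~\ref{lem:11.gapinequalities}) is one-sided and lossy,
\begin{equation*}
|q(m)|\;\le\;\ve_0^{-1}\exp(\kappa_0|m|)\,\bigl(E^+(k_m)-E^-(k_m)\bigr),
\end{equation*}
the loss $\ve_0^{-1}\exp(\kappa_0|m|)$ coming from the lower bound on $|\mu^{(f_1)}|$ in terms of the small divisors $\tau^{(r)}(k_m)$; the hypothesis $\kappa>4\kappa_0$ is what makes this loss absorbable (the surviving decay rate is $\kappa-\kappa_0>\tfrac{\kappa}{2}$, and the residual $\ve_0^{-1}$ is killed by taking $\ve^\zero$ small relative to $\ve_0$). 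For Part~(1) the paper bypasses the two-site identity entirely: it uses the reflection $T(n)=n^{(\ell)}-n$ and the sharpened perturbation step of Remark~\ref{rem:7.sharperssoneestimate} and Corollary~\ref{cor:8.sharperesesone}, comparing at scale $s$ to the two equal (by symmetry) eigenvalues of disjoint translates of a box of radius $\sim|m|/8$, which gives $E^+-E^-\le 2\ve\exp(-\tfrac{\kappa_0}{2}|m|)$ directly, with no need for the two-site arithmetic at all.

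On the bridge steps: you have correctly named the obstruction but left it as a sketch, and this is precisely where most of the work of Part~(2) lives. The paper's Lemma~\ref{lem:11scaleddecayestimate} proves a conditional chain-sum estimate adapted to a geometric annular decomposition $R_{t-1}<|p|\le R_t$ with $R_t=(5/4)R_{t-1}$, and the iteration in the proof of Part~(2) then upgrades the decay rate of $c(\cdot)$ by a fixed factor $(15/16)^2\cdot\tfrac{7}{6}>1$ per pass while halving the amplitude; after a bounded number of passes (depending only on $\kappa/\kappa_0$) the rate reaches $\tfrac{\kappa}{2}$ with amplitude still $\le\ve^{1/2}$. This is the rigorous version of your ``strong induction on $|m|$'': it is a global rate bootstrap run scale by scale, and the point you identify --- that a bridge factor $c(p)$ with $|p|\approx|m|$ can only be used at the previously attained rate --- is handled because each pass produces a multiplicative rather than additive rate improvement, so the iteration closes in finitely many steps. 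Without filling in that bookkeeping, the proposal is a plan rather than a proof.
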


\begin{remark} 
$(1)$ In a companion paper, \cite{DG}, we apply Theorem~B to establish the existence of a global solution of the KdV equation
\begin{equation} \label{eq:1.KdVP1}
\partial_t u +\partial_x^3 u + u\partial_x u=0
\end{equation}
with small quasi-periodic initial data. This application is \textbf{the main objective of Theorem B}. We would like to explain in this remark why the estimate in part $2$ of Theorem~B is crucial for the existence of a global solution of \eqref{eq:1.KdVP1} with quasi-periodic data. Recall the following fundamental result by P.~Lax \cite{Lax}: Let $u(t,x)$ be a function defined for $0 \le t < t_0$, $x \in \mathbb{R}$ such that $\partial^\alpha_x u$ exist and are continuous and bounded in both variables for $0 \le \alpha \le 3$. Assume that $u$ obeys equation \eqref{eq:1.KdVP1}. Consider the Schr\"odinger operators
\begin{equation} \label{eq:1-1u}
[H_t \psi](x) = -\psi''(x) +  u(t,x) \psi(x), \qquad x \in \IR,
\end{equation}
Then $\sigma(H_t) = \sigma(H_0)$ for all $t$.  Assume that
\begin{equation} \label{eq:1-1KdV1}
u(t,x) = \sum_{n \in \zv} \, c(t,n) e^{i x n \omega},
\end{equation}
with
$$
|c(t,n)| \le \ve \exp(-\kappa_1 |n|) \text{for all $0 \le t < t_0$},
$$
where $\ve \le \ve_0(a_0,b_0,\kappa_1)$. Assume in addition that for $t = 0$, the estimates are better: $|c(0,n)| \le \ve \exp(-\kappa_0 |n|)$, $\ve \le \ve^\zero(a_0,b_0,\kappa_1)$. Applying Theorems~A and B, one concludes that in fact $|c(t,n)| \le \ve^{1/2} \exp(-\frac{\kappa_0}{2} |n|)$. In other words, there is no blow up of the estimates for the Fourier coefficients of the solution. Thus, due to Theorems~A,B, to prove the existence of a \textbf{global} solution of the KdV equation \eqref{eq:1.KdVP1} with quasi-periodic initial data
\begin{equation} \label{eq:1-1KdV1A}
u_0(x) = \sum_{n \in \zv}\, c_0(n) e^{i x n \omega},
\end{equation}
with $|c_0(n)| \le \ve \exp(-\kappa_0 |n|)$, $\ve \le \ve^\zero$, one only has to establish the existence of a \textbf{local} solution.

$(2)$ An estimate similar in spirit to the one in the first part of Theorem~B was established by Hadj Amor \cite{HA}.

$(3)$ The problem of ``keeping the exponential decay of the Fourier coefficients in check'' is also well known in the KAM theory of perturbations of integrable PDE's; see for instance the paper \cite{EK} by Eliasson and Kuksin on periodic non-linear Schr\"{o}dinger equations.
\end{remark}

The existence of solutions $\psi(k,x)$ as in Theorem~A was discovered for a large set of $k$'s in the paper \cite{DiSi} by Dinaburg and Sinai. Such solutions are called Floquet-Bloch or just Floquet solutions and the parameter $k$ is called quasi-momentum. In \cite{El}, Eliasson proved the existence of Floquet solutions for $k \in \mathfrak{G}$ and also the fact that the spectrum is purely absolutely continuous.

Our approach is completely different from Eliasson's approach. \emph{We prove exponential localization of the eigenfunctions of the dual operator.} The duality underlying this approach is called Aubry duality. In \cite{BoJi}, Bourgain and Jitomirskaya used this approach to study discrete quasi-periodic Schr\"{o}dinger operators for small values of the coupling constant; see also \cite{Bo}. Let us introduce the dual operators for \eqref{eq:1-1}. Given $k \in \IR$ and a function $\vp(n)$, $n \in \zv$ such that $|\varphi(n)| \le C_\varphi|n|^{-\nu-1}$, where $C_\varphi$ is a constant, set
\begin{equation} \label{eq:5-5}
y_{\vp, k}(x) = \sum_{n \in \zv}\, \vp(n) e \bigl((n\omega + k)x\bigr).
\end{equation}
The function $y_{\vp, k}(x)$ satisfies equation \eqref{eq:1-1} if and only if
\begin{equation} \label{eq:1-6}
(2\pi)^2 (n\omega + k)^2 \vp(n) +  \sum_{m \in \zv}\, c(n - m)\vp(m) = E\vp(n)
\end{equation}
for any $n \in \zv$.  Put
\begin{equation} \label{eq:1-7}
\begin{split}
h(m,n; k) & = (2\pi)^2(m\omega + k)^2 \quad \text{if } m = n, \\
h(m,n; k) & = c(n-m) \quad \text{if } m \neq n.
\end{split}
\end{equation}
Consider $H_{ k} = \bigl(h(m, n; k)\bigr)_{m, n \in \zv}$.

\begin{thmc}
There exists $\ve_0 = \ve_0(\ka_0, a_0, b_0) > 0$ such that for $\ve \le \ve_0$ and any $k \in \mathfrak{G} \setminus \frac{\omega}{2}\zv$, there exists $E(k) \in \mathbb{R}$ and $\vp(k) := (\vp(n;k))_{n \in \zv}$ such that the following conditions hold:

$(1)$ $\vp(0 ;k) = 1$,
\begin{equation} \label{eq:1-17evdecay1}
\begin{split}
|\vp(n;k)| \le \ve^{1/2} \sum_{m \in \mathfrak{m}^{(\ell)}} \exp \Big( -\frac{7}{8} \kappa_0 |n-m| \Big), \quad \text{ $n \notin \mathfrak{m}^{(\ell(k))}(k)$}, \\
|\vp(m;k)| \le 2, \quad \text{for any $m \in \mathfrak{m}^{(\ell(k))}(k)$,}
\end{split}
\end{equation}
\begin{equation} \label{eq:1philim}
H_{k} \vp(k) = E(k) \vp(k).
\end{equation}

$(2)$
\begin{equation}\label{eq:1EsymmetryT}
E(k) = E(-k), \quad \vp(n ;-k) = \overline{\vp(-n ;k)},
\end{equation}
\begin{equation}\label{eq:1Ekk1EGT}
\begin{split}
(k^\zero)^2 (k - k_1)^2  < E(k) - E(k_1) < 2k (k - k_1) + 2 \ve \sum_{k_1 < k_{n} < k}(\delta(n))^{1/8} , \quad \quad 0 < k - k_1 < 1/4, \; k_1 > 0,
\end{split}
\end{equation}
where $k^\zero := \min(\ve_0, k/1024)$.

$(3)$ Set $E^\pm(k_{n}) = \lim_{k \to k_n \pm 0, \; k \in \mathfrak{G}} E( \ve,k)$. Assume that $E^+(k_{n^\zero}) > E^-(k_{n^\zero})$. Let $E^-(k_{n^\zero}) < E < E^+(k_{n^\zero})$. Then $(E - H_{k})$ is invertible for every $k$.
\end{thmc}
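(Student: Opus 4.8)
The plan is to build the eigenpair $(E(k),\vp(k))$ by a multi-scale (KAM/Newton) iteration adapted to the finite resonance set $\mathfrak{R}(k)$ and the chains $\mathfrak{m}^{(\ell)}(k)$, i.e.\ to prove exponential localization of the dual eigenfunction, and then to read off the symmetry relations \eqref{eq:1EsymmetryT}, the monotonicity \eqref{eq:1Ekk1EGT}, and the gap invertibility in part~$(3)$ from the quantitative resolvent bounds produced along the way.

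\emph{Parts $(1)$ and $(2)$.} Fix $k\in\mathfrak{G}\setminus\frac{\omega}{2}\zv$. The diagonal of $H_k$ carries the values $(2\pi)^2(m\omega+k)^2$, and the Diophantine condition \eqref{eq:1-8} forces two of them, at sites $m,m'$, to be close only when $m-m'$ is one of the \emph{finitely many} (since $k\in\mathfrak{G}$) vectors with $k\in\mathfrak{J}_{m-m'}$, i.e.\ only along the resonant chain. I would run an induction on the levels $\ell=0,1,\dots,\ell(k)$: at level $\ell$ one has an approximate eigenvalue $E^{(\ell)}$ and an approximate eigenvector concentrated on $\mathfrak{m}^{(\ell)}(k)$, together with the decisive estimate that on any finite box avoiding a neighborhood of the chain the operator $E^{(\ell)}-H_k$ has an inverse given by a convergent resolvent (path) expansion in the entries $c(m-n)$, with off-diagonal decay at a rate arbitrarily close to $\kappa_0$ --- this is where the decay $|c(m)|\le\ve e^{-\kappa_0|m|}$ and the separation of the off-chain diagonal values enter. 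Passing from level $\ell$ to level $\ell+1$ one adjoins the new resonance $n^{(\ell+1)}(k)$, which replaces the chain by $\mathfrak{m}^{(\ell)}(k)\cup T_{n^{(\ell+1)}(k)}(\mathfrak{m}^{(\ell)}(k))$, re-diagonalizes the enlarged resonant block non-perturbatively, and re-solves off the chain; each step gains a super-exponential factor in the scale, so the scheme converges to $E(k)$, $\vp(k)$ satisfying \eqref{eq:1philim} and the bound \eqref{eq:1-17evdecay1}, the sum over $\mathfrak{m}^{(\ell)}$ recording a bump of $\vp$ near each chain site and decay at rate $\tfrac78\kappa_0$ away from the chain (the loss from $\kappa_0$ absorbs the geometric series of iterations). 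The relations \eqref{eq:1EsymmetryT} follow from $c(-m)=\overline{c(m)}$ (reality of $V$) and from the invariance of $H_k$ under the simultaneous substitution $k\mapsto-k$, $m\mapsto-m$, which carries the resonance data of $k$ to that of $-k$. For \eqref{eq:1Ekk1EGT} I would differentiate the eigenvalue, $\frac{d}{dk}E(k)=\big\langle\vp(k),\partial_kH_k\,\vp(k)\big\rangle/\|\vp(k)\|^2$, where $\partial_kH_k$ is the diagonal operator with entries $2(2\pi)^2(m\omega+k)$; since $\vp(k)$ is concentrated near $m=0$, the $m=0$ term contributes $2(2\pi)^2k>0$ for $k>0$, and controlling the exponentially small contributions of the other chain sites yields the two-sided bounds, the jumps at the points $k_m$ being exactly the openings of the gaps. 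Chaining the local estimate shows, moreover, that $E$ is strictly increasing on $(0,\infty)\cap(\mathfrak{G}\setminus\frac{\omega}{2}\zv)$, with one-sided limits $E^\pm(k_m)$ at the $k_m$.

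\emph{Part $(3)$.} Let $E\in(E^-(k_{n^\zero}),E^+(k_{n^\zero}))$ and set $d_0=\min\big(E-E^-(k_{n^\zero}),\,E^+(k_{n^\zero})-E\big)>0$. The shift $S:\vp(\cdot)\mapsto\vp(\cdot+\ell)$, $\ell\in\zv$, conjugates $H_{k+\ell\omega}$ to $H_k$, so $\spec(H_{k+\ell\omega})=\spec(H_k)$, and for each $m$ with $m\omega+k\in\mathfrak{G}\setminus\frac{\omega}{2}\zv$ the number $E(m\omega+k)$ is an eigenvalue of $H_k$ (eigenvector $S\vp(m\omega+k)$); conversely, the multi-scale analysis of parts $(1)$--$(2)$ shows these are \emph{all} the eigenvalues, since any eigenvector of $H_k$ is concentrated near the site $m_0$ of its maximum, whence its eigenvalue lies in the unique resonant cluster over $(2\pi)^2(m_0\omega+k)^2$, and that cluster consists of values of $E(\cdot)$. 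Consequently, for $k'$ ranging over the dense full-measure set of those $k'\in\mathfrak{G}\setminus\frac{\omega}{2}\zv$ with all $m\omega+k'$ again in $\mathfrak{G}\setminus\frac{\omega}{2}\zv$, one has $\spec(H_{k'})\subseteq\overline{\operatorname{Range}(E)}$; by the strict monotonicity from part $(2)$ the open interval $(E^-(k_{n^\zero}),E^+(k_{n^\zero}))$ contains no value of $E$ (and, using $E(-k)=E(k)$, none at all), so $\dist(E,\spec(H_{k'}))\ge d_0$ for all such $k'$. Finally, for an arbitrary $k$ choose $k'\to k$ in this dense set and write $H_k=H_{k'}+D$, where $D$ is the diagonal operator with entries $(2\pi)^2(k-k')(2m\omega+k+k')$; since $|D_{mm}|\lesssim|k-k'|\big(1+(m\omega+k')^2\big)$, $D$ is $H_{k'}$-bounded with relative bound $O(|k-k'|)$, hence $\|D(E-H_{k'})^{-1}\|\le O(|k-k'|)\big(1+|E|/d_0\big)<1$ once $|k-k'|$ is small, and then $E-H_k=\big(I-D(E-H_{k'})^{-1}\big)(E-H_{k'})$ is invertible. (Equivalently one may run the resolvent expansions of the construction directly at the spectral parameter $E$: the only forbidden energies are the resonant-cluster values, i.e.\ $\operatorname{Range}(E)$, from which $E$ stays at distance $\ge d_0$, so the finite-box bounds are uniform in $k$ and patch to a global $(E-H_k)^{-1}$.)

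\emph{Main obstacle.} The technical heart is the resonant-cluster step of the induction: one must show that diagonalizing the ever-growing block on $\mathfrak{m}^{(\ell)}(k)$ degrades neither the convergence of the off-chain resolvent expansion nor the exponential rate, \emph{uniformly} in $\ell\le\ell(k)$ and --- crucially for part $(3)$ --- uniformly in $k$ over neighborhoods of the resonance points $k_m$, so that $\dist(E,\spec H_k)\ge d_0$ survives the passage to $k\notin\mathfrak{G}$ and $k\in\frac{\omega}{2}\zv$; and one must establish that $H_k$ carries no spectrum beyond the branch $E(\cdot)$, which is where the full strength of the multi-scale scheme is needed.
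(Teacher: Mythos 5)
Your parts (1) and (2) describe, at a high level, the same route the paper takes: a multi-scale scheme that builds the dual eigenpair by inductively enlarging the resonant chain $\mathfrak{m}^{(\ell)}(k)$, re-diagonalizing the resonant block, and running a convergent off-chain resolvent expansion, with the symmetry in \eqref{eq:1EsymmetryT} coming from $c(-m)=\overline{c(m)}$ and the map $(k,m)\mapsto(-k,-m)$, and the lower bound in \eqref{eq:1Ekk1EGT} ultimately produced by a Feynman-type derivative formula. Be careful, though, with the heuristic that ``the $m=0$ term contributes $2(2\pi)^2k>0$'': the other chain sites $n^{(\ell)}(k)$ satisfy $n^{(\ell)}\omega+k\approx -k$, so their contributions to $\partial_kE$ come with the opposite sign and can nearly cancel the $m=0$ term. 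This is precisely why the paper ends up with the \emph{quadratic} lower bound $(k^\zero)^2(k-k_1)^2$ rather than a linear one; that bound is extracted not from a crude concentration argument but from the convexity estimates on the ``continued-fraction function'' $\chi^{(f)}$ in Section~4 (Lemma~\ref{4.fcontinuedfrac}, part (8)), which control the second $\theta$-derivative at a resonance.

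The genuine gap is in part (3). You attempt to deduce $\dist(E,\spec H_{k'})\ge d_0$ from the assertion that $\spec H_{k'}\subseteq\overline{\operatorname{Range}(E)}$, i.e.\ that the branch $E(\cdot)$ exhausts the spectrum of the dual operator. That assertion is not something you have established, and proving it (pure-point spectrum for $H_{k'}$ with complete set of localized eigenfunctions) is essentially as hard as, or circular with, the invertibility you are trying to prove; the heuristic ``any eigenvector is concentrated near the site of its maximum, whence its eigenvalue lies in a resonant cluster'' is not a proof and does not obviously converge when $\mathfrak{R}(k')$ is infinite, which happens on a residual set of $k'$. The paper does not need any spectral characterization: it proves, directly from the classification of $H_{\La^{(s)}_{k_{n^\zero}}(0),\ve,k_{n^\zero}}$ into the $\cN^{(s)}$/$GSR$ classes (Theorem~D, part~(IV)), that for $E$ in the gap the finite-volume resolvents obey uniform off-diagonal bounds $|(E-H_{\La^{(s)}})^{-1}(m,n)|\le\delta^{-1}$, $\le e^{-\kappa_0|m-n|/2}$ for $|m-n|$ large, uniformly in $s$ and in $\ell$; then Lemma~\ref{lem:13.1A} (strong resolvent convergence) gives the bound for the infinite matrix at $k_{n^\zero}$; the shift conjugacy $U_\ell H_k U_\ell^{-1}=H_{k+\ell\omega}$ and density of $\{k_{n^\zero}+\ell\omega\}$ then extend the bound to all $k$, again via Lemma~\ref{lem:13.1A} rather than a Neumann series. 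Your Neumann-series step $\|D(E-H_{k'})^{-1}\|<1$ is also delicate because $D$ is unbounded ($|D_{mm}|\sim|m|\,|k-k'|$) and the claimed bound $O(|k-k'|)(1+|E|/d_0)$ is not justified; the paper sidesteps this entirely by testing against finitely supported vectors and invoking self-adjointness. Your parenthetical ``equivalently, run the resolvent expansion directly at the spectral parameter $E$'' is in fact the right idea and is what the paper carries out, but it is not developed in your proposal and the crucial finite-volume input (that $\spec H_{\La^{(s)}}$ near the gap consists of exactly the two eigenvalues $E^{(s,\pm)}$, equation~\eqref{eq:10specHEEAAA}) is missing.
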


Let us give a short description of our method and the central technical difficulty we resolve. The proof of Theorem~C is built upon an abstract multi-scale analysis scheme. We estimate the Green function $(E - H_\La)(m,n)$ of the  matrix $H_\La := \bigl( h(m,n;k) \bigr)_{m,n \in \La}$, $\La \subset \zv$ moving up on the ``size scale'' of $\La$. This approach was introduced in the theory of Anderson localization in the fundamental work \cite{FrSp} of Fr\"ohlich and Spencer on random potentials and later by Fr\"ohlich, Spencer and Wittwer in \cite{FSW} for quasi-periodic potentials. Our multi-scale scheme is based on the Schur complement formula:
\begin{equation}\label{eq:1schurfor}
\begin{bmatrix} \cH_{1} & \Gamma_{1,2} \\[5pt] \Gamma_{2,1} & \cH_{2}\end{bmatrix}^{-1} = \begin{bmatrix} \cH_1^{-1} + \cH_1^{-1} \Gamma_{1,2} \tilde H_2^{-1} \Gamma_{2,1} \cH_1^{-1} & - \cH_1^{-1} \Gamma_{1,2} \tilde H_2^{-1} \\[5pt] -\tilde H_2^{-1} \Gamma_{2,1} \cH_1^{-1} & \tilde H_2^{-1} \end{bmatrix},
\end{equation}
with
\begin{equation}\label{eq:1schurforH2}
\tilde H_2^{-1} = (\cH_2 - \Gamma_{2,1} \cH_1^{-1} \Gamma_{1,2})^{-1}.
\end{equation}
The main piece here is $\tilde H_2^{-1}$. \textit{The iteration of \eqref{eq:1schurforH2} over the scales builds up a ``continued-fraction-function'' of the spectral parameter $E$ and the quasi-momentum $k$}. To estimate $\tilde H_2^{-1}$ on a given scale, say $s$, one has to study the roots of the determinant of $\cH_2 - \Gamma_{2,1} \cH_1^{-1} \Gamma_{1,2}$ which is the previous continued-fraction-function. These roots are close to $E^{(s-1)}_{\La'}(k)$ -- the eigenvalues of the matrix of the previous scale set $\La'$ parameterized against $k$. The major problem here is that $E^{(s-1)}_{\La'}(k)$ and $E^{(s-1)}_{\La''}(k)$ can be ``extremely'' close for a finite (if $k \in \mathfrak{G}$), but large number of times. These are the so-called essential resonances. The eigenfunction $\varphi(n;k)$ in fact ``typically'' assumes values $\approx 1$ for all $n \in \mathfrak{m}^{(0)}(k)$; see\eqref{eq:1mjdefi}. The sets ``around'' $n \in \mathfrak{m}^{(0)}(k)$ produce these resonance effects. This fact gives an idea of the complexity of the central technical problem one faces in this approach. The advantage of this approach is that it eventually gives a system of equations relating the gaps in the spectrum and the Fourier coefficients. The central technical tool we develop to resolve the resonance problem consists of ``continued-fraction-functions'' and their roots. This is done in Section~\ref{sec.4}. To give the reader an idea what this is about, consider the problem for the simplest ``continued-fraction-function'':
\begin{equation} \label{eq:1-1fractios1}
E - a_1(\ve,k,E) - \frac{b(\ve,k,E)^2}{u - a_2(\ve,k,E)} = 0.
\end{equation}
The new variable $\ve$ is introduced here by considering $\ve c(n)$ instead of $c(n)$. This variable plays a crucial role since we build the solutions via analytic continuation in $\ve$, starting at $\ve=0$. Note that the fact that the numerator $b^2$ here is non-negative reflects the self-adjointness of the problem, which is also crucial for the derivation. Technically, the problem here is that $a_1$ and $a_2$ can be arbitrarily close due to resonances. A direct application of the implicit function theorem to
\begin{equation} \label{eq:1-1fract2}
\chi(\ve,k,E) := \bigl(E - a_1(\ve,k,E)\bigr)\bigl(E - a_2(\ve,k,E)\bigr) - b(\ve,k,E)^2 = 0
\end{equation}
fails ($\partial_E \chi$ may have zeros). What comes to the rescue is that \textit{the symmetries in the structure of $H_\La$, with $\La$ built appropriately, allow for the comparison}
\begin{equation} \label{eq:1-1frac3}
a_1(\ve,k,E) > a_2(\ve,k,E)
\end{equation}
\textit{for all values of $\ve \in (-\ve_0,\ve_0)$ and for $k$, $E$ close to the ones in question}. Due to this fact, one has two solutions $E^+(\ve,k) > E^-(\ve,k)$. For $k = -\frac{m\omega}{2}$, these are very close to the two edges of the corresponding gap. One of the crucial estimates we develop says that the margins  $E^{(s_2)}(k) - E^{(s_1)}(k)$ can be estimated via the quantities $|k + \frac{m\omega}{2}|$. \textit{The symmetries in the structure of $H_\La$ with $\La$ built appropriately play a crucial role in this}. Let us mention here that the next level ``continued-fraction-functions'' look as follows,
\begin{equation} \label{eq:1-1fractios3}
f = f_1 - \frac{b_1^2}{f_2} ,
\end{equation}
where $f_1, f_2$ are like in \eqref{eq:1-1fractios1}. We are interested in the solution of the equation
$f=0$.
An important detail here is that although $f_1,f_2$ are assumed to be ``small on the next scale,'' their derivatives are of magnitude $\sim 1$. This accommodates the above mentioned fact that the eigenfunction $\varphi(n;k)$ assumes values $\sim 1$ at all resonant points involved. In general the construction iterates a large number of times.

Let us say a few words about these symmetries. Given $k$, we define an increasing sequence $\La^\es_k$ of subsets of $\zv$, $\bigcup_s \La^\es = \zv$, which allows us to analyze inductively the eigenvalues $E(\La^\es_k,k)$ and the eigenvectors. The construction of the sets $\La^\es_k$ requires involved combinatorial arguments. The set $\La^\es_k$  is a relatively ``small'' perturbation of a union of two ``large'' cubes, one centered at the origin and another at $n^{(\ell)}(k)$; see \eqref{eq:1K.1}. The boundary of the set is of ``fractal nature'' built on the scale basis. The purpose of this ``fractal'' boundary is as follows. We need the set $\La^\es_k$ to be invariant under the map $T(n) = n^{(\ell)}(k) - n$, and at the same time we want the boundary $\partial \La^\es_k$ to avoid each subset $m + \La^{(s')}_{k+m\omega}$ with $s' < s$ and with $|E(\La^{(s')}_{k+m\omega},k+m\omega) - E(\La^\esone_k,k)|$ being ``small.''

Finally, let us say a few words about the structure of the work. First of all we split the technical difficulties into two major parts. In the first one, we develop a general theory of matrices which by definition have the needed structures. These matrices do not depend on the quasi-momentum $k$. We start with a general multi-scale analysis scheme and then inductively introduce more and more complex matrices under consideration. This is done in Sections~\ref{sec.2}--\ref{sec.3} and \ref{sec.5}--\ref{sec.6}. As already mentioned, in Section~\ref{sec.4} we develop the necessary theory of ``continued-fraction-functions.'' In the second part, which consists of Sections~\ref{sec.7}--\ref{sec.10}, we verify that the matrices dual to the quasi-periodic Schr\"{o}dinger equation, with  appropriate $\La^\es_k$, fit into the definitions from Sections~\ref{sec.2}--\ref{sec.6}. Finally, in Section~\ref{sec.11} we prove the main theorems.

\begin{remark}
The fact that our presentation separates the general theory from the application to small quasi-periodic potentials with Diophantine frequency vector also has the additional benefit that in subsequent applications of the general theory, one merely needs to verify all its necessary assumptions in a given situation. We envision a number of additional applications of the general theory, such as an extension of the quasi-periodic results beyond the case of small coupling, and more generally a version of them for suitable non-zero background potentials. We intend to address these additional applications in future works.
\end{remark}

\setcounter{section}{1}

\section{A General Multi-Scale Analysis Scheme Based on the Schur Complement Formula}\label{sec.2}

Let $\La \subseteq \IZ^\nu$ and let $\mathcal{H} = (\mathcal{H}(m,n))_{m,n \in \La}$ be an arbitrary matrix. For $\La' \subset \La$, denote by $P_{\Lambda'}$ the orthogonal projection onto the subspace $\IC^{\Lambda'}$ of all functions $\psi = \left\{\psi(n) : n \in \IZ^\nu \right\}$ vanishing off $\Lambda'$. The restriction of $\mathcal{H}$ to $\Lambda'$ is the operator $\mathcal{H}_{\La'}: \IC^{\Lambda'} \to \IC^{\Lambda'}$,
$$
\cH_{\La'} := P_{\Lambda'} \cH P_{\Lambda'}.
$$

Let $\Lambda_0 \subset \Lambda$ be an arbitrary subset and set $\Lambda_1 = \Lambda \setminus \Lambda_0$. Then,
$$
\cH = P_{\Lambda_0} \cH_{\La} P_{\Lambda_0} + P_{\Lambda_1} \cH_{\La} P_{\Lambda_1} + P_{\Lambda_0} \cH_{\La} P_{\Lambda_1} + P_{\Lambda_1} \cH_{\La} P_{\Lambda_0}.
$$
By viewing $\IC^\Lambda$ as $\IC^{\Lambda_1} \oplus \IC^{\Lambda_0}$, one has the following matrix representation of $H_{\La}$,
\be\label{eq:2schurfor1b}
\cH_{\La} = \begin{bmatrix} \cH_{\Lambda_1} & \Gamma_{\Lambda_1, \Lambda_0} \\[5pt] \Gamma_{\Lambda_0, \Lambda_1} & \cH_{\Lambda_0} \end{bmatrix},
\end{equation}
where
$$
\Gamma_{\Lambda_i, \Lambda_j} (k, \ell) = \cH(k,\ell),\quad k\in \La_i,\ell\in \La_j.
$$
Recall the following fact, known as the \emph{Schur complement formula}.

\begin{lemma}\label{lem:2schur} Let
\be\label{eq:2schurfor1a}
\cH = \begin{bmatrix} \cH_{1} & \Gamma_{1,2} \\[5pt] \Gamma_{2,1} & \cH_{2}\end{bmatrix},
\end{equation}
where $\cH_j$ is an $(N_j \times N_j)$-matrix, $j = 1, 2$, and $\Gamma_{i,j}$ is an $(N_i \times N_j)$-matrix. Assume that $\cH_1$ is invertible. Let $\tilde H_2 = \cH_2 - \Gamma_{2,1} \cH_1^{-1} \Gamma_{1,2}$. Then, $\cH$ is invertible if and only if $\tilde H_2$ is invertible; and in this case, we have
\begin{equation}\label{eq:2schurfor}
\cH^{-1} = \begin{bmatrix} \cH_1^{-1} + \cH_1^{-1} \Gamma_{1,2} \tilde H_2^{-1} \Gamma_{2,1} \cH_1^{-1} & - \cH_1^{-1}
\Gamma_{1,2} \tilde H_2^{-1} \\[5pt] -\tilde H_2^{-1} \Gamma_{2,1} \cH_1^{-1} & \tilde H_2^{-1} \end{bmatrix}.
\end{equation}
\end{lemma}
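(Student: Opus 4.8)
The statement to prove is the Schur complement formula (Lemma~\ref{lem:2schur}). This is a standard, purely algebraic fact, so the plan is straightforward block-matrix manipulation.

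\medskip

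The plan is to verify the formula directly by block multiplication, using the block $LU$-type factorization that the Schur complement provides. First I would record the factorization
\begin{equation*}
\cH = \begin{bmatrix} \cH_1 & \Gamma_{1,2} \\[3pt] \Gamma_{2,1} & \cH_2 \end{bmatrix} = \begin{bmatrix} I & 0 \\[3pt] \Gamma_{2,1}\cH_1^{-1} & I \end{bmatrix} \begin{bmatrix} \cH_1 & 0 \\[3pt] 0 & \tilde H_2 \end{bmatrix} \begin{bmatrix} I & \cH_1^{-1}\Gamma_{1,2} \\[3pt] 0 & I \end{bmatrix},
\end{equation*}
which is valid as soon as $\cH_1$ is invertible (the two triangular factors are unconditionally invertible, with inverses obtained by negating the off-diagonal block). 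Taking determinants, $\det\cH = \det\cH_1 \cdot \det\tilde H_2$, which already gives the equivalence ``$\cH$ invertible $\iff \tilde H_2$ invertible'' since $\cH_1$ is assumed invertible. Then, when $\tilde H_2$ is invertible, I would invert the three factors in reverse order:
\begin{equation*}
\cH^{-1} = \begin{bmatrix} I & -\cH_1^{-1}\Gamma_{1,2} \\[3pt] 0 & I \end{bmatrix} \begin{bmatrix} \cH_1^{-1} & 0 \\[3pt] 0 & \tilde H_2^{-1} \end{bmatrix} \begin{bmatrix} I & 0 \\[3pt] -\Gamma_{2,1}\cH_1^{-1} & I \end{bmatrix},
\end{equation*}
and multiply out the three blocks to obtain exactly the right-hand side of \eqref{eq:2schurfor}. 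Alternatively, and perhaps more cleanly for the write-up, I would simply take the claimed matrix on the right-hand side of \eqref{eq:2schurfor} as an ansatz and check by direct multiplication that it is a two-sided inverse of $\cH$: each of the four resulting blocks either collapses via $\tilde H_2^{-1}\tilde H_2 = I$ and $\cH_1^{-1}\cH_1 = I$, or cancels after substituting $\tilde H_2 = \cH_2 - \Gamma_{2,1}\cH_1^{-1}\Gamma_{1,2}$.

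\medskip

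For the converse direction of the equivalence, if $\cH$ is invertible then from $\det\cH = \det\cH_1\det\tilde H_2 \neq 0$ and $\det\cH_1 \neq 0$ we get $\det\tilde H_2 \neq 0$, so $\tilde H_2$ is invertible; this closes the ``if and only if.'' (One can avoid determinants entirely by exhibiting the explicit inverse of the block-triangular factors, which is elementary, and then noting that a product of square matrices is invertible iff each factor is — but invoking multiplicativity of the determinant is shortest.)

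\medskip

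There is no real obstacle here: the only thing to be careful about is bookkeeping of the block dimensions ($\cH_j$ is $N_j\times N_j$, $\Gamma_{i,j}$ is $N_i\times N_j$) so that every product written down is conformable, and noting that $\cH_1^{-1}$ appears throughout so the hypothesis that $\cH_1$ is invertible is used from the outset. The identity is stated for general matrices with no self-adjointness or positivity assumed, so nothing beyond associativity of matrix multiplication and invertibility of $\cH_1$ (and, for the inverse formula, of $\tilde H_2$) is needed.
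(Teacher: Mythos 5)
Your proof is correct. The paper states this lemma as a recalled standard fact and does not supply a proof, so there is nothing to compare against; the block $LDU$ factorization you write down (or, equivalently, direct verification that the right-hand side of the claimed identity is a two-sided inverse) is the canonical argument and fills the gap cleanly.
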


\begin{defi}\label{def:aux1}
\begin{itemize}
\item[(1)] For each $m$, let $\gamma(m) := (m)$ be the sequence which consists of one point $m$. Set $\Ga(m, m; 1, \La) := \{\gamma(m)\}$,
$\Ga(m,n;1,\La) := \emptyset$ for $n \not= m$,
\begin{equation}\label{eq:auxtraject}
\begin{split}
\Ga(k,\La) = \{ \gamma = (n_1,\dots ,n_k) : n_j \in \La, \quad n_{j+1} \neq n_j \}, \; k \ge 2, \\
\Ga(m,n;k,\La) = \{ \gamma \in \Ga(k,\La), \quad n_1 = m, n_k = n \}, \; m, n \in \La, \; k \ge 2, \\
\Ga_1(m,n;\La) = \bigcup_{k \ge 1} \Ga(m,n;k,\La), \quad \Ga_1(\La) = \bigcup_{m, n \in \La} \Ga_1(m,n;\La).
\end{split}
\end{equation}

Let $\gamma = (n_1, \dots, n_k)$, $\gamma' = (n'_1, \dots, n'_\ell)$, $n_i, n'_j \in \IZ^\nu$. Set
\begin{equation}\label{eq:auxgammagamma}
\gamma \cup \gamma' := \begin{cases} (n_1, \dots , n_k, n'_1, \dots, n'_\ell) & \text{if $n_k\neq n'_1$}, \\ (n_1, \dots, n_k, n'_2, \dots, n'_\ell) & \text{if $n_k = n'_1$}. \end{cases}
\end{equation}

\item[(2)] Let $w(m,n)$, $D(m)$ be functions obeying $w(m,n)\ge 0$, $D(m) \ge 1$, $m,n \in \La$. For $\gamma = (n_1, \dots, n_k)$, set
\begin{equation}\label{eq:auxtrajectweightO}
w_{D,\kappa_0} (\gamma) := \Big[ \prod_{1 \le j \le k-1} w(n_j,n_{j+1}) \Big] \exp \Big( \sum_{1 \le j \le k} D(n_j) \Big).
\end{equation}
Wherever we apply $w_{D,\kappa_0} (\gamma_1\cup\gamma_2)$, we assume that we are in the second case in \eqref{eq:auxgammagamma}. For that matter,  $w_{D,\kappa_0} (\gamma_1\cup\gamma_2)= w_{D,\kappa_0} (\gamma_1) w_{D,\kappa_0} (\gamma_2)$.

Let $0 < \kappa_0 < 1$. We always assume that $w(m,m)=1$ and
\begin{equation}\label{eq:2.weighdecaycond}
w(m,n)\le \exp(-\kappa_0|m-n|),
\end{equation}
and  we set
\begin{equation}\label{eq:auxtrajectweight}
\begin{split}
W_{D,\kappa_0} (\gamma) := \exp \Big( -\kappa_0 \|\gamma\| + \sum_{1 \le j \le k} D(n_j) \Big), \\
\|\gamma\| := \sum_{1 \le i \le k-1} |n_i - n_{i+1}|, \quad \bar D(\gamma) := \max_j D(n_j).
\end{split}
\end{equation}
Here, $\|\gamma\| = 0$ if $k = 1$. Obviously, $w_{D,\kappa_0} (\gamma)\le W_{D,\kappa_0} (\gamma)$.

\item[(3)] Let $T \ge 8$. We say that $\gamma = (n_1, \dots, n_{k})$, $n_j \in \La$, $k \ge 1$ belongs to $\Ga_{D, T, \kappa_0} (n_1, n_{k}; k, \La)$ if the following condition holds:
\begin{equation}\label{eq:auxtrajectweight5}
\min (D(n_{i}), D(n_{j})) \le T \| (n_{i}, \dots, n_{j}) \|^{1/5} \quad \text{for any $i < j$ such that $\min (D(n_{i}), D(n_{j})) \ge 4 T \kappa_0^{-1}$}.
\end{equation}
Note that $\Ga_{D, T, \kappa_0} (n_1, n_{1}; 1, \La) = \{ (n_1) \}$. Set $\Ga_{D, T, \kappa_0} (m, n; \La) = \bigcup_k \Ga_{D, T, \kappa_0} (m, n; k, \La)$, $\Ga_{D, T, \kappa_0} (\La) = \bigcup_{m,n} \Ga_{D, T, \kappa_0} (m, n; \La)$.

\item[(4)] Set
\begin{equation}\label{eq:auxtrajectweight1}
\begin{split}
s_{D, T, \kappa_0; k, \La} (m, n) = \sum_{\gamma \in \Ga_{D, T, \kappa_0} (m, n; k, \La)} w_{D, \kappa_0} (\gamma),\\
S_{D, T, \kappa_0; k, \La} (m, n) = \sum_{\gamma \in \Ga_{D, T, \kappa_0} (m, n; k, \La)} W_{D, \kappa_0} (\gamma).
\end{split}
\end{equation}
Note that $s_{D, T, \kappa_0; 1, \La} (m, m) = S_{D, T, \kappa_0; 1, \La} (m, m)=\exp(D(m))$.

\item[(5)] Let $\La \subset \bar \La\subset \IZ^\nu$. Set $\mu_{\La,\bar\La}(m) := \dist (m, \bar\La \setminus \La)$. We say that the function $D(m)$, $m \in \La$ belongs to $\mathcal{G}_{\La,\bar\La, T, \kappa_0}$ if the following condition holds:
\begin{equation}\label{eq:auxDcond}
D(m) \le T \mu_{\La,\bar\La}(m)^{1/5} \quad \text{for any $m$ such that $D(m) \ge 4 T \kappa_0^{-1}$}.
\end{equation}

\item[(6)] Let $D \in \mathcal{G}_{\La,\bar\La, T, \kappa_0}$. We say that $\gamma = (n_1, \dots, n_{k})$, $n_j \in \La$, $k \ge 1$ belongs to $\Ga_{D, T, \kappa_0} (n_1, n_{k}; k, \La, \mathfrak{R})$ if the following conditions hold:
\begin{equation}\label{eq:auxtrajectweight5NNNNN}
\begin{split}
\min (D(n_{i}), D(n_{j})) \le T \| (n_{i}, \dots, n_{j}) \|^{1/5} \\
\text{for any $i < j$ such that $\min (D(n_{i}), D(n_{j})) \ge 4 T \kappa_0^{-1}$}, \quad \text{unless $j = i + 1$}.
\end{split}
\end{equation}
Moreover,
\begin{equation}\label{eq:auxtrajectweight5NNNNN1}
\begin{split}
\text{if $\min (D(n_{i}), D(n_{i+1})) > T |(n_{i} - n_{i+1})|^{1/5}$} \quad \text{for some $i$, then } \\
\min (D(n_{j'}), D(n_{i})) \le T \| (n_{j'}, \dots, n_{i}) \|^{1/5}, \quad \min (D(n_{i}), D(n_{j''})) \le T \| (n_{i}, \dots, n_{j''}) \|^{1/5},\\
\min (D(n_{j'}), D(n_{i+1})) \le T \|(n_{j'}, \dots, n_{i+1}) \|^{1/5}, \quad \min (D(n_{i+1}), D(n_{j''})) \le T \| (n_{i+1}, \dots, n_{j''}) \|^{1/5},\\
\text{for any $j' < i < i+1 < j''$.}
\end{split}
\end{equation}
Set $\Ga_{D, T, \kappa_0} (m, n; \La, \mathfrak{R}) = \bigcup_{k} \Ga_{D, T, \kappa_0} (m, n; k, \La, \mathfrak{R})$, $\Ga_{D, T, \kappa_0} (\La, \mathfrak{R}) = \bigcup_{m,n} \Ga_{D, T, \kappa_0} (m, n; \La, \mathfrak{R})$. Given $\gamma = (n_1, \dots, n_{k}) \in \Ga_{D, T, \kappa_0} (n_1, n_{k}; k, \La, \mathfrak{R}) \setminus \Ga_{D, T, \kappa_0} (n_1, n_{k}; k, \La)$, we denote by $\cP(\gamma)$ the set of all $i$ for which $\min (D(n_{i}), D(n_{i+1})) \ge T |(n_{i} - n_{i+1})|^{1/5}$. Set
\begin{equation}\label{eq:auxtrajectweight111111}
\begin{split}
s_{D, T, \kappa_0; k, \La, \mathfrak{R}} (m,n) = \sum_{\gamma \in \Ga_{D, T, \kappa_0} (m, n; k, \La, \mathfrak{R})} w_{D,\kappa_0} (\gamma),\\
S_{D, T, \kappa_0; k, \La, \mathfrak{R}} (m,n) = \sum_{\gamma \in \Ga_{D, T, \kappa_0} (m, n; k, \La, \mathfrak{R})} W_{D,\kappa_0} (\gamma).
\end{split}
\end{equation}

\end{itemize}
\end{defi}

\begin{remark}\label{rem:2withRnoR}$(1)$. \textbf{Everywhere in this section the set} $\bar \La$ \textbf{is fixed. For this reason we suppress } $\bar \La$ \textbf{from the notation. We always assume in all statements that each subset $\La \subset \IZ^\nu$ under consideration is
a subset of} $\bar \La$. \textbf{The complement $\La^c$ always means} $\bar \La \setminus \La$. When we apply the statements from the current section in Sections~\ref{sec.3} and \ref{sec.5}, we will assume $\bar\La = \IZ^\nu$. On the other hand, we will use different subsets in the role of $\bar\La$ starting from Section~\ref{sec.6}. Note for that matter that $\mathcal{G}_{\La,\bar\La, T, \kappa_0} \subset \mathcal{G}_{\La,\bar\La_1, T, \kappa_0}$ if $\La\subset \bar \La_1\subset \bar \La$.

$(2)$ The sets of trajectories $\Ga_{D, T, \kappa_0} (n_1, n_{k}; k, \La)$, $\Ga_{D, T, \kappa_0} (n_1, n_{k}; k, \La, \mathfrak{R})$ are designed to estimate the inverse for two different types of matrices we study in this work. We introduce these two types of matrices in Section~\ref{sec.3} and Section~\ref{sec.5}, respectively. We estimate the inverse via the functions
\begin{equation}\label{eq:auxtrajectweightsumdef}
\begin{split}
s_{D, T, \kappa_0, \ve_0; \La} (m,n) := \sum_{k \ge 1} \ve_0^{k-1} s_{D, T, \kappa_0; k, \La} (m,n), \\
s_{D, T, \kappa_0, \ve_0; \La, \mathfrak{R}} (m,n) := \sum_{k \ge 1} \ve_0^{k-1} s_{D, T, \kappa_0; k, \La, \mathfrak{R}} (m,n), \\
S_{D, T, \kappa_0, \ve_0; \La} (m,n) := \sum_{k \ge 1} \ve_0^{k-1} S_{D, T, \kappa_0; k, \La} (m,n), \\
S_{D, T, \kappa_0, \ve_0; \La, \mathfrak{R}} (m,n) := \sum_{k \ge 1} \ve_0^{k-1} S_{D, T, \kappa_0; k, \La, \mathfrak{R}} (m,n),
\end{split}
\end{equation}
respectively. One of the important properties of these functions is ``functoriality'' with respect to the Schur complement formula. The precise meaning of this ``functoriality'' is formulated in Lemma~\ref{lem:aux5}. Its derivation is based on the mutiplicativity property of the functions $w_{D,\kappa_0}(\gamma)$, $W_{D,\kappa_0}(\gamma)$ with respect to the operation $\gamma_1 \cup \gamma_2$.

$(3)$ In Sections~\ref{sec.3}--\ref{sec.10} we will use the function  $W_{D,\kappa_0}(\gamma)$ and the corresponding sums. We will use the function $w_{D,\kappa_0}(\gamma)$ and the corresponding sums in Section~\ref{sec.11}.
\end{remark}

\begin{lemma}\label{lem:auxweight}
Let $\gamma = (n_1, \dots, n_{k}) \in \Ga_{D, T, \kappa_0} (n_1, n_{k}; k, \La, \mathfrak{R})$. Set $M = 4 T \kappa_0^{-1}$, $t_D(\gamma) := \frac{\log \bar D(\gamma)}{\log M}$, $\vartheta_t = \sum_{0 < s \le t} 2^{-5s}$.

If $t_D(\gamma) \le 5$, then $W_{D,\kappa_0}(\gamma) \le \exp(-\kappa_0 \|\gamma\| + k M^5)$. Otherwise, with $\ell$ chosen such that $D(n_\ell) = \bar D(\gamma)$, we have
\begin{equation}\label{eq:auxtrajectweight2}
\begin{split}
W_{D,\kappa_0}(\gamma) \le \begin{cases} e^{-\kappa_0 (1 - \vartheta_{t_D(\gamma)}) \| \gamma \| + \bar D(\gamma)} & \text{if  $\ell, \ell-1 \notin \cP(\gamma)$ and $\max_{j \neq \ell} D(n_j) < \frac{D(n_{\ell})}{M^2}$}, \\
e^{-\kappa_0 (1 - \vartheta_{t_D(\gamma)+1}) \| \gamma \|} & \text{ if $\ell, \ell-1 \notin \cP(\gamma)$ and  $\max_{j \neq \ell} D(n_j) \ge \frac{D(n_{\ell})}{M^2}$}, \\
e^{-\kappa_0 (1 - \vartheta_{t_D(\gamma)}) \| \gamma \| + 2 \bar D(\gamma)} & \text{if $\ell \in \cP(\gamma)$ or $\ell-1 \in \cP(\gamma)$ and $\max_{j \notin \{\ell-1, \ell\}} D(n_j) < \frac{D(n_{\ell})}{M^2}$}, \\
e^{-\kappa_0 (1 - \vartheta_{t_D(\gamma) + 1}) \| \gamma \|} & \text{ if $\ell \in \cP(\gamma)$ or $\ell-1 \in \cP(\gamma)$ and $\max_{j \notin \{\ell-1, \ell\}} D(n_j) \ge \frac{D(n_{\ell})}{M^2}$}. \end{cases}
\end{split}
\end{equation}
Here, by convention, a maximum taken over the empty set is set to be $-\infty$.
\end{lemma}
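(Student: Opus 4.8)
\emph{The plan is a finite downward induction on the number of $M$-scales contained in $\bar D(\gamma)$, i.e.\ on $\lceil t_D(\gamma)\rceil$, whose base case is exactly the range $t_D(\gamma)\le 5$.} For the base case there is nothing to prove beyond noting that $t_D(\gamma)\le 5$ forces $\bar D(\gamma)=M^{t_D(\gamma)}\le M^5$, hence $D(n_j)\le M^5$ for every $j$, so $\sum_{1\le j\le k}D(n_j)\le kM^5$ and the definition of $W_{D,\kappa_0}(\gamma)$ gives the stated inequality; the membership $\gamma\in\Ga_{D,T,\kappa_0}(\cdot,\mathfrak{R})$ is not used here.

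Assume now $t_D(\gamma)>5$ and fix $\ell$ with $D(n_\ell)=\bar D(\gamma)$. The structural input is that membership in $\Ga_{D,T,\kappa_0}(\cdot,\mathfrak{R})$ keeps the ``tall'' vertices of $\gamma$ clustered around $n_\ell$: for every $j$ with $D(n_j)\ge M$ that is not joined to $n_\ell$ by one of the exceptional edges indexed by $\cP(\gamma)$, conditions \eqref{eq:auxtrajectweight5NNNNN}--\eqref{eq:auxtrajectweight5NNNNN1} give $D(n_j)=\min(D(n_j),D(n_\ell))\le T\|(n_j,\dots,n_\ell)\|^{1/5}$, so the sub-arc of $\gamma$ joining $n_j$ to $n_\ell$ has length at least $(D(n_j)/T)^5$. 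In particular, when $\max_{j\ne\ell}D(n_j)<\bar D(\gamma)/M^2$ — the first and third cases of \eqref{eq:auxtrajectweight2} — the only vertices above the threshold $\bar D(\gamma)/M^2$ are $n_\ell$ and possibly its $\cP(\gamma)$-neighbour. Decompose $\gamma=\gamma^-\cup\gamma^0\cup\gamma^+$ with $\gamma^0$ the short sub-arc carrying these one or two tall vertices and $\gamma^\pm$ the remaining initial and terminal sub-arcs (with the obvious degenerations when $n_\ell$ is an endpoint). By multiplicativity of $W_{D,\kappa_0}$ under $\cup$, $W_{D,\kappa_0}(\gamma)=W_{D,\kappa_0}(\gamma^-)\,W_{D,\kappa_0}(\gamma^0)\,W_{D,\kappa_0}(\gamma^+)$, where $W_{D,\kappa_0}(\gamma^0)\le e^{\bar D(\gamma)}$, or $\le e^{2\bar D(\gamma)}$ when a $\cP(\gamma)$-edge abuts the peak (the partner vertex then possibly also having height $\sim\bar D(\gamma)$). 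Since $\bar D(\gamma^\pm)\le\bar D(\gamma)/M^2$ we have $t_D(\gamma^\pm)\le t_D(\gamma)-2$, so the induction hypothesis applies to $\gamma^\pm$; as $\vartheta$ is increasing, $1-\vartheta_{t_D(\gamma^\pm)}\ge 1-\vartheta_{t_D(\gamma)}$, and the additive terms $(\le 2\bar D(\gamma)/M^2$ each$)$ produced by the inductive bounds for $\gamma^\pm$ are absorbed into the slack $\kappa_0\bigl(\vartheta_{t_D(\gamma)}-\vartheta_{t_D(\gamma^\pm)}\bigr)\|\gamma\|$ (using $M\ge 32$ and $\|\gamma\|\ge k-1$). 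Together with $\|\gamma\|=\|\gamma^-\|+\|\gamma^0\|+\|\gamma^+\|$ this yields the first and third cases of \eqref{eq:auxtrajectweight2}.

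In the remaining two cases of \eqref{eq:auxtrajectweight2} there is, in addition, an index $j$ off the peak cluster with $D(n_j)\ge\bar D(\gamma)/M^2$; the confinement estimate above then forces $\|\gamma\|\ge\|(n_j,\dots,n_\ell)\|\ge\bigl(\bar D(\gamma)/(M^2T)\bigr)^5$. Because $M=4T\kappa_0^{-1}$ and $t_D(\gamma)>5$, this makes $\|\gamma\|$ so much larger than $\bar D(\gamma)=M^{t_D(\gamma)}$ that — after the same peeling-off of the (possibly several) tall clusters into $\gamma^0$, the complementary sub-arcs again having $\bar D\le\bar D(\gamma)/M^2$ — the whole additive contribution $\bar D(\gamma)$, together with the error terms inherited from the sub-arcs, is at most $\kappa_0\bigl(\vartheta_{t_D(\gamma)+1}-\vartheta_{t_D(\gamma)}\bigr)\|\gamma\|=\kappa_0 2^{-5(\lfloor t_D(\gamma)\rfloor+1)}\|\gamma\|$. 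This upgrades the bound to the additive-term-free form $e^{-\kappa_0(1-\vartheta_{t_D(\gamma)+1})\|\gamma\|}$.

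The main obstacle I anticipate is not the geometry but the simultaneous bookkeeping of the four cases down the recursion: one must check that at every level the sub-arc errors (and, in the last two cases, the peak-cluster term as well) fit inside the per-scale allowance $\kappa_0 2^{-5s}\|\gamma\|$, that the passage from the $\vartheta_{t_D(\gamma)}$ regime to the $\vartheta_{t_D(\gamma)+1}$ regime is triggered precisely by the presence of a competing tall vertex or of a $\cP(\gamma)$-edge at the peak, and that the numerical inequality $\bigl(\bar D(\gamma)/(M^2T)\bigr)^5\ge\bar D(\gamma)\cdot 2^{5(\lfloor t_D(\gamma)\rfloor+1)}/\kappa_0$ holds for all $t_D(\gamma)>5$. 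This is exactly where the exponent $1/5$ in \eqref{eq:auxtrajectweight5NNNNN}, the weights $2^{-5s}$ defining $\vartheta_t$, the threshold $M^2$, and the cutoff at five scales must be balanced against one another.
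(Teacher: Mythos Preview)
Your plan—split at the peak and recurse on the two side arcs—is the paper's. Two departures from the paper create genuine gaps.

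First, the paper inducts on the length $k$, not on $\lceil t_D(\gamma)\rceil$. This matters in the second and fourth cases: there a vertex $n_{\ell'}$ off the peak has $D(n_{\ell'})\ge\bar D(\gamma)/M^2$, possibly even $=\bar D(\gamma)$, so the side arc containing it need not have smaller $\lceil t_D\rceil$ and your induction stalls. Your remedy of peeling ``possibly several tall clusters'' into $\gamma^0$ does not help: the constraint \eqref{eq:auxtrajectweight5NNNNN} forces any two tall vertices to be separated by an arc of length $\ge(\bar D(\gamma)/(M^2T))^5$, so $\gamma^0$ is either non-contiguous (killing the multiplicativity $W(\gamma)=W(\gamma^-)W(\gamma^0)W(\gamma^+)$) or so long that $W(\gamma^0)\le e^{2\bar D(\gamma)}$ fails. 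Inducting on $k$ avoids this entirely—the two sides $\gamma_1=(n_1,\dots,n_{\ell-1})$ and $\gamma_2=(n_{\ell+1},\dots,n_k)$ always have strictly smaller length—and then the four-case inductive hypothesis on each $\gamma_i$ handles large $\bar D(\gamma_i)$ directly (the paper's sub-case (d), display \eqref{eq:auxtrajectweight18}).

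Second, your absorption in the first and third cases is wrong. You claim the inductive additive terms ($\le 2\bar D(\gamma)/M^2$ each) are swallowed by $\kappa_0(\vartheta_{t_D(\gamma)}-\vartheta_{t_D(\gamma^\pm)})\|\gamma\|$ ``using $M\ge32$ and $\|\gamma\|\ge k-1$'', but nothing in these cases forces $\|\gamma\|$ to be large: with $k=2$, $D(n_1)>M^5$, $D(n_2)=1$, $|n_1-n_2|=1$ one has $\|\gamma\|=1$ and the slack is $<1$. The paper's mechanism is exactly the constraint you state in your first paragraph but then do not invoke for this purpose: when $\bar D(\gamma_i)\ge M$, the peak $n_{\ell_i}$ of $\gamma_i$ satisfies $\bar D(\gamma_i)=D(n_{\ell_i})\le T\|(n_{\ell_i},\dots,n_\ell)\|^{1/5}$, hence $\|\gamma_i\|+|n_{\ell\mp1}-n_\ell|\ge(\bar D(\gamma_i)/T)^5\ge M^{5(t_D(\gamma_i)-1)}$, and it is \emph{this} lower bound on each side—not $\|\gamma\|\ge k-1$—that absorbs $2\bar D(\gamma_i)$ into the increment $\vartheta_{t_D(\gamma_i)}\to\vartheta_{t_D(\gamma_i)+1}$ (see \eqref{eq:auxtrajectweight16}). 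The same side-length estimate, applied when $\bar D(\gamma_i)\ge\bar D(\gamma)/M^2$, is what additionally swallows the full $\bar D(\gamma)$ and yields the additive-free second and fourth lines. You correctly identify this last step for the top level; the point is that it is needed on each side, at every level of the recursion, and the bound $\|\gamma\|\ge k-1$ is never the relevant input.
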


\begin{proof}
The verification of the estimate goes by induction in $k = 1, 2, \dots$. The estimate obviously holds for $k = 1$. Note also that if $t_D(\gamma) \le 5$, the estimate holds for obvious reasons. So, we assume henceforth that $t_D(\gamma) > 5$. Assume that the estimate \eqref{eq:auxtrajectweight2} holds for any trajectory $\gamma' = (n'_1, \dots, n'_t)$ with $t \le k-1$, $k \ge 2$. Recall that $\ell$ is chosen such that $D(n_\ell) = \bar D(\gamma)$. There are several cases to be considered.

Case $(I)$. Assume first that $\ell-1, \ell \notin \cP(\gamma)$. Assume also that $2 \le \ell \le k-1$, so that $\gamma_1 = (n_1, \dots, n_{\ell-1})$, $\gamma_2 = (n_{\ell+1}, \dots, n_{k})$ are defined. Let $\ell_i$ be such that $D(n_{\ell_i}) = \bar D(\gamma_i)$, $i = 1, 2$. Note that $\gamma = (n_1, \dots, n_{k}) \in \Ga_{D,T}(n_1, n_{k}; \La, \mathfrak{R})$ implies $T\| (n_{\ell_1}, \dots, n_{\ell}) \|^{1/5} \ge D(n_{\ell_1})$, since otherwise $\ell_1 = \ell-1 \in \cP(\gamma)$. In particular, $(\| \gamma_1 \| + |n_{\ell-1} - n_\ell|)^{1/5} \ge T^{-1} D(n_{\ell_1}) = T^{-1} M^{t_D(\gamma_1)} \ge M^{t_D(\gamma_1)-1}$. Let us consider the following sub-cases.

$(a)$ Assume that $M^2\max_{j < \ell, j \neq \ell_1} D(n_j) < D(n_{\ell_1}) < M^{-2} D(n_{\ell})$. This implies in particular $D(n_{\ell_1}) > M^2$, that is, $t_D(\gamma_1) > 2$. In particular, $4 t_D(\gamma_1) - 5 > t_D(\gamma_1) + 1$. It implies also that $t_D(\gamma_1) + 2 < t_D(\gamma)$. Recall that due to the inductive assumption, we have $W_{D,\kappa_0}(\gamma_1) \le \exp(-\kappa_0 (1 - \vartheta_{t_D(\gamma_1)}) \|\gamma_1\| + 2 D(n_{\ell_1}))$. Hence,
\begin{equation}\label{eq:auxtrajectweight16}
\begin{split}
W_{D,\kappa_0}(\gamma_1) \exp(-\kappa_0 |n_{\ell-1} - n_\ell|) \le \exp(-\kappa_0 (1 - \vartheta_{t_D(\gamma_1)}) (\|\gamma_1\| + |n_{\ell-1} - n_\ell|) + 2 D(n_{\ell_1})) \\
\le \exp(-\kappa_0 (1 - \vartheta_{t_D(\gamma_1)}) (\|\gamma_1\| + |n_{\ell-1} - n_\ell|) + 2 T (\|\gamma_1\| + |n_{\ell-1} - n_\ell|)^{1/5})\\
\le \exp(-[\kappa_0 (1 - \vartheta_{t_D(\gamma_1)} - 2 T \kappa_0^{-1} (\|\gamma_1\| + |n_{\ell-1} - n_\ell|)^{-4/5})] (\|\gamma_1\| + |n_{\ell-1} - n_\ell|)) \\
\le \exp(-[\kappa_0 (1 - \vartheta_{t_D(\gamma_1)} - 2 T \kappa_0^{-1} M^{-4(t_D(\gamma_1)-1)})] (\|\gamma_1\| + |n_{\ell-1} - n_\ell|)) \\
\le \exp(-\kappa_0 (1 - \vartheta_{t_D(\gamma_1)} - 4^{-4 t_D(\gamma_1) + 5}) (\|\gamma_1\| + |n_{\ell-1} - n_\ell|)) \\
\le \exp(-\kappa_0 (1 - \vartheta_{t_D(\gamma_1) + 1}) (\|\gamma_1\| + |n_{\ell-1} - n_\ell|)) \le \exp(-\kappa_0 (1 - \vartheta_{t_D(\gamma)}) (\|\gamma_1\| + |n_{\ell-1} - n_\ell|)).
\end{split}
\end{equation}

$(b)$ Assume that $D(n_{\ell_1}) \le M^{-2} \max_{j < \ell, j \neq \ell_1} D(n_j)$, $D(n_{\ell_1}) < M^{-2} D(n_{\ell})$. Once again, $t_D(\gamma_1) + 2 < t_D(\gamma)$. Due to the inductive assumption, this time one has $W_{D,\kappa_0}(\gamma_1) \le \exp(-\kappa_0 (1 - \vartheta_{t_D(\gamma_1) + 1}) \|\gamma_1\|)$. So,
\begin{equation}\label{eq:auxtrajectweight17}
\begin{split}
W_{D,\kappa_0}(\gamma_1) \exp(-\kappa_0 |n_{\ell-1} - n_\ell|) \le \exp(-\kappa_0 (1 - \vartheta_{t_D(\gamma_1) + 1}) (\|\gamma_1\| + |n_{\ell-1} - n_\ell|) \\
\le \exp(-\kappa_0 (1 - \vartheta_{t_D(\gamma)}) (\|\gamma_1\| + |n_{\ell-1} - n_\ell|)).
\end{split}
\end{equation}

$(c)$ Assume $D(n_{\ell_1}) < M^{-2} D(n_{\ell})$. Obviously, $(a)$ or $(b)$ applies. Thus, in any event, we have $W_{D,\kappa_0}(\gamma_1) \exp(-\kappa_0 |n_{\ell-1} - n_\ell|) \le \exp(-\kappa_0 (1 - \vartheta_{t_D(\gamma)}) (\|\gamma_1\| + |n_{\ell-1} - n_\ell|))$.

$(d)$ Assume  $D(n_{\ell_1})\ge M^{-2} D(n_{\ell})$. Since we assumed that $t_D(\gamma) > 5$, we have $D(n_{\ell_1}) > M^{-2} M^5 = M^3$. So, $t_D(\gamma_1) > 3$. In particular, $4 t_D(\gamma_1) - 7 > t_D(\gamma_1) + 1$. Using the inductive assumption, we obtain
\begin{equation}\label{eq:auxtrajectweight18}
\begin{split}
W_{D,\kappa_0}(\gamma_1) \exp(-\kappa_0 |n_{\ell-1} - n_\ell| + 2 D(n_\ell)) \\
\le \exp(-\kappa_0 (1 - \vartheta_{t_D(\gamma_1)}) (\|\gamma_1\| + |n_{\ell-1} - n_\ell|) + (2 + 2 M^2) D(n_{\ell_1})) \\
\le \exp(-\kappa_0 (1 - \vartheta_{t_D(\gamma_1)} - \kappa_0^{-1} (2 + 2 M^2) M^{-4(t_D(\gamma_1) - 1)}) (\|\gamma_1\| + |n_{\ell-1} - n_\ell|)) \\
< \exp(-\kappa_0 (1 - \vartheta_{t_D(\gamma_1)} - 4^{-4t_D(\gamma_1) + 7}) (\|\gamma_1\| + |n_{\ell-1} - n_\ell|)) \\
\le \exp(-\kappa_0 (1 - \vartheta_{t_D(\gamma_1) + 1}) (\|\gamma_1\| + |n_{\ell-1} - n_\ell|)) \le \exp(-\kappa_0 (1 - \vartheta_{t_D(\gamma) + 1}) (\|\gamma_1\| + |n_{\ell-1} - n_\ell|)).
\end{split}
\end{equation}

Now we prove the statement in case $(I)$. Obviously, the cases $(c)$ and $(d)$ complement each other. Note also that since $\ell \notin \cP$, one can similarly identify the cases $(a)$--$(d)$ for $\gamma_2$ and establish estimates similar to \eqref{eq:auxtrajectweight16}--\eqref{eq:auxtrajectweight18}. Assume that case $(c)$ applies for both $\gamma_1$ and $\gamma_2$. Combining the estimates for $\gamma_1$ and $\gamma_2$, we obtain the desired estimate in the first line case in \eqref{eq:auxtrajectweight2}. Assume now that we have case $(d)$ for $\gamma_1$ and case $(c)$ for $\gamma_2$. Then,
\begin{equation}\label{eq:auxtrajectweight19}
\begin{split}
W_{D,\kappa_0}(\gamma) = W_{D,\kappa_0}(\gamma_1) \exp(-\kappa_0 |n_{\ell-1} - n_\ell| + D(n_\ell)) \exp(-\kappa_0 |n_{\ell+1} - n_\ell|) W_{D,\kappa_0}(\gamma_2) \\
\le \exp(-\kappa_0 (1 - \vartheta_{t_D(\gamma) + 1}) (\|\gamma_1\| + |n_{\ell-1} - n_\ell| - \kappa_0 (1 - \vartheta_{t_D(\gamma_2)}) (\|\gamma_2\| + |n_{\ell-1} - n_\ell|)) \\
\le \exp(-\kappa_0 (1 - \vartheta_{t_D(\gamma) + 1}) \|\gamma\|),
\end{split}
\end{equation}
which is the estimate in the second line case in \eqref{eq:auxtrajectweight2}. The same estimate holds if we have case $(c)$ for $\gamma_1$ and case $(d)$ for $\gamma_2$. Finally, assume we have case $(d)$ for both $\gamma_1$ and $\gamma_2$. Since $D(x) \ge 1$ for any $x$, it follows that
\begin{equation}\label{eq:auxtrajectweight20}
\begin{split}
W_{D,\kappa_0}(\gamma) = W_{D,\kappa_0}(\gamma_1) \exp(-\kappa_0 |n_{\ell-1} - n_\ell| + D(n_\ell)) \exp(-\kappa_0 |n_{\ell+1} - n_\ell|) W_{D,\kappa_0}(\gamma_2) \\
\le \exp(-\kappa_0 (1 - \vartheta_{t_D(\gamma) + 1}) (\|\gamma_1\| + |n_{\ell-1} - n_\ell| - \kappa_0 (1 - \vartheta_{t_D(\gamma_2) + 1}) (\|\gamma_2\| + |n_{\ell-1} - n_\ell|)) \\
\le \exp(-\kappa_0 (1 - \vartheta_{t_D(\gamma) + 1}) \|\gamma\|),
\end{split}
\end{equation}
which is again the estimate in the second line case in \eqref{eq:auxtrajectweight2}.

This finishes the verification in the case $(I)$ with $2 \le \ell \le k-1$. One can see that the estimates hold for the rest of sub-cases in the case $(I)$.

Case $(II)$. Assume now that $\ell \in \cP(\gamma)$. Then, in particular, $\ell+1 \le k$. Assume in addition that $2 \le \ell \le k-2$, so that $\gamma_1 = (n_1,\dots ,n_{\ell-1})$, $\gamma'_2 = (n_{\ell+2},\dots ,n_{k})$ are defined. Due to \eqref{eq:auxtrajectweight5NNNNN1} in Definition~\ref{def:aux1}, the arguments from case $(I)$ apply to $\gamma_1$. For the same reason very similar arguments apply also to $\gamma'_2$. The estimates for $\gamma'_2$ are as follows:
\begin{equation}\label{eq:auxtrajectweight19II}
\begin{split}
W_{D,\kappa_0}(\gamma'_2) e^{-\kappa_0||(n_{\ell},n_{\ell+1},n_{\ell+2})||} \le e^{-\kappa_0 (1-\vartheta_{t_D(\gamma)}) (\|\gamma_1\| + |n_{\ell-1}-n_\ell|)}, \text{if $\bar D(\gamma'_2) < M^{-2} D(n_{\ell})$}, \\
W_{D,\kappa_0}(\gamma'_2) e^{-\kappa_0||(n_{\ell},n_{\ell+1},n_{\ell+2})|| + 2D(n_\ell)} \le e^{-\kappa_0 (1-\vartheta_{t_D(\gamma)}+1) (\|\gamma_1\| + |n_{\ell-1}-n_\ell|)}, \text{if $\bar D(\gamma'_2)\ge M^{-2}D(n_{\ell})$}.
\end{split}
\end{equation}
Combining the estimates for $\gamma_1$ and $\gamma'_2$, one obtains the desired estimate in the third and forth line cases in \eqref{eq:auxtrajectweight2}. One can see that the estimates hold for the rest of sub-cases in the case $(II)$.

Case $(III)$. Finally, assume that $\ell-1 \in \cP(\gamma)$. Then, in particular $\ell-1 \ge 1$. Assume in addition that $3 \le \ell \le k-1$, so that $\gamma'_1 = (n_1,\dots ,n_{\ell-2})$, $\gamma_2 = (n_{\ell+1},\dots ,n_{k})$ are defined. The argument for this case is completely similar to the one in case $(II)$.
\end{proof}

\begin{corollary}\label{cor:auxweight1}
Let $D \in \mathcal{G}_{\La,T,\kappa_0}$, $\gamma \in \Ga_{D,T,\kappa_0}(m,n;k,\La,\mathfrak{R})$, $k \ge 1$. Then,
\begin{equation}\label{eq:auxtrajectweight30}
\begin{split}
W_{D,\kappa_0}(\gamma) \le \exp(-\kappa_0\|\gamma\| + k (4T \kappa_0^{-1})^5) \le \exp(-\frac{7}{8} \kappa_0 |m-n|) \exp(-\frac{1}{8} \kappa_0 \|\gamma\| + k (4T \kappa_0^{-1})^5) \quad \text{if $t_D(\gamma) \le 5$}, \\
W_{D,\kappa_0}(\gamma) < \exp(-\frac{15}{16} \kappa_0 \|\gamma\| + 2 \bar D(\gamma)) \\
\le \min \Big[ \exp(-\frac{7}{8} \kappa_0 |m-n| + 2 T (\min \mu_\La(m),\mu_\La(n))^{1/5}) \exp(-\frac{1}{16} \kappa_0 \|\gamma\| + 2 T \|\gamma\|^{1/5}), \\
\exp(-\frac{15}{16} \kappa_0 |m-n| + 2 \bar D) \Big], \quad \bar D := \max_x D(x) \quad \text{if $t_D(\gamma) > 5$}.
\end{split}
\end{equation}
\end{corollary}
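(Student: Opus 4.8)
The plan is to read the estimate straight off Lemma~\ref{lem:auxweight}, using in addition only the definition of $\vartheta_t$ and two elementary facts: the triangle inequality $|m-n|\le\|\gamma\|$ along a trajectory $\gamma=(n_1,\dots,n_k)$ with $n_1=m$, $n_k=n$, and the subadditivity $(a+b)^{1/5}\le a^{1/5}+b^{1/5}$ for $a,b\ge 0$. Write $M:=4T\kappa_0^{-1}$; since $T\ge 8$ and $0<\kappa_0<1$ we have $M>1$, so $t_D(\gamma)=(\log\bar D(\gamma))/\log M$ is exactly the quantity that selects a branch of Lemma~\ref{lem:auxweight}.

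First, if $t_D(\gamma)\le 5$ then Lemma~\ref{lem:auxweight} already gives the first displayed inequality, $W_{D,\kappa_0}(\gamma)\le\exp(-\kappa_0\|\gamma\|+kM^5)$; to get the second I would split $\kappa_0\|\gamma\|=\tfrac{7}{8}\kappa_0\|\gamma\|+\tfrac{1}{8}\kappa_0\|\gamma\|$ and replace $\|\gamma\|$ by $|m-n|$ in the first summand only.

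Second, if $t_D(\gamma)>5$, the numerical observation that makes everything go through is $\vartheta_t=\sum_{0<s\le t}2^{-5s}<\sum_{s\ge 1}2^{-5s}=\tfrac{1}{31}<\tfrac{1}{16}$ for all $t$, so $1-\vartheta_{t_D(\gamma)}>\tfrac{15}{16}$ and $1-\vartheta_{t_D(\gamma)+1}>\tfrac{15}{16}$. Inserting these into each of the four branches of \eqref{eq:auxtrajectweight2}, and using $\bar D(\gamma)\ge 1$ to dominate the ``$+\bar D(\gamma)$'' contribution (or its absence) by ``$+2\bar D(\gamma)$'', yields in every case the first claimed bound $W_{D,\kappa_0}(\gamma)<\exp(-\tfrac{15}{16}\kappa_0\|\gamma\|+2\bar D(\gamma))$; strictness is easily checked branch by branch (in the branches with no $\bar D$-term it comes from $2\bar D(\gamma)>0$, and in the remaining ones from the strict gap $1-\vartheta_{t_D(\gamma)}>\tfrac{15}{16}$ together with $\|\gamma\|>0$, which holds there since $\cP(\gamma)\ne\emptyset$ forces $k\ge 2$). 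It then remains to dominate this right-hand side by each of the two members of the $\min$. For the member $\exp(-\tfrac{15}{16}\kappa_0|m-n|+2\bar D)$, $\bar D:=\max_x D(x)$, this is immediate from $\|\gamma\|\ge|m-n|$ and $\bar D(\gamma)\le\bar D$. For the other member, I would split $\tfrac{15}{16}=\tfrac{7}{8}+\tfrac{1}{16}$, keeping $\|\gamma\|\ge|m-n|$ only in the $\tfrac{7}{8}$-part, and handle $2\bar D(\gamma)$ as follows: choose $\ell$ with $D(n_\ell)=\bar D(\gamma)$; then $t_D(\gamma)>5$ gives $\bar D(\gamma)>M^5>M$, so $D\in\mathcal{G}_{\La,T,\kappa_0}$ together with \eqref{eq:auxDcond} gives $\bar D(\gamma)\le T\mu_\La(n_\ell)^{1/5}$; the $1$-Lipschitz property of $\mu_\La=\dist(\cdot,\La^c)$ gives $\mu_\La(n_\ell)\le\min(\mu_\La(m),\mu_\La(n))+\|\gamma\|$; and subadditivity of $t\mapsto t^{1/5}$ turns this into $\bar D(\gamma)\le T\min(\mu_\La(m),\mu_\La(n))^{1/5}+T\|\gamma\|^{1/5}$, whose double, combined with the split of $\|\gamma\|$, is precisely the first member of the $\min$.

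The only step here that is not pure bookkeeping is this last transportation of $\bar D(\gamma)$: it is attained at an \emph{interior} vertex $n_\ell$ of $\gamma$, not at an endpoint, so one must move it to $m$ or $n$ at the cost of an extra additive $\|\gamma\|$ --- the point being that this cost enters only through its $\tfrac{1}{5}$-power and is therefore absorbed into the $+2T\|\gamma\|^{1/5}$ term already present on the right-hand side. Everything else is a straightforward case analysis over the four branches of \eqref{eq:auxtrajectweight2}, which I do not expect to cause any trouble.
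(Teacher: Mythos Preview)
Your proposal is correct and follows essentially the same approach as the paper's proof: both read the bound directly off Lemma~\ref{lem:auxweight}, use $1-\vartheta_t>1-1/31>15/16$, and handle the $\bar D(\gamma)$ term by invoking $D\in\mathcal{G}_{\La,T,\kappa_0}$ at the maximizing vertex $n_\ell$, then transporting $\mu_\La(n_\ell)$ to the endpoints via the $1$-Lipschitz property of $\mu_\La$ and subadditivity of $t\mapsto t^{1/5}$. One tiny remark on your strictness bookkeeping: in the first branch of \eqref{eq:auxtrajectweight2} one can have $\cP(\gamma)=\emptyset$ (even $k=1$), so your argument ``$\cP(\gamma)\ne\emptyset$ forces $k\ge 2$'' does not apply there---but strictness follows there anyway from $\bar D(\gamma)<2\bar D(\gamma)$.
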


\begin{proof}
If $t_D(\gamma) \le 5$, then the estimate follows from Lemma~\ref{lem:auxweight} since $\|\gamma\| \ge |m-n|$. Assume that $t_D(\gamma) > 5$. Let $\ell$ be such that $D(n_\ell) = \bar D(\gamma)$. Recall that $D(n_\ell) \le T \mu_\La(n_\ell)^{1/5}$. Furthermore, $\mu_\La(n_\ell) \le \mu_\La(m) + |m - n_\ell| \le \mu_\La(m) + \|\gamma\|$. So, $\bar D(\gamma) \le T(\mu_\La(m) + \|\gamma\|)^{1/5} \le T(\mu_\La(m)^{1/5} + \|\gamma\|^{1/5})$. Similarly, $\bar D(\gamma) \le T(\mu_\La(n)^{1/5} + \|\gamma\|^{1/5})$. Note also that $1 - \vartheta_t > 1 - 1/31 > 15/16$ for any $t$. Due to  Lemma~\ref{lem:auxweight}, one has in any event
\begin{equation}\label{eq:auxtrajectweight30abc}
\begin{split}
W_{D,\kappa_0}(\gamma) \le \exp(-\kappa_0 (1 - \vartheta_{t_D(\gamma)}) \|\gamma\| + 2 \bar D(\gamma)) < \exp(-\frac{15}{16} \kappa_0 \|\gamma\| + 2 \bar D(\gamma)) \\
\le \exp(-\frac{7}{8} \kappa_0 |m-n| + 2 T (\min \mu_\La(m), \mu_\La(n))^{1/5}) \exp(-\frac{1}{16} \kappa_0 \|\gamma\| + 2 T \|\gamma\|^{1/5}).
\end{split}
\end{equation}
It follows also from Lemma~\ref{lem:auxweight} that $W_{D,\kappa_0}(\gamma) \le \exp(-\frac{15}{16} \kappa_0 |m-n| + 2 \bar D)$.
\end{proof}

To proceed we need the following elementary estimate.

\begin{lemma}\label{lem:2gammasum}
$(1)$ For any $\alpha, B, k > 0$ and $0 < \ve_0 \le \min (2^{-12 \nu - 4} \alpha^{4\nu}, \exp(-8B))$, we have
\begin{equation}\label{eq:auxtrajectweight21a}
\begin{split}
\sum_{\gamma \in \Ga(m,n;k,\zv)} \exp(-\alpha \|\gamma\|) < (8 \alpha^{-1})^{(k-1) \nu}, \\
\sum_{k \ge 2} \ve_0^{k-1} \exp(kB) \sum_{\gamma \in \Ga(m,n;k,\zv)} \exp(-\alpha \|\gamma\|) < \ve_0^{\frac{1}{2}}.
\end{split}
\end{equation}

$(2)$ For any $C, T > 1$ and $\ve_0 \le \min (\exp(-8TC^{1/5}), 2^{-4(\nu+1)} (C+1)^{-4\nu}$, we have
\begin{equation}\label{eq:auxtrajectweight21ab}
\sum_{k \ge 2}\ve_0^{k-1} \sum_{\gamma \in \Ga(m,n;k,\La), \|\gamma\| \le C} \exp(2 T \|\gamma\|^{1/5}) \le \sum_{k \ge 2} \ve_0^{k-1} \exp(2 T C^{1/5}) (2 (C+1))^{(k-1) \nu} \le \ve_0^{\frac{1}{2}}.
\end{equation}
\end{lemma}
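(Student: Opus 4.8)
The plan is to prove both parts by a direct combinatorial counting argument on the set of trajectories $\Ga(m,n;k,\zv)$, combined with the explicit smallness assumptions on $\ve_0$; no use of the preceding lemmas is needed here, since Lemma~\ref{lem:2gammasum} is the foundational estimate on which the ``functoriality'' of the $s$- and $S$-sums will later rest. For part $(1)$, I would first bound $|\{\gamma\in\Ga(m,n;k,\zv): \|\gamma\|=L\}|$. A trajectory $\gamma=(n_1,\dots,n_k)$ with $n_1=m$, $n_k=n$ is determined by the increments $v_j=n_{j+1}-n_j\in\zv\setminus\{0\}$, $1\le j\le k-1$, subject to $\sum_j|v_j|=L$ (here $|\cdot|$ denotes the norm used in $\|\gamma\|$, say the $\ell^1$ norm on $\zv$; if it is the sup-norm the same argument works with trivially adjusted constants). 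The number of ways to choose $(v_1,\dots,v_{k-1})$ with prescribed total length $L$ is at most the number of ways to write $L$ as an ordered sum of $k-1$ positive integers times the number of lattice points of each given norm; a crude bound gives $\#\{\gamma:\|\gamma\|=L\}\le \binom{L-1}{k-2}(2\nu)^{?}\cdots$, but it is cleaner to bound it by $(CL)^{(k-1)\nu}$ for an absolute constant $C\le 4$ (each $v_j$ ranges over at most $(2L+1)^\nu$ values and the $\binom{L+k-2}{k-2}$ compositions are absorbed). Hence
\begin{equation}\label{eq:plan-count}
\sum_{\gamma\in\Ga(m,n;k,\zv)}\exp(-\alpha\|\gamma\|)\le \sum_{L\ge k-1}(4L)^{(k-1)\nu}e^{-\alpha L}.
\end{equation}

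Next I would estimate the right-hand side of \eqref{eq:plan-count}. Using $L^{(k-1)\nu}e^{-\alpha L/2}\le ((k-1)\nu\cdot 2/(\alpha e))^{(k-1)\nu}\le (2(k-1)\nu/\alpha)^{(k-1)\nu}$ and then summing the geometric tail $\sum_L e^{-\alpha L/2}\le 2/\alpha$, one gets a bound of the shape $C_1^{(k-1)\nu}((k-1)\nu/\alpha)^{(k-1)\nu}\alpha^{-1}$; absorbing the polynomial factor $(k-1)^{(k-1)\nu}$ into the constant via $(k-1)^{(k-1)\nu}\le (C_2)^{(k-1)\nu}\cdot$(something summable in $k$) and using $0<\ve_0\le 2^{-12\nu-4}\alpha^{4\nu}$, I would arrive at the first inequality $\sum_\gamma \exp(-\alpha\|\gamma\|)<(8\alpha^{-1})^{(k-1)\nu}$ — the slack $8$ rather than $4$ leaving room for the $k$-dependent factors. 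For the second inequality of \eqref{eq:auxtrajectweight21a} I would multiply by $\ve_0^{k-1}e^{kB}$, use $e^{kB}\le e^{B}\cdot e^{(k-1)B}$ and the first inequality, and sum the geometric series in $k$ whose ratio is $\ve_0 e^{B}(8\alpha^{-1})^{\nu}$; the hypotheses $\ve_0\le\exp(-8B)$ and $\ve_0\le 2^{-12\nu-4}\alpha^{4\nu}$ force this ratio to be $\le \ve_0^{1/2}\cdot(\text{small})$, so the whole sum is $<\ve_0^{1/2}$. (One must be slightly careful: the exponent bookkeeping gives ratio $\sim\ve_0\,e^{B}(8/\alpha)^\nu$, and $e^{B}\le\ve_0^{-1/8}$, $(8/\alpha)^\nu\le 2^{3\nu}\alpha^{-\nu}\le\ve_0^{-1/4}\cdot 2^{3\nu}2^{-3\nu}$ roughly, leaving ratio $\le\ve_0^{1/2}$, hence the geometric sum is $\le 2\ve_0\cdot\ve_0^{-1/2}=2\ve_0^{1/2}$ — one tightens constants by demanding strict inequality in the hypotheses, which the statement does via the ``$<$'' in its conclusion.)

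For part $(2)$ the counting is the same but now the sum over $L=\|\gamma\|$ is truncated at $L\le C$, so there is no exponential decay to exploit; instead I bound $\#\{\gamma:\|\gamma\|\le C\}\le (C+1)\cdot(2(C+1))^{(k-1)\nu}$ directly (at most $(C+1)$ choices of $L$, and $\le(2(C+1))^{(k-1)\nu}$ trajectories of each length, say, after reabsorbing compositions) and note $\exp(2T\|\gamma\|^{1/5})\le\exp(2TC^{1/5})$ uniformly. This gives the first inequality of \eqref{eq:auxtrajectweight21ab} immediately. For the second, multiply by $\ve_0^{k-1}$ and sum in $k\ge2$: the series is geometric with ratio $\ve_0\,(2(C+1))^{\nu}$, and the prefactor $\exp(2TC^{1/5})$ together with the hypotheses $\ve_0\le\exp(-8TC^{1/5})$ and $\ve_0\le 2^{-4(\nu+1)}(C+1)^{-4\nu}$ makes the ratio $\le\ve_0^{1/2}$ while $\exp(2TC^{1/5})\ve_0\le\ve_0^{1-1/8}\le\ve_0^{1/2}$, so $\sum_{k\ge2}\ve_0^{k-1}\exp(2TC^{1/5})(2(C+1))^{(k-1)\nu}\le \ve_0^{1/2}$ as claimed.

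The only real subtlety — and the step I would be most careful about — is the lattice-trajectory counting in \eqref{eq:plan-count}: getting a bound of the clean form $(CL)^{(k-1)\nu}$ with $C$ small enough (e.g. $C\le 4$, or $C\le 8$ after slack) that the stated powers of $\ve_0$ suffice, requires combining the count of norm-$L$ lattice vectors with the count of compositions of $L$ into $k-1$ positive parts in a way that does not lose a factor like $2^{k}$ or $(k-1)!$. The trick is that $\binom{L-1}{k-2}\le L^{k-2}/(k-2)!\le (eL/(k-2))^{k-2}$, so the composition count is itself dominated by $L^{k-2}$ up to an absolute base, which merges harmlessly into $(CL)^{(k-1)\nu}$ since $\nu\ge1$. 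Everything else is routine manipulation of geometric series under the quantitatively explicit hypotheses on $\ve_0$.
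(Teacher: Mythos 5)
Your approach to part $(1)$ has a genuine gap — exactly at the step you flag as ``the step I would be most careful about.'' The estimate $\#\{\gamma\in\Ga(m,n;k,\zv):\|\gamma\|=L\}\le(CL)^{(k-1)\nu}$ with a modest absolute $C$ is true but fatally lossy: summing it against $e^{-\alpha L}$ produces, as you yourself compute, a factor of the form $((k-1)\nu/\alpha)^{(k-1)\nu}\sim((k-1)\nu)!\,\alpha^{-(k-1)\nu}$, and the resulting $(k-1)^{(k-1)\nu}$ cannot be absorbed. Your claimed decomposition $(k-1)^{(k-1)\nu}\le C_2^{(k-1)\nu}\cdot(\text{summable in }k)$ is false — the implied remainder $((k-1)/C_2)^{(k-1)\nu}$ is super-exponential in $k$, not even bounded — and the first inequality of \eqref{eq:auxtrajectweight21a} is a bound for each fixed $k$ that contains no $\ve_0$, so the smallness hypothesis on $\ve_0$ is unavailable to rescue it; the slack of $8$ over $4$ buys only $2^{(k-1)\nu}$, which falls far short of $(k-1)^{(k-1)\nu}$. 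The loss occurs precisely when you relax $\binom{L-1}{k-2}$ to $(eL/(k-2))^{k-2}$ and merge it into $(CL)^{(k-1)\nu}$: the $1/(k-2)!$ implicit in the binomial coefficient is the whole point, and replacing it with a power of $L$ throws it away.

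The paper's proof never slices by norm level at all. Since $\|\gamma\|=\sum_{j=1}^{k-1}|n_{j+1}-n_j|$ is additive over increments, one writes $\sum_{\gamma}e^{-\alpha\|\gamma\|}\le\bigl(\sum_{r\in\IZ^\nu}e^{-\alpha|r|}\bigr)^{k-1}$, and because $|\cdot|$ is the $\ell^1$-norm the one-increment sum factors coordinate-wise into $\bigl(2\sum_{r\ge0}e^{-\alpha r}\bigr)^\nu=\bigl(2/(1-e^{-\alpha})\bigr)^\nu<(8/\alpha)^\nu$; no $\Gamma$-function ever appears. If you want to salvage the counting route, you must keep the exact count: the number of ordered signed $p$-tuples of nonnegative integers summing to $L$, with $p=(k-1)\nu$, is at most $2^p\binom{L+p-1}{p-1}$, and summing this against $e^{-\alpha L}$ via $\sum_L\binom{L+p-1}{p-1}x^L=(1-x)^{-p}$ reproduces the product bound with the factorial cancelling exactly. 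Your treatment of the second inequality of $(1)$ and of part $(2)$ is otherwise sound once the first inequality is in hand — the geometric-series bookkeeping under the stated $\ve_0$-hypotheses works as you describe, and for $(2)$ a direct count of trajectories confined to a box of side $\sim C$ is all that is needed, since there is no tail to sum.
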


\begin{proof}
One has
\begin{equation}\label{eq:auxtrajectweight21aAA}
\begin{split}
\sum_{\gamma \in \Ga(m,n;k,\zv)} \exp(-\alpha \|\gamma\|) \le \bigl( \sum_{r \in \IZ^\nu} \exp(-\alpha |r|) \bigr)^{k-1} < \bigl( 2 \sum_{r \in \IZ, r \ge 0} \exp(-\alpha r) \bigr)^{(k-1) \nu} \\
= (2 (1 - \exp(-\alpha))^{-1})^{(k-1) \nu} < (8 \alpha^{-1})^{(k-1) \nu}, \\
\sum_{k \ge 2} \ve_0^{k-1} \exp(kB) \sum_{\gamma \in \Ga(m,n;k,\La,\mathfrak{R})} \exp(-\alpha \|\gamma\|) \le \sum_{k \ge 2} \ve_0^{k-1} \exp(kB) (8 \alpha^{-1})^{(k-1) \nu} \le \ve_0^{\frac{1}{2}}.
\end{split}
\end{equation}
This verifies $(1)$. Part $(2)$ follows from $(1)$.
\end{proof}

\begin{lemma}\label{lem:auxweight1}
Let $D \in \mathcal{G}_{\La,T,\kappa_0}$. Let $0 < \ve_0 \le \min(2^{-24 \nu - 4} \kappa_0^{4 \nu}, \exp(-(8 T \kappa_0^{-1})^5), 2^{-10 (\nu+1)} T^{-8 \nu})$. Then,
\begin{equation}\label{eq:auxtrajectweightsumest8}
\begin{split}
S_{D,T,\kappa_0,\ve_0;\La,\mathfrak{R}}(m,n) \le \min \big[ 3 \ve_0^{1/2} \exp(-\frac{7}{8} \kappa_0 |m-n| + 2 T (\min \mu_\La(m), \mu_\La(n))^{1/5}), \\
2 \ve_0^{1/2} \exp(-\frac{1}{4} \kappa_0 |m-n| + 2 \bar D)\big] \quad \text{if $m \neq n$}, \\
S_{D,T,\kappa_0,\ve_0;\La,\mathfrak{R}}(m,m) \le \min \big[ \exp(D(m)) + 3 \ve_0^{1/2} \exp(2 T \mu_\La(m)^{1/5}), 2 \exp(2 \bar D) \big].
\end{split}
\end{equation}
\end{lemma}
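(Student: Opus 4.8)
The plan is to bound $S_{D,T,\kappa_0,\ve_0;\La,\mathfrak{R}}(m,n)=\sum_{k\ge 1}\ve_0^{k-1}\sum_{\gamma\in\Ga_{D,T,\kappa_0}(m,n;k,\La,\mathfrak{R})}W_{D,\kappa_0}(\gamma)$ by applying Corollary~\ref{cor:auxweight1} term by term and then summing the geometric-type series using Lemma~\ref{lem:2gammasum}. First I would split the sum over $k$ (equivalently over trajectories $\gamma$) according to whether $t_D(\gamma)\le 5$ or $t_D(\gamma)>5$, since Corollary~\ref{cor:auxweight1} gives two qualitatively different bounds in these two regimes. The $k=1$ term contributes exactly $\exp(D(m))$ when $m=n$ (and nothing when $m\neq n$, since $\Ga_{D,T,\kappa_0}(m,n;1,\La,\mathfrak{R})=\emptyset$ for $m\neq n$); this accounts for the leading $\exp(D(m))$ in the diagonal estimate and for the "$2\exp(2\bar D)$" being an upper bound in either case. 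Note that for $m=n$ one has $D(m)\le T\mu_\La(m)^{1/5}$ when $D(m)\ge 4T\kappa_0^{-1}$, and otherwise $D(m)<4T\kappa_0^{-1}$, so in all cases $\exp(D(m))\le\exp(2\bar D)$ and also $\exp(D(m))\le\exp(2T\mu_\La(m)^{1/5})+\exp(4T\kappa_0^{-1})$, which will feed into both forms of the diagonal bound.

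For $k\ge 2$ I would estimate as follows. In the regime $t_D(\gamma)\le 5$, Corollary~\ref{cor:auxweight1} gives $W_{D,\kappa_0}(\gamma)\le\exp(-\tfrac78\kappa_0|m-n|)\exp(-\tfrac18\kappa_0\|\gamma\|+k(4T\kappa_0^{-1})^5)$, so summing over $\gamma\in\Ga(m,n;k,\zv)$ and then over $k\ge 2$ with the weight $\ve_0^{k-1}$ is exactly the second estimate in part $(1)$ of Lemma~\ref{lem:2gammasum}, with $\alpha=\tfrac18\kappa_0$ and $B=(4T\kappa_0^{-1})^5$ (the hypothesis $\ve_0\le\min(2^{-24\nu-4}\kappa_0^{4\nu},\exp(-(8T\kappa_0^{-1})^5))$ is precisely what is needed so that $\ve_0\le\min(2^{-12\nu-4}\alpha^{4\nu},\exp(-8B))$); this yields a contribution $\le\ve_0^{1/2}\exp(-\tfrac78\kappa_0|m-n|)$. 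In the regime $t_D(\gamma)>5$, I use the first displayed bound of \eqref{eq:auxtrajectweight30}, namely $W_{D,\kappa_0}(\gamma)<\exp(-\tfrac78\kappa_0|m-n|+2T(\min(\mu_\La(m),\mu_\La(n)))^{1/5})\exp(-\tfrac1{16}\kappa_0\|\gamma\|+2T\|\gamma\|^{1/5})$. Pulling the $|m-n|$-factor out, I must bound $\sum_{k\ge 2}\ve_0^{k-1}\sum_{\gamma\in\Ga(m,n;k,\La)}\exp(-\tfrac1{16}\kappa_0\|\gamma\|+2T\|\gamma\|^{1/5})$; splitting the $\gamma$-sum at $\|\gamma\|\le C$ versus $\|\gamma\|>C$ for a suitable cutoff $C=C(\kappa_0,T)$ (chosen so that $2T\|\gamma\|^{1/5}\le\tfrac1{32}\kappa_0\|\gamma\|$ for $\|\gamma\|>C$), the large-$\|\gamma\|$ part is absorbed into $\exp(-\tfrac1{32}\kappa_0\|\gamma\|)$ and handled by Lemma~\ref{lem:2gammasum}(1), while the bounded part is handled by Lemma~\ref{lem:2gammasum}(2); the remaining smallness $2^{-10(\nu+1)}T^{-8\nu}$ in the hypothesis on $\ve_0$ covers the constant $2^{-4(\nu+1)}(C+1)^{-4\nu}$ once $C$ is fixed. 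This produces the contribution $\le 2\ve_0^{1/2}\exp(-\tfrac78\kappa_0|m-n|+2T(\min(\mu_\La(m),\mu_\La(n)))^{1/5})$. Adding the two regimes (and the empty $k=1$ term off-diagonal) gives the first bracket entry $3\ve_0^{1/2}\exp(-\tfrac78\kappa_0|m-n|+2T(\min\mu_\La(m),\mu_\La(n))^{1/5})$ in the $m\neq n$ case, and similarly on the diagonal after adding $\exp(D(m))$.

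For the \emph{second} bracket entry I run the same two-regime decomposition but use instead the alternative bound from Corollary~\ref{cor:auxweight1}: $W_{D,\kappa_0}(\gamma)\le\exp(-\tfrac{15}{16}\kappa_0|m-n|+2\bar D)$ when $t_D(\gamma)>5$, and $W_{D,\kappa_0}(\gamma)\le\exp(-\tfrac78\kappa_0|m-n|-\tfrac18\kappa_0\|\gamma\|+k(4T\kappa_0^{-1})^5)$ when $t_D(\gamma)\le 5$. Factoring out $\exp(-\tfrac14\kappa_0|m-n|)$ in each case (using $\tfrac{15}{16}>\tfrac14$ and $\tfrac78>\tfrac14$), the surviving exponential decay $\exp(-\tfrac58\kappa_0|m-n|)\ge 1$ is discarded on the constant side, and the sums over $k\ge2$ and over $\gamma$ of $\exp(-\tfrac18\kappa_0\|\gamma\|+k(4T\kappa_0^{-1})^5)$ or simply $\ve_0^{k-1}\cdot(\text{const})$ times the number of trajectories are again absorbed into $\ve_0^{1/2}$ via Lemma~\ref{lem:2gammasum}(1); in the $t_D(\gamma)>5$ regime the factor $\exp(2\bar D)$ is constant in $\gamma$ and simply multiplies through. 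Collecting terms gives $\le 2\ve_0^{1/2}\exp(-\tfrac14\kappa_0|m-n|+2\bar D)$ off-diagonal and $\le 2\exp(2\bar D)$ on the diagonal (here the $k=1$ term $\exp(D(m))\le\exp(2\bar D)$ and the remaining sum is $\le\ve_0^{1/2}\exp(2\bar D)\le\exp(2\bar D)$, the two adding to at most $2\exp(2\bar D)$). The main obstacle I anticipate is purely bookkeeping: choosing the cutoff $C$ and tracking the $\nu$- and $T$-dependent constants so that all four smallness conditions on $\ve_0$ in the hypothesis suffice simultaneously, and making sure the crude bound $\|\gamma\|\ge|m-n|$ together with $\mu_\La(n_\ell)\le\mu_\La(m)+\|\gamma\|$ (already recorded in the proof of Corollary~\ref{cor:auxweight1}) is deployed consistently so that the $\min(\mu_\La(m),\mu_\La(n))$ form really does come out rather than just $\mu_\La(m)$.
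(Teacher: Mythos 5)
Your proposal is correct and follows essentially the same route as the paper: split the trajectories by $t_D(\gamma)\le 5$ versus $t_D(\gamma)>5$ using Corollary~\ref{cor:auxweight1}, handle the large-$\|\gamma\|$ tail by absorbing $2T\|\gamma\|^{1/5}$ into $-\tfrac{1}{16}\kappa_0\|\gamma\|$, treat the bounded tail via part $(2)$ of Lemma~\ref{lem:2gammasum}, sum the geometric series via part $(1)$, and add the $k=1$ diagonal term $\exp(D(m))$; the paper's proof is exactly this decomposition (its equation \eqref{eq:auxtrajectweight21} with cutoff $C=2^5(T\kappa_0^{-1})^{3/2}$) and then states ``the derivation of the other estimates is completely similar,'' which you have fleshed out correctly.
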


\begin{proof}
Let $m \neq n$. Using \eqref{eq:auxtrajectweight30}, one obtains
\begin{equation}\label{eq:auxtrajectweight21}
\begin{split}
S_{D,T,\kappa_0,;k,\La,\mathfrak{R}}(m,n) \le \exp(-\frac{7}{8} \kappa_0 |m-n| + k (4 T \kappa_0^{-1})^5) \sum_{\gamma \in \Ga(m,n;k,\La,\mathfrak{R})} \exp(-\frac{1}{8} \kappa_0 \|\gamma\|) \\
+ \exp(-\frac{7}{8} \kappa_0 |m-n| + 2 T (\min \mu_\La(m),\mu_\La(n))^{1/5}) \times \\
\big[ \sum_{\gamma \in \Ga(m,n;k,\La,\mathfrak{R})} \exp(-\frac{1}{8} \kappa_0 \|\gamma\|) + \sum_{\gamma \in \Ga(m,n;k,\La,\mathfrak{R}), \|\gamma\| \le 2^5 (T \kappa_0^{-1})^{3/2}} \exp(2 T \|\gamma\|^{1/5}) \big].
\end{split}
\end{equation}
Combining \eqref{eq:auxtrajectweight21}, \eqref{eq:auxtrajectweight21a}, and \eqref{eq:auxtrajectweight21ab}, one obtains
$$
S_{D,T,\kappa_0,\ve_0;\La,\mathfrak{R}}(m,n) \le 3 \ve_0^{1/2} \exp(-\frac{7}{8} \kappa_0 |m-n| + 2 T (\min \mu_\La(m),\mu_\La(n))^{1/5}).
$$
The derivation of the other estimates is completely similar.
\end{proof}

\begin{remark}\label{rem:2withRnoRest}
In the last lemma we estimate the functions $S_{D,T,\kappa_0,\ve_0;\La,\cR}(m,n)$ only. Clearly, $S_{D,T,\kappa_0,\ve_0;\La}(m,n)\le S_{D,T,\kappa_0,\ve_0;\La,\mathfrak{R}}(m,n)$.
\end{remark}

Later in this work we will need also the following estimates:

\begin{lemma}\label{lem:auxweight1iterated}
Let $D \in \mathcal{G}_{\La, T, \kappa_0}$. Let $0 < \ve_0 \le \min (2^{-24 \nu-4} \kappa_0^{4 \nu}, \exp(-(8 T \kappa_0^{-1})^5), 2^{-10(\nu + 1)} T^{-8\nu})$. Let $0 \le a(m,n) \le 1$, $m, n \in \La$ be arbitrary. Then, for any $m_0, n_0 \in \La^c$, we have
\begin{equation}\label{eq:auxtrajectweightsumest8iterted}
\begin{split}
\mathfrak{G}_{D, T, \kappa_0, \ve_0; \La, \mathfrak{R}}(m_0,n_0) := \ve_0 \sum_{m, n \in \La} a(m_0,m) \exp(-\kappa_0 |m_0 - m| + |m_0 - m|^{1/5}) S_{D, T, \kappa_0, \ve_0; \La, \mathfrak{R}}(m,n) \\
\exp(-\kappa_0 |n_0 - n| + |n_0 - n|^{1/5}) a(n,n_0) < \ve_0^{1/2} \exp(-\kappa_0 |m_0 - n_0|/4), \\
\mathfrak{Q}_{D, T, \kappa_0, \ve_0; \La, \mathfrak{R}}(m_0) := \ve_0 \mathfrak{G}_{D, T, \kappa_0, \ve_0; \La, \mathfrak{R}}(m_0,m_0) < \ve_0^{3/2},
\end{split}
\end{equation}
\begin{equation}\label{eq:auxtrajectweightsumest8iterted1}
\begin{split}
\mathfrak{D}^\one_{D, T, \kappa_0, \ve_0; \La, \mathfrak{R}}(m_0,n_0) := \ve_0 \sum_{m_i, n_i \in \La} a(m_0,m_1) \exp(-\kappa_0 |m_0 - m_1| + |m_0 - m_1|^{1/5}) \\
S_{D, T, \kappa_0, \ve_0; \La, \mathfrak{R}}(m_1,n_1) \exp(-\kappa_0 |n_1 - m_2| + |n_1 - m_2|^{1/5} + |m_2 - n_0|^{1/5}) S_{D, T, \kappa_0, \ve_0; \La, \mathfrak{R}}(m_2,n_2) \\
\exp(-\kappa_0 |n_0 - n_2| + |n_0 - n_2|^{1/5}) a(n_2,n_0) < \ve_0^{1/2} \exp(-\kappa_0 |m_0 - n_0|/8),
\end{split}
\end{equation}
\begin{equation}\label{eq:auxtrajectweightsumest8iterted2}
\begin{split}
\mathfrak{D}^\two_{D, T, \kappa_0, \ve_0; \La, \mathfrak{R}}(m_0,n_0) := \ve_0 \sum_{m_i, n_i \in \La} a(m_0,m_1) \exp(-\kappa_0 |m_0 - m_1| + |m_0 - m_1|^{1/5}) \\
S_{D, T, \kappa_0, \ve_0; \La, \mathfrak{R}}(m_1,n_1) \exp(-\kappa_0 |n_1 - m_2| + |m_2 - n_1|^{1/5} + |m_2 - m_0|^{1/5}) S_{D, T, \kappa_0, \ve_0; \La, \mathfrak{R}}(m_2,n_2) \\
\exp(-\kappa_0 |m_3 - n_2| + |m_3 - n_2|^{1/5} +|m_3 - m_0|^{1/5}) S_{D, T, \kappa_0, \ve_0; \La, \mathfrak{R}}(m_3,n_3) \\
\exp(-\kappa_0 |n_3 - m_4| + |m_4 - n_3|^{1/5} + |m_4 - m_0|^{1/5}) S_{D, T, \kappa_0, \ve_0; \La, \mathfrak{R}}(m_4,n_4) \\
\exp(-\kappa_0 |n_0 - n_4| + |n_0 - n_4|^{1/5}) a(n_4,n_0) < \ve_0^{1/2} \exp(-\kappa_0 |m_0 - n_0|/16).
\end{split}
\end{equation}
\end{lemma}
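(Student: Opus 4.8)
The plan is to prove all three displays by a single mechanism. Each of $\mathfrak{G}$, $\mathfrak{D}^{(1)}$, $\mathfrak{D}^{(2)}$ is, up to the harmless bounded factors $a(\cdot,\cdot)\le 1$ and the overall $\ve_0$, a sum over internal vertices in $\La$ of a product of ``edge weights'' $\exp(-\kappa_0|u-v|+|u-v|^{1/5})$ and of one, two, or four copies of the kernel $S_{D,T,\kappa_0,\ve_0;\La,\mathfrak{R}}$. First I would replace each $S$-factor by the bound of Lemma~\ref{lem:auxweight1}: since $m_0,n_0\in\La^c$, every internal vertex $w\in\La$ satisfies $\mu_\La(w)\le|w-m_0|$ and $\mu_\La(w)\le|w-n_0|$, so the off-diagonal bound reads $S_{D,T,\kappa_0,\ve_0;\La,\mathfrak{R}}(w,w')\le 3\ve_0^{1/2}\exp(-\tfrac78\kappa_0|w-w'|+2T|w-m_0|^{1/5})$ (or the symmetric version with $|w'-n_0|$), and when $w=w'$ the diagonal bound together with $D\in\mathcal{G}_{\La,T,\kappa_0}$ gives $S_{D,T,\kappa_0,\ve_0;\La,\mathfrak{R}}(w,w)\le 2\exp(4T\kappa_0^{-1})\exp(2T|w-m_0|^{1/5})$. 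The effect is that every $S$-factor is converted into linear decay $\exp(-\tfrac78\kappa_0|w-w'|)$ on the edge between its two arguments, together with a $\tfrac15$-power correction that can be charged to an \emph{already-present edge} of the path from $m_0$ to $n_0$ (the one joining $w$ to $m_0$, or $w'$ to $n_0$).

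For $\mathfrak{G}$ the relevant path is $(m_0,m,n,n_0)$. I would collect all $\tfrac15$-power terms (the explicit ones in the two edge weights and the one coming from $S$) edge by edge and absorb them via the elementary inequality $c|v|^{1/5}\le\tfrac{\kappa_0}{8}|v|+C(c,\kappa_0)$ for $v\in\zv$; this leaves each of the three edges with linear rate at least $\tfrac78\kappa_0$, up to an additive constant. Using $|m_0-m|+|m-n|+|n-n_0|\ge|m_0-n_0|$ one then splits $\tfrac78\kappa_0(|m_0-m|+|m-n|+|n-n_0|)\ge\tfrac14\kappa_0|m_0-n_0|+\tfrac58\kappa_0(|m_0-m|+|m-n|+|n-n_0|)$, pulls the first factor out, and sums the residual over $m,n\in\zv$ using Lemma~\ref{lem:2gammasum}(1) with $k=4$ (the sub-sums in which two adjacent path vertices coincide only give lower-order contributions). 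Multiplying by the prefactor --- $\ve_0\cdot 3\ve_0^{1/2}$ in the off-diagonal part, $\ve_0\cdot 2\exp(4T\kappa_0^{-1})$ in the diagonal part --- and using the hypotheses on $\ve_0$ (in particular $\ve_0\le\exp(-(8T\kappa_0^{-1})^5)$ to beat $\exp(4T\kappa_0^{-1})$ and the absorption constants, and $\ve_0\le 2^{-24\nu-4}\kappa_0^{4\nu}$ to beat the lattice-sum constant $\sim\kappa_0^{-3\nu}$) gives $\mathfrak{G}_{D,T,\kappa_0,\ve_0;\La,\mathfrak{R}}(m_0,n_0)<\ve_0^{1/2}\exp(-\kappa_0|m_0-n_0|/4)$. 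Putting $n_0=m_0$ yields $\mathfrak{G}_{D,T,\kappa_0,\ve_0;\La,\mathfrak{R}}(m_0,m_0)<\ve_0^{1/2}$, whence $\mathfrak{Q}_{D,T,\kappa_0,\ve_0;\La,\mathfrak{R}}(m_0)=\ve_0\,\mathfrak{G}_{D,T,\kappa_0,\ve_0;\La,\mathfrak{R}}(m_0,m_0)<\ve_0^{3/2}$.

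The estimates for $\mathfrak{D}^{(1)}$ and $\mathfrak{D}^{(2)}$ follow the same template along the longer paths $(m_0,m_1,n_1,m_2,n_2,n_0)$ and $(m_0,m_1,n_1,m_2,n_2,m_3,n_3,m_4,n_4,n_0)$. The only genuinely new feature is the ``long-range'' corrections $|m_2-n_0|^{1/5}$ appearing in the definition of $\mathfrak{D}^{(1)}$, and $|m_2-m_0|^{1/5}$, $|m_3-m_0|^{1/5}$, $|m_4-m_0|^{1/5}$ appearing in that of $\mathfrak{D}^{(2)}$: I would bound such $|m_j-p|$ by the sum of the path-edges joining $m_j$ to $p$ and split $|m_j-p|^{1/5}$ into a sum of $\tfrac15$-powers of those edge-lengths, each of which is then charged to the corresponding edge. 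Since the path has only $O(1)$ edges and there are only $O(1)$ corrections in total, every edge absorbs a bounded number of $\tfrac15$-power terms, each costing at most $\tfrac{\kappa_0}{8}$ of its rate; hence after all absorptions the $S$-edges retain rate $\ge\tfrac34\kappa_0$ and the remaining edges rate $\ge\tfrac78\kappa_0$, up to an additive constant. One then peels off $\tfrac18\kappa_0|m_0-n_0|$ (resp. $\tfrac1{16}\kappa_0|m_0-n_0|$), sums the residual by Lemma~\ref{lem:2gammasum}(1) with $k=6$ (resp. $k=10$), and multiplies by the prefactor $\ve_0(3\ve_0^{1/2})^2=9\ve_0^2$ (resp. $81\ve_0^3$), which comfortably absorbs every constant; this gives the claimed bounds $\ve_0^{1/2}\exp(-\kappa_0|m_0-n_0|/8)$ and $\ve_0^{1/2}\exp(-\kappa_0|m_0-n_0|/16)$.

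The main obstacle is the edgewise bookkeeping in $\mathfrak{D}^{(2)}$: one must check that after attaching all $\tfrac15$-power corrections --- the three long-range ones, the four produced by the $S$-factors via Lemma~\ref{lem:auxweight1}, and the explicit ones in the nine edge weights --- to the nine edges of the path, and after spending $\tfrac1{16}|m_0-n_0|$ worth of linear decay, every edge still carries a \emph{fixed positive multiple of} $\kappa_0$, so that the eight-fold lattice sum in Lemma~\ref{lem:2gammasum}(1) converges and its value (a fixed power of $\kappa_0^{-1}$) is dominated by the surplus factor $\ve_0^{5/2}$. Because the combinatorial data involves only boundedly many vertices, this is a finite arithmetic verification; the strong lower bound on $1/\ve_0$ demanded in the hypothesis --- above all $\exp((8T\kappa_0^{-1})^5)$ --- is exactly what is needed to dominate the constants $\exp(C(c,\kappa_0))$ from the absorptions $c|v|^{1/5}\le\tfrac{\kappa_0}{8}|v|+C$ and the factors $\exp(4T\kappa_0^{-1})$ coming from the diagonal terms $S(w,w)$.
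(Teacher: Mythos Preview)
Your proposal is correct and follows essentially the same route as the paper. The paper's proof is very terse: it writes out only the estimate for $\mathfrak{Q}$ (i.e., $\mathfrak{G}$ at $m_0=n_0$), inserting the bound \eqref{eq:auxtrajectweightsumest8} for $S$, replacing $\mu_\La(m)$ by $|m-m_0|$ via the condition $m_0\in\La^c$, and then quoting Lemma~\ref{lem:2gammasum}; for $\mathfrak{G}$, $\mathfrak{D}^{(1)}$, $\mathfrak{D}^{(2)}$ it simply says ``the derivation of the rest of the estimates is completely similar.'' Your write-up is the same argument with the edgewise bookkeeping for the longer paths made explicit, including the subadditivity step $|m_j-p|^{1/5}\le\sum_e|e|^{1/5}$ needed to charge the long-range corrections to individual edges.
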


\begin{proof}
Using \eqref{eq:auxtrajectweightsumest8} from Lemma~\ref{lem:auxweight1} and \eqref{eq:auxDcond} from Definition~\ref{def:aux1}, one obtains
\begin{equation}\label{eq:4101rem1iterations}
\begin{split}
\mathfrak{Q}_{D, T, \kappa_0, \ve_0; \La, \mathfrak{R}}(m_0) \le \varepsilon_0^2 \sum_{m, n \in \La, m \neq n} 3 \ve_0^{1/2} \times \\
\exp(-\frac{1}{4} \kappa_0 |m - n| - \kappa_0 |m_0 - m| - \kappa_0 |m_0 - n| + T(\min \mu_{\La}(m), \mu_{\La}(n))^{1/5} \\
+ |m - n|^{1/5} + |m_0 - m|^{1/5} + |m_0 - n|^{1/5}) + \varepsilon_0^2 \sum_{m \in \La} \exp(-2 \kappa_0 |m_0 - m| + 2 |m_0 - m|^{1/5}) \\
[\exp(T \mu_{\La}(m)^{1/5}) + 3 \ve_0^{1/2} \exp(2 T \mu_{\La}(m)^{1/5})] \le \varepsilon_0^2 \sum_{m, n \in \La, m \neq n} 3 \ve_0^{1/2} \times \\
\exp(-\frac{1}{4} \kappa_0 |m - n| - \kappa_0 |m_0 - m| - \kappa_0 |m_0 - n| \\
+ T (\min (|m_0 - m|, |m_0 - n|)^{1/5} + |m - n|^{1/5} + |m_0 - m|^{1/5} + |m_0 - n|^{1/5}) \\
+ \varepsilon_0^2 \sum_{m \in \La} \exp(-2 \kappa_0 |m_0 - m| + 2 |m_0 - m|^{1/5}) [\exp(T |m_0 - m|^{1/5}) + 3 \ve_0^{1/2} \exp(2 T |m_0 - m|^{1/5})].
\end{split}
\end{equation}
Combining this estimate with \eqref{eq:auxtrajectweight21a}, one obtains \eqref{eq:auxtrajectweightsumest8iterted}. The derivation of the rest of the estimates is completely similar.
\end{proof}

\begin{lemma}\label{lem:auxDfunctionsrules}
Assume that $\La = \La_1 \cup \La_2$, $\La_1 \cap \La_2 = \emptyset$. Then, $\mu_\La(m) \ge \mu_{\La_j}(m)$ if $m \in \La_j$. In particular, let $D_j \in \mathcal{G}_{\La_j,T,\kappa_0}$, $j = 1, 2$. Set $D(m) := D_j(m)$ if $m \in \La_j$. Then, $D \in \mathcal{G}_{\La,T,\kappa_0}$.
\end{lemma}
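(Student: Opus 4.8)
The plan is simply to unwind the two definitions at play, namely $\mu_{\La,\bar\La}(m) = \dist(m, \bar\La\setminus\La)$ and the defining condition \eqref{eq:auxDcond} of the class $\mathcal{G}_{\La,\bar\La,T,\kappa_0}$. Throughout, the ambient set $\bar\La$ is fixed (see Remark~\ref{rem:2withRnoR}), and $\La_1,\La_2$ are, by the standing convention, subsets of $\bar\La$; in particular all the quantities $\mu_{\La}$, $\mu_{\La_1}$, $\mu_{\La_2}$ are distances to subsets of the \emph{same} set $\bar\La$, which is what makes the comparison meaningful.

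First I would prove the monotonicity statement $\mu_\La(m)\ge\mu_{\La_j}(m)$ for $m\in\La_j$. Since $\La_j\subseteq\La_1\cup\La_2=\La$, we have the inclusion $\bar\La\setminus\La\subseteq\bar\La\setminus\La_j$. Taking the distance of $m$ to the two sides and using that $\dist(m,\cdot)$ is non-increasing under enlarging the set, we get $\dist(m,\bar\La\setminus\La)\ge\dist(m,\bar\La\setminus\La_j)$, i.e. $\mu_\La(m)\ge\mu_{\La_j}(m)$. (If $\bar\La\setminus\La_j=\emptyset$ the inequality is vacuous with the usual convention $\dist(m,\emptyset)=+\infty$.)

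For the second assertion, let $D$ be defined on $\La$ by $D(m)=D_j(m)$ for $m\in\La_j$; this is well-defined because $\La_1\cap\La_2=\emptyset$. To check $D\in\mathcal{G}_{\La,T,\kappa_0}$, fix any $m\in\La$ with $D(m)\ge 4T\kappa_0^{-1}$. Then $m$ lies in exactly one $\La_j$, and $D(m)=D_j(m)\ge 4T\kappa_0^{-1}$, so the hypothesis $D_j\in\mathcal{G}_{\La_j,T,\kappa_0}$ gives $D_j(m)\le T\,\mu_{\La_j}(m)^{1/5}$. Combining this with the monotonicity just proved and the fact that $t\mapsto t^{1/5}$ is increasing on $[0,\infty)$, we obtain $D(m)=D_j(m)\le T\,\mu_{\La_j}(m)^{1/5}\le T\,\mu_\La(m)^{1/5}$, which is precisely condition \eqref{eq:auxDcond} for $D$ on $\La$. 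Hence $D\in\mathcal{G}_{\La,T,\kappa_0}$.

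There is essentially no real obstacle here; the only points requiring a moment's care are keeping all the $\mu$'s referenced to the common ambient set $\bar\La$ and handling the edge case $\bar\La\setminus\La_j=\emptyset$, both of which are dispatched by the conventions already in force in this section.
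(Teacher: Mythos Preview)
Your proof is correct and follows essentially the same approach as the paper's own proof, which simply notes that the inequality $\mu_\La(m)\ge\mu_{\La_j}(m)$ follows from the definition of $\mu_{\La'}$ and that the second statement then follows from the definition of $\mathcal{G}_{\La,T,\kappa_0}$. You have merely spelled out these two observations in full detail.
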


\begin{proof}
Let $m \in \La_j$. It follows from the definition of the functions $\mu_{\La'}$ that $\mu_\La(m) \ge \mu_{\La_j}(m)$. The second statement follows from the first one, just due to the definition of $\mathcal{G}_{\La,T,\kappa_0}$.
\end{proof}

\begin{lemma}\label{lem:auxDfunctionsrules1}
Assume that $\La = \La_1 \cup \La_2$,  $\La_1 \cap \La_2 = \emptyset$. Let $D_j \in \mathcal{G}_{\La_j,T,\kappa_0}$, $j = 1,2$. Set $D(m) := D_j(m)$ if $m \in \La_j$. Let $m,n \in \La_1$,
\begin{equation}\label{eq:auxtrajectweight10}
\begin{split}
\gamma = \gamma_1 \cup \gamma_2 \dots \cup \gamma_{2t+1}, \quad \sigma = \gamma_1 \cup \gamma_2 \dots \cup \gamma_{2t} \\
\gamma_{2i+1} = (n_{1,2i+1}, \dots, n_{k_{2i+1},2i+1}) \in \Ga_{D_1,T}(n_{0,2i+1}, n_{k_{2i+1}, 2i+1}; \La_1, \mathfrak{R}), \quad n_{1,1} = m, n_{k_{2t+1,2t+1}} = n \\
\gamma_{2i} = (n_{1,2i}, \dots, n_{k_{2i},2i}) \in \Ga_{D_2,T}(n_{0,2i}, n_{k_{2i}, 2i+1}; \La_2, \mathfrak{R}), n_{k_{t}} = n.
\end{split}
\end{equation}
Then,
\begin{itemize}

\item[(1)] $\gamma, \sigma \in \Ga_{D,T,\kappa_0}(\La,\mathfrak{R})$.

\item[(2)] If $\gamma_1 \cup \gamma_2 \dots \cup \gamma_{2t+1} = \gamma'_1 \cup \gamma'_2 \dots \cup \gamma'_{2t'+1}$, $t, t' \ge 0$, then $t = t'$, $\gamma_j = \gamma'_j$. Similarly, if $\gamma_1 \cup \gamma_2 \dots \cup \gamma_{2t} = \gamma'_1 \cup \gamma'_2 \dots \cup \gamma'_{2t'}$, $t, t' \ge 0$, then $t = t'$, $\gamma_j = \gamma'_j$.

\end{itemize}
\end{lemma}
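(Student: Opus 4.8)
The plan is to treat the two assertions separately; part $(2)$ is a short combinatorial observation and part $(1)$ is the substantive part.

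\emph{Part $(2)$.} Since $\La_1\cap\La_2=\emptyset$ and consecutive pieces $\gamma_\ell,\gamma_{\ell+1}$ lie in opposite sets, the last point of $\gamma_\ell$ and the first point of $\gamma_{\ell+1}$ are always distinct, so in forming $\gamma=\gamma_1\cup\cdots\cup\gamma_{2t+1}$ one is always in the first case of \eqref{eq:auxgammagamma}: no merging occurs and $\gamma$ is the literal juxtaposition of the sequences $\gamma_1,\dots,\gamma_{2t+1}$ with a new ``junction'' bond inserted between consecutive pieces. Consequently, reading $\gamma$ from left to right, the blocks $\gamma_1,\gamma_3,\gamma_5,\dots$ are precisely the maximal runs of consecutive entries of $\gamma$ lying in $\La_1$, and $\gamma_2,\gamma_4,\dots$ are precisely the maximal runs in $\La_2$ — a run being broken exactly at each junction because the two sets are disjoint. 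Thus the decomposition is reconstructed from $\gamma$ (resp.\ $\sigma$) alone, and uniqueness follows: two alternating decompositions of the same sequence must agree block by block, forcing $t=t'$ and $\gamma_j=\gamma'_j$. The argument for $\sigma$ is identical, the only difference being that it ends with an even piece in $\La_2$.

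\emph{Part $(1)$: the straightforward conditions.} First, $\gamma$ is a legitimate trajectory in $\La$: consecutive points differ inside each $\gamma_\ell$ by hypothesis, and across a junction because $\La_1\cap\La_2=\emptyset$; and $D\in\mathcal{G}_{\La,T,\kappa_0}$ by Lemma~\ref{lem:auxDfunctionsrules}. It remains to verify \eqref{eq:auxtrajectweight5NNNNN} and \eqref{eq:auxtrajectweight5NNNNN1}. For \eqref{eq:auxtrajectweight5NNNNN}, fix $i<j$ with $j>i+1$ and $\min(D(n_i),D(n_j))\ge 4T\kappa_0^{-1}$. If $n_i,n_j$ lie in the same block $\gamma_\ell$, then — the blocks sitting inside $\gamma$ as contiguous subsequences — the segment $(n_i,\dots,n_j)$ of $\gamma$ is a segment of $\gamma_\ell$, and \eqref{eq:auxtrajectweight5NNNNN} for $\gamma_\ell\in\Ga_{D_a,T}(\,\cdot\,;\La_a,\mathfrak{R})$ delivers the bound (again $j>i+1$, so the ``unless $j=i+1$'' clause is irrelevant). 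If $n_i\in\La_a$ lies in an earlier block than $n_j$, then the segment $(n_i,\dots,n_j)$ must pass through $\La_{3-a}$ — either $n_j\in\La_{3-a}$, or an intermediate block of the opposite parity is traversed — hence it meets $\bar\La\setminus\La_a$, so $\mu_{\La_a,\bar\La}(n_i)\le\|(n_i,\dots,n_j)\|$; since $D(n_i)=D_a(n_i)\ge 4T\kappa_0^{-1}$ and $D_a\in\mathcal{G}_{\La_a,\bar\La,T,\kappa_0}$, this gives $D(n_i)\le T\|(n_i,\dots,n_j)\|^{1/5}$, which suffices.

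\emph{Part $(1)$: the resonant bonds, and the main obstacle.} The heart of the proof is \eqref{eq:auxtrajectweight5NNNNN1}. The key step is that no junction bond of $\gamma$ is resonant: if $(n_i,n_{i+1})$ is a junction with $n_i\in\La_a$, $n_{i+1}\in\La_{3-a}$, and if $D_a(n_i)\ge 4T\kappa_0^{-1}$, then — invoking $D_a\in\mathcal{G}_{\La_a,\La,T,\kappa_0}$, which holds by Remark~\ref{rem:2withRnoR}$(1)$ since $\La_a\subseteq\La\subseteq\bar\La$ — one has $D_a(n_i)\le T\mu_{\La_a,\La}(n_i)^{1/5}=T\dist(n_i,\La_{3-a})^{1/5}\le T|n_i-n_{i+1}|^{1/5}$, so $\min(D(n_i),D(n_{i+1}))\le T|n_i-n_{i+1}|^{1/5}$; the case $D_{3-a}(n_{i+1})\ge 4T\kappa_0^{-1}$ is symmetric, and the leftover case (both $D$-values below $4T\kappa_0^{-1}$) is settled by a direct estimate. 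Hence every resonant bond of $\gamma$ lies inside some $\gamma_\ell$ and is already resonant there. For the surrounding indices demanded by \eqref{eq:auxtrajectweight5NNNNN1}: those in the same block $\gamma_\ell$ are covered by \eqref{eq:auxtrajectweight5NNNNN1} for $\gamma_\ell$; those in other blocks are covered by the cross-block estimate of the previous paragraph (boundary crossing plus $D_a\in\mathcal{G}_{\La_a,\bar\La,T,\kappa_0}$), once one notes that a resonant bond forces the relevant $D$-values above $4T\kappa_0^{-1}$, so the $\mathcal{G}$-condition applies. I expect the main obstacle to be exactly this bookkeeping — matching the relaxation data $\cP(\gamma_\ell)$ of the individual pieces with the conditions \eqref{eq:auxtrajectweight5NNNNN1} that $\gamma$ must satisfy against points arbitrarily far away in other blocks, and confirming that junctions never manufacture a new resonance. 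The disjointness of $\La_1,\La_2$ together with the monotonicity $\mu_{\La}(m)\ge\mu_{\La_j}(m)$ for $m\in\La_j$ (Lemma~\ref{lem:auxDfunctionsrules}) are precisely what makes this go through.
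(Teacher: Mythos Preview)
Your argument is correct and follows the paper's approach. For part~(1) both you and the paper hinge on the same cross-block estimate $D_a(n_i)\le T\mu_{\La_a}(n_i)^{1/5}\le T\|(n_i,\dots,n_j)\|^{1/5}$ coming from $D_a\in\mathcal{G}_{\La_a,T,\kappa_0}$; your handling of \eqref{eq:auxtrajectweight5NNNNN1} is in fact more explicit than the paper's ``the verification for the rest of the cases is completely similar.'' For part~(2), the paper argues by induction on $\max(t,t')$, peeling off the last block at each step, while your maximal-runs reading gives the same conclusion in one stroke.

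One caveat: the claim that the ``leftover case (both $D$-values below $4T\kappa_0^{-1}$) is settled by a direct estimate'' is not quite right --- there is no such estimate in general, and a junction bond can in principle satisfy $\min(D(n_i),D(n_{i+1}))>T|n_i-n_{i+1}|^{1/5}$ with both values below the threshold. But as the paper's own proof states at the outset, conditions \eqref{eq:auxtrajectweight5NNNNN}--\eqref{eq:auxtrajectweight5NNNNN1} are only checked for pairs with $\min\ge 4T\kappa_0^{-1}$, so that leftover case is simply vacuous rather than requiring an estimate.
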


\begin{proof}
$(1)$ We verify the statement for $\gamma$. The verification for $\sigma$ is completely similar. Re-denote $\gamma$ as $\gamma = (n_0, n_1, \dots, n_k)$. We need to verify conditions \eqref{eq:auxtrajectweight5NNNNN}, \eqref{eq:auxtrajectweight5NNNNN1} for any $i<j$ such that $\min(D(n_i),D(n_j)) \ge 4T \kappa_0^{-1}$. Clearly these conditions hold if $n_{i}, \dots, n_{j}$ are consecutive points in some $\gamma_h$. Assume that $n_{i} \in \La_1$, $n_j \in \La_2$. Assume also that $D(n_i),D(n_j) \ge 4T \kappa_0^{-1}$. One has
\begin{equation}\label{eq:auxtrajectweight11}
\begin{split}
D_1(n_{i}) \le T \mu_{\La_1} (n_{i})^{1/5}, D_2(n_{j}) \le T \mu_{\La_2}(n_{j})^{1/5}, \\
\|(n_{i},\dots,n_{j})\| > |n_{i} - n_{j}| \ge \max (\mu_{\La_1}(n_{i}), \mu_{\La_2}(n_{j})), \\
\max(D(n_i),D(n_j)) \le T \|(n_{i}, \dots, n_{j})\|^{1/5}.
\end{split}
\end{equation}
So, conditions \eqref{eq:auxtrajectweight5NNNNN}, \eqref{eq:auxtrajectweight5NNNNN1} hold in this case. Assume now that $n_{i} \in \La_1$, $n_h \in \La_2$, $n_j \in \La_1$, $i < h < j$. Assume also that $D(n_i),D(n_h),D(n_j) \ge 4T \kappa_0^{-1}$. Then, due to \eqref{eq:auxtrajectweight11}, one has $D(n_i) \le T \|(n_{i},\dots,n_{h})\|^{1/5}$, $D(n_j) \le T \|(n_{h}, \dots, n_{j})\|^{1/5}$. This of course implies conditions \eqref{eq:auxtrajectweight5NNNNN}, \eqref{eq:auxtrajectweight5NNNNN1} in this case. The verification for the rest of the cases is completely similar. This finishes the first statement.

$(2)$ The proof goes by induction in $\max(t,t') = 0, 1, \dots$. We will prove the statement regarding $\gamma = \gamma'$. The proof for $\sigma = \sigma'$ is completely similar. If $t,t' = 0$, then the statement is trivial. Assume that the statement holds if $\max(t,t') \le s-1$, where $s \ge 1$. If $t \ge 1$, then $\gamma_1 \cup \gamma_2 \dots \cup \gamma_{2t+1} \notin \Ga(m,n;\La_1)$ since $\La_1 \cap \La_2 = \emptyset$. So, one can assume $t,t' \ge 1$. Note that $n_{1,2t+1}, \dots, n_{k_{2t+1},2t+1} \in \La_1$, $n'_{1,2t'+1}, \dots, n'_{k_{2t'+1},2t'+1} \in \La_1$, $n_{k_{2t},2t} \in \La_2$, $n'_{k_{2t'},2t'} \in \La_2$. Since $\gamma = \gamma'$, one concludes that $k_{2t+1} = k_{2t'+1}$ and $n_{i,2t+1} = n'_{i,2t'+1}$ for all $i$. This implies $\gamma_1 \cup \gamma_2 \dots \cup \gamma_{2t} = \gamma'_1 \cup \gamma_2' \dots \cup \gamma_{2t'}'$. Repeating this argument, one concludes that $k_{2t} = k_{2t'}$ and $n_{i,2t} = n'_{i,2t'}$ for all $i$. This implies $\gamma_1 \cup \gamma_2 \dots \cup \gamma_{2t-1} = \gamma_1' \cup \gamma_2' \dots \cup \gamma_{2t'-1}'$. Due to the inductive assumption, one has then $t-1 = t'-1$, $\gamma_j = \gamma'_j$, $1 \le j \le t-1$. This finishes the proof.
\end{proof}

\begin{lemma}\label{lem:Rtraject}
$(1)$ Assume that $\La = \La_1 \cup \La_2$, $\La_1 \cap \La_2 = \emptyset$. Let $D_1 \in \mathcal{G}_{\La_1,T,\kappa_0}$. Let $D_2(x) \ge 1$, $x \in \La_2$ be such that $D_2(x) \le T \mu_\La(x)^{1/5}$ for any $x \in \La_2$. Set $D(m) := D_j(m)$ if $m \in \La_j$, $m \in \La_j$.
Then, $D \in \mathcal{G}_{\La,T,\kappa_0}$.

$(2)$ Let $m, n \in \La_2$ and $\gamma_i = (n_{1,i}, \dots, n_{k_i,i}) \in \Ga_{D,T,\kappa_0}(\La_1,\mathfrak{R})$, $i = 1, 2$, be arbitrary. Set $\gamma' = (m,n)$ if $m \neq n$, $\gamma' = (m)$ if $m = n$, $\gamma'_1 = (m)$, $\gamma'_2 = n$. Then, $\gamma_1 \cup \gamma', \gamma' \cup \gamma_2, \gamma_1 \cup \gamma' \cup \gamma_2, \gamma'_1 \cup \gamma_1 \cup \gamma'_2 \in \Ga_{D,T,\kappa_0}(\La,\mathfrak{R})$.
\end{lemma}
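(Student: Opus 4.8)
For part (1) the plan is to check the defining inequality \eqref{eq:auxDcond} of $\mathcal{G}_{\La,T,\kappa_0}$ pointwise. Since $\La_1 \subseteq \La$ one has $\bar\La \setminus \La \subseteq \bar\La \setminus \La_1$, hence $\mu_\La(m) \ge \mu_{\La_1}(m)$ for $m \in \La_1$; this is the first assertion of Lemma~\ref{lem:auxDfunctionsrules}. Thus, if $m \in \La_1$ and $D(m) = D_1(m) \ge 4T\kappa_0^{-1}$, membership $D_1 \in \mathcal{G}_{\La_1,T,\kappa_0}$ gives $D_1(m) \le T\mu_{\La_1}(m)^{1/5} \le T\mu_\La(m)^{1/5}$; and if $m \in \La_2$ then $D(m) = D_2(m) \le T\mu_\La(m)^{1/5}$ already holds by hypothesis, with no restriction on the size of $D_2(m)$. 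In either case \eqref{eq:auxDcond} holds for $D$ on $\La$, so $D \in \mathcal{G}_{\La,T,\kappa_0}$; in particular $\Ga_{D,T,\kappa_0}(\La,\mathfrak{R})$ is well-defined.

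For part (2) the plan is to imitate the proof of Lemma~\ref{lem:auxDfunctionsrules1}(1); one cannot simply quote that lemma, since here $D_2$ is only assumed to satisfy $D_2 \le T\mu_\La^{1/5}$ rather than $D_2 \in \mathcal{G}_{\La_2,T,\kappa_0}$. Write any one of the four concatenations as $\gamma = (n_0,\dots,n_k)$. Because $\La_1 \cap \La_2 = \emptyset$ all the joins in \eqref{eq:auxgammagamma} are genuine, and since $\gamma'$ consists of at most two points, $\gamma$ has the block form ``(consecutive points of one $\gamma_i$, lying in $\La_1$), followed by (the at most two points of $\gamma'$, lying in $\La_2$), followed by (consecutive points of the other $\gamma_j$, lying in $\La_1$)'', with either or both $\La_1$-blocks possibly absent. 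The one elementary input is that every $x \in \La_2$ lies in $\bar\La \setminus \La_1$, so for $y \in \La_1$ with $D_1(y) \ge 4T\kappa_0^{-1}$ one has
\[
D_1(y) \le T\mu_{\La_1}(y)^{1/5} \le T|y-x|^{1/5} \le T\|(y,\dots,x)\|^{1/5}
\]
along any sub-trajectory of $\gamma$ joining $y$ to $x$. Hence for a pair $n_i,n_j$ ($i<j$) with at least one member in an $\La_1$-block and either the other member, or some point strictly between them, in the $\gamma'$-block, that $\La_1$-member already realizes $\min(D(n_i),D(n_j)) \le T\|(n_i,\dots,n_j)\|^{1/5}$; if $n_i,\dots,n_j$ all lie in one $\La_1$-block the required estimate is inherited from $\gamma_i \in \Ga_{D,T,\kappa_0}(\La_1,\mathfrak{R})$; and both members cannot lie inside $\gamma'$ once $j > i+1$. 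This settles \eqref{eq:auxtrajectweight5NNNNN}, and \eqref{eq:auxtrajectweight5NNNNN1} follows by the same dichotomy: a violating consecutive pair of $\gamma$ is either internal to some $\gamma_h$, where the four requisite inequalities come from \eqref{eq:auxtrajectweight5NNNNN1} for $\gamma_h$ together with the displayed $\mu_{\La_1}$-bound once the index range reaches into the other block, or it straddles the $\La_1$/$\La_2$ interface, where the $\La_1$-endpoint again dominates every relevant distance. Collecting the cases gives $\gamma \in \Ga_{D,T,\kappa_0}(\La,\mathfrak{R})$.

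Part (1) is routine. The only mildly delicate point is the final case analysis for \eqref{eq:auxtrajectweight5NNNNN1} in part (2), where one must run through all admissible $j' < i < i+1 < j''$ and track whether these indices remain in a single $\La_1$-block or cross into $\gamma'$ or the other block. Because $\gamma'$ has at most two points and because membership of $D_1$ in $\mathcal{G}_{\La_1,T,\kappa_0}$ bounds $D_1$ by the distance to any point of $\La_2$, each such sub-case reduces to one of the two displayed inequalities above, exactly as in the proof of Lemma~\ref{lem:auxDfunctionsrules1}. I do not expect a genuine obstacle here; the content is purely combinatorial bookkeeping.
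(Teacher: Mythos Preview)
Your proposal is correct and follows essentially the same approach as the paper's proof: for part (1) you invoke the inequality $\mu_\La \ge \mu_{\La_1}$ on $\La_1$ (which is Lemma~\ref{lem:auxDfunctionsrules}) together with the hypothesis on $D_2$, and for part (2) you recognize that Lemma~\ref{lem:auxDfunctionsrules1} cannot be quoted directly but that its argument carries over, the key point being that any $x \in \La_2$ lies in $\bar\La \setminus \La_1$ so that $D_1(y) \le T\mu_{\La_1}(y)^{1/5} \le T|y-x|^{1/5}$ whenever the threshold is met. The paper's proof is organized by enumerating the positional cases of the index pair relative to the three blocks $\gamma_1, \gamma', \gamma_2$, which is exactly your ``block'' dichotomy; your treatment of \eqref{eq:auxtrajectweight5NNNNN1} is slightly more explicit than the paper's (which dispatches the $\gamma'$-internal consecutive pair with the word ``obviously''), but the content is identical.
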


\begin{proof}
The first part is clear. For the second part, we cannot just refer to Lemma~\ref{lem:auxDfunctionsrules1} since it may happen that $D_2 \notin \mathcal{G}_{\La_2,T,\kappa_0}$. However, a part of the argument from the proof of Lemma~\ref{lem:auxDfunctionsrules1} still works. We need to verify conditions \eqref{eq:auxtrajectweight5NNNNN}, \eqref{eq:auxtrajectweight5NNNNN1} for any $i<j$. We will do this for $\gamma := (n_1, \dots, n_k) := \gamma_1 \cup \gamma' \cup \gamma_2$ with $m \neq n$. The verification for the rest of the cases is similar. If $i < j \le k_1$ or $k_1+3 \le i < j \le k$, then \eqref{eq:auxtrajectweight5NNNNN}, \eqref{eq:auxtrajectweight5NNNNN1} hold since $\gamma_i \in \Ga_{D,T,\kappa_0}(\La_1,\mathfrak{R})$. The argument from the proof of Lemma~\ref{lem:auxDfunctionsrules1} still works in the following cases: $(a)$ $i \le k_1$,  $k_1+3 \le j \le k$, $(b)$ $i \le k_1$, $k_1+1 \le j \le k_1+2$, $(c)$ $k_1+1 \le i \le k_1+2$, $k_1+3 \le j \le k$ since $D_1 \in \mathcal{G}_{\La_1,T,\kappa_0}$. Let $k_1+1 \le i < j \le k_1+2$. Then $i = k_1+1$, $j = k_1+2$, that is, $j = i+1$. Obviously, in this case \eqref{eq:auxtrajectweight5NNNNN1} holds.
\end{proof}

\begin{lemma}\label{lem:aux5}
Let $\La = \La_1 \cup \La_2$, $\La_1 \cap \La_2 = \emptyset$. Let $D_j \in \mathcal{G}_{\La_j,T,\kappa_0}$, $j = 1,2$. Let
$$
\cH_\Lambda = \begin{bmatrix} \cH_{\Lambda_1} & \Gamma_{1,2}\\[5pt] \Gamma_{2,1} & \cH_{\Lambda_2}\end{bmatrix}.
$$
$\Gamma_{i,j} := \Gamma_{\Lambda_i, \Lambda_j} (k, \ell) = \cH(k,\ell),\quad k\in \La_i,\ell\in \La_j$. Assume that the following conditions hold:
\begin{enumerate}
\item[(i)]
$$
\ve_0 w(m,n) := \ve_0 w_\La (m,n) := |\mathcal{H}_\Lambda(m,n)| \le \varepsilon_0 \exp( - \kappa_0 |m-n| ) , \quad m \not= n,
$$
$0 < \kappa_0 < 1$, $0 < \ve_0 \le \min(2^{-24\nu-4} \kappa_0^{4\nu}, \exp(-(8T\kappa_0^{-1})^5, 2^{-10(\nu+1)} T^{-8\nu})$.

\item[(ii)] The matrix $\cH_{\La_j}$ is invertible; moreover,
\begin{equation}\label{eq:aux00H1inverse1new}
|\cH_{\Lambda_j}^{-1}(m,n)| \le s_{D_j,T,\kappa_0,\ve_0;\La_j,\mathfrak{R}}(m,n).
\end{equation}

Then, $\tilde H_2 := [\cH_{\La_2} - \Gamma_{2,1} \cH_{\La_1}^{-1} \Gamma_{1,2}](m,n)$ is invertible, $\cH_{\Lambda}$ is invertible, and
\begin{equation}\label{eq:aux00c011kappad2statement}
\begin{split}
|\cH_{\La}^{-1} (m,n)| \le \sum_{k \ge 1} \ve_0^{k-1} \sum_{\gamma \in \Ga_{D_1,T,\kappa_0}(m,n;k,\La_1,\mathfrak{R})} w_{D_1,\kappa_0}(\gamma) \\
+ \sum_{q \ge 3} \ve_0^{q-1} \sum_{\gamma \in \Ga^{(1,2)}_{D,T,\kappa_0}(m,n;q,\La)} w_{D,\kappa_0}(\gamma) \le s_{D,T,\kappa_0,\ve_0;k,\La,\mathfrak{R}}(m,n), \quad m,n \in \La_1, \\
|\cH_{\La}^{-1} (m,n)| = |\tilde H_2^{-1}(m,n)| \le \sum_{k \ge 1} \ve_0^{k-1} \sum_{\gamma \in \Ga_{D,T,\kappa_0}(m,n;k,\La_2,\mathfrak{R})} w_{D,\kappa_0}(\gamma) \\
+ \sum_{k \ge 3} \ve_0^{k-1} \sum_{\gamma \in \Ga^{(2,1)}_{D,T,\kappa_0}(m,n;k,\La,\mathfrak{R})} w_{D,\kappa_0}(\gamma) \le s_{D,T,\kappa_0,\ve_0;k,\La,\mathfrak{R}}(m,n), \quad m,n \in \La_2, \\
|\cH_{\Lambda}^{-1}(m,n)| \\
\le \sum_{k \ge 3} \ve_0^{k-1} \sum_{\gamma \in \Ga^{(p,q,odd)}_{D,T,\kappa_0}(m,n;k,\La,\mathfrak{R})} w_{D,\kappa_0}(\gamma) \le s_{D,T,\kappa_0,\ve_0;k,\La,\mathfrak{R}}(m,n), \quad m \in \La_p, n \in \La_q, p \neq q,
\end{split}
\end{equation}
where $\Ga^{(p,q)}_{D,T,\kappa_0}(m,n;k,\La,\mathfrak{R}) = \bigcup_{t \ge 1} \Ga^{(p,q,t)}_{D,T,\kappa_0}(m,n;k,\La,\mathfrak{R})$, $\Ga^{(p,q,t)}_{D,T,\kappa_0} (m,n;k,\La,\mathfrak{R})$ stands for the set of all $\gamma \in \Ga_{D,T,\kappa_0}(m,n;k,\La,\mathfrak{R})$ such that  $\gamma = \gamma_1 \cup \gamma'_1 \dots \cup \gamma_{t+1}$ with $\gamma_j \in \Ga_{D_p,T,\kappa_0}(\La_p,\mathfrak{R})$, $\gamma'_i \in \Ga_{D_q,T,\kappa_0}(\La_q)$, $p \neq q$, $\Ga^{(p,q,odd)}_{D,T,\kappa_0} (m,n;k,\La,\mathfrak{R}) = \bigcup_{t \ge 1} \Ga^{(p,q,odd,t)}_{D,T,\kappa_0}(m,n;k,\La,\mathfrak{R})$, $\Ga^{(p,q,odd,t)}_{D,T,\kappa_0} (m,n;k,\La,\mathfrak{R})$ stands for the set of all $\gamma \in \Ga_{D,T,\kappa_0}(m,n;k,\La,\mathfrak{R})$ such that $\gamma = \gamma_1 \cup \gamma'_1 \dots \cup \gamma'_{t}$ with $\gamma_j \in \Ga_{D_p,T,\kappa_0}(\La_p)$, $\gamma'_i \in \Ga_{D_q,T,\kappa_0}(\La_q)$, $p \neq q$, and $D(m) = D_j(m)$ if $m \in \La_j$.
\end{enumerate}
\end{lemma}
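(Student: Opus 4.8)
The plan is to combine the Schur complement formula (Lemma~\ref{lem:2schur}) with a Neumann expansion of $\tilde H_2^{-1}$ and then to recognize each resulting entrywise bound as a subsum of the trajectory sum defining $s_{D,T,\kappa_0,\ve_0;\La,\mathfrak{R}}$. As a preliminary step I would note that $D$, defined by $D|_{\La_j}=D_j$, lies in $\mathcal{G}_{\La,T,\kappa_0}$ by Lemma~\ref{lem:auxDfunctionsrules}, so that the sums on the right-hand side of \eqref{eq:aux00c011kappad2statement} make sense.

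Write $B:=\Gamma_{2,1}\cH_{\La_1}^{-1}\Gamma_{1,2}$, a map on $\IC^{\La_2}$, which is well defined because hypotheses (i)--(ii), together with the bound $s_{D_1,\dots}\le S_{D_1,\dots}<\infty$ of Lemma~\ref{lem:auxweight1}, make all the matrix products absolutely convergent. Put $\Phi_N:=\sum_{j=0}^N(\cH_{\La_2}^{-1}B)^j\cH_{\La_2}^{-1}$. The key estimate is that for $m,n\in\La_2$, after inserting $|\cH_\La(a,b)|\le\ve_0\exp(-\kappa_0|a-b|)$ for every $\Gamma$-factor and the hypothesis $|\cH_{\La_p}^{-1}(a,b)|\le s_{D_p,T,\kappa_0,\ve_0;\La_p,\mathfrak{R}}(a,b)$ for every remaining factor, the $(m,n)$ entry of $(\cH_{\La_2}^{-1}B)^j\cH_{\La_2}^{-1}$ is bounded by a sum over concatenations $\gamma=\gamma_1\cup\gamma_1'\cup\gamma_2\cup\cdots\cup\gamma_{j+1}$ with $\gamma_i\in\Ga_{D_2,T,\kappa_0}(\La_2,\mathfrak{R})$ and $\gamma_i'\in\Ga_{D_1,T,\kappa_0}(\La_1,\mathfrak{R})$, the transition edges between consecutive pieces being exactly the $\Gamma$-edges. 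Here Lemma~\ref{lem:auxDfunctionsrules1}(1) (and Lemma~\ref{lem:Rtraject} for length-one pieces) guarantees $\gamma\in\Ga_{D,T,\kappa_0}(m,n;k,\La,\mathfrak{R})$ with $k$ the number of vertices of $\gamma$; the multiplicativity of $w_{D,\kappa_0}$ under $\cup$ — noting that a $\Gamma$-edge carries only the weight $w(a,b)$ and no extra $\exp(D)$-factor, while a $j$-step expansion carries exactly $k-1$ powers of $\ve_0$ — shows that the weight accumulated by the expansion is precisely $\ve_0^{k-1}w_{D,\kappa_0}(\gamma)$; and Lemma~\ref{lem:auxDfunctionsrules1}(2) shows the decomposition $\gamma\mapsto(\gamma_1,\gamma_1',\dots)$ is unique. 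Summing over $j$ and over all decompositions, and using that trajectories with distinct numbers of $\La_1$-excursions are distinct, one gets $|\Phi_N(m,n)|\le s_{D,T,\kappa_0,\ve_0;\La,\mathfrak{R}}(m,n)$ uniformly in $N$; since this bound is finite (Lemma~\ref{lem:auxweight1}), the tails of the associated trajectory sum go to zero, so $\Phi_N(m,n)$ is Cauchy and $\Phi:=\lim_N\Phi_N$ exists with the same bound.

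Next I would confirm $\tilde H_2^{-1}=\Phi$ by telescoping: from $\tilde H_2=\cH_{\La_2}(I-\cH_{\La_2}^{-1}B)=(I-B\cH_{\La_2}^{-1})\cH_{\La_2}$ one gets $\tilde H_2\Phi_N=I-(B\cH_{\La_2}^{-1})^{N+1}$ and $\Phi_N\tilde H_2=I-(\cH_{\La_2}^{-1}B)^{N+1}$, and both error terms tend entrywise to $0$ because they are controlled by tails of the same convergent trajectory sum; passing to the limit inside the products $\tilde H_2\Phi_N$ and $\Phi_N\tilde H_2$ is legitimate by dominated convergence, since $\cH_{\La_2}$ is diagonal-plus-exponentially-decaying off the diagonal and $\Phi_N$ is dominated by $s_{D,\dots;\La,\mathfrak{R}}$. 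Hence $\tilde H_2$ is invertible, and Lemma~\ref{lem:2schur} then gives the invertibility of $\cH_\La$ together with the block representation \eqref{eq:2schurfor}. Substituting $\tilde H_2^{-1}=\Phi$ into \eqref{eq:2schurfor} and expanding once more the surviving factors $\cH_{\La_1}^{-1}$, $\Gamma_{1,2}$, $\Gamma_{2,1}$ via $s_{D_1,\dots}$ and the $\Gamma$-estimate, the identical concatenation/multiplicativity/uniqueness package — now producing alternating trajectories that begin and end with a $\La_1$-piece for the $\La_1\times\La_1$ block, and with an odd number of pieces for the mixed blocks, in the sense of the sets $\Ga^{(1,2)}_{D,T,\kappa_0}$, $\Ga^{(2,1)}_{D,T,\kappa_0}$, $\Ga^{(p,q,odd)}_{D,T,\kappa_0}$ — yields the three displayed bounds in \eqref{eq:aux00c011kappad2statement}, each dominated by $s_{D,T,\kappa_0,\ve_0;\La,\mathfrak{R}}(m,n)$.

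The main obstacle is the combinatorial bookkeeping in the middle step: one must verify that gluing trajectory pieces across $\Gamma$-edges never violates the defining inequalities \eqref{eq:auxtrajectweight5NNNNN}, \eqref{eq:auxtrajectweight5NNNNN1} of $\Ga_{D,T,\kappa_0}(\La,\mathfrak{R})$, that the accounting of the $\exp(D)$-factors at the junctions and of the powers of $\ve_0$ is exact, and that each composite trajectory is produced exactly once so that there is no overcounting. This is precisely what Lemmas~\ref{lem:auxDfunctionsrules1} and \ref{lem:Rtraject} are designed to supply, so in practice the task reduces to organizing the Neumann expansion so that those lemmas apply verbatim; a lesser point is the routine justification of the rearrangements and limits, which rests entirely on the absolute convergence furnished by Lemma~\ref{lem:auxweight1}. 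One must also keep track of the slightly different bookkeeping for ``inner'' versus ``outer'' pieces (plain $\Ga_{D_q,T,\kappa_0}(\La_q)$ vs.\ $\Ga_{D_p,T,\kappa_0}(\La_p,\mathfrak{R})$) appearing in the definitions of $\Ga^{(p,q,t)}_{D,T,\kappa_0}$ and $\Ga^{(p,q,odd,t)}_{D,T,\kappa_0}$, which amounts to observing that a $\La_q$-piece sandwiched between two $\Gamma$-edges is automatically constrained by its neighbours.
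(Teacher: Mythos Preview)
Your proposal is correct and follows essentially the same approach as the paper: a Neumann expansion of $\tilde H_2^{-1}$ around $\cH_{\La_2}^{-1}$, bounding each term by concatenated trajectories via Lemma~\ref{lem:auxDfunctionsrules1} (parts (1) for membership in $\Ga_{D,T,\kappa_0}(\La,\mathfrak{R})$ and (2) for disjointness/uniqueness), and then feeding the result into the Schur complement formula. Your treatment is slightly more explicit than the paper's about the convergence of the Neumann series and the passage to the limit (the paper simply writes the infinite sum and bounds it), but the substance is identical.
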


\begin{proof}
Let $m, n \in \La_2$. For any $t \ge 1$, one has
\begin{equation}\label{eq:aux00c01a1part1}
\begin{split}
|[\cH_{\La_2}^{-1} \bigl(\Gamma_{2,1} \cH_{\La_1}^{-1} \Gamma_{1,2}\cH_{\La_2}^{-1}\bigr)^{t} ](m,n)| \le \sum_{n_{i} \in \La_2; n'_{i} \in \Lambda_1, i = 1, \dots, t} \\
\ve_0^{2t} |\cH_{\La_2}^{-1}(m,n_1)| \exp(-\kappa_0 |n_1-n'_1|) |\cH_{\La_1}^{-1}(n'_1,n'_2)| \dots |\cH_{\La_2}^{-1}(n_{t},n)| \le \sum_{n_{i} \in \La_2; n'_{i} \in \Lambda_1, i = 1,\dots,t} \\
\sum_{k_i, k'_j \ge 1, j = 1,\dots} \ve_0^{(\sum_{j}k_j) + (\sum_i k'_i)-1} \sum_{\gamma_1 \in \Ga_{D_2,T,\kappa_0} (m,n_1;k_1,\La_2,\mathfrak{R})} \sum_{\gamma'_1 \in \Ga_{D_1,T,\kappa_0}(n_2,n_3;k'_1,\La_1,\mathfrak{R})} \dots \sum_{\gamma_{t+1} \in \Ga_{D_2,T,\kappa_0}(n_t,n;k_{t+1},\La_2,\mathfrak{R})} \\
w_{D_2,\kappa_0}(\gamma_1) \exp(-\kappa_0 |n_1-n_2|) W_{D_1,\kappa_0} (\gamma'_1) \dots w_{D,\kappa_0} (\gamma_{t+1}) \\
= \sum_{n_{i} \in \La_2; n'_{i} \in \Lambda_1, i = 1, \dots, t} \\
\sum_{k_i, k'_j \ge 1, j = 1, \dots} \ve_0^{(\sum_{j}k_j) + (\sum_i  k'_i)-1} \sum_{\gamma_1 \in \Ga_{D_2,T,\kappa_0}(m,n_1;k_1,\La_2,\mathfrak{R})} \sum_{\gamma'_1 \in \Ga_{D_1,T,\kappa_0}(n_2,n_3;k'_1,\La_1,\mathfrak{R})} \dots \sum_{\gamma_{t+1} \in \Ga_{D_2,T,\kappa_0} (n_t,n;k_{t+1},\La_2,\mathfrak{R})} \\
w_{D,\kappa_0} (\gamma_1 \cup \gamma'_1 \dots \cup \gamma_{t+1}).
 \end{split}
\end{equation}
Combing \eqref{eq:aux00c01a1part1} with  Lemma~\ref{lem:auxDfunctionsrules1}, one obtains
\begin{equation}\label{eq:aux00c01a1part2}
|[\cH_{\La_2}^{-1} \bigl(\Gamma_{2,1} \cH_{\La_1}^{-1} \Gamma_{1,2}\cH_{\La_2}^{-1}\bigr)^{t} ](m,n)| \le \sum_{k \ge 1} \ve_0^{k-1} \sum_{\gamma \in \Ga^{(2,1,t)}_{D,T,\kappa_0}(m,n;k,\La,\mathfrak{R})} w_{D,\kappa_0}(\gamma).
\end{equation}
Due to Lemma~\ref{lem:auxDfunctionsrules1}, $\Ga^{(2,1,t)}_{D,T,\kappa_0}(m,n;k,\La,\mathfrak{R}) \cap \Ga^{(2,1,t')}_{D,T,\kappa_0} (m,n;k',\La,\mathfrak{R}) = \emptyset$, unless $t = t'$. Hence,
\begin{equation}\label{eq:aux00c01a1part3}
\sum_{t \ge 1}|[\cH_{\La_2}^{-1} \bigl(\Gamma_{2,1} \cH_{\La_1}^{-1} \Gamma_{1,2}\cH_{\La_2}^{-1}\bigr)^{t} ](m,n)| \le \sum_{k \ge 1} \ve_0^{k-1} \sum_{\gamma \in \Ga^{(2,1)}_{D,T,\kappa_0}(m,n;k,\La,\mathfrak{R})} w_{D,\kappa_0}(\gamma).
\end{equation}
Note that $\Ga^{(2,1)}_{D,T,\kappa_0}(m,n;k,\La,\mathfrak{R}) \cap \Ga_{D_2,T,\kappa_0}(m,n;k,\La_2,\mathfrak{R}) = \emptyset$. Thus,
\begin{equation}\label{eq:aux00c01a1part4}
\begin{split}
\tilde H_2^{-1} := [(\cH_{\La_2} - \Gamma_{2,1} \cH_{\La_1}^{-1} \Gamma_{1,2})^{-1}](m,n)| \le |[\cH_{\La_2}^{-1}](m,n)| + \\
\sum_{t \ge 1}|[\cH_{\La_2}^{-1} \bigl(\Gamma_{2,1} \cH_{\La_1}^{-1} \Gamma_{1,2}\cH_{\La_2}^{-1}\bigr)^{t} ](m,n)| \le \sum_{k \ge 1} \ve_0^{k-1} \sum_{\gamma \in \Ga_{D_2,T,\kappa_0}(m,n;k,\La_2,\mathfrak{R})} w_{D,\kappa_0}(\gamma) \\
+ \sum_{k \ge 1} \ve_0^{k-1} \sum_{\gamma \in \Ga^{(2,1)}_{D,T,\kappa_0}(m,n;k,\La,\mathfrak{R})} w_{D,\kappa_0}(\gamma) \le s_{D,T,\kappa_0,\ve_0;k,\La,\mathfrak{R}}(m,n).
\end{split}
\end{equation}
Due to the Schur complement formula, $\cH_{\La}$ is invertible and $[\cH_{\La}^{-1}](m,n) = [\tilde H_2^{-1}](m,n)|$. This finishes the proof of the statement when $m, n \in \La_2$.

Let now $m, n \in \La_1$. Using the Schur complement formula and \eqref{eq:aux00c01a1part2}, one obtains
\begin{equation}\label{eq:aux00c01a1Part4}
\begin{split}
|\cH_{\La}^{-1} (m,n)| = |[\cH_{\La_1,\ve}^{-1}](m,n)| + |[\cH_{\La_1,\ve}^{-1} \Gamma_{1,2} \tilde H_2^{-1} \Gamma_{2,1} \cH_{\La_1,\ve}^{-1}](m,n)| \\
\le \sum_{k \ge 1} \ve_0^{k-1} \sum_{\gamma \in \Ga_{D_1,T,\kappa_0}(m,n;k,\La_1,\mathfrak{R})} w_{D_1,\kappa_0}(\gamma) \\
+ \sum_{k, \ell, \tilde k \ge 1} \ve_0^{k+\ell+\tilde k-1} \sum_{n_1, n_4 \in \La_1; n_2, n_3 \in \La_2} \sum_{\gamma \in \Ga_{D_1,T,\kappa_0}(m,n_1;k,\La_1,\mathfrak{R})} \\
\big[\sum_{\lambda \in \Ga_{D_2,T,\kappa_0}(n_2,n_3;\ell,\La_2)} + \sum_{t \ge 1} \sum_{\lambda \in \Ga^{(2,1,t)}_{D,T,\kappa_0}(n_2,n_3;\ell,\La,\mathfrak{R})}\big] \sum_{\tilde \gamma \in \Ga_{D_1,T,\kappa_0}(n_4,n;k,\La_1,\mathfrak{R})} \\
w_{D,\kappa_0}(\gamma) w(n_1,n_2) w_{D,\kappa_0}(\lambda) w(n_3,n_4) w_{D,\kappa_0}(\tilde\gamma).
\end{split}
\end{equation}
Note that here
$$
w_{D,\kappa_0}(\gamma) w(n_1,n_2) w_{D,\kappa_0}(\lambda) w(n_3,n_4) w_{D,\kappa_0}(\tilde \gamma) = w_{D,\kappa_0}(\gamma \cup \lambda \cup \tilde \gamma).
$$
Let $\gamma \in \Ga_{D_1,T,\kappa_0}(k,\La_1,\mathfrak{R})$, $\tilde \gamma \in \Ga_{D_1,T,\kappa_0}(\tilde k,\La_1,\mathfrak{R})$, $\lambda \in \Ga^{(2,1,t)}_{D,T,\kappa_0}(\ell,\La,\mathfrak{R})$. One has $\lambda = \lambda_1 \cup \lambda'_1 \dots \cup \lambda_{t+1}$ with $\lambda_j \in \Ga_{D_2,T,\kappa_0}(m,n;\La_2,\mathfrak{R})$, $\lambda'_i \in \Ga_{D_1,T,\kappa_0}(\La_1,\mathfrak{R})$. Therefore, $\gamma \cup \lambda \cup \tilde \gamma \in \Ga^{(1,2,t+1)}_{D,T,\kappa_0}(m,n;k+\ell+\tilde k,\La,\mathfrak{R})$. Furthermore, let $\gamma' \in \Ga_{D_1,T,\kappa_0}(k',\La_1,\mathfrak{R})$, $\tilde \gamma' \in \Ga_{D_1,T,\kappa_0}(\tilde k',\La_1,\mathfrak{R})$, $\sigma \in \Ga^{(2,1,t')}_{D,T,\kappa_0}(\ell',\La,\mathfrak{R})$, $\sigma = \sigma_1 \cup \sigma'_1 \dots \cup \sigma_{t'+1}$ with $\sigma_j \in \Ga_{D_2,T,\kappa_0}(m,n;\La_2,\mathfrak{R})$, $\sigma'_i \in \Ga_{D_1,T,\kappa_0}(\La_1,\mathfrak{R})$. If $\gamma \cup \lambda \cup \tilde \gamma = \gamma' \cup \sigma \cup \tilde \gamma'$, then, due to Lemma~\ref{lem:auxDfunctionsrules1}, $t = t'$, $\gamma = \gamma'$, $\tilde \gamma = \tilde \gamma'$, $\lambda_1 = \sigma_1, \lambda'_1 = \sigma'_1, \dots \lambda_{t+1} = \sigma_{t+1}$. If $\lambda \in \Ga_{D_2,T,\kappa_0}(n_2,n_3; \ell,\La_2,\mathfrak{R})$, then $\gamma \cup \lambda \cup \tilde \gamma \in\Ga^{(1,2,1)}_{D,T,\kappa_0}(m,n;k+\ell+\tilde k,\La,\mathfrak{R})$. Therefore,
\begin{equation}\label{eq:aux00c01a1Part4AAAA}
\begin{split}
|\cH_{\La}^{-1} (m,n)| \le \sum_{k \ge 1} \ve_0^{k-1} \sum_{\gamma \in \Ga_{D_1,T,\kappa_0}(m,n;k,\La_1,\mathfrak{R})} w_{D_1,\kappa_0}(\gamma)
+ \sum_{t \ge 1} \sum_{q \ge 3} \ve^{q-1} \sum_{\gamma \in \Ga^{(1,2,t)}_{D,T,\kappa_0}(m,n;q,\La,\mathfrak{R})} w_{D,\kappa_0}(\gamma) \\
= \sum_{k \ge 1} \ve_0^{k-1} \sum_{\gamma \in \Ga_{D_1,T,\kappa_0}(m,n;k,\La_1,\mathfrak{R})} w_{D_1,\kappa_0}(\gamma) + \sum_{q \ge 3} \ve^{q-1} \sum_{\gamma \in \Ga^{(1,2)}_{D,T,\kappa_0}(m,n;q,\La,\mathfrak{R})} w_{D,\kappa_0}(\gamma),
\end{split}
\end{equation}
where $\Ga^{(1,2)}_{D,T,\kappa_0}(m,n;k,\La,\mathfrak{R}) = \bigcup_{t \ge 1} \Ga^{(1,2,t)}_{D,T,\kappa_0}(m,n;k,\La,\mathfrak{R})$. Note that $\Ga^{(1,2)}_{D,T,\kappa_0}(m,n;k,\La,\mathfrak{R}) \cap \Ga_{D_1,T,\kappa_0}(m,n;k,\La_1,\mathfrak{R}) = \emptyset$. Thus,
\begin{equation}\label{eq:aux00c01a1part4ZXXZ}
|\cH_{\La}^{-1} (m,n)|\le s_{D,T,\kappa_0,\ve_0;k,\La,\mathfrak{R}}(m,n).
\end{equation}
This finishes the proof of the statement when $m,n \in \La_1$. The proof for the cases $m \in \La_1$, $n \in \La_2$ and $m \in \La_2$, $n \in \La_1$ is completely similar.
\end{proof}

\begin{lemma}\label{lem:aux5AABBCC}
Let $\La \subset \IZ^\nu$. Assume that
$$
\ve_0 w(m,n) := \ve_0 w_\La (m,n) := |\mathcal{H}_\Lambda(m,n)| \le \varepsilon_0 \exp( - \kappa_0 |m-n| ), \quad m \not= n,
$$
$0 < \kappa_0 < 1$, $0 < \ve_0 \le \min(2^{-24\nu-4} \kappa_0^{4\nu}, \exp(-2^5 T \kappa_0^{-1}), 2^{-10(\nu+1)} T^{-8\nu})$. Let $\La_1\cup \La_2 = \La$, $\La_1 \cap \La_2 = \emptyset$,
$$
\cH_\Lambda = \begin{bmatrix} \cH_{\Lambda_1} & \Gamma_{1,2}\\[5pt] \Gamma_{2,1} & \cH_{\Lambda_2}\end{bmatrix}
$$
Assume that

$(i)$ The matrix $\cH_{\La_1}$ is invertible and there exists $D_1 \in \mathcal{G}_{\La_1,T,\kappa_0}$ such that
\begin{equation}\label{eq:aux00H1inverse1newNEW}
|\cH_{\Lambda_1}^{-1}(m,n)| \le s_{D_1,T,\kappa_0,\ve_0;\La_1,\mathfrak{R}}(m,n).
\end{equation}

$(ii)$ $\tilde H_2 := \cH_{\La_2} - \Gamma_{2,1} \cH_{\La_1}^{-1} \Gamma_{1,2}$ obeys $|\det \tilde H_2|^{-1} \le \exp(D_0)$, where $D_0 \le T \min_{x \in \La_2} \mu_\La(x)^{1/5}$.

Set $D(x) = D_1(x)$ if $x \in \La_1$, $D(x) = D_0$ if $x \in \La_2$. Then, $D \in \mathcal{G}_{\La,T,\kappa_0}$, $\cH_\La$ is invertible and
\begin{equation}\label{eq:aux00c011OP}
|\cH_{\Lambda}^{-1}(m,n)| \le s_{D,T,\kappa_0,\ve_0;\La,\mathfrak{R}}(m,n).
\end{equation}
\end{lemma}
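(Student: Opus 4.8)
The plan is to run the argument of Lemma~\ref{lem:aux5} essentially unchanged, the one new ingredient being to replace the missing bound on $\cH_{\La_2}^{-1}$ by an entrywise bound on $\tilde H_2^{-1}$ manufactured out of the determinant bound $|\det\tilde H_2|^{-1}\le\exp(D_0)$. First I would dispose of the two soft points: that $D\in\mathcal{G}_{\La,T,\kappa_0}$ follows from $D_1\in\mathcal{G}_{\La_1,T,\kappa_0}$, the inequality $\mu_\La(x)\ge\mu_{\La_1}(x)$ for $x\in\La_1$ (Lemma~\ref{lem:auxDfunctionsrules}), and $D_0\le T\min_{y\in\La_2}\mu_\La(y)^{1/5}$ — this is precisely Lemma~\ref{lem:Rtraject}(1) with $D_2\equiv D_0$; and that $\cH_\La$ is invertible follows from the Schur complement formula (Lemma~\ref{lem:2schur}), since $\cH_{\La_1}$ is invertible by hypothesis and $\tilde H_2$ is invertible because $\det\tilde H_2\ne0$, the inverse being given by \eqref{eq:2schurfor} in terms of $\cH_{\La_1}^{-1}$, the $\Gamma_{i,j}$'s, and $\tilde H_2^{-1}$.

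The core of the proof is then to bound $|\tilde H_2^{-1}(m,n)|$ for $m,n\in\La_2$ in a form compatible with the functions $s_{D,\dots}$. I would first observe that the off-diagonal entries of $\tilde H_2$ still decay: writing $\tilde H_2=\cH_{\La_2}-\Gamma_{2,1}\cH_{\La_1}^{-1}\Gamma_{1,2}$, the correction term is estimated by composing $|\cH_{\La_1}^{-1}(\cdot,\cdot)|\le s_{D_1,\dots;\La_1,\mathfrak{R}}\le S_{D_1,\dots;\La_1,\mathfrak{R}}$ with the exponential decay of $\Gamma$, exactly as in the estimate for $\mathfrak{G}$ in Lemma~\ref{lem:auxweight1iterated}, giving $|\tilde H_2(m,n)|\le 2\ve_0\exp(-\tfrac{\kappa_0}{4}|m-n|)$ for $m\ne n$. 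Then I would apply Cramer's rule, $\tilde H_2^{-1}(m,n)=\pm(\det\tilde H_2)^{-1}\det B_{n,m}$ with $B_{n,m}$ the matrix obtained by deleting the row at $n$ and the column at $m$, and expand $\det B_{n,m}$ over bijections $\La_2\setminus\{n\}\to\La_2\setminus\{m\}$; completing each bijection to a permutation of $\La_2$ by $n\mapsto m$, the cycle through $m$ and $n$ becomes a self-avoiding path $\gamma$ in $\La_2$ carrying a product of off-diagonal entries of $\tilde H_2$, and the remaining cycles contribute the determinant of $\tilde H_2$ restricted to $\La_2\setminus\gamma$. Summing the path contributions against the decay bound (the resulting sum over paths converges geometrically by Lemma~\ref{lem:2gammasum}), bounding the ``remainder determinant'' crudely, and dividing by $|\det\tilde H_2|\ge\exp(-D_0)$, I expect to land on a bound of the shape $|\tilde H_2^{-1}(m,n)|\le s_{D,T,\kappa_0,\ve_0;\La_2,\mathfrak{R}}(m,n)$, which by Lemma~\ref{lem:auxweight1} (with $\bar D=D_0$, using $\mu_\La(x)\ge(D_0/T)^5$ on $\La_2$) is $\le 3\ve_0^{1/2}\exp(-\tfrac{7}{8}\kappa_0|m-n|+2T\mu_\La(m)^{1/5})$ for $m\ne n$ and $\le\exp(D_0)+3\ve_0^{1/2}\exp(2T\mu_\La(m)^{1/5})$ for $m=n$ — i.e.\ $\tilde H_2^{-1}$ obeys the very estimate that $\cH_{\La_2}^{-1}$ obeyed in Lemma~\ref{lem:aux5}, with $D_2\equiv D_0$.

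From this point the proof is a copy of Lemma~\ref{lem:aux5}: I would feed the Step-3 bound, the bound $s_{D_1,\dots}$ on $\cH_{\La_1}^{-1}$, and the exponential decay of $\Gamma$ into the three blocks of \eqref{eq:2schurfor}, using the multiplicativity of $w_{D,\kappa_0}$ under $\gamma_1\cup\gamma_2$ and the disjoint-decomposition bookkeeping of Lemma~\ref{lem:auxDfunctionsrules1}, and read off $|\cH_\La^{-1}(m,n)|\le s_{D,T,\kappa_0,\ve_0;\La,\mathfrak{R}}(m,n)$ for $m\in\La_1,n\in\La_2$ (and symmetrically) and for the correction term on the $\La_1$--$\La_1$ block. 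The one place where new care is needed is that every excursion of a trajectory into $\La_2$ now costs a factor $\exp(D_0)$; but such an excursion happens at points $x$ with $\mu_\La(x)\ge(D_0/T)^5-\|\gamma\|$, so this factor is absorbed by the $\exp(2T\mu_\La^{1/5})$ already present in the $S_{D,\dots;\La,\mathfrak{R}}$-estimate of Lemma~\ref{lem:auxweight1}, which is exactly the role of the hypothesis $D_0\le T\min_{y\in\La_2}\mu_\La(y)^{1/5}$.

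The hard part is Step~3: a lower bound on $|\det\tilde H_2|$ alone does not control $\tilde H_2^{-1}$ entrywise, so one must additionally tame the ``remainder determinant'' in the cofactor expansion and verify that the unavoidable $\exp(O(D_0))$ losses are exactly those that the $\mathfrak{R}$-constrained trajectory classes with $D\equiv D_0$ on $\La_2$ can swallow; this is why the statement is cast in terms of $\det\tilde H_2$ rather than $\cH_{\La_2}^{-1}$, and it dictates the precise form of the hypotheses on $D_0$ and $\ve_0$.
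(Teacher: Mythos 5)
Your proposal tracks the paper's argument closely in all the soft parts: $D\in\mathcal{G}_{\La,T,\kappa_0}$ via Lemma~\ref{lem:auxDfunctionsrules}/Lemma~\ref{lem:Rtraject}(1), invertibility of $\cH_\La$ via the Schur complement, the entrywise estimate \eqref{eq:2-10acmm5NNN} for $\tilde H_2(m,n)$ by composing $\cH_{\La_1}^{-1}$ with the off-diagonal decay, and then feeding everything back through \eqref{eq:2schurfor} using the concatenation machinery of Lemma~\ref{lem:Rtraject}(2). The paper likewise derives \eqref{eq:2-10acmm5QP} from ``Cramer's rule, condition $(i)$, and \eqref{eq:2-10acmm5NNN}'', which is the same idea as your Step 3 — so this is the same route, not a different one.

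However, the part you (correctly) flag as the hard part — controlling the ``remainder determinant'' in the cofactor expansion — is a genuine gap in your proposal, and the paper does not resolve it by taming that determinant either. The expansion of $\det\tilde H_2^{[n,m]}$ into self-avoiding paths $\gamma$ times $\det(\tilde H_2|_{\La_2\setminus\gamma})$ does not lead anywhere for general $\La_2$: the factors $\det(\tilde H_2|_{\La_2\setminus\gamma})$ are not controlled by $|\det\tilde H_2|$ or by $D_0$, and the ``crude'' bound you invoke (product of row sups times a factorial) is not absorbed by $\exp(D_0)$ or by the trajectory sums. What actually closes the argument is that in every application of this lemma in the paper, $\La_2$ is a \emph{one- or two-point set}: $\La_2=\{m_0\}$ in Proposition~\ref{prop:4-4}(5), and $\La_2=\{m_0^+,m_0^-\}$ in Propositions~\ref{prop:8-5n},~\ref{rem:con1smalldenomnn},~\ref{prop:6-4} and their descendants. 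For $|\La_2|=1$, Cramer gives $\tilde H_2^{-1}(m,m)=1/\det\tilde H_2$, which is $\le\exp(D_0)=\exp(D(m))$, i.e.\ exactly the $k=1$ term of $s_{D,T,\kappa_0,\ve_0;\La,\mathfrak{R}}(m,m)$. For $|\La_2|=2$ with $\La_2=\{m,n\}$, Cramer gives $\tilde H_2^{-1}(m,n)=-\tilde H_2(n,m)/\det\tilde H_2$ and $\tilde H_2^{-1}(m,m)=\tilde H_2(n,n)/\det\tilde H_2$, i.e.\ the ``cofactor'' is a single entry of $\tilde H_2$, not a large sub-determinant. One then substitutes \eqref{eq:2-10acmm5NNN} for $|\tilde H_2(n,m)|$, multiplies by $\exp(D_0)$, and compares with the weight $\exp(D(m)+D(n))=\exp(2D_0)$ that $s_{D,T,\kappa_0,\ve_0;\La,\mathfrak{R}}(m,n)$ already carries on trajectories with both endpoints in $\La_2$; the concatenation $\gamma'_1\cup\gamma\cup\gamma'_2$ of Lemma~\ref{lem:Rtraject}(2) supplies exactly these trajectories. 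So the permutation machinery in your Step 3 is unnecessary, and the determinant-vs.-inverse obstruction you diagnose is moot in the regime where the lemma is actually used. If you wanted the lemma for larger $\La_2$ you would indeed need a further hypothesis (e.g.\ on all principal minors, not just on $\det\tilde H_2$).
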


\begin{proof}
Note that condition $(i)$ implies in particular $D \in \mathcal{G}_{\La,T,\kappa_0}$. Furthermore,
\begin{equation}\label{eq:2-10acmm5NNN}
|\tilde H_2(m,n)| \le |\cH(m,n)| + \sum_{m',n' \in \La_1} |\cH(m,m')| s_{D_1,T,\kappa_0,\ve_0;\La_1,\mathfrak{R}}(m',n') |\cH(n',n)|.
\end{equation}
Let $m',n' \in \La_1$ and $\gamma \in \Ga_{D,T,\kappa_0}(\La_1,\mathfrak{R})$ be arbitrary. Set $\gamma_1' = (m)$ if $\gamma'' = (n)$. Then, due to Lemma~\ref{lem:Rtraject}, $\gamma'_1 \cup \gamma \cup \gamma'_2 \in \Ga_{D,T,\kappa_0}(\La_1,\mathfrak{R})$. Using Cramer's rule, condition $(i)$, and \eqref{eq:2-10acmm5NNN}, one obtains
\begin{equation}\label{eq:2-10acmm5QP}
|\tilde H_2^{-1}(m,n)| \le s_{D,T,\kappa_0,\ve_0;\La,\mathfrak{R}}(m,n).
\end{equation}
Similarly, let $m,n \in \La_1$. Using the Schur complement formula, Lemma~\ref{lem:Rtraject}, \eqref{eq:aux00H1inverse1new}, and \eqref{eq:2-10acmm5NNN}, one obtains
\begin{equation}\label{eq:aux00c014RR}
|\cH_{\La}^{-1} (m,n)| = |[\cH_{\La_1}^{-1}](m,n)| + |[\cH_{\La_1}^{-1} \Gamma_{1,2} \tilde H_2^{-1} \Gamma_{2,1} \cH_{\La_1}^{-1}](m,n)| \\
\le s_{D,T,\kappa_0,\ve_0;\La,\mathfrak{R}}(m,n).
\end{equation}
The same estimate holds for $m \in \La_1$, $n \in \La_2$.
\end{proof}

\begin{lemma}\label{lem:aux6}
Assume that the following conditions hold:
\begin{equation}\label{eq:auxbasicpertcondrep1}
\begin{split}
\ve_0 w(m,n) := |\cH(m,n)| \le \ve_0 \exp(- \kappa_0 |m-n|), \quad m,n \in \La, m \neq n \\
\min_{m \in \La} |\cH(m,m)| \ge \exp(-4T\kappa_0^{-1}),
\end{split}
\end{equation}
$0 < \ve_0 \le \min(2^{-24\nu-4} \kappa_0^{4\nu}, \exp(-(8T \kappa_0^{-1})^5), 2^{-10(\nu+1)}T^{-8\nu})$. Then, $\cH_\La$ is invertible and
\begin{equation}\label{eq:aux00basicpertestimate}
\left|\cH_\La^{-1}(m,n)\right| \le s_{D,T,\kappa_0,\ve_0;k,\La}(m,n).
\end{equation}
\end{lemma}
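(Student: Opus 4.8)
The plan is to read this as the base case of the multi-scale scheme: the hypothesis says that the diagonal of $\cH_\La$ is bounded away from zero (possibly only by the small quantity $\exp(-4T\kappa_0^{-1})$), so a Neumann series around the diagonal part converges, and the only real work is to repackage the resulting sum over paths as the quantity $s_{D,T,\kappa_0,\ve_0;\La}(m,n)$.

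First I would set up the bookkeeping dictated by the hypotheses. Write $\cH_\La = \cD + \cR$, where $\cD$ is the diagonal part and $\cR$ the off-diagonal part; put $\ve_0 w(m,n) := |\cH(m,n)|$ for $m \neq n$ and $w(m,m) := 1$, so that $w$ satisfies \eqref{eq:2.weighdecaycond}; and define $D(m) := \max\big(1, -\log |\cH(m,m)|\big)$. The diagonal hypothesis gives both $|\cH(m,m)|^{-1} \le \exp(D(m))$ and, crucially, $D(m) \le 4T\kappa_0^{-1}$ for every $m \in \La$. Because no site ever reaches the threshold $4T\kappa_0^{-1}$, the defining inequality \eqref{eq:auxDcond} of $\mathcal{G}_{\La,T,\kappa_0}$ holds vacuously (so $D \in \mathcal{G}_{\La,T,\kappa_0}$) and the admissibility condition \eqref{eq:auxtrajectweight5} holds vacuously for every trajectory, so that $\Ga_{D,T,\kappa_0}(m,n;k,\La) = \Ga(m,n;k,\La)$ for all $m,n,k$. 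This is exactly what collapses the elaborate trajectory combinatorics of this section to the trivial ``all paths allowed'' case in the present lemma.

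Next, the Neumann series. Since $\cD$ is invertible, $\cH_\La = \cD\,(I + \cD^{-1}\cR)$, and the Schur test gives $\|\cD^{-1}\cR\|_{\mathrm{op}} \le \exp(4T\kappa_0^{-1})\cdot \ve_0 \sup_m \sum_{n\neq m}\exp(-\kappa_0|m-n|) < 1$ for $\ve_0$ in the admissible range (here $\ve_0 \le \exp(-(8T\kappa_0^{-1})^5)$ is what matters). Hence $\cH_\La$ is invertible and $\cH_\La^{-1} = \sum_{k\ge 1}(-1)^{k-1}\cD^{-1}(\cR\cD^{-1})^{k-1}$, the series converging in operator norm. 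Expanding the $(m,n)$-entry of the $k$-th term as a matrix product, one gets a sum over paths $m = m_0 \to m_1 \to \cdots \to m_{k-1} = n$ with $m_{j+1}\neq m_j$ (since $\cR$ vanishes on the diagonal), i.e.\ over $\gamma = (m_0,\dots,m_{k-1}) \in \Ga(m,n;k,\La)$, and each summand has modulus
\[ \Big(\prod_{j=0}^{k-2}|\cH(m_j,m_{j+1})|\Big)\prod_{j=0}^{k-1}|\cH(m_j,m_j)|^{-1} \le \ve_0^{k-1}\Big(\prod_{j=0}^{k-2}w(m_j,m_{j+1})\Big)\exp\Big(\sum_{j=0}^{k-1}D(m_j)\Big) = \ve_0^{k-1}\, w_{D,\kappa_0}(\gamma). \]
Summing over $\gamma \in \Ga(m,n;k,\La) = \Ga_{D,T,\kappa_0}(m,n;k,\La)$ gives the bound $\ve_0^{k-1}\, s_{D,T,\kappa_0;k,\La}(m,n)$ for the $k$-th term, and summing over $k\ge 1$ yields $|\cH_\La^{-1}(m,n)| \le s_{D,T,\kappa_0,\ve_0;\La}(m,n)$, which is the claim. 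Finiteness of the right-hand side (which also re-justifies the termwise rearrangement) is immediate from $w_{D,\kappa_0}\le W_{D,\kappa_0}$, Remark~\ref{rem:2withRnoRestest}, and Lemma~\ref{lem:auxweight1}; a cruder self-contained argument uses $D(m)\le 4T\kappa_0^{-1}$ together with Lemma~\ref{lem:2gammasum}.

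There is no serious obstacle. The single point requiring care is the choice of $D$ and the observation that the constant $4T\kappa_0^{-1}$ in the hypothesis $\min_m|\cH(m,m)| \ge \exp(-4T\kappa_0^{-1})$ is tuned precisely so that $D$ never reaches the threshold at which the constraints of Definition~\ref{def:aux1} become active; if one wants the inequality there to be strict, one reads the threshold condition ``$\ge 4T\kappa_0^{-1}$'' with a strict sign or enlarges the constant infinitesimally, which changes nothing. Once this is in place the lemma is just the standard perturbative inversion of a diagonally dominant matrix, written in the notation that Lemmas~\ref{lem:aux5} and \ref{lem:aux5AABBCC} will feed into the scale induction.
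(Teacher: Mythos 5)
Your proof is correct and follows essentially the same route as the paper's: decompose $\cH_\La$ into its diagonal part $\cD$ plus the remainder $\cR$, invert via the Neumann series, expand entrywise as a sum over paths in $\Ga(m,n;k,\La)$, and bound each summand by $\ve_0^{k-1}w_{D,\kappa_0}(\gamma)$. The only cosmetic difference is your adaptive choice $D(m)=\max(1,-\log|\cH(m,m)|)$ versus the paper's uniform $D(m)=4T\kappa_0^{-1}$; since $s_{D,T,\kappa_0,\ve_0;\La}$ is monotone in $D$ and your $D$ is pointwise no larger, your bound is (at worst trivially) sharper and implies the paper's, and both choices hit the same boundary subtlety about whether the threshold ``$\ge 4T\kappa_0^{-1}$'' in Definition~\ref{def:aux1} is strict, which you correctly flag as immaterial.
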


\begin{proof}
Set $D(m) = 4T\kappa_0^{-1}$, $m \in \La$. Note that $D \in \mathcal{G}_{\La,T,\kappa_0}$. Set also $A(m,n) = \cH(m,m)\delta_{m,n}$, $B(m,n) = \cH(m,n) - A(m,n)$, $m,n \in \La$. Then, $A$ is invertible with  $|A^{-1}(m,m)| \le \exp(4T\kappa_0^{-1})$ and $A^{-1}(m,n) = 0$ if $m \neq n$. Just as in \eqref{eq:aux00c01a1part1}--\eqref{eq:aux00c01a1part3}, one obtains
\begin{equation}\label{eq:aux00c01a1partAB}
\sum_{t \ge 1} |[A^{-1} \bigl(B A^{-1}\bigr)^{t} ](m,n)| \le \sum_{k \ge 1} \ve_0^{k-1} \sum_{\gamma \in \Ga_{D,T,\kappa_0}(m,n;k,\La)} w_{D,\kappa_0}(\gamma).
\end{equation}
Hence,
\begin{equation}\label{eq:aux00c01a1partAB1}
\left|\cH_\La^{-1}(m,n)\right| \le |A^{-1}(m,n)| + \sum_{t \ge 1} |[A^{-1} \bigl(B A^{-1}\bigr)^{t} ](m,n)| = s_{D,T,\kappa_0,\ve_0;k,\La}(m,n).
\end{equation}
\end{proof}

Now it is very easy to derive the main result of this section which is the ``general multi-scale analysis scheme based on the Schur complement formula'' mentioned in the section title.

\begin{prop}\label{prop:aux1}
Let $(\cH(x,y))_{x,y \in \La}$, $\La \subset \IZ^\nu$ be a matrix, which obeys
$$
\ve_0 w(m,n) := |\cH(x,y)| \le \ve_0 \exp(-\kappa_0 |x-y|)
$$
for any $x \neq y$, $0 < \ve_0 \le \min(2^{-24 \nu-4} \kappa_0^{4\nu}, \exp(-(8T \kappa_0^{-1})^5), 2^{-10(\nu+1)} T^{-8\nu})$. Let $\La_{j}$,  $j \in J$ be subsets of $\La$, $\La_i \cap \La_j = \emptyset$ if $i \neq j$. Let $D_j \in \mathcal{G}_{\La_j,T,\kappa_0}$. Assume that the following conditions hold:
\begin{itemize}

\item[(a)] Each $\cH_{\La_{j}}$ is invertible and
\begin{equation}\label{eq:aux102}
|\cH_{\La_j}^{-1}(m,n)| \le s_{D_j,T,\kappa_0,\ve_0;k,\La_j,\mathfrak{R}}(m,n), \text {for any $m,n \in \La_j$ and any $j$}.
\end{equation}

\item[(b)] For each $n \notin \bigcup_{j \in J} \La_{j}$, $|\cH(n,n)| \ge \exp(-4T \kappa_0^{-1})$.

\end{itemize}
Then,
\begin{equation}\label{eq:aux201}
|\cH_\La^{-1}(m,n)| \le s_{D,T,\kappa_0,\ve_0;k,\La,\mathfrak{R}}(m,n),
\end{equation}
where $D(m) = D_j(m)$ if $m \in \La_j$ for some $j$, and $D(m) = 4T \kappa_0^{-1}$ otherwise.
\end{prop}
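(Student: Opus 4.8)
The plan is to treat the ``non-resonant'' part of $\La$ by a direct Neumann series (Lemma~\ref{lem:aux6}) and then to adjoin the sets $\La_j$ one at a time, invoking at each step the Schur-complement functoriality of Lemma~\ref{lem:aux5}.

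First I would set $\La_{j_0} := \La \setminus \bigcup_{j \in J} \La_j$ and apply Lemma~\ref{lem:aux6} to the restriction $\cH_{\La_{j_0}}$: its off-diagonal entries satisfy $|\cH_{\La_{j_0}}(m,n)| \le \ve_0 \exp(-\kappa_0 |m-n|)$ for $m \neq n$ (inherited from $\cH$), its diagonal entries satisfy $\min_{m \in \La_{j_0}} |\cH(m,m)| \ge \exp(-4T\kappa_0^{-1})$ by condition $(b)$, and $\ve_0$ obeys the required smallness bound. Hence $\cH_{\La_{j_0}}$ is invertible with
\[
|\cH_{\La_{j_0}}^{-1}(m,n)| \le s_{D_{j_0},T,\kappa_0,\ve_0;\La_{j_0}}(m,n) \le s_{D_{j_0},T,\kappa_0,\ve_0;\La_{j_0},\mathfrak{R}}(m,n), \qquad D_{j_0}(m) := 4T\kappa_0^{-1},
\]
the last inequality being the inclusion of trajectory sets underlying Remark~\ref{rem:2withRnoRestest} (whose argument applies verbatim to the lowercase $s$); and $D_{j_0} \in \mathcal{G}_{\La_{j_0},T,\kappa_0}$. (If $J = \emptyset$ this is already the assertion; if $\La_{j_0} = \emptyset$ the step is vacuous and the induction below simply starts from one of the $\La_j$.)

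Next I would enumerate $J = \{1,\dots,r\}$ and set $\La^{(0)} := \La_{j_0}$, $D^{(0)} := D_{j_0}$, and $\La^{(i)} := \La^{(i-1)} \cup \La_i$ with $D^{(i)} := D^{(i-1)}$ on $\La^{(i-1)}$ and $D^{(i)} := D_i$ on $\La_i$. I would prove, by induction on $i$, that $D^{(i)} \in \mathcal{G}_{\La^{(i)},T,\kappa_0}$, that $\cH_{\La^{(i)}}$ is invertible, and that $|\cH_{\La^{(i)}}^{-1}(m,n)| \le s_{D^{(i)},T,\kappa_0,\ve_0;\La^{(i)},\mathfrak{R}}(m,n)$ for all $m,n \in \La^{(i)}$; the base case $i=0$ is the previous paragraph. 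For the inductive step I would invoke Lemma~\ref{lem:aux5} with $\La_1 := \La^{(i-1)}$ and $\La_2 := \La_i$: hypothesis $(i)$ there is the off-diagonal bound on $\cH_{\La^{(i)}}$ (inherited from $\cH$) together with the unchanged smallness of $\ve_0$; the required memberships $D^{(i-1)} \in \mathcal{G}_{\La^{(i-1)},T,\kappa_0}$ and $D_i \in \mathcal{G}_{\La_i,T,\kappa_0}$ hold by the inductive hypothesis and by hypothesis; and hypothesis $(ii)$ --- the invertibility of $\cH_{\La^{(i-1)}}$ and of $\cH_{\La_i}$, each with the corresponding $s_{\,\cdot\,,\mathfrak{R}}$ bound --- holds by the inductive hypothesis and by condition $(a)$ applied to $\La_i$. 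Lemma~\ref{lem:aux5} then yields invertibility of $\cH_{\La^{(i)}}$ together with $|\cH_{\La^{(i)}}^{-1}(m,n)| \le s_{D^{(i)},T,\kappa_0,\ve_0;\La^{(i)},\mathfrak{R}}(m,n)$ in each of the three position cases ($m,n \in \La^{(i-1)}$; $m,n \in \La_i$; $m,n$ split), and Lemma~\ref{lem:auxDfunctionsrules} gives $D^{(i)} \in \mathcal{G}_{\La^{(i)},T,\kappa_0}$. Taking $i=r$ gives $\La^{(r)} = \La$ and, since $\{\La_j\}_{j \in J} \cup \{\La_{j_0}\}$ partitions $\La$ and $D^{(r)}$ restricts to $D_j$ on each $\La_j$ and to $4T\kappa_0^{-1}$ elsewhere, $D^{(r)} = D$, which is \eqref{eq:aux201}. (If $J$ is infinite one applies this to finite $J' \subset J$ and passes to the limit, using that the bounds $s_{D,T,\kappa_0,\ve_0;\La',\mathfrak{R}}$ increase with $\La'$.)

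Since all the analytic substance is already carried by Lemmas~\ref{lem:auxweight}--\ref{lem:auxweight1} (the per-trajectory weight estimates) and by Lemma~\ref{lem:aux5} (the functoriality of the trajectory sums under the Schur complement), I do not expect a real obstacle in this proposition: the only points needing attention are that the glued exponent $D^{(i)}$ stays in the class $\mathcal{G}$ at every stage --- which is exactly repeated application of Lemma~\ref{lem:auxDfunctionsrules}, and is the step I would check most carefully --- and the harmless passage from the $\mathfrak{R}$-free bound delivered by Lemma~\ref{lem:aux6} to the $\mathfrak{R}$-bound that Lemma~\ref{lem:aux5} expects as input. One could equivalently first merge all the $\La_j$, $j \in J$, into a single block by $|J|-1$ applications of Lemma~\ref{lem:aux5} and then adjoin $\La_{j_0}$ in one final application; the accounting is identical.
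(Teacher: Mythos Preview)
Your proposal is correct and follows essentially the same approach as the paper: treat the non-resonant remainder $\La_0 := \La \setminus \bigcup_j \La_j$ by Lemma~\ref{lem:aux6}, then repeatedly apply Lemma~\ref{lem:aux5} to adjoin the $\La_j$ one at a time, with Lemma~\ref{lem:auxDfunctionsrules} ensuring $D \in \mathcal{G}_{\La,T,\kappa_0}$. The paper's proof is a three-line version of exactly this argument; your write-up simply makes the induction explicit.
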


\begin{proof}
Note that $D \in \mathcal{G}_{\La,T,\kappa_0}$. Set $\La_0 := \La \setminus \cup_{j \in J} \La_{j}$. Due to Lemma~\ref{lem:aux6},
\begin{equation}\label{eq:auxlambdas1}
|\cH_{\Lambda_{0}}^{-1}(m,n)| \le s_{D,T,\kappa_0,\ve_0;k,\La_0}(m,n).
\end{equation}
Applying repeatedly Lemma~\ref{lem:aux5}, one obtains the statement.
\end{proof}

\begin{remark}\label{rem:2withRnoRest1}
In the last three lemmas and in Proposition~\ref{prop:aux1} we analyze the cases based on the functions $s_{D,T,\kappa_0,\ve_0;\La,\mathfrak{R}}(m,n)$ only. The analysis of the cases based on the functions $s_{D,T,\kappa_0,\ve_0;\La}(m,n)$ is completely similar.
\end{remark}

\begin{lemma}\label{lem:2Qfunction}
Assume that $(\mathcal{H}(m,n))_{m, n \in \mathbb{Z}^\nu}$ obeys
$$
|\mathcal{H}(m,n)| \le \varepsilon_0 \exp( - \kappa_0 |m-n| ), \quad m \not= n,\quad m, n \in \mathbb{Z}^\nu.
$$
Given $\La$ such that $\cH_{\La}$ is invertible, set
\begin{equation}\label{eq:2abrem1a1Qdefinition}
G(m_0, n_0, \La) := \sum_{m, n \in \La} \cH(m_0,m) \cH_{\La}^{-1} (m,n) \cH(n,n_0), \quad m_0, n_0 \in \mathbb{Z}^\nu.
\end{equation}
Assume that $\La_j$, $\La$ are such that all conditions of Lemma~\ref{lem:aux5} hold. Assume also that $R := \dist(\{m_0, n_0\}, \La_2)$ obeys $\max ( \max_{x \in \La} D(x), 4 T \kappa_0^{-1} ) \le \kappa_0 R/8$. Then,
\begin{equation}\label{eq:2QQ0comparison}
|G(m_0, n_0, \La) - G(m_0, n_0, \La_1)| \le 4 |\ve_0|^{3/2} \exp(-\frac {\kappa_0}{4} R).
\end{equation}
\end{lemma}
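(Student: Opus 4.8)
The plan is to apply the Schur complement formula to $\cH_\La$ with respect to the splitting $\La=\La_1\cup\La_2$. By hypothesis all the assumptions of Lemma~\ref{lem:aux5} hold, so $\tilde H_2:=\cH_{\La_2}-\Gamma_{2,1}\cH_{\La_1}^{-1}\Gamma_{1,2}$ is invertible (and, acting on $\IC^{\La_2}$, satisfies $\tilde H_2^{-1}(m,n)=0$ unless $m,n\in\La_2$), $\cH_\La$ is invertible, and one has $|\tilde H_2^{-1}(m,n)|\le s_{D,T,\kappa_0,\ve_0;\La,\mathfrak{R}}(m,n)\le S_{D,T,\kappa_0,\ve_0;\La,\mathfrak{R}}(m,n)$ as well as $|\cH_{\La_1}^{-1}(m,n)|\le s_{D_1,T,\kappa_0,\ve_0;\La_1,\mathfrak{R}}(m,n)\le S_{D_1,T,\kappa_0,\ve_0;\La_1,\mathfrak{R}}(m,n)$. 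I would substitute the block formula \eqref{eq:2schurfor} into the definition of $G(m_0,n_0,\La)$ and split every sum over $\La$ into a sum over $\La_1$ and one over $\La_2$. The contribution of the first summand $\cH_{\La_1}^{-1}$ in the $(\La_1,\La_1)$-block reproduces exactly $G(m_0,n_0,\La_1)$, so that $G(m_0,n_0,\La)-G(m_0,n_0,\La_1)=\Sigma_{11}+\Sigma_{12}+\Sigma_{21}+\Sigma_{22}$, where each $\Sigma_{pq}$ is an absolutely convergent multiple sum whose summands are products of entries of $\cH$, of $\cH_{\La_1}^{-1}$, and of exactly one entry $\tilde H_2^{-1}(m_2,n_2)$ with $m_2,n_2\in\La_2$; moreover in every summand $m_0$ is joined to $m_2$ and $n_0$ to $n_2$ by chains of such factors. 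For instance $\Sigma_{22}=\sum_{m,n\in\La_2}\cH(m_0,m)\tilde H_2^{-1}(m,n)\cH(n,n_0)$, while $\Sigma_{11}=\sum\cH(m_0,m)\cH_{\La_1}^{-1}(m,m_1)\cH(m_1,m_2)\tilde H_2^{-1}(m_2,n_2)\cH(n_2,n_1)\cH_{\La_1}^{-1}(n_1,n)\cH(n,n_0)$ over $m,m_1,n_1,n\in\La_1$, $m_2,n_2\in\La_2$; the chains are $m_0\to m\to m_1\to m_2$ and $n_2\to n_1\to n\to n_0$.

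Next I would estimate each $\Sigma_{pq}$ termwise, using $|\cH(m,n)|\le\ve_0\exp(-\kappa_0|m-n|)$ together with the bounds of Lemma~\ref{lem:auxweight1}: for $m\ne n$, $S_{D_1,T,\kappa_0,\ve_0;\La_1,\mathfrak{R}}(m,n)\le 3\ve_0^{1/2}\exp(-\tfrac78\kappa_0|m-n|+2T(\min(\mu_{\La_1}(m),\mu_{\La_1}(n)))^{1/5})$ and $S_{D,T,\kappa_0,\ve_0;\La,\mathfrak{R}}(m,n)\le 2\ve_0^{1/2}\exp(-\tfrac14\kappa_0|m-n|+2\bar D)$, with the diagonal entries bounded analogously. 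Two observations close the estimate. First, since $\La_2\subset\bar\La\setminus\La_1$ one has $\mu_{\La_1}(x)\le\dist(x,\La_2)$; as the single $\cH_{\La_1}^{-1}$-entry occurring in each chain is always adjacent to the $\cH$-edge that crosses from $\La_1$ into $\La_2$, its additive correction $2T\mu_{\La_1}(\cdot)^{1/5}$ is bounded by $2T$ times the fifth root of that edge's length and can be absorbed into it by Young's inequality at the cost of a constant $c(T,\kappa_0)$ independent of $R$; after this absorption every edge of the two chains still carries exponential decay of rate $\ge\tfrac12\kappa_0$, so the two chains together produce a factor $\le\exp(-\tfrac12\kappa_0(R_1+R_2))$, where $R_1\ge|m_0-m_2|\ge R$ and $R_2\ge|n_2-n_0|\ge R$ are the chain lengths (using $m_2,n_2\in\La_2$). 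Second, the only surviving positive term left in the exponent is the single $2\bar D$ coming from $\tilde H_2^{-1}(m_2,n_2)$, and the hypothesis gives $2\bar D\le\kappa_0 R/4$; since $R_1+R_2\ge 2R$ one checks $-\tfrac12\kappa_0(R_1+R_2)+2\bar D\le-\tfrac14\kappa_0 R-\tfrac14\kappa_0(R_1+R_2)$, so each summand is bounded by $\exp(-\kappa_0 R/4)$ times a quantity that decays like $\exp(-\tfrac14\kappa_0|x-y|)$ along every edge $(x,y)$ of the two chains and like $\exp(-\tfrac14\kappa_0|m_2-n_2|)$ across the $\tilde H_2^{-1}$-link. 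Summing these geometric series over the free indices yields a finite constant $c(\nu,\kappa_0)$; and since the product of the $\ve_0$-prefactors in every summand is at most $\ve_0^2$ (exactly $\ve_0^2$ only in the worst diagonal sub-case of $\Sigma_{22}$, and much smaller otherwise), the standing smallness $\ve_0\le\min(2^{-24\nu-4}\kappa_0^{4\nu},\exp(-(8T\kappa_0^{-1})^5),2^{-10(\nu+1)}T^{-8\nu})$ makes $\ve_0^{1/2}c(T,\kappa_0)c(\nu,\kappa_0)\le 1$. Hence $|\Sigma_{pq}|\le\ve_0^{3/2}\exp(-\kappa_0 R/4)$ for each of the four terms, and adding them gives \eqref{eq:2QQ0comparison}.

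I expect the only genuinely delicate point to be the bookkeeping in the second step: one must route each $\mu_{\La_1}^{1/5}$-correction onto the unique $\cH$-edge of its chain that bridges $\La_1$ and $\La_2$, and then balance the one remaining $\bar D$-correction against the slack in the hypothesis $\max(\max_{x\in\La}D(x),4T\kappa_0^{-1})\le\kappa_0 R/8$ via the fact that \emph{both} chains have length at least $R$. Once this routing is set up, each $\Sigma_{pq}$ is a sum of exactly the type already controlled in Lemma~\ref{lem:auxweight1iterated}, and everything else reduces to the routine summation of geometric series against powers of $\ve_0$, just as in the proof of that lemma.
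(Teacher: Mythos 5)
Your proof is correct and uses the same structural skeleton as the paper's: Schur complement with respect to $\La = \La_1 \cup \La_2$, decomposition of $G(m_0,n_0,\La)-G(m_0,n_0,\La_1)$ into the four pieces $\Sigma_{pq}$ (the paper calls them $\tilde Q_{1,1}, Q_{2,2}, Q_{1,2}, Q_{2,1}$), and the observation that every chain from $m_0$ to $n_0$ must pass through $\La_2$ so the total chain length is at least $2R$. Where you genuinely diverge is in how each piece is then bounded. The paper invokes the trajectory-level expansion from Lemma~\ref{lem:aux5} — expressing each $\Sigma_{pq}$ as a sum over $\Ga^{(1,2)}_{D,T,\kappa_0}$-trajectories — and then applies the weight estimate \eqref{eq:auxtrajectweight2} to each trajectory as a whole, so the $\bar D(\gamma)$ and $4T\kappa_0^{-1}$ terms are already packaged. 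You instead plug in the already-summed bounds of Lemma~\ref{lem:auxweight1} factor by factor (first alternative for $\cH_{\La_1}^{-1}$, second alternative for $\tilde H_2^{-1}$, which is the only combination that closes), and then have to re-route the $2T\mu_{\La_1}^{1/5}$ corrections onto the crossing $\cH$-edges by Young's inequality. This is a real extra step, but it is sound: $\min(\mu_{\La_1}(m),\mu_{\La_1}(m_1))\le\mu_{\La_1}(m_1)\le\dist(m_1,\La_2)\le|m_1-m_2|$, the Young constant is $\exp(O(T^{5/4}\kappa_0^{-1/4}))$, and the standing bound $\ve_0\le\exp(-(8T\kappa_0^{-1})^5)$ dominates it. The one place your phrasing is slightly loose is the claim that ``the only surviving positive exponent is $2\bar D$''; a diagonal $\cH_{\La_1}^{-1}(m,m)$ contributes $\exp(D(m))$ on top, but since $D(m)\le\max(T\mu_{\La_1}(m)^{1/5},4T\kappa_0^{-1})$ this either routes onto the crossing edge the same way or is covered directly by the hypothesis $4T\kappa_0^{-1}\le\kappa_0 R/8$, and the numbers still close with room to spare — worth a sentence to make explicit. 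Net: your route trades a cleaner invocation of the prepared trajectory machinery for a more hands-on block-by-block estimate; the paper's version is shorter precisely because \eqref{eq:auxtrajectweight2} was engineered to absorb the $\mu^{1/5}$-routing once and for all.
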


\begin{proof}
We write
\begin{equation}\label{eq:2abrem1a1Qsplit}
\begin{split}
G(m_0, n_0, \La) = [ \sum_{m, n \in \La_1} + \sum_{m, n \in \La_2} + \sum_{m \in \La_1, n \in \La_2} + \sum_{m \in \La_2, n \in \La_1}] \\
\cH(m_0,m) \cH_{\La}^{-1} (m,n) \cH(n, n_0) := Q_{1,1} + Q_{2,2} + Q_{1,2} + Q_{2,1}.
\end{split}
\end{equation}
Due to the Schur complement formula \eqref{eq:2schurfor}, one has for $x, y \in \La_1$,
\begin{equation}\label{eq:2aschurforKKll}
\cH_{\La}^{-1} (x,y) = [ \cH_{\La_1}^{-1}] (x,y) + [\cH_{\La_1}^{-1} \Gamma_{1,2} \hat H_2^{-1} \Gamma_{2,1}\cH_{\La_1}^{-1} ] (x,y),
\end{equation}
where $\hat H_2 := \cH_{\La_2} - \Ga_{2,1} \cH_{\La_1}^{-1} \Ga_{1,2}$. This implies
\begin{equation}\label{eq:2rem1a1KKLLN}
\begin{split}
|G(m_0, n_0, \La) - G(m_0, n_0, \La_1)| \le  \sum_{m, n \in \La_1} \cH(m_0, m) \cH(n, n_0) \times \\
[ \cH_{\La_1}^{-1} \Gamma_{1,2} \tilde H_2^{-1} \Gamma_{2,1} \cH_{\La_1}^{-1} ] (m,n) \big| + |Q_{2,2}| + |Q_{1,2}| + |Q_{2,1}| := |\tilde Q_{1,1}| + |Q_{2,2}| + |Q_{1,2}| + |Q_{2,1}|.
\end{split}
\end{equation}
Since all conditions of Lemma~\ref{lem:aux5} hold, one can invoke \eqref{eq:aux00c011kappad2statement}. Using the estimate \eqref{eq:aux00c011kappad2statement} combined with the estimate \eqref{eq:auxtrajectweight2}, one obtains
\begin{equation}\label{eq:2auxAAAAKKLL}
\begin{split}
|\tilde Q_{1,1}| \le \ve_0^2 \sum_{m, n \in \La_{m_0}} \exp(-\kappa_0 |m_0 - m| - \kappa_0 |n - n_0|) \times \\
\sum_{q \ge 3} \ve_0^{q-1} \sum_{\gamma \in \Ga^{(1,2)}_{D, T, \kappa_0} (m, n; q, \La,\mathfrak{R})} w_{D, \kappa_0} (\gamma) \le \ve_0^2 \sum_{m, n \in \La_1} \exp(-\kappa_0 |m_0 - m| - \kappa_0 |n - n_0|) \times \\
\sum_{q \ge 3} \ve_0^{q-1} \sum_{\gamma \in \Ga^{(1,2)}_{D, T, \kappa_0} (m, n; q, \La,\mathfrak{R})} \exp(-\frac{1}{2} \kappa_0 \|\gamma\| +
\max(\bar D(\gamma), 4 T\kappa_0^{-1})),
\end{split}
\end{equation}
where $\Ga^{(p,q)}_{D, T, \kappa_0} (m, n; k, \La,\mathfrak{R}) = \bigcup_{t \ge 1} \Ga^{(p,q,t)}_{D, T, \kappa_0} (m, n; k, \La,\mathfrak{R})$, $\Ga^{(p,q,t)}_{D, T, \kappa_0} (m, n; k, \La,\mathfrak{R})$ stands for the set of all $\gamma \in \Ga_{D, T, \kappa_0} (m, n; k, \La,\mathfrak{R})$ such that $\gamma = \gamma_1 \cup \gamma'_1 \dots \cup \gamma_{t+1}$ with $\gamma_j \in \Ga_{D,T,\kappa_0} (\La_p,\mathfrak{R})$, $\gamma'_i \in \Ga_{D,T,\kappa_0} (\La_q,\mathfrak{R})$, $p \neq q$. Note that for any $m, n$ and any $\gamma \in \Ga^{(1,2)}_{D, T, \kappa_0} (m, n; q, \La,\mathfrak{R})$, we have $|m_0 - m| + \|\gamma\| + |n - n_0| \ge 2 \dist(\{m_0, n_0\}, \La_2) = 2 R$. Due to the assumptions of the lemma, $\max(\bar D(\gamma), 4 T\kappa_0^{-1})) \le \kappa_0 R/8$. Combining these estimates, one obtains
\begin{equation}\label{eq:2auxAAAAKKLL1}
\begin{split}
|\tilde Q_{1,1}| \le |\ve_0|^2 \sum_{q \ge 3} |\ve_0|^{q-1} \sum_{\gamma \in \Ga(m_0, n_0; q+2, \La,\mathfrak{R}), \quad \|\gamma\| \ge 2R} \exp(-\frac{1}{4} \kappa_0 \|\gamma\|) \\
\le |\ve_0|^2 \sum_{q \ge 3} |\ve_0|^{q-1} \exp(-\frac{\kappa_0}{4} R) \sum_{\gamma \in \Ga(m_0, n_0; q+2, \La,\mathfrak{R})} \exp(-\frac{1}{8} \kappa_0 \|\gamma\|) \le |\ve_0|^{3/2} \exp(-\frac {\kappa_0}{4} R).
\end{split}
\end{equation}
The estimation of the rest of the terms in \eqref{eq:2rem1a1KKLLN} is completely similar.
\end{proof}

\begin{remark}\label{rem:2withRnoRag}
We remark here that Lemma~\ref{lem:2Qfunction} applies to any $m_0, n_0 \in \IZ^\nu$, regardless of whether $\bar \La = \IZ^\nu$ or $\bar \La \neq \IZ^\nu$ in Remark~\ref{rem:2withRnoR}, provided of course the conditions of the lemma hold. The same applies to Lemma~\ref{lem:2Qfunctionderiv} below.
\end{remark}

\begin{lemma}\label{lem:7differentiationB}
$(1)$ Let $H_\xi = (h(x,y;\xi))_{x, y \in \La}$ be a matrix-function, $\xi \in U \subset \mathbb{R}^d$. Assume that $H_\xi^{-1}$ exists for all $\xi$. If $H_\xi$ is $C^1$-smooth, then $H_\xi^{-1}$ is $C^1$-smooth, and
\begin{equation}\label{eq:7ResderivAAAa}
\partial_{\xi_j} H_\xi^{-1} = H_\xi^{-1} (\partial_{\xi_j} H_\xi) H_\xi^{-1}.
\end{equation}
If $H_\xi$ is $C^2$-smooth, then $H_\xi^{-1}$ is $C^2$-smooth, and
\begin{equation}\label{eq:7ResderivAAB}
\partial^2_{\xi_i, \xi_j} H_\xi^{-1} = H_\xi^{-1} (\partial_{\xi_i} H_\xi) H_\xi^{-1} (\partial_{\xi_j} H_\xi) H_\xi^{-1} + H_\xi^{-1} (\partial^2_{\xi_i,\xi_j} H_\xi) H_\xi^{-1} + H_\xi^{-1} (\partial_{\xi_j} H_\xi) H_\xi^{-1} (\partial_{\xi_i} H_\xi) H_\xi^{-1}.
\end{equation}

$(2)$ Assume that for any $\xi$ and any $x, y \in \La$, we have
\begin{equation}\label{eq:3Hinvestimatestatement1kK}
|H_\xi^{-1} (x,y)| \le s_{D(\cdot; \La), T, \kappa_0, \ve_0; \La,\mathfrak{R}} (x,y), \quad x, y \in \La,
\end{equation}
where $D \in \mathcal{G}_{\La, T, \kappa_0}$. Assume that  $h(m,n;\xi)$ are $C^2$-smooth and for $m \neq n$ obey $|\partial^\alpha h(m,n;\xi)| \le B \exp(-\kappa_0 |m - n|)$ for $|\alpha| \le 2$, where $B > 0$ is a constant. Furthermore, assume that there is $m_0 \in \La$ such that $|\partial^\alpha h(m,m;\xi)| \le B' \exp(\kappa_0 |m - m_0|^{1/5})$ for any $m \in \La$, $0< |\alpha| \le 2$, where $B'>0$ is a constant. Finally, assume that $|h(m,m;\xi)| \le B''$ for any $m \in \La$, where $B'' > 0$ is a constant. Set $B_0 = \max (1,B,B',B'')$. Then, for any $|\beta | \le 2$, and any $n \in \La$, we have
\begin{equation}\label{eq:2HinvestIM}
|\partial^\beta H_{\xi}^{-1} (m,n)| \le (3 B_0)^{|\beta|} \exp(|\beta| \kappa_0 |m - m_0|^{1/5}) \mathfrak{D}^{|\beta|}_{D(\cdot), T, \kappa_0, \ve_0; \La} (m,n), \quad x, y \in \La;
\end{equation}
compare \eqref{eq:auxtrajectweightsumest8iterted} in Lemma~\ref{lem:auxweight1iterated}.
\end{lemma}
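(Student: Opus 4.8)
I would treat the two parts separately. For part $(1)$, Cramer's rule writes each entry of $H_\xi^{-1}$ as a ratio of polynomials in the entries $h(x,y;\xi)$ with nonvanishing denominator $\det H_\xi$, so $H_\xi^{-1}$ inherits the $C^1$ (resp. $C^2$) regularity of $H_\xi$; differentiating $H_\xi H_\xi^{-1}=I$ in $\xi_j$ gives $(\partial_{\xi_j}H_\xi)H_\xi^{-1}+H_\xi(\partial_{\xi_j}H_\xi^{-1})=0$, which is \eqref{eq:7ResderivAAAa} (the overall sign is irrelevant in the sequel, where only $|\partial^\beta H_\xi^{-1}|$ enters). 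Differentiating this once more in $\xi_i$ and substituting \eqref{eq:7ResderivAAAa} for the two occurrences of $\partial_{\xi_i}H_\xi^{-1}$ yields exactly the three terms of \eqref{eq:7ResderivAAB}. Thus $\partial^\beta H_\xi^{-1}$ is, for $|\beta|=1$, the single product in \eqref{eq:7ResderivAAAa}, and for $|\beta|=2$, the sum of the three products in \eqref{eq:7ResderivAAB}; in all cases it is a finite sum of alternating products $H_\xi^{-1}(\partial^{\alpha_1}H_\xi)H_\xi^{-1}\cdots(\partial^{\alpha_r}H_\xi)H_\xi^{-1}$ with $1\le r\le|\beta|$, each $|\alpha_i|\ge1$, and $\sum_i|\alpha_i|=|\beta|$.

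For part $(2)$, the plan is to reduce the estimate, via the formulas of part $(1)$, to the iterated sums controlled in Lemma~\ref{lem:auxweight1iterated}. Concretely, take the formula for $\partial^\beta H_\xi^{-1}(m,n)$, expand every matrix product entrywise, and estimate it factor by factor. Each resolvent factor $H_\xi^{-1}(x,y)$ is bounded by $s_{D,T,\kappa_0,\ve_0;\La,\mathfrak{R}}(x,y)\le S_{D,T,\kappa_0,\ve_0;\La,\mathfrak{R}}(x,y)$ via \eqref{eq:3Hinvestimatestatement1kK}, while each hop $(\partial^\alpha h)(x,y)$ with $|\alpha|\ge1$ is split into its off-diagonal part ($x\ne y$), bounded by $B_0\exp(-\kappa_0|x-y|)$, and its diagonal part ($x=y$), bounded by $B_0\exp(\kappa_0|x-m_0|^{1/5})$. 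The off-diagonal hop weights $\exp(-\kappa_0|x-y|)$ are exactly the linking kernels occurring in the iterated sums $\mathfrak{D}^{|\beta|}_{D,T,\kappa_0,\ve_0;\La}$ of \eqref{eq:auxtrajectweightsumest8iterted1}--\eqref{eq:auxtrajectweightsumest8iterted2} (adjoining there the further nonnegative $|\cdot|^{1/5}$ corrections only enlarges the bound), and collecting the number of product-rule terms with the per-hop constant $B_0$ produces the prefactor $(3B_0)^{|\beta|}$. What remains is a multiple sum over the interior indices which is dominated, term by term, by $\mathfrak{D}^{|\beta|}_{D,T,\kappa_0,\ve_0;\La}(m,n)$; if the product from part $(1)$ literally contains fewer resolvent or linking factors than that sum, the missing ones are inserted for free, since $s_{D,T,\kappa_0,\ve_0;\La,\mathfrak{R}}(x,x)=\exp(D(x))\ge1$.

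The step requiring genuine care, and the one I expect to be the main obstacle, is the diagonal hops, whose bound $B_0\exp(\kappa_0|x-m_0|^{1/5})$ \emph{grows} with the distance of the interior index $x$ from the distinguished point $m_0$. Here I would use the subadditivity $(a+b)^{1/5}\le a^{1/5}+b^{1/5}$ together with $|x-m_0|\le|x-m|+|m-m_0|$ to write $\exp(\kappa_0|x-m_0|^{1/5})\le\exp(\kappa_0|m-m_0|^{1/5})\exp(\kappa_0|x-m|^{1/5})$; over the at most $|\beta|$ diagonal hops this peels off precisely the factor $\exp(|\beta|\kappa_0|m-m_0|^{1/5})$, which is pulled outside the sum, while each leftover local factor obeys $\exp(\kappa_0|x-m|^{1/5})\le\exp(|x-m|^{1/5})$ (as $\kappa_0<1$) and is absorbed into the $\exp(\cdot^{1/5})$-corrections already built into the boundary and linking factors of $\mathfrak{D}^{|\beta|}$. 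For $|\beta|=2$ the same anchoring at $m$ is applied also to the second interior diagonal index, invoking $D\in\mathcal{G}_{\La,T,\kappa_0}$ wherever the relation between $D$ and $\mu_\La$ is needed; the finiteness and smallness of the resulting sum is then exactly the content of Lemma~\ref{lem:auxweight1iterated}. In short, once part $(1)$ is in hand, part $(2)$ reduces to this combinatorial matching together with the subadditive bookkeeping of the $(\cdot)^{1/5}$-weights, with no further analytic input.
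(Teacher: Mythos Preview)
Your proposal is correct and follows essentially the same strategy as the paper. For part~$(1)$ the paper instead proves \eqref{eq:7ResderivAAAa} via the Neumann series $H_\xi^{-1}-H_{\xi_0}^{-1}=\sum_{t\ge1}H_{\xi_0}^{-1}[(H_{\xi_0}-H_\xi)H_{\xi_0}^{-1}]^t$ and extracts the linear term, whereas you differentiate the identity $H_\xi H_\xi^{-1}=I$ after invoking Cramer's rule for regularity; both are standard, and you correctly observe the missing sign in \eqref{eq:7ResderivAAAa} is immaterial for the absolute-value bounds that follow. For part~$(2)$ the paper merely writes ``This part follows from part~$(1)$ combined with Lemma~\ref{lem:auxweight1iterated}''; your expanded account---splitting the $\partial^\alpha h$ hops into off-diagonal and diagonal pieces, and using subadditivity of $(\cdot)^{1/5}$ to peel off the factor $\exp(|\beta|\kappa_0|m-m_0|^{1/5})$ while absorbing the residual $\exp(|x-m|^{1/5})$ into the extra $|\cdot|^{1/5}$ corrections built into the kernels of $\mathfrak{D}^{|\beta|}$---is exactly the bookkeeping that the paper leaves implicit.
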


\begin{proof}
$(1)$ To verify \eqref{eq:7ResderivAAAa}, assume for convenience $d = 1$, $\xi \in (\xi_1, \xi_2)$. Let $\xi_0 \in (\xi_1, \xi_2)$. For sufficiently small $|\xi - \xi_0|$, one has $\|H_\xi - H_{\xi_0}\| < M (\xi_0) |\xi - \xi_0|$, where $M(\xi_0) = 1 + \|\partial_\xi H_\xi|_{\xi = \xi_0}\|$. In particular, $\|H_\xi - H_{\xi_0}\| \|H_{\xi_0}^{-1}\| < 1/2$ for  sufficiently small $|\xi - \xi_0|$. Hence,
\begin{equation}\label{eq:7Resderiv}
\begin{split}
H_\xi^{-1} - H_{\xi_0}^{-1} = \sum_{t \ge 1} H_{\xi_0}^{-1} [ (H_{\xi_0} - H_\xi) H_{\xi_0}^{-1} ]^t \\
= H_{\xi_0}^{-1} (H_{\xi_0} - H_\xi) H_{\xi_0}^{-1} + R(\xi, \xi_0), \\
\|R(\xi, \xi_0)\| \le \sum_{t \ge 2} \|H_{\xi_0}^{-1}\|^{t+1} \|H_\xi - H_{\xi_0}\|^t \\
\le \|H_{\xi_0}^{-1}\|^{3} \|H_\xi - H_{\xi_0}\|^2 \sum_{t \ge 0} 2^{-t} \le C(\xi_0) (\xi-\xi_0)^2,
\end{split}
\end{equation}
where $C(\xi_0) = M(\xi_0)^2 \|H_{\xi_0}^{-1}\|^{3}$. This implies \eqref{eq:7ResderivAAAa}. The derivation of the rest of the identities is similar.

$(2)$ This part follows from part $(1)$ combined with Lemma~\ref{lem:auxweight1iterated}.
\end{proof}

\begin{lemma}\label{lem:2Qfunctionderiv}
Let $H_\xi = (h(x,y;\xi))_{x, y \in \mathbb{Z}^\nu}$, $\xi \in U \subset \mathbb{R}^d$ be as in part $(2)$ of Lemma~\ref{lem:7differentiationB}. Given $\La' \subset \La$, set $\cH_{\La'} = \cH_{\La',E,\xi} = (E - h(m,n;\xi))_{m, n \in \La'}$. Provided $\cH_{\La}^{-1}$ exists, set
\begin{equation}\label{eq:2abrem1a1Qdefinitionk}
G(m_0, n_0, \La; E, \xi) := \sum_{m, n \in \La} h(m_0, m; \xi) \cH_{\La}^{-1} (m, n) h(n, n_0; \xi), \quad m_0, n_0 \in \mathbb{Z}^\nu.
\end{equation}
Assume that $\La = \La_1 \cup \La_2$, $\La_1 \cap \La_2 = \emptyset$, and for any $E \in (E',E'')$, $\xi$, $\cH_{\La}$, $\cH_{\La_j}$ obey all conditions of  Lemma~\ref{lem:aux5}. Assume also that $R := \dist(\{m_0, n_0\}, \La_2)$ obeys $\max ( \max_{x \in \La} D(x), 4 T \kappa_0^{-1} ) \le \kappa_0 R/8$. Finally, assume that $(E', E'') \subset (-B_0, B_0)$. Then, for any multi-index $|\beta| \le 2$, we have
\begin{equation}\label{eq:2QQ0comparison1}
|\partial^\beta G(m_0, n_0, \La; E, \xi) - \partial^\beta G(m_0, n_0, \La_1; E, \xi)| \le 780 B_0^2 \ve_0^{3/2} \exp(-\frac {\kappa_0}{4} R).
\end{equation}
\end{lemma}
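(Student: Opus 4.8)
The plan is to follow the proof of Lemma~\ref{lem:2Qfunction} on the algebraic level and then differentiate each of the four resulting ``far'' pieces, controlling the derivatives of every factor by Lemma~\ref{lem:7differentiationB}.

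First I would note that the Schur complement identity \eqref{eq:2aschurforKKll} holds as an identity in $(E,\xi)$ on $(E',E'')\times U$, so the decomposition \eqref{eq:2abrem1a1Qsplit}--\eqref{eq:2rem1a1KKLLN} gives, for all $(E,\xi)$,
\[
G(m_0,n_0,\La;E,\xi)-G(m_0,n_0,\La_1;E,\xi)=\tilde Q_{1,1}+Q_{2,2}+Q_{1,2}+Q_{2,1},
\]
where $\tilde Q_{1,1}=\sum_{m,n\in\La_1}h(m_0,m;\xi)\,[\cH_{\La_1}^{-1}\Gamma_{1,2}\tilde H_2^{-1}\Gamma_{2,1}\cH_{\La_1}^{-1}](m,n)\,h(n,n_0;\xi)$ and $Q_{2,2},Q_{1,2},Q_{2,1}$ are the blocks of $\sum h(m_0,\cdot;\xi)\,\cH_{\La}^{-1}(\cdot,\cdot)\,h(\cdot,n_0;\xi)$ that meet $\La_2$. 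Applying $\partial^\beta$ and the Leibniz rule, it suffices to bound each summand of $\partial^\beta$ of these four pieces; each such summand is a product of derivatives of the finitely many factors, indexed by the ways of distributing $\partial^\beta$, $|\beta|\le 2$, among them.

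Next I would differentiate factor by factor. Every factor is of one of three kinds. (i) A matrix entry $h(m_0,\cdot;\xi)$, $h(\cdot,n_0;\xi)$ or $\Gamma_{i,j}(\cdot,\cdot)$: these are $E$-independent and, by hypothesis, all their $\xi$-derivatives of order $\le 2$ are bounded by $B_0$ times $\exp(-\kappa_0|\cdot|)$ off the diagonal (and by $B_0$ times a factor $\exp(\kappa_0|\cdot|^{1/5})$ on it), so they affect the estimate only by a bounded numerical constant and a sub-exponential factor. (ii) The inverse $\cH_{\La_1}^{-1}$ or $\cH_{\La_2}^{-1}$ of a matrix of the form $(E-h(\cdot,\cdot;\xi))$: here part $(2)$ of Lemma~\ref{lem:7differentiationB} applies verbatim with $(E,\xi)$ as parameter (note $\partial_E(E-h(m,m;\xi))=1$ obeys the required diagonal bound), yielding \eqref{eq:2HinvestIM}, in which the plain sums $s_{D,T,\kappa_0,\ve_0;\La_j,\mathfrak{R}}$ are replaced by the iterated sums $\mathfrak{D}^{|\beta|}_{D,T,\kappa_0,\ve_0;\La_j}$ of Lemma~\ref{lem:auxweight1iterated}. (iii) The Schur complement $\tilde H_2^{-1}$: since this is not itself of $(E-h)$-form, I would handle it directly via $\partial_j\tilde H_2^{-1}=-\tilde H_2^{-1}(\partial_j\tilde H_2)\tilde H_2^{-1}$ together with the product rule $\partial_j\tilde H_2=\partial_j\cH_{\La_2}-(\partial_j\Gamma_{2,1})\cH_{\La_1}^{-1}\Gamma_{1,2}-\Gamma_{2,1}(\partial_j\cH_{\La_1}^{-1})\Gamma_{1,2}-\Gamma_{2,1}\cH_{\La_1}^{-1}(\partial_j\Gamma_{1,2})$, every entry of which is controlled by (i), (ii), and $|\tilde H_2^{-1}(m,n)|\le s_{D,T,\kappa_0,\ve_0;\La,\mathfrak{R}}(m,n)$ from Lemma~\ref{lem:aux5}; each resulting summand retains the $\mathfrak{R}$-admissible trajectory structure of \eqref{eq:aux00c011kappad2statement}. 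The key point is that every factor and each of its derivatives is estimated by the same $\ve_0$-weighted trajectory sums, so no power of $\ve_0$ is lost relative to the undifferentiated bound.

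Finally I would collect the estimates. Exactly as in the chain \eqref{eq:2auxAAAAKKLL}--\eqref{eq:2auxAAAAKKLL1}, each of the four differentiated pieces carries a factor $\exp(-\tfrac{1}{4}\kappa_0\|\gamma\|)$ over trajectories with $|m_0-m|+\|\gamma\|+|n-n_0|\ge 2R$ and $\max(\bar D(\gamma),4T\kappa_0^{-1})\le\kappa_0 R/8$; writing $\exp(-\tfrac{1}{4}\kappa_0\|\gamma\|)=\exp(-\tfrac{1}{8}\kappa_0\|\gamma\|)\exp(-\tfrac{1}{8}\kappa_0\|\gamma\|)$ one extracts the claimed $\exp(-\tfrac{\kappa_0}{4}R)$, the residual $\sum_q\ve_0^{q-1}\exp(-\tfrac{1}{8}\kappa_0\|\gamma\|)$ is summed by \eqref{eq:auxtrajectweight21a}, and all the sub-exponential surplus (the powers of $\|\gamma\|^{1/5}$, $\mu_\La(\cdot)^{1/5}$, $R^{1/5}$ coming from \eqref{eq:2HinvestIM} and Lemma~\ref{lem:auxweight1iterated}) is absorbed into one copy of $\exp(-\tfrac{1}{8}\kappa_0\|\gamma\|)$, since $\|\gamma\|\ge R$ and the hypothesis $\max(\max_x D(x),4T\kappa_0^{-1})\le\kappa_0 R/8$ forces $R$ large. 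Summing the boundedly many Leibniz terms and multiplying out the numerical constants from \eqref{eq:2HinvestIM}, Lemma~\ref{lem:auxweight1iterated}, and \eqref{eq:auxtrajectweight21a}, together with the factor $B_0^2$ from the two end links $h(m_0,\cdot;\xi)$, $h(\cdot,n_0;\xi)$, produces the asserted bound $780\,B_0^2\,\ve_0^{3/2}\exp(-\tfrac{\kappa_0}{4}R)$. The hard part will be step (iii): estimating the derivatives of the Schur complement $\tilde H_2^{-1}$ without a direct $(E-h)$ representation, while keeping both the trajectory ($\mathfrak{R}$-admissibility) structure of \eqref{eq:aux00c011kappad2statement} intact and the sub-exponential losses within the slack that $\exp(-\tfrac{\kappa_0}{4}R)$ can absorb.
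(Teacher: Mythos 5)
Your outline follows the paper's own strategy very closely: decompose $G(\La)-G(\La_1)=\tilde Q_{1,1}+Q_{2,2}+Q_{1,2}+Q_{2,1}$ exactly as in Lemma~\ref{lem:2Qfunction}, differentiate under the sums, control derivatives of the inverses via part $(2)$ of Lemma~\ref{lem:7differentiationB}, and rerun the exponential-decay bookkeeping of \eqref{eq:2auxAAAAKKLL}--\eqref{eq:2auxAAAAKKLL1}. The one place where the paper is genuinely leaner is precisely the step you flagged as the ``hard part.'' You do not need a direct Neumann-series/Leibniz analysis of $\partial^\beta\tilde H_2^{-1}$: the Schur complement formula \eqref{eq:2schurfor} gives the entrywise identity $\tilde H_2^{-1}(m,n)=\cH_\La^{-1}(m,n)$ for all $m,n\in\La_2$, valid identically in $(E,\xi)$, and since $\cH_\La=E-H_{\La,\xi}$ \emph{is} of $(E-h)$-form, part $(2)$ of Lemma~\ref{lem:7differentiationB} applies to $\cH_\La^{-1}$ and immediately yields $|\partial^\beta\tilde H_2^{-1}(m,n)|\le(3B_0)^{|\beta|}\mathfrak{D}^{|\beta|}_{D(\cdot),T,\kappa_0,\ve_0;\La}(m,n)$. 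This is exactly the first line of \eqref{eq:3HinvestIMappl}. After that, the derivative bound for the five-factor product $\cH_{\La_1}^{-1}\Gamma_{1,2}\tilde H_2^{-1}\Gamma_{2,1}\cH_{\La_1}^{-1}$ follows from Leibniz (accounting for the paper's constant $(195B_0)^{|\beta|}$), and the remaining absorption of sub-exponential surpluses into $\exp(-\kappa_0 R/4)$ proceeds as you described. So your plan is sound; the only correction is that there is no genuine obstruction at step (iii), and routing the estimate through $\cH_\La^{-1}$ rather than through $\tilde H_2$ itself avoids all of the $\mathfrak{R}$-admissibility bookkeeping you were worried about.
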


\begin{proof}
We use the notation from the proof of Lemma~\ref{lem:2Qfunction} with $E,\xi$ being suppressed. Using the notation from \eqref{eq:2rem1a1KKLLN}, one obtains
\begin{equation}\label{eq:2rem1a1KKLLNPartial}
|\partial^\beta \bigl( G(m_0, n_0, \La) - G(m_0, n_0, \La_1) \bigr)| \le |\partial^\beta \tilde Q_{1,1}| + |\partial^\beta Q_{2,2}| + |\partial^\beta Q_{1,2}| + |\partial^\beta Q_{2,1}|.
\end{equation}
Using \eqref{eq:2HinvestIM} from part $(2)$ of Lemma~\ref{lem:7differentiationB}, one obtains
\begin{equation}\label{eq:3HinvestIMappl}
\begin{split}
|\partial^\beta \cH_{\La}^{-1} (m, n)|, |\partial^\beta \tilde H_2^{-1} (m, n) | \le (3 B_0)^{|\beta|} \mathfrak{D}^{|\beta|}_{D(\cdot), T, \kappa_0, \ve_0; \La} (m,n), \\
|\partial^\beta \bigl(\cH_{\La}^{-1} \Gamma_{1,2} \tilde H_2^{-1} \Gamma_{2,1}\cH_{\La_1}^{-1} (m,n) \bigr)| \le (195 B_0)^{|\beta|} \mathfrak{D}^{|\beta|}_{D(\cdot), T, \kappa_0, \ve_0; \La} (m,n).
\end{split}
\end{equation}
Now, using \eqref{eq:3HinvestIMappl} just like in \eqref{eq:2auxAAAAKKLL}, \eqref{eq:2auxAAAAKKLL1}, one obtains $|\partial^\beta \tilde Q_{1,1}| \le 195 B_0 \ve_0^{3/2} \exp(-\frac {\kappa_0}{4} R)$. The estimation of the rest of the terms in \eqref{eq:2rem1a1KKLLNPartial} is similar.
\end{proof}

\section{Eigenvalues and Eigenvectors of Matrices with Inessential Resonances of Arbitrary Order}\label{sec.3}

Let $\La$ be a non-empty subset of $\zv$. Let $v(n)$, $n \in \La$, $h_0(m, n)$, $m, n \in \La$, $m \ne n$ be some complex functions. Consider $H_{\La,\ve} = \bigl(h(m, n; \ve)\bigr)_{m, n \in \La}$, where $\ve \in \IC$,
\begin{alignat}{2}
h(n, n; \ve) & = v(n), & \quad & n \in \La, \label{eq:2-1} \\[6pt]
h(m, n; \ve) & = \ve h_0(m, n), & & m, n \in \La,\ m \ne n. \nn
\end{alignat}
Assume that the following conditions are valid,
\begin{gather}
v(n) = \overline{v(n)}, \label{eq:2-2} \\[6pt]
h_0(m,n) = \overline{h_0(n,m)}, \label{eq:2-3} \\[6pt]
|h_0(m, n)| \le B_1 \exp (-\ka_0 |m - n|)\ ,\quad m, n \in \La,\ m \ne n, \label{eq:2-4}
\end{gather}
where $0<B_1 < \infty$, $\ka_0 >0$ are constants, $\big| (z_1, z_2, \dots, z_\nu) \big| = \sum_j |z_j|$, $z_j \in \IC$. \textit{For convenience we always assume that $0<B_1 \le 1$, $0 < \ka_0 \le 1/2$}.

Take an arbitrary $m_0 \in \La$. For $\ve = 0$, the matrix $H_{\La,\ve}$ has an eigenvalue $E_0 = v(m_0)$, and $\vp_0(n) = \delta_{m_0, n}$, $n \in \La$ is the corresponding eigenvector. Assume that
\begin{equation}\label{eq:2-5}
\inf \left\{ |v(n) - v(m_0)|: n \in \La,\ n \ne m_0 \right\} \ge \delta_0 > 0.
\end{equation}
In this case, elementary perturbation theory yields the following:

\medskip
\noindent
{\it There exist $\ve_0 > 0$ and analytic functions $E(\ve)$, $\vp(n, \ve)$ defined in the disc $\cD(0, \ve_0) = \left\{ \ve \in \IC: |\ve| < \ve_0\right\}$, $n \in \La$ such that}
\begin{gather}
\sum_{n \in \La}\ |\vp(n, \ve)|^2 = 1\ ,\quad \text{for}\ \ve \in (-\ve_0, \ve_0), \label{eq:2-6} \\[6pt]
H_\ve \vp(n, \ve) = E(\ve) \vp(n, \ve)\label{eq:2-7}, \\[6pt]
E(0) = E_0\ ,\quad \vp(n, 0) = \vp_0(n).\label{eq:2-8}
\end{gather}

Let $\hle = \bigl(h(m, n;\ve)\bigr)_{m, n \in \La}$ be defined as in \eqref{eq:2-1} {\it In this section we will analyze some cases where the basic non-resonance condition \eqref{eq:2-5} does not hold for the matrix $\hle$, but it does hold for some smaller matrices $H_{\La',\ve}$, $\La'\subset \La$. More specifically, we will assume that there is some structure of such smaller matrices. This idea leads to an inductive definition of classes of matrices $\cN^{(s')} \bigl( m'_0, \La'; \delta_0 \bigr)$, which we introduce here.}

\bigskip

The idea of analytic continuation in the parameter $\ve$ is absolutely crucial in the further development of the method. This development addresses the so-called cases of pairs of resonances. In this section we establish all estimates related to the analytic dependence on the parameter $\ve$ needed later in the applications. On the other hand, the analytic dependence itself helps to avoid certain ambiguities in the very definitions in this section. Let us first recall Rellich's theorem on the analytic dependence of the eigenvalues of self-adjoint matrices:

$\bullet$ Let $A_\ve = (a_{m,n}(\ve))_{1 \le m, n \le N}$ be an analytic matrix function defined in a neighborhood of the interval $\ve_1 < \ve < \ve_2$. Assume that for $\ve \in (\ve_1,\ve_2)$, the matrix $A_\ve$ is self-adjoint. Then, there exist real analytic functions $E_n(\ve)$, $\ve \in (\ve_1, \ve_2)$, such that for each $\ve$, $\spec A_\ve = \{ E_n(\ve) : 1 \le n \le N \}$. In particular, assume that for some $\ve^\zero$, the matrix has a simple eigenvalue $E^\zero$. Then, there is unique $E_{n_0} (\ve)$ such that $E_{n_0} (\ve^\zero) = E^\zero$.

\begin{defi}\label{def:4-1}
Assume that $H_{\La,\ve}$ obeys \eqref{eq:2-1}--\eqref{eq:2-5},
\begin{equation} \label{eq:2epsilon0}
|\ve| < \ve_0, \quad \ve_0 := \ve_0(\delta_0,\kappa_0) := (\bar \ve_0)^3, \quad \bar \ve_0 := \min(2^{-24\nu-4}\kappa_0^{4\nu}, \delta_0^{2^9}, 2^{-10(\nu+1)} (4 \kappa_0 \log \delta_0^{-1})^{-8\nu}).
\end{equation}

For these values of $\ve$, we say that $H_{\La,\ve}$ belongs to the class $\cN^{(1)} \bigl( m_0, \La; \delta_0 \bigr)$.

Let $0 < \beta_0 < 1$ be a constant. We assume that $\log \delta_0^{-1} > 2^{32} \beta_0^{-1} \log \kappa_0^{-1}$. Introduce the following quantities:
\be\label{eq:3basicparameters}
R^{(1)} := \bigl( \delta_0 \bigr)^{-4\beta_0}, \quad R^{(u)} := \bigl( \delta_0^{(u-1)} \bigr)^{-\beta_0}, \; u = 2,3,\dots, \quad \delta_0^{(u)} = \exp \bigl( - (\log R^{(u)})^2 \bigr), \; u = 1,2,\dots .
\end{equation}

Assume that the classes $\cN^{(s')} \bigl( m'_0, \La'; \delta_0 \bigr)$ are already defined for $s' = 1, \dots, s-1$, where $s\ge 2$.

Assume that $H_{\La,\ve}$ obeys \eqref{eq:2-1}--\eqref{eq:2-4}. Let $m_0 \in \IZ^\nu$. Assume that there exist subsets $\cM^{(s')} (\La) \subset \La$, $s' = 1, \dots, s-1$, some of which may be empty, and a collection of subsets $\La^{(s')} (m) \subset \La$, $m \in \cM^{(s')}$, such that the following conditions hold:
\begin{itemize}

\item[(a)] $m_0 \in \cM^{(s-1)} (\La)$, $m \in \La^{(s')}(m)$ for any $m \in \cM^{(s')} (\La)$, $s' \le s-1$.

\item[(b)] $\cM^{(s')} (\La) \cap \cM^{(s'')} (\La) = \emptyset$ for any $s' < s''$. For any $(m', s') \neq (m'', s'')$, we have
$$
\Lambda^{(s')} (m') \cap \Lambda^{(s'')} (m'') = \emptyset.
$$

\item[(c)] For any $s' = 1, \dots, s-1$ and any $m \in \cM^{(s')}(\La)$, the matrix $H_{\La^{(s')} (m), \ve}$ belongs to $\cN^{(s')} \bigl( m, \La^{(s')}(m); \delta_0 \bigr)$. Note that, in particular, this means that for the set $\La^{(s')} (m)$, a system of subsets $\cM^{(s')} (\La^{(s')} (m)) \subset \La^{(s')}(m)$, $s'' = 1, \dots, s'$, and $\La^{(s'')} (m) \subset \La^{(s')} (m)$, $m \in \cM^{(s')} (\La^{(s')} (m))$ is defined so that all the conditions stated above and below are valid for $H_{\La^{(s')} (m), \ve}$ in the role of $\hle$, $s'$ in the role of $s$, and $m$ in the role of $m_0$.

\item[(d)]
$$
\bigl( m' + B(R^{(s')}) \bigr) \subset \Lambda^{(s')} (m'), \quad \text {for any $m' \in \cM^{(s')} (\La)$, $s' < s$}.
$$

$$
\bigl( m_0 + B(R^{(s)}) \bigr) \subset \Lambda.
$$

\item[(e)] For any $n \in \La \setminus \{ m_0 \}$, we have $v(n) \neq v(m_0)$. So, $E^{(s)} (m_0, \La; 0) := v(m_0)$ is a simple eigenvalue of $H_{\La,0}$. Let $E^{(s)} \bigl( m_0, \La; \ve \bigr)$, $\ve \in \IR$, be the real analytic function such that $E^{(s)} \bigl( m_0, \La; \ve \bigr) \in \spec H_{\La,\ve}$ for any $\ve$, $E^{(s)} \bigl( m_0, \La; 0 \bigr) = v(m_0)$. Similarly, for any $m \in \cM^{(s')}(\La)$, and $n \in \La^{(s')} (m) \setminus \{m\}$, we have $v(n) \neq v(m)$. So, $E^{(s')} (m, \La^{(s')} (m); 0) := v(m)$ is a simple eigenvalue of $H_{\La^{(s')} (m), 0}$. Let $E^{(s')} \bigl( m, \La^{(s')} (m); \ve \bigr)$, $\ve \in \IR$, be the real analytic function such that $E^{(s')} \bigl( m, \La^{(s')} (m); \ve \bigr) \in \spec H_{\La^{(s')} (m), \ve} $ for any $\ve$, $E^{(s')} \bigl( m, \La^{(s')} (m); 0 \bigr) = v(m)$. Set
$$
\ve_s = \ve_0 - \sum_{1 \le s' \le s} \delta_0^{(s')}, \; s\ge 1.
$$
If $s = 1$, we will show in Proposition~\ref{prop:4-4} that $E^{(1)} \bigl( m_0, \La; \ve \bigr)$ can be extended analytically in
the disk $|\ve| < \ve_0$. For $s = 2$, it is required by the current definition that for all complex $\ve$, $|\varepsilon| < \varepsilon_{0}$, we have
\begin{equation}\label{eq:4-3}
3 \delta_0^{(1)} \le \big| E^{(1)} \bigl( m, \La^{(1)}(m); \ve \bigr) - E^{(1)} \bigl( m_0, \La^{(1)}(m_0); \ve \bigr) \big| \le  \delta^\zero_0 := \delta_0/8.
\end{equation}
We show in Proposition~\ref{prop:4-4} that in this case, $E^{(2)} \bigl( m_0, \La; \ve \bigr)$ can be extended analytically in the disk $|\ve| < \ve_2$. Using induction we prove in Proposition~\ref{prop:4-4} that this is true for all $s$. For $s \ge 3$, we require that for all  $\ve \in \mathbb{C}$ with $|\varepsilon| < \varepsilon_{s-2}$, we have
\begin{equation}\label{eq:4-3sge3}
\begin{split}
3 \delta_0^{(s-1)} \le \big| E^{(s-1)} \bigl( m, \La^{(s-1)} (m); \ve \bigr) - E^{(s-1)} \bigl( m_0, \La^{(s-1)}(m_0); \ve \bigr) \big| <  \delta_0^{(s-2)} ,\quad \text{for $m \neq m_0$}, \\
\frac{\delta_0^{(s')}}{2} \le \big| E^{(s')} \bigl( m, \La^{(s')} (m); \ve \bigr) - E^{(s-1)} \bigl( m_0, \La^{(s-1)} (m_0); \ve \bigr) \big| < \delta_0^{(s'-1)}, \quad \text{for $s' = 1, \dots, s-2$}.
\end{split}
\end{equation}

\item[(f)] For $s = 1$, we have $|v(n) - v(m_0)| \ge \delta_0/4$ for every $m \neq m_0$. For $s \ge 2$, we have $|v(n) - v(m_0)| \ge (\delta_0)^4$ for every $n \in \Lambda \setminus \bigl( \bigcup_{1 \le s' \le s-1} \bigcup_{m \in \cM{(s')}} \La^{(s')} (m) \bigr)$.

\end{itemize}

In this case we say that $H_{\La, \varepsilon}$ belongs to the class $\cN^{(s)} \bigl( m_0, \La; \delta_0 \bigr)$. We call $m_0$ the principal point. We set $s(m_0)=s$. We call $m_0$ the principal point. We call $\La^{(s-1)}(m_0)$ the $(s-1)$-set for $m_0$.
\end{defi}

\begin{remark}\label{rem:3.Rs}
Note that in particular
$$
\kappa_0 (R^\es)^{1/16} > \log (\delta_0^\es)^{-1}, \delta^\es < (\delta^\esone)^{8}, \quad \delta_0^\one < \ve_0/2.
$$
\end{remark}

\begin{prop}\label{prop:4-4}
Let $E^{(s')} (m, \La^{(s')}(m); \ve)$ be the same as in Definition~\ref{def:4-1}, $m \in \cM^{(s')}$, $s' = 1, \dots, s-1$. The following statements hold:
\begin{itemize}

\item[(1)] Define inductively the functions $D(\cdot; \La^{(s')} (m))$, $1 \le s' \le s-1$, $m \in \cM{(s')}$, $D(\cdot; \La)$ by setting for $s = 1$, $D(x; \La) = 4 \log \delta_0^{-1}$ for $x \in \La \setminus \{m_0\}$, $D(m_0; \La) := 4 \log (\delta^\one)^{-1}$; and by setting for $s \ge 2$, $D(x; \La) = D(x; \La^{(s')} (m))$ if $x \in \La^{(s')} (m)$ for some $s' \le s-1$ and some $m \in \cM{(s')} \setminus \{m_0\}$, $D(x; \La) = D(x; \La^{(s-1)} (m_0))$ if $x \in \La^{(s-1)} (m_0) \setminus \{m_0\}$, $D(m_0; \La) = 2 \log (\delta^{(s)}_0)^{-1}$, $D(x; \La) = 4 \log \delta_0^{-1}$ if $x \in \Lambda \setminus \bigl( \bigcup_{1 \le s'\le s} \bigcup_{m \in \cM{(s')}} \La^{(s')} (m) \bigr)$. Then, $D(\cdot; \La^{(s')} (m)) \in \mathcal{G}_{\La^{(s')} (m), T, \kappa_0}$, $1 \le s' \le s-1$, $m \in \cM{(s')}$, $D(\cdot; \La) \in \mathcal{G}_{\La, T, \kappa_0}$, $T = 4 \kappa_0 \log \delta_0^{-1}$, $\max_{x \neq m_0} D(x; \La) \le 4 \log (\delta^{(s-1)}_0)^{-1}$. We will denote by $D(\cdot; \La \setminus \{m_0\})$ the restriction of $D(\cdot; \La)$ to $\La \setminus \{m_0\}$.

\item[(2)] For $s = 1$, the matrix $(E - H_{\La \setminus \{m_0\}, \ve})$ is invertible for any $|\ve| < \bar \ve_0$, $|E - v(m_0)| < \delta_0/4$. For $s \ge 2$, $|\ve| < \ve_{s-2}$, and $\big|E - E^{(s-1)} (m_0, \La^{(s-1)}(m_0); \ve) \big| < 2 \delta^{(s-1)}_0$, the matrices $(E - H_{\La^{(s')} (m), \ve})$, $s' \le s-1$, $m \in \cM^{(s')}$, $m \neq m_0$ and the matrices $(E - H_{\La^{(s-1)} (m_0) \setminus \{m_0\}, \ve})$, $(E - H_{\La \setminus \{m_0\}, \ve})$ are invertible. Moreover,
\begin{equation}\label{eq:3Hinvestimatestatement1}
\begin{split}
|[(E - H_{\La^{(s')} (m), \ve})^{-1}] (x,y)| \le s_{D(\cdot; \La^{(s')} (m)), T, \kappa_0, |\ve|; \La^{(s')} (m)} (x,y), \\
|[(E - H_{\La^{(s-1)} (m_0) \setminus \{m_0\}, \ve})^{-1}] (x,y)| \le s_{D(\cdot; \La^{(s-1)} (m_0) \setminus \{m_0\}), T, \kappa_0, |\ve|; \La^{(s-1)} (m_0) \setminus \{m_0\}} (x,y), \\
|[(E - H_{\La \setminus \{m_0\}, \ve})^{-1}] (x,y)| \le s_{D(\cdot; \La \setminus \{m_0\}), T, \kappa_0, |\ve|; \La \setminus \{m_0\}} (x,y).
\end{split}
\end{equation}

\item[(3)] Set $\La_{m_0} := \Lambda \setminus \{m_0\}$. The functions
\begin{equation}\label{eq:4-10ac}
\begin{split}
K^{(s)} (m, n, \La_{m_0}; \ve, E) = (E - H_{\La_{m_0}, \ve})^{-1} (m,n), \quad m, n \in \La_{m_0}, \\
Q^{(s)} (m_0, \La; \ve, E) = \sum_{m', n' \in \La_{m_0}} h(m_0, m'; \ve) K^{(s)} (m', n'; \La_{m_0}; \ve, E) h(n', m_0; \ve), \\
F^{(s)} (m_0, n, \La_{m_0}; \ve, E) = \sum_{m \in \La_{m_0}} K^{(s)} (n, m, \La_{m_0}; \ve, E) h(m, m_0; \ve), \quad n \in \La_{m_0}
\end{split}
\end{equation}
are well-defined and analytic in the following domain,
\begin{equation}\label{eq:4.domain}
\begin{split}
|\ve| < \bar \ve_0, \quad |E - v(m_0)| < \delta_0/4, \quad \text { in case $s = 1$}, \\
|\ve| < \ve_{s-2} := \ve_0 - \sum_{1 \le s' \le s-2} \delta^{(s')}_0, \quad \big| E - E^{(s-1)}(m_0, \La^{(s-1)} (m_0); \ve) \big| < 2 \delta^{(s-1)}_0, \quad s \ge 2.
\end{split}
\end{equation}
The following estimates hold,
\begin{equation}\label{eq:4-12acACC}
\begin{split}
& \big| Q^{(s)} (m_0, \La; \ve, E) - Q^{(s-1)} \bigl( m_0, \La^{(s-1)}(m_0); \ve, E \bigr) \big| \le 4 |\ve|^{3/2} \exp \left( -\kappa_0 R^{(s-1)} \right) \le |\ve| (\delta_0^{(s-1)})^6 \\
& \text{for} \quad |\ve| < \ve_{s-2}, \quad \big| E - E^{(s-1)} (m_0, \La^{(s-1)} (m_0); \ve) \big| < 2 \delta^{(s-1)}_0, \quad s \ge 2,\\
& \big| \partial_\ve Q^{(1)} (m_0, \La; \ve, E) \big| \le |\ve|^{1/2}, \quad \big| \partial_E^\alpha Q^{(1)} (m_0, \La; \ve, E) \big| \le |\ve| \\
&\text {for $\alpha \le 2$ and any $|\ve| < \ve_0$, $|E - v(m_0) | < \delta_0/8$ },\\
& \big| \partial_\ve Q^{(s)} (m_0, \La; \ve, E) \big| \le |\ve|^{1/2}, \quad \big| \partial^\alpha_E Q^{(s)} (m_0, \La; \ve, E) \big| \le |\ve| \\
&\text {for $\alpha \le 2$ and any $|\ve| < \ve_{s-1}$, $\big|E - E^{(s-1)} (m_0, \La^{(s-1)} (m_0); \ve) \big| < 3 \delta^{(s-1)}_0/2$ },
 \end{split}
\end{equation}

\begin{equation}\label{eq:4-11acACC}
\begin{split}
\big| F^{(s)} (m_0, n, \La; \ve, E) \big| \le 4 |\ve|^{1/2} \exp \left( -\frac{7\kappa_0}{8} |n - m_0| \right), \\
\big| F^{(s)} (m_0, n, \La; \ve, E) - F^{(s-1)} \bigl( m_0, n, \La^{(s-1)}(m_0); \ve, E \bigr) \big| \le  |\ve|^{1/2} \exp \left( -\kappa_0 R^{(s-1)} \right), \\
|\ve| < \ve_{s-2}, \quad \big| E - E^{(s-1)} (m_0, \La^{(s-1)}(m_0); \ve) \big| < 2 \delta^{(s-1)}_0, \quad s \ge 2, \\
\big| F^{(1)} (m_0, n, \La; \ve, E) \big| \le 4 |\ve|^{1/2} \exp \left( -\frac{7\kappa_0}{8} |n - m_0| \right), \\
\big| \partial_\ve F^{(1)} (m_0, n, \La; \ve, E) \big| \le \bar \ve_0^{-1/2}, \quad \big| \partial^\alpha_E F^{(1)}(m_0, n, \La; \ve, E) \big| \le |\ve|^{1/2} \\
\text{for $\alpha \le 2$ and any $|\ve| < \ve_0$, $|E - v(m_0) | < \delta_0/8$ }, \\
\big| \partial_\ve F^{(s)} (m_0, n, \La; \ve, E) \big| \le \bar \ve_0^{-1/2}, \quad \big| \partial^\alpha_E F^{(s)} (m_0, n, \La; \ve, E) \big| \le |\ve|^{1/2} \\
\text{for $\alpha \le 2$ and any } |\ve| < \ve_{s-1}, \, \big| E - E^{(s-1)}(m_0, \La^{(s-1)}(m_0); \ve) \big| < 3 \delta^{(s-1)}_0/2.
\end{split}
\end{equation}

\item[(4)] For $s = 1$ and $|\ve| < \ve_0$, the equation
\begin{equation}\label{eq:4-16}
E = v(m_0) + Q^{(s)} (m_0, \La; \ve, E)
\end{equation}
has a unique solution $E = E^{(1)}(m_0, \La; \ve)$ in the disk $\big| E - v (m_0) \big | < \delta_0/8$. For $s \ge 2$ and $|\ve| < \ve_{s-1}$, the equation~\eqref{eq:4-16} has a unique solution $E = E^{(s)}(m_0, \La; \ve)$ in the disk $\big| E - E^{(s-1)} (m_0, \La^{(s-1)}(m_0); \ve\bigr) \big | < 3 \delta^{(s-1)}_0/2$. This solution is a simple zero of $\det(E - H_{\La, \ve})$. Furthermore, $\det(E - H_{\La, \ve})$ has no other zeros in the disk $|E - E^{(s-1)} (m, \La^{(s-1)}(m); \ve)| < 2 \delta^{(s-1)}_0$. The function $E^{(s)} (m_0, \La; \ve)$ is analytic in the disk $|\ve| < \ve_{s-1}$ and obeys
\begin{equation}\label{eq:4-17AAAA}
\begin{split}
\big| E^{(s)} (m_0, \La; \ve) - E^{(s-1)} \bigl( m_0, \La^{(s-1)} (m_0); \ve \bigr) \big| < |\ve| (\delta_0^{(s-1)})^5, \\
\big| E^{(s)} (m_0, \La; \ve) - v(m_0)| \big| < |\ve|.
\end{split}
\end{equation}
Finally,
\begin{equation}\label{eq:4-13acAAAA}
\big| v(m_0) + Q^{(s)} (m_0, \La; \ve, E) - E^{(s)} \bigl( m_0, \La; \ve \bigr) \big| \le  |\ve| \big| E - E^{(s)} \bigl( m_0, \La; \ve \bigr) \big|.
\end{equation}

\item[(5)] For $s = 1$, $|\ve| < \ve_0$, and $(\delta_0^\one)^4 < \big| E - E^{(1)} (m_0, \La; \ve)\big | < \delta_0/16$, the matrix $(E - H_{\La, \ve})$ is invertible. For $s \ge 2$, $|\ve| < \ve_{s-1}$, and $(\delta_0^\es)^4 < \big| E - E^{(s)}(m_0, \La; \ve) \big| < 2 \delta^{(s-1)}_0$, the matrix $(E - H_{\La, \ve})$ is invertible. Moreover,
$$
|[(E - H_{\La, \ve})^{-1}] (x,y)| \le S_{D(\cdot; \La), T, \kappa_0, |\ve|; k, \La} (x,y).
$$

\item[(6)] The vector $\vp^\es(\La; \ve) := (\vp^{(s)} (n, \La; \ve))_{n \in \La}$, given by $\vp^{(s)} (m_0, \La; \ve) = 1$ and $\vp^{(s)} (n, \La; \ve) = - F^{(s)}(m_0, n, \La; \ve, E^{(s)} (m_0, \La; \ve))$ for $n \ne m_0$, obeys
\begin{equation}\label{eq:4-21ACC}
H_{\La, \ve} \vp^{(s)} (\La; \ve) = E^{(s)}(m_0, \La; \ve) \vp^{(s)}(\La; \ve),
\end{equation}
\begin{equation}\label{eq:4-11evdecay}
\begin{split}
|\vp^{(s)} (n, \La; \ve) | \le 4 |\ve|^{1/2} \exp \left( -\frac{7\kappa_0}{8} |n - m_0| \right), \quad n \neq m_0, \\
\vp^{(s)} (m_0, \La; \ve) = 1.
\end{split}
\end{equation}
Furthermore, let $P(m_0, \La; \ve)$ be the Riesz projector onto the one-dimensional subspace $\mathbb{C} \vp^\es (\La; \ve)$ $($see \eqref{eq:4ResRes}$)$, and let $\delta_{m_0} := (\delta_{m_0,x})_{x \in \La}$. Then, $\|P(m_0, \La; \ve) \delta_{m_0}\| \ge 2/3$. Finally,
\begin{equation}\label{eq:4-11acACCev}
|\vp^\es (n, \La; \ve) - \vp^\esone (n, \La^\esone (m_0); \ve) \le 2|\ve| (\delta_0^{(s-1)})^5, \quad n \in \La^\esone(m_0).
\end{equation}

\end{itemize}
\end{prop}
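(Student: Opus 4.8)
The plan is to establish all six parts simultaneously by induction on $s$, the engine being the Schur complement decomposition of $\La$ into $\La_{m_0}:=\La\setminus\{m_0\}$ and the single point $m_0$, fed by the multi-scale machinery of Section~\ref{sec.2}. For the base case $s=1$ everything is elementary: in Part~(1) the prescribed $D(\cdot;\La)$ satisfies \eqref{eq:auxDcond} vacuously off $m_0$ (its value there, $4\log\delta_0^{-1}$, is below $4T\kappa_0^{-1}$) and at $m_0$ by $\mu_\La(m_0)\ge R^{(1)}$ (Definition~\ref{def:4-1}(d)) together with $\kappa_0(R^{(1)})^{1/16}>\log(\delta_0^{(1)})^{-1}$ (Remark~\ref{rem:3.Rs}); in Part~(2) the diagonal of $E-H_{\La_{m_0},\ve}$ is $\ge 3\delta_0/4\gg\exp(-4T\kappa_0^{-1})$ by \eqref{eq:2-5} and $|E-v(m_0)|<\delta_0/4$, so Lemma~\ref{lem:aux6} yields invertibility and the Green-function bound (the monotonicity $s_{D_1}\le s_{D_2}$ for $D_1\le D_2$ absorbing the generous choice of $D$). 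Parts~(3)--(6) at $s=1$ then follow exactly as in the inductive step below, the difference being that the explicit prefactors $\ve^2$ in $Q^{(1)}$ and $\ve$ in $F^{(1)}$ directly supply the claimed powers of $|\ve|$, and \eqref{eq:4-11acACCev} is vacuous.

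For the inductive step ($s\ge2$), Part~(1): build $D(\cdot;\La)$ by pasting the inductively supplied $D(\cdot;\La^{(s')}(m))$ on the pairwise disjoint (Definition~\ref{def:4-1}(b)) sets $\La^{(s')}(m)$, setting $D(m_0;\La)=2\log(\delta_0^{(s)})^{-1}$, and $D=4\log\delta_0^{-1}$ elsewhere; that $D\in\mathcal{G}_{\La,T,\kappa_0}$ follows from Lemma~\ref{lem:auxDfunctionsrules} ($\mu_\La\ge\mu_{\La_j}$ on the pieces), from $\mu_\La(m_0)\ge R^{(s)}$, and from the ambient-set monotonicity of $\mathcal{G}$ (Remark~\ref{rem:2withRnoR}). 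Part~(2): decompose $\La_{m_0}$ into the sets $\La^{(s')}(m)$ with $m\ne m_0$, the set $\La^{(s-1)}(m_0)\setminus\{m_0\}$, and leftover singletons. On the first kind the inductive Part~(5) applies, because the separation \eqref{eq:4-3sge3}, combined with $|E-E^{(s-1)}(m_0,\La^{(s-1)}(m_0);\ve)|<2\delta_0^{(s-1)}$ and $\delta_0^{(s-1)}\ll\delta_0^{(s')}$, places $E$ in the admissible annulus $(\delta_0^{(s')})^4<|E-E^{(s')}(m,\La^{(s')}(m);\ve)|<2\delta_0^{(s'-1)}$; on $\La^{(s-1)}(m_0)\setminus\{m_0\}$ the inductive Part~(2) applies since $|E^{(s-1)}(m_0,\cdot)-E^{(s-2)}(m_0,\cdot)|<|\ve|(\delta_0^{(s-2)})^5$ forces $|E-E^{(s-2)}(m_0,\cdot)|<2\delta_0^{(s-2)}$; on the singletons $|v(n)-v(m_0)|\ge\delta_0^4$ from Definition~\ref{def:4-1}(f). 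Feeding these into Proposition~\ref{prop:aux1} gives invertibility of $E-H_{\La_{m_0},\ve}$ and the Green-function estimate; hence Part~(3): $K^{(s)}=(E-H_{\La_{m_0},\ve})^{-1}$, $Q^{(s)}$, $F^{(s)}$ are analytic on the stated domain, their size bounds come from summing the Part~(2) estimate against the decaying $h_0$ (Corollary~\ref{cor:auxweight1}, Lemma~\ref{lem:auxweight1iterated}), their $\partial_\ve,\partial_E$ bounds from Lemma~\ref{lem:7differentiationB} and Lemma~\ref{lem:2Qfunctionderiv}, and the crucial comparison $|Q^{(s)}(m_0,\La;\ve,E)-Q^{(s-1)}(m_0,\La^{(s-1)}(m_0);\ve,E)|\le|\ve|(\delta_0^{(s-1)})^6$ from Lemma~\ref{lem:2Qfunction} with $\La_1=\La^{(s-1)}(m_0)\setminus\{m_0\}$, $\La_2=\La_{m_0}\setminus\La_1$, where $\dist(\{m_0\},\La_2)\ge R^{(s-1)}$ by Definition~\ref{def:4-1}(d) and all loss terms are beaten by $\kappa_0 R^{(s-1)}\gg(\log R^{(s-1)})^2=\log(\delta_0^{(s-1)})^{-1}$; the analogous $F^{(s)}$ versus $F^{(s-1)}$ comparison is obtained the same way.

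With Part~(3) in hand Parts~(4)--(6) are soft. For Part~(4), set $\phi(\ve,E):=E-v(m_0)-Q^{(s)}(m_0,\La;\ve,E)$ and let $\phi^{(s-1)}$ be its analogue with $Q^{(s-1)}(m_0,\La^{(s-1)}(m_0);\cdot)$; by induction $\phi^{(s-1)}(\ve,\cdot)$ has the simple zero $E^{(s-1)}(m_0,\La^{(s-1)}(m_0);\ve)$ with $|\partial_E\phi^{(s-1)}+1|\le|\ve|$, and $|\phi-\phi^{(s-1)}|\le|\ve|(\delta_0^{(s-1)})^6$ on $|E-E^{(s-1)}(m_0,\cdot)|<2\delta_0^{(s-1)}$, so Rouch\'e gives a unique zero $E^{(s)}(m_0,\La;\ve)$ within $|\ve|(\delta_0^{(s-1)})^5<\frac{3}{2}\delta_0^{(s-1)}$ of it, analytic in $|\ve|<\ve_{s-1}$ by the analytic implicit function theorem; the Schur identity $\det(E-H_{\La,\ve})=\det(E-H_{\La_{m_0},\ve})\,\phi(\ve,E)$ and Part~(2) make it a simple zero of $\det(E-H_{\La,\ve})$ with no other zero in the $2\delta_0^{(s-1)}$-disk, and \eqref{eq:4-17AAAA}, \eqref{eq:4-13acAAAA} follow by subtracting fixed-point equations and using $|\partial_E Q^{(s)}|\le|\ve|$. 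Part~(5) is the same identity read for invertibility: on $(\delta_0^{(s)})^4<|E-E^{(s)}|<2\delta_0^{(s-1)}$ one has $(E-H_{\La_{m_0},\ve})$ invertible (Part~(2)) and $|\phi(\ve,E)|\gtrsim|E-E^{(s)}|>(\delta_0^{(s)})^4$, so Lemma~\ref{lem:aux5AABBCC} with $\La_1=\La_{m_0}$, $\La_2=\{m_0\}$, $|\det\tilde H_2|^{-1}=|\phi|^{-1}\le\exp(D(m_0;\La))\le\exp(T\mu_\La(m_0)^{1/5})$ gives the $S$-bound. For Part~(6), the vector $\vp^{(s)}$ with $\vp^{(s)}(m_0)=1$, $\vp^{(s)}(n)=-F^{(s)}(m_0,n,\La_{m_0};\ve,E^{(s)})$ solves \eqref{eq:4-21ACC} because its block components make both block rows of $(E^{(s)}-H_{\La,\ve})\vp^{(s)}=0$ vanish (the $\La_{m_0}$-row by the definition of $F^{(s)}$, the $m_0$-row by the fixed-point equation); \eqref{eq:4-11evdecay} is the Part~(3) bound on $F^{(s)}$; $\|P(m_0,\La;\ve)\delta_{m_0}\|\ge2/3$ follows from $\|\vp^{(s)}\|\ge1$ and $\|\vp^{(s)}\|^2\le1+O(|\ve|)$ via the standard representation of the spectral projector of the now simple, isolated eigenvalue $E^{(s)}$; and \eqref{eq:4-11acACCev} follows from the $F^{(s)}$ versus $F^{(s-1)}$ comparison plus $|E^{(s)}-E^{(s-1)}|<|\ve|(\delta_0^{(s-1)})^5$ and $|\partial_E F^{(s-1)}|\le|\ve|^{1/2}$.

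The main obstacle is bookkeeping rather than any single estimate: the induction cannot be carried out one part at a time, because the separation hypotheses (e), (f) of Definition~\ref{def:4-1} that drive Part~(2) at level $s$ are phrased through the functions $E^{(s')}(m,\La^{(s')}(m);\ve)$ produced by Part~(4) at the lower levels, so the nested analyticity thresholds $\ve_{s-1}<\ve_{s-2}<\cdots$ and the nested disks of radii $\delta_0^{(s-1)}\ll\delta_0^{(s-2)}$ must be made to line up exactly across all six statements. Quantitatively, the one place where geometry genuinely enters is the comparison $Q^{(s)}\approx Q^{(s-1)}$, where one must check that the built-in separation $\dist(m_0,\La_2)\ge R^{(s-1)}$ of Definition~\ref{def:4-1}(d) dominates every loss term of Lemma~\ref{lem:2Qfunction} (the weights $D(\cdot)$, the constant $4T\kappa_0^{-1}$, and the $|\ve|^{3/2}$ prefactor); this is exactly what the parameter choices \eqref{eq:3basicparameters} and Remark~\ref{rem:3.Rs} are designed to guarantee.
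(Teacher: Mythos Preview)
Your proposal is correct and follows essentially the same approach as the paper: simultaneous induction on $s$ driven by the Schur complement with $\La_2=\{m_0\}$, Proposition~\ref{prop:aux1} for the Green function on $\La_{m_0}$, and the Section~\ref{sec.2} comparison machinery (your citation of Lemma~\ref{lem:2Qfunction} is the packaged form of what the paper does by hand in \eqref{eq:4101rem1a1Qsplit}--\eqref{eq:auxAAAAKKLL1}). The only noteworthy deviation is in the projector bound of Part~(6): the paper computes the residue of the Schur resolvent explicitly, obtaining $P\delta_{m_0}=\bigl(\partial_E\phi\big|_{E=E^{(s)}}\bigr)^{-1}\vp^{(s)}$ with $|\partial_E\phi|\le 3/2$, which gives $\|P\delta_{m_0}\|\ge 2/3$ for all complex $|\ve|<\ve_{s-1}$; your argument via $\|P\delta_{m_0}\|=1/\|\vp^{(s)}\|$ presumes the orthogonal projector formula and hence self-adjointness, so it covers only real $\ve$.
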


\begin{proof}
The proof of $(1)$--$(5)$ goes simultaneously by induction over $s$, starting with $s = 1$.

We will prove now $(1)$--$(5)$ in the case $s = 1$. Let $E \in \IC$ be such that
\begin{equation}\label{eq:4-spectralgap}
|E - v(m_0)| \le \delta_0/4.
\end{equation}
Set
\begin{equation}\label{eq:4inddef0}
\cH_{\La_{m_0}} = E - H_{\La_{m_0}, \ve}.
\end{equation}
Clearly, $D(\cdot; \La_{m_0}) \in \mathcal{G}_{\La_{m_0}, T, \kappa_0}$. Set $D(x; \La) = D(x; \La_{m_0}) := 2 \log \delta_0^{-1}$ if $x \in \La_{m_0}$, and $D(m_0; \La) = 2 \log (\delta^{(1)}_0)^{-1}$. Due to condition $(d)$ in Definition~\ref{def:4-1}, one has $\mu_\La (m_0) \ge R^{(1)}$. So,
$$
D(m_0; \La) = 2 \log (\delta^{(1)}_0)^{-1} = 2 \log \exp \bigl( (\log R^{(1)})^{2} \bigr) = (\log R^{(1)})^{2} < (R^\one)^{1/5} < \mu_\La (m_0)^{1/5}.
$$
Hence, $D(\cdot; \La) \in \mathcal{G}_{\La, T, \kappa_0}$. This finishes the proof of $(1)$ in case $s = 1$.

One has $|\cH_{\La_{m_0}}(n,n)| \ge \delta_0/4$ for each $n \in \La_{m_0}$. Due to Lemma~\ref{lem:aux6}, we have for $|\ve| \le \bar \ve_0$,
$$
|\cH_{\La_{m_0}}^{-1}(m,n)| \le s_{D(\cdot;\La_{m_0}),T,\kappa_0,|\ve|;\La_{m_0}}(m,n).
$$
Due to \eqref{eq:auxtrajectweightsumest8iterted} from Lemma~\ref{lem:auxweight1iterated}, one has $|Q^{(1)}(m_0, \La; \ve, E)| < |\varepsilon|^{3/2}$. It follows from Cramer's rule that $K^{(1)}(m,n, \La_{m_0}; \ve, E)$ is analytic wherever it is defined. Thus, $K^{(1)}(m,n, \La_{m_0}; \ve, E)$, $Q^{(1)}(m_0, \La; \ve, E)$ are analytic in the domain
\begin{equation}\label{eq:4domain1}
|\ve| < \bar \ve_0, \quad |E - v(m_0)| < \delta_0/4.
\end{equation}
Using Cauchy estimates for analytic functions, one obtains
\begin{equation}\label{eq:4derivest}
|\partial_\ve Q^{(1)}(m_0, \La; \ve, E)| < \frac{1}{2} \bar \ve_0^{-1} |\varepsilon|^{3/2} < |\ve|^{1/2},
\end{equation}
provided
\begin{equation}\label{eq:4domain1N}
|\ve| < \ve_0, \quad |E - v(m_0| < \delta_0/8.
\end{equation}
The verification of the rest of \eqref{eq:4-12acACC} and \eqref{eq:4-11acACC} with $s=1$ is completely similar. This finishes the proof of $(2)$ and $(3)$ in the case $s=1$.

Let $|E - v(m_0)| < \delta_0/4$. Due to the Schur complement formula, $\cH_\La := E - H_{\La,\ve}$ is invertible if and only if
\begin{equation}\label{eq:4characteristicequation1}
\tilde H_2 = E - v(m_0) - \sum_{m,n \in \La_{m_0}} (-\ve h(m_0,m)) K^{(1)}(m,n, \La_{m_0}; \ve, E) (-\ve h(n,m_0)) \neq 0.
\end{equation}
Moreover,
\begin{equation}\label{eq:4schurfor}
\cH_\La^{-1} = \begin{bmatrix} \cH_{\La_{m_0}}^{-1} + \cH_{\La_{m_0}}^{-1} \Gamma_{1,2} \tilde H_2^{-1} \Gamma_{2,1} \cH_{\La_{m_0}}^{-1} & - \cH_{\La_{m_0}}^{-1} \Gamma_{1,2} \tilde H_2^{-1} \\[5pt] -\tilde H_2^{-1} \Gamma_{2,1} \cH_{\La_{m_0}}^{-1} & \tilde H_2^{-1}\end{bmatrix}.
\end{equation}
In other words, if $|E - v(m_0)| < \delta_0/4$, then $E \in \spec H_{\La,\ve}$ if and only if it obeys
\begin{equation}\label{eq:4chracteristic1}
E = v(m_0) + Q^{(1)}(m_0, \La; \ve, E).
\end{equation}
To solve the equation \eqref{eq:4chracteristic1}, we invoke part $(2)$ of Lemma~\ref{em:implicitanalyticont} with
\begin{equation}\label{eq:4equationdomain1}
\begin{split}
\phi_0(z) \equiv 0 ,\quad z_0 = 0, \quad \sigma_0  = \bar \ve_0, \\
f(z,w)  = Q^{(1)} (m_0, \La; z,v(m_0)+ w), \quad \big| w -\phi_0(z) \big| < \delta_0/4, \quad  \quad \rho_0  = \delta_0/4.
\end{split}
\end{equation}
Note that
\begin{equation}\label{eq:4equationequivalent}
f(z,w) = w \quad \text{if and only if $E = v(m_0) + w$ obeys equation \eqref{eq:4chracteristic1}}.
\end{equation}
One has
\begin{equation}\label{eq:4equationauxestimate}
|f(z,w) - \phi_0(z)| = |Q^{(1)} (m_0, \La; z,v(m_0)+ w)| < |\ve|^{3/2} \quad \text{for any $| w -\phi_0(z) \big| < \delta_0/4$}.
\end{equation}
Using Cauchy inequalities for the derivatives, one obtains
\begin{equation}\label{eq:4equationauxestimate2}
|\partial_w f (z, w)| < (\delta_0/8)^{-1}|\ve|^{3/2} <1/2 \quad \text{for any $| w -\phi_0(z) \big| < \delta_0/8$}.
\end{equation}
As in part $(2)$ of Lemma~\ref{em:implicitanalyticont}, set
\begin{align*}
M_0 & = \sup_z |\phi_0(z)| + \rho_0 + \sup_{z,w} |f(z,w)|, \\
M_1 & = \max(1,M_0), \quad \varepsilon_1 = \frac {\sigma_0^2 \rho_0^2}{10^{10} M_1^3 (1 + \log (\max (100, M_1)))^2}.
\end{align*}
We have $M_0 < 1$, $M_1 = 1$. This implies $\varepsilon_1 > \bar \ve_0^3 =\ve_0 > |\ve|^{3/2}$. On the other hand, $|f(z,\phi_0(z)) - \phi_0(z)| < |\ve|^{3/2}$. Thus, conditions $(\alpha)$, $(\beta)$ from the part $(2)$ of Lemma~\ref{em:implicitanalyticont} both hold. Therefore the equation $f(z,w) = w$ has a unique solution $w = w(z)$. Set $E^{(1)} (m_0, \La; z) := v(m_0) + w(z)$. The function $E^{(1)} (m_0, \La; z)$ is defined and analytic for $|z - z_0| < \ve_0 $. Moreover,
\begin{equation}\label{eq:4equationsolved1}
\begin{split}
v(m_0) + Q^{(1)} (m_0, \La; z,E^{(1)} (m_0, \La; z)) = E^{(1)} (m_0, \La; z), \\
|E^{(1)} (m_0, \La; z) - v(m_0)| < |\ve|^{3/2} < |\ve|(\delta^{(0}_0)^6.
\end{split}
\end{equation}

Clearly, $E^{(1)}(m_0, \La; \ve)$ is a  zero of $\det(E - H_{\La,\ve})$, and at $\ve = 0$ it obeys $E^{(1)}(m_0, \La^{(1)}(m_0);0) = v(m_0)$. Furthermore, $\det(E - H_{\La,\ve})$ has no other zeros in the disk $|E - v(m_0)| < 3 \delta^\zero_0/2$. Note that
\begin{equation}\label{eq:4equationsolved1AAAAA}
\begin{split}
|E - v(m_0) - Q^{(1)}(m_0, \La; \ve, E)| = |w - f(\ve,w)| = |[w - f(\ve,w)] - [w_0 - f(\ve,w_0)]| \\
\ge \min \partial_w [w - f(\ve,w)] |w - w_0| \ge \frac{1}{2} |w - w_0| = \frac{1}{2} |E - E^{(1)} (m_0, \La; \ve))| \\
w_0 = E^{(1)} (m_0, \La; \ve) - v(m_0), \quad w = E - v(m_0).
\end{split}
\end{equation}
Combining \eqref{eq:4schurfor} with \eqref{eq:4equationsolved1AAAAA}, one concludes that $\|(E - H_{\La,\ve})^{-1}\| \le C_{H_{\La,\ve}} |E - E^{(1)} (m_0, \La; \ve))|^{-1}$. Hence, $|\det(E - H_{\La,\ve})|^{-1} \le C_{H_{\La,\ve}} |E - E^{(1)} (m_0, \La; \ve))|^{-1}$. Therefore, $E^{(1)}(m_0, \La; \ve)$ is a simple zero of $\det(E - H_{\La,\ve})$.

To verify \eqref{eq:4-13acAAAA} note that
\begin{equation} \label{eq:4-13acACAC}
\begin{split}
\big| v(m_0) + Q^{(1)} (m_0, \La; \ve, E) - E^{(1)} \bigl(m_0, \La; \ve \bigr) \big | = \big| Q^{(1)} (m_0, \La; \ve, E) - Q^{(1)} (m_0, \La; \ve,E^{(1)} \bigl(m_0, \La; \ve \bigr)) \big | \\
\le [\sup |\partial_E Q^{(1)} (m_0, \La; \ve, E)|] |E - E^{(1)} \bigl(m_0, \La; \ve \bigr)|.
\end{split}
\end{equation}
Recall that $|\partial_E Q^{(1)} (m_0, \La; \ve, E)| < |\ve|$ if $|E - v(m_0)| < \delta_0/8$. This proves $(4)$ in case $s = 1$.

Let $(\delta_0^\one)^4 \le |E - E^{(1)}(m_0, \La; \ve)| \le \delta_0/16$. To verify $(5)$, we apply Lemma~\ref{lem:aux5AABBCC} with $\La_2 := \{ m_0 \}$, $\La_1 := \Lambda_{m_0}$. One has
$$
|v(m_0) - E| < |v(m_0) - E^{(1)}(m_0, \La; \ve)| + |E - E^{(1)}(m_0, \La; \ve)| \le \ve_0 + \delta_0/16 < \delta_0/8.
$$
Therefore, the matrix $\cH_{\La_{m_0}}$ is invertible and
\begin{equation}\label{eq:aux00H1inverse1newAAA}
|\cH_{\Lambda_{m_0}}^{-1}(m,n)| \le s_{D_1,T,\kappa_0,|\ve|;\La_{m_0}}(m,n).
\end{equation}
Furthermore, using \eqref{eq:4-13acAAAA}, one obtains
\begin{equation}\label{eq:aux00H1BBBB}
\begin{split}
|\tilde H_2| := |\cH(m_0,m_0) - \Gamma_{2,1} \cH_{\La_{m_0}}^{-1} \Gamma_{1,2}| = |E - v(m_0) - Q^{(1)} (m_0, \La; \ve, E)| \\
\ge |E - E^{(1)} \bigl(m_0, \La; \ve \bigr)| - |v(m_0) + Q^{(1)} (m_0, \La; \ve, E) - E^{(1)} \bigl(m_0, \La; \ve \bigr)| \\
\ge |E - E^{(1)} \bigl(m_0, \La; \ve \bigr)| - |\ve| |E - E^{(1)} \bigl(m_0, \La; \ve \bigr)| > \frac{1}{2} |E - E^{(1)} \bigl(m_0, \La; \ve \bigr)| > (\delta_0^\one)^4/2, \\
|\tilde H_2|^{-1} \le 2\exp(D(m_0;\La)).
\end{split}
\end{equation}
Thus, all conditions of Lemma~\ref{lem:aux5AABBCC} hold. So, $\cH_{\Lambda}$ is invertible and
\begin{equation}\label{eq:3aux00c011kappad2statement}
|\cH_{\Lambda}^{-1}(m,n)| \le s_{D,T,\kappa_0,\ve_0;\La}(m,n).
\end{equation}
This proves $(5)$ in case $s=1$.

\bigskip

Assume now that $s \ge 2$ and statements $(1)$--$(5)$ hold for any matrix of class $\cN^{(s')}(m,\Lambda^{(s')}(m); \delta_0)$ with $1 \le s' \le s-1$.

Note first of all the following. Let $\ve, E \in \mathbb{C}$ be such that $|\ve| < \ve_{s-2}$ and $|E^{(s-1)}(m_0,\La^\esone(m_0); \ve) - E| < 2 \delta^\esone$. Assume that $s \ge 3$. Let $m \in \cM^{(s-1)}$ be arbitrary, $m \neq m_0$. Then, using \eqref{eq:4-3sge3} from condition $(e)$ in Definition~\ref{def:4-1}, one obtains
\begin{equation}\label{eq:3EdomainstimateAAAA}
\begin{split}
|E^{(s-1)}(m, \La^{(s-1)}(m); \ve) - E| < \delta^{(s-2)}_0 + 2 \delta^\esone_0 < 2 \delta^{(s-2)}_0, \\
|E^{(s-1)}(m, \La^{(s-1)}(m); \ve) - E| > 3 \delta^{(s-1)}_0 - 2 \delta^\esone_0 = \delta^{(s-1)}_0 > (\delta^{(s-1)}_0)^2.
\end{split}
\end{equation}
Similarly, let  $1 \le s' \le s-2$, $m \in \cM{(s')}$ be arbitrary. Then, using \eqref{eq:4-3sge3} from condition $(e)$ in Definition~\ref{def:4-1}, one obtains
\begin{equation}\label{eq:3EdomainstimateAAAABCDE}
\begin{split}
|E^{(s')}(m, \La^{(s')}(m); \ve) - E| < \delta^{(s'-1)}_0 + 2 \delta^\esone < 3 \delta^{(s'-1)}_0/2, \\
|E^{(s')}(m, \La^{(s')}(m); \ve) - E| > \delta^{(s')}_0/2 - 2 \delta^\esone > \delta^{(s')}_0/4 > (\delta^{(s')}_0)^2.
\end{split}
\end{equation}
This means that the inductive assumption applies to $H_{\La^{(s')}(m),\ve}$ in the role of $H_{\La,\ve}$ and to the value $E$, so that $(1)$--$(5)$ hold. In particular, each $\cH_{\La^{(s'),\ve}(m),\ve} := E - H_{\La^{(s'),\ve}(m),\ve}$ $m \neq m_0$ is invertible. Furthermore, obviously, the inductive assumptions apply to $\cH_{\La^{(s-1)}(m_0),\ve}$ in the role of $H_{\La,\ve}$ and to the value $E$, so that $(1)$--$(4)$ hold. In particular, $\cH_{\La^{(s-1)}(m_0)\setminus\{m_0\},\ve} := E - H_{\La^{(s-1)}(m_0) \setminus \{m_0\},\ve}$ is invertible. Moreover,
\begin{equation}\label{eq:3HinvestimateAXYZ}
\begin{split}
|\cH_{\La^{(s')}(m),\ve}^{-1}(x,y)| \le s_{D(\cdot;\La^{(s')}(m)),T,\kappa_0,|\ve|;\La_{m_0}}(x,y), \\
|\cH_{\La^{(s-1)}(m_0) \setminus \{m_0\},\ve}^{-1}(x,y)| \le s_{D(\cdot;\La^{(s-1)}(m_0)\setminus\{m_0\}),T,\kappa_0,|\ve|; \La^{(s-1)}(m_0)\setminus\{m_0\}}(x,y).
\end{split}
\end{equation}
For $s = 2$, one arrives at the same conclusions using \eqref{eq:4-3} instead of \eqref{eq:4-3sge3}. Due to \eqref{eq:4-17AAAA},
$\big|E^{(s-1)}(m_0, \La^\esone(m_0); \ve) - v(m_0) \big| < |\ve| < \delta_0/64$. Recall also that $|v(n) - v(m_0)| \ge \delta_0/16$ for any $n \in \Lambda \setminus \bigl(\bigcup_{1 \le s' \le s} \bigcup_{m \in \cM{(s')}} \La^{(s')}(m) \bigr)$. This implies $|E - v(n)| \ge \delta_0/32 > \delta_0^2$ for any $n \in \Lambda \setminus \bigl(\bigcup_{1 \le s' \le s} \bigcup_{m \in \cM{(s')}} \La^{(s')}(m) \bigr)$ since $|E^{(s-1)}(m_0,\La^\esone(m_0); \ve) - E| < 2 \delta^\esone < \delta_0/64$. Let again $\La_{m_0} = \La \setminus \{m_0\}$, $D(x;\La_{m_0}) = D(x;\La^{(s')}(m))$ if $x \in \La^{(s')}(m)$ for some $s' \le s-1$ and some $m \in \cM{(s')}$, $m \neq m_0$ or if $x \in \La^{(s-1)}(m_0) \setminus \{m_0\}$, $D(x;\La_{m_0}) = 4 \log \delta_0^{-1}$; otherwise just as in part $(1)$ of the current proposition. Due to the inductive assumptions, $D(\cdot;\La^{(s')}(m)) \in \mathcal{G}_{\La^{(s')}(m),T,\kappa_0}$, $1 \le s' \le s-1$, $m \in \cM{(s')}$, $m \neq m_0$, and also $D(\cdot;\La^{(s-1)}(m_0) \setminus \{m_0\}) \in \mathcal{G}_{\La^{(s-1)}(m_0) \setminus \{m_0\},T,\kappa_0}$. Due to Lemma~\ref{lem:auxDfunctionsrules}, $D(\cdot;\La_{m_0}) \in \mathcal{G}_{\La_{m_0},T,\kappa_0}$. Due to condition $(d)$ in Definition~\ref{def:4-1}, one has $\mu_\La(m_0) \ge R^{(s)}$. So,
$$
D(m_0;\La) = 4 \log (\delta^{(s)}_0)^{-1} = 4 \log \exp \bigl((\log R^{(s)})^{2}\bigr) = 4 (\log R^{(s)})^{2} < (R^\es)^{1/5} < \mu_\La(m_0)^{1/5}.
$$
Hence, $D(\cdot;\La) \in \mathcal{G}_{\La,T,\kappa_0}$. Due to the inductive assumption, $\max_{x \neq m} D(x;\La^{(s')}(m)) \le \log 4 (\delta_0^{(s'-1)})^{-1}$ for any $s'$ and any $m \in \cM{(s')}$. Due to the definition, $D(m;\La^{(s')}(m)) = \log 4 (\delta_0^{(s'-1)})^{-1}$ if $m \in \cM{(s')}$. Thus, $\max_{x \neq m_0} D(x;\La^{(s')}(m)) \le \log 4(\delta_0^{(s-1)})^{-1}$. This finishes the proof of $(1)$,$(2)$.

Due to Proposition~\ref{prop:aux1}, $\cH_{\La_{m_0},\ve} = E - H_{\La_{m_0},\ve}$ is invertible and
\begin{equation}\label{eq:3Hinvestimate}
|\cH_{\La_{m_0},\ve}^{-1}(x,y)| \le s_{D(\cdot;\La_{m_0}),T,\kappa_0,\ve_0;\La_{m_0}}(x,y).
\end{equation}

Just like in the case $s = 1$ one concludes that
\begin{equation}\label{eq:4101rem1a1}
\begin{split}
|Q^{(s)}(m_0, \La; \ve, E)| \le |\varepsilon|^{3/2}, \\
\text{where $Q^{(s)}(m_0, \La; \ve, E) := \ve^2 \sum_{m,n \in \La_{m_0}} h(m_0,m) K^{(s)}(m,n, \La_{m_0}; \ve, E) h(n,m_0)$}, \\
K^{(s)}(x,y, \La_{m_0}; \ve, E) := \cH_{\La_{m_0}}^{-1}(x,y).
\end{split}
\end{equation}
The functions  $K^{(s)}(m,n, \La_{m_0}; \ve, E)$, $Q^{(s)}(m_0, \La; \ve, E)$ are analytic in the domain
\begin{equation}\label{eq:4domain1A1AA}
|\ve| < \ve_{s-2}, \quad |E^{(s-1)}(m_0,\La^\esone(m_0); \ve) - E| < 2 \delta^\esone.
\end{equation}

To verify the first estimate in \eqref{eq:4-12acACC}, we write
\begin{equation}\label{eq:4101rem1a1Qsplit}
\begin{split}
Q^{(s)}(m_0, \La; \ve, E) := \ve^2 [\sum_{m,n \in \La_1} + \sum_{m,n \in \La_2} + \sum_{m \in \La_1, n \in \La_2} + \sum_{m \in \La_2, n \in \La_1}] \\
h(m_0,m) \cH_{\La_{m_0}}^{-1}(m,n) h(n,m_0) := Q_{1,1} + Q_{2,2} + Q_{1,2} + Q_{2,1},
\end{split}
\end{equation}
where $\La_1 := \La^{(s-1)}(m_0) \setminus \{m_0\}$, $\La_2 := \La_{m_0} \setminus \La_1 = \La \setminus \La^{(s-1)}(m_0)$. We invoke the Schur complement formula \eqref{eq:2schurfor} with these $\La_1$, $\La_2$,
$$
\cH_{\La_{m_0}}^{-1} = \begin{bmatrix} \cH_1^{-1} + \cH_1^{-1} \Gamma_{1,2} \tilde H_2^{-1} \Gamma_{2,1} \cH_1^{-1} & - \cH_1^{-1}
\Gamma_{1,2} \tilde H_2^{-1} \\[5pt] -\tilde H_2^{-1} \Gamma_{2,1} \cH_1^{-1} & \tilde H_2^{-1}\end{bmatrix}.
$$
For $x,y \in \La_1$, one has
\begin{equation}\label{eq:4schurforKKll}
\cH_{\La_{m_0}}^{-1} (x,y) = [\cH_{\La^{(s-1)}(m_0) \setminus \{m_0\},\ve}^{-1}] (x,y) + [\cH_{\La^{(s-1)}(m_0) \setminus \{m_0\},\ve}^{-1} \Gamma_{1,2} \hat H_2^{-1} \Gamma_{2,1}\cH_{\La^{(s-1)}(m_0) \setminus \{m_0\},\ve}^{-1}](x,y),
\end{equation}
where $\hat H_2 := \cH_{\La \setminus \La^{(s-1)}(m_0),\ve} - \Ga_{2,1} \cH_{\La^{(s-1)}(m_0) \setminus \{m_0\},\ve}^{-1} \Ga_{1,2}$. This implies
\begin{equation}\label{eq:4101rem1a1KKLLN}
\begin{split}
|Q^{(s)}(m_0, \La; \ve, E) - Q^{(s-1)}(m_0, \La^{(s-1)}(m_0); \ve, E)| \le \ve^2 \sum_{m,n \in \La_1} \exp(-\kappa_0 |m_0-m| - \kappa_0 |n - m_0|) \times \\
|[\cH_{\La^{(s-1)}(m_0) \setminus \{m_0\},\ve}^{-1} \Gamma_{1,2} \tilde H_2^{-1} \Gamma_{2,1} \cH_{\La^{(s-1)}(m_0) \setminus \{m_0\},\ve}^{-1}](m,n)| + |Q_{2,2}| + |Q_{1,2}| + |Q_{2,1}| \\
:= R_{1,1} + |Q_{2,2}| + |Q_{1,2}| + |Q_{2,1}|.
\end{split}
\end{equation}
Once again, since all conditions of Lemma~\ref{lem:aux5AABBCC} hold, one can invoke \eqref{eq:aux00c011kappad2statement}.

Using the estimate \eqref{eq:aux00c011kappad2statement}, combined with the estimate \eqref{eq:auxtrajectweight2}, one obtains
\begin{equation}\label{eq:auxAAAAKKLL}
\begin{split}
R_{1,1} \le |\ve|^2 \sum_{m,n \in \La_1} \exp(-\kappa_0 |m_0-m| - \kappa_0 |n-m_0|) \times \\
\sum_{q \ge 3} |\ve|^{q-1} \sum_{\gamma \in \Ga^{(1,2)}_{D,T,\kappa_0}(m,n;q,\La)} W_{D,\kappa_0}(\gamma) \le |\ve|^2 \sum_{m,n \in \La_1} \exp(-\kappa_0 |m_0-m| - \kappa_0 |n-m_0|) \times \\
\sum_{q \ge 3} |\ve|^{q-1} \sum_{\gamma \in \Ga^{(1,2)}_{D,T,\kappa_0}(m,n;q,\La_{m_0})} \exp \left( -\frac{15}{16} \kappa_0 \|\gamma\| + \max(\bar D(\gamma),4T\kappa_0^{-1}) \right),
\end{split}
\end{equation}
where $\Ga^{(p,q)}_{D,T,\kappa_0}(m,n;k,\La) = \bigcup_{t \ge 1} \Ga^{(p,q,t)}_{D,T,\kappa_0}(m,n;k,\La)$, $\Ga^{(p,q,t)}_{D,T,\kappa_0}(m,n;k,\La)$ stands for the set of all $\gamma \in \Ga_{D,T,\kappa_0}(m,n;k,\La)$ such that $\gamma = \gamma_1 \cup \gamma'_1 \dots \cup \gamma_{t+1}$ with $\gamma_j \in \Ga_{D_p,T,\kappa_0}(\La_p)$, $\gamma'_i \in \Ga_{D_q,T,\kappa_0}(\La_q)$, $p \neq q$. Note that for any $m,n$ and any $\gamma \in \Ga^{(1,2)}_{D,T,\kappa_0}(m,n;q,\La_{m_0})$, we have
\begin{equation}\nn
\begin{split}
\bar D(\gamma) \le \max_{x \neq m_0} D(x) \le \log 4 (\delta_0^\esone)^{-1} = \bigl(R^\esone\bigr)^{1/4} + 2 \log2, \\
|m_0-m| + \|\gamma\| + |n-m_0| \ge 2 \mu_{\La^{(s-1)}(m_0)}(m_0) \ge 2 R^\esone
\end{split}
\end{equation}
(the second estimate here is due to condition $(d)$ in Definition~\ref{def:4-1}). Combining these estimates with \eqref{eq:auxAAAAKKLL}, one obtains
\begin{equation}\label{eq:auxAAAAKKLL1}
\begin{split}
R_{1,1} \le |\ve|^2 \sum_{q \ge 3} |\ve|^{q-1} \sum_{\gamma \in \Ga(m_0,m_0;q+2,\La), \quad \|\gamma\| \ge 2 R^\esone} \exp \left( -\frac{15}{16} \kappa_0 \|\gamma\| \right) \\
\le |\ve|^2 \sum_{q \ge 3} |\ve|^{q-1} \exp \left( -\kappa_0 R^{(s-1)} \right) \sum_{\gamma \in \Ga(m_0,m_0;q+2,\La)} \exp \left( -\frac{1}{4} \kappa_0 \|\gamma\| \right) \\
\le |\ve|^{3/2} \exp \left( -\kappa_0 R^{(s-1)} \right) < |\ve| (\delta_0^{(s-1)})^6
\end{split}
\end{equation}
(here we used \eqref{eq:auxtrajectweight21a} from Lemma~\ref{lem:2gammasum} and $|\ve| \le \ve_0$). The estimation of the rest of the terms in \eqref{eq:4101rem1a1KKLLN} is completely similar. So, the first estimate in \eqref{eq:4-12acACC} holds. The second estimate in \eqref{eq:4-12acACC} follows from the first one combined with the inductive assumption and Cauchy estimates for analytic functions. This finishes the verification of $(3)$.

Let us turn to part $(4)$. Suppose $|E^{(s-1)}(m_0,\La^\esone(m_0); \ve) - E| < 2 \delta^\esone$. Due to the Schur complement formula, $\cH_\La := E - H_\La$ is invertible if and only if
\begin{equation}\label{eq:4characteristicequation1s}
\tilde H_2 = E - v(m_0) - \sum_{m,n \in \La_{m_0}} (-\ve h(m_0,m)) K^{(s)}(m,n, \La_{m_0}; \ve, E) (-\ve h(n,m_0)) \neq 0.
\end{equation}
In this case,
\begin{equation}\label{eq:4schurforsss}
\cH_\La^{-1} = \begin{bmatrix} \cH_{\La_{m_0}}^{-1} + \cH_{\La_{m_0}}^{-1} \Gamma_{1,2} \tilde H_2^{-1} \Gamma_{2,1} \cH_{\La_{m_0}}^{-1} & - \cH_{\La_{m_0}}^{-1} \Gamma_{1,2} \tilde H_2^{-1} \\[5pt] -\tilde H_2^{-1} \Gamma_{2,1} \cH_{\La_{m_0}}^{-1} & \tilde H_2^{-1}\end{bmatrix}.
\end{equation}
In other words, if $|E^{(s-1)}(m_0,\La^\esone(m_0); \ve) - E| < \delta^\esone$, then $E \in \spec H_{\La,\ve}$ if and only if it obeys
\begin{equation}\label{eq:4chracteristic1sss}
E = v(m_0) + Q^{(s)}(m_0, \La; \ve, E).
\end{equation}
To solve the equation \eqref{eq:4chracteristic1sss}, we again invoke part $(2)$ of Lemma~\ref{em:implicitanalyticont}. We set
\begin{equation}\label{eq:4equationdomain1sss}
\begin{split}
\phi_0(z) := E^{(s-1)}(m_0,\La^\esone(m_0); \ve) - v(m_0), \quad z := \ve, \quad z_0 := 0, w := E - v(m_0), \quad \sigma_0 := \ve_{s-1}, \\
f(z,w)  = Q^{(s)} (m_0, \La; \ve,w+v(m_0)), \quad \big| w -\phi_0(z) \big| < 2 \delta^\esone, \quad \quad \rho_0 = \delta^\esone.
\end{split}
\end{equation}
Note that due to equation \eqref{eq:4-16} with $(s-1)$ in the role of $s$ and the first estimate in \eqref{eq:4-12acACC}, one has
\begin{align*}
|f & (z, \phi_0(z)) - \phi_0(z)| \\
& = |v(m_0) + Q^{(s)} (m_0, \La; z, E^{(s-1)}(m_0, \La^\esone(m_0); z)) - E^{(s-1)} (m_0, \La^\esone(m_0); z)| \\
& = |v(m_0) + Q^{(s)} (m_0, \La; z, E^{(s-1)}(m_0, \La^\esone(m_0); z)) \\
& \qquad \qquad - [|v(m_0) + Q^{(s-1)} (m_0, \La^\esone(m_0); z, E^{(s-1)} (m_0, \La^\esone(m_0); z))]| \\
& < |\ve| (\delta^{(s-1)}_0)^6.
\end{align*}
As in part $(2)$ of Lemma~\ref{em:implicitanalyticont}, set
\begin{align*}
M_0 & = \sup_z |\phi_0(z)| + \rho_0 + \sup_{z,w} |f(z,w)|, \\
M_1 & = \max(1,M_0), \quad \varepsilon_1  = \frac{\sigma_0^2 \rho_0^2}{10^{10} M_1^3 (1 + \log (\max (100, M_1)))^2}.
\end{align*}
One has $M_0 < \ve_0 + \rho_0 + \ve_0 < 1$. This implies $\varepsilon_1 > \frac{\delta^{(s-1)}_0)^5}{10^{12} } > (\delta^\esone_0)^6$. Due to part $(2)$ of Lemma~\ref{em:implicitanalyticont}, the equation $f(z,w) = w$ has a unique solution, which we denote by $w = E^{(s)} (m_0, \La; z) - v(m_0)$. The function $E^{(s)} (m_0, \La; z)$ is defined and analytic for $|z - z_0| < \sigma_0 - \ve_1$, and it obeys equation \eqref{eq:4chracteristic1sss}. Note that $\sigma_0 - \ve_1 > \ve_s$. Due to part $(2)$ of Lemma~\ref{em:implicitanalyticont}, one has
$$
|E^{(s)} (m_0, \La; z) - E^{(s-1)} (m_0, \La^\esone(m_0); z)| < 10^3 (1 + \log (\max (100, M_1)))^2 |f(z, \phi_0(z)) - \phi_0(z)| < |\ve|(\delta^{(s-1)}_0)^5.
$$
This validates \eqref{eq:4-17AAAA}. Next,
\begin{equation}\label{eq:4-13acAAAAvalid}
\begin{split}
\big| v(m_0) + Q^{(s)} (m_0, \La; \ve, E) - E^{(s)} \bigl(m_0, \La; \ve \bigr) \big | = \big| Q^{(s)} (m_0, \La; \ve, E) - Q^{(s)} (m_0, \La; \ve,E^{(s)} \bigl(m_0, \La; \ve \bigr)) \big | \\
\le  [\sup |\partial_E Q^{(s)} (m_0, \La; \ve, E)|]|E - E^{(s)} \bigl( n^\zero, \La^{(s)}; \ve \bigr) |<|\ve| |E - E^{(s)} \bigl( n^\zero, \La^{(s)}; \ve \bigr) |,
\end{split}
\end{equation}
which validates \eqref{eq:4-13acAAAA}.

The validation of part $(5)$ goes just the same way as for $s=1$. Thus, $(1)$--$(5)$ hold for any $s$.

We will now verify $(6)$. Since $E = E^{(s)}(m_0, \La; \ve)$ is a simple zero of $\det(E - H_{\La,\ve})$, the operator
\begin{equation}\label{eq:4ResRes}
P(m_0,\La;\ve) := \Res (E - H_{\La,\ve})^{-1}|_{E = E^{(s)}(m_0, \La; \ve)}
\end{equation}
is a one-dimensional projector on the eigenspace corresponding to $E^{(s)}(m_0, \La; \ve)$, which is called Riesz projector. Due to \eqref{eq:4schurforsss}, one has
\begin{equation}\label{eq:4ResRes1}
\begin{split}
(E - H_{\La,\ve})^{-1}(n,m_0) = - \cH_{\La_{m_0},\ve}^{-1} \Gamma_{1,2} \tilde H_2^{-1} \\
= - \sum_{m \in \La_{m_0}} \cH_{\La_{m_0},\ve}^{-1}(n,m) h(m,m_0;\ve) (E - v(m_0) - Q^{(s)}(m_0, \La; \ve, E))^{-1} \\
= - F^{(s)}(m_0,n, \La_{m_0}; \ve, E) (E - v(m_0) - Q^{(s)}(m_0, \La; \ve, E))^{-1}, \quad n \neq m_0, \\
(E - H_{\La,\ve})^{-1}(m_0,m_0) = (E - v(m_0) - Q^{(s)}(m_0, \La; \ve, E))^{-1}.
\end{split}
\end{equation}
Hence,
\begin{equation}\label{eq:4ResRes2}
P(m_0,\La;\ve) \delta_{m_0,\cdot} = \Res [(E - v(m_0) - Q^{(s)}(m_0, \La; \ve, E))^{-1} \vp^\es(\La;\ve,E)]|_{E = E^{(s)}(m_0, \La; \ve)},
\end{equation}
where $\vp^\es(\La;\ve,E) := (\vp^{(s)}(n, \La; \ve,E))_{n \in \La}$, $\vp^{(s)}(m_0, \La; \ve,E) = 1$, $\vp^{(s)}(n, \La; \ve,E) = -F^{(s)}
(m_0, n, \La; \ve, E)$, $n \ne m_0$. Recall that $(E - v(m_0) - Q^{(s)}(m_0, \La; \ve, E))^{-1}$ has a simple pole at $E = E^{(s)}(m_0, \La; \ve)$. Therefore,
\begin{equation}\label{eq:4ResRes3}
P(m_0,\La;\ve)\delta_{m_0,\cdot} = \Res [(E - v(m_0) - Q^{(s)}(m_0, \La; \ve, E))^{-1}]|_{E = E^{(s)}(m_0, \La; \ve)}\vp^\es(\La;\ve),
\end{equation}
where $\vp^\es(\La;\ve)$ is defined as in part $(6)$. Since $\Res [(E - v(m_0) - Q^{(s)}(m_0, \La; \ve, E))^{-1}]|_{E = E^{(s)}(m_0, \La; \ve)} \neq 0$, $P(m_0,\La;\ve) \delta_{m_0,\cdot} \neq 0$. Hence, $P(m_0,\La;\ve) \delta_{m_0,\cdot}$ is an eigenvector of $H_{\La,\ve}$ corresponding to $E^{(s)}(m_0, \La; \ve)$. Therefore, $\vp^{(s)}(\La; \ve)$ is an eigenvector of $H_{\La,\ve}$ corresponding to $E^{(s)}(m_0, \La; \ve)$. The estimate in \eqref{eq:4-11evdecay} follows from \eqref{eq:4-11acACC}. The identity in \eqref{eq:4-11evdecay} is just the definition of  $\vp^{(s)}(m_0, \La; \ve)$. To verify $\|P(m_0,\La;\ve) \delta_{m_0}\| \ge 2/3$, note that $|\partial_E (E - v(m_0) - Q^{(s)}(m_0, \La; \ve, E))| \le 3/2$. Hence,
\begin{equation}\label{eq:4ResRes3U}
\begin{split}
|\Res [(E - v(m_0) - Q^{(s)}(m_0, \La; \ve, E))^{-1}]|_{E = E^{(s)}(m_0, \La; \ve)}| \\
= |\partial_E (E - v(m_0) - Q^{(s)}(m_0, \La; \ve, E))|_{E = E^{(s)}(m_0, \La; \ve)}|^{-1} > 2/3.
\end{split}
\end{equation}
This implies the desired estimate. Finally, using \eqref{eq:4-11acACC} and \eqref{eq:4-17AAAA}, one obtains
\begin{equation}\label{eq:4-11acACCevP}
\begin{split}
|\vp^\es(n,\La;\ve) -\vp^\esone(n,\La^\esone(m_0);\ve)| \le \sup_{E} \big| F^{(s)}(m_0, n, \La; \ve, E) - F^{(s-1)} \bigl( m_0, n, \La^{(s-1)}(m_0); \ve, E \bigr) \big| \\
+ 2 \sup_{E,s'}  \big| \partial_E F^{(s')} \big| \big| E^{(s)}(m_0, \La; \ve) - E^{(s-1)} \bigl( m_0, \La^{(s-1)}(m_0); \ve \bigr) \big| \\
\le |\ve|^{1/2} \exp(-\kappa_0 R^{(s-1)}) + |\ve| (\delta_0^{(s-1)})^5 \le 2 |\ve| (\delta_0^{(s-1)})^5
\end{split}
\end{equation}
for any $n \in \La^\esone(m_0)$, as claimed in \eqref{eq:4-11acACCev}.
\end{proof}

Using the notation of \eqref{eq:2-1}--\eqref{eq:2-4}, assume that the functions $h(m, n,\ve)$, $m, n \in \La$ depend also on some parameter $k \in (k_1,k_2)$, that is, $h(m,n;\ve) = h(m, n;\ve,k)$. Let
\begin{equation}\label{eq:3.Hk}
H_{\La,\varepsilon,k} := \bigl(h(m, n; \ve,k)\bigr)_{m, n \in \La}.
\end{equation}
Assume that $H_{\La,\varepsilon,k}$ belongs to the class $\cN^{(s)}\bigl(m_0, \La; \delta_0\bigr)$. Denote by $K^{(s)}(m,n, \La_{m_0}; \ve,k, E)$, $Q^{(s)}(m_0,\La; \ve,k, E)$, $E^{(s)}(m_0, \La; \ve,k)$ the functions introduced in Proposition~\ref{prop:4-4} with $H_{\La,\varepsilon,k}$ in the role of $\hle$. Later in this work we will need estimates for the partial derivatives of these functions with respect to the parameter $k$.

\begin{lemma}\label{lem:7differentiation}
$(1)$ Let $H_k = (h(x,y;k))_{x,y \in \La}$ be a matrix-function, $k \in (k_1,k_2)$. Let $E \in \mathbb{C} \setminus \bigcup_{k \in (k_1,k_2)} \spec H_k$, so that $(E - H_k)^{-1}$ is well defined for $k \in (k_1,k_2)$. If $H_k$ is $C^1$-smooth, then $(E - H_k)^{-1}$ is a $C^1$-smooth function of $E,k$
\begin{equation}\label{eq:7ResderivAAA}
\begin{split}
\partial_k (E - H_k)^{-1} = (E - H_k)^{-1} \partial_k H_k (E - H_k)^{-1}, \\
\partial_E (E - H_k)^{-1} = - (E - H_k)^{-2}.
\end{split}
\end{equation}
If $H_k$ is $C^2$-smooth, then $(E - H_k)^{-1}$ is a $C^2$-smooth function of $E,k$ and
\begin{equation}\label{eq:7ResderivAAB-2}
\begin{split}
\partial^2_k (E - H_k)^{-1} = 2 (E - H_k)^{-1} \partial_k H_k (E - H_k)^{-1} \partial_k H_k (E - H_k)^{-1} + (E - H_k)^{-1} \partial^2_{k,k} H_k (E - H_k)^{-1}, \\
\partial^2_{E,k} (E - H_k)^{-1} = (E - H_k)^{-1} \partial_k H_k (E - H_k)^{-2} + (E - H_k)^{-2} \partial_k H_k (E - H_k)^{-1}, \\
\partial^2_E (E - H_k)^{-1} = 2 (E - H_k)^{-2}.
\end{split}
\end{equation}

$(2)$ Let $H_{\La,\varepsilon,k}$ be as in \eqref{eq:3.Hk}. Assume that for any $E \in (E',E'')$, we have
\begin{equation}\label{eq:3Hinvestimatestatement1kK-2}
|(E - H_{\La,\ve,k})^{-1}(x,y)| \le S_{D(\cdot;\La),T,\kappa_0,|\ve|;\La}(x,y), \quad x,y \in \La,
\end{equation}
where $D \in \mathcal{G}_{\La,T,\kappa_0}$. Assume also that $h(m,n;\ve,k)$ are $C^2$-smooth functions that for $m \neq n$ obey $|\partial^\alpha h(m,n;\ve,k)| \le B_0 \exp(-\kappa_0 |m - n|)$ for $|\alpha| \le 2$. Furthermore, assume that there is $m_0 \in \La$ such that $|\partial^\alpha h(m,m;\ve,k)| \le B_0 \exp(\kappa_0 |m - m_0|^{1/5})$ for any $m \in \La$, $0< |\alpha| \le 2$. Then, for any multi-index $|\beta |\le 2$, we have
\begin{equation}\label{eq:3HinvestIM}
|\partial^\beta (E - H_{\La,\ve,k})^{-1}(m,n)| \le (3B_0)^{|\beta|} \exp( |\beta| \kappa_0 |m - m_0|^{1/5}) \mathfrak{D}^{|\beta|}_{D(\cdot),T,\kappa_0,|\ve|;\La}(m,n), \quad m,n \in \La;
\end{equation}
see Lemma~\ref{lem:7differentiationB}.
\end{lemma}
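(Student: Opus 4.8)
The plan is to obtain both parts as essentially immediate consequences of Lemma~\ref{lem:7differentiationB}, which has already been established and is stated for an arbitrary smooth matrix-valued function of a real parameter vector.

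For part $(1)$, I would apply Lemma~\ref{lem:7differentiationB}$(1)$ to the matrix function $\xi \mapsto E - H_k$, with $\xi = (\Ree E, \imm E, k) \in \IR^3$ ranging over the open set on which $(E - H_k)^{-1}$ exists (the $E$-derivatives being legitimate since $E - H_k$, hence $(E - H_k)^{-1}$, is holomorphic in $E$). Since $H_k$ is $C^1$ (resp. $C^2$) in $k$ by hypothesis and $E - H_k$ is affine in $E$, this function is $C^1$ (resp. $C^2$), so Lemma~\ref{lem:7differentiationB}$(1)$ gives the asserted smoothness of $(E - H_k)^{-1}$ together with the generic resolvent identities \eqref{eq:7ResderivAAAa}, \eqref{eq:7ResderivAAB}. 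It then remains only to substitute the elementary derivatives of $E - H_k$: $\partial_E(E - H_k) = \mathrm{Id}$, $\partial^2_E(E - H_k) = 0$, $\partial^2_{E,k}(E - H_k) = 0$, $\partial_k(E - H_k) = -\partial_k H_k$, $\partial^2_{k,k}(E - H_k) = -\partial^2_{k,k} H_k$. Using $(E - H_k)^{-1}\,\mathrm{Id}\,(E - H_k)^{-1} = (E - H_k)^{-2}$ and noting that the minus sign carried by $\partial_k(E - H_k) = -\partial_k H_k$ cancels against the sign convention in \eqref{eq:7ResderivAAAa}, one reads off exactly \eqref{eq:7ResderivAAA} and \eqref{eq:7ResderivAAB-2}. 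Alternatively, one may run the Neumann-series argument from the proof of Lemma~\ref{lem:7differentiationB}$(1)$ verbatim with base point $k_0$ and $E$ held fixed.

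For part $(2)$, I would apply Lemma~\ref{lem:7differentiationB}$(2)$ with $\xi = k$ and $H_\xi := E - H_{\La,\ve,k}$, reading $|\ve|$ for $\ve_0$ and $B_0$ for the constants $B, B'$ there. The resolvent bound \eqref{eq:3Hinvestimatestatement1kK-2} provides the needed input estimate on $(E - H_{\La,\ve,k})^{-1}(x,y)$ — in fact more than needed, since $S \ge s$ pointwise and the functions $\mathfrak{D}^{(j)}$ of Lemma~\ref{lem:auxweight1iterated} are themselves assembled from the $S$-sums — and the smoothness/decay hypotheses $|\partial^\alpha h(m,n;\ve,k)| \le B_0 \exp(-\kappa_0|m-n|)$ for $m \neq n$ and $|\partial^\alpha h(m,m;\ve,k)| \le B_0 \exp(\kappa_0|m-m_0|^{1/5})$ for $0 < |\alpha| \le 2$ are precisely what controls the differentiated factors $\partial^\alpha(E - H_{\La,\ve,k})$ appearing when one expands the product identities of part $(1)$. (The auxiliary boundedness of the diagonal entries used in Lemma~\ref{lem:7differentiationB}$(2)$ plays no role for the derivative bound, which only involves $(E - H_{\La,\ve,k})^{-1}$ and its first and second derivatives.) The conclusion \eqref{eq:3HinvestIM} is then identical to \eqref{eq:2HinvestIM}.

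The one step that requires care — and it is carried out in exactly the same way as in the proof of Lemma~\ref{lem:7differentiationB}$(2)$ — is substituting these bounds into the expanded formulas \eqref{eq:7ResderivAAA}, \eqref{eq:7ResderivAAB-2} and checking, term by term, that the resulting multiple sums over $\La$ are dominated by $(3B_0)^{|\beta|} \exp(|\beta|\kappa_0|m-m_0|^{1/5}) \mathfrak{D}^{|\beta|}_{D(\cdot),T,\kappa_0,|\ve|;\La}(m,n)$. Here the factor $(3B_0)^{|\beta|}$ absorbs the $B_0$ from each of the $|\beta|$ differentiated factors together with the number of summands on the right of \eqref{eq:7ResderivAAA} (one) or \eqref{eq:7ResderivAAB-2} (three); each diagonal factor $\partial^\alpha h(m',m';\ve,k)$ contributes an exponential $\exp(\kappa_0|m'-m_0|^{1/5})$ that is redistributed via $|m'-m_0|^{1/5} \le |m-m_0|^{1/5} + |m'-m|^{1/5}$, where the ``hop'' exponents such as $\exp(|n_1-m_2|^{1/5})$ built into the definitions of $\mathfrak{D}^{(1)}$, $\mathfrak{D}^{(2)}$ in Lemma~\ref{lem:auxweight1iterated} absorb the second summand; and the $S$-sums $S_{D(\cdot),T,\kappa_0,|\ve|;\La,\mathfrak{R}}$ play the role of the resolvent factors. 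I expect this reshuffling to be the only non-mechanical point, and it is routine given Lemma~\ref{lem:auxweight1iterated}; everything else is a direct citation of Lemma~\ref{lem:7differentiationB}.
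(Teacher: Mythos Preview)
Your proposal is correct and matches the paper's approach: for part $(1)$ the paper runs exactly the Neumann-series argument you describe (the ``alternatively'' option you mention), and for part $(2)$ it likewise cites part $(1)$ together with Lemma~\ref{lem:7differentiationB} and the definitions in Lemma~\ref{lem:auxweight1iterated}. Your write-up is in fact more explicit than the paper's on the bookkeeping in part $(2)$, but the underlying idea is identical.
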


\begin{proof}
$(1)$ Let $k_0 \in (k_1,k_2)$ be arbitrary. For sufficiently small $|k - k_0|$, one has $\|H_k - H_{k_0}\| < M(k_0) |k-k_0|$, where $M(k_0) = 1 + \|\partial_k H_k|_{k=k_0}\|$. In particular, $\|H_k - H_{k_0}\| \|(E - H_{k_0})^{-1}\| < 1/2$ for sufficiently small $|k - k_0|$. Hence,
\begin{equation}\label{eq:7Resderiv-2}
\begin{split}
(E - H_k)^{-1} - (E - H_{k_0})^{-1} = \sum_{t \ge 1} (E - H_{k_0})^{-1} [(H_{k_0} - H_k) (E - H_{k_0})^{-1}]^t \\
= (E - H_{k_0})^{-1} (H_{k_0} - H_k) (E - H_{k_0})^{-1} + R(k,k_0), \\
\|R(k,k_0)\| \le \sum_{t \ge 2} \|(E - H_{k_0})^{-1}\|^{t+1} \|H_k - H_{k_0}\|^t \\
\le \|(E - H_{k_0})^{-1}\|^{3} \|H_k - H_{k_0}\|^2 \sum_{t \ge 0} 2^{-t} \le C(k_0) (k - k_0)^2,
\end{split}
\end{equation}
where $C(k_0) = 2 M(k_0)^2 \|(E - H_{k_0})^{-1}\|^{3}$. This implies the first identity in \eqref{eq:7ResderivAAA}. The derivation of the other identities is similar.

$(2)$ This part follows from part $(1)$ combined with Lemma~\ref{lem:7differentiationB} and with the definitions in \eqref{eq:auxtrajectweightsumest8iterted} of Lemma~\ref{lem:auxweight1iterated}.
\end{proof}

\begin{lemma}\label{lem:5differentiation}
Assume that $H_{\La,\varepsilon,k} \in \cN^{(s)}\bigl(m_0, \La; \delta_0\bigr)$. Then,

$(1)$ If $h(m,n;\ve,k)$ are $C^t$-smooth functions of $k$, then $K^{(s)}(m,n, \La_{m_0}; \ve,k, E)$, $Q^{(s)}(m_0,\La; \ve,k, E)$, $E^{(s)}(m_0, \La; \ve,k)$ are $C^t$-smooth functions of all variables involved.

$(2)$ Assume that $h(m,n;\ve,k)$ obeys conditions in part $(2)$ of Lemma~\ref{lem:7differentiation}.
Then, for $\alpha=1,2$, we have
\begin{equation}\label{eq:3Hinvestimatestatement1kvard}
|\partial^\alpha_k Q^{(s)}(m_0,\La; \ve,k, E)|\le
(3B_0)^\alpha|\ve|\mathfrak{D}^\alpha_{D(\cdot;\La\setminus\{m_0,m_0\}),T,\kappa_0,|\ve|;\La\setminus \{m_0\}}(m_0)<(3B_0)^\alpha|\ve|^{3/2},
\end{equation}
\begin{equation}\label{eq:3Hinvestimatestatement1kvardE}
|\partial^\alpha_k E^{(s)}(m_0,\La; \ve,k)-\partial^\alpha_k v(m_0,k)| < (3B_0)^\alpha|\ve|^{3/2}.
\end{equation}
\end{lemma}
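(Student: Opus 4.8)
The plan is to reduce both parts to the analytic/smooth dependence of the resolvent $(E-H_{\La_{m_0},\ve,k})^{-1}$, $\La_{m_0}:=\La\setminus\{m_0\}$, recorded in Proposition~\ref{prop:4-4}, to the abstract differentiation estimates of Lemma~\ref{lem:7differentiation} (which rest on Lemma~\ref{lem:7differentiationB}), and to the fixed-point equation $E=v(m_0,k)+Q^{(s)}(m_0,\La;\ve,k,E)$ defining $E^{(s)}$.

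\emph{Part $(1)$.} By \eqref{eq:4-10ac}, $K^{(s)}(m,n,\La_{m_0};\ve,k,E)=(E-H_{\La_{m_0},\ve,k})^{-1}(m,n)$, and by Proposition~\ref{prop:4-4}$(2)$ this matrix is invertible on the whole domain \eqref{eq:4.domain}. Since its entries $h(m,n;\ve,k)$ are $C^t$ in $k$ and affine in $\ve$, the Neumann-series computation of Lemma~\ref{lem:7differentiation}$(1)$ shows that $K^{(s)}$ is jointly $C^t$ in $(k,E)$ and analytic in $\ve$ (it is analytic in $E$ on \eqref{eq:4.domain} as well). Then $Q^{(s)}(m_0,\La;\ve,k,E)=\sum_{m,n\in\La_{m_0}}h(m_0,m;\ve,k)K^{(s)}(m,n,\La_{m_0};\ve,k,E)h(n,m_0;\ve,k)$; the off-diagonal decay \eqref{eq:2-4}, the bound \eqref{eq:3Hinvestimatestatement1} on $K^{(s)}$, and the bounds on its $k$-derivatives established below make this series and all its termwise $k$-derivatives of order $\le t$ converge absolutely and locally uniformly, so $Q^{(s)}$ is $C^t$ in all its variables. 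Finally, $E^{(s)}$ is the unique solution of $E=v(m_0,k)+Q^{(s)}(m_0,\La;\ve,k,E)$ in the disk of Proposition~\ref{prop:4-4}$(4)$, where $|\partial_E Q^{(s)}|\le|\ve|<1/2$; hence $(k,E)\mapsto E-v(m_0,k)-Q^{(s)}(m_0,\La;\ve,k,E)$ has non-vanishing $\partial_E$-derivative there, and the implicit function theorem gives $E^{(s)}\in C^t$ in $k$, analyticity in $\ve$ being part of Proposition~\ref{prop:4-4}$(4)$.

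\emph{Part $(2)$, estimate for $\partial^\alpha_k Q^{(s)}$.} Apply Lemma~\ref{lem:7differentiation}$(2)$ to the matrix-function $H_{\La_{m_0},\ve,k}$: by Proposition~\ref{prop:4-4}$(2)$, $|(E-H_{\La_{m_0},\ve,k})^{-1}(x,y)|\le s_{D(\cdot;\La_{m_0}),T,\kappa_0,|\ve|;\La_{m_0}}(x,y)$ with $D(\cdot;\La_{m_0})\in\mathcal{G}_{\La_{m_0},T,\kappa_0}$, and the present hypotheses on the $k$-derivatives of $h$ are exactly those required, so for $|\beta|\le2$
\[
|\partial^\beta_k K^{(s)}(m,n,\La_{m_0};\ve,k,E)|\le(3B_0)^{|\beta|}\exp\bigl(|\beta|\kappa_0|m-m_0|^{1/5}\bigr)\,\mathfrak{D}^{|\beta|}_{D(\cdot;\La_{m_0}),T,\kappa_0,|\ve|;\La_{m_0}}(m,n).
\]
Now differentiate $Q^{(s)}=\sum_{m,n}h(m_0,m;\ve,k)K^{(s)}h(n,m_0;\ve,k)$ by the Leibniz rule. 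Since $h(m,n;\ve,k)=\ve h_0(m,n;k)$, every $\partial_k$ falling on an outer factor still leaves a factor $\ve$ in that term (and in the applications those factors do not depend on $k$ off the diagonal, so $\partial_k$ acts only on $K^{(s)}$); combining the above bound on $\partial^\beta_k K^{(s)}$ with $|\partial^\gamma_k h(m_0,\cdot;\ve,k)|\le B_0\exp(-\kappa_0|m_0-\cdot|)$, summing over $m,n$, and letting the slack factors $\exp(|m-m_0|^{1/5})$ built into $\mathfrak{D}^{(\cdot)}$ absorb the factors $\exp(|\beta|\kappa_0|m-m_0|^{1/5})$, one matches the convolutions defining $\mathfrak{D}^{(1)}$, $\mathfrak{D}^{(2)}$ in Lemma~\ref{lem:auxweight1iterated} and obtains $|\partial^\alpha_k Q^{(s)}|\le(3B_0)^\alpha|\ve|\,\mathfrak{D}^\alpha_{D(\cdot;\La\setminus\{m_0\}),T,\kappa_0,|\ve|;\La\setminus\{m_0\}}(m_0)$. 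The final bound $<(3B_0)^\alpha|\ve|^{3/2}$ follows from \eqref{eq:auxtrajectweightsumest8iterted1}--\eqref{eq:auxtrajectweightsumest8iterted2} with parameter $|\ve|$, which give $\mathfrak{D}^\alpha_{\cdots}(m_0)<|\ve|^{1/2}$.

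\emph{Part $(2)$, estimate for $\partial^\alpha_k E^{(s)}$, and the main difficulty.} Differentiating $E^{(s)}=v(m_0,k)+Q^{(s)}(m_0,\La;\ve,k,E^{(s)})$ in $k$ yields $(1-\partial_E Q^{(s)})\,\partial_k E^{(s)}=\partial_k v(m_0,k)+(\partial_k Q^{(s)})$, whence
\[
\partial_k E^{(s)}-\partial_k v(m_0,k)=\frac{(\partial_k Q^{(s)})+(\partial_E Q^{(s)})\,\partial_k v(m_0,k)}{1-\partial_E Q^{(s)}},
\]
where $(\partial_k Q^{(s)})$ denotes the derivative in the explicit $k$-slot, taken at $E=E^{(s)}$. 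Here $|\partial_k v(m_0,k)|=|\partial_k h(m_0,m_0;\ve,k)|\le B_0$ (the exponent $|m_0-m_0|^{1/5}$ vanishes), $|(\partial_k Q^{(s)})|<3B_0|\ve|^{3/2}$ by the previous step, and $|\partial_E Q^{(s)}|\le|\ve|$ by \eqref{eq:4-12acACC}; $\alpha=2$ is handled by one further differentiation, every extra term carrying a factor from $\{\partial_E Q^{(s)},\ \partial_k E^{(s)}-\partial_k v(m_0,k)\}$. The main obstacle is the bookkeeping of constants and powers of $|\ve|$ in this last step: with only $|\partial_E Q^{(s)}|\le|\ve|$, the term $(\partial_E Q^{(s)})\,\partial_k v(m_0,k)$ would be merely $O(B_0|\ve|)$, which is too big. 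One must use the representation $\partial_E Q^{(s)}=-\ve^2\sum_{m,n}h_0(m_0,m)\,[(E-H_{\La_{m_0},\ve,k})^{-2}](m,n)\,h_0(n,m_0)$, bound the second resolvent power through the $s$-functions, and exploit that $|\ve|\le\ve_0$ is super-small relative to the scale parameters $\delta_0^{(s')}$, $R^{(s')}$ (and that $Q^{(s)}$ is quadratic in the off-diagonal coupling), to see that $|\partial_E Q^{(s)}|$ is in fact smaller than $|\ve|^{3/2}$ by a large power of $\delta_0^{-1}$. With that improvement the numerator above is dominated by $3B_0|\ve|^{3/2}$ and the denominator is close to $1$, giving $|\partial^\alpha_k E^{(s)}-\partial^\alpha_k v(m_0,k)|<(3B_0)^\alpha|\ve|^{3/2}$.
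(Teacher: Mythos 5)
Your plan is correct and follows the same route as the paper: reduce to Lemma~\ref{lem:7differentiation}/Lemma~\ref{lem:7differentiationB} and Lemma~\ref{lem:auxweight1iterated} for $K^{(s)}$ and $Q^{(s)}$, then differentiate the fixed-point equation \eqref{eq:4-16} and use the implicit function theorem for $E^{(s)}$.

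The ``main obstacle'' you flag is in fact a genuine subtlety that the paper's own text glosses over. The displayed identity \eqref{eq:4-16DDDD1} as printed omits the $\partial_k Q^{(s)}$ term on the right-hand side (it reads as if only $\partial_E Q^{(s)}\,\partial_k v$ appears); the correct identity is the one you wrote, namely
\[
[\partial_k E^{(s)}-\partial_k v(m_0,k)]\bigl(1-\partial_E Q^{(s)}\bigr)=\partial_k Q^{(s)}+\partial_E Q^{(s)}\,\partial_k v(m_0,k),
\]
with the two $Q^{(s)}$-derivatives evaluated at $E=E^{(s)}$. Moreover, the only bound on $\partial_E Q^{(s)}$ recorded in Proposition~\ref{prop:4-4} is $|\partial_E Q^{(s)}|\le|\ve|$, and since $|\partial_k v(m_0,k)|\le B_0$, the cross term is a priori only $O(B_0|\ve|)$, which is \emph{larger} than the target $(3B_0)|\ve|^{3/2}$ for $|\ve|<\ve_0$. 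So the paper's stated intermediate estimates, taken at face value, do not close the argument. Your proposed remedy is exactly the right one: go back to the explicit representation
\[
\partial_E Q^{(s)}=-\sum_{m,n\in\La_{m_0}}h(m_0,m;\ve)\,[(E-H_{\La_{m_0},\ve,k})^{-2}](m,n)\,h(n,m_0;\ve)=-\ve^2\sum_{m,n}h_0(m_0,m)\,[(E-H_{\La_{m_0},\ve,k})^{-2}](m,n)\,h_0(n,m_0),
\]
bound $(E-H_{\La_{m_0}})^{-2}$ by a convolution of $s$-functions using Lemma~\ref{lem:auxweight1} (the dominant contribution being the on-diagonal factor $e^{2D(m)}\lesssim(\delta_0^{(s-1)})^{-8}$, but only at points $m$ with $|m-m_0|$ bounded below by the corresponding scale), and use the smallness conditions on $\ve_0$ in \eqref{eq:2epsilon0} (specifically $\ve_0\le\delta_0^{3\cdot 2^9}$) to deduce $|\partial_E Q^{(s)}|\ll|\ve|^{3/2}$. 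With that improvement the cross term and the factor $(1-\partial_E Q^{(s)})^{-1}$ are absorbed into the strict slack in the bound $\mathfrak{D}^\alpha_{D(\cdot),T,\kappa_0,|\ve|;\La_{m_0}}(m_0,m_0)<|\ve|^{1/2}$ from Lemma~\ref{lem:auxweight1iterated} (which in fact holds with considerable room, since the true size is $\lesssim|\ve|\delta_0^{-4}$), and the claimed estimate \eqref{eq:3Hinvestimatestatement1kvardE} follows. The argument for $\alpha=2$ is handled by a second differentiation of the fixed-point relation exactly as you describe.

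In summary: your approach is the paper's approach; the one step you correctly mark as requiring care is precisely the point where the paper's written proof is incomplete, and your sketch of the fix is the right one. To turn this into a full proof you only need to write out the direct bound on $\partial_E Q^{(s)}$ (one convolution of $s$-functions with two outer $h_0$-factors) and check the exponent bookkeeping against $\ve_0\le\bar\ve_0^3\le\delta_0^{3\cdot 2^9}$ as above.
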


\begin{proof}
It follows from Lemma~\ref{lem:7differentiation} that $K^{(s)}(m,n, \La_{m_0}; \ve,k, E)$ is a $C^t$-smooth functions of all variables involved. Therefore, $Q^{(s)}(m_0,\La; \ve,k, E)$ is  $C^t$-smooth. Due to the implicit function theorem, $E^{(s)}(m_0, \La; \ve,k)$ is $C^t$-smooth.

Using \eqref{eq:7ResderivAAA} from Lemma~\ref{lem:7differentiation}, \eqref{eq:3Hinvestimatestatement1} from Proposition~\ref{prop:4-4}, and \eqref{eq:auxtrajectweightsumest8iterted} from Lemma~\ref{lem:auxweight1iterated}, one obtains
\begin{equation}\label{eq:3Hinvestimatestatement1kvard-2}
\begin{split}
| \partial_k Q^{(s)}(m_0,\La; \ve,k, E)| = \Big| \partial_k \sum_{m',n' \in \La_{m_0}} h(m_0,m_1;\ve,k) (E - H_{\La_{m_0},\ve,k})^{-1}(m_1,n_1) h(n_1,n_2;\ve,k) \Big| \\
\le 3 B_0 \mathfrak{D}^\one_{D(\cdot;\La\setminus\{m_0\}),T,\kappa_0,|\ve|;\La\setminus \{m_0\}}(m_0) < 3 B_0 |\ve|^{3/2}.
\end{split}
\end{equation}
This verifies \eqref{eq:3Hinvestimatestatement1kvard} for $\alpha = 1$. The verification for $\alpha = 2$ is completely similar.

Differentiating equation \eqref{eq:4-16}, one obtains
\begin{equation}\label{eq:4-16DDDD1}
\begin{split}
[\partial_k E^{(s)}(m_0, \La; \ve,k) -\partial_k v(m_0,k)] \bigl(1 -\partial_E Q^{(s)}(m_0, \La; \ve,k, E)|_{E = E^{(s)}(m_0, \La; \ve,k)} \bigr) \\
= \partial_E Q^{(s)}(m_0, \La; \ve,k, E)|_{E = E^{(s)}(m_0, \La; \ve,k)} \partial_k v(m_0,k).
\end{split}
\end{equation}
Combining \eqref{eq:4-16DDDD1} with \eqref{eq:3Hinvestimatestatement1kvard}, and taking into account the estimate for $|\partial_E Q^{(s)}(m_0,\La; \ve,k, E)|$ from \eqref{eq:4-12acACC}, one obtains the estimate \eqref{eq:3Hinvestimatestatement1kvardE} for $\alpha = 1$. The derivation for $\alpha = 2$ is completely similar.
\end{proof}

Let $H_{\La_j, \ve}$, $j = 1, 2$, be two matrices belonging to the class $\cN^\es(m_0, \La_j, \delta_0)$ with the same principal point $m_0$. Let $v(n,j)$ be the diagonal entries of $H_{\La_j,\ve}$. We assume that $v(n,1) = v(n,2)$ for $n \in \La_1 \cap \La_2$. Let $E^{(s)} \bigl( \La_j; \ve \bigr)$ be the eigenvalue defined by  Proposition~\ref{prop:4-4} with $H_{\La_j, \ve}$ in the role of $\hle$, $j=1,2$. One has the following:

\begin{corollary}\label{cor:5.twolambdas1}
\begin{equation}\label{eq:5.twolambdas1}
\big| E^{(s)} ( m_0, \La_1; \ve) - E^{(s)} \bigl( m_0, \La_2; \ve \bigr) \big| < |\ve| (\delta_0^{(s)})^5.
\end{equation}
\end{corollary}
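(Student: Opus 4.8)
We argue by induction on $s$ and reduce the claim to a comparison of the two functions $Q^{(s)}(m_0,\La_j;\ve,E)$. Fix $\ve$, write $E_j:=E^{(s)}(m_0,\La_j;\ve)$, and recall from Proposition~\ref{prop:4-4}$(4)$ that $E_j$ is the unique solution of $E=v(m_0)+Q^{(s)}(m_0,\La_j;\ve,E)$ in the relevant disk and that \eqref{eq:4-13acAAAA} holds. Evaluating the $j=1$ version of \eqref{eq:4-13acAAAA} at $E=E_2$ and using $E_2=v(m_0)+Q^{(s)}(m_0,\La_2;\ve,E_2)$, one gets $(1-|\ve|)|E_1-E_2|\le|Q^{(s)}(m_0,\La_1;\ve,E_2)-Q^{(s)}(m_0,\La_2;\ve,E_2)|$, hence
$$
|E_1-E_2|\le 2\,\big|Q^{(s)}(m_0,\La_1;\ve,E_2)-Q^{(s)}(m_0,\La_2;\ve,E_2)\big|.
$$
To make sense of this one first has to know that $E_2$ lies in the domain of $Q^{(s)}(m_0,\La_1;\ve,\cdot)$; this follows from the second line of \eqref{eq:4-17AAAA} applied to $\La_2$ together with the corollary at level $s-1$ applied to the $(s-1)$-sets $\La^{(s-1)}_1(m_0)$ and $\La^{(s-1)}_2(m_0)$ (these satisfy the hypotheses, being in $\cN^{(s-1)}(m_0,\cdot;\delta_0)$ and contained in $\La_1\cap\La_2$), which shows the two centres $E^{(s-1)}(m_0,\La^{(s-1)}_j(m_0);\ve)$ differ by less than $|\ve|(\delta_0^{(s-1)})^5$. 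So everything reduces to estimating the difference of the $Q$-functions.

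For the $Q$-comparison I would telescope down the chain of scale-sets attached to $m_0$. Put $\La^{(s)}_j(m_0):=\La_j$ and let $\La^{(s')}_j(m_0)$, $s'=s-1,\dots,1$, be the nested scale-$s'$ sets for $m_0$ furnished by $H_{\La_j,\ve}\in\cN^{(s)}(m_0,\La_j;\delta_0)$ through condition $(c)$ of Definition~\ref{def:4-1}; thus $H_{\La^{(s')}_j(m_0),\ve}\in\cN^{(s')}(m_0,\La^{(s')}_j(m_0);\delta_0)$ and $(m_0+B(R^{(s')}))\subset\La^{(s')}_j(m_0)$ by condition $(d)$. Applying the first estimate of \eqref{eq:4-12acACC} at each level — valid at $E=E_2$ because, by the telescoped form of \eqref{eq:4-17AAAA} and the inductive corollary as in the previous paragraph, $E_2$ lies in each disk $\{|E-E^{(s'-1)}(m_0,\La^{(s'-1)}_j(m_0);\ve)|<2\delta_0^{(s'-1)}\}$ — gives
$$
\big|Q^{(s')}(m_0,\La^{(s')}_j(m_0);\ve,E_2)-Q^{(s'-1)}(m_0,\La^{(s'-1)}_j(m_0);\ve,E_2)\big|\le 4|\ve|^{3/2}\exp(-\kappa_0 R^{(s'-1)}),\quad 2\le s'\le s.
$$
Summing over $s'$ and over $j$ reduces the task to comparing the two scale-one quantities $Q^{(1)}(m_0,\La^{(1)}_j(m_0);\ve,E_2)$, at the cost of an error $O(|\ve|^{3/2}\exp(-\kappa_0 R^{(1)}))$.

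For the scale-one comparison, both $\La^{(1)}_j(m_0)$ contain $Y:=(m_0+B(R^{(1)}))\setminus\{m_0\}$, and on $\La^{(1)}_j(m_0)\setminus\{m_0\}$ every diagonal entry of $\cH:=E_2-H_{\cdot,\ve}$ has modulus at least $\delta_0/2$ (by \eqref{eq:2-5} for $\cN^{(1)}$ and $|E_2-v(m_0)|<\delta_0/8$), in particular at least $\exp(-4T\kappa_0^{-1})$. Split $\La^{(1)}_j(m_0)\setminus\{m_0\}=Y\sqcup D_j$ with $D_j:=\La^{(1)}_j(m_0)\setminus(m_0+B(R^{(1)}))$. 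Lemma~\ref{lem:aux6} makes $\cH_Y$ and $\cH_{D_j}$ invertible with the Green-function bounds required by Lemma~\ref{lem:aux5}, and since $\dist(m_0,D_j)\ge R^{(1)}$ while $\max(\max_x D(x),4T\kappa_0^{-1})=4T\kappa_0^{-1}\ll\kappa_0 R^{(1)}/8$, Lemma~\ref{lem:2Qfunction} (with the matrix $\cH$ and $m_0=n_0$, so that $Q^{(1)}(m_0,\La^{(1)}_j(m_0);\ve,E_2)=G(m_0,m_0,\La^{(1)}_j(m_0)\setminus\{m_0\})$) yields
$$
\big|Q^{(1)}(m_0,\La^{(1)}_j(m_0);\ve,E_2)-G(m_0,m_0,Y)\big|\le 4|\ve|^{3/2}\exp\big(-\tfrac{\kappa_0}{4}R^{(1)}\big),\quad j=1,2.
$$
Since $Y$ and the entries of $\cH$ on $Y$ are the same for $j=1,2$ (the off-diagonal datum $h_0$ being shared and $v(\cdot,1)=v(\cdot,2)$ on $\La_1\cap\La_2\supset Y$), the term $G(m_0,m_0,Y)$ cancels, whence $|Q^{(1)}(m_0,\La^{(1)}_1(m_0);\ve,E_2)-Q^{(1)}(m_0,\La^{(1)}_2(m_0);\ve,E_2)|\le 8|\ve|^{3/2}\exp(-\tfrac{\kappa_0}{4}R^{(1)})$.

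Combining the three steps, $|E_1-E_2|=O(|\ve|^{3/2}\exp(-\tfrac{\kappa_0}{4}R^{(1)}))$; since $R^{(1)}=\delta_0^{-4\beta_0}$ is a fixed positive power of $\delta_0^{-1}$, whereas $(\delta_0^{(s)})^{-5}=\exp(5(\log R^{(s)})^2)$ is only the exponential of a power of $\log\delta_0^{-1}$, the right-hand side is $<|\ve|(\delta_0^{(s)})^5$ once $\delta_0$ is small enough (which is part of the standing hypotheses). For $s=1$ the argument collapses to the last display together with the fixed-point step. The part I expect to require the most care is the bookkeeping of the analyticity domains — checking that $E_2=E^{(s)}(m_0,\La_2;\ve)$ stays inside $\{|E-E^{(s'-1)}(m_0,\La^{(s'-1)}_1(m_0);\ve)|<2\delta_0^{(s'-1)}\}$ for every $s'$, which is exactly where the inductive use of the corollary at lower levels enters; a related point, and the reason one telescopes through the $\cN^{(s')}$-chain rather than localizing directly to $m_0+B(R^{(s)})$ in one step, is that scale-$s'$ resonant sub-blocks may protrude past the boundary of that ball, so the one-shot localization to a fixed ball is safe only at scale $1$, where \eqref{eq:2-5} provides unconditional diagonal dominance.
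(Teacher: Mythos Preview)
Your argument has a genuine quantitative gap at the very last step. Telescoping the $Q$-comparison all the way down to scale~$1$ produces a bound of order $|\ve|^{3/2}\exp(-\tfrac{\kappa_0}{4}R^{(1)})$, which does \emph{not} depend on $s$. But the target $(\delta_0^{(s)})^5=\exp\bigl(-5(\log R^{(s)})^2\bigr)$ shrinks with $s$: the recursion $\log R^{(u)}=\beta_0(\log R^{(u-1)})^2$ gives $\log R^{(s)}\sim\beta_0^{-1}(\beta_0\log R^{(1)})^{2^{s-1}}$, so $5(\log R^{(s)})^2$ grows like $(\log\delta_0^{-1})^{2^{s}}$ and eventually exceeds the fixed number $\tfrac{\kappa_0}{4}R^{(1)}$. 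Your sentence ``$(\delta_0^{(s)})^{-5}$ is only the exponential of a power of $\log\delta_0^{-1}$'' is precisely where this goes wrong: that ``power'' is $2^{s}$, not a constant. So for large $s$ your final inequality fails, and the corollary is stated for arbitrary $s$.

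The paper's proof sidesteps this by working at scale $s$ directly: it restricts the eigenvector $\vp^{(s)}(\La_1;\ve)$ to $\La_1\cap\La_2\supset m_0+B(R^{(s)})$, extends by zero, and uses the decay \eqref{eq:4-11evdecay} to get $\|(E^{(s)}(m_0,\La_1;\ve)-H_{\La_2,\ve})\,\widetilde\vp\|\lesssim\exp(-c\kappa_0 R^{(s)})$. This puts $E^{(s)}(m_0,\La_1;\ve)$ within $\exp(-c\kappa_0 R^{(s)})$ of \emph{some} point of $\spec H_{\La_2,\ve}$; induction on $s$ together with \eqref{eq:4-17AAAA} and the spectral isolation in part~$(4)$ of Proposition~\ref{prop:4-4} then identifies that point as $E^{(s)}(m_0,\La_2;\ve)$. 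The gain is that the localization radius is $R^{(s)}$ rather than $R^{(1)}$, which is exactly what your telescoping throws away. If you want to salvage the $Q$-function route, you must compare $Q^{(s)}(m_0,\La_1;\ve,E)$ and $Q^{(s)}(m_0,\La_2;\ve,E)$ with an error controlled by $R^{(s)}$ --- for instance by applying Lemma~\ref{lem:2Qfunction} at the top level with the full Green-function bound \eqref{eq:3Hinvestimatestatement1} on $\La_j\setminus\{m_0\}$ --- rather than descending to a scale where only $R^{(1)}$ is visible.
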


\begin{proof}
Note first of all that $m_0 + B(R^\es) \subset \La_2 \cap \La_1$. Let $\vp^{(s)}(\La_1; \ve)$ be the vector defined in part $(6)$ of Proposition~\ref{prop:4-4} with $H_{\La_1, \ve}$ in the role of $\hle$. Set $\widetilde{\vp}^{(s)}(\La_2; \ve)(n) = \vp^{(s)}(\La_1; \ve)(n)$ if $n \in \La_2 \cap \La_1$, and $\widetilde{\vp}^{(s)}(\La_2; \ve)(n) = 0$ otherwise. Since $m_0 + B(R^\es) \subset \La_2 \cap \La_1$, one obtains using \eqref{eq:4-11evdecay} from Proposition~\ref{prop:4-4}, $\|(E^{(s)} ( \La_1; \ve) - H_{\La_2, \ve}) \widetilde{\vp}^{(s)}(\La_2; \ve)\| \le \exp(-\frac{\kappa_0}{8} R^\es)$. This, along with \eqref{eq:4-11evdecay} again for normalization purposes, implies
\begin{equation}\label{eq:4-17AAAAevclose}
\dist(E^{(s)}(m_0, \La_1;\ve), \spec H_{\La_2, \ve}) \le \exp \left(-\frac{\kappa_0}{16} R^\es \right).
\end{equation}
Recall that due to Definition~\ref{def:4-1} and \eqref{eq:4-17AAAA} from Proposition~\ref{prop:4-4}, there exists $\La^\esone_j$ such that $H_{\La^\esone_j, \ve} \in \cN^\esone(m_0, \La^\esone_j, \delta_0)$, and
\begin{equation}\label{eq:4-17AAAAevcloseA}
\big| E^{(s)}\bigl( m_0, \La_j; \ve\bigr) - E^{(s-1)}\bigl(m_0, \La^{(s-1)}_j; \ve \bigr) \big| < |\ve| (\delta_0^{(s-1)})^5, \quad j=1,2.
\end{equation}
Using induction and combining \eqref{eq:4-17AAAAevclose} with part $(4)$ of Proposition~\ref{prop:4-4} and with \eqref{eq:4-17AAAAevcloseA}, one obtains the statement.
\end{proof}

\section{Implicit Functions Defined by Continued-Fraction-Functions}\label{sec.4}

In this section and later in this paper we use the following notation:
\begin{align}
\cD \bigl( \zo, \ro \bigr),\ \zo  & = \bigl( \zo_1, \zo_2, \ldots, \zo_k \bigr),\ \zo_j \in \IC^1,\ j = 1, 2, \ldots, k \label{eq:6-1n}\\[6pt]
\ro & = \bigl( \ro_1, \ro_2, \ldots, \ro_k \bigr),\ \ro_j > 0 \label{eq:6-2n}
\end{align}
for the polydisk $\prod \limits_{1 \le j \le k} \cD \bigl(\zo_j, \ro_j\bigr) \subset \IC^k$, where $\cD(\zeta, r) = \{z \in \IC^1 : |z - \zeta| < r\},\ \zeta \in \IC^1,\ r > 0$;
\begin{equation}\label{eq6-5n}
S(\alpha, \beta; \rho) = \left\{z \in \IC^1: \Ree z \in (\alpha, \beta),\ |\imm z| < \rho\right\},
\end{equation}
$\alpha < \beta$; $\rho > 0$;
\begin{align}
\cL(g, \rho_1,\rho_0) & = \left\{ (z, w) \in \mathbb{C}^2: z \in S(\alpha, \beta; \rho_1),|w - g(\Re z)| < \rho_0\right\} , \label{eq:6-3n} \\[5pt]
\cL_\IR\bigl(g, \rho_0 \bigr) & = \cL(g,\rho_1, \rho_0) \cap (\IR\times \IR), \label{eq:6-4n}
\end{align}
where $g(x)$ is a real function defined on the interval $(\alpha,\beta)$ ($\cL_\IR\bigl(g, \rho_0\bigr)$ obviously does not depend on $\rho_1$);
\begin{align}
\cL(g,\cD,\rho) & = \left\{ (z, w) \in \mathbb{C}^2 : z \in \cD,|w - g(z)| < \rho\right\} , \label{eq:6-3nN}
\end{align}
where $g(z)$ is a complex function defined on the domain $\cD$.

We start with the following quantitative version of the implicit function theorem for complex analytic functions.

\begin{lemma}\label{standimplfunctheor}
Let $F(z,w)$ be an analytic function defined in the polydisk $\cP (z_0, w_0; r_0, r_0) := \cD (z_0, r_0) \times \cD(w_0, r_0)$. Assume that the following conditions hold: $(a)$ $ F(z_0,w_0) = 0$, $(b)$ $\tau := \big| \partial_w F_{|_{(z_0,w_0)}} \big| > 0$. Set $r = \tau^2 r_0^3 / (16 M_0)$, $r' = \tau r_0^2 / (2 M_0)$, where $M_0 := \sup_{\cP(z_0, w_0; r_0, r_0)} |F(z,w)|$. Then, for any $|z - z_0| < r$, there exists a unique $w = \phi(z)$, $|\phi(z)-w_0|<r'$ such that $F(z, \phi(z)) = 0$. Moreover, $\phi(z)$ is analytic in the disk $\cD (z_0, r) = \{z : |z - z_0| < r\}$.
\end{lemma}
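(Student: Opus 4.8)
The statement is a fully quantitative version of the analytic implicit function theorem, so I would prove it the way one always does: solve $F(z,w) = 0$ for fixed $z$ near $z_0$ by a fixed-point/Newton-type iteration in the $w$-variable, and then read off analyticity and the size bounds from uniform control of the iteration. The one point of care is that all estimates must be made explicit in terms of the given data $\tau$, $r_0$, $M_0$, so that the stated radii $r = \tau^2 r_0^3/(16M_0)$ and $r' = \tau r_0^2/(2M_0)$ actually come out.

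\textbf{Step 1: Cauchy estimates on the derivatives of $F$.} Since $F$ is analytic and bounded by $M_0$ on the polydisk $\cP(z_0,w_0;r_0,r_0)$, Cauchy's inequalities give, on the slightly smaller polydisk where $|w-w_0| \le r_0/2$, bounds of the form $|\partial_w F| \le 2M_0/r_0$, $|\partial_w^2 F| \le C M_0/r_0^2$ (and similarly a bound on $\partial_z F$, which will enter the analyticity argument but not the radii). In particular $\partial_w F(z,w)$ stays within, say, $\tau/2$ of its value $\tau$ at $(z_0,w_0)$ provided $|z-z_0|$ and $|w-w_0|$ are small enough — this is where the factor $\tau^2 r_0^3/M_0$ is forced: one needs $(2M_0/r_0)\cdot r' \le \tau/2$ roughly, i.e. $r' \lesssim \tau r_0^2/M_0$, and then the $z$-variation is controlled by a further factor $r' \tau / M_0 \lesssim \tau^2 r_0^3/M_0^2$ — I would track the constants carefully to land on exactly $16$ and $2$.

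\textbf{Step 2: The iteration.} Fix $z$ with $|z-z_0| < r$. Define $\phi_0 := w_0$ and $\phi_{n+1} := \phi_n - F(z,\phi_n)/\partial_w F(z,\phi_n)$ (Newton) or, more simply, the contraction map $\Phi(w) := w - F(z,w)/\tau$. I would use the contraction map: on the disk $\{|w-w_0|<r'\}$ one checks (i) $\Phi$ maps the disk into itself, using $|F(z,w_0)| = |F(z,w_0)-F(z_0,w_0)| \le (\sup|\partial_z F|)|z-z_0|$ together with the first-order expansion in $w$, and (ii) $|\Phi'(w)| = |1 - \partial_w F(z,w)/\tau| \le 1/2$ by Step 1. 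The Banach fixed point theorem then yields a unique $w = \phi(z)$ in that disk with $F(z,\phi(z)) = 0$; uniqueness within the disk is exactly the ``$|\phi(z)-w_0|<r'$'' clause of the statement.

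\textbf{Step 3: Analyticity of $\phi$.} Uniform convergence of the iterates on $\cD(z_0,r)$ together with the fact that each iterate is analytic in $z$ (composition of analytic maps, with $\partial_w F$ nonvanishing there) gives analyticity of the limit $\phi$ by Weierstrass/Morera. Alternatively one can invoke the holomorphic implicit function theorem directly once nonvanishing of $\partial_w F$ on the relevant region is established, and then the content of the lemma is purely the explicit radii, handled in Steps 1–2.

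\textbf{Main obstacle.} There is no conceptual difficulty; the only real work is bookkeeping the constants so that the self-map property and the contraction constant $1/2$ both hold on a disk of radius exactly $r'$ for all $z$ in a disk of radius exactly $r$. The delicate chain is: $r'$ must be small enough (in units of $\tau r_0^2/M_0$) that the $w$-ball is $\partial_w F$-stable, and then $r$ must be small enough (in units of $\tau r'/M_0 \sim \tau^2 r_0^2/M_0^2$, times $r_0$ from the $\partial_z$ Cauchy bound) that $|F(z,w_0)|$ is dominated by $\tau r'/2$. I expect the stated numbers $16$ and $2$ to fall out with a little slack once one fixes the intermediate shrink factor (e.g. working on $|w-w_0|\le r_0/2$) — and if a cleaner constant is wanted one simply absorbs it, since only an upper bound on $r$ and $r'$ is asserted implicitly by the uniqueness-in-a-ball formulation.
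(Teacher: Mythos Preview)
Your approach is correct and will yield the stated result, but it differs from the paper's proof. The paper does not use a contraction or Newton iteration at all. Instead it argues as follows: Cauchy estimates give $|\partial_w^2 F|\le 8M_0/r_0^2$ on $\cD(z_0,r_0)\times\cD(w_0,r_0/2)$, whence a Taylor expansion in $w$ shows $|F(z_0,w)|\ge \tfrac{\tau}{2}|w-w_0|$ on $|w-w_0|\le r'$; then the bound $|\partial_z F|\le 2M_0/r_0$ gives $|F(z,w)-F(z_0,w)|<|F(z_0,w)|$ on the circle $|w-w_0|=r'$ for $|z-z_0|<r$, and Rouch\'e's theorem produces the unique root $\phi(z)$ in that disk. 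Analyticity is then read off from the residue formula $\phi(z)=\frac{1}{2\pi i}\oint_{|w-w_0|=r'} w\,\frac{\partial_w F(z,w)}{F(z,w)}\,dw$.

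Your contraction-map route is equally standard and has the advantage of being constructive, with analyticity following from uniform convergence of analytic iterates (Weierstrass). The Rouch\'e route is slightly slicker for the analyticity step, since the residue integral is manifestly holomorphic in $z$, and it avoids having to track the contraction constant. Two small points of sloppiness in your sketch: in Step~1 the relevant bound for keeping $\partial_w F$ near its central value is the \emph{second} derivative, so the inequality controlling $r'$ should read $(CM_0/r_0^2)\,r'\le \tau/2$ (you wrote $2M_0/r_0$), though your conclusion $r'\lesssim \tau r_0^2/M_0$ is correct; and the map should be $\Phi(w)=w-F(z,w)/\partial_w F(z_0,w_0)$, not divided by the absolute value $\tau$. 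Neither affects the validity of the argument.
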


\begin{proof}
Due to Cauchy estimates for the derivatives, one has $|\partial^2_{ww} F| \le 8 M_0 r_0^{-2}$ for any $(z,w) \in \cD (z_0, r_0) \times \cD (w_0, r_0 / 2)$. This implies
\begin{align*}
|F(z_0, w)| & = |F(z_0, w) - F(z_0, w_0)| \\
& \ge \big|\partial_w F_{|_{(z_0,w_0)}} \big| |w - w_0| - \frac{1}{2}\left(\sup_{|w - w_0| \le r_0/2}|\partial^2_{ww}F|\right) |w - w_0|^2 \\
& \ge \tau |w - w_0|/2
\end{align*}
for any $|w - w_0| \le \tau r_0^2 / 2 M_0$. This implies, in particular, that $w_0$ is the only root of $F(z_0, \cdot)$ in the disk $|w - w_0| \le \tau r_0^2 / 2 M_0$. Once again, due to Cauchy estimates for the derivatives, one has $|\partial_{z}F| \le 2 M_0 r_0^{-1}$ for any $(z,w) \in \cD (z_0, r_0/2) \times \cD(w_0, r_0)$. Hence, for any $|z - z_0| < r$ and any $|w - w_0| = \tau r_0^2 / (2 M_0)$, one has
\begin{align*}
|F(z,w) - F(z_0,w)| & \le \left(\sup_{|z-z_0| < r_0/2, |w-w_0| \le r_0}|\partial_{z}F|\right) |z - z_0| \\
& \le 2 M_0 r r_0^{-1} \\
& = \tau^2 r_0^2 / 8 \\
& = \tau |w - w_0| / 4 \le |F(z_0,w)|/2 < |F(z_0,w)|.
\end{align*}
Due to Rouch\'e's Theorem, the function $F(z,\cdot)$ has exactly one root in the disk $|w - w_0| \le \tau r_0^2 / 2 M_0$ for any $|z - z_0| < r$. Denote this root by $\phi(z)$. By the residue theorem with $r' = \tau r_0^2 / 2 M_0$, one has
$$
\frac{1}{2\pi i} \oint_{|w - w_0| = r'} w \frac{F_w(z,w)}{F(z,w)}\, dw = \phi(z)
$$
and the analyticity of $\phi(z)$ follows.
\end{proof}

We proceed with the derivation of a somewhat stronger version of this statement, where condition $(a)$ is being replaced by $(a')$ $|F(z_0,w_0)| \le \epsilon$ with sufficiently small $\epsilon$. For that we need the following version of the Harnack inequality.

\begin{lemma}\label{lem:6.harn}
Let $f(x)$ be analytic in $\cD (z_0, r_0)$ and non-vanishing in $\cD (z_0, r_1)$ with $0 < r_1 \le r_0$. Assume that
$$
K := \sup \bigl\{|f(z)|: z \in \cD (z_0, r_0) \bigr\} < \infty.
$$
Assume also that
\begin{equation}\label{harcond}
|f(z_0)| \ge K^{-1}.
\end{equation}
Then,
$$
|f(\zeta)| \le \exp(4) |f(z)|
$$
for any $z, \zeta \in \cD (z_0, r_2)$, $r_2 = (1 + \log (\max (100,K)))^{-2}r_1$.
\end{lemma}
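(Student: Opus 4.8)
The plan is to reduce the estimate to a Harnack-type inequality for a non-negative harmonic function extracted from $\log|f|$. Since $f$ is analytic and zero-free on the simply connected disk $\cD(z_0,r_1)$, it admits an analytic logarithm $g=\log f$ there, and $u:=\Ree g=\log|f|$ is harmonic on $\cD(z_0,r_1)$. The two hypotheses on $f$ translate into: $M:=\sup_{\cD(z_0,r_1)}u\le\log K$ (because $r_1\le r_0$ and $K=\sup_{\cD(z_0,r_0)}|f|$), and $u(z_0)=\log|f(z_0)|\ge-\log K$. Note also that $K\ge 1$, since $K^{-1}\le|f(z_0)|\le K$. Hence $h:=M-u$ is a non-negative harmonic function on $\cD(z_0,r_1)$ with $h(z_0)=M-u(z_0)\le 2\log K$.

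Next I would apply Harnack's inequality to $h$ on $\cD(z_0,r_1)$: for $w$ with $|w-z_0|\le\rho<r_1$ one has $\frac{r_1-\rho}{r_1+\rho}h(z_0)\le h(w)\le\frac{r_1+\rho}{r_1-\rho}h(z_0)$, so for any two points $z,\zeta\in\cD(z_0,\rho)$, $u(\zeta)-u(z)=h(z)-h(\zeta)\le\Big(\frac{r_1+\rho}{r_1-\rho}-\frac{r_1-\rho}{r_1+\rho}\Big)h(z_0)=\frac{4r_1\rho}{r_1^2-\rho^2}h(z_0)\le\frac{8r_1\rho}{r_1^2-\rho^2}\log K$. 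Taking $\rho=r_2=(1+\log\max(100,K))^{-2}r_1$, one has $r_2<r_1/(\log 100)^2<r_1/10$, hence $r_1^2-r_2^2>\tfrac{99}{100}r_1^2$ and $\frac{8r_1r_2}{r_1^2-r_2^2}<\frac{10\,r_2}{r_1}=\frac{10}{(1+\log\max(100,K))^2}$. Using $\log K\le\log\max(100,K)\le 1+\log\max(100,K)$ this gives $u(\zeta)-u(z)\le\frac{10\log K}{(1+\log\max(100,K))^2}\le\frac{10}{1+\log 100}<4$ for all $z,\zeta\in\cD(z_0,r_2)$. By symmetry $|u(\zeta)-u(z)|<4$, and exponentiating yields $|f(\zeta)|/|f(z)|=\exp(u(\zeta)-u(z))\le\exp(4)$, which is the claim. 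An equivalent route replaces the Harnack step by the Borel--Carath\'eodory inequality applied to $g-g(z_0)$, whose real part is bounded above by $2\log K$ on $\cD(z_0,r_1)$; this controls $|g(\zeta)-g(z)|$ on $\cD(z_0,r_2)$ directly.

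There is no serious obstacle here beyond bookkeeping the constants: the Harnack (or Borel--Carath\'eodory) constant over a sub-disk of radius $\rho$ inside $\cD(z_0,r_1)$ is of order $\rho/r_1$ and it is multiplied by $\log K$, so already a radius $r_2\sim r_1/\log K$ would suffice; the prescribed $r_2\sim r_1/(\log K)^2$ is comfortably smaller, which is precisely why the target constant may be taken as small as $4$ (the argument in fact produces a bound closer to $\exp(2)$). The only points that need minor care are checking $K\ge 1$, so that $h\ge 0$ and $h(z_0)\le 2\log K$ are meaningful (the degenerate case $K=1$ forces $h\equiv 0$, i.e.\ $|f|$ is constant and the inequality is trivial), and noting $\cD(z_0,r_2)\subset\cD(z_0,r_1)$ so that Harnack applies on $\overline{\cD(z_0,r_2)}$.
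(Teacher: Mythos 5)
Your proof is correct and takes essentially the same route as the paper: both apply Harnack's inequality on $\cD(z_0,r_1)$ to the non-negative harmonic function $\log K-\log|f|$ (equivalently, your $h=M-\log|f|$), using $|f(z_0)|\ge K^{-1}$ to bound its value at the center by $2\log K$. The one small divergence is that the paper splits into the cases $K\ge100$ and $K<100$, handling the latter by rescaling $f\mapsto(100/K)f$ so that the new sup equals $100$, whereas you avoid the split by writing the Harnack constant with $\log\max(100,K)$ throughout and then using $\log K\le\log\max(100,K)$; this is a modest bookkeeping simplification rather than a different argument, and your observation that the estimate actually yields a constant closer to $\exp(2)$ is accurate.
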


\begin{proof}
Assume first that $K \ge 100$. The function $u(z) := \log K - \log |f(z)|$ is harmonic and non-negative in $\cD (z_0, r_0)$. Applying Harnack's inequality to it in $\cD(z_0, r_1)$ yields
\begin{align*}
[1 - 2 (1 + \log K)^{-2}] (\log K - \log |f(z_0)|) & \le \log K - \log |f(z)| \\
& \le [1 + 3 (1 + \log K)^{-2}] (\log K -\log |f(z_0)|)
\end{align*}
for any $z \in \cD (z_0, r_2)$. Hence, using \eqref{harcond} and $K \ge 100$, this implies that
$$
- 2 - \log |f(z_0)| \le - \log |f(z)| \le 2 - \log |f(z_0)|
$$
for any $z \in \cD (z_0, r_2)$, and the lemma follows.

Assume now that $K < 100$. Set $\tilde{f} (z) = \lambda f(z)$ and $\lambda = 100 / K$, so that
$$
\tilde{K} := \sup \bigl\{|\tilde {f}(z)|: z \in \cD (z_0, r_0) \bigr\} = 100.
$$
Then, $|\tilde{f} (z_0)| \ge 100/K^2 \ge 1/100 = 1/\tilde K$. Thus, $\tilde{f} (z)$ obeys the condition of the lemma with $\tilde K = 100$. By what we saw above, this implies
$$
|\tilde{f} (\zeta)| \le \exp(4) |\tilde{f} (z)|
$$
for any $z, \zeta \in \cD (z_0, r_2)$, $r_2 = (1 + \log 100)^{-2}r_1$. Replacing here $\tilde{f} (\cdot)$ by $\lambda f(\cdot)$, one obtains the statement.
\end{proof}

\begin{corollary}\label{cor:6.1}
Let $F(w)$ be an analytic function defined in the disk $\cD (w_0,r_0)$. Assume that $\tau_0 := |\partial_w F_{|_{w_0}}\big| > 0$. Assume also that $M_0 := \sup_{\cD(z_0,r_0)} |F(w)| < \infty$.
\begin{itemize}

\item[(1)]  If $|F(w_0)| < r_0 \tau_0 / (200 (1 + \log (\max (100, M_0)))^2)$, then there exists $w'_0 \in \cD (w_0,2r_1)$ with $r_1 = 100 (1 + \log (\max (100, M_0)))^2 \tau_0^{-1} |F(w_0)|$ such that $F(w'_0) = 0$.

\item[(2)] If $|F(w_0)| < (\min(1,r_0))^2 (\min(1,\tau_0))^2 /(200 \max(1,M_0) (1 + \log (\max (100, M_0))))^{2}$, then $F(w) \neq 0$ for any $w \in \cD (w_0,r'_0) \setminus \{w'_0\}$ with $r'_0 = \min(1,\tau_0) (\min(1,r_0))^2 / (8 \max(1,M_0))$. Moreover, $w'_0$ is a simple zero of $F$.

\end{itemize}
\end{corollary}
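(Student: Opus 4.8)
The plan is to deduce both statements from three ingredients already in hand: the Cauchy estimates for the derivatives of a bounded analytic function, Rouch\'e's theorem (used exactly as in the proof of Lemma~\ref{standimplfunctheor}), and the Harnack-type estimate of Lemma~\ref{lem:6.harn}. Part~$(2)$ is the quantitative implicit function theorem of Lemma~\ref{standimplfunctheor} with the hypothesis $F(w_0)=0$ relaxed to ``$|F(w_0)|$ small,'' while part~$(1)$ is a coarser existence statement, proved by a case split according to the size of $|F(w_0)|$. Before starting I would normalize by the biholomorphism $u\mapsto w_0+r_0u$ of $\cD(0,1)$ onto $\cD(w_0,r_0)$; this leaves $M_0$ and $|F(w_0)|$ unchanged, multiplies $\tau_0$ by $r_0$, and rescales distances by $r_0$, so all of the asserted radii transform correctly and one may assume $r_0=1$ throughout. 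I record the standing consequence $\tau_0=|\partial_wF(w_0)|\le M_0$ of the Cauchy estimate.

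For part~$(2)$, set $L(w):=F(w_0)+\partial_wF(w_0)(w-w_0)$; its only zero is at distance $|F(w_0)|/\tau_0$ from $w_0$. By the Cauchy estimate $|\partial^2_wF(w)|\le 8M_0$ for $|w-w_0|\le 1/2$, hence $|F(w)-L(w)|\le 4M_0|w-w_0|^2$ there, while on a circle $|w-w_0|=\rho\le 1/2$ one has $|L(w)|\ge\tau_0\rho-|F(w_0)|$. The smallness hypothesis is (much) more than enough to force $|F(w_0)|<\tau_0^2/(16M_0)$, so the quadratic $4M_0\rho^2-\tau_0\rho+|F(w_0)|$ has two positive roots $\rho_-<\rho_+$ with $\rho_-<2|F(w_0)|/\tau_0$ and $\rho_+>r_0'$ (where $r_0'=\tau_0/(8M_0)$ in the normalized variables), and $|F-L|<|L|$ holds on every circle $|w-w_0|=\rho$ with $\rho\in(\rho_-,\rho_+)\supset(2|F(w_0)|/\tau_0,\,r_0']$. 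Rouch\'e's theorem applied on these circles shows that $F$ has exactly one zero $w_0'$ in $\cD(w_0,r_0')$, that $w_0'\in\overline{\cD(w_0,2|F(w_0)|/\tau_0)}$, and — via the residue representation $\frac1{2\pi i}\oint_{|w-w_0|=r_0'} w\,\frac{F'(w)}{F(w)}\,dw=w_0'$ together with $\frac1{2\pi i}\oint_{|w-w_0|=r_0'}\frac{F'(w)}{F(w)}\,dw=1$, as in the proof of Lemma~\ref{standimplfunctheor} — that $w_0'$ is a simple zero. Undoing the normalization gives part~$(2)$; the factor $(1+\log(\max(100,M_0)))^2$ in the stated hypothesis is extra slack, kept for use in part~$(1)$.

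For part~$(1)$ I argue by contradiction: assume $F$ is zero-free on $\overline{\cD(w_0,2r_1)}$, with $r_1=100\,(1+\log(\max(100,M_0)))^2\,\tau_0^{-1}|F(w_0)|$ (the hypothesis forces $2r_1<r_0$). If $|F(w_0)|$ is small enough to fall in the range covered by part~$(2)$, then part~$(2)$ produces a zero of $F$ inside $\cD(w_0,2|F(w_0)|/\tau_0)\subset\cD(w_0,2r_1)$, a contradiction. Otherwise $|F(w_0)|$ is comparatively large, and I use Lemma~\ref{lem:6.harn}: since $F$ is zero-free on $\cD(w_0,2r_1)$ one may apply it to $F$ there — after the harmless rescaling $F\rightsquigarrow(M_0|F(w_0)|)^{-1/2}F$, which makes $|F(w_0)|$ equal to the reciprocal of $\sup|F|$ so that hypothesis~\eqref{harcond} holds with nonvanishing disk $\cD(w_0,2r_1)$. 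The lemma then gives $|F(w)|\le e^{4}|F(w_0)|$ on a disk $\cD(w_0,r_2)$ whose radius is $2r_1$ divided by the square of a logarithm. Feeding this into the Cauchy bound $\tau_0=|\partial_wF(w_0)|\le r_2^{-1}\sup_{\cD(w_0,r_2)}|F|$ and using the definition of $r_1$ produces an inequality of the shape $200\,(1+\log(\max(100,M_0)))^2\le e^{4}\bigl(1+\log(\max(100,(M_0/|F(w_0)|)^{1/2}))\bigr)^2$, whose right-hand side is controlled because in this case $|F(w_0)|$ is bounded below in terms of $\tau_0$ and $M_0$; the inequality then fails, which is the desired contradiction.

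The step I expect to be the genuine obstacle is making the case split in part~$(1)$ exhaustive: the Rouch\'e regime covers $|F(w_0)|\lesssim\tau_0^2r_0^2/M_0$ and the Harnack contradiction covers $|F(w_0)|$ above an explicit (very small) threshold, and one must verify, using only the relation $\tau_0r_0\le M_0$ forced by the Cauchy estimate together with the hypothesis $|F(w_0)|<r_0\tau_0/\bigl(200\,(1+\log\max(100,M_0))^2\bigr)$, that the two regimes overlap. In the extreme case where $\tau_0r_0$ is abnormally small compared with $M_0$ — precisely where the linearization $L$ is useless — I would instead track the first non-negligible Taylor coefficient of $F$ at $w_0$ and run Rouch\'e against $\sum_{k\ge1}a_k(w-w_0)^k$ rather than against $L$, which still forces a zero of $F$ inside $\cD(w_0,2r_1)$. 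Arranging the bookkeeping of the logarithmic and polynomial-in-$M_0$ losses so that the generous constants $200$ and $100$ absorb everything is the delicate part of the argument.
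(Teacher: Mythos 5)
Your Rouch\'e argument for part (2) is correct; for the record it differs from the paper's, which Taylor-expands $F$ around the zero $w'_0$ \emph{supplied by part (1)} to read off uniqueness, whereas your version gives existence, uniqueness, and simplicity in one stroke. Because you establish existence inside (2), your plan of proving (2) first and then deducing (1) is logically legitimate. But the hypothesis of (2) is strictly stronger than that of (1) — the two thresholds differ by a factor comparable to $r_0\tau_0/\max(1,M_0)^2$, which can be arbitrarily small — so this plan leaves the intermediate regime to be handled, and that is where the proposal has a genuine gap.

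The Harnack contradiction you sketch for the ``$|F(w_0)|$ comparatively large'' case does not close. After your rescaling $F\rightsquigarrow(M_0|F(w_0)|)^{-1/2}F$, Lemma~\ref{lem:6.harn} is applied with $K=(M_0/|F(w_0)|)^{1/2}$, and combining the resulting bound $\sup_{\cD(w_0,r_2)}|F|\le e^4|F(w_0)|$ with the Cauchy estimate $\tau_0\le r_2^{-1}\sup_{\cD(w_0,r_2)}|F|$ yields only $(1+\log\max(100,M_0))^2\lesssim(1+\log\max(100,K))^2$. This is not a contradiction: nothing in the part-(1) hypothesis prevents $\tau_0$ from being a large negative power of $M_0$, in which case $|F(w_0)|$ may be as large as $\sim\tau_0/(200L)$ with $L=(1+\log\max(100,M_0))^2$, so $\log K$ is comparable to $\log(M_0/\tau_0)$ and swamps $\log M_0$. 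The ``otherwise'' regime is exactly where your Harnack argument loses control of the logarithm, and that is also the regime where Rouch\'e against a truncated Taylor polynomial is useless because the linear term is tiny — so the proposed repair does not rescue it either.

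The paper's proof of (1) uses neither a case split nor the derivative-Cauchy estimate. The device you are missing is to compare the \emph{value} of $F$ at two nearby points rather than its value and its derivative at one. Set $r_2:=r_1(1+\log\max(100,M_0))^{-2}$; by the Cauchy integral representation of $F'(w_0)$, $\max_{|w-w_0|=r_2}|F|\ge r_2\tau_0$, so there is $w_1$ with $|w_1-w_0|=r_2$ and $|F(w_1)|\ge r_2\tau_0=100\,|F(w_0)|$. Now rescale by $\lambda_0:=(r_2\tau_0\,M'_0)^{-1/2}$ where $M'_0:=\max_{|w-w_0|=r_1}|F|$ is the \emph{local} supremum; then $g:=\lambda_0F$ obeys $|g(w_1)|\ge 1/\tilde M_0$ with $\tilde M_0:=\lambda_0 M'_0=\sup_{\cD(w_0,r_1)}|g|$, which is precisely hypothesis~\eqref{harcond}. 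If $g$ were zero-free on $\cD(w_1,r_1)$, Lemma~\ref{lem:6.harn} would give $|g(w_1)|/|g(w_0)|\le e^4$, but the ratio equals $|F(w_1)|/|F(w_0)|\ge 100>e^4$; so $F$ vanishes at some $w'_0\in\cD(w_1,r_1)\subset\cD(w_0,2r_1)$. The circle lower bound $\max_{|w-w_0|=r_2}|F|\ge r_2\tau_0$ and the normalization by the local sup $M'_0$ (rather than by the global $M_0$) are the two ingredients your proposal lacks, and together they make the logarithm behave uniformly, with no case split.
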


\begin{proof}
$(1)$ Set $r_2 = r_1 (1 + \log (\max (100, M_0)))^{-2}$. One has $r_1 < r_0/2$, $r_2 < r_1/8$. Set $M'_0 = \max_{|w-w_0| = r_1} |F(w)|$, $M''_0 = \max_{|w - w_0| = r_2} |F(w)|$. Due to the Cauchy inequality, one has $M''_0 \ge r_2 \tau_0$. So, $|F(w_1)| \ge r_2 \tau_0$ for some $|w_1 - w_0| = r_2$. Set $\lambda_0 = (r_2 \tau_0 M'_0)^{-1/2}$, $g(w) := \lambda_0 F(w)$. Then,
\begin{equation}\nn\begin{split}
\tilde {M}_0 := \sup_{w \in \cD (w_0,r_1)} |g(w)| = \lambda_0 M'_0, \\
|g(w_1)| \ge \lambda_0 r_2 \tau_0 = 1/(\lambda_0 M'_0) = 1/\tilde {M}_0.
\end{split}
\end{equation}
Note that,
$$
|g(w_1)| |g(w_0)|^{-1} = |F(w_1)| |F(w_0)|^{-1} \ge r_2 \tau_0 |F(w_0)|^{-1} = 100 > \exp(4).
$$
Due to Lemma~\ref{lem:6.harn}, $g(w)$ must vanish at some point $w'_0 \in \cD (w_1,r_1)$. Clearly, $w'_0 \in \cD (w_0,2r_1)$.

$(2)$ Assume now that $|F(w_0)| < \min(1,r_0)^2 \min(1,\tau_0)^2 (200 \max(1,M_0) (1 + \log (\max (100, M_0))))^{-2}$. Then, $r'_0 < r_0/2$. Using Cauchy inequalities, one gets for any $w \in \cD (w_0,r'_0)$,
$$
|\partial_w F| \ge |\partial_w F|_{w=w_0}|- \sup_{\cD(z_0,r_0/2)} |\partial^2_{w,w}F| |w - w_0| \ge \tau_0 - 4 r_0^{-2} M_0 r'_0 \ge \tau_0/2
$$
and
$$
|\partial^2_{w,w} F| \le 4 r_0^{-2} M_0.
$$
Let $|w' - w_0|< r'_0$. One has
\begin{align*}
|F(w')| & =|F(w') - F(w'_0)| \\
& \ge |\partial_w F|_{w=w'_0}| |w' - w'_0| - \frac{1}{2} \sup_{\cD (w_0,r'_0)} \left| \partial^2_{w,w} F \right| |w' - w'_0|^2 \\
& \ge \tau_0 |w' - w'_0|/2 - \frac{M_0}{r_0^2} |w' - w'_0|^2 \\
& \ge |w' - w'_0| \left( \frac{\tau_0}{2} -\frac{M_0}{r_0^2} r'_0 \right) \\
& > 0,
\end{align*}
provided that $w' \neq w'_0$. Moreover, this calculation shows that $w'_0$ is a simple zero of $F$.
\end{proof}

\begin{lemma}\label{em:implicitanalyticont}
\begin{itemize}

\item[(1)] Let $F(z,w)$ be an analytic function defined in the poly-disk $\cD (z_0,w_0; p_0,r_0) := \{ z \in \IC : |z - z_0| < p_0 \} \times \{ w \in \IC : |w - w_0| < r_0 \}$. Assume that the following conditions hold:

$(a)$ $\tau_0 := \big| \partial_w F_{|_{(z_0,w_0)}} \big| > 0$,

$(b)$
$$
|F(z_0,w_0)| \le \epsilon_1 := \frac{\tau_1^2(\min (1,r_0))^2}{10^8 M_1^2 (1 + \log (\max (100, M_0)))^2},
$$
$\tau_1 = \min(1,\tau_0)$, $M_1 = \max(1,M_0)$, where $M_0 := \sup_{\cP(z_0,w_0,p_0,r_0)} |F(z,w)|$. Set $r = \epsilon_1 (\min (1, p_0, r_0))^2/M_1$. Then, for any $|z - z_0| < r$, there exists a unique $\phi(z)=w$, $|w - w_0| < r_1 := 400 (1 + \log (\max (100, M_0)))^2 (\tau_1)^{-1} \epsilon_1$, such that $F(z,w) = 0$. For $z = z_0$, we have $|\phi(z_0) - w_0| < 400 (1 + \log (\max (100, M_0)))^2 (\tau_1)^{-1} |F(z_0,w_0)|$. Finally, $\phi(z)$ is analytic in the disk $\cD(z_0,r)$.

\item[(2)] Let $\phi_0(z)$ be an analytic function defined in the disk $\cD (z_0,\sigma_0)$, $0 < \sigma_0 \le 1$, and let $f(z,w)$ be an analytic function defined in the domain $\cL (\phi_0, \cD (z_0, \sigma_0), \rho_0)$, $0 < \rho_0 < 1$. Assume that the following conditions hold,

$(\alpha)$ $\sup_{z,w} |\partial_w f| \le 1/2$,

$(\beta)$
$$
|f(z, \phi_0(z)) - \phi_0(z)| < \epsilon_1 := \frac{\sigma_0^2 \rho_0^2}{10^{10} M_1^3 (1 + \log (\max (100, M_1)))^2}
$$
for any $z \in \cD (z_0,\ve_0)$, where $\ve_0\le \sigma_0-\rho_0$, $M_1 := \max(1,M_0)$, $M_0 := \sup_{z} |\phi_0(z)| + \rho_0 + \sup_{z,w} |f(z,w)|$.

Then, for any $|z - z_0| < \ve_0$, there exists $w = \phi(z)$,
$$
|\phi(z)-\phi_0(z)| < 10^3 (1 + \log (\max (100, M_1)))^2 |f(z,\phi_0(z)) - \phi_0(z)|,
$$
such that $f(z,\phi(z)) = \phi(z)$.  Furthermore, $\phi(z)$ is analytic in the disk $\cD(z_0,\ve_0-\ve_1)$. Finally, $w\neq f(z,w)$ if $|z - z_0| < \ve_0$, $|w-\phi_0(z)|<\rho_0$ and $w\neq \phi(z)$.

\end{itemize}
\end{lemma}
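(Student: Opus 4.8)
For part $(1)$ the plan is to first correct the base point $w_0$ to a genuine zero of $F(z_0,\cdot)$ using Corollary~\ref{cor:6.1}, and then continue that zero analytically in $z$ by a Rouch\'e/residue argument as in the proof of Lemma~\ref{standimplfunctheor}. For part $(2)$ the plan is to solve the fixed point equation $w=f(z,w)$ directly by Picard iteration, exploiting that hypothesis $(\alpha)$ makes $w\mapsto f(z,w)$ a $\tfrac12$-contraction on each disk $\{\,|w-\phi_0(z)|<\rho_0\,\}$.

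\emph{Part $(1)$.} Consider the one–variable function $F(z_0,\cdot)$ on $\cD(w_0,r_0)$. A short computation shows that the definition of $\epsilon_1$ puts $|F(z_0,w_0)|\le\epsilon_1$ well below the thresholds required in both parts of Corollary~\ref{cor:6.1} (using $\tau_1\le\tau_0$, $\min(1,r_0)\le r_0$, $M_1\ge1$). Hence there is a point $w_0'$ with $|w_0'-w_0|<200(1+\log(\max(100,M_0)))^2\tau_0^{-1}|F(z_0,w_0)|$ and $F(z_0,w_0')=0$, and $w_0'$ is a \emph{simple} zero of $F(z_0,\cdot)$, with $F(z_0,w)\ne0$ for $0<|w-w_0'|<r_0'$, $r_0'=\tau_1(\min(1,r_0))^2/(8M_1)$. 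Since $|w_0'-w_0|$ is tiny compared with $r_0$, Cauchy estimates for $\partial^2_{ww}F$ give $|\partial_wF_{|_{(z_0,w_0')}}|\ge\tau_0/2$, and Cauchy estimates for $\partial_zF$ on $\cD(z_0,p_0/2)$ give $|\partial_zF|\lesssim M_0/p_0$. Choosing a Rouch\'e circle $|w-w_0'|=\rho$ with $\rho$ comparable to $\tau_0r_0^2/M_0$ (and $<r_0'$), one has $|F(z_0,w)|\ge(\tau_0/4)\rho$ on it, so for $|z-z_0|<r$ one gets $|F(z,w)-F(z_0,w)|<(\tau_0/4)\rho$; by Rouch\'e, $F(z,\cdot)$ has a unique zero $\phi(z)$ in $\cD(w_0',\rho)$, and the residue formula $\phi(z)=\frac1{2\pi i}\oint_{|w-w_0'|=\rho}w\,F_w(z,w)/F(z,w)\,dw$ shows $\phi$ is analytic on $\cD(z_0,r)$. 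Tracking the constants (the generous factor $10^8$ in $\epsilon_1$ is there precisely to absorb every loss) shows that the stated $r=\epsilon_1(\min(1,p_0,r_0))^2/M_1$ is admissible and that $\phi$ maps $\cD(z_0,r)$ into $\{\,|w-w_0|<r_1\,\}$; uniqueness of $\phi(z)$ in that ball follows from simplicity of $w_0'$ together with Rouch\'e, and the bound at $z=z_0$ is exactly the estimate for $|w_0'-w_0|$ above (with $\tau_0^{-1}\le\tau_1^{-1}$).

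\emph{Part $(2)$.} Set $\phi^{(0)}:=\phi_0$ and $\phi^{(n+1)}(z):=f(z,\phi^{(n)}(z))$ for $z\in\cD(z_0,\ve_0)\subset\cD(z_0,\sigma_0)$. Arguing inductively, if $|\phi^{(n)}(z)-\phi_0(z)|<\rho_0$ then $\phi^{(n+1)}$ is well defined and analytic, and by $(\alpha)$ and convexity of the disk $\{\,|w-\phi_0(z)|<\rho_0\,\}$ one has $|\phi^{(n+1)}(z)-\phi^{(n)}(z)|\le\tfrac12|\phi^{(n)}(z)-\phi^{(n-1)}(z)|$, whence $|\phi^{(n+1)}(z)-\phi_0(z)|\le2|f(z,\phi_0(z))-\phi_0(z)|<2\epsilon_1<\rho_0$ by $(\beta)$ (using $\sigma_0,\rho_0\le1$, $M_1\ge1$); this closes the induction. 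The $\phi^{(n)}$ converge geometrically and uniformly on $\cD(z_0,\ve_0)$, so the limit $\phi$ is analytic there, satisfies $f(z,\phi(z))=\phi(z)$, and obeys $|\phi(z)-\phi_0(z)|\le2|f(z,\phi_0(z))-\phi_0(z)|$, which is stronger than the claimed bound. If $w_1,w_2$ both solve $w=f(z,w)$ with $|w_i-\phi_0(z)|<\rho_0$, then $[w_1,w_2]$ lies in that disk and $|w_1-w_2|=|f(z,w_1)-f(z,w_2)|\le\tfrac12|w_1-w_2|$, so $w_1=w_2=\phi(z)$; this yields both uniqueness and the final assertion. (One could instead deduce part $(2)$ from part $(1)$ applied to $F(z,w):=f(z,w)-w$, but the direct Picard argument avoids the mismatch between the polydisk of part $(1)$ and the tube $\cL(\phi_0,\cD(z_0,\sigma_0),\rho_0)$.)

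\textbf{Main obstacle.} There is no conceptual difficulty — the two ingredients (Corollary~\ref{cor:6.1} and the Rouch\'e/residue mechanism of Lemma~\ref{standimplfunctheor}, respectively the contraction principle) do all the work. The only delicate point is the constant bookkeeping in part $(1)$: one must verify that the single threshold $\epsilon_1$ simultaneously triggers both parts of Corollary~\ref{cor:6.1}, keeps $|\partial_wF|\ge\tau_0/2$ on a neighbourhood of $w_0'$, and leaves enough room for the continuation radius to dominate the stated $r$. In particular one should \emph{not} shrink the polydisk $\cD(z_0,p_0)\times\cD(w_0,r_0)$ to a common radius before invoking the exact implicit function theorem, since that would cost a factor of $p_0$ and fail when $p_0$ is very small; running the Rouch\'e argument directly on the rectangular polydisk (with $z$-radius a fixed fraction of $p_0$) is what makes the stated $r$ come out.
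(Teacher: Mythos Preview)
Your argument is correct. For part $(1)$ you follow essentially the same route as the paper: use Corollary~\ref{cor:6.1} to locate and isolate the zero, then Rouch\'e/residue to continue it in $z$. The only organizational difference is that the paper applies Corollary~\ref{cor:6.1} directly at each fixed $z$ with $|z-z_0|<r$ (after first using Cauchy estimates to show $|F(z,w_0)|<2\epsilon_1$ and $|\partial_wF|_{(z,w_0)}|>\tau_1/2$), whereas you apply it once at $z_0$ and then do the Rouch\'e continuation by hand; the content is the same.

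For part $(2)$ you take a genuinely different route. The paper does exactly what you flag in your parenthetical remark: it sets $F(z,w):=w-f(z,w)$, restricts to a small polydisk around each $(z_0',\phi_0(z_0'))$ contained in the tube $\cL(\phi_0,\cD(z_0,\sigma_0),\rho_0)$, and invokes part $(1)$ there; uniqueness is then proved by the same contraction inequality you use. Your direct Picard iteration is more elementary, avoids the polydisk-in-tube bookkeeping, and actually yields the sharper bound $|\phi(z)-\phi_0(z)|\le 2|f(z,\phi_0(z))-\phi_0(z)|$ together with analyticity on all of $\cD(z_0,\ve_0)$ rather than the smaller disk in the statement. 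The paper's approach buys only economy of exposition: part $(2)$ becomes a formal corollary of part $(1)$ with no new argument beyond the contraction step for uniqueness.
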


\begin{proof}
Let $|z - z_0| < r$. Due to Cauchy inequalities, $|\partial_z F|\le M_1/p_0$, $|\partial^2_{z,w}F| \le M_1/p_0r_0$. This implies
\begin{equation}\label{eqz0zcorrections}
|F(z,w_0)| \le \epsilon_1 + M_1 |z - z_0| / p_0 < 2 \epsilon_1, \quad |\partial_w F|_{z,w_0}| > \tau_0 - M_1 |z - z_0| / p_0 r_0 > \tau_1 / 2.
\end{equation}
Therefore, Corollary~\ref{cor:6.1} may be applied and it follows that there exists $w \in \cD (w_0, 2 \tilde r)$ with $\tilde r = 100 (1 + \log (\max (100, M_0)))^2 (\tau_1/2)^{-1} |F(z,w_0)| < r_1 / 2$ such that $F(z,w) = 0$. Moreover, $F(z,w') \neq 0$ for any $w' \in \cD (w_0,r'_0) \setminus \{w\}$, where $r'_0 = (\tau_1/2) \min(1,r_0)^2 / (8 M_1) > r_1$. Set $\phi(z) = w$. To finish part $(1)$ we have to show that $\phi$ is analytic. By the residue theorem,
$$
\frac{1}{2\pi i} \oint_{|w - w_0| = r_1} w \, \frac{F_w(z,w)}{F(z,w)} \, dw = \phi(z),
$$
and analyticity follows.

To prove part $(2)$, set $F(z,w) := w - f(z,w)$. Then, $|\partial_w F| \ge 1/2$ for any $(z,w)$. Furthermore, $\sup_{z,w} |F(z,w)| \le \sup_{z} |\phi_0(z)| + \rho_0 + \sup_{z,w} |f(z,w)| \le M_1$. Let $z'_0 \in \cD (z_0,\ve_0)$ be arbitrary, $w'_0 := \phi_0(z'_0)$. For $z \in \cD (z'_0,\rho_0/2)$, one has $|\partial_z\phi_0| \le 2 \rho_0^{-1} M_0$, due to Cauchy estimates. Hence, $|w - \phi_0(z)| < |w-w'_0| + |\phi_0(z) - \phi_0(z'_0)| < \sigma_0$, provided that $|w - w'_0| < \sigma_0 / 2 =: r'_0$, $|z - z'_0| < \sigma_0 \rho_0 / 4 M_1 =: p'_0$. So, the function $F(z,w)$ is well-defined and analytic in the poly-disk $\cP (z'_0,w'_0,p'_0,r'_0)$. Due to condition $(\beta)$, one has $|F(z'_0,w'_0)| < \epsilon_1$, where $\tau_1 = \min(\tau,1) \ge 1/2$. Due to part $(1)$ of the lemma, applied to the function $F(z,w)$ in the poly-disk $\cD(z'_0,w'_0;p'_0,r'_0)$, for any $|z - z'_0|<r$ with some $r>0$, there exists a unique $w = \phi(z)$ such that $\phi(z) = f(z,\phi(z))$ and $|\phi(z) - w'_0| < 10^3 (1 + \log (\max (100, M_1)))^2 |f(z,\phi_0(z)) - \phi_0(z)|$. Moreover, $\phi(z)$ is analytic in the disk $\cD(z'_0,r)$. Assume that $w_1= f(z,w_1)$ for some $|z - z_0| < \ve_0$, $|w_1-\phi_0(z)|<\rho_0$. Then
$$
|w_1 - \phi(z)| = \Big| \int_{\phi(z)}^{w_1} \partial_w f(z,w) \, dw \Big| \le (\sup |\partial_w f(z,w)|) |w_1 - \phi(z)| \le \frac{1}{2} |w_1 - \phi(z)|.
$$
Hence, $w_1 = \phi(z)$. This finishes part $(2)$.
\end{proof}

Let $a_1(x,u)$, $a_2(x, u)$, $b(x, u)$, $g(x)$ be real functions such that:
\begin{enumerate}

\item[(i)] $g(x)$ is a $C^2$-function on some interval $(-\alpha_0, \alpha_0)$.

\item[(ii)] $a_1(x, u)$, $a_2(x,u)$, $b^2(x,u)$ are $C^2$-functions in the domain $\cL_{\IR}\bigl(g, \rho_0\bigr)$, $\rho_0<1$.

\item[(iii)] $a_1(x,u) > a_2(x,u)$ for any $(x, u)$; $b(0, u) = 0$ for any $u \in \bigl(g(0) - \rho_0, g(0) + \rho_0\bigr)$.

\item[(iv)] $\big | a_i(x, u) - g(x) \big | < \rho_0/4$, for any $(x,u)$, $i = 1, 2$; $|b(x,u)| < \rho_0/4$ for any $x, u$.

\item[(v)] $|\partial_u\, a_i| < 1/2$ for any $(x,u)$, $i = 1, 2$; $|\partial_u\, b^2|< |b|/4$ for any $(x, u)$.

\end{enumerate}

Consider the following equation
\begin{equation}\label{eq:6-1}
\chi(x,u) := \bigl(u - a_1(x,u)\bigr)\bigl(u - a_2(x,u)\bigr) - b(x,u)^2 = 0.
\end{equation}

\begin{lemma}\label{lem:6-1}
For any $x \in (-\alpha_0, \alpha_0)$, the equation \eqref{eq:6-1} has exactly two solutions, $\zeta_+(x)$ and $\zeta_-(x)$. The functions $\zeta_+(x)$, $\zeta_-(x)$ are continuously differentiable on $(-\alpha_0, \alpha_0)$ and obey
\begin{equation} \label{eq:6-1''}
\max (a_1 \bigl(x, \zeta_+(x) \bigr), a_2 \bigl(x, \zeta_+(x) \bigr) + |b(x,\zeta_+(x))|)  \le \zeta_+(x) \le a_1 \bigl(x, \zeta_+(x) \bigr) + |b(x,\zeta_+(x))|,
\end{equation}
\begin{equation}\label{eq:6-1'''}
a_2 \bigl(x, \zeta_-(x)\bigr) - |b(x,\zeta_-(x))| \le \zeta_-(x) \le \min (a_2 \bigl( x, \zeta_-(x) \bigr), a_1 \bigl( x, \zeta_+(x) \bigr) - |b(x,\zeta_+(x))|),
\end{equation}
\begin{equation}\label{eq:6-1''''}
g(x) - \rho_0/2 \le \zeta_\pm(x) \le g(x) + \rho_0/2.
\end{equation}
\end{lemma}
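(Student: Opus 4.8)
The idea is to split \eqref{eq:6-1} into two fixed-point equations. Set
\[
A(x,u)=\tfrac12\bigl(a_1(x,u)+a_2(x,u)\bigr),\quad d(x,u)=\tfrac12\bigl(a_1(x,u)-a_2(x,u)\bigr),\quad s(x,u)=\bigl(d(x,u)^2+b(x,u)^2\bigr)^{1/2},
\]
and $F_\pm(x,u):=A(x,u)\pm s(x,u)$; by (iii) we have $d>0$, hence $s\ge d>0$, so $s$ and $F_\pm$ are $C^2$ on $\cL_{\IR}(g,\rho_0)$. For fixed $x$, a number $u$ solves \eqref{eq:6-1} iff $(u-a_1(x,u))(u-a_2(x,u))=b(x,u)^2\ge0$, i.e. $u$ does not lie strictly between $a_2(x,u)$ and $a_1(x,u)$, i.e. $u$ is one of the two roots $A(x,u)\pm s(x,u)$ of the quadratic $t\mapsto(t-a_1(x,u))(t-a_2(x,u))-b(x,u)^2$; equivalently, $u=F_+(x,u)$ or $u=F_-(x,u)$.

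Next I would show that, for fixed $x$, each map $F_\pm(x,\cdot)$ sends the interval $I_x:=[g(x)-\rho_0/2,\,g(x)+\rho_0/2]$ into its interior and is a contraction there. The first point uses $\max(d,|b|)\le s\le d+|b|$ (valid since $d\ge0$): $a_1-|b|\le F_+\le a_1+|b|$ and $a_2-|b|\le F_-\le a_2$, so by (iv) ($|a_i-g|<\rho_0/4$, $|b|<\rho_0/4$) both $F_\pm$ lie in $(g-\rho_0/4,\,g+\rho_0/2)$. The contraction estimate comes from
\[
\partial_uF_\pm=(\partial_ua_1)\tfrac{s\pm d}{2s}+(\partial_ua_2)\tfrac{s\mp d}{2s}\pm\tfrac{\partial_u(b^2)}{2s},
\]
where the weights $\tfrac{s\pm d}{2s}$ are nonnegative and sum to $1$, so by (v) ($|\partial_ua_i|<1/2$, $|\partial_u(b^2)|<|b|/4\le s/4$) we get $|\partial_uF_\pm|<1/2+1/8=5/8<1$. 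Thus each $F_\pm(x,\cdot)$ has a unique fixed point $\zeta_\pm(x)\in I_x$. By the reformulation these are precisely the solutions of \eqref{eq:6-1} in the domain — any solution is forced into $I_x$ by the same bounds — they are distinct since $\zeta_+=\zeta_-$ would force $s=0$, contradicting $d>0$, and $\zeta_-(x)<\zeta_+(x)$ because $u\mapsto u-A(x,u)$ is strictly increasing (derivative $>1/2$) while it equals $s(x,\zeta_+)>0$ at $\zeta_+$ and $-s(x,\zeta_-)<0$ at $\zeta_-$. This proves the ``exactly two solutions'' claim; continuity of $\zeta_\pm$ in $x$ follows from the uniform contraction and the joint continuity of $F_\pm$, and since $F_\pm\in C^2$ with $\partial_u(u-F_\pm(x,u))=1-\partial_uF_\pm$ bounded away from $0$, the implicit function theorem applied to $u=F_\pm(x,u)$ upgrades $\zeta_\pm$ to $C^1$ (indeed $C^2$).

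The ``same-point'' inequalities in \eqref{eq:6-1''}, \eqref{eq:6-1'''} and the inclusion \eqref{eq:6-1''''} are read off directly from $\zeta_\pm=A\pm s$, $\max(d,|b|)\le s\le d+|b|$ and (iv): $\zeta_+\ge A+d=a_1$, $\zeta_+\ge A+|b|\ge a_2+|b|$, $\zeta_+\le A+d+|b|=a_1+|b|$, $\zeta_-\le A-d=a_2$, $\zeta_-\ge a_2-|b|$, and $\zeta_\pm\in I_x$ is \eqref{eq:6-1''''}. For later use I would also record the sign of $\partial_u\chi$ at a solution: with $p=\zeta_+-a_1(x,\zeta_+)=s-d\ge0$, $q=\zeta_+-a_2(x,\zeta_+)=s+d>0$ one gets $\partial_u\chi(x,\zeta_+)=(1-\partial_ua_1)q+(1-\partial_ua_2)p-\partial_u(b^2)>\tfrac12(p+q)-\tfrac14|b|=s-\tfrac14|b|\ge\tfrac{1}{\sqrt2}(d+|b|)-\tfrac14|b|>0$, and symmetrically $\partial_u\chi(x,\zeta_-)<0$; combined with the two-root count this shows $\chi(x,\cdot)<0$ precisely on $(\zeta_-(x),\zeta_+(x))$ within the domain.

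The one remaining inequality, the cross-bound $\zeta_-(x)\le a_1(x,\zeta_+(x))-|b(x,\zeta_+(x))|$, is the step I expect to be the main obstacle. The natural approach is to put $v:=a_1(x,\zeta_+)-|b(x,\zeta_+)|$, note $\zeta_+-v=(\zeta_+-a_1(x,\zeta_+))+|b(x,\zeta_+)|=(s-d)+|b|\ge0$ so that $v\le\zeta_+$, and then prove $\chi(x,v)\le0$: by the sign structure just established this forces $v\in[\zeta_-(x),\zeta_+(x)]$, hence $v\ge\zeta_-(x)$. Expanding $\chi(x,v)$ around the coefficient values frozen at $\zeta_+$ produces a dominant term $-2\,d(x,\zeta_+)\,|b(x,\zeta_+)|\le0$ together with corrections that one controls using the Lipschitz bounds from (v) and the smallness $\zeta_+-v\le(s-d)+|b|\le\tfrac{b^2}{2d}+|b|$ (at $\zeta_+$). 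Making the corrections strictly dominated by the main term is where the precise constants $1/2$ and $1/4$ in (iv)--(v) enter; I would handle this by distinguishing the regimes $|b(x,\zeta_+)|\le d(x,\zeta_+)$ and $|b(x,\zeta_+)|>d(x,\zeta_+)$, if necessary bootstrapping with a first crude version of a lower bound for $\zeta_+(x)-\zeta_-(x)$ in terms of $s(x,\zeta_+)+s(x,\zeta_-)$.
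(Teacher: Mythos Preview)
Your argument is essentially the paper's own: both rewrite $\chi=0$ as the pair of fixed-point equations $u=F_\pm(x,u)$ (the paper's $\varphi_\pm$), derive the same elementary bounds $\max(d,|b|)\le s\le d+|b|$, and read off \eqref{eq:6-1''}--\eqref{eq:6-1''''} from them. Your direct contraction of $F_\pm(x,\cdot)$ on $I_x$ for every $x$ is in fact slightly cleaner than the paper's route, which first produces the fixed points at $x=0$ (using $b(0,u)=0$, so that $\zeta_\pm(0)$ solve $u=a_i(0,u)$) and then continues them in $x$ via the implicit function theorem applied to $\chi$ after checking $\partial_u\chi\ne 0$ at solutions.

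The ``cross-bound'' $\zeta_-(x)\le a_1\bigl(x,\zeta_+(x)\bigr)-|b(x,\zeta_+(x))|$ that you flag as the main obstacle is almost certainly a typographical slip in the statement of \eqref{eq:6-1'''}: the intended inequality is the same-point bound $\zeta_-(x)\le a_1\bigl(x,\zeta_-(x)\bigr)-|b(x,\zeta_-(x))|$. Two pieces of evidence support this. First, the paper's own proof says that \eqref{eq:6-1''}, \eqref{eq:6-1'''} ``follow from \eqref{eq:6-5} and \eqref{eq:6-6}'', and those are purely pointwise estimates on $\varphi_\pm(x,u)$; substituting $u=\zeta_-(x)$ into \eqref{eq:6-6} gives only $\zeta_-\le \tfrac12(a_1+a_2)-|b|\le a_1-|b|$ with all quantities evaluated at $\zeta_-$, and nothing at $\zeta_+$. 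Second, every downstream use of this bound in the paper (see e.g.\ the last lines of \eqref{eq:8-13acnq0}) records it with all arguments at the ``$-$'' value. So you should read $\zeta_-$ in place of $\zeta_+$ in that term; the inequality then drops out of your $F_-$ bounds with no further work, and the delicate $\chi(x,v)\le 0$ analysis can be discarded.
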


\begin{proof}
Consider the following equations,
\begin{align}
u & = (1/2) \left[a_1(x,u) + a_2(x,u) + \bigl((a_1(x, u) - a_2(x,u))^2 + 4b^2(x,u)\bigr)^{1/2}\right] , \label{eq:6-2} \\[6pt]
u & = (1/2) \left[a_1(x,u) + a_2(x,u) - \bigl((a_1(x,u) - a_2(x,u))^2 + 4b^2(x,u)\bigr)^{1/2}\right] . \label{eq:6-3}
\end{align}
Note that $\chi(x,u) = 0$ if and only if \eqref{eq:6-2} or \eqref{eq:6-3} holds. Denote by $\vp_+(x,u)$ (resp., $\vp_-(x,u)$) the expression on the right-hand side of \eqref{eq:6-2} (resp., \eqref{eq:6-3}) and by $r(x,u)$ the square root in \eqref{eq:6-2} (and \eqref{eq:6-3}). Note the following relations,
\begin{align}
& \max \Bigl\{ \bigl( a_1(x,u) - a_2(x,u) \bigr),\ 2 |b(x,u)| \Bigr \} \le r(x,u) \le \bigl( a_1(x,u) - a_2(x,u) + 2|b(x,u)|\bigr), \label{eq:6-4}\\[6pt]
& \max \Bigl\{ a_1(x,u), (1/2) \bigl[ a_1(x,u) + a_2(x,u) + 2|b(x,u)| \bigr] \Bigr\} \le \vp_+(x,u) \le a_1(x,u) + |b(x,u)|, \label{eq:6-5}\\[6pt]
& a_2(x,u) - |b(x,u)| \le \vp_-(x,u) \le \min \Bigl\{ a_2(x,u), (1/2) \bigl[ (a_1(x,u) + a_2(x,u)) - 2|b(x,u)| \bigr] \Bigr\}. \label{eq:6-6}
\end{align}
Assume that $\chi(x_0, u_0) = 0$ for some $(x_0, u_0) \in \cL \bigl( g, \rho_0 \bigr)$. Then, either $u_0 = \vp_+(x_0, u_0)$ or $u_0 = \vp_-(x_0, u_0)$. Assume $u_0 = \vp_+(x_0, u_0)$. Then, due to \eqref{eq:6-5} and conditions (i)--(v), we obtain
\begin{equation}\label{eq:6-7}
\begin{split}
\partial_u \chi \Big |_{\xumap} & = \Bigl\{ (1 -\partial_u a_1)(u - a_2) + (1-\partial_u a_2)(u - a_1) - \partial_u b^2 \Bigr\} \Big |_{\xumap} \\
& \ge (1 - 1/2) \bigl( \vp_+ - a_2 \bigr) + (1 - 1/2) \bigl( \vp_+ - a_1 \bigr) - |b|/4 \Big |_{\xumap} \\
& \ge (1/4) \bigl( (a_1 - a_2)+|b| \bigr) \Big|_{\xumap} > 0.
\end{split}
\end{equation}
Thus $\chi(x, u)$ satisfies all conditions of the implicit function theorem in some neighborhood of $\xumap$.  Consider the equation
\begin{equation}\label{eq:6-8}
u = a_1(0, u),
\end{equation}
$u \in \bigl( g(0) - \rho_0, g(0) + \rho_0 \bigr)$. Due to condition~(iv), $a_1(0, u) \in I_0 = \bigl[ g(0) - \rho_0/4, g(0) + \rho_0/4 \bigr]$ for any $u \in \bigl( g(0) - \rho_0, g(0) + \rho_0 \bigr)$. Hence, $u \to a_1(0, u)$ maps $I_0$ into itself.  Since $|\partial_u a_1| < 1/2$, this map is contracting.  Therefore, the equation \eqref{eq:6-8} has a unique solution in $I_0$, which we denote by $\zeta_+(0)$. Clearly, $u_0 = \zeta_+(0)$ satisfies $u_0 = \vp_+(0, u_0)$. Due to \eqref{eq:6-7}, for any $x$ in some neighborhood of $x_0 = 0$, the equation \eqref{eq:6-1} has a unique solution $\zeta_+(x)$ belonging to some small neighborhood of $u_0$. Clearly, $\zeta_+(x) = \vp_+(x, \zeta_+(x))$.

Assume that $\chi(x_1, u_1) = 0$ for some $(x_1, u_1) \in \cL_{\IR} \bigl( g, (-\alpha_0, \alpha_0), \rho_0 \bigr)$. Then, due to \eqref{eq:6-5} and \eqref{eq:6-6},
\begin{equation}\label{eq:6-9}
a_2(x_1, u_1) - |b(x_1, u_1)| \le u_1 \le a_1(x_1, u_1) + |b(x_1, u_1)|.
\end{equation}
Combining \eqref{eq:6-9} with condition~(iv), we obtain
\begin{equation}\label{eq:6-10}
g(x_1) - \rho_0/2 \le u_1 \le g(x_1) + \rho_0/2.
\end{equation}
It follows from \eqref{eq:6-10} and the above arguments that, given $(\bar x, \bar u) \in \cL_{\IR} \bigl( g, (-\alpha_0, \alpha_0), \rho_0 \bigr)$ such that $\bar u = \vp_+ (\bar x, \bar u)$, there exists a unique $C^1$-function $\zeta_+(x)$ defined on $(-\alpha_0, \alpha_0)$ such that $\chi(x, \zeta_+(x)) = 0$, $\zeta_+(x) = \vp_+(x, \zeta_+(x))$, $\zeta_+(\bar x) = \bar u$. In a similar way we define $\zeta_-(x)$, $x \in (-\alpha_0, \alpha_0)$. Let $u_1 = \zeta_+(0)$ and $u_2 = \zeta_-(0)$. Then, $u_i = a_i(0, u_i)$, $i = 1, 2$. Since $u \to a_1(0, u)$ is a contraction, $|u_1 - u_2| > |a_1(0, u_1) - a_1(0, u_2)| = |u_1 - a_1(0, u_2)|$. Due to (iii), $a_1(0, u_2) > a_2(0, u_2) = u_2$. Therefore, $u_2 \ge u_1$ is impossible.  So, $\zeta_+(0) > \zeta_-(0)$. Since $\zeta_+(0) \ne \zeta_-(0)$, $\zeta_+(x) \ne \zeta_-(x)$ for any $x \in (-\alpha_0, \alpha_0)$.  Hence, $\zeta_+(x) > \zeta_-(x)$ for any $x \in (-\alpha_0, \alpha_0)$.

The estimates \eqref{eq:6-1''}, \eqref{eq:6-1'''} follow from \eqref{eq:6-5} and \eqref{eq:6-6}. The estimate \eqref{eq:6-1''''} follows from  \eqref{eq:6-1''}, \eqref{eq:6-1'''}.
\end{proof}

\begin{remark}\label{rem:zetazetasmalleras}
For our applications of Lemma~\ref{lem:6-1} we need to generalize its statement for some cases when the crucial conditions $|\partial_u a_i| < 1/2$ in {\rm (v)}, due to which we can apply the implicit function theorem, fail. In Definition~\ref{def:4a-functions} below we introduce inductively the classes of functions for which we need the statement. We need also to accommodate the case when the functions depend on some parameter $\theta$.
\end{remark}

Before we proceed with the definition of the cases mentioned in the previous remark we need the following lemma which solves the inequality $|\chi(x,u)|<\epsilon$ rather than the equation $\chi(x,u)=0$ from Lemma~\ref{lem:6-1}.
\begin{lemma}\label{4:generalquadratic}
Let $a_1 > a_2$ and $b$ be reals. Let $u$ be a solution of the quadratic inequality
\begin{equation}\label{eq:4generalquadraticinequ}
|(u - a_1)(u - a_2) - b^2| < (a_1 - a_2)^2/4.
\end{equation}
Let $\lambda = \lambda(u) := (a_1 - a_2)^{-2}[(u - a_1)(u - a_2) - b^2]$, $\gamma = \gamma(u) := (\sqrt{1 + 4\lambda}-1)/2$. Either
\begin{equation}\label{eq:4ineqdichotomy+}
u \ge \max \bigl\{ a_1 - |\gamma| (a_1 - a_2), (1/2)\bigl[a_1 + a_2 + 2 |b| \bigr] \bigr\} \ge a_2 + (1/2) \bigl(a_1 - a_2 \bigr) + |b|,
\end{equation}
or
\begin{equation}\label{eq:4ineqdichotomy-}
u \le \min \bigl\{ a_2 + |\gamma| (a_1-a_2), (1/2) \bigl[ (a_1 + a_2) - 2 |b| \bigr] \bigr\} \le a_1 - (1/2) \bigl( a_1 - a_2 \bigr) - |b|.
\end{equation}
In any event, $a_2 - |\gamma| (a_1 - a_2) - |b| \le u \le a_1 + |\gamma| (a_1 - a_2) + |b|$.
\end{lemma}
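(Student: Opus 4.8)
The plan is to treat this as an elementary exercise in analyzing a perturbed quadratic, exploiting the fact that the two roots of the unperturbed product $(u-a_1)(u-a_2)$ are separated by exactly $a_1-a_2$. First I would rewrite the hypothesis \eqref{eq:4generalquadraticinequ} in the normalized form $|\lambda(u)| < 1/4$, where $\lambda = \lambda(u) = (a_1-a_2)^{-2}[(u-a_1)(u-a_2)-b^2]$; this is just the definition, and it shows $1+4\lambda > 0$, so $\gamma = \gamma(u) = (\sqrt{1+4\lambda}-1)/2$ is a well-defined real number with $|\gamma| < 1/2$ (indeed $-1/4 < \gamma$ and, since $\sqrt{1+4\lambda} < \sqrt{2} < 3/2$ when $\lambda < 1/4$, one gets $\gamma < 1/4$; the crude bound $|\gamma| \le |\lambda|$ coming from $|\sqrt{1+4\lambda}-1| \le 4|\lambda|/(1+\sqrt{1+4\lambda}) \le 2|\lambda|$ also suffices). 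The identity to keep in mind is that $\lambda$ is chosen precisely so that $(u-a_1)(u-a_2) = b^2 + \lambda(a_1-a_2)^2$.

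Next I would solve the equation $(u-a_1)(u-a_2) = b^2 + \lambda(a_1-a_2)^2$ for $u$ treating $\lambda$ as a fixed parameter: completing the square gives $u = \tfrac12(a_1+a_2) \pm \tfrac12\sqrt{(a_1-a_2)^2 + 4b^2 + 4\lambda(a_1-a_2)^2}$. The discriminant is $(a_1-a_2)^2(1+4\lambda) + 4b^2$, and $(1+4\lambda) = (2\gamma+1)^2$ by the definition of $\gamma$, so $\sqrt{(a_1-a_2)^2(2\gamma+1)^2 + 4b^2}$ lies between $(2\gamma+1)(a_1-a_2)$ and $(2\gamma+1)(a_1-a_2) + 2|b|$ (using $\sqrt{X^2+Y^2} \le |X| + |Y|$ and $\sqrt{X^2+Y^2} \ge |X|$, with $X = (2\gamma+1)(a_1-a_2) > 0$). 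Feeding this into the two choices of sign, the "$+$" root satisfies $u \ge \tfrac12(a_1+a_2) + \tfrac12(2\gamma+1)(a_1-a_2) = a_1 + \gamma(a_1-a_2)$ on one hand, and $u \ge \tfrac12(a_1+a_2) + \tfrac12(2\gamma+1)(a_1-a_2)$, which also dominates $\tfrac12(a_1+a_2+2|b|)$ once we note $(2\gamma+1)(a_1-a_2) \ge$ ... — here one has to be mildly careful about the sign of $\gamma$. This is the one genuinely fiddly point: since $\gamma$ can be negative (down to almost $-1/4$), the clean bound $u \ge a_1 - |\gamma|(a_1-a_2)$ in \eqref{eq:4ineqdichotomy+} comes from $a_1 + \gamma(a_1-a_2) \ge a_1 - |\gamma|(a_1-a_2)$, and the comparison with $\tfrac12[a_1+a_2+2|b|]$ uses the other lower bound for the discriminant, $\sqrt{\cdots} \ge 2|b|$ (so the "$+$" root is $\ge \tfrac12(a_1+a_2) + |b|$) together with $\sqrt{\cdots}\ge (2\gamma+1)(a_1-a_2)$; one then takes the max. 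The final inequality $a_2 + \tfrac12(a_1-a_2) + |b| \le$ that max is just $\tfrac12[a_1+a_2+2|b|] =$ that expression, so it is an equality on the nose. The "$-$" root is handled symmetrically (swap the roles, flip the sign of the square root), giving \eqref{eq:4ineqdichotomy-}.

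Finally, the two-sided bound "in any event" follows by combining: whichever root $u$ is, \eqref{eq:4ineqdichotomy+} gives $u \ge a_2 + \tfrac12(a_1-a_2)+|b| \ge a_2 - |\gamma|(a_1-a_2) - |b|$ trivially, and the upper bound $u \le a_1 + |\gamma|(a_1-a_2)+|b|$ comes from the "$+$" root formula $u = \tfrac12(a_1+a_2) + \tfrac12\sqrt{(2\gamma+1)^2(a_1-a_2)^2 + 4b^2} \le \tfrac12(a_1+a_2) + \tfrac12[(2\gamma+1)(a_1-a_2) + 2|b|] = a_1 + \gamma(a_1-a_2) + |b| \le a_1 + |\gamma|(a_1-a_2)+|b|$; the mirror argument bounds the "$-$" root from below by $a_2 - |\gamma|(a_1-a_2) - |b|$. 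The main (and only real) obstacle is bookkeeping the sign of $\gamma$ so that the "$\max$/$\min$" formulations and the subsequent clean $|\gamma|$-bounds are all consistent; once one observes $1+4\lambda = (2\gamma+1)^2$ and $0 < 2\gamma+1 < 3/2$, everything reduces to the two elementary inequalities $|X| \le \sqrt{X^2+Y^2} \le |X|+|Y|$ applied to the discriminant. I would present the argument for the "$+$" case in full and remark that the "$-$" case is identical with obvious sign changes.
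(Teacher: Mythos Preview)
Your proposal is correct and follows essentially the same route as the paper: rewrite the defining relation as the quadratic $u^2 - (a_1+a_2)u + a_1a_2 - b^2 - \lambda(a_1-a_2)^2 = 0$, solve for $u$ as $\tfrac12(a_1+a_2) \pm \tfrac12\sqrt{(a_1-a_2)^2(1+4\lambda)+4b^2}$, and bound the square root above and below by $|X|$, $|Y|$, and $|X|+|Y|$ with $X=(2\gamma+1)(a_1-a_2)$, $Y=2|b|$. Your explicit identification $1+4\lambda=(2\gamma+1)^2$ and discussion of the sign of $\gamma$ are slightly more detailed than the paper's write-up, but the argument is the same.
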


\begin{proof}
One has
\begin{equation}\label{eq:4a-functions11N}
u^2 - (a_1 + a_2) u + a_1 a_2 - b^2 - \lambda(a_1-a_2)^2 = 0.
\end{equation}
Therefore, $u$ obeys one of the following equations
\begin{align}
u & = \varphi_+(u,\lambda):=(1/2) \left[a_1 + a_2 + \bigl((a_1 - a_2)^2(1+4\lambda) + 4b^2\bigr)^{1/2}\right] , \label{eq:4-2NEWN} \\[6pt]
u & = \varphi_-(u,\lambda):=(1/2) \left[a_1 + a_2 - \bigl((a_1 - a_2)^2(1+4\lambda) + 4b^2\bigr)^{1/2}\right] . \label{eq:4-3NEWN}
\end{align}
Note that \eqref{eq:4generalquadraticinequ} implies, in particular, that $1+4\lambda>0$. One has
\begin{align}
& \max \bigl\{ a_1 + \gamma (a_1 - a_2), (1/2) \bigl[ a_1 + a_2 + 2 |b| \bigr] \bigr\} \le \vp_+ \le a_1 + \gamma (a_1 - a_2) + |b|, \label{eq:6-5NEW} \\[6pt]
& a_2 - \gamma (a_1 - a_2) - |b| \le \vp_- \le \min \bigl\{ a_2 - \gamma (a_1 - a_2), (1/2) \bigl[ (a_1 + a_2) - 2 |b| \bigr] \bigr\}, \label{eq:6-6NEW}
\end{align}
and the statement follows.
\end{proof}

\begin{remark}\label{rem:5.pmlemma}
In the definition below we refer to the cases in the last lemma as the $+$-case and the $-$-case, respectively.
\end{remark}

\begin{defi}\label{def:4a-functions}
$(1)$  Let $g_ 0(x)$ be a $C^2$-function on $(-\alpha_0, \alpha_0)$. Let $0 < \lambda \le 1$. Let $a_1(x,u,\theta)$, $a_2(x, u,\theta)$, $b^2(x, u,\theta)$ be $C^2$-functions which obey the conditions {\rm (i)--(iii)} before Lemma~\ref{lem:6-1}, $\rho_0 < 1/32$ for each fixed $\theta \in \Theta$. Assume in addition that $|u - a_i|, b^2, |\partial_{u,\theta}^{\alpha_1,\alpha_2} a_i|, |\partial_{u,\theta}^{\alpha_1,\alpha_2} b^2| < \lambda/64$ for any $x,u,\theta$ and any  $|(\alpha_1,\alpha_2)| \le 2$. Set
\begin{equation}\label{eq:4a-functions1}
\begin{split}
f(x,u,\theta,1) = u - a_1 - \frac{b^2}{u - a_2}, \quad f(x,u,\theta,2) = u - a_2 - \frac{b^2}{u - a_1}, \quad (x,u) \in \cL_{\IR} \bigl( g, \rho_0 \bigr),\\
\mathfrak{F}^{(1)}_{\mathfrak{g}^\one, \mathfrak{r}^\one, \lambda} (a_1,a_2,b^2) = \{ f(\cdot,j) : j = 1, 2 \}, \quad \mathfrak{g}^\one := g_0, \quad \mathfrak{r}^\one := \rho_0,\quad\quad\quad\quad\\ \mu^{(f(\cdot,1))} = (u-a_2), \quad \mu^{(f(\cdot,2))} = (u-a_1), \quad \chi^{(f(\cdot,i))} = \mu^{(f(\cdot,i))} f(\cdot,i) \quad\quad\quad\quad\quad\\
\tau^{(f(\cdot,i))}(x,u,\theta):= a_1(x,u,\theta) - a_2(x,u,\theta),\quad i=1,2.\quad\quad\quad\quad\quad\quad\quad\quad
\end{split}
\end{equation}
Here, $f(\cdot,1)$ is defined if $u - a_2(x,u,\theta) \neq 0$, and $f(\cdot,2)$ if $u - a_1(x,u,\theta) \neq 0$. Set $\mathfrak{F}^{(1)}_{\mathfrak{g}^\one, \mathfrak{r}^\one, \lambda} = \bigcup_{a_1,a_2,b^2} \mathfrak{F}^{(1)}_{\mathfrak{g}^\one, \mathfrak{r}^\one, \lambda} (a_1,a_2,b^2)$. With some abuse of notation we will write $f \in \mathfrak{F}^{(1)}_{\mathfrak{g}^\one, \mathfrak{r}^\one, \lambda} (f_1, f_2, b^2)$ for $f \in \mathfrak{F}^{(1)}_{\mathfrak{g}^\one, \mathfrak{r}^\one, \lambda} (a_1, a_2, b^2)$ with $f_i := u - a_i$, $i = 1, 2$.
Note here that we do not need $g$ to be dependent on $\theta$. Furthermore, our ``main'' variables are $x,u$, and we view $\theta$ as a parameter. Below we will sometimes drop $\theta$ from the notation. Requirements on $\theta$-dependent quantities are then implicitly assumed to hold for all $\theta$.

$(2)$ Let $g_ 0(x)$, $g_1(x)$ be $C^2$-functions on $(-\alpha_0, \alpha_0)$ and $0 < \rho_1 < \rho_0$. Assume that $\cL_{\IR} \bigl( g_{0}, \rho_{0} \bigr) \supset \cL_{\IR} \bigl(g_{1}, \rho_{1}\bigr)$. Assume also that $g_0(0) = g_1(0)$. Set $\mathfrak{g}^{(2)} = (g_0,g_{1})$, $\mathfrak{r}^{(2)} = (\rho_0,\rho_{1})$. Let $f_i \in \mathfrak{F}^{(1)}_{\mathfrak{g}^{(1)}, \mathfrak{r}^\one, \lambda} (a_{i,1}, a_{i,2}, b^2_i)$, $i = 1, 2$. Let $b(\cdot,\theta)$ be $C^2$-smooth in $\cL_{\IR} \bigl(g_{0}, \rho_{0} \bigr)$. Assume that the following conditions hold: $(a)$ $\chi^{(f_1)}<\chi^{(f_2)}$ for all $(x,u) \in \cL_{\IR} \bigl(g_{1}, \rho_{1}\bigr)$, $(b)$ $|f_i| < (\min_j \lambda \tau^{(f_j)})^{10}$ for all $(x,u) \in \cL_{\IR} \bigl( g_{1}, \rho_{1} \bigr)$, $(c)$ the inequality $|(u - a_{i,1})(u-a_{i,2})-b_i^2|<(a_{i,1}-a_{i,2})^2/4$, which holds for all $x,u$ due to condition $(b)$, is either in the $+$-case
for all $(x,u) \in \cL_{\IR} \bigl( g_{1}, \rho_{1} \bigr)$, $i = 1, 2$, or in the $-$-case for all $(x,u) \in \cL_{\IR} \bigl( g_{1}, \rho_{1} \bigr)$, $i = 1, 2$; furthermore, $f_i = (u - a_{i,1}) - b^2_i (u - a_{i,2})^{-1}$ in the $+$ case, respectively, $f_i = (u - a_{i,2}) - b^2_i (u - a_{i,1})^{-1}$
for all $(x,u) \in \cL_{\IR} \bigl( g_{1}, \rho_{1} \bigr)$, $i = 1, 2$, in the $-$-case, $(d)$ $|b| < (\min_j \lambda \tau^{(f_j)})^{10}$,
$|\partial_u b^2|,|\partial_\theta b^2| < (\min_j \lambda \tau^{(f_j)})^{10} |b|$, $|\partial^2_u b^2|,|\partial^2_\theta b^2| < (\min_j \lambda \tau^{(f_j)})^{10}$ for any $(x,u) \in \cL_{\IR} \bigl( g_{1}, \rho_{1} \bigr)$ and any $\theta$, $(e)$ $f_i (0,u) = u - g_1(0)$, $b(0,u) = 0$ for any $u$, $i = 1, 2$. Set
\begin{equation}\label{eq:4a-functionsN2}
\begin{split}
f(x,u,\theta,1) = f_1 - \frac{b^2}{f_2},\quad f(x,u,\theta,2) = f_2 - \frac{b^2}{f_1}, \quad\quad\quad \quad\quad\quad\quad\\
\mathfrak{F}^{(2)}_{\mathfrak{g}^{(2)},\mathfrak{r}^{(2)},\lambda} (f_1,f_2,b^2) = \{f(\cdot,j) : j = 1, 2\},\quad\quad\quad\quad\quad\quad\quad\quad\\
\mu^{(f(\cdot,1))}=\mu^{(f_1)}\mu^{(f_2)}f_2,\quad\mu^{(f(\cdot,2))}=\mu^{(f_2)}\mu^{(f_1)}f_1,\quad
\chi^{(f(\cdot,i))}=\mu^{(f(\cdot,i))}f(\cdot,i),\quad\quad\quad\quad\\
\tau^{(f(\cdot,i))}(x,u,\theta):= \chi^{(f_2)} - \chi^{(f_1)},\quad i=1,2.\quad\quad\quad\quad\quad\quad\quad\quad
\end{split}
\end{equation}
Let $f \in \mathfrak{F}^{(2)}_{\mathfrak{g}^{(2)}, \mathfrak{r}^{(2)}, \lambda}(f_1,f_2,b^2)$. We say that $f \in \mathfrak{F}^{(2,\pm)}_{\mathfrak{g}^{(2)}, \mathfrak{r}^{(2)}, \lambda}(f_1,f_2,b^2)$, according to the dichotomy in $(c)$. Set $\mathfrak{F}^{(2,\pm)}_{\mathfrak{g}^{(2)}, \mathfrak{r}^{(2)}, \lambda} = \bigcup_{f_1,f_2,b^2} \mathfrak{F}^{(2,\pm)}_{\mathfrak{g}^{(2)}, \mathfrak{r}^{(2)}, \lambda} (f_1,f_2,b^2)$, $\mathfrak{F}^{(2)}_{\mathfrak{g}^{(2)}, \mathfrak{r}^{(2)}, \lambda} = \mathfrak{F}^{(2,+)}_{\mathfrak{g}^{(2)}, \mathfrak{r}^{(2)}, \lambda} \cup \mathfrak{F}^{(2,-)}_{\mathfrak{g}^{(2)}, \mathfrak{r}^{(2)}, \lambda}$, $\sigma(f) = \pm 1$ if $f \in \mathfrak{F}^{(2,\pm)}_{\mathfrak{g}^{(2)}, \mathfrak{r}^{(2)}, \lambda}(f_1,f_2,b^2)$.  We introduce also the following sequence $\hat \sigma(f):=(\sigma(f))$, consisting just of one term.

$(3)$ We define the classes of functions $\mathfrak{F}^{(\ell)}_{\mathfrak{g}^{(\ell)},\lambda}$ inductively. Assume that $\mathfrak{F}^{(t)}_{\mathfrak{g}^{(t)}}$ are already defined for $t = 1, \dots, \ell-1$, where $\ell \ge 3$. Let $g_t(x)$ be a $C^2$-function on $(-\alpha_0, \alpha_0)$, $0< \rho_{t+1}  < \rho_{t} < 1$, $t = 0, \dots, \ell-1$. Assume that $\cL_{\IR}\bigl(g_{\ell-2}, \rho_{\ell-2}\bigr) \supset \cL_{\IR}\bigl(g_{\ell-1}, \rho_{\ell-1}\bigr)$. Set $\mathfrak{g}^{(t)} = (g_0, \dots, g_{t-1})$, $\mathfrak{r}^{(t)} = (\rho_0,\dots,\rho_{t-1})$. Let $f_i \in \mathfrak{F}^{(\ell-1)}_{\mathfrak{g}^{(\ell-1)},\mathfrak{r}^{(\ell-1)}, \lambda} (f_{i,1},f_{i,2},b^2_i)$,  $i=1,2$. Assume that the following conditions hold: $(a)$ $\chi^{(f_1)} < \chi^{(f_2)}$, for all $(x,u) \in \cL_{\IR}\bigl(g_{\ell-1}, \rho_{\ell-1}\bigr)$, $(b)$ $|f_i| < (\min_j \lambda \tau^{(f_j)})^{10}$ for all $(x,u) \in \cL_{\IR}\bigl(g_{\ell-1}, \rho_{\ell-1}\bigr)$, $(c)$ with $\chi^{(f_{i,j})} := u-a_{i,j}$, the inequality $|(u - a_{i,1})(u-a_{i,2})-\mu^{(f_i)}b_i^2|<(a_{i,1}-a_{i,2})^2/4$, which holds for all $x,u$ due to condition $(b)$, see the verification in \eqref{eq:4chipmderivation}, is either in the $+$-case for all $(x,u) \in \cL_{\IR}\bigl(g_{\ell-1}, \rho_{\ell-1}\bigr)$, $i = 1, 2$, or in the $-$-case for all $(x,u) \in \cL_{\IR}\bigl(g_{\ell-1}, \rho_{\ell-1}\bigr)$, $i = 1, 2$; furthermore, $f_i = f_{i,1} - b^2_i  f_{i,2}^{-1}$ in the $+$ case, respectively, $f_i = f_{i,2} - b^2_i a_{i,1}^{-1}$ for all $(x,u) \in \cL_{\IR}\bigl(g_{\ell-1}, \rho_{\ell-1}\bigr)$, $i = 1, 2$, in the $-$-case. $(d)$ $|b| < (\lambda \min_j \tau^{(f_{j})})^{10}$, $|\partial_u b^2|,|\partial_\theta b^2| < (\lambda \min_j \tau^{(f_{j})})^{10} |b|$, $|\partial^2_u b^2|, |\partial^2_\theta b^2| < (\lambda \min_j \tau^{(f_{j})})^{10}$, $(e)$ $\hat\sigma(f_1) = \hat\sigma(f_2)$. Here $\tau^{(f)}$, $\sigma(f)$ and $\hat \sigma(f)$ $)$ are defined inductively see part $(4)$ below. Set
\begin{equation}\label{eq:4a-functionsN2-2}
\begin{split}
f(x,u,\theta,1) = f_1 - \frac{b^2}{f_2},\quad f(x,u,\theta,2) = f_2 - \frac{b^2}{f_1}, \\
\mathfrak{F}^{(\ell)}_{\mathfrak{g}^{(\ell)},\mathfrak{r}^{(\ell)},\lambda} (f_1,f_2,b^2) = \{f(\cdot,j) : j = 1, 2\}.
\end{split}
\end{equation}
We say that $f \in \mathfrak{F}^{(\ell,\pm)}_{\mathfrak{g}^{(\ell)},\mathfrak{r}^{(\ell)}, \lambda} (f_1,f_2,b^2)$, according to the dichotomy in
$(c)$. Set $\mathfrak{F}^{(\ell,\pm)}_{\mathfrak{g}^{(\ell)}, \mathfrak{r}^{(\ell)}, \lambda} = \bigcup_{f_1,f_2,b^2} \mathfrak{F}^{(\ell,\pm)}_{\mathfrak{g}^{(\ell)}, \mathfrak{r}^{(\ell)}, \lambda} (f_1,f_2,b^2)$, $\mathfrak{F}^{(\ell)}_{\mathfrak{g}^{(\ell)}, \mathfrak{r}^{(\ell)}, \lambda} (f_1,f_2,b^2) = \mathfrak{F}^{(\ell,+)}_{\mathfrak{g}^{(\ell)}, \mathfrak{r}^{(\ell)}, \lambda} (f_1,f_2,b^2) \cup \mathfrak{F}^{(\ell,-)}_{\mathfrak{g}^{(\ell)}, \lambda} (f_1,f_2,b^2)$, $\mathfrak{F}^{(\ell)}_{\mathfrak{g}^{(\ell)}, \mathfrak{r}^{(\ell)}, \lambda} = \mathfrak{F}^{(\ell,+)}_{\mathfrak{g}^{(\ell)}, \mathfrak{r}^{(\ell)}, \lambda} \cup \mathfrak{F}^{(\ell,-)}_{\mathfrak{g}^{(\ell)}, \mathfrak{r}^{(\ell)}, \lambda}$.

$(4)$ Let $f \in \mathfrak{F}^{(1)}_{\mathfrak{g}^\one} (a_1,a_2,b^2)$. With $f_i := u - a_i$, we introduce for convenience $\chi^{(f_i)} := f_i$, $\mu^{(f_i)} := 1$, $\tau^{(f_i)} := 1$, $\sigma(f_i) := 1$, $i = 1, 2$.

Let $f \in \mathfrak{F}^{(\ell,\pm)}_{\mathfrak{g}^{(\ell)},\mathfrak{r}^{(\ell)},\lambda} (f_1,f_2,b^2)$. Set
\begin{equation}\label{eq:4a-bfunctions1a}
\begin{split}
\mu^{(f)} = \begin{cases} \mu^{(f_1)}\mu^{(f_2)}f_2 & \text {if} \quad f = f_1 - \frac{b^2}{f_2}, \\ \mu^{(f_1)} \mu^{(f_2)} f_1 & \text {if} \quad f = f_2 - \frac{b^2}{f_1}, \end{cases} \\
\chi^{(f)} = \mu^{(f)} f,\quad\quad\quad\quad\quad\quad\quad\quad\quad\quad\\
\tau^{(f)} = (\chi^{(f_2)}-\chi^{(f_1)}) \tau^{(f_1)}\tau^{(f_2)},\quad\quad\quad\quad\quad\\
 \sigma(f) = \pm \sigma(f_{1}) = \pm \sigma(f_{2}) \text { according to
 $f \in \mathfrak{F}^{(\ell,\pm)}_{\mathfrak{g}^{(\ell)},\mathfrak{r}^{(\ell)},\lambda} (f_1,f_2,b^2)$}.
\end{split}
\end{equation}
The sequence $\hat \sigma(f)$ is defined just by attaching $\sigma(f)$ to $\hat \sigma(f_i)$ from the left, that is, $\hat \sigma (f) = (\sigma(f),\hat \sigma (f_i))$. Due to condition $(e)$ in the definition in part $(3)$, the result does not depend on $i=1,2$.
\end{defi}

\begin{remark}\label{rem:4.1smoothnesfi}
$(1)$ The parameter $\lambda$ is introduced in the definitions above only for the sake of stability under small perturbations which we establish in Lemma~\ref{lem:4stable} at the very end of this section. Clearly, $\mathfrak{F}^{(\ell,\pm)}_{\mathfrak{g}^{(\ell)}, \lambda} \subset \mathfrak{F}^{(\ell,\pm)}_{\mathfrak{g}^{(\ell)},1}$. Everywhere in this section, with the exception of Lemma~\ref{lem:4stable}, we always assume $\lambda = 1$ and we suppress $\lambda$ from the notation.

We remark here also that the quantities $0 < \rho_{t}$, $t = 0, \dots, \ell-1$ do not enter any inequalities in Definition~\ref{def:4a-functions}. Let $0 < \rho_{t,1} \le \rho_t$, $t = 0, \dots, \ell-1$ be such that $\cL_{\IR} \bigl(g_{\ell-2}, \rho_{\ell-2,1} \bigr) \supset \cL_{\IR} \bigl(g_{\ell-1}, \rho_{\ell-1,1} \bigr)$. If $f \in \mathfrak{F}^{(\ell,\pm)}_{\mathfrak{g}^{(\ell)}, \mathfrak{r}^{(\ell)}, \lambda}$, then also $f \in \mathfrak{F}^{(\ell,\pm)}_{\mathfrak{g}^{(\ell)}, \mathfrak{r}^{(\ell,1)}, \lambda}$, where  $\mathfrak{r}^{(t,1)} = (\rho_{0,1}, \dots, \rho_{t-1,1})$. We suppress $\mathfrak{r}^{(\ell)}$ from the notation, everywhere except Lemma~\ref{lem:6-1ell}. We will use it later on, starting from Section~\ref{sec.6}.

$(2)$ Let $f \in \mathfrak{F}^{(\ell)}_{\mathfrak{g}^{(\ell)}} (f_1,f_2,b^2)$. We remark here that Definition~\ref{def:4a-functions} implies in particular that $f_i$ is a $C^2$-smooth function in $\cL_{\IR}\bigl(g_{\ell-1}, \rho_{\ell-1} \bigr)\times \Theta$.

$(3)$ Once again, note we do not need $g_i$ to be dependent on $\theta$ and, to simplify notations, we suppress $\theta$ wherever it does not cause ambiguity.
\end{remark}

\begin{lemma}\label{4.fcontinuedfrac}
Suppose $f \in \mathfrak{F}^{(\ell)}_{\mathfrak{g}^{(\ell)}} (f_1,f_2,b^2)$. Then, the following statements hold:

$(1)$  $\max_j |f_j|, |\tau^{(f)}|, |\mu^{(f)}|, |\chi^{(f)}| < 2^{-2^{2\ell}}$ for all $(x,u) \in \cL_{\IR} \bigl( g_{\ell-1}, \rho_{\ell-1} \bigr)$. Furthermore, $|\chi^{(f_i)}| < (\min_j |\tau^{(f_j)}|)^{10}$ for all $(x,u,\theta) \in \cL_{\IR} \bigl( g_{\ell-1}, \rho_{\ell-1} \bigr) \times \Theta$.

$(2)$ The functions $\mu^{(f)}$, $\chi^{(f)}$ are $C^2$-smooth, $|\partial^\alpha \mu^{(f)}|, |\partial^\alpha \chi^{(f)}| < 2^{-2^{2(\ell-1)} + 3}$, $|\alpha| \le 2$.

$(3)$ Let $\ell \ge 2$. Either $f_i \in \mathfrak{F}^{(\ell-1,+)}_{\mathfrak{g}^{(\ell-1)}} (f_{i,1}, f_{i,2}, b_i^2)$, $i = 1,2$, or $f_i \in \mathfrak{F}^{(\ell-1,-)}_{\mathfrak{g}^{(\ell-1)}} (f_{i,1}, f_{i,2}, b_i^2)$, $i = 1,2$. In the first case, $\chi^{(f_{i,1})}>-( \min_j \tau^{(f_{j})})^8(\chi^{(f_{i,2})} - \chi^{(f_{i,1})})$, $\chi^{(f_{i,2})}\ge (1/2)(\chi^{(f_{i,2})} - \chi^{(f_{i,1})})+(\prod_j\mu^{(f_{i,j})})^{1/2}|b_i|$
for all $(x,u) \in \cL_{\IR} \bigl( g_{\ell-1}, \rho_{\ell-1} \bigr)$, $i = 1,2$. In the second case, $\chi^{(f_{i,2})}<( \min_j \tau^{(f_{j})})^8(\chi^{(f_{i,2})} - \chi^{(f_{i,1})})$, $\chi^{(f_{i,1})}\le -(1/2)(\chi^{(f_{i,2})} - \chi^{(f_{i,1})})-(\prod_j\mu^{(f_{i,j})})^{1/2}|b_i|$
for all $(x,u) \in \cL_{\IR} \bigl( g_{\ell-1}, \rho_{\ell-1} \bigr)$, $i = 1,2$.

$(4)$ Let $\ell \ge 2$ and $f_i \in \mathfrak{F}^{(\ell-1)}_{\mathfrak{g}^{(\ell-1)}} (f_{i,1}, f_{i,2}, b_i^2)$. Then  $\sigma(f_{i,j}) = \sigma(f_{i',j'})$, for any $i, j, i', j'$.

$(5)$ $\sigma(f_{i}) \partial_u \chi^{(f_{i})} > (\tau^{(f_{{i}})})^2$ , $i = 1, 2$.

$(6)$ Assume $\chi^{(f)}(x_0,u_0) = 0$. Then, $\sgn f_1(x_0,u_0) \partial_u \chi^{(f)}|_{x_0,u_0} > (\tau^{(f)})^2|_{x_0,u_0}$.

$(7)$ $\partial^2_u \chi^{(f)} > (1/2) (\min_{i} \tau^{(f_{i})})^4$ for all $(x,u) \in \cL_{\IR} \bigl( g_{\ell-1}, \rho_{\ell-1} \bigr)$.

$(8)$ Let $f \in \mathfrak{F}^{(1)} (a_1,a_2,b^2)$. Assume that the following condition $(k^\zero)$ holds:

$(k^\zero)$ $\partial_\theta a_1 > k^\zero$, $\partial_\theta a_2 < -k^\zero$, $|f_i| < (k^\zero)^2/8$, $|\partial^2_\theta b^2| < (k^\zero)^2/8$, where $k^\zero > 0$ is a constant.

Then, $\partial^2_\theta \chi^{(f)} < -(k^\zero)^2 $. Furthermore, assume in addition that $\chi^{(f)}(x,u,\theta) = \chi^{(f)}(x,u,-\theta)$, $\theta \in \Theta = (-\theta_0,0) \cup (0,\theta_0)$. Then,
\begin{equation}\label{eq:4a-thetaest}
\partial_\theta \chi^{(f)} < -(k^\zero)^2 \theta \quad \text{if $\theta > 0$}, \quad \partial_\theta \chi^{(f)} > -(k^\zero)^2 \theta \quad \text{if $\theta < 0$}.
\end{equation}

$(9)$ Let $\ell \ge 2$. Assume that $|\partial_\theta \chi^{(f_{i})}| > (\tau^{(f_{i})})^4$, $i=1,2$,
and $\sgn (\partial_\theta \chi^{(f_{1})})=-\sgn (\partial_\theta \chi^{(f_{2})})$. Then, $\partial^2_\theta \chi^{(f)} \le - (\min_{j} \tau^{(f_{j})})^8$. Furthermore, assume in addition that $\chi^{(f)}(x,u,\theta) = \chi^{(f)}(x,u,-\theta)$, $\theta \in \Theta = (-\theta_0,0)\cup (0,\theta_0)$. Then,
\begin{equation}\label{eq:4a-thetaest1}
\partial_\theta \chi^{(f)} < -(\min_{j} \tau^{(f_{j})})^8 \theta \quad \text{if $\theta > 0$}, \quad \partial_\theta \chi^{(f)} >- (\min_{j} \tau^{(f_{j})})^8 \theta \quad \text{if $\theta < 0$}.
\end{equation}
\end{lemma}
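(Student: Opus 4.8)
The plan is to prove all nine assertions simultaneously by induction on $\ell$: the case $\ell=1$ is a direct computation, and the passage from level $\ell-1$ to level $\ell$ rests on a single structural identity. Writing, say, $f=f_1-b^2/f_2$ and unwinding the definitions of $\mu^{(f)}$ and $\chi^{(f)}$ in \eqref{eq:4a-bfunctions1a} gives the Schur-type recursion
$$
\chi^{(f)}=\chi^{(f_1)}\chi^{(f_2)}-\mu^{(f_1)}\mu^{(f_2)}b^2 ,
$$
which at $\ell=1$ degenerates to $\chi^{(f)}=(u-a_1)(u-a_2)-b^2$ (the form $f=f_2-b^2/f_1$ yields the same identity). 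All control of $\chi^{(f)}$ — its size, its derivatives in $u$ and in $\theta$, and its behaviour near its zeros — is read off from this identity together with the inductive bounds on $\chi^{(f_i)},\mu^{(f_i)},\tau^{(f_i)}$ and the smallness hard-wired into conditions $(b)$, $(d)$ of Definition~\ref{def:4a-functions}. I also carry along the induction the sign invariant $\sgn\mu^{(f)}=\sigma(f)$; together with condition $(e)$, which forces $\sigma(f_1)=\sigma(f_2)$, it yields $\mu^{(f_1)}\mu^{(f_2)}\ge 0$, so that $\mu^{(f_1)}\mu^{(f_2)}b^2$ enters with a fixed sign — this is where self-adjointness, $b^2\ge0$, is used.

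For $\ell=1$, $(1)$ and $(2)$ are immediate from the bounds $|u-a_i|,\,b^2,\,|\partial^\alpha a_i|,\,|\partial^\alpha b^2|<\lambda/64$ in Definition~\ref{def:4a-functions}$(1)$; $(3)$ and $(4)$ are vacuous; $(6)$ and $(7)$ come from the explicit formulas for $\partial_u\chi^{(f)}$ and $\partial_u^2\chi^{(f)}$ using $|\partial_u a_i|<1/2$, $|\partial_u b^2|<|b|/4$; and $(8)$ is the computation $\partial_\theta^2\chi^{(f)}=-(\partial_\theta^2a_1)(u-a_2)+2(\partial_\theta a_1)(\partial_\theta a_2)-(u-a_1)(\partial_\theta^2a_2)-\partial_\theta^2b^2$, in which the middle term is $<-2(k^\zero)^2$ under $(k^\zero)$ while the other three are $<(k^\zero)^2$ in total by the hypotheses $|f_i|<(k^\zero)^2/8$, $|\partial_\theta^2b^2|<(k^\zero)^2/8$. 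In the inductive step, $(1)$ comes from estimating $|\chi^{(f_1)}\chi^{(f_2)}|$ and $|\mu^{(f_1)}\mu^{(f_2)}b^2|$ against $(\min_j\tau^{(f_j)})^{10}$ and $|\tau^{(f_i)}|,|\mu^{(f_i)}|<2^{-2^{2(\ell-1)}}$; the exponent $10$ in $(b)$, $(d)$ is precisely what makes the resulting bound on $|\tau^{(f)}|$ fit under $2^{-2^{2\ell}}$, and the other chains close the same way. $(2)$ is Leibniz applied to the recursion, every summand being a product of an inductively bounded derivative of $\chi^{(f_i)}$ or $\mu^{(f_i)}$ with a factor that is small by $(1)$ or a controlled derivative of $b^2$ by $(d)$. $(4)$ is sign-sequence bookkeeping: condition $(e)$ at levels $\ell$ and $\ell-1$ forces $\hat\sigma(f_{1,1})=\hat\sigma(f_{1,2})=\hat\sigma(f_{2,1})=\hat\sigma(f_{2,2})$, hence all $\sigma(f_{i,j})$ agree. $(3)$ is essentially a restatement: the dichotomy is condition $(e)$, and the inequalities for $\chi^{(f_{i,1})},\chi^{(f_{i,2})}$ follow by feeding the quadratic inequality for $f_i$ (condition $(c)$ one level down, whose validity is the computation referenced at \eqref{eq:4chipmderivation}) into Lemma~\ref{4:generalquadratic}, using $(1)$ to see that the parameter $\gamma$ there satisfies $|\gamma|<(\min_j\tau^{(f_j)})^{8}$.

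Statements $(5)$, $(6)$, $(7)$ come from differentiating the recursion in $u$. Since $\sigma(f_1)=\sigma(f_2)=:\sigma$ and, by $(5)$ one level down together with $(3)$, $\partial_u\chi^{(f_i)}$ has sign $\sigma$ with $|\partial_u\chi^{(f_i)}|>(\tau^{(f_i)})^2$, the cross term $2\,\partial_u\chi^{(f_1)}\partial_u\chi^{(f_2)}$ in $\partial_u^2\chi^{(f)}$ is positive and exceeds $2(\min_i\tau^{(f_i)})^4$, dominating the other summands (each carrying a factor $b^2$ or a factor $\chi^{(f_i)}<(\min\tau)^{10}$); this is $(7)$. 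Statement $(5)$ at level $\ell$ is the same computation for $\chi^{(f_i)}$ itself: in the $+$-case, $(3)$ gives $\chi^{(f_{i,2})}\ge\tfrac12(\chi^{(f_{i,2})}-\chi^{(f_{i,1})})>0$ and $\chi^{(f_{i,1})}>-(\min_j\tau^{(f_j)})^{8}(\chi^{(f_{i,2})}-\chi^{(f_{i,1})})$, so $\partial_u\chi^{(f_i)}=\partial_u\chi^{(f_{i,1})}\chi^{(f_{i,2})}+\chi^{(f_{i,1})}\partial_u\chi^{(f_{i,2})}-(\text{small})$ has sign $\sigma$ and magnitude $\gtrsim(\tau^{(f_{i,1})})^2(\chi^{(f_{i,2})}-\chi^{(f_{i,1})})>(\tau^{(f_i)})^2$ (the base case $\ell=2$ being handled directly from the quadratic $\chi^{(f_i)}=(u-a_{i,1})(u-a_{i,2})-b_i^2$). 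For $(6)$, at a zero of $\chi^{(f)}$ the recursion forces $\chi^{(f_1)}\chi^{(f_2)}=\mu^{(f_1)}\mu^{(f_2)}b^2\ge0$, so $\chi^{(f_1)}$ and $\chi^{(f_2)}$ share a sign there, which with $\sgn\mu^{(f_1)}=\sigma$ fixes $\sgn f_1$; then $\partial_u\chi^{(f)}=\partial_u\chi^{(f_1)}\chi^{(f_2)}+\chi^{(f_1)}\partial_u\chi^{(f_2)}-(\text{small})$ has sign $\sgn f_1$ and magnitude $>(\tau^{(f)})^2$, using $(3)$ to bound $\chi^{(f_2)}-\chi^{(f_1)}$ below by a controlled multiple of $|\chi^{(f_1)}|+|\chi^{(f_2)}|$. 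Statement $(9)$ is the identical mechanism with $\partial_\theta$ for $\partial_u$: in $\partial_\theta^2\chi^{(f)}$ the cross term $2\,\partial_\theta\chi^{(f_1)}\partial_\theta\chi^{(f_2)}$ is negative, because by hypothesis $\partial_\theta\chi^{(f_1)}$ and $\partial_\theta\chi^{(f_2)}$ have opposite signs with $|\partial_\theta\chi^{(f_i)}|>(\tau^{(f_i)})^4$, and dominates, giving $\partial_\theta^2\chi^{(f)}\le-(\min_j\tau^{(f_j)})^8$. Finally, whenever $\chi^{(f)}$ is even in $\theta$, $\partial_\theta\chi^{(f)}$ is odd and so vanishes at $\theta=0$; integrating the bound on $\partial_\theta^2\chi^{(f)}$ from $0$ to $\theta$ yields the one-sided estimates \eqref{eq:4a-thetaest}, \eqref{eq:4a-thetaest1}.

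The substance is entirely in the bookkeeping: at every stage one must check that each Leibniz summand carrying a factor $b^2$, a factor $\partial^\alpha b^2$, or an off-diagonal $\chi^{(f_i)}$ is negligible against the one surviving main term, which forces the powers $10,8,4,2$ in conditions $(b)$--$(d)$ to be balanced against the exponents $2^{2\ell}$, $2^{2(\ell-1)}$ in $(1)$--$(2)$. The genuinely delicate point, as opposed to the merely tedious estimates, will be the propagation of signs: $\sigma(f_1)=\sigma(f_2)$ and the invariant $\sgn\mu^{(f)}=\sigma(f)$ must be kept exact through the induction, so that both $\mu^{(f_1)}\mu^{(f_2)}b^2\ge0$ and the alignment of $\partial_u\chi^{(f_1)}$ with $\partial_u\chi^{(f_2)}$ persist at each step — this is the structural reason the two edges $\zeta_\pm$ of Lemma~\ref{lem:6-1} stay ordered, and exactly where the self-adjoint origin of the $b^2$-term is used. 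One must also watch the degenerate locus $x=0$, where $b\equiv0$ and $\tau^{(f)}$ vanishes together with $\chi^{(f)}$, and where the inequalities in $(5)$--$(7)$ degenerate to equalities.
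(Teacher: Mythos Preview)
Your proposal is correct and takes essentially the same approach as the paper: induction on $\ell$ via the recursion $\chi^{(f)}=\chi^{(f_1)}\chi^{(f_2)}-\mu^{(f_1)}\mu^{(f_2)}b^2$, with Lemma~\ref{4:generalquadratic} supplying the sign structure in $(3)$ and $(6)$, and Leibniz estimates for $(2)$, $(5)$, $(7)$--$(9)$. Your explicit sign invariant $\sgn\mu^{(f_i)}=\sigma(f_i)$ is a clean way to package what the paper leaves implicit (it is exactly what is needed to justify the square root in the paper's own proof of $(6)$); your closing caution that $\tau^{(f)}$ vanishes at $x=0$ is not correct---condition~$(a)$ keeps $\chi^{(f_2)}-\chi^{(f_1)}>0$ on the whole domain---but this does not affect the argument.
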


\begin{proof}
$(1)$ Let us first consider the case $\ell = 1$ and assume $f \in \mathfrak{F}^{(1)}_{\mathfrak{g}^\one} (a_1, a_2, b^2)$. It follows from Definition~\ref{def:4a-functions} that $|f_i| = |u-a_i| < 1/16 = 2^{-2^{2}}$, $|\tau^{(f)}|, |\mu^{(f)}|, |\chi^{(f)}| \le 2^{-2^{2}}$ for any $(x,u) \in \cL_{\IR} \bigl( g_{0}, \rho_{0} \bigr)$. Recall that by convention $\chi^{(f_i)} := f_i$, $\tau^{(f_i)} := 1$. Therefore,
$|\chi^{(f_i)}| < (\min_j |\tau^{(f_j)}|)^{10}$ obviously holds. Using the notation from Definition~\ref{def:4a-functions}, assume now that $\ell \ge 2$ and $f \in \mathfrak{F}^{(\ell)}_{\mathfrak{g}^{(\ell)}} (f_1, f_2, b^2)$, $f_i \in \mathfrak{F}^{(\ell-1)}_{\mathfrak{g}^{(\ell-1)}}$, $i = 1, 2$. Then, $|f_i|, |b| < (\min_j\tau^{(f_{j})})^{10}$, $\tau^{(f)} \le 2(\max_j |f_j|) (\max_j \tau^{(f_{j})})^2$, $|\mu^{(f)}| \le (\max_j |f_j|) (\max_j |\mu^{(f_{j})}|)^2$, $|\chi^{(f)}| \le (\max_j |\chi^{(f_{j})}|)^2 + |b|^2  (\max_j |\mu^{(f_{j})}|)^2$, $|\chi^{(f_i)}| = |f_i| |\mu^{(f_{i})}| \le (\min_j\tau^{(f_{j})})^{10} |\mu^{(f_{i})}|$. Using induction one obtains the claim.

$(2)$ For $\ell = 1$, the statement follows from \eqref{eq:4a-functions1}
 and the conditions $|\partial^\alpha a_i|, |\partial^\alpha b^2| < 1/64$ for any $x,u$ and any $1 \le \alpha \le 2$ in Definition~\ref{def:4a-functions}. Using induction over $\ell = 1, \dots$ and
\eqref{eq:4a-bfunctions1a}, one proves the claim for any $\ell$.

$(3)$ Due to Definition~\ref{def:4a-functions}, $\hat \sigma(f_1) = \hat \sigma(f_2)$. Due to the definition of the sequences $\hat \sigma(\cdot)$
this implies that either $f_i \in \mathfrak{F}^{(\ell-1,+)}_{\mathfrak{g}^{(\ell-1)}} (f_{i,1}, f_{i,2}, b_i^2)$, $i = 1, 2$, or $f_i \in \mathfrak{F}^{(\ell-1,-)}_{\mathfrak{g}^{(\ell-1)}} (f_{i,1}, f_{i,2}, b_i^2)$, $i = 1, 2$. Assume $\ell\ge 2$, $f_i \in \mathfrak{F}^{(\ell-1,+)}_{\mathfrak{g}^{(\ell-1)}} (f_{i,1}, f_{i,2}, b^2_i)$, $i = 1, 2$. Recall that due to condition $(b)$ $|f_i| < (\min_j \tau^{(f_{j})})^{10} < (\chi^{(f_{i,2})} - \chi^{(f_{i,1})})^2/4$ for all $(x,u) \in \cL_{\IR} \bigl( g_{\ell-1}, \rho_{\ell-1} \bigr)$, $i = 1, 2$. As in Definition~\ref{def:4a-functions} set $a_{i,j} = u - \chi^{(f_{i,j})}$. Since $|\mu^{(f_{i,1})}||\mu^{(f_{i,2})}||f_{i,2}| < 1$, one has
\begin{equation}\label{eq:4chipmderivation}
\begin{split}
(a_{i,1}-a_{i,2})^2/4=
(\chi^{(f_{i,2})} - \chi^{(f_{i,1})})^2/4>|\mu^{(f_{i,1})}||\mu^{(f_{i,2})}||f_{i,2}|(\chi^{(f_{i,2})} - \chi^{(f_{i,1})})^2/4>\\
|\mu^{(f_{i,1})}||\mu^{(f_{i,2})}||f_{i,2}||f_i|=|\chi^{(f_{i,1})}\chi^{(f_{i,2})}-\prod_j\mu^{(f_{i,j})} b_i^2|=|(u - a_{i,1})(u-a_{i,2})-\prod_j\mu^{(f_{i,j})}b_i^2|\\
\end{split}
\end{equation}
Due to Definition~\ref{def:4a-functions} we are in the $+$-case in Lemma~\ref{4:generalquadratic}. So, \eqref{eq:4ineqdichotomy+} applies. In particular, \eqref{eq:4ineqdichotomy+} implies $\chi^{(f_{i,1})}= u - a_{i,1} \ge -|\gamma|(a_{i,1} - a_{i,2}) = -|\gamma|(\chi^{(f_{i,2})} - \chi^{(f_{i,1})})$,
$\chi^{(f_{i,2})} = u - a_{i,2} \ge (1/2)(a_{i,1} - a_{i,2})+(\prod_j\mu^{(f_{i,j})})^{1/2}|b_i|= (1/2)(\chi^{(f_{i,2})} - \chi^{(f_{i,1})})+(\prod_j\mu^{(f_{i,j})})^{1/2}|b_i|$ for all $(x,u) \in \cL_{\IR} \bigl( g_{\ell}, \rho_{\ell} \bigr)$, $i = 1, 2$. Here, $\gamma = (\sqrt{1+4\lambda} - 1)/2$, $\lambda = (a_{i,1} - a_{i,2})^{-2}[(u - a_{i,1})(u - a_{i,2}) - \prod_j\mu^{(f_{i,j})}b_i^2]$. One has due to conditions $(b)$ and $(d)$ in Definition~\ref{def:4a-functions} $|\lambda| < (a_{i,1} - a_{i,2})^{-2} (\min_j \tau^{(f_{j})})^{10}/2 < (\min_j \tau^{(f_{j})})^{8}/2$, $|\gamma| < 2|\lambda| < (\min_j \tau^{(f_{j})})^8$. This finishes the proof of the claim in the first case in $(3)$. The verification for the second case is completely similar. The verification in case $\ell=1$ is also completely similar and we omit it.

$(4)$ Due to Definition~\ref{def:4a-functions} $\hat \sigma(f_1) = \hat\sigma(f_2)$. That implies the statement in part $(4)$.

$(5)$ The proof goes by induction over $\ell = 1 \dots$. Let $f \in \mathfrak{F}^{(1)} (a_1,a_2,b^2)$, $f = (u - a_1) - b^2 (u - a_2)^{-1}$. Then, by convention, $f_i := u - a_i$, $\chi^{(f_i)} := f_i$, $\mu^{(f_i)} := 1$, $\tau^{(f_i)} := 1$, $\sigma(f_{i})= 1$, $i = 1, 2$. Hence, $\sigma(f_{i}) \partial_u \chi^{(f_i)} = 1 - \partial_u a_i > 1/2 = (\tau^{(f_i)})^2/2$, as claimed. Assume that the statement holds for any $h \in \mathfrak{F}^{(t)}_{\mathfrak{g}^{(t)}}$, $1 \le t \le \ell - 1$, $\ell \ge 2$. Assume, for instance, $f \in \mathfrak{F}^{(\ell)}_{\mathfrak{g}^{(\ell)}} (f_1, f_2, b^2)$, $f_i \in \mathfrak{F}^{(\ell-1,-)}_{\mathfrak{g}^{(\ell-1)}} (f_{i,1}, f_{i,2}, b^2_i)$, $i = 1, 2$. Due to the inductive assumption, $\sigma(f_{i,j}) \partial_u \chi^{(f_{i,j})} > (\tau(f_{{i,j}}))^2$, $i, j = 1, 2$. Due to  to $(4)$, $\sigma(f_{i,j}) = \sigma(f_{i',j'})$ for any $i, j, i', j'$. Due to part $(3)$, one has $\chi^{(f_{i,1})} \le -(1/2) (\chi^{(f_{i,2})}-\chi^{(f_{i,1})})$, $\chi^{(f_{i,2})}< (\min_j\tau^{(f_{j})})^8 (\chi^{(f_{i,2})}-\chi^{(f_{i,1})}) < (\min_j\tau^{(f_{j})})^8$, $i = 1, 2$. Due to part $(1)$, $ |\mu^{(f_{i,j})}|,|\mu^{(f_{i,j})}| < 2^{-2^{2(\ell-2)}}$, due to part $(2)$, $|\partial^\alpha \mu^{(f_{i,j})}|, |\partial^\alpha \chi^{(f_{i,j})}| < 2^{-2^{2(\ell-3)}}$. Finally, due to Definition~\ref{def:4a-functions}, one has $|b_i| < (\min_j\tau^{(f_{i,j})})^{10}$, $|\partial_u b_i^2| < (\min_j\tau^{(f_{i,j})})^{10} |b_i|$. Using all these estimates, one obtains
\begin{equation}\label{eq:4chisignident3}
\begin{split}
\sigma(f_{i}) \partial_u \chi^{(f_{i})} = \sigma(f_{i}) \partial_u [\chi^{(f_{i,1})} \chi^{(f_{i,2})} - \mu^{(f_{i,1})} \mu^{(f_{i,2})} b_i^2]=\\
= [\sigma(f_{i,1}) (\partial_u \chi^{(f_{i,1})})] [- \chi^{(f_{i,2})}] + [\sigma(f_{i,2}) (\partial_u \chi^{(f_{i,2})})] [-\chi^{(f_{i,1})}] \\
-[(\partial_u \mu^{(f_{i,1})}) \mu^{(f_{i,2}) }b_i^2 + (\partial_u \mu^{(f_{i,2})}) \mu^{(f_{i,1})} b_i^2 + (\partial_u b_i^2) \mu^{(f_{i,1})} \mu^{(f_{i,2})}] \\
\ge -[\sigma(f_{i,1}) (\partial_u \chi^{(f_{i,1})})]  (\min_j \tau^{(f_{j})})^8
 + [\sigma(f_{i,1})\partial_u \chi^{(f_{i,2})}]
(\chi^{(f_{i,2})}-\chi^{(f_{i,1})})/2\\
- [|\partial_u \mu^{(f_{i,1})}| |\mu^{(f_{i,2})}| b_i^2 + |\partial_u \mu^{(f_{i,2})}| |\mu^{(f_{i,1})}| b_i^2 + |\partial_u b_i^2| |\mu^{(f_{i,1})}| |\mu^{(f_{i,2})}|] \\
\ge -2^{-2^{2(\ell-2)}} (\min_j \tau^{(f_{j})})^8 +  (\chi^{(f_{i,2})}-\chi^{(f_{i,1})})(\tau^{(f_{i,2})})^2/2 \\
- 2^{-2^{\ell}}(\min_j\tau^{(f_{i,j})})^6 > (\chi^{(f_{i,2})}-\chi^{(f_{i,1})})(\tau^{(f_{i,2})})^2/4 > (\chi^{(f_{i,2})}-\chi^{(f_{i,1})})^2(\tau^{(f_{i,1})})^2 (\tau^{(f_{i,2})})^2 = (\tau^{(f_{i})})^2.
\end{split}
\end{equation}
This finishes the proof in case $f_i \in \mathfrak{F}^{(\ell-1,-)}_{\mathfrak{g}^{(\ell-1)}}$. The case $f_i \in \mathfrak{F}^{(\ell-1,+)}_{\mathfrak{g}^{(\ell-1)}}$ is completely similar.

$(6)$ Assume $\chi^{(f)}(x_0,u_0) = 0$. Set $a_i := u_0 - \chi^{(f_{i})}(x_0,u_0)$, $i=1,2$, $b:=(\prod_i\chi^{(f_{i})})^{1/2}b(x_0,u_0)$. Recall that $\chi = \prod_i\mu^{(f_{i})} f_i - b^2 \prod_i\mu^{(f_{i})}$. Due to part $(4)$, $\prod_i \mu^{(f_{i})}\neq 0$. Hence one has $\bigl(u_0 - a_1\bigr)\bigl(u_0 - a_2\bigr) - |b|^2 = 0$. One can apply Lemma~\ref{4:generalquadratic}. Assume for instance $u_0 - a_1(x_0,u_0) \ge 0$. Then \eqref{eq:4ineqdichotomy+} applies. Note that here $\lambda = 0$, $\gamma = 0$. So, $\chi^{(f_{1})}(x_0,u_0) > 0$, $\chi^{(f_{2})}(x_0,u_0) > [(1/2)(\chi^{(f_{2})}(x_0,u_0)-\chi^{(f_{1})} + (\prod_i\chi^{(f_{i})})^{1/2}|b|]|_{x_0,u_0}$. From this point, the derivation of the estimate in $(7)$ goes exactly the same way as in \eqref{eq:4chisignident3}. The case $u_0 - a_1(x_0,u_0) \le 0$ is completely similar.

$(7)$ Consider the case $\ell \ge 2$. Due to part $(5)$, $|\partial_u \chi^{(f_{i})}| > (\tau^{(f_{i})})^2$, $\sgn (\partial_u \chi^{(f_{1})}) = \sgn (\partial_u \chi^{(f_{2})})$. Due to part $(1)$, $|\chi^{(f_i)}| < (\min_j |\tau^{(f_j)}|)^{10}$. Due to part $(2)$, $|\partial^\alpha \mu^{(f)}|, |\partial^\alpha \chi^{(f)}| < 2^{-2^{2(\ell-1)}+3}$, $|\alpha| \le 2$. Finally, due to Definition~\ref{def:4a-functions}, one has $|\partial^\alpha_u b^2| < (\min_j\tau^{(f_{j})})^{10}$. Using these estimates, one obtains
\begin{equation}\label{eq:4chisignident5}
\begin{split}
\partial^2_u \chi^{(f)} \ge |\partial_u \chi^{(f_{1})}||\partial_u \chi^{(f_{2})}| - \big\{|\partial^2_u \chi^{(f_{1})}| |\chi^{(f_{2})}| + |\partial^2_u \chi^{(f_{2})}| |\chi^{(f_{1})}| + |\partial^2_u [\mu^{(f_{1})} \mu^{(f_{2})}b^2]| \big\} \\
\ge \prod_i (\tau^{(f_{i})})^2 - 2 \cdot 2^{-2^{2(\ell-2)}+3} (\min_j |\tau^{(f_j)}|)^{10} - 6 \cdot 2^{-2^{2(\ell-2)}+3} \cdot 2^{-2^{2(\ell-2)}+3} \cdot (\min_j |\tau^{(f_j)}|)^{10} \\
\ge (1/2)(\min_{j} \tau^{(f_{j})})^4.
\end{split}
\end{equation}
The estimation for $\ell = 1$ is similar.

$(8)$ Let $f \in \mathfrak{F}^{(1)} (a_1,a_2,b^2)$, $f = (u - a_1) - b^2 (u - a_2)^{-1}$. By convention, $f_i := u - a_i$, $\chi^{(f)} := f_1f_2-b^2$. Due to part $(2)$, $|\partial^2_\theta f_{1}| < 1$. Due to condition $(k^\zero)$, $\partial_\theta f_1 =  - \partial_\theta a_1 < -k^\zero$, $\partial_\theta f_2 =  - \partial_\theta a_2 > k^\zero$, and moreover,  $|f_i| < (k^\zero)^2/8$, $|\partial^2_\theta b^2| < (k^\zero)^2/8$. One has
\begin{equation}\label{eq:4chisignident5thetaA}
\partial^2_\theta \chi^{(f)} \le -2(k^\zero)^2 + |\partial^2_\theta f_{1}| |f_{2}| + |\partial^2_\theta f_{2}| |f_{1}| + |\partial^2_\theta b^2| < -(k^\zero)^2,
\end{equation}
as claimed. Assume now in addition that $\chi^{(f)}(x,u,\theta) = \chi^{(f)}(x,u,-\theta)$. This implies $\partial_\theta \chi^{(f)}|_{x,u,0} = 0$, and $\partial_\theta \chi^{(f)} < -(k^\zero)^2 \theta$ if $\theta > 0$, $\partial_\theta \chi^{(f)} > -(k^\zero)^2 \theta$ if $\theta < 0$.

$(9)$ The estimation is similar to the one in $(8)$. Recall that we assume here $\ell \ge 2$. Due to part $(1)$, $|\chi^{(f_i)}| < (\min_j |\tau^{(f_j)}|)^{10}$. Due to part $(2)$, $|\partial^\alpha \mu^{(f)}|, |\partial^\alpha \chi^{(f)}| < 2^{-2^{2(\ell-1)}+3}$, $|\alpha| \le 2$. Due to Definition~\ref{def:4a-functions}, $|\partial^\alpha_\theta b^2| < (\min_j\tau^{(f_{j})})^{10}$. Using these estimates and the assumption $\sgn (\partial_\theta \chi^{(f_{1})})=-\sgn (\partial_\theta \chi^{(f_{2})})$, one obtains
\begin{equation}\label{eq:4chisignident5theta}
\begin{split}
\partial^2_\theta \chi^{(f)} \le -2|\partial_\theta \chi^{(f_{1})}||\partial_\theta \chi^{(f_{2})}| + \big\{|\partial^2_\theta \chi^{(f_{1})}| |\chi^{(f_{2})}| + |\partial^2_\theta \chi^{(f_{2})}| |\chi^{(f_{1})}| + |\partial^2_\theta [\mu^{(f_{1})} \mu^{(f_{2})}b^2]| \big\} \\
\le -2 \prod_i (\tau^{(f_{i})})^4 + 2 \cdot 2^{-2^{2(\ell-2)}+3} (\min_j |\tau^{(f_j)}|)^6 + 6 \cdot 2^{-2^{2(\ell-2)}+3} \cdot 2^{-2^{2(\ell - 2)} + 3} \cdot (\min_j |\tau^{(f_j)}|)^6 \\
\le -(\min_{i} \tau^{(f_{i})})^8.
\end{split}
\end{equation}
The second statement in $(10)$ follows from the first one just like in $(8)$.
\end{proof}

We need the following elementary calculus statements.

\begin{lemma}\label{elemcalculusconv}
Let $f(u)$ be a $C^2$-function, $u \in (t_0 - \rho_0, t_0 + \rho_0)$. Assume that $\sigma_0 = \inf f'' > 0$.

$(0)$ The function $f$ has at most two zeros.

$(1)$ Assume that $\sgn (f'(v_1)) \sgn (f'(v_2)) \ge 0$ for some $v_1 < v_2$. Then,
\begin{align*}
(v_2 - v_1)^2 \le 2 \sigma_0^{-1} |f(v_1)-f(v_2)|.
\end{align*}

$(2)$ Let $|v_0 - t_0| < \frac{\rho_0}{2}$. Assume $-\frac{\sigma_0\rho_0}{2} < f'(v_0) < 0$. Then there exists $v_0 < u_0 \le v_0 + \sigma_0^{-1} |f'(v_0)|$ such that $f'(u_0) = 0$. Similarly, if $\frac{\sigma_0\rho_0}{2} > f'(v_0) > 0$, then there exists $v_0 > u_0 \ge v_0 - \sigma_0^{-1} |f'(v_0)|$ such that $f'(u_0) = 0$.

$(3)$ Let $|v_0 - t_0| < \frac{\rho_0}{2}$, $0 < \rho \le \rho_0$. Assume $-\frac{\sigma_1^2\rho^2}{256} < f(v_0) \le 0$, $f'(v_0) < 0$, $\sigma_1 := \min(\sigma_0, 1)$. Then there exists $t_0 - \rho_0 < v_0 - \frac{\rho}{8} < v \le v_0$ such that $f(v) = 0$. Similarly,
assume $-\frac{\sigma_1^2\rho^2}{256} < f(v_0) \le 0$, $f'(v_0) > 0$. Then there exists $v_0 \le v < v_0 + \frac{\rho}{8} < t_0 + \rho_0$
such that $f(v) = 0$.

Assume in addition that $\sup |f'| \le 1$.

$(4)$ Let $|v_0 - t_0| < \frac{\rho_0}{2}$, $0 < \rho \le \rho_0$. Assume $-\frac{\sigma_1^2\rho^2}{256} < f(v_0) \le 0$, $-\frac{\sigma_1^2\rho^2}{256} < f'(v_0) < 0$. Then there exist $t_0 - \rho_0 < v_0 - \frac{\rho}{8} < v_1 \le v_0 < v_2 < v_0 +\frac{\rho}{4} < t_0 + \rho_0$ such that $f(v_j) = 0$ $j = 1, 2$. Similarly, assume that $-\frac{\sigma_1^2\rho^2}{256} < f(v_0) \le 0$, $\frac{\sigma_1^2\rho^2}{256} > f'(v_0) > 0$. Then there exist $t_0 - \rho_0 < v_0 - \frac{\rho}{4} < v_1 < v_0 \le v_2 < v_0 + \frac{\rho}{8} < t_0 + \rho_0$ such that $f(v_j) = 0$, $j = 1, 2$.

$(5)$ If $f$ has two zeros $v_1 < v_2$, $|v_i - v_0| < \frac{\rho_0}{2}$, then $-f'({v_1}),f'(v_2) > \frac{\sigma_1^2(v_2 - v_1)^2}{256}$.
\end{lemma}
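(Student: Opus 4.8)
The plan is to reduce everything to parts (1) and (3). Suppose $f$ has two zeros $v_1 < v_2$ with $|v_i - v_0| < \rho_0/2$. First I would observe that since $\inf f'' = \sigma_0 > 0$, Rolle's theorem gives a unique critical point $c \in (v_1,v_2)$, at which $f$ attains its minimum on $(v_1,v_2)$; moreover $f'(v_1) < 0 < f'(v_2)$ strictly, because if either derivative vanished the zero would be a double zero and, by convexity, the only zero. So the quantities $-f'(v_1)$ and $f'(v_2)$ are genuinely positive and we must bound them below.

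The key step is to apply part (3) in reverse. Fix attention on $v_2$ and set $\rho := v_2 - v_1 \le |v_2 - v_0| + |v_1 - v_0| < \rho_0$, so $0 < \rho \le \rho_0$ is admissible. I claim $f(v_1) = 0$ together with $f'(v_1) < 0$ forces $-f'(v_1)$ not too small: indeed, were $-f'(v_1) = |f'(v_1)|$ very small — specifically $|f'(v_1)| < \sigma_1 \rho / 16$, say — then part (2) (applied with $v_0$ replaced by $v_1$) would produce the critical point $c$ within distance $\sigma_0^{-1}|f'(v_1)| $ of $v_1$, hence $c < v_1 + \rho/16$. But then on the interval $(c, v_2)$ the function $f$ is increasing from its minimum value $f(c) \le f(v_1) = 0$ up to $f(v_2) = 0$, so $f(c) = 0$, i.e. $c$ is a double zero, contradicting that $v_1 < v_2$ are two \emph{distinct} zeros while $c \neq v_1$ (here I use convexity: a $C^2$ function with $f'' > 0$ and a double zero has no other zero). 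Tracking the constants through part (1), namely $\rho^2 = (v_2 - v_1)^2 \le 2\sigma_0^{-1}(f(v_1) \text{ or } f(v_2) \text{ minus } f(c))$ is not directly the route; instead I would argue directly: by Taylor expansion around the critical point $c$, $f(v_i) = f(c) + \tfrac12 f''(\xi_i)(v_i - c)^2 \ge f(c) + \tfrac{\sigma_0}{2}(v_i-c)^2$, and since $f(v_i) = 0$ and $f(c) \le 0$ this gives $(v_i - c)^2 \le -2\sigma_0^{-1} f(c)$; also $f'(v_2) = f'(v_2) - f'(c) = f''(\eta)(v_2 - c) \ge \sigma_0 (v_2 - c)$ and similarly $-f'(v_1) \ge \sigma_0(c - v_1)$. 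Hence it suffices to bound $\max(v_2 - c, c - v_1)$ below, and since $(v_2 - v_1) = (v_2 - c) + (c - v_1)$ one of the two summands is at least $(v_2 - v_1)/2 = \rho/2$. The corresponding derivative bound is then $\ge \sigma_0 \rho / 2 \ge \sigma_1^2 (v_2 - v_1)^2 / 256$ once we also use $\rho \le \rho_0 \le 1$ implicitly — but this only bounds the larger of the two.

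To get \emph{both} $-f'(v_1)$ and $f'(v_2)$ bounded below I would use $f(c) \le 0$ more carefully: from $(v_i - c)^2 \le -2\sigma_0^{-1} f(c)$ applied to \emph{both} $i = 1,2$ we get $v_2 - v_1 = (v_2 - c) + (c - v_1) \le 2\sqrt{-2\sigma_0^{-1} f(c)}$, hence $-f(c) \ge \tfrac{\sigma_0}{8}(v_2 - v_1)^2$; but also, expanding $0 = f(v_1) = f(c) + \int_c^{v_1}\!\!\int$, we have $0 - f(c) = \tfrac12 f''(\xi)(v_1 - c)^2 \le \tfrac12 (v_1 - c)^2$ using $\sup|f'| \le 1$ is not what bounds $f''$ — here I realize the clean bound comes from $|f'(v_1)| = |f'(v_1) - f'(c)| \ge \sigma_0|v_1 - c|$ combined with $-f(c) = \int_c^{v_1} f'(t)\,dt$ and $|f'(t)| \le |f'(v_1)|$ on $[c, v_1]$ (monotonicity of $f'$), giving $-f(c) \le |f'(v_1)|(v_1 - c) \le \sigma_0^{-1} f'(v_1)^2$, so $f'(v_1)^2 \ge \sigma_0(-f(c)) \ge \tfrac{\sigma_0^2}{8}(v_2-v_1)^2 \ge \tfrac{\sigma_1^2}{256}(v_2 - v_1)^2$ after absorbing constants and using $\sigma_1 \le \sigma_0$, $\sigma_1 \le 1$. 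The identical argument at $v_2$ gives the bound on $f'(v_2)$.

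\textbf{Main obstacle.} The real work is bookkeeping the constant $256$: one must route the estimate so that the final constant is no worse than $1/256$, which means being slightly generous at each Taylor step and at the point where $\sigma_0$ is replaced by $\sigma_1 = \min(\sigma_0,1)$ (needed since $\sigma_0$ could exceed $1$, in which case $\sigma_0^2/8$ is already large enough, while if $\sigma_0 < 1$ we have $\sigma_1 = \sigma_0$ and $\sigma_0^2/8 \ge \sigma_1^2/256$ trivially). The hypothesis $\sup|f'| \le 1$ enters only to control $f(c)$ via $-f(c) \le |f'(v_1)|\,(v_1 - c)$ through monotonicity of $f'$, so it is essentially free. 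Everything else is a routine application of convexity together with parts (0)–(3) already established.
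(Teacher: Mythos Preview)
Your direct Taylor-expansion argument around the critical point $c$ is a different route from the paper's, which simply argues by contradiction via part~(4): if, say, $-f'(v_1) \le \sigma_1^2(v_2-v_1)^2/256$, then applying~(4) at the point $v_1$ with $\rho := v_2-v_1$ manufactures a second zero strictly between $v_1$ and $v_2$, hence three zeros, contradicting~(0). Your approach is more self-contained and in fact yields the sharper \emph{linear} estimate $|f'(v_1)| \ge \tfrac{\sigma_0}{2\sqrt 2}(v_2-v_1)$.

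There is, however, a real gap at your final step. You end with $f'(v_1)^2 \ge \tfrac{\sigma_0^2}{8}(v_2-v_1)^2 \ge \tfrac{\sigma_1^2}{256}(v_2-v_1)^2$, but the statement to be proved is $|f'(v_1)| > \tfrac{\sigma_1^2}{256}(v_2-v_1)^2$ --- a bound on $|f'(v_1)|$, not on its square. Taking square roots of your inequality gives only $|f'(v_1)| \ge \tfrac{\sigma_1}{16}(v_2-v_1)$, which is linear in $v_2-v_1$, whereas the target is quadratic; the linear bound dominates the quadratic one only when $\sigma_1(v_2-v_1) < 16$, and nothing in your argument so far ensures this. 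The missing ingredient is precisely the hypothesis $\sup|f'| \le 1$: from $f'(v_2)-f'(v_1) \ge \sigma_0(v_2-v_1)$ and $|f'(v_i)| \le 1$ one gets $\sigma_0(v_2-v_1) \le 2$, hence $\sigma_1(v_2-v_1) \le 2$, and then your linear bound does imply the required quadratic one with room to spare. So $\sup|f'|\le 1$ is \emph{not} ``essentially free'' as you claim, and it does not enter where you say it does --- your inequality $-f(c) \le |f'(v_1)|(c-v_1)$ uses only monotonicity of $f'$ on $[v_1,c]$, not the sup bound. It is exactly what bridges your linear estimate and the stated quadratic one.
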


\begin{proof}
$(0)$ Follows from Rolle's Theorem.

$(1)$ Assume $f'(v_i) \ge 0$, $i = 1, 2$. Then,
\begin{align*}
f(v_2) - f(v_1) & = \int_{v_1}^{v_2} f'(v) \, dv = \int_{v_1}^{v_2} \Big[ \int_{v_1}^{v} f''(x) \, dx + f'(v_1) \Big]\, dv \\
& \ge \sigma_0 \int_{v_1}^{v_2} \int_{v_1}^{v} \, dx \, dv = \sigma_0 (v_2 - v_1)^2/2.
\end{align*}
Assume $f'(v_i) \le 0$, $i = 1, 2$. Then,
\begin{align*}
f(v_2) - f(v_1) & = \int_{v_1}^{v_2} f'(v) \, dv = \int_{v_1}^{v_2} \Big[ -\int_{v}^{v_2} f''(x) \, dx + f'(v_2) \Big]\, dv \\
& \le -\sigma_0 \int_{v_1}^{v_2} \int_{v_1}^{v} \, dx \, dv = -\sigma_0 (v_2 - v_1)^2/2.
\end{align*}

$(2)$ Let us verify the first statement. One has $f'(u) \ge f'(v_0) + \sigma_0 (u - v_0) > 0$ if $v_0 + \sigma_0^{-1} |f'(v_0)| < u < t_0 + \rho_0$. Hence there exists $v_0 < u_0 \le v_0 + \sigma_0^{-1} |f'(v_0)|$ such that $f'(u_0) = 0$. The verification of the second statement is completely similar.

$(3)$ Let us verify the first statement. Since $f'(v_0) < 0$, integrating like in part $(1)$, one obtains $f(u) \ge f(v_0) + \sigma_0 (u - v_0)^2/2$ for $t_0 - \rho_0 < u < v_0$. So, $f(v_0 - \rho/8) > 0$. Hence, there exists $v_0 - \rho/8 < v \le v_0$ such that $f(v) = 0$. The verification of the second statement is completely similar.

$(4)$ Let us verify the first statement. Using the notation from part $(2)$, one has $-\sigma_1 \rho^2/128 < f(v_0) - \sigma_0^{-1} |f'(v_0)| < f(v_0) - (u_0 - v_0) \le f(u_0) < 0$, since $\sup |f'| \le 1$. Note also that $u_0 \le v_0 + \sigma_0^{-1} |f'(v_0)| \le v_0 + \sigma_1 \rho^2/256 < t_0 + 33 \rho_0/64$. Like in part $(1)$, one obtains $f(u) > f(u_0) + \sigma_0 (u - u_0)^2/2 > 0$ if $u_0 + \rho/8 < u < t_0 + \rho_0$. Hence, there exists
$u_0 < v_2 < u_0 + \rho_0/8 < t_0 + \rho_0$ such that $f(v_2) = 0$. The existence of $v_1$ is due to part $(3)$. The verification of the second statement is completely similar.

$(5)$ Since $f'' > 0$, one has $f'(v_1) < 0$, $f'(v_2) > 0$. Set $\rho := v_2 - v_1$. Then, $\rho < \rho_0$. It follows from part $(4)$ that $-f'({v_1}),f'(v_2) > \frac{\sigma_1^2\rho^2}{256}$, since otherwise $f$ would have at least three zeros.
\end{proof}

\begin{lemma}\label{lem:6-1ell}
Let $f \in \mathfrak{F}^{(\ell)}_{\mathfrak{g}^{(\ell)}, \mathfrak{r}^{(\ell)}}$.

$(1)$ For any $x \in (-\alpha_0, \alpha_0)$, the equation  $\chi^{(f)} = 0$ has at most two solutions $\zeta_-(x) \le \zeta_+(x)$.

$(2)$ Let $\ell \ge 2$. Assume that the following conditions hold: $(a)$ $\zeta_+(0)$ and $\zeta_-(0)$ exist, $\chi^{(f_1)} (0, \zeta_+(0)) = 0$, $\chi^{(f_2)} (0, \zeta_-(0) = 0$, $(b)$ $|\chi^{(f_1)} (x, g_{\ell-1}(x))|, |\chi^{(f_2)} (x, g_{\ell-1}(x))| < (\tau_0)^6 \rho_{\ell-1}$ for all $x$, $(c)$ $|b| < (\tau_0)^6 \rho_{\ell - 1}$ for all $x, u$, where $\tau_0 := \inf_{x,u} (\min_{i} \tau^{(f_{i})})$. Then, $\zeta_+(x)$ and $\zeta_-(x)$ exist for all $x \in (-\alpha_0, \alpha_0)$. The functions $\zeta_+(x)$, $\zeta_-(x)$ are $C^2$-smooth on $(-\alpha_0, \alpha_0)$ and obey the estimates \eqref{eq:6-1''}, \eqref{eq:6-1'''}, where $a_i = u - f_i$, and also the following estimates:
\begin{equation}\label{eq:4zetaindomain}
|\zeta_\pm(x) - g_{\ell - 1} (x)| < \rho_{\ell - 1}/2,
\end{equation}

\bigskip

\begin{equation}\label{eq:4zetassplit}
\begin{split}
\partial_u \chi^{(f)}|_{x, \zeta_-(x)} < -(\tau^{(f)}|_{x, \zeta_-(x)})^2 < 0, \quad \partial_u \chi^{(f)}|_{x, \zeta_+(x)} > (\tau^{(f)}|_{x, \zeta_+(x)})^2 > 0, \\
\zeta_+(x) - \zeta_{-}(x) > \frac{1}{8} [-\partial_u \chi^{(f)}|_{x, \zeta_-(x)} + \partial_u \chi^{(f)}|_{x, \zeta_+(x)}], \\
-\partial_u \chi^{(f)}|_{x, \zeta_-(x)}, \partial_u \chi^{(f)}|_{x, \zeta_+(x)} \ge \frac{\sigma_1^2 (\zeta_+(x) - \zeta_-(x))^2}{256}, \\
|\chi^{(f)}(x,u)| \ge \min \bigl( \frac{\sigma_1}{2} (u - \zeta_-(x))^2, \frac{\sigma_1}{2} (u - \zeta_+(x))^2 \bigr),
\end{split}
\end{equation}
where $\sigma_1 := (1/8) (\inf_{x,u} (\min_{i} \tau^{(f_{i})}))^4$.
\end{lemma}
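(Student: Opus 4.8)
The plan is to prove both statements by induction on $\ell$, running the argument for $\zeta_\pm$ simultaneously with the quantitative estimates in \eqref{eq:4zetassplit}. For part $(1)$, the key observation is Lemma~\ref{4.fcontinuedfrac}$(7)$: since $\partial^2_u \chi^{(f)} > (1/2)(\min_i \tau^{(f_i)})^4 > 0$ on $\cL_\IR(g_{\ell-1},\rho_{\ell-1})$, the function $u \mapsto \chi^{(f)}(x,u)$ is strictly convex, hence has at most two zeros by Lemma~\ref{elemcalculusconv}$(0)$. This gives $(1)$ immediately; label the two zeros $\zeta_-(x)\le\zeta_+(x)$ wherever they exist.

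For part $(2)$, I would first establish existence of $\zeta_\pm(x)$ for all $x$ using the continuation-along-$x$ device from the proof of Lemma~\ref{lem:6-1}. At $x=0$ the zeros $\zeta_\pm(0)$ exist by hypothesis $(a)$ and, by the symmetry $f_i(0,u)=u-g_{\ell-1}(0)$ from condition $(e)$ in Definition~\ref{def:4a-functions}, one has $\chi^{(f_i)}(0,u)=(u-g_{\ell-1}(0))\mu^{(f_{i,1})}\mu^{(f_{i,2})}$ and $b(0,u)=0$, so $\chi^{(f)}(0,u)$ vanishes exactly at $u=g_{\ell-1}(0)$; a more careful bookkeeping using the inductive structure pins down $\zeta_+(0)$ and $\zeta_-(0)$ as the points where $\chi^{(f_1)}$ resp.\ $\chi^{(f_2)}$ vanish, which is exactly hypothesis $(a)$. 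Then I would invoke Lemma~\ref{4.fcontinuedfrac}$(6)$: at any zero $(x_0,u_0)$ of $\chi^{(f)}$ one has $\mathrm{sgn}(f_1(x_0,u_0))\,\partial_u\chi^{(f)}|_{x_0,u_0} > (\tau^{(f)})^2|_{x_0,u_0} > 0$, so $\partial_u\chi^{(f)}$ is nonzero at each zero; the implicit function theorem then continues each branch $\zeta_\pm(x)$ as a $C^2$ function, and convexity prevents branches from colliding or escaping. To see the branches stay in the domain, i.e.\ \eqref{eq:4zetaindomain}, I would argue as in \eqref{eq:6-9}--\eqref{eq:6-10}: the bounds \eqref{eq:6-1''}, \eqref{eq:6-1'''} with $a_i = u - f_i$ (which follow from Lemma~\ref{4:generalquadratic} applied to the quadratic $\chi^{(f)}(x,u)/(\mu^{(f_1)}\mu^{(f_2)})=0$, using the $\pm$-dichotomy recorded in condition $(c)$ and part $(3)$ of Lemma~\ref{4.fcontinuedfrac}) sandwich $\zeta_\pm(x)$ between $a_2 - |b|$ and $a_1 + |b|$; combined with $|f_i|<\rho_0/4$ (Lemma~\ref{4.fcontinuedfrac}$(1)$, or condition $(iv)$ pulled through the recursion) and the smallness of $b$ from hypothesis $(c)$, this forces $|\zeta_\pm(x) - g_{\ell-1}(x)|<\rho_{\ell-1}/2$, provided hypothesis $(b)$ keeps $\chi^{(f_i)}$ small along $u=g_{\ell-1}(x)$ so that the zeros cannot wander to the far edge of the strip.

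The quantitative estimates \eqref{eq:4zetassplit} then follow by assembling the calculus lemma with the convexity bound. The sign statements $\partial_u\chi^{(f)}|_{x,\zeta_-(x)} < -(\tau^{(f)}|_{x,\zeta_-(x)})^2 < 0 < (\tau^{(f)}|_{x,\zeta_+(x)})^2 < \partial_u\chi^{(f)}|_{x,\zeta_+(x)}$ come from Lemma~\ref{4.fcontinuedfrac}$(6)$ together with the fact that at $\zeta_-$ one has $f_1 < 0$ and at $\zeta_+$ one has $f_1 > 0$ (read off from the $\pm$-case analysis and the ordering $\chi^{(f_1)}<\chi^{(f_2)}$ in condition $(a)$). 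The gap estimate $\zeta_+(x)-\zeta_-(x) > \frac18[-\partial_u\chi^{(f)}|_{x,\zeta_-} + \partial_u\chi^{(f)}|_{x,\zeta_+}]$ follows from $\partial^2_u\chi^{(f)}\le \sup|\partial^2_u\chi^{(f)}| \le 2^{-2^{2(\ell-1)}+4}$ (Lemma~\ref{4.fcontinuedfrac}$(2)$) by integrating: $\partial_u\chi^{(f)}|_{x,\zeta_+} - \partial_u\chi^{(f)}|_{x,\zeta_-} = \int_{\zeta_-}^{\zeta_+}\partial^2_u\chi^{(f)}\,du \le (\sup\partial^2_u\chi^{(f)})(\zeta_+-\zeta_-)$, so the factor $\frac18$ is comfortably absorbed once one checks $\sup\partial^2_u\chi^{(f)} \le 8$. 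The lower bound $-\partial_u\chi^{(f)}|_{x,\zeta_-}, \partial_u\chi^{(f)}|_{x,\zeta_+} \ge \sigma_1^2(\zeta_+(x)-\zeta_-(x))^2/256$ is precisely Lemma~\ref{elemcalculusconv}$(5)$ applied to $f(u)=\chi^{(f)}(x,u)$ with $\sigma_0 = \inf\partial^2_u\chi^{(f)} \ge 2\sigma_1$ (valid since, by Lemma~\ref{4.fcontinuedfrac}$(2)$, $|\partial_u\chi^{(f)}|\le 2^{-2^{2(\ell-1)}+3}\le 1$ on the strip, so the ``$\sup|f'|\le 1$'' hypothesis holds). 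Finally, the last line of \eqref{eq:4zetassplit}, $|\chi^{(f)}(x,u)|\ge \min(\frac{\sigma_1}{2}(u-\zeta_-(x))^2, \frac{\sigma_1}{2}(u-\zeta_+(x))^2)$, follows from convexity: for $u$ to the right of $\zeta_+$ integrate $\partial^2_u\chi^{(f)}\ge 2\sigma_1$ twice from $\zeta_+$ (where $\chi^{(f)}=0$ and $\partial_u\chi^{(f)}\ge 0$), and symmetrically for $u$ left of $\zeta_-$; between the two zeros $|\chi^{(f)}|$ is bounded below by the smaller of the two parabolic bounds anchored at the near zero, again by convexity.

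The main obstacle I anticipate is the existence-and-confinement step in part $(2)$: verifying that the continued branches $\zeta_\pm(x)$ genuinely persist over the \emph{entire} interval $(-\alpha_0,\alpha_0)$ and never leave the shrinking strip $\cL_\IR(g_{\ell-1},\rho_{\ell-1})$ on which the defining inequalities for $f\in\mathfrak{F}^{(\ell)}$ are available. This is exactly where hypotheses $(b)$ and $(c)$ — the smallness of $\chi^{(f_i)}$ along $u=g_{\ell-1}(x)$ and of $b$ — must be combined with the structural estimates of Lemma~\ref{4.fcontinuedfrac} (parts $(1)$, $(3)$, $(5)$, $(7)$) to rule out the branch reaching $|u-g_{\ell-1}(x)| = \rho_{\ell-1}$, at which point one would lose control. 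Handling the nested definition of $\chi^{(f)}$ in terms of $\chi^{(f_i)}$, $\mu^{(f_i)}$ and the inductive $\pm$-dichotomy — so that Lemma~\ref{4:generalquadratic} can be legitimately invoked with $a_i = u - f_i$ and ``effective $b$'' equal to $(\mu^{(f_1)}\mu^{(f_2)})^{1/2}b$ — is the bookkeeping-heavy part, but it is routine given part $(3)$ of Lemma~\ref{4.fcontinuedfrac}, which already packages the relevant output of Lemma~\ref{4:generalquadratic}.
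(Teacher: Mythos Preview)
Your overall strategy matches the paper's: part $(1)$ follows immediately from Lemma~\ref{4.fcontinuedfrac}(7) and strict convexity; for part $(2)$ one continues the branches via the implicit function theorem using Lemma~\ref{4.fcontinuedfrac}(6), and the estimates in \eqref{eq:4zetassplit} follow by combining Lemma~\ref{4.fcontinuedfrac}(2),(6),(7) with Lemma~\ref{elemcalculusconv}. Two remarks: no induction on $\ell$ is needed (everything is already packaged in Lemma~\ref{4.fcontinuedfrac}), and the paper derives \eqref{eq:6-1''}--\eqref{eq:6-1'''} \emph{after} confinement, by tracking which of $\varphi_\pm$ each branch equals via continuity from $x=0$ and hypothesis~$(a)$.

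The genuine gap is in your confinement argument for \eqref{eq:4zetaindomain}. The sandwich $a_2-|b|\le\zeta_\pm\le a_1+|b|$ with $a_i(x,u)=u-f_i(x,u)$ evaluated at $u=\zeta_\pm(x)$ is circular: it only yields $0\le f_1(x,\zeta_+(x))\le|b|$, which says nothing about $|\zeta_\pm(x)-g_{\ell-1}(x)|$. Invoking ``$|f_i|<\rho_0/4$'' does not help either, since on $\cL_\IR(g_{\ell-1},\rho_{\ell-1})$ the bound $|f_i|<(\tau_0)^{10}$ just says $a_i(x,u)\approx u$, so the sandwich collapses. The correct argument (the one in the paper) is: at a zero of $\chi^{(f)}$ one has $\chi^{(f_1)}\chi^{(f_2)}=\mu^{(f_1)}\mu^{(f_2)}b^2$, hence by hypothesis~$(c)$ and $|\mu^{(f_i)}|<1$ one gets $\min_i|\chi^{(f_i)}(x,\zeta_\pm(x))|<(\tau_0)^6\rho_{\ell-1}$; combined with hypothesis~$(b)$ (same smallness at $u=g_{\ell-1}(x)$) and the strict monotonicity $|\partial_u\chi^{(f_i)}|>(\tau^{(f_i)})^2\ge\tau_0^2$ from Lemma~\ref{4.fcontinuedfrac}(5), this forces $|\zeta_\pm(x)-g_{\ell-1}(x)|<2(\tau_0)^4\rho_{\ell-1}<\rho_{\ell-1}/2$. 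You list the right ingredients in your final paragraph, but the mechanism linking them is this monotonicity-of-$\chi^{(f_i)}$ step, not the quadratic sandwich.
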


\begin{proof}
Due to part $(7)$ of Lemma~\ref{4.fcontinuedfrac}, $\partial^2_u \chi^{(f)} > 0$ everywhere. Therefore, for any $x \in (-\alpha_0, \alpha_0)$, the equation $\chi^{(f)} = 0$ has at most two solutions $\zeta_+(x)$ and $\zeta_-(x)$. Assume $\zeta_+(0)$ and $\zeta_-(0)$ exist. Due to part $(6)$ in Lemma~\ref{4.fcontinuedfrac}, $|\partial_u \chi^{(f)}| |_{x_0,u_0} > 0$, provided $\chi^{(f)}(x_0,u_0) = 0$. Therefore, $\zeta_+(x)$ and $\zeta_-(x)$ can be defined via continuation and the standard implicit function theorem as long as the point $(x, \zeta_\pm(x))$ does not leave the domain $\cL_{\IR} \bigl( g_{\ell-1}, \rho_{\ell-1} \bigr)$. Let us verify that as long as $\zeta_+(x)$ and $\zeta_-(x)$ are defined, \eqref{eq:4zetaindomain} holds. Recall that $\chi^{(f)} = \chi^{(f_{1})} \chi^{(f_{2})} - \mu^{(f_{1})} \mu^{(f_{2})} b^2$. Since $|\mu^{(f_{i})}| < 1$, one has $|\chi^{(f_{1})} (x, \zeta_\pm(x))| |\chi^{(f_{2})}(x, \zeta_\pm(x))| < (\tau_0)^{12} \rho_{\ell - 1}^2$, due to $(c)$ in the current lemma. Hence, $\min(|\chi^{(f_{1})} (x, \zeta_\pm(x))|, |\chi^{(f_{2})} (x, \zeta_\pm(x))|) < (\tau_0)^6 \rho_{\ell - 1}$. Recall that due to part $(5)$ of Lemma~\ref{4.fcontinuedfrac}, one has $|\partial_u \chi^{(f_{i})}| > (\tau^{(f_{{i}})})^2$. Combining this with condition $(b)$ in the current lemma, one concludes that \eqref{eq:4zetaindomain} holds. Note that \eqref{eq:4zetaindomain} says in particular that the point $(x, \zeta_\pm(x))$ does not leave the domain $\cL_{\IR} \bigl( g_{\ell - 1}, \rho_{\ell - 1} \bigr)$ ever. Hence $\zeta_+(x)$ and $\zeta_-(x)$ exist for all $x \in (-\alpha_0, \alpha_0)$. Due to the standard implicit function theorem, the functions $\zeta_+(x)$, $\zeta_-(x)$ are $C^2$-smooth. Recall that due to part $(7)$ in Lemma~\ref{4.fcontinuedfrac}, one has $\partial^2_u \chi^{(f)} > 0$. Therefore, $\partial_u \chi^{(f)}|_{x,\zeta_-(x)} \le 0$, $\partial_u \chi^{(f)}|_{x,\zeta_+(x)} \ge 0$. On the other hand, due to part $(6)$ in Lemma~\ref{4.fcontinuedfrac}, one has $|\partial_u \chi^{(f)}||_{x,\zeta_\pm(x)} > (\tau^{(f)})^2|_{x,\zeta_\pm(x)}$. Thus, $\partial_u \chi^{(f)}|_{x,\zeta_-(x)} < -(\tau^{(f)})^2|_{x,\zeta_-(x)}$, $\partial_u \chi^{(f)}|_{x,\zeta_+(x)} > (\tau^{(f)})^2|_{x,\zeta_+(x)}$, that is, the first line in \eqref{eq:4zetassplit} holds. Due to part $(2)$ in Lemma~\ref{4.fcontinuedfrac}, $|\partial^2_u \chi^{(f)}| < 8$ for any $\ell$. Therefore the second line in \eqref{eq:4zetassplit} holds.  Recall that due to part $(7)$ in Lemma~\ref{4.fcontinuedfrac}, $\partial^2_u \chi^{(f)} > \sigma_1$ everywhere. Therefore the third line in \eqref{eq:4zetassplit} holds due to part $(4)$ of Lemma~\ref{elemcalculusconv}. Finally, the last line in \eqref{eq:4zetassplit} is due to part $(1)$ of Lemma~\ref{elemcalculusconv}.

Recall that $\zeta_\pm(x)$ obeys the equation $(u-a_1)(u-a_2)-b^2=0$ with $a_i = u - \chi^{(f_i)}$, $i = 1, 2$. Let
\begin{align*}
 \varphi_+(x,u):=(1/2) \left[a_1 + a_2 + \bigl((a_1 - a_2)^2 + 4b^2\bigr)^{1/2}\right] , \\[6pt]
\varphi_-(x,u):=(1/2) \left[a_1 + a_2 - \bigl((a_1 - a_2)^2 + 4b^2\bigr)^{1/2}\right]
\end{align*}
be as in the proof of Lemma~\ref{lem:6-1}. Due to condition $(a)$ in the current lemma, one concludes that $\zeta_+(0) = a_1 (0, \zeta_+(0)) = \varphi_+(0, \zeta_+(0))$, $\zeta_-(0) = a_2 (0, \zeta_-(0)) = \varphi_-(0, \zeta_-(0))$. Note that due to part $(2)$ in Lemma~\ref{4.fcontinuedfrac}
the functions $\varphi_\pm(x,\zeta_+(x))$ are continuous. Since $\varphi_+(x,u) > \varphi_-(x,u)$, by continuity, $\zeta_\pm(x) = \varphi_\pm(x,\zeta_+(x))$ for all $x$. Now just as in the proof of Lemma~\ref{lem:6-1}, one verifies that $\zeta_+(x)$, $\zeta_-(x)$ obey the estimates \eqref{eq:6-1''}, \eqref{eq:6-1'''}.
\end{proof}

For our applications, we will also need a certain generalization of the last lemma in the case when condition $(c)$ fails, that is, $|b| \nless (\tau_0)^6 \rho_{\ell-1}$. This happens when $\rho_{\ell-1}$ is too small. The specific situation is as follows. Let $g_{t,\pm}(x)$ be $C^2$-functions on $(-\alpha_0, \alpha_0)$, $0 < \rho_{t+1} < \rho_{t} < 1$, $t = 0, \dots, \ell - 1$. Assume that $g_{t,-}(x) < g_{t,+}(x)$ for every $x$. Assume that $\cL_{\IR} \bigl(g_{\ell',\pm}, \rho_{\ell'} \bigr) \supset \cL_{\IR} \bigl( g_{\ell'+1,\pm}, \rho_{\ell'+1} \bigr)$, $\ell' = 0, 1, \dots$. Set $\mathfrak{g}^{(t)}_\pm = (g_{0,\pm}, \dots, g_{t-1,\pm})$. Using these notations assume that $f \in \mathfrak{F}^{(\ell)}_{\mathfrak{g}^{(\ell)}_-} (f_1, f_2, b)$ and also $f \in \mathfrak{F}^{(\ell)}_{\mathfrak{g}^{(\ell)}_+} (f_1, f_2, b)$. This means in particular that if $(x,u) \in \cL_{\IR} \bigl( g_{\ell-1,-}, \rho_{\ell-1} \bigr) \cap \cL_{\IR} \bigl(g_{\ell-1,+}, \rho_{\ell-1} \bigr)$, then $f(x,u)$, $f_1$, $f_2$, $b$, and also the rest of the functions involved in the definition are the same no matter which way one defines them. We use the notation $\chi^{(f)}(x,u)$ for the corresponding function. Note that it is well-defined and smooth in
$\cL_{\IR} \bigl(g_{\ell-1,-}, \rho_{\ell-1} \bigr) \cup \cL_{\IR} \bigl(g_{\ell-1,+}, \rho_{\ell-1} \bigr)$.

Assume that the following conditions hold: $(\alpha)$ $|\chi^{(f)}(x, g_{\ell-1,\pm}(x))| < \frac{\sigma_1^{13} \rho^8}{2^{83}}$, with $\sigma_1 := (1/8) (\inf_{x,u} (\min_{i} \tau^{(f_{i})}))^4$, $0 < \rho \le \rho_{\ell-1}$, $(\beta)$ $\prod_i \chi^{(f_i)}|_{0,g_{\ell-1,\pm} (0)} = 0$, $(\gamma)$ $g_{\ell-1,+}(x) - g_{\ell-1,-}(x) + \frac{\sigma_1^6\rho^4}{2^{39}} \ge \min \bigl( \frac{1}{8} [|\partial_u\chi^{(f)}|_{x,g_{\ell-1,+}(x)}| + |\partial_u\chi^{(f)}|_{x,g_{\ell-1,-}(x)}|], \rho_{\ell-1} \bigr)$, $(\delta)$ $\frac{\sigma_1^2\rho^2}{128} + \min (-\partial_u \chi^{(f)}|_{x,g_{\ell-1,-}(x)}, \partial_u \chi^{(f)}|_{x,g_{\ell-1,+}(x)}) \ge \min \bigl( \frac{\sigma_1^2 (g_{\ell-1,+}(x) - g_{\ell-1,-}(x))^2}{256}, \frac{\sigma_1^2 \rho^2}{64} \bigr)$.

\begin{lemma}\label{lem:6-1ellM}
For any $x \in (-\alpha_0, \alpha_0)$, the equation  $\chi^{(f)}(x,u) = 0$ has exactly two solutions $\zeta_-(x) < \zeta_-(x)$. The functions $\zeta_+(x)$, $\zeta_-(x)$ are $C^2$-smooth on $(-\alpha_0, \alpha_0)$, obey the estimates \eqref{eq:6-1''}, \eqref{eq:6-1'''}, where $a_i = u-f_i$, and also the following estimates,
\begin{equation}\label{eq:4zetaindomain11s}
|\zeta_\pm(x) - g_{\ell-1,\pm}(x)| < \frac{\sigma_1^2\rho^2}{2^{12}},
\end{equation}
\begin{equation}\label{eq:4zetassplit11}
\partial_u \chi^{(f)}|_{x,\zeta_-(x)} \le -(\tau^{(f)})^2 (x,\zeta_-(x)) < 0, \quad \partial_u \chi^{(f)}|_{x,\zeta_+(x)} \ge (\tau^{(f)})^2 (x,\zeta_+(x)) > 0,
\end{equation}
\begin{equation}\label{eq:4zetassplit1t}
\zeta_+(x) - \zeta_{-}(x) \ge \min \bigl( \frac{1}{8} [-\partial_u\chi^{(f)}|_{x,\zeta_-(x)} + \partial_u \chi^{(f)}|_{x,\zeta_+(x)}],
\rho_{\ell-1} \bigr),
\end{equation}
\begin{equation}\label{eq:4zetassplit1tututu}
- \partial_u \chi^{(f)}|_{x,\zeta_-(x)}, \partial_u \chi^{(f)}|_{x,\zeta_+(x)} \ge \min \bigl( \frac{\sigma_1^2 (\zeta_+(x) - \zeta_-(x))^2}{256}, \frac{\sigma_1^2 \rho^2}{128} \bigr),
\end{equation}
\begin{equation}\label{eq:4zetassplit1tatata}
|\chi^{(f)}(x,u)| \ge \min (\frac{\sigma_1}{2}(u - \zeta_-(x))^2, \frac{\sigma_1}{2}(u - \zeta_+(x))^2),
 \quad \text{if $\min(|u - \zeta_-(x)|, |u - \zeta_+(x)|) < \frac{\sigma_1^2\rho^2}{2^{11}}$}.
\end{equation}
\end{lemma}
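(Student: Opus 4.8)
The plan is to follow the architecture of the proofs of Lemma~\ref{lem:6-1} and Lemma~\ref{lem:6-1ell}, replacing the crude ``zeros lie within $\rho_{\ell-1}/2$ of the curve'' localization — which is no longer available because $|b|$ may dominate $\rho_{\ell-1}$ — by a sharper one--dimensional convexity analysis resting on Lemma~\ref{elemcalculusconv}. First I would record the structural facts that are unaffected by the failure of condition~$(c)$ of Lemma~\ref{lem:6-1ell}: by part~$(7)$ of Lemma~\ref{4.fcontinuedfrac}, $\partial^2_u\chi^{(f)} \ge (1/2)(\min_i\tau^{(f_i)})^4 = 4\sigma_1 > 0$ on the whole set $\cL_{\IR}(g_{\ell-1,-},\rho_{\ell-1}) \cup \cL_{\IR}(g_{\ell-1,+},\rho_{\ell-1})$; by part~$(2)$, $|\partial_u\chi^{(f)}|$ and $|\partial^2_u\chi^{(f)}|$ are uniformly bounded there; and by part~$(6)$, at every zero $(x_0,u_0)$ of $\chi^{(f)}$ one has $\sgn f_1(x_0,u_0)\,\partial_u\chi^{(f)}|_{x_0,u_0} > (\tau^{(f)})^2|_{x_0,u_0} > 0$, so the zeros of $\chi^{(f)}(x,\cdot)$ are simple and, in each of the two strips, there are at most two of them. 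The goal is then to produce the upper zero $\zeta_+(x)$ near $g_{\ell-1,+}(x)$ and the lower zero $\zeta_-(x)$ near $g_{\ell-1,-}(x)$ for every $x$, extend them as $C^2$ functions, and read off the estimates.

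Next I would construct $\zeta_\pm$. Starting at the base point $x=0$, where the normalizations built into Definition~\ref{def:4a-functions} (the normal form of $f_i$ and $b(0,\cdot)=0$) together with condition~$(\beta)$ make $g_{\ell-1,\pm}(0)$ zeros of $\chi^{(f)}(0,\cdot)$, one defines $\zeta_\pm$ by continuation: since $\partial_u\chi^{(f)}\neq 0$ at any zero, the implicit function theorem gives $C^2$ continuation as long as $(x,\zeta_\pm(x))$ stays inside the relevant strip. The only point to verify is therefore the a priori bound \eqref{eq:4zetaindomain11s}, $|\zeta_\pm(x)-g_{\ell-1,\pm}(x)|<\sigma_1^2\rho^2/2^{12}$; since $\sigma_1^2\rho^2/2^{12}\ll\rho_{\ell-1}$, this bound prevents the continuation from ever reaching $\partial\cL_{\IR}(g_{\ell-1,\pm},\rho_{\ell-1})$, and hence $\zeta_\pm$ exist on all of $(-\alpha_0,\alpha_0)$. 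To prove \eqref{eq:4zetaindomain11s} I would apply Lemma~\ref{elemcalculusconv} to $f(u):=\chi^{(f)}(x,u)$ with base point $v_0=g_{\ell-1,\pm}(x)$: condition~$(\alpha)$ makes $|f(v_0)|$ of order $\sigma_1^{13}\rho^8$, and condition~$(\delta)$ supplies the control on $f'(v_0)=\partial_u\chi^{(f)}(x,g_{\ell-1,\pm}(x))$ — either a genuine lower bound on its magnitude with the correct sign, pinning the zero right next to $v_0$, or, when that derivative is small, still enough to invoke parts~$(3)$ and $(4)$ of Lemma~\ref{elemcalculusconv} and locate a zero within distance $\sim\sigma_1^2\rho^2$ of $v_0$; convexity (the lower bound $\partial^2_u\chi^{(f)}\ge 4\sigma_1$) does the rest.

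Once $\zeta_\pm$ are globally defined with \eqref{eq:4zetaindomain11s}, the remaining conclusions follow quickly. Estimate \eqref{eq:4zetassplit11} is immediate from part~$(6)$ of Lemma~\ref{4.fcontinuedfrac} applied at $(x,\zeta_\pm(x))$ (together with $\partial^2_u\chi^{(f)}>0$, which fixes the signs). The gap bound \eqref{eq:4zetassplit1t} and the derivative--magnitude bounds \eqref{eq:4zetassplit1tututu} come from parts~$(4)$ and $(5)$ of Lemma~\ref{elemcalculusconv} — a convex function with two zeros $\zeta_-<\zeta_+$ has its derivatives at those zeros bounded below by $\sigma_1^2(\zeta_+-\zeta_-)^2/256$ — combined with conditions~$(\gamma)$ and $(\delta)$, which is precisely where the separation of the two approximating curves $g_{\ell-1,\pm}$ enters. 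The quadratic lower bound \eqref{eq:4zetassplit1tatata} is a Taylor expansion of $\chi^{(f)}(x,\cdot)$ at $\zeta_\pm(x)$ using $\partial^2_u\chi^{(f)}\ge 4\sigma_1$, valid on the indicated $\sigma_1^2\rho^2/2^{11}$-neighborhood (part~$(1)$ of Lemma~\ref{elemcalculusconv}). Finally, \eqref{eq:6-1''} and \eqref{eq:6-1'''} are proved verbatim as in Lemma~\ref{lem:6-1} and Lemma~\ref{lem:6-1ell}: writing $\chi^{(f)}=(u-a_1)(u-a_2)-\tilde b^2$ with $a_i=u-f_i$ and $\tilde b^2=\mu^{(f_1)}\mu^{(f_2)}b^2$ (legitimate since $\mu^{(f_1)}\mu^{(f_2)}\neq 0$ by part~$(4)$ of Lemma~\ref{4.fcontinuedfrac}), one gets $\zeta_\pm(x)=\varphi_\pm(x,\zeta_+(x))$ from the quadratic formula and the two-sided estimates \eqref{eq:6-5NEW}, \eqref{eq:6-6NEW}.

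The main obstacle is the a priori localization \eqref{eq:4zetaindomain11s} in exactly the regime Lemma~\ref{lem:6-1ell} did not cover: when $\zeta_+(x)$ and $\zeta_-(x)$ are close and $\partial_u\chi^{(f)}$ at $g_{\ell-1,\pm}(x)$ is small, so that one cannot simply invoke the implicit function theorem with a uniformly non-degenerate derivative. Conditions~$(\gamma)$ and $(\delta)$ are tailored to resolve this — $(\gamma)$ ties the separation of the curves to the derivatives, while $(\delta)$ forbids the derivative from being ``too negative at $g_{\ell-1,+}$ / too positive at $g_{\ell-1,-}$,'' which is what keeps each zero attached to its own curve — and the real work is the bookkeeping of the powers of $\sigma_1$ and $\rho$ through parts~$(3)$--$(5)$ of Lemma~\ref{elemcalculusconv}. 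Everything else is a routine adaptation of the earlier two lemmas.
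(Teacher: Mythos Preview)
Your plan is correct and follows the same architecture as the paper's proof: start at $x=0$ using $(\beta)$, continue $\zeta_\pm$ via the implicit function theorem (legitimate by part~(6) of Lemma~\ref{4.fcontinuedfrac}), and reduce everything to the a priori bound \eqref{eq:4zetaindomain11s}, after which \eqref{eq:4zetassplit11}--\eqref{eq:4zetassplit1tatata} and \eqref{eq:6-1''}--\eqref{eq:6-1'''} follow from Lemma~\ref{elemcalculusconv} and Lemma~\ref{4.fcontinuedfrac} exactly as you describe.

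One structural point you should make explicit when you write this out: the paper organizes the proof of \eqref{eq:4zetaindomain11s} around the dichotomy $g_{\ell-1,+}(x)-g_{\ell-1,-}(x)\lessgtr 2\rho_{\ell-1}$. When the strips overlap, $\chi^{(f)}(x,\cdot)$ is a single convex function on $(g_{\ell-1,-}(x)-\rho_{\ell-1},\,g_{\ell-1,+}(x)+\rho_{\ell-1})$ with exactly two zeros, and the issue is the \emph{matching} --- showing $\zeta_+$ sits near $g_{\ell-1,+}$ and $\zeta_-$ near $g_{\ell-1,-}$, not both near one curve. Here condition~$(\gamma)$ (not $(\delta)$) is the key input: if both zeros cluster near one curve, then $g_{\ell-1,+}-g_{\ell-1,-}$ is tiny, and $(\gamma)$ forces the derivatives at the $g_{\ell-1,\pm}$ to be tiny as well, which via part~(4) of Lemma~\ref{elemcalculusconv} bounds $\zeta_+-\zeta_-$ and recovers \eqref{eq:4zetaindomain11s}. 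When the strips are disjoint, condition~$(\delta)$ gives a genuine lower bound $\min(-\partial_u\chi^{(f)}|_{g_{\ell-1,-}},\,\partial_u\chi^{(f)}|_{g_{\ell-1,+}})\ge \sigma_1^2\rho^2/128$, and part~(1) of Lemma~\ref{elemcalculusconv} pins each zero. Your summary of the roles of $(\gamma)$ and $(\delta)$ has them slightly swapped, but the bookkeeping is otherwise exactly what the paper does.
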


\begin{proof}
Note that $\chi^{(f)}(0, g_{\ell-1,\pm}(0)) = \prod_i \chi^{(f_i)}|_{0,g_{\ell-1,\pm}(0)} = 0$. So, $\zeta_\pm(0)$ exist. Like in the proof of Lemma~\ref{lem:6-1ell}, $\zeta_\pm(x)$ can be defined via continuation, starting at $x = 0$, and the standard implicit function theorem, as long as the point $(x, \zeta_\pm(x))$ does not leave the domain $\cL_{\IR} \bigl( g_{\ell-1,-}, \rho_{\ell-1} \bigr) \cup \cL_{\IR} \bigl( g_{\ell-1,+}, \rho_{\ell-1} \bigr)$. Due to condition $(\beta)$, \eqref{eq:4zetaindomain11s} holds for $|x|$ sufficiently small.

Assume that $\zeta_+(x)$ and $\zeta_-(x)$ are defined and obey \eqref{eq:4zetaindomain11s} for all $x \in [0,x_0)$. The standard implicit function theorem arguments apply to show that  $\zeta_\pm(x)$ are well defined for $x \in [0,x_1)$ with  $x_1 - x_0 > 0$ being small. We claim that in fact \eqref{eq:4zetaindomain11s}, \eqref{eq:4zetassplit11} hold for any $x \in [0,x_1)$. Let $x \in [0,x_1)$ be arbitrary. Note first of all that since  $\zeta_-(0) < \zeta_+(0)$, the implicit function theorem arguments imply that $\zeta_-(x) < \zeta_+(x)$ for any  $x \in [0,x_1)$. Assume first $g_{\ell-1,+}(x) - g_{\ell-1,-}(x) < 2 \rho_{\ell-1}$. Then, $\chi^{(f)}(x,\cdot)$ is a $C^2$-smooth function defined in $(g_{\ell-1,-}(x) - \rho_{\ell-1}, g_{\ell-1,+}(x) + \rho_{\ell-1})$. Due to part $(7)$ of Lemma~\ref{4.fcontinuedfrac}, $\partial^2_u \chi^{(f)} > \sigma_1$ everywhere. Since $\chi^{(f)}(x, \zeta_\pm(x)) = 0$, $\zeta_-(x) < \zeta_+(x)$, one concludes that  $\partial_u \chi^{(f)}|_{x,\zeta_-(x)} < 0$, $\partial_u \chi^{(f)}|_{x,\zeta_+(x)} > 0$. Combined with part $(6)$ of Lemma~\ref{4.fcontinuedfrac}, this implies \eqref{eq:4zetassplit11}. Furthermore, $\chi^{(f)}(x,\cdot)$ has exactly two zeros. Due to part $(1)$ of Lemma~\ref{elemcalculusconv}, one concludes that $\min_{+,-} |\zeta_\pm(x) - g_{\ell-1,-}(x)| < \bigl( 2 \sigma_1^{-1} |\chi^{(f)} (x, g_{\ell-1,-}(x))| \bigr)^{1/2} < \frac{\sigma_1^6\rho^4}{2^{41}}$. Similarly, $\min_{+,-} |\zeta_\pm(x) - g_{\ell-1,-}(x)| < \frac{\sigma_1^6\rho^4}{2^{41}}$. Assume first $\max_{+,-} |\zeta_-(x) - g_{\ell-1,\pm}(x)| < \frac{\sigma_1^6 \rho^4}{2^{40}}$. Then, $g_{\ell-1,+}(x) - g_{\ell-1,-}(x) < \frac{\sigma_1^6 \rho^4}{2^{39}}$. Due to condition $(\gamma)$, one obtains $\frac{\sigma_1^6\rho^4}{2^{38}} > g_{\ell-1,+}(x) - g_{\ell-1,-}(x) + \frac{\sigma_1^6 \rho^4}{2^{39}} \ge 2^{-3} [|\partial_u \chi^{(f)}| |_{x,g_{\ell-1,+}(x)} + |\partial_u\chi^{(f)}| |_{x,g_{\ell-1,-}(x)}]$. In particular, $\frac{\sigma_1^6 \rho^4}{2^{35}} > |\partial_u \chi^{(f)}| |_{x,g_{\ell-1,+}(x)}$. Since $|\partial^2_u \chi^{(f)}| < 8$, one concludes $|\partial_u \chi^{(f)}| |_{x,\zeta_{-}(x)} < \frac{\sigma_1^6 \rho^4}{2^{34}}$. Due to part $(4)$ of Lemma~\ref{elemcalculusconv}, one concludes that $\zeta_+(x) - \zeta_{-}(x) < \frac{\sigma_1^2 \rho^2}{2^{13}}$. Since $\max_{+,-} |\zeta_-(x) - g_{\ell-1,\pm}(x)| < \frac{\sigma_1^6 \rho^4}{2^{40}}$, \eqref{eq:4zetaindomain11s} follows. Similarly, \eqref{eq:4zetaindomain11s} follows if $\max_{+,-} |\zeta_+(x) - g_{\ell-1,\pm}(x)| < \frac{\sigma_1^6 \rho^4}{2^{40}}$. Assume now $\max_{+,-} |\zeta_-(x) - g_{\ell-1,\pm}(x)| \ge \frac{\sigma_1^6 \rho^4}{2^{40}}$ and $\max_{+,-} |\zeta_+(x) - g_{\ell-1,\pm}(x)| \ge \frac{\sigma_1^6 \rho^4}{2^{40}}$. Since  $\zeta_-(x) < \zeta_+(x)$, $g_{\ell-1,-}(x) < g_{\ell-1,+}(x)$, $\min_{+,-} |\zeta_\pm(x) - g_{\ell-1,+}(x)| < \frac{\sigma_1^6 \rho^4}{2^{41}}$, $\min_{+,-}| \zeta_\pm(x) - g_{\ell-1,-}(x)| < \frac{\sigma_1^6 \rho^4}{2^{41}}$, one concludes that $|\zeta_\pm(x) - g_{\ell-1,\pm}(x)| < \frac{\sigma_1^6 \rho^4}{2^{41}}$. In particular, \eqref{eq:4zetaindomain11s} holds. This finishes the proof of the claim in case $g_{\ell-1,+}(x)- g_{\ell-1,-}(x) < 2\rho_{\ell-1}$.

Assume now  $g_{\ell-1,+}(x) - g_{\ell-1,-}(x) \ge 2 \rho_{\ell-1}$. In this case, due to condition $(\delta)$, $\min( - \partial_u \chi^{(f)}|_{x,g_{\ell-1,-}(x)}, \partial_u \chi^{(f)}|_{x,g_{\ell-1,+}(x)}) \ge \frac{\sigma_1^2 \rho^2}{128}$. Recall that $|\zeta_\pm(x) - g_{\ell-1,\pm}(x)| < \frac{\sigma_1^2 \rho^2}{2^{12}}$ and $|\partial^2_u \chi^{(f)}| < 8$. This implies in particular $-\partial_u \chi^{(f)}|_{x,\zeta_{-}(x)}, \partial_u \chi^{(f)}|_{x,\zeta_{+}(x)} > \frac{\sigma_1^2 \rho^2}{256}$. Combined with part $(6)$ of Lemma~\ref{4.fcontinuedfrac}, this implies \eqref{eq:4zetassplit11}. Since $g_{\ell-1,+}(x) - g_{\ell-1,-}(x) \ge 2 \rho_{\ell-1}$, it follows from part $(1)$ of Lemma~\ref{elemcalculusconv} that $|\zeta_\pm(x) - g_{\ell-1,\pm}(x)| < \frac{\sigma_1^6 \rho^4}{2^{41}}$. Thus, \eqref{eq:4zetaindomain11s} holds. This finishes the verification of the claim.

It follows from the claim that $\zeta_+(x)$ and $\zeta_-(x)$ can be defined for all $x$. These functions are $C^2$-smooth and obey \eqref{eq:4zetaindomain11s}, \eqref{eq:4zetassplit11}. Let us verify \eqref{eq:4zetassplit1t}. Assume first $g_{\ell-1,+}(x) - g_{\ell-1,-}(x) < 2 \rho_{\ell-1}$. Then, $\chi^{(f)}(x,\cdot)$ is a $C^2$-smooth function defined in $(g_{\ell-1,-}(x) - \rho_{\ell-1}, g_{\ell-1,+}(x) + \rho_{\ell-1})$. Therefore, \eqref{eq:4zetassplit1t} follows from \eqref{eq:4zetassplit11} since $|\partial^2_u \chi^{(f)}| < 8$. The estimate \eqref{eq:4zetassplit1tututu} follows from part $(5)$ of Lemma~\ref{elemcalculusconv}. The estimate \eqref{eq:4zetassplit1tatata} follows from part $(1)$ of Lemma~\ref{elemcalculusconv}, and in fact, in this case it holds for any $u$. Assume $g_{\ell-1,+}(x) - g_{\ell-1,-}(x) \ge 2 \rho_{\ell-1}$. In this case, \eqref{eq:4zetassplit1t} follows from \eqref{eq:4zetaindomain11s}.  Above we verified that $-\partial_u \chi^{(f)}|_{x,\zeta_{-}(x)}, \partial_u \chi^{(f)}|_{x,\zeta_{+}(x)} > \frac{\sigma_1^2 \rho^2}{256}$. Note also that $\frac{\sigma_1^2 (\zeta_+(x) - \zeta_-(x))^2}{256} > \frac{\sigma_1^2}{128}$. This verifies \eqref{eq:4zetassplit1tututu} for this case. Assume $|u - \zeta_-(x)| < \frac{\sigma_1^2 \rho^2}{2^{11}}$. Then, $\partial_u \chi^{(f)}|_{x,u} < -\frac{\sigma_1^2 \rho^2}{256} < 0$. So, part $(1)$ in Lemma~\ref{elemcalculusconv} applies and \eqref{eq:4zetassplit1tatata}  follows. The case $|u - \zeta_+(x)|) < \frac{\sigma_1^2 \rho^2}{256}$ is similar.
\end{proof}

\begin{lemma}\label{lem:4zetatheta}
Let $\zeta_\pm$ be as in  Lemma~\ref{lem:6-1ell} or as in Lemma~\ref{lem:6-1ellM}. If $\ell = 1$, assume that \eqref{eq:4a-thetaest} from $(9)$ of Lemma~\ref{4.fcontinuedfrac} holds. If $\ell \ge 2$, assume that \eqref{eq:4a-thetaest1} from $(10)$ of Lemma~\ref{4.fcontinuedfrac} holds. Then,
\begin{equation}\label{eq:4a-thetaest1AST}
\begin{split}
\partial_\theta \zeta_+ > (k^\zero)^2 \theta, \quad \partial_\theta \zeta_- < - (k^\zero)^2 \theta \quad \text{if $\ell = 1$, $\theta > 0$}, \\
\partial_\theta \zeta_+ > (\min_{j} \tau^{(f_{j})})^8 \theta, \quad \partial_\theta \zeta_- < - (\min_{j} \tau^{(f_{j})})^8 \theta \quad \text{if $\ell \ge 2$, $\theta > 0$}.
\end{split}
\end{equation}
\end{lemma}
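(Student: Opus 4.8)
The plan is to differentiate the defining identity $\chi^{(f)}\big(x,\zeta_\pm(x,\theta),\theta\big)=0$ in $\theta$ and solve for $\partial_\theta\zeta_\pm$. First I record that $\zeta_\pm$ is $C^2$ in $\theta$ as well as in $x$: by \eqref{eq:4zetassplit} (when $\zeta_\pm$ is as in Lemma~\ref{lem:6-1ell}) or by \eqref{eq:4zetassplit11} (when $\zeta_\pm$ is as in Lemma~\ref{lem:6-1ellM}) one has $\partial_u\chi^{(f)}|_{x,\zeta_-(x)}<0<\partial_u\chi^{(f)}|_{x,\zeta_+(x)}$ with $|\partial_u\chi^{(f)}|_{x,\zeta_\pm(x)}|\ge(\tau^{(f)})^2>0$, so $\partial_u\chi^{(f)}$ does not vanish at $(x,\zeta_\pm(x))$, and the implicit function theorem applies in the $(u,\theta)$ variables exactly as it did in $(x,u)$, yielding
\[
\partial_\theta\zeta_\pm \;=\; -\,\frac{\partial_\theta\chi^{(f)}\big|_{x,\zeta_\pm(x)}}{\partial_u\chi^{(f)}\big|_{x,\zeta_\pm(x)}}\,.
\]

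Next I assemble bounds on the two factors. For the denominator I already have, from the inequalities just quoted, the sign and the lower bound $|\partial_u\chi^{(f)}|_{x,\zeta_\pm(x)}|\ge(\tau^{(f)})^2>0$; for the upper bound I invoke part~(2) of Lemma~\ref{4.fcontinuedfrac}, which gives $|\partial_u\chi^{(f)}|<2^{-2^{2(\ell-1)}+3}\le 1$ when $\ell\ge 2$, and for $\ell=1$ a direct estimate from $\chi^{(f)}=f_1f_2-b^2$ using $|f_i|,|\partial_u b^2|<1/64$ and $|\partial_u f_i|=|1-\partial_u a_i|<1+1/64$ from Definition~\ref{def:4a-functions}, so that again $|\partial_u\chi^{(f)}|<1$. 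For the numerator I use the standing hypothesis of the lemma: \eqref{eq:4a-thetaest} (if $\ell=1$) or \eqref{eq:4a-thetaest1} (if $\ell\ge 2$) asserts, for $\theta>0$, that $\partial_\theta\chi^{(f)}<-c\,\theta$ at every point, with $c=(k^\zero)^2$ in the first case and $c=(\min_j\tau^{(f_j)})^8$ in the second; in particular this holds at $(x,\zeta_\pm(x))$.

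Finally I substitute. At $\zeta_+$, the denominator is positive and $\le 1$ while the numerator is $<-c\,\theta<0$, hence $\partial_\theta\zeta_+ = (-\partial_\theta\chi^{(f)})/(\partial_u\chi^{(f)}) \ge -\partial_\theta\chi^{(f)} > c\,\theta$. At $\zeta_-$, the denominator is negative with absolute value $\le 1$ and the numerator is $<-c\,\theta<0$, hence $\partial_\theta\zeta_- = -|\partial_\theta\chi^{(f)}|/|\partial_u\chi^{(f)}| \le -|\partial_\theta\chi^{(f)}| < -c\,\theta$. This is exactly \eqref{eq:4a-thetaest1AST}. I do not expect a genuine obstacle here: the substantive content — the strict convexity and the nonvanishing of $\chi^{(f)}$ in $u$ at its roots, together with the negativity of $\partial_\theta\chi^{(f)}$, which are precisely the places where self-adjointness and the $\theta\mapsto-\theta$ symmetry have been exploited — is already in hand from Lemma~\ref{4.fcontinuedfrac} and Lemmas~\ref{lem:6-1ell}, \ref{lem:6-1ellM}. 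The only points requiring minor care are the uniform bound $|\partial_u\chi^{(f)}|\le 1$ (needed to retain the clean constant $c$ rather than $c/2$ or worse) and the remark that $\zeta_\pm$ depends smoothly on the parameter $\theta$, both of which are routine.
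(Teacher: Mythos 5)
Your proof is correct and takes essentially the same route as the paper: differentiate $\chi^{(f)}(x,\zeta_\pm,\theta)=0$ implicitly in $\theta$, bound the numerator $-\partial_\theta\chi^{(f)}$ from below by $c\,\theta$ via the hypothesis \eqref{eq:4a-thetaest} or \eqref{eq:4a-thetaest1}, bound the denominator $|\partial_u\chi^{(f)}|$ from above by $1$ (part~(2) of Lemma~\ref{4.fcontinuedfrac}), and fix signs at $\zeta_\pm$ from the positivity/negativity of $\partial_u\chi^{(f)}$ established in Lemmas~\ref{lem:6-1ell}, \ref{lem:6-1ellM}. You are actually a bit more careful than the paper at $\ell=1$, where the quoted part~(2) bound is $<4$ rather than $<1$ and one genuinely needs the direct computation from $\chi^{(f)}=f_1f_2-b^2$ that you carry out; this is a minor tightening of the same argument, not a different approach.
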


\begin{proof}
Take an arbitrary $x_0$ and let $\theta > 0$. Set $u_0 = \zeta_+(x_0,\theta)$. Due to part $(7)$ of Lemma~\ref{4.fcontinuedfrac} one has
$\partial_u \chi^{(f)}|_{x_0,u_0} > (\tau^{(f)})^2|_{x_0,u_0}>0$. On the other hand, due to part $(2)$ of Lemma~\ref{4.fcontinuedfrac}, one has $|\partial^\alpha \chi^{(f)}| < 1$ for all $x,u,\theta$. Consider for instance the case $\ell \ge 2$ and $\theta > 0$. Then the assumption is that for $\theta > 0$,
\begin{equation}\label{eq:4a-thetaest1A}
\partial_\theta \chi^{(f)} < -(\min_{j} \tau^{(f_{j})})^8 \theta.
\end{equation}
Hence,
\begin{equation}\label{eq:4a-thetaest1AB}
\partial_\theta \zeta_+ = -\frac{\partial_\theta \chi}{\partial_u \chi}  > (\min_{j} \tau^{(f_{j})})^8 \theta,
\end{equation}
as claimed. The proof for the rest of the cases is similar.
\end{proof}

\begin{lemma}\label{lem:4stable}
Using the notation from Definition~\ref{def:4a-functions}, assume $f \in \mathfrak{F}^{(\ell)}_{\mathfrak{g}^{(\ell)},\lambda} (f_1, f_2, b^2)$. Let $r_i$, $h^2$ be $C^2$-functions of $(x,u) \in \cL_{\IR} \bigl( g_{\ell-1}, \rho_{\ell-1} \bigr)$. Let $\tilde f_i = f_i + r_i$, $\tilde b^2 = b^2 + h^2$. Assume that the following conditions hold for $(x,u) \in \cL_{\IR} \bigl( g_{\ell-1}, \rho_{\ell-1} \bigr)$: $(i)$ $\tilde f_1 < \tilde f_2$, $(ii)$ $|\partial^\alpha_u r_i|, |\partial^\alpha_u h^2| \le \min_j (\delta \lambda \tau^{(f_j)})^6$, $0 \le \alpha \le 2$, $i = 1, 2$, with some $\delta < (1 - \lambda)/4 \lambda$, $(iii)$ $h^2(0,u) = 0$. Set $\tilde f = \tilde f_1 - \tilde b^2/\tilde f_2$.
Then, $\tilde f \in \mathfrak{F}^{(\ell)}_{\mathfrak{g}^{(\ell)}, (1+4\delta)\lambda} (\tilde f_1, \tilde f_2, \tilde b^2)$, $\tau^{(\tilde f_j)} > (1 - \delta)\tau^{(f_j)}$.
\end{lemma}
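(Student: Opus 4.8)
I would prove the lemma by induction on $\ell$, checking in each case that the perturbed data $(\tilde f_1,\tilde f_2,\tilde b^2)$ satisfies every requirement of Definition~\ref{def:4a-functions} for membership in $\mathfrak{F}^{(\ell)}_{\mathfrak{g}^{(\ell)},(1+4\delta)\lambda}$, together with the stated lower bound on the new $\tau^{(\tilde f_j)}$. Note first that $\delta<(1-\lambda)/4\lambda$ guarantees $(1+4\delta)\lambda<1$, so the target parameter is admissible, and that $\delta<1/4$, so $(1+4\delta)(1-\delta)=1+3\delta-4\delta^2>1$; this last inequality is the mechanism by which the new size bounds in Definition~\ref{def:4a-functions} come out \emph{larger} than the old ones and hence absorb the perturbations.

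\emph{Base case $\ell=1$.} Here $f_i=u-a_i$, so $\tilde f_i=u-\tilde a_i$ with $\tilde a_i:=a_i-r_i$ and $\tilde b^2=b^2+h^2$. The ordering condition $\tilde a_1>\tilde a_2$ is hypothesis $(i)$; $\tilde b(0,u)=0$ follows from $b(0,u)=0$ and $h^2(0,u)=0$; and the size conditions of Definition~\ref{def:4a-functions}$(1)$ with $\lambda$ replaced by $(1+4\delta)\lambda$ follow by the triangle inequality, since by $(ii)$ with $\tau^{(f_j)}=1$ the perturbations are bounded by $(\delta\lambda)^6$, far below the extra room $(1+4\delta)\lambda/64-\lambda/64=\delta\lambda/16$. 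The bound $\tau^{(\tilde f_j)}>(1-\delta)\tau^{(f_j)}$ is trivial because $\tau^{(f_j)}=\tau^{(\tilde f_j)}=1$ by convention at level $1$.

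\emph{Inductive step $\ell\ge2$.} Say $f=f_1-b^2/f_2$ (the choice $f=f_2-b^2/f_1$, and the $-$-case, are symmetric), with $f_i\in\mathfrak{F}^{(\ell-1)}_{\mathfrak{g}^{(\ell-1)},\lambda}(f_{i,1},f_{i,2},b_i^2)$; by Lemma~\ref{4.fcontinuedfrac}$(3)$ both $f_i$ lie in the same ($+$ or $-$) subclass, say the $+$-case, so $f_i=f_{i,1}-b_i^2/f_{i,2}$. I would realize the perturbation $r_i$ of $f_i$ by perturbing only the outer summand: $\tilde f_{i,1}:=f_{i,1}+r_i$, $\tilde f_{i,2}:=f_{i,2}$, $\tilde b_i^2:=b_i^2$, so that $\tilde f_i=\tilde f_{i,1}-\tilde b_i^2/\tilde f_{i,2}$. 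Using $|\tau^{(f_i)}|\le|\tau^{(f_{i,k})}|$ — which follows from the formula $\tau^{(f_i)}=(\chi^{(f_{i,2})}-\chi^{(f_{i,1})})\tau^{(f_{i,1})}\tau^{(f_{i,2})}$ in \eqref{eq:4a-bfunctions1a} together with parts $(1)$–$(2)$ of Lemma~\ref{4.fcontinuedfrac}, which make the omitted factor smaller than $1$ — hypothesis $(ii)$ at level $\ell$ implies the level-$(\ell-1)$ size hypothesis for $r_i$, in fact with a much smaller effective $\delta'=\delta\cdot(\tau^{(f_i)}/\min_k\tau^{(f_{i,k})})\ll\delta$. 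Hence the inductive hypothesis gives $\tilde f_i\in\mathfrak{F}^{(\ell-1)}_{\mathfrak{g}^{(\ell-1)},(1+4\delta)\lambda}(\tilde f_{i,1},\tilde f_{i,2},\tilde b_i^2)$ and $\tau^{(\tilde f_{i,k})}>(1-\delta')\tau^{(f_{i,k})}$. It remains to verify conditions $(a)$–$(e)$ of Definition~\ref{def:4a-functions}$(3)$ for $(\tilde f_1,\tilde f_2,\tilde b^2)$: condition $(a)$, $\chi^{(\tilde f_1)}<\chi^{(\tilde f_2)}$, holds because $\chi^{(f)}=\mu^{(f)}f$ gives $|\chi^{(\tilde f_i)}-\chi^{(f_i)}|\lesssim|\mu^{(f_i)}||r_i|+|f_i||\Delta\mu^{(f_i)}|=O((\delta\lambda\tau^{(f)})^6)$, while the gap $\chi^{(f_2)}-\chi^{(f_1)}=\tau^{(f)}/(\tau^{(f_1)}\tau^{(f_2)})$ dominates this by the factor $(\delta\lambda)^{-6}|\tau^{(f)}|^{-5}\gg1$; conditions $(b)$ and $(d)$ (the size bounds on $\tilde f_i$, $b$, and their derivatives) follow from the triangle inequality together with $\tau^{(\tilde f_j)}>(1-\delta)\tau^{(f_j)}$ and $(1+4\delta)(1-\delta)>1$; condition $(c)$, preservation of the $\pm$-dichotomy, follows from the quantitative margins of Lemma~\ref{4.fcontinuedfrac}$(3)$ (e.g. $\chi^{(f_{i,2})}\ge\tfrac12(\chi^{(f_{i,2})}-\chi^{(f_{i,1})})+(\prod_j\mu^{(f_{i,j})})^{1/2}|b_i|$), which are of order $\tau^{(f_{i,j})}$ and so comfortably survive the perturbations, together with Lemma~\ref{4.fcontinuedfrac}$(5)$–$(6)$; and condition $(e)$, $\hat\sigma(\tilde f_1)=\hat\sigma(\tilde f_2)$, is inherited since the $\pm$-labels do not change. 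Finally, $\tau^{(\tilde f_j)}>(1-\delta)\tau^{(f_j)}$ follows by substituting $\tau^{(\tilde f_{j,k})}>(1-\delta')\tau^{(f_{j,k})}$ and the negligible change of the $\chi$-gap into the defining formula for $\tau^{(\tilde f_j)}$, as a product of finitely many factors each exceeding $1-o(\delta)$ still exceeds $1-\delta$.

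\emph{Main obstacle.} The delicate part is the simultaneous bookkeeping of the two competing distortions — the parameter inflation $\lambda\mapsto(1+4\delta)\lambda$ and the $\tau$-deflation $\tau\mapsto(1-\delta)\tau$ — through the nested structure. The point one must get exactly right is that, although the perturbation bounds in $(ii)$ are only \emph{sixth} powers of the $\tau$'s whereas the structural bounds in Definition~\ref{def:4a-functions} are \emph{tenth} powers, the perturbations are still negligible, because each $\tau^{(f)}$ is dominated both by the $\tau$'s of its constituents and by the corresponding $\chi$-gap; and that the $\pm$-case dichotomy in condition $(c)$ is genuinely stable, which forces one to use the strict quantitative inequalities of Lemma~\ref{4.fcontinuedfrac}$(3)$,$(5)$,$(6)$ rather than merely the sign conditions. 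Everything else is a routine, if lengthy, triangle-inequality chase.
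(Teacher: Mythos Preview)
Your proposal is correct and follows essentially the same approach as the paper: induction on $\ell$, with the key trick of absorbing the perturbation $r_i$ entirely into the outer summand via $\tilde f_{i,1}:=f_{i,1}+r_i$, $\tilde f_{i,2}:=f_{i,2}$, $\tilde b_i^2:=b_i^2$, and then using $\tau^{(f_j)}\le\min_k\tau^{(f_{i,k})}$ to pass the size hypothesis down a level. The paper's proof is much terser---it starts the induction at $\ell=2$, checks only $\tau^{(\tilde f_j)}>(1-\delta)\tau^{(f_j)}$ and the size bound $|\tilde f_i|<((1+4\delta)\lambda\tau^{(\tilde f_j)})^{6}$ explicitly, and dismisses the remaining conditions $(b)$--$(d)$ as ``similar'' while attributing $(a)$ directly to hypothesis $(i)$---whereas you spell out the verification of $(a)$, $(c)$, and $(e)$ more carefully. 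Your explicit treatment of condition $(a)$ via the smallness of $|\chi^{(\tilde f_i)}-\chi^{(f_i)}|$ relative to the $\chi$-gap is in fact more convincing than the paper's one-line reference to $(i)$, since $(i)$ gives $\tilde f_1<\tilde f_2$ rather than $\chi^{(\tilde f_1)}<\chi^{(\tilde f_2)}$ directly.
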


\begin{proof}
The proof goes by induction in $\ell = 2, \dots$. Assume for instance $f \in \mathfrak{F}^{(2,+)}_{\mathfrak{g}^{(2)},\lambda} (f_1, f_2, b^2)$ and $u - a_{i,2} > 0$, $i = 1, 2$. One has in this case $\tilde f_i = f_i + r_i = (u - a_{i,1}) + r_i - b^2_i (u - a_{i,2})^{-1} = (u - \tilde a_{i,1}) - b^2_i (u - \tilde a_{i,2})^{-1}$, $\tilde a_{i,1} := a_{i,1} - r_i$, $\tilde a_{i,2} = a_{i,2}$, $\tau^{(\tilde f_j)} \ge \tau^{(f_j)} - |r_j| > (1 - \delta) \tau^{(f_j)}$, $|\tilde f_i| \le |f_i| + |r_i| < \min_j (\lambda \tau^{(f_j)})^6 + \min_j (\delta \lambda \tau^{(f_j)})^6 < \min_j ((1 + \delta) \lambda \tau^{(f_j)})^6 \le \min_j ((1 + 4\delta) \lambda \tau^{(\tilde f_j)})^6$. The verification of the rest of the conditions $(b)$--$(d)$ in part $(2)$ of Definition~\ref{def:4a-functions} is similar. Condition $(a)$ is due to condition $(i)$ in the current lemma. Let $f \in \mathfrak{F}^{(\ell)}_{\mathfrak{g}^{(\ell)},\lambda} (f_1, f_2, b^2)$, $\ell \ge 3$, $f_i \in \mathfrak{F}^{(\ell-1)}_{\mathfrak{g}^{(\ell-1)},\lambda} (f_{i,1}, f_{i,2}, b^2_i)$. Assume for instance  $f_{i,2} > 0$, $f_i = f_{i,1} - b^2_i f_{i,2}^{-1}$, $i = 1, 2$. Then, $\tilde f_i = \tilde f_{i,1} - b^2_i \tilde f_{i,2}^{-1}$, $\tilde f_{i,1} = f_{i,1} + r_i$, $\tilde f_{i,2} = f_{i,2}$. Since $\tau^{(f_j)} < \min_{i,k} \tau^{(f_{i,k})}$, one can verify all conditions in part $(3)$ of Definition~\ref{def:4a-functions} just like above. Induction is needed just to make sure that $\tilde f_i \in \mathfrak{F}^{(\ell-1)}_{\mathfrak{g}^{(\ell-1)},(1+4\delta)\lambda} (\tilde f_{i,1}, \tilde f_{i,2}, b^2_i)$.
\end{proof}

\section{Matrices with Ordered Pairs of Resonances}\label{sec.5}

Let us now return to the setting of Section~\ref{sec.3}. Let $\La$ be a subset of $\zv$. Let $v(n)$, $n \in \La$, $h_0(m, n)$, $m, n \in \La$, $m \ne n$ be some complex functions. Consider $H_{\La,\ve} = \bigl(h(m, n; \ve)\bigr)_{m, n \in \La}$, where $\ve \in \IC$,
\begin{alignat}{2}
h(n, n; \ve) & =v(n)\ , &\quad  & n \in \La, \label{eq:5-1N} \\[6pt]
h(m, n; \ve) & = \ve h_0(m, n)\ , && m, n \in \La,\ m \ne n. \nn
\end{alignat}
Assume that the following conditions are valid,
\begin{gather}
v(n) = \overline{v(n)}, \label{eq:5-2N} \\[6pt]
h_0(m,n) = \overline{h_0(n,m)}, \label{eq:5-3N}\\[6pt]
|h_0(m, n)| \le B_1 \exp (-\ka_0 |m - n|), \quad m, n \in \La,\ m \ne n, \label{eq:5-5N}
\end{gather}
where $0 < B_1 \le 1$, $0 < \ka_0 \le 1/2$.

\begin{defi}\label{def:8-1a}
Assume that $H_{\La,\ve}$ obeys \eqref{eq:5-1N}--\eqref{eq:5-5N}. Assume also that there exist $m_0^+, m_0^- \in \La$, $m_0^- \neq m_0^+$ such that $|v(m^+_0) - v(m_0^-)| < \delta_0^3$ and $|v(n) - v(m^+_0)| \ge \delta_0$ for any $n \in \La \setminus \{m^+_0, m_0^-\}$. Assume also that
\begin{equation} \label{eq:4-3AAAAABBBBBBCCCC}
\bigl(m^\pm_0 + B(R^{(1)})\bigr) \subset \Lambda.
\end{equation}
We say in this case that $H_{\La,\varepsilon} \in \widehat{OPR^{(1)}} \bigl( m^+_0, m^-_0, \La; \delta_0 \bigr)$.

Let $s \ge 2$. Let $m^+_0, m^-_0 \in \La$, $m_0^+ \neq m_0^-$. Assume that there exist subsets $\cM^{(s')} \subset \La$, $s' = 1, \ldots, s - 1$, some of which may be empty, and a collection of subsets $\La^{(s')}(m) \subset \La$, $m \in \cM^{(s')}$, defined only for those $s'$ for which $\cM^{(s')} \neq \emptyset$. Assume that $m^+_0, m^-_0 \in \cM^\esone$. Assume that all conditions in Definition~\ref{def:4-1} hold with $m_0 := m^+_0$ and with the following exception. The estimate \eqref{eq:4-3sge3} holds for any $m \neq m_0^-$, and moreover,
\begin{equation} \label{eq:4-3AAAAAmnotm0}
12(\delta_0^{(s-1)})^{1/8} \le \big| E^{(s-1)} \bigl(m, \La^{(s-1)}(m); \ve\bigr) - E^{(s-1)} \bigl(m^+_0, \La^{(s-1)}(m^+_0); \ve\bigr) \big| \le \delta_0^{(s-2)}.
\end{equation}
For $m = m^-_0$, we have
\begin{equation} \label{eq:4-3AAAAA}
\big| E^{(s-1)} \bigl(m^-_0, \La^{(s-1)}(m^-_0); \ve \bigr) - E^{(s-1)}\bigl(m^+_0, \La^{(s-1)}(m^+_0); \ve\bigr) \big| \le (\delta_0^{(s-1)})^{1/8}.
\end{equation}
Assume also that
\begin{equation} \label{eq:4-3AAAAABBBBBB}
\bigl(m^\pm_0 + B(R^{(s)})\bigr) \subset \Lambda.
\end{equation}
In this case, we say that $H_{\La,\varepsilon}$ belongs to the class $\widehat{OPR^{(s)}}\bigl(m^+_0, m^-_0, \La; \delta_0 \bigr)$. We set $s(m^\pm_0) = s$. We call $m^+_0, m^-_0$ the  principal points. We call $\La^{(s-1)}(m^\pm_0)$ the $(s-1)$-set for $m^\pm_0$.
\end{defi}

\begin{remark}\label{rem:5.1}
$(1)$ We remark here that if $H_{\La,\varepsilon} \in \widehat{OPR^{(s)}} \bigl( m^+_0, m^-_0, \La; \delta_0 \bigr)$, then some of the statements in Proposition~\ref{prop:4-4} still hold for obvious reasons, the lower estimate in \eqref{eq:4-3sge3} for $m = m_0^-$ does not affect these statements. In Proposition~\ref{prop:5-4I} below, these statements are made explicit.

$(2)$ Note that the classes $\widehat{OPR^{(s)}} \bigl( m^+_0, m^-_0, \La; \delta_0 \bigr)$ and $\cN^{(s)} \bigl( m_0, \La; \delta_0 \bigr)$ may intersect since \eqref{eq:4-3AAAAA} does not exclude such a possibility, that is, it is possible that one has
\begin{equation} \label{eq:4-3AAAAAU}
3 \delta_0^{(s-1)} \le \big| E^{(s-1)} \bigl( m^-_0, \La^{(s-1)}(m^-_0); \ve \bigr) - E^{(s-1)} \bigl( m^+_0, \La^{(s-1)}(m^+_0); \ve \bigr) \big| \le (\delta_0^{(s-1)})^{1/8}.
\end{equation}
In fact, in Section~\ref{sec.8} we will have examples for which this happens.
\end{remark}

\begin{prop}\label{prop:5-4I}
Let $H_{\La,\varepsilon} \in \widehat{OPR^{(s)}} \bigl( m^+_0, m^-_0, \La; \delta_0 \bigr)$. For any $m \in \cM^{(s')}$ and $n \in \La^{(s')} (m) \setminus \{m\}$, we have $v(n) \neq v(m)$, $s' = 1, \dots, s-1$. So, $E^{(s')} (m, \La^{(s')} (m); 0) := v(m)$ is a simple eigenvalue of $H_{\La^{(s')} (m), 0}$. Let $E^{(s')} \bigl(m, \La^{(s')}(m); \ve \bigr)$ be the analytic function such that $E^{(s')} \bigl( m, \La^{(s')} (m); \ve \bigr) \in \spec H_{\La^{(s')}(m), \ve}$ for any $\ve$, $E^{(s')} \bigl(m, \La^{(s')}(m); 0 \bigr) = v(m)$.
\begin{itemize}

\item[(1)] Define inductively the functions $D(\cdot; \La^{(s')} (m))$, $1 \le s'\le s-1$, $m \in \cM{(s')}$, $D(\cdot; \La)$, by setting:

    for $s = 1$, \quad $D(x; \La) = 4 \log \delta_0^{-1}$, $x \in \La \setminus \{m_0^\pm\}$, $D(m_0^\pm;\La) := 4\log (\delta^\one)^{-1}$,

    for $s > 1$, \quad $D(x;\La) = D(x; \La^{(s')} (m))$ if $x \in \La^{(s')}(m)$ for some $s' \le s-1$ and some $m \in \cM{(s')} \setminus \{ m_0^\pm \}$, or if $x \in \La^{(s-1)} (m_0^\pm) \setminus \{ m_0^\pm \}$, $D(m_0^\pm; \La) = 4 \log (\delta^{(s)}_0)^{-1}$, $D(x; \La) = 4 \log \delta_0^{-1}$ if $x \in \Lambda \setminus \bigl( \bigcup_{1 \le s'\le s-1} \bigcup_{m \in \cM{(s')}} \La^{(s')}(m) \bigr)$.

Then, $D(\cdot; \La^{(s')} (m)) \in \mathcal{G}_{\La^{(s')} (m), T, \kappa_0}$, $1 \le s'\le s-1$, $m \in \cM{(s')}$, $D(\cdot; \La) \in \mathcal{G}_{\La, T, \kappa_0}$, $T = 4 \kappa_0 \log \delta_0^{-1}$, $\max_{x \notin \{m^+_0,m_0^-\}} D(x) \le 4 \log (\delta^{(s-1)}_0)^{-1}$.

\item[(2)] If $s = 1$, the matrix $(E - H_{\La \setminus \{m_0^+,m_0^-\},\ve})$ is invertible for any complex $|\ve|<\ve_0$, $|E - v(m_0^+)| < \delta_0/4$.

Let $s \ge 2$. For any complex $|\ve| < \ve_{s-2}$, $\big| E - E^{(s-1)}(m_0^+, \La^{(s-1)} (m_0^+); \ve) \big| < 10 (\delta^{(s-1)}_0)^{1/8}$, each matrix $(E - H_{\La^{(s')} (m), \ve})$, $s' \le s-1$, $m \in \cM^{(s')}$, $m \notin \{m_0^+,m_0^- \}$ is invertible. The matrices $(E - H_{\La^{(s-1)} (m_0^\pm) \setminus \{ m_0^\pm \}, \ve})$ and the matrix  $(E - H_{\La \setminus \{m_0^+,m_0^-\}, \ve})$ are invertible. Here, $E^{(0)}(m', \La';0) := v(m')$ for any $\La'$ and any $m' \in \La'$. Moreover,
\begin{equation}\label{eq:5Hinvestimatestatement1}
\begin{split}
|[(E - H_{\La^{(s')}(m), \ve})^{-1}] (x,y)| \le s_{D(\cdot; \La^{(s')}(m)), T, \kappa_0, |\ve|; \La^{(s')}(m)} (x,y), \\
|[(E - H_{\La^{(s-1)}(m_0^\pm) \setminus \{ m_0^\pm \}, \ve})^{-1}] (x,y)| \le s_{D(\cdot; \La^{(s-1)} (m_0^\pm) \setminus \{ m_0^\pm \}), T, \kappa_0, |\ve|; \La^{(s-1)} (m_0^\pm) \setminus \{ m_0^\pm \}} (x,y), \\
|[(E - H_{\La \setminus \{ m_0^+, m_0^- \} , \ve})^{-1}] (x,y)| \le s_{D(\cdot; \La \setminus \{ m_0^+, m_0^- \}), T, \kappa_0, |\ve|; \La \setminus \{ m_0^+, m_0^- \}} (x,y).
\end{split}
\end{equation}

\end{itemize}
\end{prop}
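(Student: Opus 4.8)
The statement is a two--principal--point analogue of parts $(1)$ and $(2)$ of Proposition~\ref{prop:4-4}, and the plan is to reproduce those arguments essentially verbatim while tracking the one structural difference: by Definition~\ref{def:8-1a}, all hypotheses of Definition~\ref{def:4-1} hold with $m_0:=m_0^+$, except that for $m=m_0^-$ the lower separation bound in \eqref{eq:4-3sge3} is dropped in favour of the mere upper bound \eqref{eq:4-3AAAAA}, whereas for every other $m$ the separation is in fact \emph{strengthened} to \eqref{eq:4-3AAAAAmnotm0}. The decisive observation is that none of the matrices named in Proposition~\ref{prop:5-4I} contains the near--resonant pair $\{m_0^+,m_0^-\}$: each $\La^{(s')}(m)$ with $m\ne m_0^\pm$ is, by condition $(c)$ of Definition~\ref{def:4-1}, the index set of a matrix of class $\cN^{(s')}\bigl(m,\La^{(s')}(m);\delta_0\bigr)$, the sets $\La^{(s-1)}(m_0^\pm)$ are of class $\cN^{(s-1)}$, and $\La\setminus\{m_0^+,m_0^-\}$ is obtained by deleting \emph{both} principal points. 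Hence the weakened bound \eqref{eq:4-3AAAAA} never enters, and Proposition~\ref{prop:4-4} is available as a black box for each of these pieces.

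For part $(1)$, I would argue exactly as in part $(1)$ of Proposition~\ref{prop:4-4}, now for $m_0^+$ and $m_0^-$ symmetrically: by \eqref{eq:4-3AAAAABBBBBB} one has $\mu_\La(m_0^\pm)\ge R^{(s)}$, so $D(m_0^\pm;\La)=4\log(\delta_0^{(s)})^{-1}=4(\log R^{(s)})^2<(R^{(s)})^{1/5}<\mu_\La(m_0^\pm)^{1/5}$ by Remark~\ref{rem:3.Rs}; for every other $x$, the membership $D(\cdot;\La)\in\mathcal{G}_{\La,T,\kappa_0}$ and the bound $\max_{x\notin\{m_0^+,m_0^-\}}D(x)\le 4\log(\delta_0^{(s-1)})^{-1}$ follow from the corresponding facts for the blocks $\La^{(s')}(m)$ together with Lemma~\ref{lem:auxDfunctionsrules}, by induction on $s$.

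For part $(2)$, fix $\ve,E$ with $|\ve|<\ve_{s-2}$ and $|E-E^{(s-1)}(m_0^+,\La^{(s-1)}(m_0^+);\ve)|<10(\delta_0^{(s-1)})^{1/8}$. The first two lines of \eqref{eq:5Hinvestimatestatement1} are then obtained precisely as the estimates \eqref{eq:3Hinvestimatestatement1} of Proposition~\ref{prop:4-4}, the only change being that the $E$-window $2\delta_0^{(s-1)}$ is enlarged to $10(\delta_0^{(s-1)})^{1/8}$: for $m\in\cM^{(s')}$ with $m\ne m_0^\pm$ one checks, using \eqref{eq:4-3AAAAAmnotm0} when $s'=s-1$ and the second line of \eqref{eq:4-3sge3} when $s'\le s-2$ (together with \eqref{eq:4-17AAAA}), that $E$ still lies strictly inside the $E$-domain demanded by Proposition~\ref{prop:4-4} for the block $\La^{(s')}(m)$, while for $\La^{(s-1)}(m_0^\pm)\setminus\{m_0^\pm\}$ one applies Proposition~\ref{prop:4-4} with $s-1$ in the role of $s$ and $m_0^\pm$ in the role of $m_0$, moving the $E$-center from $E^{(s-1)}(m_0^+,\cdot;\ve)$ to $E^{(s-2)}(m_0^\pm,\cdot;\ve)$ at a cost $O((\delta_0^{(s-1)})^{1/8})$ via \eqref{eq:4-3AAAAA} and \eqref{eq:4-17AAAA}. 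For the last line of \eqref{eq:5Hinvestimatestatement1} I would invoke Proposition~\ref{prop:aux1} for $\cH:=E-H_{\La\setminus\{m_0^+,m_0^-\},\ve}$ with the pairwise disjoint family $\{\La^{(s')}(m):m\in\cM^{(s')}\setminus\{m_0^\pm\},\ 1\le s'\le s-1\}\cup\{\La^{(s-1)}(m_0^+)\setminus\{m_0^+\},\ \La^{(s-1)}(m_0^-)\setminus\{m_0^-\}\}$; disjointness is condition $(b)$ of Definition~\ref{def:4-1}, hypothesis $(a)$ of Proposition~\ref{prop:aux1} is what was just established, and hypothesis $(b)$ holds because for any $n$ outside all these sets condition $(f)$ of Definition~\ref{def:4-1} gives $|v(n)-v(m_0^+)|\ge\delta_0^4$, hence $|E-v(n)|>\delta_0^4/2>\delta_0^{16}=\exp(-4T\kappa_0^{-1})$, since both $|E-E^{(s-1)}(m_0^+,\cdot;\ve)|$ and, by \eqref{eq:4-17AAAA}, $|E^{(s-1)}(m_0^+,\cdot;\ve)-v(m_0^+)|<|\ve|$ are negligible. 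The case $s=1$ is the degenerate instance of this last step with no blocks $\La^{(s')}(m)$ at all, condition $(f)$ of Definition~\ref{def:8-1a} supplying $|v(n)-v(m_0^+)|\ge\delta_0$ for $n\ne m_0^\pm$.

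I expect the only genuinely delicate point to be the $E$-domain bookkeeping in part $(2)$: one must be certain that the enlarged window of radius $10(\delta_0^{(s-1)})^{1/8}$ around $E^{(s-1)}(m_0^+,\cdot;\ve)$ — enlarged precisely to house the extra near--resonant point $m_0^-$ — still sits strictly inside every $E$-domain required by Proposition~\ref{prop:4-4} for the sub-blocks. This hinges on the separation $12(\delta_0^{(s-1)})^{1/8}$ for the genuinely non-resonant points in \eqref{eq:4-3AAAAAmnotm0}, and on $\delta_0^{(s-1)}$ being superpolynomially smaller than $\delta_0^{(s-2)}$ (Remark~\ref{rem:3.Rs}), so that $(\delta_0^{(s-1)})^{1/8}$ is negligible against $\delta_0^{(s'-1)}$ for every $s'\le s-2$ and against $\delta_0^{(s-2)}$. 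I would also emphasize that Proposition~\ref{prop:5-4I} says nothing about the full matrix $E-H_{\La,\ve}$, in which $m_0^+$ and $m_0^-$ appear together; handling that genuinely new resonance is postponed to the continued-fraction-function analysis built in Section~\ref{sec.4}.
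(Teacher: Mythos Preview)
Your proposal is correct and follows exactly the approach the paper has in mind: as noted in Remark~\ref{rem:5.1}(1), the paper gives no explicit proof of Proposition~\ref{prop:5-4I}, merely observing that the relevant statements of Proposition~\ref{prop:4-4} ``still hold for obvious reasons'' because the missing lower bound in \eqref{eq:4-3sge3} for $m=m_0^-$ never enters when both principal points are removed. Your write-up is a careful unpacking of this, including the correct identification of the one non-trivial check---that the enlarged $E$-window of radius $10(\delta_0^{(s-1)})^{1/8}$ still fits inside the Proposition~\ref{prop:4-4} domains for each sub-block, via \eqref{eq:4-3AAAAAmnotm0} at level $s-1$ and the coarser $\delta_0^{(s'-1)}$-scale at lower levels.
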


\begin{lemma}\label{lem:5schur1}
Using the notation from Proposition~\ref{prop:5-4I}, the following statements hold.
\begin{itemize}

\item[(1)] The functions
\begin{equation} \label{eq:5-10ac}
\begin{split}
K^{(s)}(m, n, \La; \ve, E) = (E - H_{\La_{m_0^+, m_0^-}})^{-1} (m,n) , \quad m, n \in \La_{m_0^+, m_0^-} := \Lambda \setminus \{ m_0^+, m_0^- \}, \\
Q^{(s)} (m_0^\pm, \La; \ve, E) = \sum_{m', n' \in \La_{m^+_0, m_0^-}} h(m_0^\pm, m'; \ve) K^{(s)}(m', n'; \La; \ve, E) h(n', m_0^\pm; \ve), \\
G^\es(m^\pm_0, m^\mp_0, \La; \ve, E) = h(m_0^\pm, m_0^\mp; \ve) + \sum_{m', n' \in \La_{m^+_0,m_0^-}} h(m_0^\pm, m'; \ve) K^{(s)}(m', n'; \La; \ve, E) h(n', m_0^\mp;\ve)
\end{split}
\end{equation}
are well-defined and analytic in the following domain,
\begin{equation}\label{eq:6.domainOP}
\begin{split}
|\ve| < \bar \ve_0, \quad |E - v(m_0)| < \delta_0/4, \quad \text { in case $s = 1$}, \\
|\ve| < \ve_{s-1} := \ve_0 - \sum_{1 \le s' \le s-1} \delta^{(s')}_0, \quad \big| E - E^{(s-1)}(m^+_0, \La^{(s-1)}(m_0^+); \ve) \big| < 10 (\delta^{(s-1)}_0)^{1/8}, \quad s \ge 2, \\
\ve_0 := \bar \ve_0^3, \quad \bar \ve_0: = \min(2^{-24 \nu-4} \kappa_0^{4\nu}, \delta_0^{2^9}, 2^{-10(\nu+1)} ( (4 \kappa_0 \log \delta_0^{-1})^{-8\nu})).
\end{split}
\end{equation}
The following estimates hold with $0 \le \alpha \le 2$:
\begin{equation} \label{eq:5-12acACC}
\begin{split}
& \big| \partial^\alpha_E [Q^{(s)} (m_0^\pm, \La; \ve, E) - Q^{(s-1)} \bigl( m_0^\pm, \La^{(s-1)} (m_0^\pm); \ve,E \bigr) ] \big|\le 4 |\ve|^{3/2} \exp(-\frac{\kappa_0}{4} R^\esone) \le |\ve| (\delta_0^{(s-1)})^{12}, \\
& \big| \partial^\alpha_E Q^{(s)} (m_0^\pm, \La; \ve, E) \big| \le |\ve|, \\
& \big| \partial^\alpha_E G^\es( m^\pm_0, m^\mp_0, \La; \ve, E) \big| \le  4 |\ve|^{3/2} \exp(-\frac{\kappa_0}{4} |m_0^+ - m_0^-|) \le
4 |\ve|^{3/2} \exp(-\frac{\kappa_0}{4} R^\esone) \le |\ve| (\delta_0^{(s-1)})^{12}
\end{split}
\end{equation}

For $\ve, E \in \mathbb{R}$, the following identities hold:
\begin{equation} \label{eq:5-11selfadj}
\begin{split}
K^{(s)}(m, n, \La; \ve, E) = \overline{K^{(s)}(n, m, \La; \ve, E)}, \\
\overline{Q^\es(m^\pm_0, \La; \ve, E)} = Q^\es(m^\pm_0, \La; \ve, E), \quad G^\es(m^+_0, m^-_0, \La; \ve, E) = \overline{G^\es(m^-_0, m_0^+\La; \ve, E)}.
\end{split}
\end{equation}

\item[(2)] Let $|E - E^{(s-1)} (m_0^+, \La^\esone(m_0); \ve)| < 4 \delta^\esone$. Set $\cH_\La := E - \hle$. Let $\tilde H_2$ be as in the Schur complement formula with $\La_1 := \La_{m_0^+, m_0^-}$, $\La_2 := \La \setminus \La_1$. Then,
\begin{equation}\label{eq:5-13NNNNM}
\begin{split}
& \det \tilde H_2 = \chi(\ve, E) := \bigl( E - v(m_0^+) - Q^\es(m^+_0, \La; \ve, E) \bigr) \cdot \bigl( E - v(m_0^-) - Q^\es(m_0^-, \La; \ve, E) \bigr) \\
& \qquad - G^\es(m^+_0, m^-_0, \La; \ve, E) G^\es(m^-_0, m^+_0, \La; \ve, E).
\end{split}
\end{equation}
In particular, $E \in\spec H_{\La, \ve}$ if and only if $E$ obeys
\begin{equation} \label{eq:5-13NNNN}
\chi(\ve,E) = 0.
\end{equation}

\end{itemize}
\end{lemma}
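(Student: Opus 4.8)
The plan is to prove Lemma~\ref{lem:5schur1} by combining the inverse-estimates of Proposition~\ref{prop:5-4I} with the Schur complement formula (Lemma~\ref{lem:2schur}) and the ``functoriality'' lemmas of Section~\ref{sec.2}, following closely the pattern already established in the proof of Proposition~\ref{prop:4-4}. The key structural observation is that now we remove two points, $m_0^+$ and $m_0^-$, rather than one, so the relevant Schur block $\tilde H_2$ is a $2\times 2$ matrix whose entries are exactly the four functions $E - v(m_0^\pm) - Q^{(s)}(m_0^\pm,\La;\ve,E)$ on the diagonal and $-G^{(s)}(m_0^\pm,m_0^\mp,\La;\ve,E)$ off the diagonal.

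First I would establish part $(1)$. Set $\La_{m_0^+,m_0^-} := \La\setminus\{m_0^+,m_0^-\}$ and $\cH_{\La_{m_0^+,m_0^-}} := E - H_{\La_{m_0^+,m_0^-},\ve}$. By part $(2)$ of Proposition~\ref{prop:5-4I}, this matrix is invertible in the stated domain with $|\cH^{-1}_{\La_{m_0^+,m_0^-}}(x,y)|\le s_{D(\cdot;\La_{m_0^+,m_0^-}),T,\kappa_0,|\ve|;\La_{m_0^+,m_0^-}}(x,y)$; Cramer's rule then gives analyticity of $K^{(s)}$, hence of $Q^{(s)}$ and $G^{(s)}$, throughout the domain \eqref{eq:6.domainOP}. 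The bounds $|\partial_E^\alpha Q^{(s)}|\le|\ve|$ follow from Lemma~\ref{lem:auxweight1iterated} (applied as in \eqref{eq:4101rem1a1}) together with Cauchy estimates, exactly as in Proposition~\ref{prop:4-4}. For the comparison estimates $|\partial_E^\alpha[Q^{(s)}(m_0^\pm,\La;\cdot)-Q^{(s-1)}(m_0^\pm,\La^{(s-1)}(m_0^\pm);\cdot)]|$ I would invoke Lemma~\ref{lem:2Qfunctionderiv} (equivalently, repeat the computation \eqref{eq:4101rem1a1KKLLN}--\eqref{eq:auxAAAAKKLL1}) with $\La_1 := \La^{(s-1)}(m_0^\pm)\setminus\{m_0^\pm\}$ and $\La_2 := \La\setminus\La^{(s-1)}(m_0^\pm)$, using condition $(d)$ of Definition~\ref{def:4-1} to get $\mu_{\La^{(s-1)}(m_0^\pm)}(m_0^\pm)\ge R^{(s-1)}$ and the bound $\bar D\le 4\log(\delta_0^{(s-1)})^{-1}$ from part $(1)$ to absorb the exponential; the final numerical inequality $4|\ve|^{3/2}\exp(-\tfrac{\kappa_0}{4}R^{(s-1)})\le|\ve|(\delta_0^{(s-1)})^{12}$ is Remark~\ref{rem:3.Rs} plus $\kappa_0(R^{(s-1)})^{1/16}>\log(\delta_0^{(s-1)})^{-1}$. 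The estimate for $G^{(s)}$ is the same argument, with the additional gain $\exp(-\tfrac{\kappa_0}{4}|m_0^+-m_0^-|)$ coming from $|h(m_0^\pm,m';\ve)|\le|\ve|\exp(-\kappa_0|m_0^\pm-m'|)$ and the triangle inequality $|m_0^+-m'|+\|\gamma\|+|n'-m_0^-|\ge|m_0^+-m_0^-|$, together with $|m_0^+-m_0^-|\ge R^{(s-1)}$ from \eqref{eq:4-3AAAAABBBBBB}. The self-adjointness identities \eqref{eq:5-11selfadj} are immediate from $\overline{h(m,n;\ve)}=h(n,m;\ve)$ for real $\ve$ (conditions \eqref{eq:5-2N}--\eqref{eq:5-3N}), which forces $\overline{\cH_{\La_{m_0^+,m_0^-}}^{-1}(m,n)}=\cH_{\La_{m_0^+,m_0^-}}^{-1}(n,m)$ for real $E$, and then a direct transposition in the defining sums.

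Next I would prove part $(2)$. Apply the Schur complement formula (Lemma~\ref{lem:2schur}) to $\cH_\La=E-H_{\La,\ve}$ with the block decomposition $\La_1=\La_{m_0^+,m_0^-}$, $\La_2=\{m_0^+,m_0^-\}$. Since $\cH_{\La_1}$ is invertible, $\cH_\La$ is invertible iff $\tilde H_2 := \cH_{\La_2}-\Gamma_{2,1}\cH_{\La_1}^{-1}\Gamma_{1,2}$ is invertible. Writing the $2\times 2$ matrix $\tilde H_2$ out explicitly in the basis $\{m_0^+,m_0^-\}$, its $(m_0^\pm,m_0^\pm)$ entry is $E-v(m_0^\pm)-\sum_{m',n'\in\La_1}h(m_0^\pm,m';\ve)\cH_{\La_1}^{-1}(m',n')h(n',m_0^\pm;\ve)=E-v(m_0^\pm)-Q^{(s)}(m_0^\pm,\La;\ve,E)$, and its $(m_0^\pm,m_0^\mp)$ entry is $-h(m_0^\pm,m_0^\mp;\ve)-\sum_{m',n'\in\La_1}h(m_0^\pm,m';\ve)\cH_{\La_1}^{-1}(m',n')h(n',m_0^\mp;\ve)=-G^{(s)}(m_0^\pm,m_0^\mp,\La;\ve,E)$ (here the minus sign is from $E-H$). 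Computing the $2\times 2$ determinant gives exactly \eqref{eq:5-13NNNNM}, and $E\in\spec H_{\La,\ve}\iff\det\cH_\La=0\iff\det\tilde H_2=0\iff\chi(\ve,E)=0$, which is \eqref{eq:5-13NNNN}.

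The one point that needs slight care rather than pure bookkeeping is checking that all the domain requirements of Proposition~\ref{prop:5-4I}$(2)$ and of Lemma~\ref{lem:2Qfunctionderiv}/Lemma~\ref{lem:aux5} are actually met on the enlarged disk $|E-E^{(s-1)}(m_0^+,\La^{(s-1)}(m_0^+);\ve)|<10(\delta_0^{(s-1)})^{1/8}$ that appears in \eqref{eq:6.domainOP}, which is a genuinely larger neighborhood than the $2\delta_0^{(s-1)}$-disk used in Proposition~\ref{prop:4-4}. Here the key is the modified resonance hypotheses \eqref{eq:4-3AAAAAmnotm0} (lower bound $12(\delta_0^{(s-1)})^{1/8}$ for $m\ne m_0^-$) and \eqref{eq:4-3AAAAA} (the pair $m_0^+,m_0^-$ is allowed to be close): for $m\in\cM^{(s')}$ with $m\notin\{m_0^+,m_0^-\}$ one still gets $|E^{(s')}(m,\La^{(s')}(m);\ve)-E|$ bounded below by a positive power of $\delta_0^{(s')}$ and above by $\delta_0^{(s'-1)}$, so the inductive machinery of Proposition~\ref{prop:4-4} applies to each $H_{\La^{(s')}(m),\ve}$ and to $H_{\La^{(s-1)}(m_0^\pm)\setminus\{m_0^\pm\},\ve}$ unchanged — this is precisely the content of part $(1)$ of Remark~\ref{rem:5.1}. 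Concretely I would verify: $|E-E^{(s-1)}(m,\La^{(s-1)}(m);\ve)|\le\delta_0^{(s-2)}+10(\delta_0^{(s-1)})^{1/8}<2\delta_0^{(s-2)}$ and $\ge 12(\delta_0^{(s-1)})^{1/8}-10(\delta_0^{(s-1)})^{1/8}=2(\delta_0^{(s-1)})^{1/8}>(\delta_0^{(s-1)})^2$ for $m\ne m_0^-$; and $|E-v(n)|\ge\delta_0/32$ for $n\notin\bigcup_{s'}\bigcup_m\La^{(s')}(m)$ using \eqref{eq:4-17AAAA} and $10(\delta_0^{(s-1)})^{1/8}<\delta_0/64$. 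Once these domain checks are in place, everything else is the routine Schur-complement-plus-Cauchy-estimate argument already carried out twice in the excerpt.
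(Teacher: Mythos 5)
Your proposal is correct and follows essentially the same route as the paper: the paper's own proof just declares part $(1)$ ``completely similar to the proof of $(3)$ in Proposition~\ref{prop:4-4},'' verifies the self-adjointness identities by the same direct transposition you use, and then obtains part $(2)$ by writing out the $2\times 2$ Schur complement $\tilde H_2$ with $\La_1 = \La_{m_0^+,m_0^-}$, $\La_2 = \{m_0^+,m_0^-\}$ and computing its determinant. Your domain verification on the enlarged disk $|E - E^{(s-1)}(m_0^+,\cdot;\ve)| < 10(\delta_0^{(s-1)})^{1/8}$ using \eqref{eq:4-3AAAAAmnotm0} and \eqref{eq:4-3AAAAA} is exactly the content that the paper leaves implicit in the phrase ``completely similar'' (and in Remark~\ref{rem:5.1}(1) and Proposition~\ref{prop:5-4I}), so spelling it out is a welcome addition, not a deviation.
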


\begin{proof}
The proof of all statements in $(1)$ is completely similar to the proof of $(3)$ in Proposition~\ref{prop:4-4}. The first identity in \eqref{eq:5-11selfadj} is due to the fact that $E - H_{\La_{m_0^+, m_0^-}}$ is self-adjoint if $\ve,E$ are real. Furthermore, one has
\begin{equation}\label{eq:5-10acselfadj}
\begin{split}
\overline{Q^{(s)} (m_0^\pm, \La; \ve, E)} = \overline{\sum_{m', n' \in \La_{m^\pm_0, m_0^-}} h(m_0^\pm, m'; \ve) K^{(s)} (m', n'; \La; \ve, E) h(n', m_0^\pm; \ve)} \\
= \sum_{m', n' \in \La_{m^\pm_0,m_0^-}} \overline{h(m_0^\pm, m'; \ve)} \quad \overline{K^{(s)} (m', n'; \La; \ve, E)} \quad \overline{h(n', m_0^\pm; \ve)}\\
= \sum_{m', n' \in \La_{m^\pm_0,m_0^-}} h(m_0^\pm, n'; \ve) K^{(s)}(n', m'; \La; \ve, E) h(m', m_0^\pm; \ve) = Q^{(s)} (m_0^\pm, \La; \ve, E).
\end{split}
\end{equation}
This verifies the second identity in \eqref{eq:5-11selfadj}. The verification of the third identity in \eqref{eq:5-11selfadj} is similar.

Due to the Schur complement formula with $\La_1 = \La_{m_0^+, m_0^-}$, $\La_2 = \La \setminus \La_1$, $\cH_\La = E - \hle$ is invertible if and only if
\begin{equation}\label{eq:5schurfor1a}
\tilde H_2 := \begin{bmatrix} E - v(m_0^+) - Q^\es(m^+_0, \La; \ve, E)\bigr) & - G^\es(m^+_0, m^-_0, \La; \ve, E) \\[5pt]
 - G^\es(m^-_0, m^+_0, \La; \ve, E) & E - v(m_0^-) - Q^\es(m^-_0, \La; \ve, E) \end{bmatrix}
\end{equation}
is invertible. Note that $\det \tilde H_2 = \chi(\ve, E)$. In particular, $E \in \spec H_{\La,\ve}$ if and only if it obeys \eqref{eq:5-13NNNN}.
\end{proof}

\begin{defi}\label{def:5-2a}
Using the notation of Lemma~\ref{lem:5schur1}, assume that for every $\ve \in (-\ve_{s-1},\ve_{s-1})$ and every $|E - E^\esone(m^+_0, \La^\esone (m^+_0); \ve)| < 10 (\delta^{(s-1)}_0)^{1/8}$, we have
\begin{equation} \label{eq:5-13NNNN1}
v(m_0^+) + Q^\es(m^+_0, \La; \ve, E) \ge v(m_0^-) + Q^\es(m_0^-,\La; \ve, E) + \tau^\zero,
\end{equation}
where $\tau^\zero > 0$. Then we say that $\hle \in OPR^{(s)} \bigl( m^+_0, m^-_0, \La; \delta_0, \tau^\zero \bigr)$. We always assume here for convenience that $\tau^\zero \le (\delta_0^\esone)^3$.
\end{defi}

\begin{prop}\label{prop:8-5n}
Assume $\hle \in OPR^{(s)} \bigl( m^+_0, m^-_0, \La; \delta_0, \tau^\zero \bigr)$.
\begin{itemize}

\item[(1)] For $\ve \in (-\ve_{s-1}, \ve_{s-1})$, $|E - E^\esone (m^+_0, \La^\esone (m^+_0); \ve)| < 8 (\delta^\esone_0)^{1/8}$, the equation
\begin{equation} \label{eq:7-13}
\begin{split}
& \chi(\ve,E) := \bigl( E - v(m_0^+) - Q^\es (m^+_0, \La; \ve, E) \bigr) \cdot \bigl( E - v(m_0^-) - Q^\es (m_0^-, \La ; \ve, E) \bigr) \\
& \qquad - \big |G^\es (m^+_0, m^-_0, \La; \ve, E) \big |^2 = 0
\end{split}
\end{equation}
has exactly two solutions $E = E^{(s, \pm)} (m_0^+, \La; \ve)$, obeying $E^{(s, -)} (m_0^+, \La; \ve) < E^{(s, +)}(m_0^+, \La; \ve)$,
\begin{equation} \label{eq.5Eestimates1AP}
|E^{(s, \pm)} (m_0^+, \La; \ve) - E^\esone (m^+_0, \La^\esone (m^+_0); \ve)| < 4|\ve| (\delta^\esone_0)^{1/8}.
\end{equation}
The functions $E^{(s, \pm)}(m^+_0, \La; \ve)$ are $C^2$-smooth on the interval $(-\ve_{s-1}, \ve_{s-1})$.

\item[(2)] The following estimates hold:
\begin{equation} \label{eq:8-13acnq0}
\begin{split}
|\partial^\alpha_E \chi| \le 8, \quad \text {for $\alpha \le 2$}, \quad \partial^2_E \chi > 1/8, \\
\partial_E \chi|_{\ve,E^{(s,-)}(m^+_0, \La; \ve)} < -(\tau^\zero)^2, \quad \partial_E \chi|_{\ve, E^{(s,+)}(m^+_0, \La; \ve)} > (\tau^\zero)^2, \\
E^{(s,+)} (m^+_0, \La; \ve) - E^{(s,-)} (m^+_0, \La; \ve) > \frac{1}{8} [- \partial_E \chi|_{\ve,E^{(s,-)} (m^+_0, \La; \ve)} + \partial_E \chi|_{\ve, E^{(s,+)} (m^+_0, \La; \ve)}], \\
- \partial_E \chi|_{\ve,E^{(s,-)} (m^+_0, \La; \ve)}, \partial_E \chi|_{\ve, E^{(s,+)} (m^+_0, \La; \ve)} > \frac{1}{2^{14}}
\bigl(E^{(s,+)} (m^+_0, \La; \ve) - E^{(s,-)} (m^+_0, \La; \ve) \bigr)^2, \\
|\chi(\ve, E)| \ge \frac{1}{8} \min \bigl( (E - E^{(s,-)} (m^+_0, \La; \ve))^2, (E - E^{(s,+)} (m^+_0, \La; \ve))^2 \bigr), \\
[a_1 (\ve, E) + |b (\ve, E)|]|_{E=E^{(s,+)} (m^+_0, \La; \ve)} \ge E^{(s,+)} (m^+_0, \La; \ve) \\
\ge \max ( a_1 (\ve, E),  a_2 (\ve, E) + |b (\ve, E)| )|_{E=E^{(s,+)} (m^+_0, \La; \ve)}, \\
[a_2 (\ve, E) - |b (\ve, E)| ]|_{E = E^{(s,-)} (m^+_0, \La; \ve)}\le E^{(s,-)} (m^+_0, \La; \ve) \\
\le \min ( a_2 (\ve, E),  a_1 (\ve, E) - |b (\ve, E)| )|_{E = E^{(s,-)} (m^+_0, \La; \ve)}.
\end{split}
\end{equation}
where
\begin{equation}
\begin{split}\nn
a_1 (\ve, E) = v(m_0^+) + Q^\es (m^+_0, \La; \ve, E), \quad\quad a_2(\ve, E) = v(m_0^-) + Q^\es (m_0^-, \La ; \ve, E), \\
b (\ve, E) = |b_1 (\ve,E)|, \quad b_1 (\ve, E) = G^\es (m^+_0, m^-_0, \La; \ve, E).
\end{split}
\end{equation}

\item[(3)] We have
\begin{equation} \label{eq:5specHEE}
\begin{split}
\spec H_{\La, \ve} \cap \{E : |E - E^\esone (m^+_0, \La^\esone (m^+_0); \ve)| < 8 (\delta^\esone_0)^{1/4}\} \\
= \{ E^{(s,+)} (m_0^+, \La; \ve), E^{(s, -)} (m_0^+, \La; \ve)\}, \\
E^{(s,\pm)} (m_0^+, \La; 0) = v(m^\pm_0).
\end{split}
\end{equation}

\item[(4)] Using the notation from part $(1)$ of Proposition~\ref{prop:5-4I}, for any
\begin{equation} \label{eq:8-13acnq0EDOM}
E^{(s,-)} \bigl( m^+_0, \La; \ve \bigr) - (\delta_0^{(s-1)})^{1/8} < E < E^{(s,+)} \bigl( m^+_0, \La; \ve \bigr) + (\delta_0^{(s-1)})^{1/8},
\end{equation}
the matrix $(E - H_{\La \setminus \{ m_0^+, m_0^- \}, \ve})$ is invertible and
\begin{equation}\label{eq:3Hinvestimatestatement1P}
|[(E - H_{\La \setminus \{ m_0^+, m_0^- \}, \ve })^{-1}] (x,y)| \le s_{D(\cdot; \La \setminus \{ m_0^+, m_0^- \}), T, \kappa_0, |\ve|; \La \setminus \{ m_0^+, m_0^- \}} (x,y).
\end{equation}
If
\begin{equation}\label{eq:5Esplitspecconddomain}
(\delta^\es_0)^4 < \min_\pm |E - E^{(s,\pm)} (n_0^+, \La; \ve)| < 6 (\delta^\esone_0)^{1/8},
\end{equation}
then the matrix $(E - H_{\La,\ve})$ is invertible. Moreover,
\begin{equation}\label{eq:5inverseestiMATE}
|[(E - H_{\La, \ve})^{-1}] (x,y)| \le s_{D(\cdot; \La), T, \kappa_0, |\ve|; k, \La, \mathfrak{R}} (x,y).
\end{equation}

\end{itemize}
\end{prop}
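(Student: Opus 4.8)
\textbf{Proof proposal for Proposition~\ref{prop:8-5n}.}

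The plan is to reduce everything to the continued-fraction machinery of Section~\ref{sec.4}, specifically Lemma~\ref{lem:6-1ell} (or Lemma~\ref{lem:6-1ellM}), applied to a function $f\in\mathfrak{F}^{(1)}$. First I would set up the correspondence: take $x:=\ve$, $u:=E$, $a_1(\ve,E):=v(m_0^+)+Q^{(s)}(m_0^+,\La;\ve,E)$, $a_2(\ve,E):=v(m_0^-)+Q^{(s)}(m_0^-,\La;\ve,E)$, $b^2(\ve,E):=|G^{(s)}(m_0^+,m_0^-,\La;\ve,E)|^2$, and the base function $g_0(\ve):=E^{(s-1)}(m_0^+,\La^{(s-1)}(m_0^+);\ve)$, with $\rho_0$ of order $(\delta_0^{(s-1)})^{1/8}$. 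I would then verify that $(a_1,a_2,b^2)$ satisfy conditions (i)--(v) preceding Lemma~\ref{lem:6-1}. Conditions (i) and (ii) ($C^2$-smoothness) are immediate from the analyticity statements in part~(1) of Lemma~\ref{lem:5schur1}. Condition (iii), i.e.\ $a_1>a_2$, is exactly the defining hypothesis \eqref{eq:5-13NNNN1} of the class $OPR^{(s)}$ (with the gap $\tau^\zero$), and $b(0,E)=0$ follows because $h(m,n;0)=0$ forces $G^{(s)}(\cdot;0,\cdot)=0$ (plus, at $\ve=0$, $a_i=v(m_0^\pm)$ and $v(m_0^+)=v(m_0^-)$ does not occur, but the vanishing of $b$ is what we need). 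Conditions (iv) and (v) — the smallness of $|a_i-g_0|$, $|b|$, $|\partial_u a_i|$, $|\partial_u b^2|$ — follow from the estimates \eqref{eq:5-12acACC}: $|\partial_E^\alpha Q^{(s)}|\le|\ve|$ gives $|\partial_u a_i|<1/2$ and (via \eqref{eq.5Eestimates1AP}-type bounds obtained from the $Q^{(s)}-Q^{(s-1)}$ comparison together with the inductive control of $E^{(s-1)}$) the proximity $|a_i-g_0|<\rho_0/4$; the cubic-in-$\ve$ bound on $G^{(s)}$ gives $|b|<\rho_0/4$ and $|\partial_u b^2|<|b|/4$.

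With the hypotheses checked, Lemma~\ref{lem:6-1} yields exactly two solutions $\zeta_\pm(\ve)$ of $\chi(\ve,E)=0$, which I rename $E^{(s,\pm)}(m_0^+,\La;\ve)$; their $C^2$-smoothness comes from the implicit function theorem as in the proof of Lemma~\ref{lem:6-1} (the nonvanishing of $\partial_E\chi$ at the roots, established there). Part~(1) of the proposition, including the localization \eqref{eq.5Eestimates1AP}, then follows from \eqref{eq:6-1''''} after noting $\rho_0\le 4(\delta_0^{(s-1)})^{1/8}$ and absorbing the extra factor $|\ve|$ (which appears because at $\ve=0$ both roots collapse to the common value, and the separation from $g_0$ is controlled by $|\ve|$ times the relevant scale — this uses the $\ve$-analyticity and $\chi(0,\cdot)=(E-v(m_0^+))^2$). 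Part~(2) is the quantitative package: the bounds $|\partial_E^\alpha\chi|\le 8$ and $\partial_E^2\chi>1/8$ follow from differentiating \eqref{eq:5-13NNNNM} and using $|\partial_E^\alpha Q^{(s)}|,|\partial_E^\alpha G^{(s)}|\le|\ve|$ so that $\chi$ is a small perturbation of $(E-a_1^0)(E-a_2^0)$ with derivative $\sim 2$; the sign and size of $\partial_E\chi$ at the two roots, the lower bound on the gap $E^{(s,+)}-E^{(s,-)}$, and the quadratic lower bound $|\chi(\ve,E)|\gtrsim\min(\dots)^2$ are exactly the conclusions of parts (5)--(7) of Lemma~\ref{4.fcontinuedfrac} combined with parts (1)--(4) of Lemma~\ref{elemcalculusconv} (convexity of $\chi$ in $E$), specialized to $\ell=1$ where $\tau^{(f)}=a_1-a_2\ge\tau^\zero$; the two-sided comparisons of $E^{(s,\pm)}$ with $a_1,a_2,b$ are \eqref{eq:6-1''}--\eqref{eq:6-1'''}.

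For part~(3) I would argue that on the disk $|E-g_0(\ve)|<8(\delta_0^{(s-1)})^{1/4}$, by Lemma~\ref{lem:5schur1}(2) one has $E\in\spec H_{\La,\ve}\iff\chi(\ve,E)=0$; since $\chi(\ve,\cdot)$ is (a small perturbation of) a quadratic with exactly the two roots $E^{(s,\pm)}$ in this disk — and no others, because $\partial_E^2\chi>1/8$ forces at most two zeros and we have located both — the spectrum in that window is precisely $\{E^{(s,+)},E^{(s,-)}\}$; the values at $\ve=0$ are $v(m_0^\pm)$ by continuity and the $\ve=0$ form of $\chi$. For part~(4): invertibility of $(E-H_{\La\setminus\{m_0^+,m_0^-\},\ve})$ on the enlarged interval \eqref{eq:8-13acnq0EDOM} is just Proposition~\ref{prop:5-4I}(2), since that interval still lies in the domain where the relevant sub-matrices are invertible (one checks $|E-g_0(\ve)|<10(\delta_0^{(s-1)})^{1/8}$ using \eqref{eq.5Eestimates1AP}), and the Green-function bound \eqref{eq:3Hinvestimatestatement1P} is \eqref{eq:5Hinvestimatestatement1}. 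For the invertibility of $(E-H_{\La,\ve})$ under \eqref{eq:5Esplitspecconddomain}, I would apply the Schur complement formula with $\La_1=\La\setminus\{m_0^+,m_0^-\}$, $\La_2=\{m_0^+,m_0^-\}$: then $\tilde H_2$ is the $2\times 2$ matrix \eqref{eq:5schurfor1a} with $\det\tilde H_2=\chi(\ve,E)$, which by the quadratic lower bound in part~(2) satisfies $|\chi(\ve,E)|\ge\frac18(\delta_0^{(s)})^8>0$ on that region, so $\tilde H_2^{-1}$ exists with $\|\tilde H_2^{-1}\|\lesssim\exp(D(m_0^\pm;\La))$ — precisely the hypothesis of Lemma~\ref{lem:aux5AABBCC} with $D_0=D(m_0^\pm;\La)\le T\min_{x\in\La_2}\mu_\La(x)^{1/5}$ (valid since $m_0^\pm+B(R^{(s)})\subset\La$) — and that lemma delivers \eqref{eq:5inverseestiMATE}.

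The main obstacle I anticipate is the bookkeeping in part~(1)/part~(2) around the exact powers of $\delta_0^{(s-1)}$ and the factors of $|\ve|$: one must check that the comparison estimate $|Q^{(s)}-Q^{(s-1)}|\le|\ve|(\delta_0^{(s-1)})^{12}$ from \eqref{eq:5-12acACC}, together with the inductive control of $E^{(s-1)}(m_0^\pm,\cdot;\ve)$ from Proposition~\ref{prop:4-4}(4) and the separation hypotheses \eqref{eq:4-3AAAAAmnotm0}--\eqref{eq:4-3AAAAA} in Definition~\ref{def:8-1a}, is strong enough to place both roots inside the small window \eqref{eq.5Eestimates1AP} while keeping $(a_1,a_2,b^2)$ inside the domain where conditions (iv)--(v) of Lemma~\ref{lem:6-1} hold — in particular verifying that $\rho_0\sim(\delta_0^{(s-1)})^{1/8}$ is simultaneously large enough to contain the roots and small enough that the Taylor remainders are negligible. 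Once the scales are reconciled this is routine, and everything else is a direct citation of Section~\ref{sec.4} and Section~\ref{sec.2}.
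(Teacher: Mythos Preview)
Your proposal is correct and follows essentially the same route as the paper: reduce to the $\mathfrak{F}^{(1)}$ framework by setting $a_1,a_2,b^2,g_0,\rho_0$ as you do, verify the hypotheses (i)--(v) via the estimates \eqref{eq:5-12acACC} and the defining inequality of $OPR^{(s)}$, and then read off parts~(1)--(2) from Lemma~\ref{lem:6-1ell} (which for $\ell=1$ packages Lemma~\ref{lem:6-1}, Lemma~\ref{4.fcontinuedfrac}, and Lemma~\ref{elemcalculusconv} exactly as you describe); parts~(3)--(4) go through Lemma~\ref{lem:5schur1}(2), Proposition~\ref{prop:5-4I}(2), and Lemma~\ref{lem:aux5AABBCC} just as you outline. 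The paper additionally introduces the auxiliary $\tilde f_i=E-v(m_0^\pm)-Q^{(s-1)}(m_0^\pm,\La^{(s-1)}(m_0^\pm);\ve,E)$ to make the localization \eqref{eq.5Eestimates1AP} cleaner (since $\tilde f_i$ vanish exactly at $E^{(s-1)}(m_0^\pm,\cdot;\ve)$ and differ from $f_i$ by $O(|\ve|(\delta_0^{(s-1)})^{12})$), which is precisely the bookkeeping device that resolves the scale-reconciliation you flag as the main obstacle.
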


\begin{proof}
To prove $(1)$, we apply Lemma~\ref{lem:6-1ell}. Consider the case $s \ge 2$. Set
\begin{equation}
\begin{split}\nn
f_i = E - a_i, \quad f = f_1 - \frac{b^2}{f_2}, \quad g_0 (\ve) = E^\esone (m^+_0, \La^\esone (m^+_0); \ve), \quad \rho_0 = 10 (\delta^{(s-1)}_0)^{1/8}, \\
\tilde f_1 (\ve, E) = E - v(m^+_0)- Q^{(s-1)} (m_0^+, \La^{(s-1)} (m_0^+); \ve, E), \\
\tilde f_2 (\ve, E) = E - v(m^-_0)- Q^{(s-1)} (m_0^-, \La^{(s-1)} (m_0^-); \ve, E).
\end{split}
\end{equation}
We apply Lemma~\ref{lem:6-1ell} to $\chi^{(f)} = (E - a_1)(E- a_2) - b^2$. We also verify that $f \in \mathfrak{F}^{(1)}_{\mathfrak{g}^\one} (f_1, f_2, b^2)$. Let us verify conditions (i)--(iii) before Lemma~\ref{lem:6-1}. The functions $a_j$, $b$ are analytic in the complex domain $|\ve| < \ve_{s-1}$, $\big| E - E^{(s-1)}(m^+_0, \La^{(s-1)}(m_0^+); \ve) \big| < 10 (\delta^{(s-1)}_0)^{1/8}$, due to Lemma~\ref{lem:5schur1}. So, conditions (i) and (ii) hold. Due to the second identity in \eqref{eq:5-11selfadj}, $a_j$ assumes real values if $\ve, E$ are real. Due to Definition~\ref{def:5-2a}, we have $a_1 (\ve, E) - a_2 (\ve, E) \ge \tau^{(0)}$ for any $\ve \in (-\ve_{s-1},\ve_{s-1})$ and any $E \in (E^\esone(m^+_0, \La^\esone (m^+_0); \ve) - 10 (\delta^\esone_0)^{1/8}, E^\esone (m^+_0, \La^\esone (m^+_0); \ve) + 10 (\delta^\esone_0)^{1/8})$. One has also $G^\es (m^\pm_0, m^\mp_0, \La; 0, E) = 0$. Thus, both requirements in condition (iii) hold. Due to \eqref{eq:5-12acACC}, $|\partial^\alpha_E a_i| \le |\ve| < 1/64$, $i = 1, 2$, $\alpha = 1, 2$, $|\partial^\alpha b^2| < 4 |\ve|^{3/2} \exp(-\frac{\kappa_0}{4} R^\esone) < 1/64$, $\alpha \le 2$. Furthermore,
\begin{equation} \label{eq:5-12acACCAA}
\begin{split}
|\partial^\alpha_E [f_i - \tilde f_i]| \le \max_{+,-} \big| \partial^\alpha_E [Q^{(s)} (m_0^\pm, \La; \ve, E) - Q^{(s-1)} \bigl( m_0^\pm, \La^{(s-1)} (m_0^\pm); \ve, E \bigr) ] \bigr| \le 4 |\ve|^{3/2} \exp(-\frac{\kappa_0}{4} R^\esone), \quad \alpha \le 2, \\
\tilde f_1 (\ve, E^\esone (m^+_0, \La^\esone (m^+_0); \ve) = 0, \quad \tilde f_2(\ve, E^\esone (m^-_0, \La^\esone (m^-_0); \ve) = 0, \\
E^\esone (m^+_0, \La^\esone (m^+_0); \ve) - (\delta_0^{(s-1)})^{1/8} \le E^\esone (m^-_0, \La^\esone (m^-_0); \ve) < E^\esone (m^+_0, \La^\esone(m^+_0); \ve).
\end{split}
\end{equation}
Here we used \eqref{eq:5-12acACC}. Since $|\partial_E \tilde f_i| < 1$, \eqref{eq:5-12acACCAA} implies in particular
\begin{equation} \label{eq:5-12acACCAAqzeroref}
|E - a_i| = |f_i| < |E-E^\esone (m^+_0, \La^\esone (m^+_0); \ve)| < \rho_0 < 1/64.
\end{equation}
Moreover, all conditions in Definition~\ref{def:4a-functions} hold, and hence $f \in \mathfrak{F}^{(1)}_{\mathfrak{g}^\one} (f_1, f_2, b^2)$, $\chi = \chi^{(f)}$. Lemma~\ref{lem:6-1ell} implies parts $(1)$, $(2)$ of the current proposition.

The first identity in \eqref{eq:5specHEE} follows from part $(2)$ of Lemma~\ref{lem:5schur1}. The second identity in \eqref{eq:5specHEE} follows from the first one since $v(m^+_0), v(m^-_0)$ are the only eigenvalues of $H_{\La,0}$ which belong to the interval in the first line, and $v(m^+_0)) > v(m^-_0)$. This finishes part $(3)$.

We will now verify $(4)$. The estimate \eqref{eq:3Hinvestimatestatement1P} is due to \eqref{eq:3Hinvestimatestatement1}. For $E$ in the domain \eqref{eq:5Esplitspecconddomain}, we invoke Lemma~\ref{lem:aux5AABBCC} with $\La_2 = \{ m_0^+, m_0^- \}$. We need to verify conditions $(i)$, $(ii)$ in Lemma~\ref{lem:aux5AABBCC}. Condition $(i)$ holds due to \eqref{eq:3Hinvestimatestatement1P}. Let $\tilde H_2 := \cH_{\La_2} - \Gamma_{2,1} \cH_{\La_1}^{-1} \Gamma_{1,2}$. Recall that $\det \tilde H_2 = \chi(\ve,E)$, due to part $(2)$ of Lemma~\ref{lem:5schur1}.
Due to \eqref{eq:8-13acnq0}, one obtains
$$
D_0 := \log |\det \tilde H_2|^{-1} = \log |\chi(\ve,E)|^{-1} \le \frac{1}{4} \log (\delta^\esone)^{-1} + 3 \log 2 < D(m_0^\pm;\La);
$$
see the notation from part $(1)$ of Proposition~\ref{prop:5-4I}. Furthermore, due to condition \eqref{eq:4-3AAAAABBBBBB}, $\mu_\La (m_0^\pm) \ge R^\es$. Due to Remark~\ref{rem:3.Rs}, one obtains $D_0 < [\min ( \mu_\La(m_0^+), \mu_\La (m_0^-))]^{1/5}$. Thus, condition $(ii)$ in Lemma~\ref{lem:aux5AABBCC} holds. Due to Lemma~\ref{lem:aux5AABBCC}, \eqref{eq:5inverseestiMATE} holds. This finishes the case $s \ge 2$. The verification in case $s=1$ is completely similar.
\end{proof}

\begin{remark}\label{rem:7.sharperssoneestimate}
Here we want to comment on a stronger version of the estimate \eqref{eq.5Eestimates1AP} in the statement of the last proposition. Namely, in some of our applications we will consider cases where some additional conditions hold. Namely, the sets  $\La^\esone (m^\pm_0)$ will obey
\begin{equation} \label{eq.5biggerR}
\La^\esone (m^\pm_0) \supset m^\pm_0 + B(R)
\end{equation}
with $R > R^\esone$. Furthermore,
\begin{equation} \label{eq:4-3AAAAANEW}
\big| E^{(s-1)} \bigl(m^-_0, \La^{(s-1)}(m^-_0); \ve \bigr) - E^{(s-1)} \bigl( m^+_0, \La^{(s-1)}(m^+_0); \ve \bigr) \big| \le \exp(-R),
\end{equation}
compare with \eqref{eq:4-3AAAAA}. In this case, a revision of the proof of \eqref{eq.5Eestimates1AP} shows that the following stronger estimate holds,
\begin{equation} \label{eq.5Eestimates1APNEW}
|E^{(s, \pm)} (m_0^+, \La; \ve) - E^\esone (m^+_0, \La^\esone (m^+_0); \ve)| < 2|\ve| \exp(-\frac{\kappa_0}{2} R).
\end{equation}
\end{remark}

\begin{defi}\label{def:7-6}
Let $\hle$ be as in \eqref{eq:5-1N}--\eqref{eq:5-5N}. Let $s > 0$, $q > 0$ be integers. Assume that the classes of matrices $OPR^{(s,s')} \bigl( \tilde m^+_0, \tilde m^-_0, \tilde \La; \delta_0, \tau_0 \bigr)$ are defined for $s \le s' \le s + q - 1$, starting with $OPR^{(s,s)} \bigl( \tilde m^+_0, \tilde m^-_0, \tilde \La; \delta_0, \tau_0 \bigr) := OPR^{(s)} \bigl( \tilde m^+_0, \tilde m^-_0, \tilde \La; \delta_0, \tau_0 \bigr)$ being as in Definition~\ref{def:5-2a}. Let $m^+_0$, $m^-_0 \in \La$. Assume that there are subsets $\cM^{(s',+)} = \left\{ m^+_j : j \in J^{(s')} \right\}$, $\cM^{(s',-)} = \left\{ m^-_j : j \in J^{(s')} \right\}$, $\La^{(s')} (m_j^+) = \La^{(s')} (m_j^{-})$, $j \in J^{(s')}$, with $s \le s' \le s + q - 1$, and also subsets $\cM^{(s')}$, $\La^{(s')}(m)$, $m \in \cM^{(s')}$, $1 \le s'\le s + q - 1$ such that the following conditions are valid:
\begin{enumerate}

\item[(i)] $ m^\pm_0 \in \cM^{(s + q - 1, \pm)}$, $($ so, by convention, $0 \in J^{(s + q - 1)}$ $)$, $m \in \La^{(s')}(m) \subset \La$ for any $m$.

\item[(ii)]
\begin{equation} \nn
\begin{split}
\cM^{(\mathfrak{s}')} (\La) \cap \cM^{(\mathfrak{s}'')} (\La) = \emptyset, \quad \text {for any possible superscript indices $\mathfrak{s}' \neq \mathfrak{s}''$}, \\
\La^{(s')} (m') \cap \La^{(s'')} (m'') = \emptyset, \quad \text{ unless $s' = s''$, and $m' = m''$ or $m' = m^\pm_j$, $m'' = m^\mp_j$.}
\end{split}
\end{equation}

\item[(iii)] For $\tau^\zero > 0$ and any $m^+_j \in \cM^{(s',+)}$, $s' \ge s$, $H_{\La^{(s')} (m_j^+), \ve} \in OPR^{(s,s')} \bigl( m^+_j, m^-_j, \La^{(s')} (m_j^+); \delta_0, \tau^\zero \bigr)$. For any $m \in \cM^{(s')}$, $H_{\La^{(s')}(m), \ve} \in \cN^{(s')}(m,\La^{(s')}(m),\delta_0)$.

\item[(iv)] Let $\delta^{(s')}_0$, $R^{(s')}$ be as in Definition~\ref{def:4-1}. Then,
\begin{equation} \nn
\begin{split}
\bigl( m' + B(R^{(s')}) \bigr) \subset \Lambda^{(s')} (m'), \quad \text {for any $m'$, $s'$}, \\
\bigl( m_j^\pm + B(R^{(s')}) \bigr) \subset \Lambda^{(s')}(m^+_j), \quad \text {for any $j$, $s \le s' < s + q$}, \\
\bigl( m_0^\pm + B(R^{(s+q)}) \bigr) \subset \Lambda.
\end{split}
\end{equation}

\item[(v)] Given $m^+_j \in \cM^{(s',+)}$, let $E^{(s',\pm)} \bigl(m^+_j, \La^{(s')}(m^+_j); \ve \bigr)$, $Q^{(s')} \bigl(m^\pm_j, \La^{(s')} (m^+_j) ; \ve,E \bigr)$, etc.\ be the functions defined for the matrix $H_{\La^{(s')}(m^+_j), \ve}$. $($ Here, $E^{(s,\pm)} \bigl( m^+_j, \La^{(s)} (m^+_j); \ve \bigr)$ are just as in Proposition~\ref{prop:8-5n}. Below in Proposition~\ref{rem:con1smalldenomnn} we will give the construction of these functions for $s' > s$, which justifies the use of these functions in our inductive definition. $)$ Similarly, given $m \in \cM^{(s')}$, let $E^{(s')} \bigl( m, \La^{(s')}(m); \ve \bigr)$ be the functions defined for the matrix $H_{\La^{(s')} (m), \ve} \in \cN^{(s')} (m, \La^{(s')} (m), \delta_0)$. For each $m^+_j \in \cM^{(s',+)}$, $m^+_j \notin \{ m^+_0, m^-_0 \}$, $s \le s' < s+q$, any  $\ve \in (-\ve_{s-1}, \ve_{s-1})$, we have
\begin{align}
3 \delta^{(s+q-1)}_0 \le |E^{(s+q-1,\pm)} \bigl( m^+_j, \La^{(s+q-1)}(m^+_j); \ve \bigr) - E^{(s+q-1,\pm)} \bigl( m^+_0, \La^{(s+q-1)}(m^+_0); \ve \bigr)| \le \delta^{(s+q-2)}_0 \label{eq:5EVsplitdefs1a}, \\
3 \delta^{(s+q-1)}_0 \le |E^{(s+q-1,\mp)} \bigl(m^+_j, \La^{(s+q-1)}(m^+_j); \ve \bigr) - E^{(s+q-1,\pm)} \bigl( m_0^+, \La^{(s+q-1)}(m_0^+); \ve \bigr)| \label{eq:5EVsplitdefAs1b}, \\
\frac{\delta^{(s')}_0}{2} \le |E^{(s',\pm)} \bigl( m^+_j, \La^{(s')}(m^+_j); \ve \bigr) - E^{(s+q-1,\pm )} \bigl( m^+_0, \La^{(s+q-1)}(m^+_0); \ve \bigr)| \le \delta^{(s'-1)}_0 \label{eq:5EVsplitdefsq} , \quad \text{for $s \le s' < s+q-1$,} \\
\frac{\delta^{(s')}_0}{2} \le |E^{(s',\mp)} \bigl( m^+_j, \La^{(s')}(m^+_j); \ve \bigr) - E^{(s+q-1,\pm)} \bigl( m_0^+, \La^{(s+q-1)}(m_0^+); \ve \bigr) | \label{eq:5EVsplitdefA}, \quad \text{for $s \le s' < s+q-1$.}
\end{align}

Furthermore, for any $m \in \cM^{(s')}$, $1 \le s' \le s+q-1$ and any $\ve \in (-\ve_{s-1}, \ve_{s-1})$, we have
$$
\frac{\delta^{(s')}}{2} \le |E^{(s')} \bigl( m, \La^{(s')}(m); \ve \bigr) - E^{(s+q-1,+)} \bigl( m^+_0, \La^{(s+q-1)}(m^+_0); \ve \bigr)| \le \delta^{(s'-1)}_0.
$$

\item[(vi)] $|v(n) - v(m_0^+)| \ge 2 \delta_0^4$ for any $n \in \Lambda \setminus \bigl( \big[ \bigcup_{1 \le s' \le s+q-1} \bigcup_{m \in \cM{(s')}} \La^{(s')}(m) \big] \cup \big[ \bigcup_{s \le s' \le s+q-1} \bigcup_{j \in J^{(s')}} \La^{(s')}(m^+_j) \big] \bigr)$.

\item[(vii)] In Proposition~\ref{rem:con1smalldenomnn} we will show inductively that the functions
\begin{equation} \label{eq:5-10acOPQDEF}
\begin{split}
K^{(s+q)}(m, n, \La; \ve, E) = (E - H_{\La_{m_0^+, m_0^-}})^{-1} (m,n) , \quad m, n \in \La_{m_0^+, m_0^-} := \Lambda \setminus \{m_0^+, m_0^-\}, \\
Q^{(s+q)}(m_0^\pm, \La; \ve, E) = \sum_{m', n' \in \La_{m^\pm_0,m_0^-}} h(m_0^\pm, m'; \ve) K^{(s+q)}(m', n'; \La; \ve, E) h(n', m_0^\pm; \ve)
\end{split}
\end{equation}
are well-defined for any $\ve \in (-\ve_{s-1}, \ve_{s-1})$ and any
$$
E \in \bigcup_\pm(E^{(s+q-1,\pm)} \bigl(m^+_0, \La^{(s+q-1)} (m^+_0);\ve) - 2\delta^{(s+q-1)}_0,E^{(s+q-1,\pm)} \bigl(m^+_0, \La^{(s+q-1)} (m^+_0); \ve) + 2 \delta^{(s+q-1)}_0).
$$
We require that for these $\ve, E$ and with $\tau^\zero$ from {\rm (iii)}, we have
\begin{equation} \label{eq:5-13OPQ}
v(m_0^+) + Q^{(s+q)}(m^+_0, \La,E) \ge v(m_0^-) + Q^{(s+q)}(m_0^-,\La; \ve, E) + \tau^\zero.
\end{equation}
\end{enumerate}

Then we say that $\hle \in OPR^{(s,s+q)} \bigl( m^+_0, m^-_0, \La; \delta_0, \tau^\zero \bigr)$. We set $s(m_0^\pm) = s+q$. We call $m^+_0$, $m^-_0$ the principal points. We call $\La^{(s+q-1)}(m^\pm_0)$ the $(s+q-1)$-set for $m^\pm_0$.
\end{defi}

\begin{prop}\label{rem:con1smalldenomnn}
For each $q$ and any $\hle \in OPR^{(s,s+q)} \bigl( m^+_0, m^-_0, \La; \delta_0, \tau^\zero \bigr)$, one can define the functions $E^{(s+q,\pm)} (m_0^+, \La; \ve \bigr)$ so that the following conditions hold.
\begin{itemize}

\item[(0)] $E^{(s+q,\pm)} \bigl(m^+_0, \La; \ve \bigr)$ are $C^2$-smooth in $\ve \in (-\ve_{s-1}, \ve_{s-1})$.

\item[(1)] Let $D(\cdot; \La^{(s')}(m))$, $1 \le s' \le s+q-1$, $m \in \cM^{(s')}$ be defined as in Proposition~\ref{prop:4-4}. Define inductively the functions $D(\cdot; \La^{(s')}(m_j^+))$, $s \le s' \le s+q-1$, $j \in J{(s')}$, and the function
    $D(\cdot;\La)$ as follows. For $s' = s$, let $D(\cdot; \La^{(s')}(m_j^+))$ be just $D(\cdot; \La)$ from Proposition~\ref{prop:5-4I}
    with $\La^{(s')}(m_j^+)$ in the role of $\La$ and $m_j^+$ in the role of $m_0^+$. Similarly, for $s' > s$, let $D(\cdot; \La^{(s')}(m_j^+))$ be just $D(\cdot; \La)$ from the current proposition with $\La^{(s')}(m_j^+)$ in the role of $\La$ and $m_j^+$ in the role of $m_0^+$. Set $D(x; \La) = D(x; \La^{(s')}(m))$ if $x \in \La^{(s')}(m)$ for some $s' \le s-1$, or if $x \in \La^{(s')}(m)$, $m = m_j^+$, $j \in  J^{(s')}$, $s' \ge s$, $m^+_j \notin \{m^+_0,m_0^-\}$. Set $D(x; \La) = 4 \log \delta_0^{-1}$ if $x \in \Lambda \setminus \bigl( \big[ \bigcup_{1 \le s' \le s+q-1} \bigcup_{m \in \cM^{(s')}} \La^{(s')}(m) \big] \cup \big[ \bigcup_{s \le s'\le s+q-1} \bigcup_{j \in J^{(s')}} \La^{(s')}(m^+_j) \big] \bigr)$. Finally, set $D(m_0^\pm; \La) = D_0 := 4 \log (\delta^{(s+q)}_0)^{-1}$.

Then, $D(\cdot;\La) \in \mathcal{G}_{\La, T, \kappa_0}$, $T = 4 \kappa_0 \log \delta_0^{-1}$, and
$$
\max_{x \notin \{ m_0^+,m_0^-\}} D(x) \le 4 \log (\delta^{(s+q-1)}_0)^{-1}, \quad \max_{x \in \La} D(x) \le
    4 \log (\delta^{(s+q)}_0)^{-1}.
$$

\item[(2)] Let $q \ge 1$, $\cL^{(s+q-1,\pm)} := \cL_{\IR} \bigl( E^{(s+q-1,\pm)} \bigl( m^+_0, \La^{(s+q-1)}(m^+_0); \ve \bigr), 2 \delta_0^{(s+q-1)} \bigr)$. For any $(\ve,E) \in \cL^{(s+q-1,+)} \cup \cL^{(s+q-1,-)}$, the matrix $(E - H_{\La \setminus \{m_0^+, m_0^-\},\ve})$ is invertible. Moreover,
\begin{equation}\label{eq:3Hinvestimatestatement1PQ}
|[(E - H_{\La \setminus \{m_0^+,m_0^-\},\ve})^{-1}] (x,y)| \le s_{D(\cdot; \La \setminus \{m_0^+,m_0^-\}), T, \kappa_0, |\ve|; \La \setminus \{m_0^+,m_0^-\}, \mathfrak{R}}(x,y).
\end{equation}

\item[(3)] The functions
\begin{equation} \label{eq:5-10acOPQ}
\begin{split}
K^{(s+q)}(m, n, \La; \ve, E) = (E - H_{\La_{m_0^+, m_0^-}})^{-1} (m,n), \quad m, n \in \La_{m_0^+,m_0^-} := \Lambda \setminus \{m_0^+,m_0^-\}, \\
Q^{(s+q)}(m_0^\pm, \La; \ve, E) = \sum_{m', n' \in \La_{m^\pm_0,m_0^-}} h(m_0^\pm, m'; \ve) K^{(s+q)}(m', n'; \La; \ve, E) h(n', m_0^\pm; \ve), \\
G^{(s+q)}(m^\pm_0, m^\mp_0, \La; \ve, E) = h(m^\pm_0, m_0^\mp, \ve) + \sum_{m', n' \in \La_{m_0^+, m_0^-}} h(m_0^\pm, m'; \ve) K^{(s+q)}(m', n'; \La; \ve, E) h(n', m_0^\mp; \ve)
\end{split}
\end{equation}
are well-defined and $C^2$-smooth in $\cL^{(s+q-1,+)} \cup \cL^{(s+q-1,-)}$. These functions obey the following estimates for $(\ve,E) \in \cL^{(s+q-1,+)} \cup \cL^{(s+q-1,-)}$ and $0 \le \alpha \le 2$: 2 $|\ve|^{3/2}$ and some more regarding $G$
\begin{equation} \label{eq:5-12acACCOPQ}
\begin{split}
& \big| \partial^\alpha_E Q^{(s+q)}(m_0^\pm, \La; \ve, E) - \partial^\alpha_E Q^{(s+q-1)} \bigl( m_0^\pm, \La^{(s+q-1)}(m_0^+); \ve,E \bigr) \big | \\
& \le 4 |\ve|^{3/2} \exp(-\kappa_0 R^{(s+q-1)}) \le |\ve| (\delta_0^{(s+q-1)})^{12}, \\
& \big |\partial^\alpha_E G^{(s+q)}(m^\pm_0, m^\mp_0, \La; \ve, E) - \partial^\alpha_EG^{(s+q-1)} \bigl( m_0^\pm, m^\mp_0, \La^{(s+q-1)}(m_0^+); \ve, E \bigr) \big | \\
& \le 4 |\ve|^{3/2} \exp(-\kappa_0 R^{(s+q-1)}) \le |\ve| (\delta_0^{(s+q-1)})^{12}, \\
& \big| \partial^\alpha_E Q^{(s+q)}(m_0^\pm, \La; \ve, E) \big| \le |\ve|, \quad |E - v(m_0^\pm) - Q^{(s+q)}(m_0^\pm, \La; \ve, E)| < |\ve| \\
& \big| \partial^\alpha_EG^{(s+q)}(m^\pm_0, m^\mp_0, \La; \ve, E) \big| \le 8|\ve|^{3/2} \exp(-\frac{7\kappa_0}{8} |m_0^+ - m_0^-|)
\le |\ve| (\delta_0^{(s-1)})^{12}.
\end{split}
\end{equation}

The following identities hold:
\begin{equation} \label{eq:5-11selfadjOPQ}
\overline{Q^{(s+q)}(m^\pm_0, \La; \ve, E)} = Q^{(s+q)}(m^\pm_0, \La; \ve, E), \quad G^{(s+q)}(m^+_0, m^-_0, \La; \ve, E) = \overline{G^{(s+q)}(m^-_0, m_0^ + \La; \ve, E)}.
\end{equation}

\item[(4)] Let $(\ve,E) \in \cL^{(s+q-1,+)} \cup \cL^{(s+q-1,-)}$. Then, $E \in \spec H_{\La,\ve}$ if and only if $E$ obeys
\begin{equation} \label{eq:5-13NNNNOPQ}
\begin{split}
& \chi(\ve,E) := \bigl(E - v(m_0^+) - Q^{(s+q)}(m^+_0, \La; \ve, E) \bigr) \cdot \bigl( E - v(m_0^-) - Q^{(s+q)}(m_0^-, \La; \ve, E) \bigr) \\
& \qquad - G^{(s+q)}(m^+_0, m^-_0, \La; \ve, E) G^{(s+q)}(m^-_0, m^+_0, \La; \ve, E) = 0.
\end{split}
\end{equation}

\item[(5)] For $\ve \in (-\ve_{s-1}, \ve_{s-1})$, the equation
\begin{equation} \label{eq:8-13nn}
\chi(\ve,E) = 0
\end{equation}
has exactly two solutions $E = E^{(s+q, \pm)}(m_0^+, \La; \ve)$, obeying $E^{(s+q, -)}(m_0^+, \La; \ve) < E^{(s+q, +)}(m_0^+, \La; \ve)$ and
\begin{equation} \label{eq.5Eestimates1APqq}
|E^{(s+q, \pm)}(m_0^+, \La; \ve) - E^{(s+q-1,\pm)}(m^\pm_0, \La^{(s+q-1)}(m^+_0); \ve)| < |\ve| (\delta^{(s+q-1)}_0)^3.
\end{equation}
The functions $E^{(s+q, \pm)}(m^+_0, \La; \ve)$ are $C^2$-smooth on the interval $(-\ve_{s-1}, \ve_{s-1})$. The following estimates hold:
\begin{equation} \label{eq:8-13acnq0A}
\begin{split}
|\partial^\alpha_E \chi| \le 8, \quad \text {for $\alpha \le 2$}, \quad \partial^2_E \chi > 1/8, \\
\partial_E \chi|_{\ve, E^{(s+q,-)}(m^+_0, \La; \ve)} < - (\tau^\zero)^2, \quad \partial_E \chi|_{\ve,E^{(s+q,+)}(m^+_0, \La; \ve)} > (\tau^\zero)^2, \\
E^{(s+q,+)}(m^+_0, \La; \ve) - E^{(s+q,-)}(m^+_0, \La; \ve) \\
> \min \Bigl( \frac{1}{8}[ - \partial_E \chi|_{\ve,E^{(s,-)}(m^+_0, \La; \ve)} + \partial_E \chi|_{\ve,E^{(s,+)}(m^+_0, \La; \ve)}], 2 \delta^{(s+q-1)}_0 \Bigr), \\
- \partial_E \chi|_{\ve,E^{(s+q,-)}(m^+_0, \La; \ve)}, \partial_E \chi|_{\ve,E^{(s+q,+)}(m^+_0, \La; \ve)} \\
> \min \bigl( \frac{1}{2^{14}} \bigl( E^{(s+q,+)}(m^+_0, \La; \ve) - E^{(s+q,-)}(m^+_0, \La; \ve) \bigr)^2, \frac{(\delta^{(s+q-1)}_0)^2}{2^{11}} \bigr), \\
|\chi(\ve,E)| \ge \frac{1}{16} \min_{+,-} (E - E^{(s,\pm)}(m^+_0, \La; \ve))^2 \quad \text{if $\min_{+,-} |E - E^{(s+q,\pm)}(m^+_0, \La; \ve)| < \frac{(\delta^{(s+q-1)}_0)^2}{2^{12}}$}, \\
E^{(s+q,+)}(m^+_0, \La; \ve) - v(m_0^-) - Q^{(s+q)}(m^-_0, \La; \ve, E^{(s+q,+)}(m^+_0, \La; \ve) ) \\
\ge \max (\tau^\zero/2, |G^{(s+q)}(m^+_0, m^-_0, \La; \ve, E^{(s+q,+)}(m^+_0, \La; \ve)|) \\
E^{(s+q,-)}(m^+_0, \La; \ve) - v(m_0^+) - Q^{(s+q)}(m^+_0, \La; \ve, E^{(s+q,-)}(m^+_0, \La; \ve) ) \\
\le - \max (\tau^\zero/2, |G^{(s+q)}(m^+_0, m^-_0, \La; \ve, E^{(s+q,-)}(m^+_0, \La; \ve)|), \\
[ a_1 (\ve, E) + |b (\ve, E)| ]|_{E = E^{(s+q,+)} (m^+_0, \La; \ve)} \ge E^{(s+q,+)} (m^+_0, \La; \ve) \\
\ge \max ( a_1 (\ve, E),  a_2 (\ve, E) + |b (\ve, E)| )|_{E = E^{(s+q,+)} (m^+_0, \La; \ve)}, \\
[ a_2 (\ve, E) - |b (\ve, E)| ]|_{E = E^{(s+q,+)} (m^+_0, \La; \ve)}\le E^{(s+q,-)} (m^+_0, \La; \ve) \\
\le \min ( a_2 (\ve, E),  a_1 (\ve, E) - |b (\ve, E)| )|_{E = E^{(s+q,-)} (m^+_0, \La; \ve)}.
\end{split}
\end{equation}
where
\begin{equation}
\begin{split}\nn
a_1 (\ve, E) = v(m_0^+) + Q^{(s+q)} (m^+_0, \La; \ve, E), \quad\quad a_2(\ve, E) = v(m_0^-) + Q^{(s+q)}(m_0^-, \La ; \ve, E), \\
b (\ve, E) = |b_1 (\ve,E)|, \quad b_1 (\ve, E) = G^{(s+q)} (m^+_0, m^-_0, \La; \ve, E).
\end{split}
\end{equation}

\item[(6)]
\begin{equation} \label{eq:5specHEEAAA}
\begin{split}
\spec H_{\La, \ve} \cap \{ E : \min_\pm |E - E^{(s+q-1,\pm)}(m^+_0,\La^\esone(m^+_0); \ve)| < 8 (\delta^{(s+q-1)}_0)^{1/4} \} \\
= \{ E^{(s+q,+)}(m_0^+, \La; \ve), E^{(s+q, -)}(m_0^+, \La; \ve) \}, \\
E^{(s+q,\pm)}(m_0^+, \La; 0) = v(m^\pm_0).
\end{split}
\end{equation}

Let
\begin{equation}\label{eq:5Esplitspecconddomainq}
(\delta^{(s+q)}_0)^4 < \min_\pm |E - E^{(s+q-1,\pm)} \bigl( m^+_0, \La^{(s+q-1)}(m^+_0); \ve \bigr)| < (\delta_0^{(s+q-1)})^{1/2}, \quad E \in \IR.
\end{equation}
Then the matrix $(E - H_{\La,\ve})$ is invertible. Moreover, with $D(x;\La)$ as in part $(1)$,
\begin{equation}\label{eq:5inverseestiMATEq}
|[(E - H_{\La,\ve})^{-1}] (x,y)| \le S_{D(\cdot; \La), T, \kappa_0, |\ve|; k, \La, \mathfrak{R}} (x,y).
\end{equation}

\item[(7)] Set
\begin{equation}\label{eq:5Hevectors1PQPM}
\begin{split}
\beta^\pm = \frac{G^{(s+q)}(m^\mp_0, m^\pm_0, \La; \ve, E^{(s+q,\pm)}(m^+_0, \La; \ve))}{E^{(s+q,\pm)}(m^+_0, \La; \ve) - v(m_0^\mp) - Q^{(s+q)}(m^\mp_0, \La; \ve, E^{(s+q,\pm)}(m^+_0, \La; \ve))}, \\
\varphi^{(s+q,\pm)}(n, \La; \ve) = - \sum_{x \in \La \setminus \{m_0^+, m^-_0\}} (E^{(s+q,\pm)}(m^+_0, \La; \ve) - H_{\La \setminus \{ m_0^+, m^-_0 \}})^{-1} (n,x) \times \\
[h(x, m^\pm_0; \ve) + h(x, m^\mp_0; \ve) \beta^\pm], \quad n \notin \{ m_0^+, m_0^- \}, \\
\vp^{(s+q,\pm)}(m^\pm_0, \La; \ve) = 1, \quad \vp^{(s+q,\pm)}(m^\mp_0, \La; \ve) = \beta^\pm.
\end{split}
\end{equation}

\end{itemize}
Then the vector $\vp^{(s+q,\pm)}(\La; \ve) := (\vp^{(s+q,\pm)}(n, \La; \ve))_{n \in \La}$ is well-defined and obeys $\hle \vp^{(s+q,\pm)}(\La; \ve) = E^{(s+q,\pm)}(m^+_0, \La; \ve) \vp^{(s+q,\pm)}(\La; \ve)$,
\begin{equation}\label{eq:5evdecaY}
|\vp^{(s+q,\pm)}(n, \La; \ve)| \le |\ve|^{1/3} \Big[ \exp \Big( -\frac{7\kappa_0}{8} |n - m^+_0|) + \exp(-\frac{7\kappa_0}{8} |n - m^-_0| \Big) \Big], \quad n \notin \{m^+_0,m^-_0\},
\end{equation}
$|\vp^{(s+q,\pm)}(m^\mp_0,\La; \ve)| \le 1$.
\end{prop}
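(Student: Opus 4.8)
The plan is to prove Proposition~\ref{rem:con1smalldenomnn} by induction on $q$, mirroring the structure of the proof of Proposition~\ref{prop:4-4} and Proposition~\ref{prop:8-5n}. The base case $q=0$ is precisely Proposition~\ref{prop:8-5n} (together with Proposition~\ref{prop:5-4I}, Lemma~\ref{lem:5schur1}), so assume all statements hold for $q'<q$. First I would establish part $(1)$: the functions $D(\cdot;\La^{(s')}(m))$ are in $\mathcal{G}_{\La^{(s')}(m),T,\kappa_0}$ by the inductive hypothesis (applied to the $OPR$-sets $\La^{(s')}(m_j^+)$ with $s'\ge s$ via the current proposition at level $q'<q$, and to the $\cN$-sets via Proposition~\ref{prop:4-4}), then glue them using Lemma~\ref{lem:auxDfunctionsrules} and Lemma~\ref{lem:auxDfunctionsrules1}; the bound $D(m_0^\pm;\La)=4\log(\delta_0^{(s+q)})^{-1}<(R^{(s+q)})^{1/5}\le\mu_\La(m_0^\pm)^{1/5}$ follows from condition (iv) in Definition~\ref{def:7-6} and Remark~\ref{rem:3.Rs}, giving $D(\cdot;\La)\in\mathcal{G}_{\La,T,\kappa_0}$. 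For part $(2)$ I would invoke Proposition~\ref{prop:aux1}: on the domains $\cL^{(s+q-1,\pm)}$, the conditions (v), (vi) of Definition~\ref{def:7-6} guarantee that each $(E-H_{\La^{(s')}(m),\ve})$ with $m\notin\{m_0^+,m_0^-\}$ and each $(E-H_{\La^{(s-1)}(m_0^\pm)\setminus\{m_0^\pm\},\ve})$ is invertible with Green's function bounded by the appropriate $s_{D,T,\kappa_0,|\ve|;\cdot,\mathfrak{R}}$ (using the inductive hypothesis for the $OPR$ pieces and Proposition~\ref{prop:4-4} for the $\cN$ pieces), and that all remaining diagonal entries satisfy $|E-v(n)|\ge\delta_0^2$; Proposition~\ref{prop:aux1} then yields \eqref{eq:3Hinvestimatestatement1PQ}.

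Part $(3)$ is the heart of the induction. Having the inverse of $H_{\La\setminus\{m_0^+,m_0^-\}}$, the functions $K^{(s+q)}$, $Q^{(s+q)}$, $G^{(s+q)}$ are well-defined; analyticity/$C^2$-smoothness follows from Cramer's rule and Lemma~\ref{lem:7differentiationB}. The key comparison estimates \eqref{eq:5-12acACCOPQ} between the scale-$(s+q)$ and scale-$(s+q-1)$ quantities would be proved exactly as the corresponding estimate in Proposition~\ref{prop:4-4}, part $(3)$, and in Lemma~\ref{lem:5schur1}: split the sum defining $Q^{(s+q)}$ according to $\La_1:=\La^{(s+q-1)}(m_0^+)\setminus\{m_0^+,m_0^-\}$ and $\La_2:=\La\setminus\La^{(s+q-1)}(m_0^+)$, apply the Schur complement formula \eqref{eq:2schurfor} inside $H_{\La\setminus\{m_0^+,m_0^-\}}$, recognize the diagonal block as $H_{\La^{(s+q-1)}(m_0^+)\setminus\{m_0^+,m_0^-\}}$, and estimate the cross term via Lemma~\ref{lem:aux5} combined with Corollary~\ref{cor:auxweight1}, using condition (iv) of Definition~\ref{def:7-6} to get $\dist(\{m_0^+,m_0^-\},\La_2)\ge R^{(s+q-1)}$ and $\bar D(\gamma)\le 4\log(\delta_0^{(s+q-1)})^{-1}<(R^{(s+q-1)})^{1/4}$; Lemma~\ref{lem:2gammasum} then collapses the sum over trajectories. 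The $G$-estimate uses the same split together with the exponential decay $|n-m_0^+|,|n-m_0^-|$ factors, exactly as in \eqref{eq:4-11acACC}. The self-adjointness identities \eqref{eq:5-11selfadjOPQ} follow from $(E-H_{\La_{m_0^+,m_0^-}})^{-1}(m,n)=\overline{(E-H_{\La_{m_0^+,m_0^-}})^{-1}(n,m)}$ for real $\ve,E$, as in \eqref{eq:5-10acselfadj}. Part $(4)$ is then the Schur complement formula \eqref{eq:5schurfor1a} applied with $\La_2=\{m_0^+,m_0^-\}$: $E\in\spec H_{\La,\ve}$ iff $\det\tilde H_2=\chi(\ve,E)=0$, just as in part $(2)$ of Lemma~\ref{lem:5schur1}.

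For part $(5)$ I would apply Lemma~\ref{lem:6-1ellM} (or Lemma~\ref{lem:6-1ell} when $\rho$ is not too small) to $\chi^{(f)}=\chi(\ve,E)$ with $f_i=E-a_i$, $f=f_1-b^2/f_2$, $g_0(\ve)=E^{(s+q-1,+)}(m_0^+,\La^{(s+q-1)}(m_0^+);\ve)$, checking that $f\in\mathfrak{F}^{(1)}_{\mathfrak{g}^{(1)}}(f_1,f_2,b^2)$: conditions (i)--(iii) before Lemma~\ref{lem:6-1} hold by the analyticity and self-adjointness from part $(3)$ and by the ordering \eqref{eq:5-13OPQ} in condition (vii) of Definition~\ref{def:7-6}, which gives $a_1-a_2\ge\tau^{(0)}$; the smallness bounds on $|f_i|$, $|\partial^\alpha b^2|$ follow from \eqref{eq:5-12acACCOPQ}. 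Lemma~\ref{lem:6-1ellM} then produces the two roots $E^{(s+q,\pm)}$, the closeness estimate \eqref{eq.5Eestimates1APqq} (refining via \eqref{eq:5EVsplitdefs1a}--\eqref{eq:5EVsplitdefsq} as in Corollary~\ref{cor:5.twolambdas1}), the derivative bounds, and the quadratic lower bound on $|\chi|$ in \eqref{eq:8-13acnq0A}; the $a_1,a_2,b$-sandwich inequalities come directly from \eqref{eq:6-1''}, \eqref{eq:6-1'''}. Part $(6)$: the spectral description \eqref{eq:5specHEEAAA} follows from part $(4)$ together with the fact that $v(m_0^+),v(m_0^-)$ are the only eigenvalues of $H_{\La,0}$ in the relevant window (condition (vi)); the invertibility and Green's function bound \eqref{eq:5inverseestiMATEq} follow from Lemma~\ref{lem:aux5AABBCC} with $\La_2=\{m_0^+,m_0^-\}$, where $|\det\tilde H_2|^{-1}=|\chi(\ve,E)|^{-1}\le 2^{4}(\delta_0^{(s+q)})^{-4}$ bounds $D_0<\min(\mu_\La(m_0^+),\mu_\La(m_0^-))^{1/5}$ by condition (iv), exactly as at the end of the proof of Proposition~\ref{prop:8-5n}. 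Finally, part $(7)$: the vectors $\vp^{(s+q,\pm)}$ are read off from \eqref{eq:5schurfor1a} as the kernel of $\tilde H_2$ at $E=E^{(s+q,\pm)}$; the entry $\beta^\pm$ is the ratio from the $2\times2$ eigenvector equation, bounded by $1$ because of the sandwich inequalities in \eqref{eq:8-13acnq0A} ($|E^{(s+q,\pm)}-v(m_0^\mp)-Q^{(s+q)}(m_0^\mp,\cdot)|\ge\max(\tau^0/2,|G^{(s+q)}|)$); the decay \eqref{eq:5evdecaY} follows by combining \eqref{eq:3Hinvestimatestatement1PQ}, \eqref{eq:auxtrajectweightsumest8} and the exponential decay of $h(x,m_0^\pm;\ve)$.

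\textbf{Main obstacle.} The hardest step is part $(5)$ combined with the bookkeeping needed to feed the output back into condition (v) of Definition~\ref{def:7-6} at the next level: one must verify that the abstract continued-fraction machinery of Section~\ref{sec.4} genuinely applies, i.e.\ that $\chi(\ve,E)$ has the precise structure required to belong to $\mathfrak{F}^{(1)}_{\mathfrak{g}^{(1)}}$ uniformly over the parameter $\ve\in(-\ve_{s-1},\ve_{s-1})$, and that the resulting two analytic branches $E^{(s+q,\pm)}$ stay within the nested disks $\cL^{(s+q-1,\pm)}$ throughout the continuation. This requires carefully tracking, via \eqref{eq.5Eestimates1APqq} and the triangle inequalities in condition (v), that the gap between $E^{(s+q,\pm)}(m_0^+,\La;\ve)$ and the corresponding scale-$(s+q-1)$ value never exceeds $|\ve|(\delta_0^{(s+q-1)})^3$, so that the invertibility domains and the non-resonance separations assumed in the definition propagate correctly; this is exactly the delicate resonance-cancellation point flagged in the introduction.
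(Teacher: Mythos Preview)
Your proposal is correct and follows the paper's approach closely: induction on $q$, with parts $(1)$--$(4)$ handled via Proposition~\ref{prop:aux1}, Lemma~\ref{lem:auxDfunctionsrules}, and Lemma~\ref{lem:2Qfunctionderiv} exactly as you describe, and parts $(6)$, $(7)$ via Lemma~\ref{lem:aux5AABBCC} and the Schur complement/residue computation.

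One point in part $(5)$ deserves sharpening. You write $g_0(\ve)=E^{(s+q-1,+)}(m_0^+,\La^{(s+q-1)}(m_0^+);\ve)$, but for $q\ge 1$ the domain $\cL^{(s+q-1,+)}\cup\cL^{(s+q-1,-)}$ consists of \emph{two} strips, centered at $E^{(s+q-1,+)}$ and $E^{(s+q-1,-)}$, which need not overlap. This is precisely why Lemma~\ref{lem:6-1ellM} (not Lemma~\ref{lem:6-1ell}) is the right tool: it is designed for a pair $g_{0,\pm}$. The paper sets $g_{0,\pm}(\ve)=E^{(s+q-1,\pm)}(m_0^+,\La^{(s+q-1)}(m_0^+);\ve)$, $\rho_0=2\delta_0^{(s+q-1)}$, and verifies that $f\in\mathfrak{F}^{(1)}_{\mathfrak{g}^{(1)}_+}\cap\mathfrak{F}^{(1)}_{\mathfrak{g}^{(1)}_-}$. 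The conditions $(\alpha)$--$(\delta)$ of Lemma~\ref{lem:6-1ellM} are then checked not directly but by introducing the previous-scale characteristic function $\chi_1:=\tilde f_1\tilde f_2-\tilde b^2$ (built from $Q^{(s+q-1)}$, $G^{(s+q-1)}$ on $\La^{(s+q-1)}(m_0^+)$), using the comparison $|\partial_E^\alpha\chi-\partial_E^\alpha\chi_1|\le 4|\ve|(\delta_0^{(s+q-1)})^{12}$ from \eqref{eq:5-12acACCOPQ}, the fact that $\chi_1$ vanishes at $g_{0,\pm}$, and the inductive instance of \eqref{eq:8-13acnq0A} for $\chi_1$. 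Condition $(\beta)$ follows from $\chi_1(0,E)=\chi(0,E)$. Your sketch mentions the comparison estimates in part $(3)$ but does not connect them to this verification; once you make that link explicit, the argument is complete and identical to the paper's.
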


\begin{proof}
The proof of all statements goes simultaneously by induction over $q = 0, 1, \dots$. For $q = 0$, all statements except $(7)$ are due to Proposition~\ref{prop:5-4I}, Lemma~\ref{lem:5schur1}, and Proposition~\ref{prop:8-5n}. We discuss $(7)$ for $q \ge 1$; the derivation for $q = 0$ is completely similar. Let $q \ge 1$. Assume that the statements hold for any $q' \le q-1$ in the role of $q$. The derivation of $(1)$--$(4)$ is completely similar to the derivation of these properties in Proposition~\ref{prop:5-4I}, Lemma~\ref{lem:5schur1}, and Proposition~\ref{prop:8-5n}. We discuss the proof of these statements very briefly. A very important difference in $(5)$ is that this time we invoke Lemma~\ref{lem:6-1ellM} instead of Lemma~\ref{lem:6-1}.

Note first of all the following. Let $(\ve,E) \in \cL^{(s+q-1,+)}$. Let $j \in J^{(s+q-1)} \setminus \{0\}$ be arbitrary. Then, using conditions \eqref{eq:5EVsplitdefs1a}, \eqref{eq:5EVsplitdefAs1b} in Definition~\ref{def:7-6}, one obtains
\begin{equation}\label{eq:3EdomainstimateAAAAOPsq}
\begin{split}
|E^{(s+q-1,+)}(m^+_j, \La^{(s+q-1)}(m); \ve) - E| \le |E^{(s+q-1,+)}(m^+_j, \La^{(s+q-1)}(m); \ve) - E^{(s+q-1,+)}(m^+_0, \La^{(s+q-1)}(m^+_0); \ve)| \\
+ |E^{(s+q-1,+)}(m^+_0, \La^{(s+q-1)}(m^+_0); \ve) - E| < \delta^{(s+q-2)} + 2 \delta^{(s+q-1)} < 3 \delta^{(s+q-2)}/2, \\
|E^{(s+q-1,+)}(m^+_j, \La^{(s+q-1)}(m^+_j); \ve) - E| \ge |E^{(s+q-1,+)}(m^+_j, \La^{(s+q-1)}(m^+_j); \ve) - E^{(s+q-1,+)}(m^+_0, \La^{(s+q-1)}(m^+_0); \ve)|\\
- |E^{(s+q-1,+)}(m^+_0, \La^{(s+q-1)}(m^+_0); \ve) - E| \ge 3 \delta^{(s+q-1)} - 2 \delta^{(s+q-1)} > (\delta^{(s+q-1)})^4, \\
|E^{(s+q-1,-)}(m^+_j, \La^{(s+q-1)}(m^+_j); \ve) - E| \ge |E^{(s+q-1,-)}(m^+_j, \La^{(s+q-1)}(m^+_j); \ve) - E^{(s+q-1,+)}(m^+_0, \La^{(s+q-1)}(m^+_j); \ve) | \\
- |E^{(s+q-1,+)}(m^+_0, \La^{(s+q-1)}(m^+_j); \ve) - E| \ge 3 \delta^{(s+q-1)}-2 \delta^{(s+q-1)} > (\delta^{(s+q-1)})^4.
\end{split}
\end{equation}
Let $s \le s' \le s+q-2$, $j \in J^{(s')}$ be arbitrary. Then, using conditions \eqref{eq:5EVsplitdefsq}, \eqref{eq:5EVsplitdefA} in Definition~\ref{def:7-6}, one obtains
\begin{equation}\label{eq:3EdomainstimateAAAAOP}
\begin{split}
|E^{(s',+)}(m^+_j, \La^{(s')}(m); \ve) - E| \le |E^{(s',+)}(m^+_j, \La^{(s')}(m); \ve) - E^{(s+q-1,+)}(m^+_0, \La^{(s+q-1)}(m^+_0); \ve)| \\
+ |E^{(s+q-1,+)}(m^+_0, \La^{(s+q-1)}(m^+_0); \ve) - E| \le \delta^{(s'-1)} + 2 \delta^{(s+q-1)} < 3 \delta^{(s'-1)}/2, \\
|E^{(s',+)}(m^+_j, \La^{(s')}(m); \ve) - E| \ge |E^{(s',+)}(m^+_j, \La^{(s')}(m); \ve) - E^{(s+q-1,+)}(m^+_0, \La^{(s+q-1)}(m^+_0); \ve)| \\
- |E^{(s+q-1,+)}(m^+_0, \La^{(s+q-1)}(m^+_0); \ve) - E| \ge \frac{\delta^{(s')}}{2} - 2 \delta^{(s+q-1)} > (\delta^{(s')})^4, \\
|E^{(s',-)}(m^+_j, \La^{(s')}(m); \ve) - E| \ge |E^{(s',-)}(m^+_j, \La^{(s')}(m); \ve) - E^{(s+q-1,+)}(m^+_0, \La^{(s+q-1)}(m^+_0); \ve) | \\
- |E^{(s+q-1,+)}(m^+_0, \La^{(s+q-1)}(m^+_0); \ve) - E| \ge \frac{\delta^{(s')}}{2} - 2 \delta^{(s+q-1)} > (\delta^{(s')})^4.
\end{split}
\end{equation}

Similar estimates hold if $(\ve,E) \in \cL^{(s+q-1,-)}$. \emph{For this reason, the inductive assumption applies to $H_{\La^{(s')}(m^+_j),\ve}$ in the role of $\hle$ and to $(\ve,E)$ so that $(1)$--$(6)$ of the current proposition hold for $H_{\La^{(s')}(m^+_j),\ve}$}. In particular, for any $(\ve,E) \in \cL^{(s+q-1,+)} \cup \cL^{(s+q-1,-)}$, each matrix $(E - H_{\La^{(s')}(m),\ve})$ is invertible for any $s \le s' \le s+q-2$ and any $m$, $m \notin \{m_0^+, m_0^-\}$. The matrix $(E - H_{\La^{(s+q-1)}(m_0^+) \setminus \{ m_0^+, m_0^- \},\ve})$ is also invertible. Furthermore,
\begin{equation}\label{eq:3Hinvestimatestatement1NNNN}
\begin{split}
|[(E - H_{\La^{(s')}(m),\ve})^{-1}] (x,y)| \le S_{D(\cdot; \La^{(s')}(m)), T, \kappa_0, |\ve|; \La^{(s')}(m), \mathfrak{R}} (x,y), \\
|[(E - H_{\La^{(s+q-1)}(m_0^+) \setminus \{ m_0^+, m_0^- \}, \ve})^{-1}] (x,y)| \\
\le s_{D(\cdot; \La^{(s+q-1)}(m_0^+) \setminus \{ m_0^+, m_0^- \}), T, \kappa_0, |\ve|; \La^{(s+q-1)}(m_0^+) \setminus \{ m_0^+, m_0^- \}, \mathfrak{R}} (x,y).
\end{split}
\end{equation}
Similar estimates can be shown for $s' < s$ and for $s' = s+q-1$.

Recall also that $|v(n) - v(m_0^+)| \ge 2 \delta_0^4$ for any
$$
n \in \Lambda \setminus \bigl( \big[ \bigcup_{1 \le s' \le s+q-1} \bigcup_{m \in \cM{(s')}} \La^{(s')}(m) \big] \cup \big[ \bigcup_{s \le s' \le s+q-1} \bigcup_{j \in J^{(s')}} \La^{(s')}(m^+_j) \big] \bigr),
$$
due to condition (vi) in Definition~\ref{def:7-6}. This implies $|E - v(n)| \ge \delta_0^4$ for any such $n$.

Taking into account condition (iv) in Definition~\ref{def:7-6} and Remark~\ref{rem:3.Rs}, one obtains $D(m_0^\pm) \le T \mu_\La (m_0^\pm)^{1/3}$. Just as in the proof of Proposition~\ref{prop:8-5n}, one concludes that $D(\cdot; \La) \in \mathcal{G}_{\La, T, \kappa_0}$. Furthermore, due to Proposition~\ref{prop:aux1}, $\cH_{\La_{m_0^+, m_0^-}} := E - H_{\La_{m_0^+, m_0^-}, \ve}$ is invertible. Moreover,
\begin{equation}\label{eq:3Hinvestimate-2}
|\cH_{\La_{m_0^+, m_0^-}}^{-1} (x,y)| \le s_{D(\cdot; \La_{m_0^+, m_0^-}), T, \kappa_0, |\ve|; \La_{m_0^+, m_0^-}, \mathfrak{R}} (x,y).
\end{equation}
Thus, in particular, parts $(1)$, $(2)$ of the current proposition hold.

The estimates in \eqref{eq:5-12acACCOPQ} are due to Lemma~\ref{lem:2Qfunctionderiv}. This finishes the verification of $(1)$--$(4)$.

As we have mentioned, to verify $(5)$ we invoke Lemma~\ref{lem:6-1ellM}. For $(\ve,E) \in \cL^{(s+q-1,+)} \cup \cL^{(s+q-1,\pm)}$, set
\begin{equation}\begin{split}\label{eq:5gpm2}
a_1 (\ve,E) = v(m_0^+) + Q^{(s+q)}(m^+_0, \La; \ve, E), \; a_2(\ve,E) = v(m_0^-) + Q^{(s+q)}(m_0^-,\La; \ve, E), \\
b_1 (\ve,E) = G^{(s+q)}(m^+_0, m^-_0, \La; \ve, E), \; b(\ve,E) = |b_1(\ve,E)|, \\
f_i = E - a_i, \quad f = f_1 - b^2 f_2^{-1}, \\
g_{0,\pm}(\ve) = E^{(s+q-1,\pm)}(m^+_0, \La^{(s+q-1)}(m_0+); \ve), \quad \rho_0 = 2 \delta^{(s+q-1)}_0, \\
\tilde f_1 = E - v(m_0^+) - Q^{(s+q-1)}(m_0^+, \La^{(s+q-1)}(m_0^+); \ve, E), \\
\tilde f_2 = E - v(m_0^-) - Q^{(s+q-1)}(m_0^-, \La^{(s+q-1)}(m_0^+); \ve, E), \\
\tilde b^2 = |G^{(s+q-1)}(m_0^+, m_0^-, \La^{(s+q-1)}(m_0^+); \ve, E)|^2, \quad \chi_1(\ve,E) = \tilde f_1 \tilde f_2 - \tilde b^2.
\end{split}
\end{equation}
Due to \eqref{eq:5-12acACCOPQ}, one has $|\partial_E a_j|, |\partial^2_{E} a_j|, |b_1|, |\partial_{E} b_1|, |\partial^2_{E} b_1|, |E-a_j| < 1/64$. So, $f \in \mathfrak{F}^{(1)}_{\mathfrak{g}^{(1)}_-}(f_1, f_2, b)$, and also $f \in \mathfrak{F}^{(1)}_{\mathfrak{g}^{(1)}_+}(f_1, f_2, b)$, as required in Lemma~\ref{lem:6-1ellM}. Note that $\chi = \chi^{(f)}$. Let us verify conditions $(\alpha)$--$(\delta)$ needed for an application of Lemma~\ref{lem:6-1ellM}. We set $\rho := \rho_0$. Using \eqref{eq:5-12acACCOPQ}, one obtains $|\partial^\alpha_E \chi(\ve,E) - \partial^\alpha_E \chi_1(\ve,E)| \le 4 |\ve| (\delta_0^{(s+q-1)})^{12}$, $\alpha \le 2$. Recall that $\chi_1(\ve, E^{(s+q-1,\pm)}(m^\pm_0, \La^{(s+q-1)}(m^+_0); \ve)) = 0$, and \eqref{eq:8-13acnq0A} applies to $\chi_1$ in the role of $\chi$ and $q-1$ in the role of $q$. This implies conditions $(\alpha)$, $(\gamma)$, and $(\delta)$ with $\sigma_1 := (1/8) (\inf_{x,u} (\min_{i} \tau^{(f_{i})}))^4 = 1/8$. Note that $\chi_1(0,E) = \chi(0,E)$. This implies condition $(\beta)$. Part $(5)$ follows straight from  Lemma~\ref{lem:6-1ellM} $($the last two lines in \eqref{eq:8-13acnq0A} are due to \eqref{eq:6-1''}, \eqref{eq:6-1'''}, respectively$)$.

$(6)$ The proof of this part goes word for word as the proof of part $(3)$ of Proposition~\ref{prop:8-5n}.

$(7)$ Let $0 < | E - E^{(s,\pm)} \bigl( m^+_0, \La; \ve \bigr) | < (\delta_0^{(s+q-1)})^{1/2}$. We invoke the Schur complement formula \eqref{eq:2schurfor1a} with $\cH_\La = E - H_{\La,\ve}$, $\La_2 := \{m_0^+, m_0^-\}$, $\La_1 := \La \setminus \La_2$. Provided $\chi(\ve,E) \neq 0$, one has (see \eqref{eq:5schurfor1a})
$$
\cH_\Lambda = \begin{bmatrix} \cH_{\Lambda_1} & \Gamma_{1,2}\\[5pt] \Gamma_{2,1} & \cH_{\Lambda_2}\end{bmatrix},
$$
\begin{equation}\label{eq:3Hinvestimatest1PQPM-2}
\tilde H_2^{-1} = \frac{1}{\chi(\ve,E)} \begin{bmatrix} (E - v(m_0^-) - Q^{(s+q)}(m^-_0, \La; \ve, E)) & - G^{(s+q)}(m^+_0, m^-_0, \La; \ve, E) \\[5pt] - G^{(s+q)}(m^-_0, m^+_0, \La; \ve, E) & (E - v(m_0^+) - Q^{(s+q)}(m^+_0, \La; \ve, E)) \end{bmatrix},
\end{equation}
\begin{equation}\label{eq:3Hinvestimatest1PQPM-3}
\begin{split}
(E - H_{\La,\ve})^{-1} (n, m^\pm_0) = - [\cH_1^{-1} \Gamma_{1,2} \tilde H_2^{-1}] (n, m^\pm_0) = - \frac{1}{\chi(\ve,E)} \sum_{x \in \La \setminus \{m_0^+, m^-_0\}} \\
(E - H_{\La \setminus \{ m_0^+, m^-_0 \},\ve })^{-1} (n,x) [h(x, m^\pm_0; \ve) (E - v(m_0^\mp) - Q^{(s+q)}(m^\mp_0, \La; \ve, E)) + \\
h(x, m^\mp_0; \ve) G^{(s+q)}(m^\mp_0, m^\pm_0, \La; \ve, E)], \quad n \notin \{ m_0^+, m_0^-\} \\
(E - H_{\La,\ve})^{-1} (m^\pm_0, m^\pm_0) = \tilde H_2^{-1} (m^\pm_0, m^\pm_0) = \frac{1}{\chi(\ve,E)} (E - v(m_0^\mp) - Q^{(s+q)}(m^\mp_0, \La; \ve, E)), \\
(E - H_{\La,\ve})^{-1} (m^\pm_0, m^\mp_0) = \tilde H_2^{-1}(m^\pm_0, m^\mp_0) = \frac{1}{\chi(\ve,E)}
 G^{(s+q)}(m^+_0, m^-_0, \La; \ve, E).
\end{split}
\end{equation}
Note also that due to part (7) of Lemma~\ref{4.fcontinuedfrac}, $\partial_E \chi |_{E^{(s+q,\pm)}(m^+_0, \La; \ve)} \not= 0$. Set
\begin{equation}\label{eq:5muPM}
\alpha^\pm := \frac{E^{(s+q,\pm)}(m^+_0, \La; \ve) - v(m_0^\mp) - Q^{(s+q)}(m^\mp_0, \La; \ve, E^{(s+q,\pm)} (m^+_0, \La; \ve))}{\partial_E \chi |_{E^{(s+q,\pm)}(m^+_0, \La; \ve)}}.
\end{equation}
It follows from  \eqref{eq:8-13acnq0A} that $\alpha^\pm \neq 0$, $|\beta^\pm| \le 1$. One has
\begin{equation}\label{eq:5Hinvestimatest1PQPM}
\begin{split}
Res[(E-H_{\La,\ve})^{-1} (n, m^\pm_0)]|_{E = E^{(s+q,\pm)}(m^+_0, \La; \ve)} = - \alpha^\pm \sum_{x \in \La \setminus \{ m_0^+, m^-_0 \}} \\
(E^{(s+q,\pm)}(m^+_0, \La; \ve) - H_{\La \setminus \{ m_0^+, m^-_0 \},\ve })^{-1} (n,x) [h(x, m^\pm_0; \ve) + h(x, m^\mp_0; \ve) \beta^\pm], \quad n \in \La \setminus \{m_0^+, m^-_0\}, \\
Res[(E - H_{\La,\ve})^{-1}(m^\pm_0, m^\pm_0)]|_{E = E^{(s+q,\pm)}(m^+_0, \La; \ve)} = \alpha^\pm, \\
Res[(E - H_{\La,\ve})^{-1}(m^\pm_0, m^\mp_0)]|_{E = E^{(s+q,\pm)}(m^+_0, \La; \ve)} = \alpha^\pm\beta^\pm, \\
Res[(E - H_{\La,\ve})^{-1}\delta_{m^\pm_0}]|_{E = E^{(s+q,\pm)}(m^+_0, \La; \ve)} = \alpha^\pm \vp^{(s+q)}(\cdot, \La; \ve).
\end{split}
\end{equation}
This implies $\hle \vp^{(s+q,\pm)}(\La; \ve) = E^{(s+q,\pm)}(m^+_0, \La; \ve) \vp^{(s+q,\pm)}(\La; \ve)$. Combining \eqref{eq:5Hevectors1PQPM} with \eqref{eq:3Hinvestimatestatement1PQ} and with the estimate \eqref{eq:auxtrajectweightsumest8} from Lemma~\ref{lem:auxweight1}, one obtains \eqref{eq:5evdecaY}.
\end{proof}

Using the notation from Proposition~\ref{rem:con1smalldenomnn}, assume that the functions $h(m, n,\ve)$, $m, n \in \La$ depend also on some parameter $k \in (k_1, k_2)$, that is, $h(m, n; \ve) = h(m, n; \ve, k)$. Assume that $H_{\La, \varepsilon, k} := \bigl( h(m, n; \ve, k) \bigr)_{m, n \in \La}\in OPR^{(s,s+q)}\bigl(m^+_0, m^-_0, \La; \delta_0, \tau^\zero \bigr)$. Let $Q^{(s+q)}(m_0^\pm, \La; \ve, k, E)$, $G^{(s+q)}(m^\pm_0, m^\mp_0, \La; \ve, k, E)$, $E^{(s+q,\pm)}(m_0^+, \La; \ve, k)$ be the functions introduced in Proposition~\ref{rem:con1smalldenomnn} with $H_{\La,\varepsilon,k}$ in the role of $\hle$.

\begin{lemma}\label{lem:6differentiation}
$(1)$ If $h(m, n; \ve, k)$ are $C^t$-smooth functions of $k$, then $Q^{(s+q)}(m_0^\pm, \La; \ve, k, E)$, $G^{(s+q)}(m^\pm_0, m^\mp_0, \La; \ve, k, E)$, and $E^{(s+q,\pm)}(m_0^+, \La; \ve,k)$ are $C^t$-smooth functions of all variables involved.

$(2)$ Assume also that $h(m,n;\ve,k)$ are $C^2$-smooth functions and for $m \neq n$ obey $|\partial^\alpha h(m,n;\ve,k)| \le B_0 \exp(-\kappa_0 |m - n|)$ for $|\alpha| \le 2$. Furthermore, assume that $|\partial^\alpha h(m,m;\ve,k)| \le B_0 \exp(\kappa_0 |m - m_0^+|^{1/5})$ for any $m \in \La$, $0< |\alpha| \le 2$. Then, for $|\alpha|\le 2$, we have
\begin{equation}\label{eq:6Hinvestimatestatement1kvard}
\begin{split}
|\partial^\alpha (E - H_{\La \setminus \{ m_0^+, m_0^- \}, k})^{-1}] (x,y)| \le (3 B_0)^\alpha \mathfrak{D}^\alpha_{D(\cdot; \La \setminus \{ m_0^\pm, m^-_\mp \}), T, \kappa_0, |\ve|; \La \setminus \{ m_0^+, m^-_0 \} } (x,y), \\
|\partial^\alpha Q^{(s+q)}(m_0^\pm, \La; \ve, k, E)| \le (3 B_0)^\alpha |\ve| \mathfrak{D}^\alpha_{D(\cdot; \La \setminus \{ m_0^+, m^-_0 \} ), T, \kappa_0, |\ve|; \La \setminus \{ m_0^+, m^-_0 \} } (m_0^\pm, m_0^\pm) < (3 B_0)^\alpha |\ve|^{3/2}, \\
|\partial^\alpha G^{(s+q)}(m_0^\pm, m^\mp_0, \La; \ve, k, E)| \le (3 B_0)^\alpha \mathfrak{D}^\alpha_{D(\cdot; \La \setminus \{ m_0^\pm, m^-_\mp \}), T, \kappa_0, |\ve|; \La \setminus \{ m_0^+, m^-_0 \} } (m_0^\pm, m_0^\mp) \\
<(3B_0)^\alpha|\ve|^{1/2}\exp(-\kappa_0|m^+_0-m^-_0/16|),
\end{split}
\end{equation}
\begin{equation}\label{eq:6Hinvestimatestatement1kvardE}
|\partial^\alpha E^{(s+q,\pm)} (m_0^+, \La; \ve, k, E) - \partial^\alpha v(m_0,k)| < (3 B_0)^\alpha |\ve|^{3/2}.
\end{equation}
Here, $\mathfrak{D}^\alpha_{D(\cdot; \La \setminus \{x,y\}), T, \kappa_0, |\ve|; \La \setminus \{ m_0^+, m^-_0 \} } (m_0^\pm, m^\mp_0)$ is defined as in Lemma~\ref{lem:auxweight1iterated}.
\end{lemma}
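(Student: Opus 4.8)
\textbf{Proof plan for Lemma~\ref{lem:6differentiation}.}

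The plan is to derive everything from the differentiation identities of Lemma~\ref{lem:7differentiationB}(1) (equivalently Lemma~\ref{lem:7differentiation}(1)) applied to the matrix $H_{\La\setminus\{m_0^+,m_0^-\},\ve,k}$, combined with the weighted-sum estimates already available. First, for part $(1)$: smoothness of $K^{(s+q)}$ in all variables is immediate from Lemma~\ref{lem:7differentiation}(1), since $K^{(s+q)}(m,n,\La;\ve,k,E)=(E-H_{\La_{m_0^+,m_0^-},\ve,k})^{-1}(m,n)$ and $H_{\La_{m_0^+,m_0^-},\ve,k}$ is $C^t$-smooth in $k$ (and analytic in $E$, $\ve$) on the relevant domain, where invertibility holds by part $(2)$ of Proposition~\ref{rem:con1smalldenomnn}. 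Since $Q^{(s+q)}$ and $G^{(s+q)}$ are finite sums of products of $h(m_0^\pm,m;\ve,k)$, $K^{(s+q)}$, and $h(n,m_0^\pm;\ve,k)$, they are $C^t$-smooth as well. Finally, $E^{(s+q,\pm)}(m_0^+,\La;\ve,k)$ is $C^t$-smooth by the implicit function theorem: it is defined as a solution of $\chi(\ve,k,E)=0$ where $\chi$ is $C^t$-smooth and $\partial_E\chi\neq 0$ at $E=E^{(s+q,\pm)}$ by the estimate $|\partial_E\chi|_{E^{(s+q,\pm)}}| > (\tau^\zero)^2 > 0$ in \eqref{eq:8-13acnq0A}.

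For part $(2)$: the first inequality in \eqref{eq:6Hinvestimatestatement1kvard} is a direct application of \eqref{eq:3HinvestIM} in Lemma~\ref{lem:7differentiation}(2) to $H_{\La\setminus\{m_0^+,m_0^-\},\ve,k}$; the hypotheses there (invertibility with $s$-bound, and the $C^2$ bounds on $\partial^\alpha h(m,n)$, $\partial^\alpha h(m,m)$ with reference point $m_0^+$) are exactly what is assumed here, using \eqref{eq:3Hinvestimatestatement1PQ} for the inverse bound and noting $D(\cdot;\La\setminus\{m_0^+,m_0^-\})\in\mathcal{G}_{\La\setminus\{m_0^+,m_0^-\},T,\kappa_0}$ from part $(1)$ of Proposition~\ref{rem:con1smalldenomnn}. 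The second and third inequalities in \eqref{eq:6Hinvestimatestatement1kvard} then follow by the product rule: $\partial^\alpha Q^{(s+q)}$ is a sum over $|\alpha|+2$ factors, two of which are $\partial$-derivatives of $h(m_0^\pm,m_1;\ve,k)$ and $h(n_1,m_0^\pm;\ve,k)$ (each bounded by $B_0\exp(-\kappa_0|m_0^\pm-m_1|)$ up to $|m_0^\pm-m_0^\pm|^{1/5}=0$ diagonal factors since $m_0^\pm\notin\La\setminus\{m_0^+,m_0^-\}$, so the reference-point factor is trivial), and the inner factor $\partial^\beta K^{(s+q)}(m_1,n_1)$ bounded by $(3B_0)^{|\beta|}\mathfrak{D}^{|\beta|}$; collecting all terms gives the bound $(3B_0)^\alpha|\ve|\,\mathfrak{D}^\alpha_{D(\cdot;\La\setminus\{m_0^+,m_0^-\}),T,\kappa_0,|\ve|;\La\setminus\{m_0^+,m_0^-\}}(m_0^\pm,m_0^\pm)$, and the final $<(3B_0)^\alpha|\ve|^{3/2}$ follows from \eqref{eq:auxtrajectweightsumest8iterted} (the $\mathfrak{Q}$-estimate) of Lemma~\ref{lem:auxweight1iterated}. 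The $G$-estimate is the same computation but with endpoints $m_0^\pm$ and $m_0^\mp$ instead of $m_0^\pm$ and $m_0^\pm$, so one uses the $\mathfrak{G}$-estimate in \eqref{eq:auxtrajectweightsumest8iterted}, together with $|m_0^+-m_0^-|\ge R^{(s)}$ (from condition (iv) in Definition~\ref{def:7-6}), to extract the factor $\exp(-\kappa_0|m_0^+-m_0^-|/16)$ and absorb everything else into $|\ve|^{1/2}$.

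For \eqref{eq:6Hinvestimatestatement1kvardE}: differentiate the defining equation $\chi(\ve,k,E^{(s+q,\pm)})=0$ with respect to $k$, exactly as in \eqref{eq:4-16DDDD1} of Lemma~\ref{lem:5differentiation}. Writing $\chi = (E-a_1)(E-a_2)-b_1b_1'$ with $a_j=v(m_0^\pm)+Q^{(s+q)}$ and $b_1=G^{(s+q)}$, one gets
\begin{equation}\nn
\partial_k E^{(s+q,\pm)}\cdot\partial_E\chi + \partial_k\chi = 0,
\end{equation}
where $\partial_k\chi$ involves $\partial_k v(m_0^\pm)$, $\partial_k Q^{(s+q)}$, $\partial_k G^{(s+q)}$, and $\partial_E\chi$ is bounded below by $(\tau^\zero)^2$ at $E=E^{(s+q,\pm)}$ by \eqref{eq:8-13acnq0A}. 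Subtracting $\partial_k v(m_0^+)$ and using $|\partial_k\chi - (\ldots)\partial_k v(m_0^\pm)|\le (3B_0)|\ve|^{3/2}\cdot(\text{const})$ from the $Q$- and $G$-bounds already proved, together with $|\partial_E\chi - 1|$ being of order $|\ve|$ (from $|\partial_E Q^{(s+q)}|,|\partial_E G^{(s+q)}|\le |\ve|$ in \eqref{eq:5-12acACCOPQ}), gives $|\partial_k E^{(s+q,\pm)} - \partial_k v(m_0^+)| < (3B_0)|\ve|^{3/2}$ for $\alpha=1$. The $\alpha=2$ case follows by differentiating once more and using the $C^2$-versions of all the preceding bounds. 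The main obstacle I anticipate is purely bookkeeping: keeping track of the various reference points and ensuring that the diagonal-type bound $|\partial^\alpha h(m,m)|\le B_0\exp(\kappa_0|m-m_0^+|^{1/5})$ interacts correctly with the $\mathfrak{D}^\alpha$ weights when the endpoints are $m_0^\pm$ rather than $m_0^+$ --- one must check that for the $G$-estimate the factor $\exp(\kappa_0|m_0^-|^{1/5})$-type contributions are dominated by the exponential gain from $|m_0^+-m_0^-|\ge R^{(s)}$, which is exactly the kind of estimate packaged into Lemma~\ref{lem:auxweight1iterated}; everything else is a mechanical consequence of the cited lemmas.
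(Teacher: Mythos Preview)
Your proposal is correct and follows essentially the same approach as the paper: the paper states that the proof of this lemma is completely similar to that of Lemma~\ref{lem:5differentiation} and omits it, and your argument is precisely the natural adaptation of that proof to the $OPR$ setting --- smoothness of $K^{(s+q)}$ via Lemma~\ref{lem:7differentiation}, then $Q$ and $G$ as finite sums, then the implicit function theorem for $E^{(s+q,\pm)}$; the quantitative bounds via Lemma~\ref{lem:7differentiation}(2) combined with \eqref{eq:3Hinvestimatestatement1PQ} and Lemma~\ref{lem:auxweight1iterated}; and the $E$-derivative estimate by implicit differentiation of $\chi=0$ using \eqref{eq:8-13acnq0A}.
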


The proof of this statement is completely similar to the proof of Lemma~\ref{lem:5differentiation} and we skip it.

\section{Self-Adjoint Matrices with a Graded System of Ordered Pairs of Resonances}\label{sec.6}

This section is to large extent an ``upgrade'' of Section~\ref{sec.5}. We explain only the new ingredients. We skip the rest of the proofs because up to the new notation, they go almost word for word as the proofs of the corresponding statements in Section~\ref{sec.5}. Lemma~\ref{lem:6-9.1} explains how the main transition to the ``upgraded'' case goes. After that, ultimately the main difference is that we use Lemma~\ref{lem:6-1ell} instead of Lemma~\ref{lem:6-1}, and in Lemma~\ref{lem:6-1ellM}, we have this time $\ell > 1$.

\begin{defi}\label{def:9-6}
Let $s > 0$, $q > 0$ be integers and let $\tau^\zero > (\delta^{(s+q-1)}_0)^{1/4}$. Using the notation from Definition~\ref{def:7-6}, assume that conditions $(i)$--$(iv)$ and $(vi)$ of Definition~\ref{def:7-6} hold. Assume also that there exists $m^+_{j_0} \in \cM^{(s+q-1,+)}$ such that
\begin{align}
(\delta^{(s+q-1)}_0)^{1/2} \le |E^{(s+q-1,\pm)}\bigl(m^+_j, \La^{(s+q-1)}(m^+_j); \ve \bigr) - E^{(s+q-1,\pm)}\bigl(m^+_0, \La^{(s+q-1)}(m^+_0); \ve\bigr)| \le \delta^{(s+q-2)}_0 \label{eq:6EVsplitdefs1a}, \quad j \neq j_0, \\
(\delta^{(s+q-1)}_0)^{1/2} \le |E^{(s+q-1,-)} \bigl(m^+_j, \La^{(s+q-1)}(m); \ve\bigr) - E^{(s+q-1,\pm)}\bigl(m_0^+, \La^{(s+q-1)}(m_0^+); \ve \bigr)| \label{eq:6EVsplitdefAs1b}, \quad j \neq j_0, \\
|E^{(s+q-1,\pm)}\bigl(m^+_{j_0}, \La^{(s+q-1)}(m^+_{j_0}); \ve \bigr) - E^{(s+q-1,\pm)}\bigl(m^+_0, \La^{(s+q-1)}(m^+_0); \ve \bigr)| \le (\delta_0^{(s+q-1)})^{5/8}. \label{eq:6-9EVsplitdefs1a}
\end{align}
Assume that the rest of condition $(v)$ of Definition~\ref{def:7-6} holds.

Due to Proposition~\ref{prop:6-4} below, the functions
\begin{equation}\label{eq:9-10acOPQDEF}
\begin{split}
K^{(s+q,\pm)}(m,n, \La; \ve, E) = (E - H_{\La_{m_0^\pm,m_{j_0}^\pm}})^{-1} (m,n), \quad m,n \in \La_{m_0^\pm,m_{j_0}^\pm} := \Lambda \setminus \{m_0^\pm,m_{j_0}^\pm\}, \\
Q^{(s+q,\pm)}(m,\La; \ve, E) = \sum_{m',n' \in \La_{m^\pm_0,m_{j_0}^\pm}} h(m,m';\ve) K^{(s+q,\pm)} (m',n'; \La; \ve, E) h(n',m;\ve), \quad m \in \{m_0^+,m^+_{j_0}\}
\end{split}
\end{equation}
are well-defined for any $\ve \in (-\ve_{s+q-2}, \ve_{s+q-2})$ and any
\begin{equation} \nn
\begin{split}
E \in \mathcal{E}^\pm(\ve) := (E^{(s+q-1,\pm)} \bigl(m^+_0, \La^{(s+q-1)} (m^+_0);\ve) - 2 \delta^{(s+q-1)}_0,E^{(s+q-1,\pm)} \bigl(m^+_0, \La^{(s+q-1)} (m^+_0);\ve) + 2 \delta^{(s+q-1)}_0).
\end{split}
\end{equation}
We require that for all $\ve$, we have
\begin{equation} \label{eq:6-9-13OPQ}
\begin{split}
v(m_0^+) + Q^{(s+q)}(m^+_0, \La,E) \ge v(m_{j_0}^+)+Q^{(s+q)}(m_{j_0}^+,\La; \ve, E) + \tau^\one, \quad E \in \mathcal{E}^+(\ve),
\end{split}
\end{equation}
or, alternatively,
\begin{equation} \label{eq:6-9-13OPQminus}
\begin{split}
v(m_0^-) + Q^{(s+q)}(m^-_0, \La,E) \ge v(m_{j_0}^-) + Q^{(s+q)}(m_{j_0}^-,\La; \ve, E) + \tau^\one, \quad E \in \mathcal{E}^-(\ve),
\end{split}
\end{equation}
where $\tau^\one > 0$. We introduce the following notation:
\begin{equation} \label{eq:6-9-13PAKA}
\mathfrak{m}^\one := \mathfrak{m} := ((m^+_0,m^-_0), (m^+_{j_0},m^-_{j_0})), \quad \quad \quad s^\zero := s\quad, s^\one = s+q, \quad \mathfrak{s}^\one := \mathfrak{s} := (s^\zero,s^\one), \quad \tau = (\tau^\zero,\tau^\one).
\end{equation}
With some abuse of notation, we set
\begin{equation} \label{eq:6-9-13PAKAZ}
\begin{split}
m^+ := m^+_0, \quad m^- := m^+_{j_0} \quad \text{if \eqref{eq:6-9-13OPQ} holds}, \\
\text {or alternatively} \\
m^+ := m^-_0, \quad m^- := m^-_{j_0} \quad \text{if \eqref{eq:6-9-13OPQminus} holds.}
\end{split}
\end{equation}
We say that $\hle \in GSR^{[\mathfrak{s}]}\bigl( \mathfrak{m},m^+,m^-, \La; \delta_0,\tau\bigr)$ . We set $s(m^\pm) = s^\one$. We call $m^+,m^-$ the principal points. We call $\La^{(s^\one-1)}(m^\pm)$ the $(s^\one-1)$-set for $m^\pm$.
\end{defi}

\begin{remark}\label{rem:6plusminusclasses}
We introduce the cases \eqref{eq:6-9-13OPQ} and \eqref{eq:6-9-13OPQminus} to address all possible cases for our applications in Section~\ref{sec.10}. In fact, in Section~\ref{sec.10} these cases exclude each other. This property is inessential for the development in the current section. For this reason, we do not include it in the definitions, and we consider these two cases as two alternatives.
\end{remark}

\begin{remark}\label{rem:6barLanobarLa}
\textbf{In the next statement we will use for the first time part $(5)$ of Definition~\ref{def:aux1} with }$\bar \La \neq \IZ^\nu$, see Remark~\ref{rem:2withRnoR}. Here and later in this work, $\mathcal{G}_{\La',T,\kappa_0} := \mathcal{G}_{\La', \IZ^\nu, T, \kappa_0}$. Recall also that $\mathcal{G}_{\La, \bar \La, T, \kappa_0} \subset \mathcal{G}_{\La,\bar\La_1, T, \kappa_0}$ if $\bar \La_1 \subset \bar \La$.
\end{remark}

\begin{lemma}\label{lem:6-9.1}
$(1)$ Let $H_{\La', \ve} \in OPR^{(s,s+q')} \bigl(m^+_0,m^-_0, \La'; \delta_0,\tau^\zero \bigr)$. Assume that $(\delta^{(s+q')}_0)^{1/4} < \tau^\zero$. Let $\cH_{\La'} := E - H_{\La',\ve}$, $\La_1 = \La' \setminus \{m^+_0,m^-_0\}$, $\La_2 = \{m^\mp_0\}$, $\cH_j := \cH_{\La_j}$, $\Gamma_{i,j}(k,\ell) := \Gamma_{\Lambda_i, \Lambda_j} (k, \ell) := \cH(k,\ell), \quad k \in \La_i, \ell \in \La_j$. For $|E - E^{(s+q',\pm)}(m_0^+, \La'; \ve)| < (\delta^{(s+q')}_0)^{1/4}/8$, the quantity $\tilde H^\pm_2 = \tilde H^\pm_2 (m_0^\mp,m_0^\mp) = [\cH_2 - \Gamma_{2,1} \cH_1^{-1} \Gamma_{1,2}](m_0^\mp,m_0^\mp)$ is well defined, $\tilde H^\pm_2 = E - v(m_0^\mp) - Q^{(s+q')}(m_0^\mp,\La'; \ve,E)$, and
\begin{equation}\label{eq:6aux00c01a1PSTATE}
\pm\tilde H^\pm_2(m_0^\mp,m^\mp_0)\ge (\delta^{(s+q')}_0)^{1/4}/4.
\end{equation}
Furthermore, set $D(x;\La'\setminus\{m^\pm_0\}) = D(x;\La')$, $x \in \La' \setminus \{m^+_0,m^-_0\}$, where $D(x;\La'\setminus\{m^+_0,m^-_0\})$ is defined as in Proposition~\ref{rem:con1smalldenomnn} with $\La'$ in the role of $\La$, $D(m^\mp_0;\La'\setminus \{m^\pm_0\}) = 4 \log (\delta^{(s+q'-1)}_0)^{-1}$. Then, $D(\cdot;\La'\setminus \{m^\pm_0\}) \in \mathcal{G}_{\La'\setminus \{m^\pm_0\},\IZ^\nu \setminus \{m^\pm_0\},T,\kappa_0}$. The matrix $E - H_{\Lambda'\setminus \{m^\pm_0\}}$ is invertible and
\begin{equation}\label{eq:6aux00c01a1PSTATEIII}
|[(E - H_{\La' \setminus \{m^\pm_0\},\ve})^{-1}](x,y)| \le s_{D(\cdot;\La \setminus \{m^\pm_0\}),T,\kappa_0,|\ve|;\La' \setminus \{m^\pm_0\},\mathfrak{R}}(x,y).
\end{equation}

$(2)$ Set
\begin{equation} \label{eq:6-10acOPQDEFQQ}
\hat Q^{(s+q')}(m_0^\pm,\La'; \ve, E) = \sum_{m,n\in \La'\setminus{m^\pm_0}} h(m_0^\pm,m;\ve) (E - H_{\La' \setminus \{m^\pm_0\},\ve})^{-1} (m,n) h(n,m_0^\pm;\ve).
\end{equation}
Then,
\begin{equation}\label{eq:6Qformula}
\hat Q^{(s+q')}(m_0^\pm,\La'; \ve, E) = Q^{(s+q')}(m_0^\pm,\La'; \ve, E) + \frac{|G^{(s+q')}(m_0^\pm,m^\mp_0,\La'; \ve, E)|^2}{E - v(m_0^\mp) - Q^{(s+q')}(m_0^\mp,\La'; \ve,E )}.
\end{equation}

$(3)$ Using the notation from part $(2)$, set $g_0(\ve,E) = g_{0,\pm}(\ve,E) = E^{(s+q'-1,\pm)}(m_0^+, \La'; \ve)$, $\rho_0 = 2 \delta^{(s+q'-1)}_0$,
\begin{equation}\label{eq:6Qfformula}
\begin{split}
f_\pm(\ve,E) = E - v(m_0^\pm) - Q^{(s+q')}(m_0^\pm,\La'; \ve,E ), \quad b^2(\ve,E) = |G^{(s+q')}(m_0^\pm,m^\mp_0,\La'; \ve, E)|^2, \\
f(\ve,E) = f_+(\ve,E) - \frac{b^2(\ve,E)}{f_-(\ve,E)}.
\end{split}
\end{equation}
Then, $f \in \mathfrak{F}^{(1)}_{\mathfrak{g}^\one,1/2}(f_+,f_-,b^2)$, $\tau^{(f)}\ge \tau_0$; see Definition~\ref{def:4a-functions}.
\end{lemma}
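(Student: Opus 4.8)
\textbf{Proof plan for Lemma~\ref{lem:6-9.1}.}

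The plan is to treat the three parts in order, since each one feeds the next. For part $(1)$, the starting point is Proposition~\ref{rem:con1smalldenomnn} applied to $H_{\La',\ve} \in OPR^{(s,s+q')}(m^+_0,m^-_0,\La';\delta_0,\tau^\zero)$: this provides the functions $Q^{(s+q')}(m_0^\pm,\La';\ve,E)$, $G^{(s+q')}(m^\pm_0,m^\mp_0,\La';\ve,E)$, the eigenvalues $E^{(s+q',\pm)}(m_0^+,\La';\ve)$, and all the estimates in \eqref{eq:5-12acACCOPQ}, \eqref{eq:8-13acnq0A}. First I would invoke the Schur complement formula (Lemma~\ref{lem:2schur}) with $\La_1 = \La'\setminus\{m^+_0,m^-_0\}$, $\La_2 = \{m^\mp_0\}$, which is legitimate once $\cH_1 = E - H_{\La'\setminus\{m^+_0,m^-_0\},\ve}$ is invertible; that invertibility, together with the bound \eqref{eq:3Hinvestimatestatement1PQ}, is exactly part $(2)$ of Proposition~\ref{rem:con1smalldenomnn} and holds for $(\ve,E)\in\cL^{(s+q'-1,+)}\cup\cL^{(s+q'-1,-)}$, which contains our $E$-range by \eqref{eq.5Eestimates1APqq}. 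Then $\tilde H^\pm_2(m_0^\mp,m_0^\mp) = E - v(m_0^\mp) - Q^{(s+q')}(m_0^\mp,\La';\ve,E)$ by the Schur formula for a single-point $\La_2$. The sign/size estimate \eqref{eq:6aux00c01a1PSTATE} is where the hypothesis $(\delta^{(s+q')}_0)^{1/4} < \tau^\zero$ and the gap structure enter: from the last four lines of \eqref{eq:8-13acnq0A} one reads off that at $E = E^{(s+q',+)}(m_0^+,\La';\ve)$ one has $a_2(\ve,E) - E \le -\tau^\zero/2$ (so $\tilde H^+_2 \ge \tau^\zero/2$), and symmetrically at the lower eigenvalue; then perturbing $E$ within a ball of radius $(\delta^{(s+q')}_0)^{1/4}/8$ and using $|\partial_E Q^{(s+q')}| \le |\ve|$ from \eqref{eq:5-12acACCOPQ} keeps $\pm\tilde H^\pm_2 \ge \tau^\zero/2 - (\delta^{(s+q')}_0)^{1/4}/8 - |\ve|\cdot(\delta^{(s+q')}_0)^{1/4}/8 \ge (\delta^{(s+q')}_0)^{1/4}/4$. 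The membership $D(\cdot;\La'\setminus\{m^\pm_0\}) \in \mathcal{G}_{\La'\setminus\{m^\pm_0\},\IZ^\nu\setminus\{m^\pm_0\},T,\kappa_0}$ follows from Lemma~\ref{lem:Rtraject}$(1)$ applied with $\La_1 = \La'\setminus\{m^+_0,m^-_0\}$ (where $D$ restricts from Proposition~\ref{rem:con1smalldenomnn}) and $\La_2 = \{m^\mp_0\}$ (a single point with $D = 4\log(\delta^{(s+q'-1)}_0)^{-1}$), noting $\mu_{\La'\setminus\{m^\pm_0\}}(m^\mp_0) \ge R^{(s+q'-1)}$ by condition $(iv)$ of Definition~\ref{def:7-6} and Remark~\ref{rem:3.Rs}, together with the observation from Remark~\ref{rem:6barLanobarLa} that working over $\bar\La = \IZ^\nu\setminus\{m^\pm_0\}$ only enlarges $\mathcal{G}$. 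Finally \eqref{eq:6aux00c01a1PSTATEIII} is an application of Lemma~\ref{lem:aux5AABBCC} with $\La_1 = \La'\setminus\{m^+_0,m^-_0\}$, $\La_2 = \{m^\mp_0\}$: condition $(i)$ there is \eqref{eq:3Hinvestimatestatement1PQ}, and condition $(ii)$ is $|\tilde H^\pm_2|^{-1} \le \exp(D_0)$ with $D_0 = 4\log(\delta^{(s+q'-1)}_0)^{-1} \le T\mu(m^\mp_0)^{1/5}$, which is \eqref{eq:6aux00c01a1PSTATE}.

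For part $(2)$, I would compute $\hat Q^{(s+q')}$ directly using a second Schur complement identity. The matrix $E - H_{\La'\setminus\{m^\pm_0\},\ve}$ differs from $E - H_{\La'\setminus\{m^+_0,m^-_0\},\ve}$ precisely by adjoining the one row/column at $m^\mp_0$; applying the Schur inversion formula \eqref{eq:2schurfor} to extract the entries of $(E - H_{\La'\setminus\{m^\pm_0\},\ve})^{-1}$ in terms of $\cH_1^{-1} = (E - H_{\La'\setminus\{m^+_0,m^-_0\},\ve})^{-1}$ and the scalar $\tilde H^\pm_2{}^{-1}$, and then substituting into \eqref{eq:6-10acOPQDEFQQ}, produces exactly
\[
\hat Q^{(s+q')}(m_0^\pm,\La'; \ve, E) = Q^{(s+q')}(m_0^\pm,\La'; \ve, E) + \frac{|G^{(s+q')}(m_0^\pm,m^\mp_0,\La'; \ve, E)|^2}{\tilde H^\pm_2},
\]
once one recognizes that $\sum_{m,n\in\La'\setminus\{m^+_0,m^-_0\}} h(m_0^\pm,m;\ve)\cH_1^{-1}(m,n)h(n,m_0^\pm;\ve) = Q^{(s+q')}$ and $h(m_0^\pm,m^\mp_0;\ve) + \sum h(m_0^\pm,m;\ve)\cH_1^{-1}(m,n)h(n,m^\mp_0;\ve) = G^{(s+q')}(m_0^\pm,m^\mp_0,\La';\ve,E)$, using the self-adjointness identity \eqref{eq:5-11selfadjOPQ} to handle the conjugate term in the numerator. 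Since $\tilde H^\pm_2 = E - v(m_0^\mp) - Q^{(s+q')}(m_0^\mp,\La';\ve,E)$ by part $(1)$, this is \eqref{eq:6Qformula}.

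For part $(3)$, with $f_\pm$, $b^2$, $f$ as defined in \eqref{eq:6Qfformula} and $g_0 = g_{0,\pm}$, $\rho_0 = 2\delta^{(s+q'-1)}_0$, I would verify the conditions of Definition~\ref{def:4a-functions}$(1)$ with $\lambda = 1/2$. Conditions (i)--(ii) (smoothness of $g_0$, $a_i := E - f_\pm$, $b^2$ on the relevant domain) are immediate from part $(3)$ of Proposition~\ref{rem:con1smalldenomnn}, which asserts $C^2$-smoothness of $Q^{(s+q')}$ and $G^{(s+q')}$ on $\cL^{(s+q'-1,+)}\cup\cL^{(s+q'-1,-)}$. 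Condition (iii) has two halves: $a_1 > a_2$ is exactly the defining inequality \eqref{eq:5-13OPQ}, i.e. $v(m_0^+) + Q^{(s+q')}(m^+_0,\La';\ve,E) \ge v(m_0^-) + Q^{(s+q')}(m^-_0,\La';\ve,E) + \tau^\zero$ (giving also $\tau^{(f)} = a_1 - a_2 \ge \tau^\zero = \tau_0$); and $b(0,u) = 0$ follows from $G^{(s+q')}(m^\pm_0,m^\mp_0,\La';0,E) = 0$, which holds because at $\ve = 0$ all off-diagonal entries of $H_{\La',\ve}$ vanish. The quantitative bounds $|u - a_i|, b^2, |\partial^{\alpha_1,\alpha_2}_{u,\theta}a_i|, |\partial^{\alpha_1,\alpha_2}_{u,\theta}b^2| < \lambda/64 = 1/128$ for $|(\alpha_1,\alpha_2)|\le 2$ come from \eqref{eq:5-12acACCOPQ}: there $|\partial^\alpha_E Q^{(s+q')}| \le |\ve|$, $|E - v(m_0^\pm) - Q^{(s+q')}| < |\ve|$, and $|\partial^\alpha_E G^{(s+q')}| \le 8|\ve|^{3/2}\exp(-\tfrac{7\kappa_0}{8}|m^+_0 - m^-_0|)$, so for $\ve$ small enough (which is part of $|\ve| < \ve_{s+q-2}$ via the definition of $\ve_0$) all these are $\ll 1/128$; the $\theta$-derivative bounds are vacuous or inherited in the same way since $\theta$ does not appear here. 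This places $f \in \mathfrak{F}^{(1)}_{\mathfrak{g}^\one,1/2}(f_+,f_-,b^2)$. The main obstacle throughout is bookkeeping rather than a genuine difficulty: keeping the two parallel Schur complements (the rank-two split $\{m^+_0,m^-_0\}$ versus the rank-one split $\{m^\mp_0\}$) straight, and checking that the domain $|E - E^{(s+q',\pm)}(m_0^+,\La';\ve)| < (\delta^{(s+q')}_0)^{1/4}/8$ is small enough that all the estimates of Proposition~\ref{rem:con1smalldenomnn}, which are stated on the larger sets $\cL^{(s+q'-1,\pm)}$ of radius $2\delta^{(s+q'-1)}_0$, remain applicable — this is where \eqref{eq.5Eestimates1APqq} and the chain $\delta^{(s+q')}_0 \ll \delta^{(s+q'-1)}_0$ from Remark~\ref{rem:3.Rs} are used.
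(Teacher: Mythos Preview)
Your proposal is correct and follows essentially the same approach as the paper: invoking Proposition~\ref{rem:con1smalldenomnn} for the base estimates and the identification $\tilde H^\pm_2 = E - v(m_0^\mp) - Q^{(s+q')}$, reading off the sign bound \eqref{eq:6aux00c01a1PSTATE} from \eqref{eq:8-13acnq0A} and the derivative bound \eqref{eq:5-12acACCOPQ}, then passing to \eqref{eq:6aux00c01a1PSTATEIII} by the single-point Schur machinery (the paper cites Proposition~\ref{prop:aux1} rather than Lemma~\ref{lem:aux5AABBCC}, but for a one-point $\La_2$ these are equivalent); part $(2)$ is the same four-term Schur decomposition the paper writes out as $\mathfrak{S}_1+\mathfrak{S}_2+\mathfrak{S}_3+\mathfrak{S}_4$; and part $(3)$ is the same direct verification of the conditions in Definition~\ref{def:4a-functions} from \eqref{eq:5-13OPQ} and \eqref{eq:5-12acACCOPQ}.
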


\begin{proof}
$(1)$ Let $|E - E^{(s+q',+)}(m_0^+, \La'; \ve)| < (\delta^{(s+q')}_0)^{1/4}/8$. Due to part $(2)$ of Proposition~\ref{rem:con1smalldenomnn}, $Q^{(s+q')}(m_0^-,\La'; \ve,E )$ is well-defined. One can see that $\tilde H^+_2 = E - v(m_0^-) - Q^{(s+q')}(m_0^-,\La'; \ve,E )$. Due to \eqref{eq:8-13acnq0A} from Proposition~\ref{rem:con1smalldenomnn}, one obtains
\begin{equation}\label{eq:6-9gpm2N}
\tilde H_2^+|_{E = E^{(s+q', +)}(m_0^+, \La'; \ve)} = E^{(s+q', +)}(m_0^+, \La'; \ve) - v(m_0^-) - Q^{(s+q')}(m_0^-,\La'; \ve,E^{(s+q', +)}(m_0^+, \La'; \ve) ) \ge \tau^\zero/2.
\end{equation}
Combining \eqref{eq:6-9gpm2N} with the estimates \eqref{eq:5-12acACCOPQ} from Proposition~\ref{rem:con1smalldenomnn} and taking into account that $(\delta^{(s+q')}_0)^{1/4} < \tau^\zero$, one obtains \eqref{eq:6aux00c01a1PSTATE} for $\tilde H^+_2$. The derivation for $\tilde H^-_2$ is completely similar. Furthermore, due to \eqref{eq:3Hinvestimatestatement1PQ} from Proposition~\ref{rem:con1smalldenomnn}, one has
\begin{equation}\label{eq:6aux00c01a1PSTATEAGA}
|[(E - H_{\La' \setminus \{m^+_0,m^-_0\},\ve})^{-1}](x,y)| \le s_{D(\cdot;\La \setminus \{m^+_0,m^-_0\}),T,\kappa_0,|\ve|;\La' \setminus \{m^+_0,m^-_0\},\mathfrak{R}}(x,y).
\end{equation}
Due to Proposition~\ref{prop:aux1}, the estimate \eqref{eq:6aux00c01a1PSTATEIII} follows from \eqref{eq:6aux00c01a1PSTATE} combined with \eqref{eq:6aux00c01a1PSTATEAGA}. The case $|E - E^{(s+q',-)}(m_0^+, \La'; \ve)| < (\delta^{(s+q')}_0)^{1/4}/8$ is completely similar.

$(2)$ One has
\begin{equation}\label{eq:6Qfoursplit}
\begin{split}
\hat Q^{(s+q')}(m_0^\pm,\La'\setminus \{m^\pm_0\}; \ve, E) = [\sum_{m,n \in \La_1} + \sum_{m,n \in \La_2} + \sum_{m \in \La_1, n \in \La_2} + \sum_{m \in \La_2, n \in \La_1}] \\
h(m_0^\pm,m;\ve) (E - H_{\La' \setminus \{m^\pm_0\},\ve})^{-1}(m,n) h(n,m_0^\pm;\ve) := \sum_{1 \le j \le 4} \mathfrak{S}_j.
\end{split}
\end{equation}
Using the Schur complement formula and the definition \eqref{eq:5-10acOPQ}, one obtains
\begin{equation}\label{eq:6applySchur1}
\begin{split}
\mathfrak{S}_1 = \sum_{m,n \in \La_1} h(m_0^\pm,m;\ve) [\cH_1^{-1} + \cH_1^{-1} \Gamma_{1,2} \tilde H_2^{-1} \Gamma_{2,1}\cH_1^{-1}](m,n)
h(n,m_0^\pm;\ve), \\
= Q^{(s+q')}(m_0^\pm,\La'; \ve, E) + (\tilde H_2(m^\mp_0,m^\mp_0))^{-1} |\mathfrak{M}_1|^2, \\
\mathfrak{M}_1 = \sum_{m,n \in \La_1} h(m_0^\pm,m;\ve) \cH_1^{-1}(m,n) h(n,m^\mp_0;\ve);
\end{split}
\end{equation}
see Lemma~\ref{lem:2schur}. Similarly,
\begin{equation}\label{eq:6applySchur2}
\begin{split}
\mathfrak{S}_2 = (\tilde H_2(m^\mp_0,m^\mp_0))^{-1} |h(m^\mp_0,m_0^\pm;\ve)|^2, \\
\mathfrak{S}_3 = (\tilde H_2(m^\mp_0,m^\mp_0))^{-1} \mathfrak{M}_1 h(m^\mp_0,m_0^\pm;\ve), \quad \mathfrak{S}_4 = (\tilde H_2(m^\mp_0,m^\mp_0))^{-1} \overline{\mathfrak{M}_1 h(m^\mp_0,m_0^\pm;\ve)}.
\end{split}
\end{equation}
Due to \eqref{eq:6applySchur1}, \eqref{eq:6applySchur2}, and the definition \eqref{eq:5-10acOPQ}, one has
\begin{equation}\label{eq:6QfoursplitBAK}
\sum_{1 \le j \le 4} \mathfrak{S}_j = Q^{(s+q')}(m_0^\pm,\La'; \ve, E) + (\tilde H_2(m^\mp_0,m^\mp_0))^{-1} |G^{(s+q')}(m_0^\pm, m^\mp_0, \La'; \ve, E)|^2,
\end{equation}
as claimed in \eqref{eq:6Qformula}.

$(3)$ Due to \eqref{eq:5-13OPQ} in Definition~\ref{def:7-6}, one has $f_+ > f_-$ for any $\ve, E$. It follows from the estimates \eqref{eq:5-12acACCOPQ} in Proposition~\ref{rem:con1smalldenomnn} that $f \in \mathfrak{F}^{(1)}_{\mathfrak{g}^\one,1/2}(f_+,f_-,b^2)$. Furthermore, due to Definition~\ref{def:7-6}, one has $\tau^{(f)} \ge \tau_0$.
\end{proof}

\begin{prop}\label{prop:6-4}
Using the notation from Definition~\ref{def:9-6}, the following statements hold.

$(1)$ Let $D(\cdot;\La(m))$ be as in Proposition~\ref{rem:con1smalldenomnn}. Set $D(x;\La) = D(x;\La\setminus\{m^+,m^-\}) = D(x;\La\setminus \mathfrak{m}) = D(x;\La(m))$ if $x \in \La(m) \setminus \mathfrak{m}$, $D(x;\La) = D(x;\La\setminus\{m^+,m^-\}) = 4 \log (\delta^{(s+q-1)}_0)^{-1}$ if $x \in \mathfrak{m} \setminus\{m^+,m^-\}$, and $D(x;\La) = 4 \log (\delta^{(s+q)}_0)^{-1}$ if $x \in \{m^+,m^-\}$. Then, $D(\cdot;\La\setminus \mathfrak{m}) \in \mathcal{G}_{\La\setminus \mathfrak{m},T,\kappa_0}$, $D(\cdot;\La \setminus \{m^+,m^-\}) \in \mathcal{G}_{\La \setminus \{m^+,m^-\}, \IZ^\nu \setminus \{m^+,m^-\},T,\kappa_0}$, $D(\cdot;\La) \in \mathcal{G}_{\La,T,\kappa_0}$.

$(2)$ Set $g_1 = E^{(s^\one-1,\pm)} \bigl(m^+, \La^{(s^\one-1)}(m^+); \ve \bigr)$ if \eqref{eq:6-9-13OPQ} or \eqref{eq:6-9-13OPQminus} holds, respectively. Set
\begin{equation}\label{eq:6-11domainnOPQU}
\cL^{(s^\one-1)} := \Big\{ (\ve,E) : \ve \in (-\ve_{s-1},\ve_{s-1}), |E - g_1(\ve)| < \frac{(\delta_0^{(s^\one-1)})^{1/2}}{2} \Big\}.
\end{equation}
For any $(\ve,E) \in \cL^{(s^\one-1)}$, the matrix $(E - H_{\La \setminus \{m^+,m^-\},\ve})$ is invertible and
\begin{equation}\label{eq:6-9.Hinvestimatestatement1PQ}
|[(E - H_{\La \setminus \{m^+,m^-\},\ve})^{-1}](x,y)| \le s_{D(\cdot;\La\setminus\{m^+,m^-\}),T,\kappa_0,|\ve|;\La\setminus
\{m^+,m^-\},\mathfrak{R}}(x,y).
\end{equation}

$(3)$ Set
\begin{equation} \label{eq:6-10acU}
\begin{split}
Q^{(s^\one)}(m^\pm,\La; \ve, E) = \sum_{m,n \in \La \setminus \{m^+,m^-\}} h(m^\pm,m;\ve) (E - H_{\La \setminus \{m^+,m^-\}})^{-1} (m,n) h(n,m^\pm;\ve), \\
G^{(s^\one)}(m^\pm,m^\mp, \La\setminus\{m^+,m^-\}; \ve, E) = h(m^\pm,m^\mp;\ve) \\
+ \sum_{m,n \in \La \setminus \{m^+,m^-\}} h(m^\pm,m;\ve) (E - H_{\La \setminus \{m^+,m^-\}})^{-1}(m,n) h(n,m^\mp;\ve).
\end{split}
\end{equation}
Let  $\hat Q^{(s^\one-1)} \bigl(m^\pm, \La^{(s^\one-1)}(m^\pm); \ve,E \bigr)$ be defined as in Lemma~\ref{lem:6-9.1} with $\La' = \La^{(s^\one-1)}(m^\pm)$. Then, for $\alpha \le 2$,
\begin{equation} \label{eq:6ACCOPQ}
\begin{split}
& \big| \partial^\alpha_E Q^{(s^\one)}(m^\pm, \La; \ve, E) - \partial^\alpha_E \hat Q^{(s^\one-1)}\bigl(m^\pm, \La^{(s^\one-1)}(m^\pm); \ve,E \bigr) \big | \\
& \le 4 |\ve|^{3/2} \exp \left( -\kappa_0 R^{(s^\one-1)} \right) \le |\ve| (\delta_0^{(s^\one-1)})^{12}, \\
& \big| \partial^\alpha_E G^{(s^\one)}(m^+, m^-, \La;\ve,E) \big| \le 4 |\ve|^{3/2} \exp \left( -\frac{7\kappa_0}{8} |m^+ - m^-| \right) \\
& \le 4 |\ve|^{3/2} \exp \left( -\kappa_0 R^{(s^\one-1)} \right) \le |\ve| (\delta_0^{(s^\one-1)})^{12}.
\end{split}
\end{equation}
Furthermore, set $\rho_0 = \delta^{(s^\one-2)}_0$, $\rho_1 = (\delta^{(s^\one-1)}_0)^{1/4}/8$, $g_0(\ve,E) = g_{0,\pm}(\ve,E) = E^{(s^\one-2,\pm)}(m^+, \La(m^+); \ve)$, $g_1(\ve,E) = g_{1,\pm}(\ve,E) = E^{(s^\one-1,\pm)}(m^+, \La(m^+); \ve)$ if \eqref{eq:6-9-13OPQ} or \eqref{eq:6-9-13OPQminus} holds, respectively, and
\begin{equation}\label{eq:6QfformulaUP}
\begin{split}
f_1(\ve,E) = E - v(m^+) - Q^{(s^\one)}(m^+,\La; \ve,E ), \quad f_2(\ve,E) = E - v(m^-) - Q^{(s^\one)}(m^-,\La; \ve,E ), \\
b^2(\ve,E) = |G^{(s^\one)}(m^\pm,m^\mp,\La; \ve, E)|^2, \quad f(\ve,E) = f_1(\ve,E) - \frac{b^2(\ve,E)}{f_2(\ve,E)}.
\end{split}
\end{equation}
Then, $f \in \mathfrak{F}^{(2,\pm)}_{\mathfrak{g}^{(2)},3/4}(f_1,f_2,b^2)$ if \eqref{eq:6-9-13OPQ} or \eqref{eq:6-9-13OPQminus} holds, respectively, $\tau^{(f_j)} > \tau^\zero/2$, $\tau^{(f)} \ge \tau^\one(\tau^\zero)^2/4$; see Definition~\ref{def:4a-functions} .

$(4)$ Let $(\ve,E) \in \cL^{(s^\one-1)}$. Then, $E \in \spec H_{\La,\ve}$ if and only if $E$ obeys
\begin{equation} \label{eq:6-13NNNNOPQU}
\begin{split}
& \chi(\ve,E) := \bigl( E - v(m^+) - Q^{(s^\one)}(m^+, \La; \ve, E) \bigr) \cdot \bigl( E - v(m^-) - Q^{(s^\one)}(m^-,\La; \ve, E) \bigr) \\
& \qquad - |G^{(s^\one)}(m^+, m^-, \La; \ve, E)|^2 = 0.
\end{split}
\end{equation}

$(5)$ Let $f$ be as in part $(3)$ and let $\chi^{(f)}$ be as in Definition~\ref{def:4a-functions}. Then, $\chi(\ve,E) = 0$ if and only if $\chi^{(f)} = 0$. For $\ve \in (-\ve_{s-1}, \ve_{s-1})$, the equation
\begin{equation}\label{eq:8-13nn-2}
\chi^{(f)}(\ve,E) = 0
\end{equation}
has exactly two solutions $E^{(s^\one, +)}(m^+, \La; \ve) > E^{(s^\one, -)}(m^+, \La; \ve)$,
\begin{equation} \label{eq.6Eestimates1APN1}
|E^{(s^\one, \pm)}(m^+, \La; \ve) - g_1| < 4(\delta^{s^\one-1}_0)^{1/8}.
\end{equation}
The functions $E^{(s^\one, \pm)}(m^+, \La; \ve)$ are smooth on the interval $(-\ve_{s-1}, \ve_{s-1})$. The following estimates hold:
\begin{equation} \label{eq:6-13acnq0N1}
\begin{split}
|\partial^\alpha_E \chi^{(f)}| \le 8 \quad \text{for $\alpha \le 2$}, \quad \partial^2_E \chi^{(f)} > 1/8, \\
\partial^\alpha_E \chi^{(f)}|_{\ve,E^{(s^\one,-)}(m^+, \La; \ve)} < -(\tau^{(f)})^2, \quad \partial^\alpha_E \chi^{(f)}|_{\ve,E^{(s^\one,+)}(m^+, \La; \ve)} > (\tau^{(f)})^2, \\
E^{(s^\one,+)}(m^+, \La; \ve) - E^{(s^\one,-)}(m^+, \La; \ve) > \frac{1}{8}[ -\partial^\alpha_E \chi^{(f)}|_{\ve,E^{(s^\one,-)}(m^+, \La; \ve)} + \partial^\alpha_E \chi^{(f)}|_{\ve,E^{(s^\one,+)}(m^+, \La; \ve)}], \\
-\partial^\alpha_E \chi^{(f)}|_{\ve,E^{(s^\one,-)}(m^+, \La; \ve)}, \partial^\alpha_E \chi^{(f)}|_{\ve,E^{(s^\one,+)}(m^+, \La; \ve)} >
\frac{1}{2^{14}} \bigl( E^{(s^\one,+)}(m^+, \La; \ve) - E^{(s^\one,-)}(m^+, \La; \ve) \bigr)^2, \\
|\chi^{(f)}(\ve,E)| \ge \frac{1}{8} \min \bigl( (E - E^{(s^\one,-)}(m^+, \La; \ve))^2, (E - E^{(s^\one,+)}(m^+, \La; \ve))^2 \bigr), \\
[a_1 (\ve, E) + |b (\ve, E)|]|_{E = E^{(s^\one,+)} (m^+, \La; \ve)} \ge E^{(s^\one,+)} (m^+, \La; \ve) \\
\ge \max ( a_1 (\ve, E),  a_2 (\ve, E) + |b (\ve, E)| )|_{E = E^{(s^\one,+)} (m^+, \La; \ve)}, \\
[a_2 (\ve, E) - |b (\ve, E)|]|_{E = E^{(s^\one,+)} (m^+, \La; \ve)} \le E^{(s^\one,-)} (m^+, \La; \ve) \\
\le \min ( a_2 (\ve, E),  a_1 (\ve, E) - |b (\ve, E)| )|_{E = E^{(s^\one,-)} (m^+, \La; \ve)}.
\end{split}
\end{equation}
where
\begin{equation}
\begin{split}\nn
a_1 (\ve, E) = v(m^+) + Q^{(s^\one)} (m^+, \La; \ve, E), \quad \quad a_2(\ve, E) = v(m^-) + Q^{(s^\one)}(m^-, \La ; \ve, E), \\
b (\ve, E) = |b_1 (\ve,E)|, \quad b_1 (\ve, E) = G^{(s^\one)} (m^+, m^-, \La; \ve, E).
\end{split}
\end{equation}

$(6)$
\begin{equation} \label{eq:6specHEEN1}
\begin{split}
\spec H_{\La,\ve} \cap \{E : |E - E^{(s^\one-1)}(m^+_0,\La^{(s^\one-1)}(m^+_0);\ve)| < 8 (\delta^{(s^\one-1)}_0)^{1/4}\} \\
= \{ E^{(s^\one,+)}(m_0^+, \La; \ve), E^{(s^\one, -)}(m_0^+, \La; \ve) \}, \\
E^{(s^\one,\pm)}(m_0^+, \La;0) = v(m^\pm_0).
\end{split}
\end{equation}
Let
\begin{equation}\label{eq:6EsplitspecconddomainqU}
(\delta^{(s^\one)}_0)^4 < \min_\pm |E - E^{(s^\one,\pm)}(m^+, \La; \ve)| < (\delta^{(s^\one-1)}_0)^{1/2}, \quad E \in \IR.
\end{equation}
Then, the matrix $(E - H_{\La,\ve})$ is invertible. Moreover,
\begin{equation}\label{eq:5inverseestiMATEq-2}
|[(E - H_{\La,\ve})^{-1}](x,y)| \le s_{D(\cdot;\La),T,\kappa_0,|\ve|;k,\La,\mathfrak{R}}(x,y),
\end{equation}
where $D(x,\La)$ is as in part $(1)$.
\end{prop}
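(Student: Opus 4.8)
\textbf{Plan of proof for Proposition~\ref{prop:6-4}.}
The plan is to follow the same architecture as the proof of Proposition~\ref{rem:con1smalldenomnn}, with the crucial change that in the resonance analysis one iterates the continued-fraction-function construction one level higher, so that Lemma~\ref{lem:6-1ellM} enters with $\ell = 2$ rather than $\ell = 1$. First I would set up the inductive framework: by condition $(iii)$ in Definition~\ref{def:7-6} (as inherited by Definition~\ref{def:9-6}) each $H_{\La^{(s')}(m_j^+),\ve}$ lies in $OPR^{(s,s')}$ or in $\cN^{(s')}$, so Proposition~\ref{rem:con1smalldenomnn}, Proposition~\ref{prop:4-4} and Lemma~\ref{lem:6-9.1} apply to all of these smaller sets. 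Using the separation estimates \eqref{eq:6EVsplitdefs1a}--\eqref{eq:6-9EVsplitdefs1a} together with the triangle-inequality computations exactly as in \eqref{eq:3EdomainstimateAAAAOPsq}--\eqref{eq:3EdomainstimateAAAAOP}, I would check that for $(\ve,E) \in \cL^{(s^\one-1)}$ every relevant denominator $E - E^{(s',\pm)}(\cdot)$ is bounded below by a power of $\delta_0^{(s')}$; the only "double" resonance left is the pair $(m^+_0, m^+_{j_0})$ (resp.\ the minus pair), which is precisely the $\ell=2$ structure. Then $(1)$ follows from Lemma~\ref{lem:auxDfunctionsrules}, Remark~\ref{rem:3.Rs} and condition $(iv)$ giving $\mu_\La(m^\pm) \ge R^{(s+q)}$, just as in part $(1)$ of Proposition~\ref{rem:con1smalldenomnn}. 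For $(2)$, I would apply Proposition~\ref{prop:aux1} to $\La \setminus \{m^+,m^-\}$, using the inverse bounds on the blocks $\La^{(s')}(m)$, the bound $|v(n)-v(m_0^+)| \ge 2\delta_0^4$ from $(vi)$ on the complement of the union, and the invertibility of the two "$j_0$-blocks" $E - H_{\La^{(s^\one-1)}(m^\pm_{j_0})\setminus\{m^\pm_{j_0}\}}$ furnished by part $(1)$ of Lemma~\ref{lem:6-9.1}; this is where the "$\mathfrak{R}$" trajectory bookkeeping is needed and where Lemma~\ref{lem:aux5} / Lemma~\ref{lem:aux5AABBCC} get invoked.

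For $(3)$, $Q^{(s^\one)}$ and $G^{(s^\one)}$ are well-defined by $(2)$, and the comparison with the previous scale is a direct application of Lemma~\ref{lem:2Qfunctionderiv} with $\La_1 = \La^{(s^\one-1)}(m^\pm)\setminus\{m^+,m^-\}$, $\La_2$ the complement, $R = R^{(s^\one-1)}$; the hypothesis $\max(\max_x D(x), 4T\kappa_0^{-1}) \le \kappa_0 R/8$ holds by Remark~\ref{rem:3.Rs} and part $(1)$. This produces \eqref{eq:6ACCOPQ}. The genuinely new point is identifying the continued-fraction structure: one writes $\hat Q^{(s^\one-1)}(m^\pm, \La^{(s^\one-1)}(m^\pm))$ via Lemma~\ref{lem:6-9.1}(2) as $Q^{(s^\one-1)}$ plus $|G^{(s^\one-1)}|^2/f_\mp$, so the pre-scale pair of functions $(f_1,f_2,b^2)$ already carries a level-$1$ continued-fraction each (via Lemma~\ref{lem:6-9.1}(3), $f \in \mathfrak{F}^{(1)}_{\mathfrak{g}^\one,1/2}$), and assembling $f = f_1 - b^2/f_2$ at the current scale yields $f \in \mathfrak{F}^{(2,\pm)}_{\mathfrak{g}^{(2)},3/4}$. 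To check membership I would verify conditions $(a)$--$(e)$ of part $(2)$ of Definition~\ref{def:4a-functions}: $(a)$ from \eqref{eq:6-9-13OPQ}/\eqref{eq:6-9-13OPQminus}, which gives $f_1 > f_2$ with a gap $\ge \tau^\one$ (this is exactly the self-adjointness/symmetry input), $(b)$, $(d)$ from the smallness estimates \eqref{eq:6ACCOPQ} and $(\delta_0^{(s^\one-1)})^{1/4} < \tau^\zero$, $(c)$ from the sign dichotomy of Lemma~\ref{4:generalquadratic} applied to $f_i$, $(e)$ from $h(m,n;0) = 0$; the $\tau$-bounds $\tau^{(f_j)} > \tau^\zero/2$, $\tau^{(f)} \ge \tau^\one(\tau^\zero)^2/4$ come from Lemma~\ref{lem:4stable} (stability under the perturbation $f_i \mapsto \tilde f_i$) and the formula for $\tau^{(f)}$ in \eqref{eq:4a-bfunctions1a}. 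Part $(4)$ is just the Schur complement formula for $\cH_\La$ with $\La_2 = \{m^+,m^-\}$ and $\det\tilde H_2 = \chi$, as in Lemma~\ref{lem:5schur1}(2). Part $(5)$ then follows from Lemma~\ref{lem:6-1ellM} (or Lemma~\ref{lem:6-1ell}) applied to $\chi^{(f)}$ with $\ell = 2$: conditions $(\alpha)$--$(\delta)$ of that lemma are verified by comparing $\chi$ with $\chi_1 := \tilde f_1 \tilde f_2 - \tilde b^2$ (which vanishes at $E = E^{(s^\one-1,\pm)}$ and obeys the previous-scale version of \eqref{eq:6-13acnq0N1}), the discrepancy being $O(|\ve|(\delta_0^{(s^\one-1)})^{12})$ by \eqref{eq:6ACCOPQ}; the last two displayed lines of \eqref{eq:6-13acnq0N1} are \eqref{eq:6-1''}, \eqref{eq:6-1'''} of Lemma~\ref{lem:6-1ell}. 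Finally $(6)$ is proved exactly as part $(6)$ of Proposition~\ref{rem:con1smalldenomnn}: the spectral identity from $(4)$ plus a count at $\ve = 0$ gives \eqref{eq:6specHEEN1}, and for $E$ in the annulus \eqref{eq:6EsplitspecconddomainqU} one invokes Lemma~\ref{lem:aux5AABBCC} with $\La_2 = \{m^+,m^-\}$, using $|\chi(\ve,E)| \ge \tfrac18 \min_\pm(E - E^{(s^\one,\pm)})^2 \ge \tfrac18(\delta_0^{(s^\one)})^8$ from \eqref{eq:6-13acnq0N1} to bound $|\det\tilde H_2|^{-1} \le \exp(D(m^\pm;\La))$, and $\mu_\La(m^\pm) \ge R^{(s+q)}$ from $(iv)$ with Remark~\ref{rem:3.Rs} to verify condition $(ii)$ of that lemma.

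I expect the main obstacle to be the bookkeeping in part $(3)$: correctly organizing the two-level continued-fraction so that $f \in \mathfrak{F}^{(2,\pm)}_{\mathfrak{g}^{(2)},3/4}(f_1,f_2,b^2)$ with the stated $\lambda = 3/4$ and the precise hierarchy $\tau^{(f_j)} > \tau^\zero/2$, $\tau^{(f)} \ge \tau^\one(\tau^\zero)^2/4$, and then keeping the radii $\rho_0 = \delta_0^{(s^\one-2)}$, $\rho_1 = (\delta_0^{(s^\one-1)})^{1/4}/8$ and the domains $\cL_\IR(g_0,\rho_0) \supset \cL_\IR(g_1,\rho_1)$ consistent with the inclusions required by Definition~\ref{def:4a-functions} and by Lemma~\ref{lem:6-1ellM}. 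The stability lemma (Lemma~\ref{lem:4stable}) is the right tool, but applying it requires checking $|\partial^\alpha_E r_i|, |\partial^\alpha_E h^2| \le \min_j(\delta\lambda\tau^{(f_j)})^6$ with $\delta < (1-\lambda)/4\lambda$, which forces one to track the explicit constants from \eqref{eq:6ACCOPQ} against the $\tau$-thresholds; this is the one place where the estimates have to be bookkept with care rather than merely quoted. Everything else is a transcription of the $\ell=1$ arguments of Section~\ref{sec.5} with $\ell$ raised by one, as the section preamble promises.
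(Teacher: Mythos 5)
Your plan matches the paper's proof closely: parts $(1)$, $(2)$, $(4)$ and $(6)$ are essentially the transcriptions the paper gives (citing Lemma~\ref{lem:auxDfunctionsrules}, Proposition~\ref{prop:aux1}, the Schur complement formula, and the Proposition~\ref{prop:8-5n} argument, respectively), and for part~$(3)$ you correctly single out Lemma~\ref{lem:6-9.1} as the pivot that realizes $\hat Q^{(s^\one-1)}(m^\pm,\cdot)$ as a level-$1$ continued fraction, feeds it through the stability Lemma~\ref{lem:4stable} to get $f_j \in \mathfrak{F}^{(1)}_{\mathfrak{g}^{(1)},3/4}$ with $\tau^{(f_j)} > \tau^\zero/2$, and then assembles $f = f_1 - b^2/f_2 \in \mathfrak{F}^{(2,\pm)}_{\mathfrak{g}^{(2)},3/4}$ — this is exactly what the paper does.

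The one place where your route genuinely diverges is part~$(5)$. You propose verifying conditions $(\alpha)$--$(\delta)$ of Lemma~\ref{lem:6-1ellM} by comparison with $\chi_1 := \tilde f_1 \tilde f_2 - \tilde b^2$, recycling the template from the proof of Proposition~\ref{rem:con1smalldenomnn}~$(5)$. But the paper invokes Lemma~\ref{lem:6-1ell} here, not Lemma~\ref{lem:6-1ellM}, and for a structural reason: in Proposition~\ref{rem:con1smalldenomnn} the working domain is the disjoint union $\cL^{(s+q-1,+)} \cup \cL^{(s+q-1,-)}$, which forces the two-reference-function variant Lemma~\ref{lem:6-1ellM} (with strictly separated $g_{\ell-1,-} < g_{\ell-1,+}$), whereas in the present $GSR$ setting condition~\eqref{eq:6-9EVsplitdefs1a} forces $E^{(s^\one-1,\pm)}(m^+_{j_0})$ to cluster within $(\delta_0^{(s^\one-1)})^{5/8}$ of $E^{(s^\one-1,\pm)}(m^+_0)$, so the domain \eqref{eq:6-11domainnOPQU} is a \emph{single} interval around the one reference $g_1$. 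Lemma~\ref{lem:6-1ell} then applies directly: hypothesis $(a)$ is the $\ve = 0$ factorization of $\chi^{(f)}$, hypothesis $(b)$ ($|\chi^{(f_j)}(\ve, g_1(\ve))| < (\tau_0)^6 \rho_1$) follows from $\hat f^\pm(\ve, g_1(\ve)) = 0$ together with the first line of \eqref{eq:6ACCOPQ}, and hypothesis $(c)$ is the $G$-bound in the second line of \eqref{eq:6ACCOPQ}. Attempting Lemma~\ref{lem:6-1ellM} here would require an artificial split of $g_1$ into $g_{1,-} < g_{1,+}$ that the framework does not supply; Lemma~\ref{lem:6-1ellM} with $\ell > 1$ enters only from Proposition~\ref{prop:6-4UUP} onward, where the domain is again a pair of intervals $\cL^{(s^\one+q^\one-1,\pm)}$. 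Finally, a small sign slip: condition~$(a)$ of part~$(2)$ of Definition~\ref{def:4a-functions} asks for $\chi^{(f_1)} < \chi^{(f_2)}$; since \eqref{eq:6-9-13OPQ} gives $a_1 \ge a_2 + \tau^\one$, we have $f_1 = E - a_1 < E - a_2 = f_2$, not $f_1 > f_2$ as you wrote.
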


\begin{proof}
$(1)$ The verification of this part is the same as for part $(1)$ of Proposition~\ref{rem:con1smalldenomnn}.

$(2)$ Let $(\ve,E) \in \cL^{(s^\one-1)}$. Assume that \eqref{eq:6-9-13OPQ} holds. If \eqref{eq:6-9-13OPQminus} holds, the arguments are completely similar. Let  $j \in J^{(s^\one+q-1)} \setminus \{0,j_0\}$ be arbitrary. Then, using conditions \eqref{eq:6EVsplitdefs1a} and \eqref{eq:6EVsplitdefAs1b}, one obtains
\begin{equation}\label{eq:6EdomainstimateAAAAOPsq}
\begin{split}
|E^{(s^\one-1,+)}(m^+_j, \La^{(s^\one-1)}(m^+_j); \ve) - E| \le |E^{(s^\one-1,+)}(m^+_j, \La^{(s^\one-1)}(m_j); \ve) - E^{(s^\one-1,+)}(m^+_0, \La^{(s^\one-1)}(m^+_0); \ve)| \\
+ |E^{(s^\one-1,+)}(m^+_0, \La^{(s^\one-1)}(m^+_0); \ve) - E| < \delta^{(s^\one-2)}_0 + (\delta^{(s^\one-1)}_0)^{1/2} < 2 \delta^{(s^\one-2)}, \\
|E^{(s^\one-1,+)}(m^+_j, \La^{(s^\one-1)}(m^+_j); \ve) - E| \ge |E^{(s^\one-1,+)}(m^+_j, \La^{(s^\one-1)}(m^+_j); \ve) - E^{(s^\one-1,+)}(m^+_0, \La^{(s^\one-1)}(m^+_0); \ve)| \\
- |E^{(s^\one-1,+)}(m^+_0, \La^{(s^\one-1)}(m^+_0); \ve) - E| > (\delta^{(s^\one-1)}_0)^{1/2} - (\delta^{(s^\one-1)}_0)^{1/2}/2 > (\delta^{(s^\one-1)}_0)^4, \\
|E^{(s^\one-1,-)}(m^+_j, \La^{(s^\one-1)}(m^+_j); \ve) - E| \ge |E^{(s^\one-1,-)}(m^+_j, \La^{(s^\one-1)}(m^+_j); \ve) - E^{(s^\one-1,+)}(m^+_0, \La^{(s^\one-1)}(m^+_0); \ve) | \\
- |E^{(s^\one-1,+)}(m^+_0, \La^{(s^\one-1)}(m^+_0); \ve) - E| > (\delta^{(s^\one-1)}_0)^4.
\end{split}
\end{equation}
Due to part $(6)$ of Proposition~\ref{rem:con1smalldenomnn} applied to $H_{\La^{(s^\one-1)}(m^+_j),\ve}$ in the role of
$\hle$, one has
\begin{equation}\label{eq:6Hinvestimatestatement1NNNN}
|[(E - H_{\La^{(s')}(m),\ve})^{-1}](x,y)| \le S_{D(\cdot;\La^{(s')}(m)),T,\kappa_0,|\ve|;\La^{(s')}(m),\mathfrak{R}}(x,y)
\end{equation}
for $s' = s+q-1$ and any $m = m^+_j$, $j \in J^{(s+q-1)} \setminus \{0,j_0\}$. Similarly, \eqref{eq:6Hinvestimatestatement1NNNN} holds for any
$s' < s+q-1$. Note that
\begin{equation}\label{eq:6Em01pm}
\begin{split}
|E - E^{(s^\one-1,+)} \bigl( m^+_{j_0}, \La^{(s^\one-1)}(m^+_{j_0}); \ve \bigr)| < |E - E^{(s^\one-1,+)} \bigl( m^+_{0}, \La^{(s^\one-1)}(m^+_{0}); \ve \bigr) | \\
+ |E^{(s^\one-1,+)} \bigl(m^+_{0}, \La^{(s^\one-1)}(m^+_{0}); \ve \bigr) - E^{(s^\one-1,+)} \bigl( m^+_{j_0}, \La^{(s^\one-1)}(m^+_{j_0}); \ve \bigr) | \\
< (\delta^{(s^\one-1)}_0)^{1/2} + (\delta^{(s^\one-1)}_0)^{5/8}  < (\delta^{(s^\one-1)}_0)^{1/4}/2.
\end{split}
\end{equation}
Due to \eqref{eq:6aux00c01a1PSTATEIII} from Lemma~\ref{lem:6-9.1} applied to $H_{\La^{(s+q-1)}(m^\pm),\ve}$ in the role of $H_{\La',\ve}$, one has
\begin{equation}\label{eq:6Hinvestimatestatement1PaQ}
|[(E - H_{\La^{(s^\one-1)}(m^\pm) \setminus \{m^\pm\}})^{-1}](x,y)| \le s_{D(\cdot;\La^{(s^\one-1)}(m^\pm) \setminus \{m^\pm\}),T,\kappa_0,|\ve|;\La^{(s^\one-1)}(m^\pm) \setminus \{m^\pm\},\mathfrak{R}} (x,y).
\end{equation}
Recall that $|v(n) - v(m_0^+)| \ge 2 \delta_0^4$ for any $n \in \Lambda \setminus  \bigcup_{s',m} \La^{(s')}(m) $, due to condition $(vi)$ in Definition~\ref{def:7-6}. This implies $|E - v(n)| \ge \delta_0^4$ for any such $n$. Due to Proposition~\ref{prop:aux1}, \eqref{eq:6-9.Hinvestimatestatement1PQ} follows from \eqref{eq:6Hinvestimatestatement1NNNN} and \eqref{eq:6Hinvestimatestatement1PaQ}.

$(3)$ The estimates in \eqref{eq:6ACCOPQ} follow from Lemma~\ref{lem:2Qfunctionderiv}. Let $f_j$, $f$ be as in \eqref{eq:6QfformulaUP}. Assume for instance that \eqref{eq:6-9-13OPQ} holds. Due to part $(3)$ of Lemma~\ref{lem:6-9.1}, one has $\hat f^\pm := E - v(m^\pm) - \hat Q^{(s^\one-1)} \bigl( m^\pm, \La^{(s^\one-1)}(m^\pm); \ve,E \bigr) \in \mathfrak{F}^{(1)}_{\mathfrak{g}^{\pm,1}, \mathfrak{r}^{(\pm,1)},1/2}$,
with $\mathfrak{g}^{(\pm,1)} := (g_0^\pm) := \bigl( E^{(s^\one-1,+)} \bigl( m^\pm, \La^{(s^\one-1)}(m^\pm);\ve \bigr) \bigr)$,
$\mathfrak{r}^{(\pm,1)} := (2 \rho_0)$, and with $\tau^{(\hat f^\pm)} \ge \tau^\zero$. Due to \eqref{eq:6-9EVsplitdefs1a} in Definition~\ref{def:9-6}, this implies $\hat f^\pm \in \mathfrak{F}^{(1)}_{\mathfrak{g}^{(1)}, \mathfrak{r}^{(1)},1/2}$ with $\mathfrak{g}^{(1)} := (g_0) = (g_0^+)$, $ \mathfrak{r}^{(1)} := (\rho_0)$. Due to \eqref{eq:6ACCOPQ}, one has $|\partial^\alpha(f_1 - \hat f^+)|, |\partial^\alpha(f_2 - \hat f^-)| < (\delta_0^{(s^\one-1)}) (\min \tau^{(\tilde f^\pm)})^6$, $0 \le \alpha \le 2$. It follows from Lemma~\ref{lem:4stable} that $f_j \in \mathfrak{F}^{(1)}_{\mathfrak{g}^{(1)}, \mathfrak{r}^{(1)},3/4}$, $\tau^{(f_j)} > \tau^\zero/2$. Due to \eqref{eq:6-9-13OPQ}, $f_2 - f_1 > \tau^\one$. Due to \eqref{eq:6ACCOPQ}, one obtains $|b| < ((3/4) \min_j \tau^{(f_j)})^6$, $|\partial_u b^2| < ((3/4) \min_j \tau^{(f_j)})^6 |b|$, $|\partial^2_u b^2| < ((3/4) \min_j \lambda \tau^{(f_j)})^6$, since $\tau^{(f_j)} > \tau^\zero/2 > (\delta^{(s^\one-1)}_0)^{1/4}$. Thus, $f \in \mathfrak{F}^{(2,\pm)}_{\mathfrak{g}^{(2)},3/4}(f_1,f_2,b^2)$, $\tau^{(f)} \ge \tau^\one (\tau^\zero)^2/4$. The case when \eqref{eq:6-9-13OPQminus} holds is completely similar. This finishes part $(3)$.

$(4)$ Follows from the Schur complement formula.

$(5)$ It follows from  Definition~\ref{def:4a-functions} that $\chi(\ve,E) = 0$ if and only if $\chi^{(f)} = 0$. All statements follow from Lemma~\ref{lem:6-1ell}.

$(6)$ The proof is completely similar to the proof of parts $(3)$ and $(4)$ of Proposition~\ref{prop:8-5n}. One can see that the proof of this part has nothing to do with the fact that we use part $(5)$ of the Definition~\ref{def:aux1} with $\bar \La \neq \IZ^\nu$.
\end{proof}

\begin{defi}\label{def:6-6UP}
Assume that the classes of matrices $GSR^{[\mathfrak{s},s']}\bigl( \mathfrak{m},m^+,m^-, \La; \delta_0, \tau\bigr)$ are defined for $s^\one \le s' \le s^\one + q^\one - 1$, $q^\one > 0$, starting with $GSR^{[\mathfrak{s},s^\one]} \bigl( \mathfrak{m}, m^+,m^-,\La; \delta_0, \tau \bigr) := GSR^{[\mathfrak{s}]} \bigl( \mathfrak{m}, m^+,m^-, \La; \delta_0, \tau \bigr)$ being as in Definition~\ref{def:9-6}. Let $m^+_1$, $m^-_1 \in \La$. Assume there are subsets $\cM^{(s',+)} = \left\{ m^+_j : j \in J^{(s')} \right\}$, $\cM^{(s',-)} = \left\{ m^-_j : j \in J^{(s')} \right\}$, $\La^{(s')} (m_j^+) = \La^{(s')} (m_j^{-})$, $j \in J^{(s')}$, with $s \le s' \le s^\one + q^\one - 1$, and also subsets $\cM^{(s')}$, $\La^{(s')}(m)$, $m \in \cM^{(s')}$, $1 \le s'\le s+q-1$ such that the following conditions are valid:
\begin{enumerate}

\item[(i)] $m^\pm_1 \in \cM^{(s^\one+q^\one-1,\pm)}$, $($so, by convention, $1 \in J^{(s^\one+q^\one-1)}$ $)$, $m \in \La(m)$ for any $m$.

\item[(ii)] For any $m$, $H_{\La(m), \ve}$ belongs to one of the classes we have introduced before with $m$ being a principal point, $s(m) \le s^\one + q^\one - 1$ $($ for the notation $s(m)$, see Definitions~\ref{def:4-1},~\ref{def:7-6},~\ref{def:9-6} $)$. Furthermore, $H_{\La(m_1^+), \ve} \in GSR^{[\mathfrak{s}, s^\one+q^\one-1]}\bigl(\mathfrak{m},m_1^+,m_1^- ,\La(m_1^+); \delta_0, \tau\bigr)$.

\item[(iii)] For any $m,m'$, either $\La(m) \cap \La(m') = \emptyset$, or $\La(m) = \La(m')$, in which case $m,m'$ are the principal points for $H_{\La(m), \ve}$.

\item[(iv)] Let $\delta^{(s')}_0$, $R^{(s')}$ be as in Definition~\ref{def:4-1}. Then, $\bigl( m + B(R^{(s')}) \bigr) \subset \Lambda^{(s')}(m)$ for any $\Lambda^{(s')}(m)$, and $\bigl( m_0^\pm + B(R^{(s^\one+q^\one)}) \bigr) \subset \Lambda$.

\item[(v)] Let $E^{(s^\one, \pm)}(m^+, \La; \ve)$ be as in Proposition~\ref{prop:6-4}. Below in Proposition~\ref{prop:6-4UUP} we define inductively the functions $E^{(s^\one+\tilde q, \pm)}(\tilde {m}, \tilde \La; \ve)$. We require that for each $m^+_j \in \cM^{(s',+)}$, $m^+_j \notin \{ m^+_1, m^-_1 \}$, $m \in \cM^{(s')} (\La)$, $s\le s' < s+q$, and any $\ve \in (-\ve_{s-1}, \ve_{s-1})$, we have
\begin{align}
\label{eq:6EVsplitdefs1aFI} 3 \delta^{(s^\one+q^\one-1)}_0 \le |E^{(s^\one+q^\one-1,\pm)} \bigl( m^+_j, \La^{(s^\one+q^\one-1)}(m^+_j); \ve \bigr) - E^{(s^\one+q^\one-1,\pm)} \bigl( m^+_1, \La^{(s^\one+q^\one-1)}(m^+_1); \ve \bigr) | \\
\nn \le \delta^{(s^\one+q^\one-2)}_0, \\
3 \delta^{(s^\one+q^\one-1)}_0 \le |E^{(s^\one+q^\one-1,\mp)} \bigl( m^+_j, \La^{(s^\one+q^\one-1)}(m^+_j); \ve \bigr) - E^{(s^\one+q^\one-1,\pm)} \bigl( m_1^+, \La^{(s^\one+q^\one-1)}(m_1^+); \ve \bigr) | \label{eq:6EVsplitdefAs1bFI}, \\
\label{eq:6EVsplitdefsqFI} \frac{\delta^{(s')}_0}{2} \le |E^{(s',\pm)}\bigl(m^+_j, \La^{(s')}(m^+_j); \ve \bigr) - E^{(s+q-1,\pm )} \bigl( m^+_1, \La^{(s+q-1)}(m^+_1); \ve \bigr) | \le \delta^{(s'-1)}_0 \\
\nn \text{for $s \le s' < s+q-1$}, \\
\frac{\delta^{(s')}_0}{2} \le |E^{(s',\mp)} \bigl( m^+_j, \La^{(s')}(m^+_j); \ve \bigr) - E^{(s+q-1,\pm)} \bigl( m_1^+, \La^{(s+q-1)}(m_0^+); \ve \bigr) | \label{eq:6EVsplitdefAFI} \quad \text{for $s \le s' < s^\one + q^\one - 1$}, \\
\frac{\delta^{(s')}_0}{2} \le |E^{(s')} \bigl( m, \La^{(s')}(m); \ve \bigr) - E^{(s+q-1,+)} \bigl( m^+_1, \La^{(s+q-1)}(m^+_1); \ve \bigr) | \le \delta^{(s'-1)}_0, \quad s' \le s.
\end{align}

\item[(vi)] $|v(n) - v(m_1^+)| \ge 2 \delta_0^4$ for any $n \in \Lambda \setminus \bigcup_{s',m} \La^{(s')}(m)$.

\item[(vii)] Due to Proposition~\ref{prop:6-4UUP}, the functions
\begin{equation} \label{eq:6-10acOPQDEFAPUUP}
Q^{(s^\one+q^\one)}(m_1^\pm,\La; \ve, E) = \sum_{m',n' \in \La \setminus \{m^+_1,m_{1}^-\}} h(m_{1}^\pm,m';\ve) (E - H_{\La \setminus \{m^+-1,m^-_1\}})^{-1}(m',n') h(n',m^\pm_1;\ve)
\end{equation}
are well-defined for all $\ve \in (-\ve_{s-1}, \ve_{s-1})$,
\begin{equation} \label{eq:6EDomain}
\begin{split}
E \in \bigcup_\pm (E^{(s^\one + q^\one-1,\pm)} \bigl( m^+_1, \La^{(s^\one+q^\one-1)} (m^+_1); \ve \bigr) - 2 \delta^{(s^\one+q^\one-1)}_0, \\
E^{(s^\one+q^\one-1,\pm)} \bigl( m^+_1, \La^{(s^\one+q^\one-1)} (m^+_1); \ve \bigr) + 2 \delta^{(s^\one + q^\one - 1)}_0).
\end{split}
\end{equation}
We require that for these $(\ve,E)$,
\begin{equation} \label{eq:6-13GSRUUP}
v(m^+-1) + Q^{(s^\one + q^\one)}(m^+_1, \La,E) \ge v(m^-_1) + Q^{(s^\one + q^\one)}(m^-_1,\La; \ve, E) + \tau^\one.
\end{equation}
\end{enumerate}

Then we say that $\hle \in GSR^{[\mathfrak{s},s^\one+q]} \bigl( \mathfrak{m},m_1^+,m_1^-, \La; \delta_0, \tau^\one \bigr)$. We call $m^+_1,m^-_1$ the principal points. We set $s(m^\pm_1) = s^\one + q^\one$. We call $\La^{(s^\one + q^\one - 1)}(m^\pm_1)$ the $(s^\one + q^\one-1)$-set for $m^\pm_1$.
\end{defi}

\begin{prop}\label{prop:6-4UUP}
Using the notation from Definition~\ref{def:6-6UP}, the following statements hold:

$(1)$ Define inductively $D(x;\La) := D(x;\La \setminus \{m^+_1,m^-_1\}) := D(x;\La(m))$ if $x \in \La(m)$ with $m \notin \{m^+_1,m^-_1\}$, and $D(m^\pm_1;\La) := 4 \log (\delta^{(s^\one+q^\one)}_0)^{-1}$.

Then, $D(\cdot;\La) \in \mathcal{G}_{\La,T,\kappa_0}$, $D(\cdot;\La \setminus \{m^+_1,m^-_1\}) \in \mathcal{G}_{\La \setminus \{m^+_1,m^-_1\},T,\kappa_0}$.

$(2)$  Let $\cL^{(s^\one+q^\one-1,\pm)} := \cL_\mathbb{R} \bigl( E^{(s^\one+q^\one-1,\pm)} \bigl( m^+_1, \La^{(s^\one+q^\one-1)}(m^+_1); \ve \bigr), 2 \delta_0^{(s^\one+q^\one-1)} \bigr)$. For any $(\ve,E) \in \cL^{(s^\one+q^\one-1,\pm)}$, we have
\begin{equation}\label{eq:6-9.Hinvestimatestatement1PQUP}
|[(E - H_{\La \setminus \{m^+_1,m^-_1\},\ve})^{-1}](x,y)| \le s_{D(\cdot;\La), T, \kappa_0, |\ve|; \La \setminus \{m^+_1,m^-_1\}, \mathfrak{R}}(x,y).
\end{equation}

$(3)$ The functions
\begin{equation} \label{eq:6-10acUUP}
\begin{split}
Q^{(s^\one+q^\one)}(m^\pm_1,\La; \ve, E) = \sum_{m',n' \in \La \setminus \{m^+_1,m^-_1\}} h(m^\pm_1,m';\ve) (E - H_{\La \setminus \{m^+_1,m^-_1\}})^{-1} (m,n) h(n',m^\pm_1;\ve), \\
G^{(s^\one+q^\one)}(m^\pm_1,m^\mp_1, \La \setminus \{m^+_1,m^-_1\}; \ve, E) = h(m^\pm_1,m^\mp_1;\ve) \\
+ \sum_{m,n \in \La \setminus \{m^+_1,m^-_1\}} h(m^\pm_1,m;\ve) (E - H_{\La \setminus \{m^+_1,m^-_1\}})^{-1} (m,n) h(n,m^\mp_1;\ve)
\end{split}
\end{equation}
are well-defined and $C^2$-smooth in $\cL^{(s^\one+q^\one-1,+)} \cup \cL^{(s^\one+q^\one-1,-)}$,
\begin{equation} \label{eq:6-12acACCOPQnew1}
\begin{split}
& \big| \partial^\alpha_E Q^{(s^\one+q^\one)}(m_1^\pm, \La; \ve, E) - \partial^\alpha_E Q^{(s^\one+q^\one-1)} \bigl( m_1^\pm, \La^{(s^\one+q^\one-1)}(m_1^+); \ve,E \bigr) \big | \\
& \le 4 |\ve|^{3/2} \exp \left( -\kappa_0 R^{(s^\one+q^\one-1)} \right) \le |\ve| (\delta_0^{(s^\one+q^\one-1)})^{12}, \\
& \big| \partial^\alpha_E G^{(s^\one+q^\one)}(m^\pm_1, m^\mp_1, \La;\ve,E) - \partial^\alpha_E G^{(s^\one+q^\one-1)} \bigl( m_1^\pm,m^\mp_1, \La^{(s^\one+q^\one-1)}(m_1^+); \ve, E \bigr) \big| \\
& \le 4 |\ve|^{3/2} \exp \left( -\kappa_0 R^{(s^\one+q^\one-1)} \right) \le |\ve| (\delta_0^{(s^\one+q^\one-1)})^{12}, \\
& \big| \partial^\alpha_E  Q^{(s^\one+q^\one)}(m_1^\pm, \La; \ve, E) \big| \le |\ve|, \quad |E - v(m_1^\pm) - Q^{(s^\one+q^\one)}(m_1^\pm, \La; \ve, E)| < |\ve|, \\
& \big| \partial^\alpha_E G^{(s^\one+q^\one)}(m^\pm_1, m^\mp_1, \La;\ve,E)| \le 8 |\ve|^{1/2} \exp \left( -\frac{7\kappa_0}{8} |m_1^+ - m_1^-| \right) \le |\ve| (\delta_0^{(s^\one-1)})^{12}
\end{split}
\end{equation}
with $\alpha \le 2$. Furthermore, for $q^\one > 0$, set $\rho_0 = \delta^{(s^\one+q^\one-2)}_0$, $\rho_1 = \delta^{(s^\one+q^\one-1)}_0$,
$g_{0,\pm}(\ve,E) := g_{1,\pm}(\ve,E) := E^{(s^\one+q^\one-1,\pm)}(m^+_1, \La(m^+_1); \ve)$,
\begin{equation}\label{eq:6fformulaINTERIM}
\begin{split}
f_{1}(q^\one;\ve,E) = E - v(m^+_1) - Q^{(s^\one+q^\one)}(m^+_1,\La; \ve,E ), \quad f_{2}(q^\one;\ve,E) = E - v(m^-_1) - Q^{(s^\one+q^\one)}(m^-_1,\La; \ve,E ), \\
b^2(q^\one;\ve,E) = |G^{(s^\one+q^\one)}(m^\pm_1,m^\mp_1,\La; \ve, E)|^2, \quad f(q^\one;\ve,E) = f_1(q^\one;\ve,E) - \frac{b^2(q^\one;\ve,E)}{f_2(q^\one;\ve,E)}.
\end{split}
\end{equation}
Then $($ see Definition~\ref{def:4a-functions}$)$ $f(q^\one;\cdot) \in \mathfrak{F}^{(2)}_{\mathfrak{g}^{(2)}, \lambda(s,q^\one)}(f_1(q^\one;\cdot), f_2(q^\one;\cdot), b^2(q^\one;\cdot))$, $\tau^{(f_j(q^\one;\cdot))} > \tau^\zero(1-\xi(s,q^\one))$, $\tau^{(f)} \ge \tau^\one(\tau^\zero)^2 (1 - \xi(s,q^\one))^2$, with $\lambda(s,q) = (1 + 8^{-(s+q)}) \lambda(s,q-1)$, $\lambda(s,0) = 3/4$, $\lambda(s,q) < 1$ for any $s,q$, $1 - \xi(s,q) = (1 - \xi(s,q-1))(1 - 8^{-(s+q)})$, $\xi(s,0) = 0$, $1 - \xi(s,q) > 1/2$ for any $s,q$.

$(4)$ Let $(\ve,E) \in \cL^{(s^\one+q^\one-1,+)}\cup\cL^{(s^\one+q^\one-1,-)}$. Then, $E \in \spec H_{\La,\ve}$ if and only if $E$ obeys
\begin{equation} \label{eq:6-chiINTERIM}
\begin{split}
& \chi(\ve,E) := \bigl( E - v(m^+-1) - Q^{(s^\one+q^\one)}(m^+_1, \La; \ve, E) \bigr) \cdot \bigl( E - v(m^-_1) - Q^{(s^\one+q^\one)}(m^-_1,\La; \ve, E) \bigr) \\
& \qquad - G^{(s^\one+q^\one)}(m^+_1, m^-_1, \La; \ve, E)G^{(s^\one+q^\one)}(m^-_1, m^+_1, \La; \ve, E) = 0.
\end{split}
\end{equation}

$(5)$ For $\ve \in  (-\ve_{s-1}, \ve_{s-1})$, the equation
\begin{equation}\label{eq:8-13nn-3}
\chi(\ve,E) = 0
\end{equation}
has exactly two solutions $E = E^{(s^\one+q^\one, \pm)}(m_1^+, \La; \ve)$, $E^{(s^\one+q^\one, -)}(m_1^+, \La; \ve) < E^{(s^\one+q^\one, +)}(m_1^+, \La; \ve)$,
\begin{equation} \label{eq.6Eestimates1APINTERIM}
|E^{(s^\one+q^\one, \pm)}(m_1^+, \La; \ve) - E^{(s^\one+q^\one-1,\pm)}(m^\pm_1,\La^{(s^\one+q^\one-1)}(m^+_1);\ve)| < |\ve| (\delta^{(s^\one+q^\one-1)}_0)^3,
\end{equation}
\begin{equation} \label{eq.6Eestimates1APINTERIMEPM}
\begin{split}
[a_1 (\ve, E) + |b (\ve, E)| ]|_{E = E^{(s^\one+q^\one,+)} (m^+_1, \La; \ve)} \ge E^{(s^\one+q^\one,+)} (m^+_1, \La; \ve) \\
\ge \max ( a_1 (\ve, E),  a_2 (\ve, E) + |b (\ve, E)| )|_{E = E^{(s^\one+q^\one,+)} (m^+_1, \La; \ve)} , \\
[a_2 (\ve, E) - |b (\ve, E)| ]|_{E = E^{(s^\one+q^\one,+)} (m^+_1, \La; \ve)} \le E^{(s^\one+q^\one,-)} (m^+_1, \La; \ve) \\
\le \min ( a_2 (\ve, E),  a_1 (\ve, E) - |b (\ve, E)| )|_{E = E^{(s^\one+q^\one,-)} (m^+_1, \La; \ve)},
\end{split}
\end{equation}
where
\begin{equation}\begin{split}\nn
a_1 (\ve, E) = v(m_1^+) + Q^{(s^\one+q^\one)} (m^+_1, \La; \ve, E), \quad \quad a_2(\ve, E) = v(m_1^-) + Q^{(s^\one+q^\one)}(m_1^-, \La ; \ve, E), \\
b (\ve, E) = |b_1 (\ve,E)|, \quad b_1 (\ve, E) = G^{(s^\one+q^\one)} (m^+_1, m^-_1, \La; \ve, E).
\end{split}
\end{equation}

The functions $E^{(s^\one+q^\one, \pm)}(m^+_1, \La; \ve)$ are smooth on the interval $(-\ve_{s-1}, \ve_{s-1})$.

$(6)$
\begin{equation} \label{eq:6specHEEAAAINTERIM}
\begin{split}
\spec H_{\La,\ve} \cap \{E : \min_\pm |E - E^{(s^\one+q^\one,\pm)} (m^+_1,\La^{(s^\one+q^\one-1)} (m^+_1);\ve)| < 8 (\delta^{(s^\one+q^\one-1)}_1)^{1/4}\} \\
= \{ E^{(s^\one+q^\one,+)}(m_1^+, \La; \ve), E^{(s^\one+q^\one,-)}(m_1^+, \La; \ve) \}. \\
\end{split}
\end{equation}

If
\begin{equation}\label{eq:6EsplitspecINTERIM}
(\delta^{(s^\one+q^\one)}_1)^4 < \min_\pm |E - E^{(s^\one+q^\one-1,\pm)} \bigl( m^+_0, \La^{(s^\one+q^\one-1)}(m^+_1); \ve \bigr)|
< 3 \delta_0^{(s^\one+q^\one-1)}/2, \quad E \in \IR,
\end{equation}
then the matrix $(E - H_{\La,\ve})$ is invertible. Moreover,
\begin{equation}\label{eq:6inverINTERIM}
|[(E - H_{\La,\ve})^{-1}] (x,y)| \le s_{D(\cdot;\La),T,\kappa_0,|\ve|;k,\La,\mathfrak{R}}(x,y),
\end{equation}
where $D(x;\La)$ is as in part $(1)$.
\end{prop}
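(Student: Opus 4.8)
\emph{Proof plan.} The plan is to induct on $q^\one = 0, 1, 2, \dots$, establishing all seven parts simultaneously. For $q^\one = 0$ the class $GSR^{[\mathfrak{s},s^\one]}$ coincides with $GSR^{[\mathfrak{s}]}$, so every assertion is Proposition~\ref{prop:6-4}. Hence I would fix $q^\one \ge 1$, assume the proposition for all $q' \le q^\one - 1$, and take $(\ve,E) \in \cL^{(s^\one+q^\one-1,+)} \cup \cL^{(s^\one+q^\one-1,-)}$. The first step is to secure the ``localization input'': exactly as in the derivation of \eqref{eq:3EdomainstimateAAAAOPsq}--\eqref{eq:3Hinvestimatestatement1NNNN} in Proposition~\ref{rem:con1smalldenomnn}, the separation inequalities \eqref{eq:6EVsplitdefs1aFI}--\eqref{eq:6EVsplitdefAFI} of condition (v) in Definition~\ref{def:6-6UP}, together with $|E - E^{(s^\one+q^\one-1,\pm)}(m^+_1,\La^{(s^\one+q^\one-1)}(m^+_1);\ve)| < 2\delta_0^{(s^\one+q^\one-1)}$, place $E$ in the punctured annulus around each $E^{(s',\pm)}(m,\cdot)$ and $E^{(s')}(m,\cdot)$ to which the inductive hypothesis (or Propositions~\ref{prop:4-4}, \ref{rem:con1smalldenomnn}, \ref{prop:6-4}, or Lemma~\ref{lem:6-9.1}) applies. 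Hence every $(E - H_{\La^{(s')}(m),\ve})$ with $m \notin \{m^+_1,m^-_1\}$ and every $(E - H_{\La^{(s^\one+q^\one-1)}(m^\pm_1)\setminus\{m^\pm_1\},\ve})$ is invertible, with Green's function bounded by the appropriate $s_{D(\cdot),T,\kappa_0,|\ve|;\cdot,\mathfrak{R}}$, while condition (vi) gives $|E - v(n)| \ge \delta_0^4$ off $\bigcup_{s',m}\La^{(s')}(m)$.

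Granting this, part (1) would follow from Lemma~\ref{lem:auxDfunctionsrules} and Remark~\ref{rem:3.Rs}: condition (iv) gives $m^\pm_1 + B(R^{(s^\one+q^\one)}) \subset \La$, hence $D(m^\pm_1;\La) = 4\log(\delta_0^{(s^\one+q^\one)})^{-1} < \mu_\La(m^\pm_1)^{1/5}$ and $D(\cdot;\La) \in \mathcal{G}_{\La,T,\kappa_0}$; part (2) is then Proposition~\ref{prop:aux1} applied with the subsets $\{\La^{(s')}(m)\}$ inside $\La \setminus \{m^+_1,m^-_1\}$. In part (3), I would verify the $C^2$-smoothness of $K^{(s^\one+q^\one)},Q^{(s^\one+q^\one)},G^{(s^\one+q^\one)}$ and the self-adjointness identities word for word as in Lemma~\ref{lem:5schur1}, and obtain the comparison bounds \eqref{eq:6-12acACCOPQnew1} from Lemma~\ref{lem:2Qfunctionderiv} with $\La_1 := \La^{(s^\one+q^\one-1)}(m^+_1)\setminus\{m^+_1,m^-_1\}$ and $\La_2$ its complement in $\La\setminus\{m^+_1,m^-_1\}$: condition (iv) gives $R := \dist(\{m^\pm_1\},\La_2) \ge R^{(s^\one+q^\one-1)}$ and Remark~\ref{rem:3.Rs} gives $\max_x D(x;\La) \le 4\log(\delta_0^{(s^\one+q^\one-1)})^{-1} \le \kappa_0 R/8$.

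The genuinely new point, and the place where Lemma~\ref{lem:4stable} enters, is the membership $f(q^\one;\cdot) \in \mathfrak{F}^{(2)}$. By the inductive hypothesis $f(q^\one-1;\cdot) \in \mathfrak{F}^{(2)}_{\mathfrak{g}^{(2)},\lambda(s,q^\one-1)}$ with $\tau^{(f_j(q^\one-1;\cdot))} > \tau^\zero(1-\xi(s,q^\one-1))$ (for $q^\one = 1$ this is part (3) of Proposition~\ref{prop:6-4}). I would apply Lemma~\ref{lem:4stable} with $r_i$ the difference $Q^{(s^\one+q^\one-1)}(m^\pm_1,\cdot) - Q^{(s^\one+q^\one)}(m^\pm_1,\cdot)$, $h^2 := |G^{(s^\one+q^\one-1)}(\cdot)|^2 - |G^{(s^\one+q^\one)}(\cdot)|^2$, and $\delta := 8^{-(s+q^\one)}$: its condition (i) is $f_1(q^\one;\cdot) < f_2(q^\one;\cdot)$, which is \eqref{eq:6-13GSRUUP}; its condition (ii) is exactly \eqref{eq:6-12acACCOPQnew1} (the bound $|\ve|(\delta_0^{(s^\one+q^\one-1)})^{12}$ lies far below $\min_j(\delta\lambda\tau^{(f_j)})^6$); its condition (iii), $h^2(0,\cdot)=0$, holds since the off-diagonal terms $G^{(\cdot)}(\cdot;0,E)$ vanish at $\ve = 0$. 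This yields $f(q^\one;\cdot) \in \mathfrak{F}^{(2)}_{\mathfrak{g}^{(2)},\lambda(s,q^\one)}$ with $\lambda(s,q^\one) = (1+8^{-(s+q^\one)})\lambda(s,q^\one-1)$ and $\tau^{(f_j)} > \tau^\zero(1-\xi(s,q^\one))$; convergence of the products $\prod_q(1+8^{-(s+q)})$ and $\prod_q(1-8^{-(s+q)})$ keeps $\lambda(s,q) < 1$ and $1-\xi(s,q) > 1/2$, and the formula for $\tau^{(\cdot)}$ in Definition~\ref{def:4a-functions} then gives $\tau^{(f)} \ge \tau^\one(\tau^\zero)^2(1-\xi(s,q^\one))^2$.

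Parts (4)--(6) are then routine given part (3). Part (4) is the Schur complement formula with $\La_2 = \{m^+_1,m^-_1\}$, producing $\det\tilde H_2 = \chi(\ve,E)$ as in \eqref{eq:5schurfor1a}--\eqref{eq:5-13NNNN}. For part (5) I would apply Lemma~\ref{lem:6-1ellM} with $\ell = 2$ to $\chi^{(f)}$, checking hypotheses $(\alpha)$--$(\delta)$ by comparing $\chi = \chi^{(f)}$ with the $(s^\one+q^\one-1)$-level function $\chi_1 := \tilde f_1\tilde f_2 - \tilde b^2$ (which vanishes at $E^{(s^\one+q^\one-1,\pm)}(m^\pm_1,\cdot)$ and to which the inductive form of \eqref{eq:6-13acnq0N1} supplies the needed derivative lower bounds, $\sigma_1 = 1/8$): the estimate $|\partial^\alpha_E(\chi-\chi_1)| \le 4|\ve|(\delta_0^{(s^\one+q^\one-1)})^{12}$ from \eqref{eq:6-12acACCOPQnew1} validates $(\alpha)$, $(\gamma)$, $(\delta)$, while $\chi_1(0,\cdot) = \chi(0,\cdot)$ gives $(\beta)$. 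Lemma~\ref{lem:6-1ellM} then supplies the roots $E^{(s^\one+q^\one,\pm)}(m^+_1,\La;\ve)$ with the estimates \eqref{eq.6Eestimates1APINTERIM}, \eqref{eq.6Eestimates1APINTERIMEPM} (the latter from \eqref{eq:6-1''}, \eqref{eq:6-1'''}) and $C^2$-dependence on $\ve$. For part (6), the spectral identity \eqref{eq:6specHEEAAAINTERIM} follows from parts (4)--(5) as in parts (3)--(4) of Proposition~\ref{prop:8-5n} ($v(m^+_1) > v(m^-_1)$ are the only eigenvalues of $H_{\La,0}$ in the stated interval), and for $E$ in the punctured region \eqref{eq:6EsplitspecINTERIM} one invokes Lemma~\ref{lem:aux5AABBCC} with $\La_2 = \{m^+_1,m^-_1\}$ (its condition (i) from part (2); its condition (ii) from $|\det\tilde H_2|^{-1} = |\chi(\ve,E)|^{-1} \le \exp(D_0)$ with $D_0 := \frac{1}{4}\log(\delta_0^{(s^\one+q^\one-1)})^{-1} + 3\log 2 \le T\mu_\La(m^\pm_1)^{1/5}$ by condition (iv) and Remark~\ref{rem:3.Rs}). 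The hard part will be the uniform-in-$q^\one$ bookkeeping in part (3): one must keep $\tau^{(f_j)}$ and $\tau^{(f)}$ above the thresholds required by Lemmas~\ref{lem:4stable} and \ref{lem:6-1ellM} at the very small scale $\rho_1 = \delta_0^{(s^\one+q^\one-1)}$ — which is precisely what the geometrically summable increments defining $\lambda(s,q)$ and $\xi(s,q)$ are designed to guarantee.
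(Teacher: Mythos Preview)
Your proposal is correct and follows exactly the route the paper intends: the paper omits the proof entirely, saying only that each part is ``completely similar to the proof of either a statement from Proposition~\ref{rem:con1smalldenomnn} or a statement from Proposition~\ref{prop:6-4},'' and your outline is a faithful expansion of precisely that --- induction on $q^\one$ with base case Proposition~\ref{prop:6-4}, parts (1)--(2) via Lemma~\ref{lem:auxDfunctionsrules} and Proposition~\ref{prop:aux1}, the comparison estimates in (3) via Lemma~\ref{lem:2Qfunctionderiv}, the $\mathfrak{F}^{(2)}$ membership via iterated Lemma~\ref{lem:4stable} with $\delta = 8^{-(s+q^\one)}$ (which is exactly what the recursions for $\lambda(s,q)$ and $\xi(s,q)$ in the statement encode), and parts (4)--(6) via the Schur complement, Lemma~\ref{lem:6-1ellM}, and Lemma~\ref{lem:aux5AABBCC} as in Proposition~\ref{rem:con1smalldenomnn}.
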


The proof of each statement in this proposition is completely similar to the proof of either a statement from Proposition~\ref{rem:con1smalldenomnn} or a statement from Proposition~\ref{prop:6-4}. We skip the proofs.

\bigskip

\textit{We need yet two more upgrades of the classes of matrices under consideration. We skip the proofs for the first upgrade and most of the proofs for the second one since they are completely similar to the proof of Proposition~\ref{rem:con1smalldenomnn} and Proposition~\ref{prop:6-4}, respectively. Here is the first one:}

\begin{defi}\label{def:9-6general}
Assume that the classes $GSR^{[\mathfrak{s}^{(h)}]} \bigl( \mathfrak{m}^{(h)},m^+,m^-, \La; \delta_0,\mathfrak{t}^{(h)} \bigr)$, $GSR^{[\mathfrak{s}^{(h)},s^{(h)}+q]} \bigl( \mathfrak{m}^{(h)},m^+,m^-, \La; \delta_0,\mathfrak{t}^{(h)} \bigr)$ are defined for all $h = 1, \dots, \ell$, $\ell \ge 2$, starting with $GSR^{[\mathfrak{s}^{(1)}]} \bigl( \mathfrak{m}^{(1)}, m^+,m^-,\La; \delta_0,\mathfrak{t}^{(1)} \bigr)$, $GSR^{[\mathfrak{s}^{(1)},s^\one+q]} \bigl( \mathfrak{m}^{(1)},m^+,m^-, \La; \delta_0,\mathfrak{t}^{(1)} \bigr)$ being as in Definition~\ref{def:9-6} and Definition~\ref{def:6-6UP}, respectively. Here, $\mathfrak{m}^{(h)} \subset \La$, $|\mathfrak{m}^{(h)}| = 2^{h+1}$, $\mathfrak{s}^{(h)} = (s^\zero,s^\one,\dots,s^{(h)})$, $s^{(k)} \in \mathbb{N}$, $s^{(k)} < s^{(k+1)}$, $\mathfrak{t}^{(h)} = (\tau^\zero,\dots,\tau^{(h)})$, $\tau^{(k)} > \tau^{(k+1)}>0$. Let $\hle$ be as in \eqref{eq:5-1N}--\eqref{eq:5-5N} and let $\delta^{(s')}_0$, $R^{(s')}$ be as in Definition~\ref{def:4-1}. Let $q$ be such that $\tau^{(\ell)} > (\delta^{(s^{(\ell)}+q-1)}_0)^{1/4}$. Let $m^+,m^- \in \La$. Assume that there are subsets $\cM \subset \La$, $\La(m) \subset \La$, $m \in \cM$, such that the following conditions hold
\begin{enumerate}

\item[(i)] $m^\pm \in \cM$, $m \in \La(m)$ for any $m$.

\item[(ii)] For any $m \in \cM$, $H_{\La(m), \ve}$ belongs to one of the classes we have introduced before with $s(m) \le s^{(\ell)} + q^{(\ell)} - 1$ $($ for the notation $s(m)$, see Definitions~\ref{def:4-1},~\ref{def:7-6},~\ref{def:9-6},~\ref{def:6-6UP} $)$. Furthermore, $H_{\La(m^\pm), \ve} \in GSR^{[\mathfrak{s}^{(\ell)}, s^{(\ell)}+q-1]} \bigl( \mathfrak{m}^{(\ell,\pm)}, \La(m^\pm); \delta_0,\mathfrak{t}^{(\ell)}\bigr)$ with some $\mathfrak{m}^{(\ell,\pm)} \subset \La(m^\pm)$, $m^\pm \in \mathfrak{m}^{(\ell,\pm)}$. Given $m \in \cM$ such that $H_{\La(m), \ve} \in GSR^{[\mathfrak{s}^{(\ell')}, s^{(\ell')}}+q'] \bigl( \mathfrak{m}^{(\ell')}, \La(m); \delta_0,\mathfrak{t}^{(\ell')} \bigr)$, we set $s(m) := s^{(\ell')} + q'$, which is the largest integer involved in the latter notation.

\item[(iii)] For any $m,m'$, either $\La(m) \cap \La(m') = \emptyset$, or $\La(m) = \La(m')$, in which case $m,m'$ are the principal points for $H_{\La(m), \ve}$. We use the notation $m' = \bullet m$ for the latter case. In the former case we say that $\bullet m$ does not exist and $\{m,\bullet m\} = \{m\}$. Finally, $\bullet m^+ \neq m^-$, that is, $\La(m^+) \neq \La(m^-)$.

\item[(iv)] Let $m \in \cM$. There exists a unique real-analytic function $E(m,\La(m);\ve)$, $\ve \in (-\ve_{s-1}, \ve_{s-1})$ such that
$E(m,\La(m);\ve)$ is a simple eigenvalue of $H_{\La(m), \ve}$ and $E(m,\La(m);0) = v(m)$. Furthermore, let $m \in \cM \setminus \{m^+,\bullet m^+,m^-,\bullet m^-\}$ be arbitrary. The following estimates hold:
\begin{align}
(\delta^{(s^{(\ell)}+q-1)}_0)^{1/2} \le \min_{m' \in \{m,\bullet m\}} |E \bigl( m^+, \La(m^+); \ve \bigr) - E \bigl( m', \La(m'); \ve \bigr)| \le \delta^{(s^{(\ell)}+q-2)}_0 \label{eq:6EVsplitdefs1aINT} \quad \text{if $s(m) = s^{(\ell)}+q-1$}, \\
|E \bigl( m^-, \La(m^-); \ve \bigr) - E \bigl( m^+, \La(m^+); \ve \bigr)| \le (\delta_0^{(s^{(\ell)}+q-1)})^{5/8} \label{eq:6-9EVsplitdefs1aINT} \\
\frac{\delta^{(s(m))}_0}{2} \le \min_{m' \in \{m,\bullet m\}} |E \bigl( m^+, \La(m^+); \ve \bigr) - E \bigl(m', \La(m'); \ve \bigr)| \le
\nn \delta^{(s(m)-1)}_0 \label{eq:6-9EVsplitdefs1aINT10} \quad \text{if $s(m) < s^{(\ell)}+q-1$.}
\end{align}

\item[(v)]
$$
\bigl(m + B(R^{(s(m))}\bigr)\subset \Lambda(m).
$$

\item[(vi)] $|v(n) - v(m_0)| \ge 2 \delta_0^4$ for any $n \in \Lambda \setminus \bigcup_{m \in \cM} \La(m)$.

\item[(vii)] Due to the inductive argument from Proposition~\ref{prop:6-4INT} below, for any $\ve \in (-\ve_{s-1}, \ve_{s-1})$ and any
\begin{equation} \label{eq:6EDomainINTERVAL}
E \in (E \bigl( m^+, \La(m^+);\ve \bigr) - (\delta^{(s^{(\ell)}+q-1)}_0)^{1/2}, E \bigl( m^+, \La(m^+);\ve \bigr) + (\delta^{(s^{(\ell)}+q-1)}_0)^{1/2}),
\end{equation}
the functions
\begin{equation} \label{eq:6-10acOPQDEFAPUUPINTERIM}
Q(m^\pm,\La; \ve, E) = \sum_{m',n' \in \La \setminus \{m^+,m^-\}} h(m^\pm,m';\ve) (E - H_{\La \setminus \{m^+,m^-\}})^{-1} (m',n') h(n',m^\pm;\ve)
\end{equation}
are well-defined. We require that for these $\ve,E$ and some $\tau^{(\ell+1)} > 0$, we have
\begin{equation} \label{eq:6-13GSRUUP-2}
v(m^+) + Q(m^+, \La,E) \ge v(m^-) + Q(m^-,\La; \ve, E) + \tau^{(\ell+1)}.
\end{equation}
\end{enumerate}

In this case we say that $\hle \in GSR^{[\mathfrak{s}^{(\ell+1)}]} \bigl( \mathfrak{m}^{(\ell+1)}, \La; \delta_0,\mathfrak{t}^{(\ell+1)} \bigr)$, $\mathfrak{m}^{(\ell+1)} = \bigcup_\pm \mathfrak{m}^{(\ell,\pm)}$, $\mathfrak{s}^{(\ell+1)} = (s^\one,\dots, s^{(\ell+1)})$, $s^{(\ell+1)} = s^{(\ell)}+q$, $\mathfrak{t}^{(\ell+1)} = (\tau^\zero,\dots, \tau^{(\ell+1)})$. We call $\mathfrak{m}^{(\ell+1)}$ the principal set for $\hle$ and $m^+,m^-$ the principal points for $\hle$. We set $s(m^\pm) = s^{(\ell+1)}$. We call $\La^{(s(m^\pm)-1)}(m^\pm)$ the $(s(m^\pm)-1)$-set for $m^\pm$.
\end{defi}

\begin{prop}\label{prop:6-4INT}
Using the notation from Definition~\ref{def:9-6general}, the following statements hold:

$(1)$ Define inductively $D(x;\La) = D(x;\La \setminus \{m^+,m^-\}) = D(x;\La \setminus \mathfrak{m}^{(\ell+1)}) = D(x;\La(m))$ if $x \in \La(m) \setminus \mathfrak{m}^{(\ell+1)}$, $D(x;\La) = D(x;\La \setminus \{m^+,m^-\}) = 4 \log (\delta^{(s^{(\ell+1)}-1)}_0)^{-1}$ if $x \in \mathfrak{m}^{(\ell+1)} \setminus\{m^+,m^-\}$, and $D(x;\La) = 4 \log (\delta^{(s^{(\ell+1)})}_0)^{-1}$ if $x \in \{m^+,m^-\}$. Then, $D(\cdot;\La \setminus \mathfrak{m}^{(\ell+1)}) \in \mathcal{G}_{\La \setminus \mathfrak{m}^{(\ell+1)},T,\kappa_0}$, $D(\cdot;\La \setminus \{m^+,m^-\}) \in \mathcal{G}_{\La \setminus \{m^+,m^-\},\IZ^\nu \setminus \{m^+,m^-\},T,\kappa_0}$, $D(\cdot;\La) \in \mathcal{G}_{\La,T,\kappa_0}$.

$(2)$ Set $g_1 = E \bigl( m^+, \La^{(s^{(\ell+1)}-1)}(m^+); \ve \bigr)$ and
\begin{equation} \label{eq:6-11domainnOPQUINT}
\cL^{(s^{(\ell+1)}-1)} := \Big\{ (\ve,E) : \ve \in (-\ve_{s^\zero-1},\ve_{s^\zero-1}), |E - g_1(\ve)| < \frac{(\delta_0^{(s^{(\ell+1)}-1)})^{1/2}}{2} \Big\}.
\end{equation}
For any $(\ve,E) \in \cL^{(s^{(\ell+1)}-1)}$, the matrix $(E - H_{\La \setminus \{m^+,m^-\},\ve})$ is invertible. Moreover,
\begin{equation}\label{eq:6-9.Hinvestimatestatement1PQINT}
|[(E - H_{\La \setminus \{m^+,m^-\},\ve})^{-1}](x,y)| \le s_{D(\cdot;\La \setminus \{m^+,m^-\}),T,\kappa_0,|\ve|;\La \setminus \{m+,m^-\},\mathfrak{R}}(x,y).
\end{equation}

$(3)$ Set for $(\ve,E) \in \cL^{(s^{(\ell+1)}-1)}$,
\begin{equation} \label{eq:6-10acUINT}
\begin{split}
Q^{(s^{(\ell+1)})}(m^\pm,\La; \ve, E) = \sum_{m,n \in \La \setminus \{m^+,m^-\}} h(m^\pm,m;\ve) (E - H_{\La \setminus \{m^+,m^-\}})^{-1} (m,n) h(n,m^\pm;\ve), \\
G^{(s^{(\ell+1)})}(m^\pm,m^\mp, \La; \ve, E) = h(m^\pm,m^\mp;\ve) + \sum_{m,n \in \La \setminus \{m^+,m^-\}} h(m^\pm,m;\ve) (E - H_{\La \setminus \{m^+,m^-\}})^{-1} (m,n) h(n,m^\mp;\ve), \\
\hat Q(m^\pm,\La(m^\pm); \ve, E) = \sum_{m,n \in \La(m^\pm) \setminus \{m^\pm\}} h(m^\pm,m;\ve) (E - H_{\La(m^\pm) \setminus \{m^\pm\},\ve})^{-1} (m,n) h(n,m^\pm;\ve).
\end{split}
\end{equation}
The functions in \eqref{eq:6-10acUINT} are well-defined and $C^2$-smooth. For $\alpha \le 2$, we have
\begin{equation} \label{eq:6ACCOPQINT}
\begin{split}
& \big| \partial^\alpha_E Q^{(s^{(\ell+1)})}(m^\pm, \La; \ve, E) - \partial^\alpha_E \hat Q \bigl( m^\pm, \La(m^\pm); \ve,E \bigr) \big| \le 4 |\ve|^{3/2} \exp \left( -\kappa_0 R^{(s^{(\ell+1)}-1)} \right) \le |\ve| (\delta_0^{(s^{(\ell+1)}-1)})^{12}, \\
& \big| \partial^\alpha_E G^{(s^{(\ell+1)})}(m^+, m^-, \La;\ve,E) \big| \le 4 |\ve|^{3/2} \exp \left( -\frac{7\kappa_0}{8} |m^+ - m^-| \right) \le 4 |\ve|^{3/2} \exp \left( -\kappa_0 R^{(s^{(\ell+1)}-1)} \right) \le |\ve| (\delta_0^{(s^{(\ell+1)}-1)})^{12}.
\end{split}
\end{equation}
Furthermore, set $\rho_0 = \delta^{(s^{(\ell+1)}-1)}$, $\rho_j = \rho_0$, $g_j = g_0$, $j = 1, \dots, \ell$,
\begin{equation}\label{eq:6QfformulaUPINT}
\begin{split}
f_1(\ve,E) = E - v(m^+) - Q^{(s^{(\ell+1)})}(m^+,\La; \ve,E ), \quad f_2(\ve,E) = E - v(m^-) - Q^{(s^{(\ell+1)})}(m^-,\La; \ve,E ), \\
b^2(\ve,E) = |G^{(s^{(\ell+1)})}(m^\pm,m^\mp,\La; \ve, E)|^2, \quad f(\ve,E) = f_1(\ve,E) -\frac{b^2(\ve,E)}{f_2(\ve,E)}.
\end{split}
\end{equation}
Then, $f \in \mathfrak{F}^{(\ell+1)}_{\mathfrak{g}^{(\ell+1)}, 1-4^{-\ell}}(f_1,f_2,b^2)$, $\tau^{(f_j)} > \tau^{[\ell]}/2$, $\tau^{(f)} \ge \tau^{[\ell+1]}$, where $\tau^{[j+1]} = \tau^{(j+1)}(\tau^{[j]})^2/4$, $j \ge 0$, $\tau^{[0]} := \tau^\zero$; see Definition~\ref{def:4a-functions}.

$(4)$ Let $(\ve,E) \in \cL^{(s^{(\ell+1)}-1)}$. Then, $E \in \spec H_{\La,\ve}$ if and only if $E$ obeys
\begin{equation} \label{eq:6-13NNNNOPQUINT}
\chi(\ve,E) := \bigl( E - v(m^+) - Q^{(s^{(\ell+1)})}(m^+, \La; \ve, E) \bigr) \cdot \bigl( E - v(m^-) - Q^{(s^{(\ell+1)})}(m^-,\La; \ve, E)\bigr) - |G^{(s^{(\ell+1)})}(m^+, m^-, \La; \ve, E)|^2 = 0.
\end{equation}

$(5)$ Let $f$ be as in part $(3)$ and let $\chi^{(f)}$ be as in Definition~\ref{def:4a-functions}. Then, $\chi(\ve,E) = 0$ if and only if $\chi^{(f)} = 0$. For $\ve \in (-\ve_{s-1}, \ve_{s-1})$, the equation
\begin{equation}\label{eq:8-13nnINT}
\chi^{(f)}(\ve,E) = 0
\end{equation}
has exactly two solutions $E(m^+, \La; \ve) > E(m^-, \La; \ve)$, which obey
\begin{equation} \label{eq.6Eestimates1APN1INT}
|E(m^\pm, \La; \ve) - E(m^\pm,\La(m^\pm);\ve)| < 4 (\delta^{(s^{(\ell+1)}-1)}_0)^{1/8}.
\end{equation}
The functions $E(m^\pm, \La; \ve)$ are $C^2$-smooth on the interval $(-\ve_{s^\zero-1}, \ve_{s^\zero-1})$. The following estimates hold,
\begin{equation} \label{eq:6-13acnq0N1INT}
\begin{split}
|\partial^\alpha_E \chi^{(f)}| \le 8 \quad \text{for $\alpha \le 2$}, \quad \partial^2_E \chi^{(f)} > 1/8, \\
\partial^\alpha_E \chi^{(f)}|_{\ve,E(m^-, \La; \ve)} < -(\tau^{(f)})^2, \quad \partial^\alpha_E \chi^{(f)}|_{\ve,E(m^+, \La; \ve)} > (\tau^{(f)})^2, \\
E(m^+, \La; \ve) - E(m^-, \La; \ve) > \frac{1}{8}[ -\partial^\alpha_E \chi^{(f)}|_{\ve,E(m^-, \La; \ve)} + \partial^\alpha_E \chi^{(f)}|_{\ve,E(m^+, \La; \ve)}], \\
-\partial^\alpha_E \chi^{(f)}|_{\ve,E(m^-, \La; \ve)}, \partial^\alpha_E \chi^{(f)}|_{\ve,E(m^+, \La; \ve)} > \frac{1}{2^{14}} \bigl(E(m^+, \La; \ve) - E(m^-, \La; \ve)\bigr)^2, \\
|\chi^{(f)}(\ve,E)| \ge \frac{1}{8} \min \bigl( (E - E(m^+, \La; \ve))^2, (E - E(m^-, \La; \ve))^2 \bigr),
\end{split}
\end{equation}
\begin{equation} \label{eq.6Eestimates1APPFINALPM}
\begin{split}
[ a_1 (\ve, E) + |b (\ve, E)| ]|_{E = E (m^+, \La; \ve)} \ge E (m^+, \La; \ve) \\
\ge \max ( a_1 (\ve, E),  a_2 (\ve, E) + |b (\ve, E)| )|_{E = E (m^+, \La; \ve)}, \\
[ a_2 (\ve, E) - |b (\ve, E)| ]|_{E = E (m^+, \La; \ve)} \le E (m^-, \La; \ve) \\
\le \min ( a_2 (\ve, E),  a_1 (\ve, E) - |b (\ve, E)| )|_{E = E (m^-, \La; \ve)},
\end{split}
\end{equation}
where
\begin{equation}\begin{split}\nn
a_1 (\ve, E) = v(m^+) + Q^{(s^{(\ell+1)})}(m^+, \La; \ve, E), \quad \quad a_2 (\ve, E) = v(m^-) + Q^{(s^{(\ell+1)})}(m^-, \La; \ve, E), \\
b (\ve, E) = |b_1 (\ve,E)|, \quad b_1 (\ve, E) = G^{(s^{(\ell+1)})}(m^+, m^-, \La; \ve, E).
\end{split}
\end{equation}

$(6)$
\begin{equation} \label{eq:6specHEEN1INT}
\begin{split}
\spec H_{\La,\ve} \cap \Big\{ |E - E(m^+,\La(m^+);\ve,E)| < \frac{(\delta^{(s^{(\ell+1)}-1)}_0)^{1/2}}{2} \Big\} \\
= \{ E(m^+, \La; \ve), E(m^-, \La; \ve)\}, \\
E(m^\pm, \La; 0) = v(m^\pm).
\end{split}
\end{equation}
Furthermore, let
\begin{equation}\label{eq:6EsplitspecconddomainqUINT}
(\delta^{(s^{(\ell+1)})}_0)^4 < \min_\pm |E - E(m^\pm, \La; \ve)| < 2 \delta^{(s^{(\ell+1)}-1)}_0, E \in \IR.
\end{equation}
Then the matrix $(E - H_{\La,\ve})$ is invertible. Moreover,
\begin{equation}\label{eq:5inverseestiMATEqINT}
|[(E - H_{\La,\ve})^{-1}](x,y)| \le S_{D(\cdot;\La),T,\kappa_0,|\ve|;k,\La,\mathfrak{R}}(x,y).
\end{equation}
\end{prop}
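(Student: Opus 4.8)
The plan is to prove all of $(1)$--$(6)$ simultaneously by induction on the grading depth $\ell$, with the statements at a fixed depth organized exactly as in Proposition~\ref{prop:6-4}, so that $(1)\Rightarrow(2)\Rightarrow\cdots\Rightarrow(6)$ in that order. The base case $\ell=1$ is covered by Propositions~\ref{prop:6-4} and~\ref{prop:6-4UUP}, so assume the statement holds for all $GSR$-classes of depth $\le\ell$ and let $\hle\in GSR^{[\mathfrak{s}^{(\ell+1)}]}(\mathfrak{m}^{(\ell+1)},\La;\delta_0,\mathfrak{t}^{(\ell+1)})$. For $(1)$, since $D(\cdot;\La(m))\in\mathcal{G}_{\La(m),T,\kappa_0}$ for each $m\in\cM$ (inductive hypothesis, or Proposition~\ref{prop:4-4} for the $\cN$-blocks), Lemma~\ref{lem:auxDfunctionsrules} gives $D(\cdot;\La\setminus\mathfrak{m}^{(\ell+1)})\in\mathcal{G}_{\La\setminus\mathfrak{m}^{(\ell+1)},T,\kappa_0}$; for the intermediate points of $\mathfrak{m}^{(\ell+1)}\setminus\{m^+,m^-\}$ and for $m^\pm$ one uses condition $(v)$, i.e. $\mu_\La(m^\pm)\ge R^{(s^{(\ell+1)})}$, together with Remark~\ref{rem:3.Rs} to get $D(m^\pm;\La)=4\log(\delta_0^{(s^{(\ell+1)})})^{-1}<(R^{(s^{(\ell+1)})})^{1/5}\le\mu_\La(m^\pm)^{1/5}$, which puts $D(\cdot;\La\setminus\{m^+,m^-\})$ and $D(\cdot;\La)$ in the required classes. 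For $(2)$ the key point is that for $(\ve,E)\in\cL^{(s^{(\ell+1)}-1)}$ the separation estimates $(iv)$ (i.e. \eqref{eq:6EVsplitdefs1aINT}, \eqref{eq:6-9EVsplitdefs1aINT10}) keep $E$ in the admissible domain of every block $H_{\La(m),\ve}$ with $m\in\cM\setminus\{m^+,\bullet m^+,m^-,\bullet m^-\}$, so the inductive hypothesis (resp. Propositions~\ref{prop:4-4},~\ref{prop:8-5n}, \dots) supplies the Green-function bound for each such block; for the two blocks $\La(m^\pm)$ one removes the principal point and invokes Lemma~\ref{lem:6-9.1} exactly as in part $(2)$ of Proposition~\ref{prop:6-4}, using $|E-E(m^+,\La(m^+);\ve)|<(\delta_0^{(s^{(\ell+1)}-1)})^{1/2}$ and \eqref{eq:6-9EVsplitdefs1aINT}; condition $(vi)$ handles the diagonal away from all $\La(m)$. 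Gluing the blocks via Proposition~\ref{prop:aux1} gives invertibility of $E-H_{\La\setminus\{m^+,m^-\}}$ and \eqref{eq:6-9.Hinvestimatestatement1PQINT}.

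Part $(3)$ is the technical heart. The $Q$, $G$ estimates \eqref{eq:6ACCOPQINT} and the comparison with $\hat Q$ follow from Lemma~\ref{lem:2Qfunctionderiv} (the ``functoriality'' of the $Q$-functions under the Schur complement, which produces the gain $\exp(-\kappa_0 R^{(s^{(\ell+1)}-1)})$), applied with $\La_1=\La(m^\pm)\setminus\{m^\pm\}$, $\La_2=\La\setminus\La(m^\pm)$. To show $f\in\mathfrak{F}^{(\ell+1)}_{\mathfrak{g}^{(\ell+1)},1-4^{-\ell}}$ one argues as in part $(3)$ of Proposition~\ref{prop:6-4}: the functions $\hat f^\pm:=E-v(m^\pm)-\hat Q(m^\pm,\La(m^\pm);\ve,E)$ are, by Lemma~\ref{lem:6-9.1} together with the inductive hypothesis applied to $H_{\La(m^\pm),\ve}$, members of $\mathfrak{F}^{(\ell)}$ with $\tau^{(\hat f^\pm)}\ge\tau^{[\ell]}$ and, thanks to \eqref{eq:6-9EVsplitdefs1aINT}, with a common germ $\mathfrak{g}^{(\ell)}$; by \eqref{eq:6ACCOPQINT} each $f_j$ differs from the corresponding $\hat f^\pm$ by a quantity $\le|\ve|(\delta_0^{(s^{(\ell+1)}-1)})(\min_i\tau^{(\hat f^\pm)})^6$, so Lemma~\ref{lem:4stable} yields $f_j\in\mathfrak{F}^{(\ell)}$ with $\tau^{(f_j)}>\tau^{[\ell]}/2$ and a slightly degraded $\lambda$; condition $(vii)$/\eqref{eq:6-13GSRUUP-2} gives $f_2-f_1\ge\tau^{(\ell+1)}$, and the smallness of $b^2$ from \eqref{eq:6ACCOPQINT} verifies the $b$-conditions of Definition~\ref{def:4a-functions}. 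Hence $f=f_1-b^2/f_2\in\mathfrak{F}^{(\ell+1)}_{\mathfrak{g}^{(\ell+1)},1-4^{-\ell}}(f_1,f_2,b^2)$ with $\tau^{(f)}\ge\tau^{(\ell+1)}(\tau^{[\ell]})^2/4=\tau^{[\ell+1]}$.

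The remaining parts are essentially formal. Part $(4)$ is the Schur complement formula with $\La_2=\{m^+,m^-\}$ and $\det\tilde H_2=\chi$, as in Lemma~\ref{lem:5schur1}. For $(5)$, since $\chi=\chi^{(f)}$ by Definition~\ref{def:4a-functions}, Lemma~\ref{lem:6-1ell} (used as in part $(5)$ of Proposition~\ref{prop:6-4}; if the innermost radius is too small one falls back on Lemma~\ref{lem:6-1ellM} as in Proposition~\ref{rem:con1smalldenomnn}) produces the two solutions $E(m^\pm,\La;\ve)$ together with all the derivative and gap estimates \eqref{eq:6-13acnq0N1INT} and \eqref{eq.6Eestimates1APPFINALPM}, while the localization \eqref{eq.6Eestimates1APN1INT} follows from \eqref{eq:4zetaindomain} combined with \eqref{eq.6Eestimates1APN1INT} at level $\ell$ and \eqref{eq:6ACCOPQINT}. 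For $(6)$, the spectral identity is deduced from $(4)$ and $(5)$ exactly as in parts $(3)$--$(4)$ of Proposition~\ref{prop:8-5n}; for the invertibility of $E-H_{\La,\ve}$ on the split gap \eqref{eq:6EsplitspecconddomainqUINT} one invokes Lemma~\ref{lem:aux5AABBCC} with $\La_2=\{m^+,m^-\}$, using $|\det\tilde H_2|=|\chi(\ve,E)|\ge\frac18\min_\pm(E-E(m^\pm,\La;\ve))^2$ from \eqref{eq:6-13acnq0N1INT}, so that $D_0:=\log|\chi|^{-1}<D(m^\pm;\La)$ and, by condition $(v)$ and Remark~\ref{rem:3.Rs}, $D_0\le[\min(\mu_\La(m^+),\mu_\La(m^-))]^{1/5}$; Lemma~\ref{lem:aux5AABBCC} then gives \eqref{eq:5inverseestiMATEqINT}.

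I expect the genuine obstacle to be in part $(3)$: one must simultaneously keep $E$ inside the admissible domain of each sub-block (so that the inductive hypothesis applies at every level at all), track the accumulation of the perturbations $f_j-\hat f^\pm$ through all $\ell$ grading levels via repeated use of Lemma~\ref{lem:4stable} so that the $\lambda$-parameter stays below $1$ and the thresholds $\tau^{[j]}$ remain above the relevant powers of $\delta_0$, and check that the two germs used for $\hat f^+$ and $\hat f^-$ may be taken equal --- the last point being precisely where \eqref{eq:6-9EVsplitdefs1aINT} enters. Everything else is a transcription of the depth-$1$ and depth-$2$ arguments already carried out in Sections~\ref{sec.5} and~\ref{sec.6}.
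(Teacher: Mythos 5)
Your proposal reconstructs the intended proof correctly. The paper does not spell out a proof for Proposition~\ref{prop:6-4INT}; it states before Definition~\ref{def:9-6general} that the proofs ``are completely similar to the proof of Proposition~\ref{rem:con1smalldenomnn} and Proposition~\ref{prop:6-4}.'' Your plan --- induction on the grading depth, with $(1)$ from Lemma~\ref{lem:auxDfunctionsrules} and condition $(v)$, $(2)$ from the separation estimates $(iv)$ together with Lemma~\ref{lem:6-9.1} and Proposition~\ref{prop:aux1}, $(3)$ from Lemma~\ref{lem:2Qfunctionderiv} plus Lemma~\ref{lem:4stable} and the inductive membership $\hat f^\pm\in\mathfrak{F}^{(\ell)}$, $(4)$ from the Schur complement, $(5)$ from Lemma~\ref{lem:6-1ell}, and $(6)$ from Lemma~\ref{lem:aux5AABBCC} --- is precisely the argument used in Proposition~\ref{prop:6-4}, and you correctly flagged the two genuine subtleties (the need for a common germ via \eqref{eq:6-9EVsplitdefs1aINT}, and the controlled degradation of the $\lambda$-parameter across $\ell$ levels via Lemma~\ref{lem:4stable}).
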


\bigskip

\bigskip

\textit{Here is the second and last upgrade of the classes of matrices.}

\begin{defi}\label{def:6-6UPFIN}
Let $\ell \in \mathbb{N}$ be fixed. Assume that the classes $GSR^{[\mathfrak{s}^{(\ell+1)},s^{(\ell+1)}+q']} \bigl( \mathfrak{m}^{(\ell+1)},m^+,m^-, \La'; \delta_0,\mathfrak{t}^{(\ell+1)} \bigr)$ are defined for all $q' = 0, \dots, q-1$, starting with
$GSR^{[\mathfrak{s}^{(\ell+1)},s^{(\ell+1)}]} \bigl( \mathfrak{m}^{(\ell+1)},m^+,m^-, \La'; \delta_0,\mathfrak{t}^{(\ell+1)} \bigr) :=
GSR^{[\mathfrak{s}^{(\ell+1)}]} \bigl( \mathfrak{m}^{(\ell+1)},m^+,m^-, \La'; \delta_0,\mathfrak{t}^{(\ell+1)} \bigr)$ being as in Definition~\ref{def:9-6general}. Let $\La$ and $m^+, m^- \in \La$ be given.

Assume that there are subsets $\cM \subset \La$, $\La(m) \subset \La$, $m \in \cM$, such that the following conditions hold
\begin{enumerate}

\item[(i)] $m^\pm \in \cM$, $m \in \La(m)$ for any $m$, $\La(m^+) = \La(m^-)$.

\item[(ii)] For any $m \in \cM$,  $H_{\La(m), \ve}$ belongs to one of the classes we have introduced before with $s(m) \le s^{(\ell)}+ q^{(\ell)} - 1$ $($ for the notation $s(m)$, see Definitions~\ref{def:4-1},~\ref{def:7-6},~\ref{def:9-6},~\ref{def:6-6UP}, ~\ref{def:9-6general} $)$. Furthermore, $H_{\La(m^+), \ve} \in GSR^{[\mathfrak{s}^{(\ell+1)}, s^{(\ell+1)}+q-1]} \bigl( \mathfrak{m}^{(\ell+1)}, m^+,m^-, \La(m^+); \delta_0,\mathfrak{t}^{(\ell+1)} \bigr)$ with some $\mathfrak{m}^{(\ell+1)} \subset \La(m^+)$.

\item[(iii)] For any $m,m'$, either $\La(m) \cap \La(m') = \emptyset$, or $\La(m) = \La(m')$, in which case $m,m'$ are the principal points for $H_{\La(m), \ve}$. We use the notation $m' = \bullet m$ for the latter case. In particular, $\bullet m^+ = m^-$. In the former case we say that $\bullet m$ does not exist and $\{m,\bullet m\} = \{m\}$.

\item[(iv)] Let $m \in \cM$. There exists a unique real-analytic function $E(m,\La(m);\ve)$, $\ve \in (-\ve_{s-1}, \ve_{s-1})$ such that
$E(m,\La(m);\ve)$ is a simple eigenvalue of $H_{\La(m), \ve}$ and $E(m,\La(m);0) = v(m)$. Furthermore, let $m \in \cM \setminus \{m^+,m^-\}$ be arbitrary. The following estimates hold:
\begin{equation}\nn
\begin{split}
3 \delta^{(s^{(\ell+1)}+q-1)}_0 \le \min_{m'' \in \{m^+, m^-\}} \min_{m' \in \{m, \bullet m\}} |E \bigl( m'', \La(m''); \ve \bigr) - E \bigl( m', \La(m'); \ve \bigr)| \le \\
\max_{m'' \in \{m^+, m^-\}} \min_{m' \in \{m, \bullet m\}} |E \bigl( m'', \La(m''); \ve \bigr) - E \bigl( m', \La(m'); \ve \bigr)| \le
\delta^{(s^{(\ell+1)}+q-2)}_0 \quad \text{if $s(m) = s^{(\ell+1)}+q-1$}, \\
\frac{\delta^{(s(m))}_0}{2} \le \min_{m'' \in \{m^+, m^-\}} \min_{m' \in \{m, \bullet m\}} |E \bigl( m'', \La(m''); \ve \bigr) - E \bigl( m', \La(m'); \ve \bigr)| \le \\
\max_{m'' \in \{m^+, m^-\}} \min_{m' \in \{m, \bullet m\}} |E \bigl( m'', \La(m''); \ve \bigr) - E \bigl( m', \La(m'); \ve \bigr)| \le \delta^{(s(m)-1)}_0 \quad \text{if $s(m) < s^{(\ell)}+q-1$.}
\end{split}
\end{equation}

\item[(v)]
\be\nn
\begin{split}
\bigl( m + B(R^{(s(m))} \bigr) \subset \Lambda(m) \quad \text{for any $\Lambda(m)$}, \\
\bigl( m^\pm + B(R^{(s^{(\ell+1)}+q)} \bigr) \subset \Lambda.
\end{split}
\end{equation}

\item[(vi)] $|v(n) - v(m_0)| \ge 2 \delta_0^4$ for any $n \in \Lambda \setminus \bigcup_{m \in \cM} \La(m)$.

\item[(vii)] Due to the inductive argument, for any $\ve \in  (-\ve_{s^\zero-1}, \ve_{s^\zero-1})$ and any
\begin{equation} \label{eq:6EDomainFIN}
E \in \bigcup_{\pm} (E \bigl( m^\pm, \La(m^+);\ve \bigr) - 2 \delta^{(s^{(\ell+1)}+q-1)}_0, E \bigl( m^\pm, \La(m^+);\ve \bigr) + 2 \delta^{(s^{(\ell+1)}+q-1)}_0),
\end{equation}
the functions
\begin{equation} \label{eq:6-10acOPQDEFAPUUPFIN}
Q^{(s^{(\ell+1)}+q)}(m^\pm,\La; \ve, E) = \sum_{m',n' \in \La \setminus \{m^+,m^-\}} h(m^\pm,m';\ve) (E - H_{\La \setminus \{m^+,m^-\}})^{-1} (m',n') h(n',m^\pm;\ve)
\end{equation}
are well-defined. We require that
\begin{equation} \label{eq:6-13GSRUUPFIN}
v(m^+) + Q^{(s^{(\ell+1)}+q)}(m^+, \La,E) \ge v(m^-) + Q^{(s^{(\ell+1)}+q)}(m^-,\La; \ve, E) + \tau^{(\ell+1)}.
\end{equation}
\end{enumerate}

In this case we say that $\hle \in GSR^{[\mathfrak{s}^{(\ell+1)}, s^{(\ell+1)}+q]} \bigl( \mathfrak{m}^{(\ell+1)}, m^+,m^-,\La; \delta_0,\mathfrak{t}^{(\ell+1)} \bigr)$. We call $m^+,m^-$ the principal points. We set $s(m^\pm) = s^{(\ell+1)}+q$. We call $\La^{(s(m^\pm)-1)}(m^\pm)$ the $(s(m^\pm)-1)$-set for $m^\pm$.
\end{defi}

\begin{theorem}\label{th:6-4FIN}
Let $\hle \in GPR^{[\mathfrak{s}^{(\ell+1)},s^{(\ell+1)}+q]} \bigl( \mathfrak{m}^{(\ell+1)}, m^+,m^-,\La; \delta_0,\mathfrak{t}^{(\ell+1)} \bigr)$. The following statements hold:

$(1)$ Define inductively $D(x;\La) = D(x;\La \setminus \{m^+,m^-\}) = D(x;\La \setminus \mathfrak{m}^{(\ell+1)}) = D(x;\La(m))$ if $x \in \La(m) \setminus \mathfrak{m}^{(\ell+1)}$, $D(x;\La) = D(x;\La \setminus \{m^+,m^-\}) = 4 \log (\delta^{(s^{(\ell+1)}+q-1)}_0)^{-1}$ if $x \in \mathfrak{m}^{(\ell+1)} \setminus \{m^+,m^-\}$, and $D(x;\La) = 4 \log (\delta^{(s^{(\ell+1)}+q)}_0)^{-1}$ if $x \in \{m^+,m^-\}$. Then, $D(\cdot;\La \setminus \mathfrak{m}^{(\ell+1)}) \in \mathcal{G}_{\La \setminus \mathfrak{m}^{(\ell+1)},T,\kappa_0}$, $D(\cdot;\La \setminus \{m^+,m^-\}) \in \mathcal{G}_{\La \setminus \{m^+,m^-\},\IZ^\nu \setminus \{m^+,m^-\},T,\kappa_0}$, $D(\cdot;\La) \in \mathcal{G}_{\La,T,\kappa_0}$.

$(2)$ Let $\cL^{(s^{(\ell+1)}+q-1,\pm)} := \cL_\mathbb{R} \bigl( E \bigl( m^\pm, \La(m^+); \ve \bigr), 2 \delta_0^{(s^{(\ell+1)}+q-1)} \bigr)$. For any $(\ve,E) \in \cL^{(s^{(\ell+1)}+q-1,\pm)}$,
\begin{equation}\label{eq:6-9.FIN}
|(E - H_{\La \setminus \{m^+,m^-\},\ve})^{-1}(x,y)| \le s_{D(\cdot;\La \setminus \{m^+,m^-\}),T,\kappa_0,|\ve|;\La \setminus \{m+,m^-\},\mathfrak{R}}(x,y).
\end{equation}

$(3)$ The functions
\begin{equation} \label{eq:6-10acFIN}
\begin{split}
Q^{(s^{(\ell+1)}+q)}(m^\pm,\La; \ve, E) = \\
\sum_{m,n \in \La \setminus\{m^+,m^-\}} h(m^\pm,m;\ve) (E - H_{\La \setminus \{m^+,m^-\}})^{-1} (m,n) h(n,m^\pm;\ve), \\
G^{(s^{(\ell+1)}+q)}(m^\pm,m^\mp,\La; \ve, E) = h(m^\pm,m^\mp;\ve) + \\
\sum_{m,n \in \La \setminus \{m^+,m^-\}} h(m^\pm,m;\ve) (E - H_{\La \setminus \{m^+, m^-\}})^{-1} (m,n) h(n,m^\mp;\ve)
\end{split}
\end{equation}
are well-defined and and $C^2$-smooth in the domain $\cL^{(s^{(\ell+1)}+q-1,+)} \cup \cL^{(s^{(\ell+1)}+q-1,-)}$,
\begin{equation} \label{eq:6-12acACCOPQNEEW1}
\begin{split}
& \big| \partial^\alpha_E Q^{(s^{(\ell+1)}+q)}(m^\pm,\La; \ve, E) - \partial^\alpha_E Q^{(s^{(\ell+1)}+q-1)} \bigl( m^\pm, \La(m^+); \ve,E \bigr) \big| \\
& \le 4 |\ve|^{3/2} \exp \left( -\kappa_0 R^{(s^{(\ell+1)}+q-1)} \right) \le |\ve| (\delta_0^{(s^{(\ell+1)}+q-1)})^{12}, \\
& \big| \partial^\alpha_E G^{(s^{(\ell+1)}+q)}(m^\pm,m^\mp,\La; \ve, E) - \partial^\alpha_E G^{(s^{(\ell+1)}+q-1)}(m^\pm,m^\mp, \La(m^+); \ve, E) \big | \\
& \le 4 |\ve|^{3/2} \exp \left( -\kappa_0 R^{(s^{(\ell+1)}+q-1)} \right) \le |\ve| (\delta_0^{(s^{(\ell+1)}+q-1)})^{12}, \\
& \big| \partial^\alpha_E Q^{(s^{(\ell+1)}+q)}(m^\pm, \La; \ve, E) \big| \le |\ve|, \quad |E - v(m_0^\pm) - Q^{(s^{(\ell+1)}+q)}(m^\pm, \La; \ve, E)| < |\ve|, \\
& |\partial^\alpha_E G^{(s^{(\ell+1)}+q)}(m^+, m^-, \La;\ve,E) \big| \le 8 |\ve|^{3/2} \exp \left( -\frac{7\kappa_0}{8} |m^+ - m^-| \right) \le |\ve|^{3/2} \exp \left( -\kappa_0 R^{(s^{(\ell+1)}-1)} \right) \\
&\le |\ve| (\delta_0^{(s^{(\ell+1)}-1)})^{12}.
\end{split}
\end{equation}
Furthermore, set $\rho_0 = \delta^{(s^{(\ell+1)}+q-1)}$, $\rho_j = \rho_0$, $g_j = g_0$, $j = 1, \dots, \ell$,
\begin{equation}\label{eq:6QfformulaUPINTq}
\begin{split}
f_1(\ve,E) = E - v(m^+) - Q^{(s^{(\ell+1)}+q)}(m^+,\La; \ve,E ), \quad f_2(\ve,E) = E - v(m^-) - Q^{(s^{(\ell+1)}+q)}(m^-,\La; \ve,E ), \\
b^2(\ve,E) = |G^{(s^{(\ell+1)}+q)}(m^\pm,m^\mp,\La; \ve, E)|^2, \quad f(\ve,E) = f_1(\ve,E) - \frac{b^2(\ve,E)}{f_2(\ve,E)}
\end{split}
\end{equation}
Then, $f \in \mathfrak{F}^{(\ell+1)}_{\mathfrak{g}^{(\ell+1)}, 1-4^{-\ell}}(f_1,f_2,b^2)$, $\tau^{(f_j)} > \tau^{[\ell]}/4$, $\tau^{(f)} \ge \tau^{[\ell+1]}/4$, where $\tau^{[\ell+1]}$ is the same as in Proposition~\ref{prop:6-4INT}.

$(4)$ Let $(\ve,E) \in \cL^{(s^{(\ell+1)}+q-1,\pm)}$. Then, $E \in \spec H_{\La,\ve}$ if and only if $E$ obeys
\begin{equation} \label{eq:6-13FIN}
\begin{split}
& \chi(\ve,E) := \bigl( E - v(m^+) - Q^{(s^{(\ell+1)}+q)}(m^+, \La; \ve, E) \bigr) \cdot \bigl( E - v(m^-) - Q^{(s^{(\ell+1)}+q)}(m^-,\La; \ve, E) \bigr) \\
& \qquad - |G^{(s^{(\ell+1)}+q)}(m^+, m^-, \La; \ve, E)|^2 = 0.
\end{split}
\end{equation}

$(5)$  Let $f$ be as in part $(3)$ and let $\chi^{(f)}$ be as in Definition~\ref{def:4a-functions}. Then, $\chi(\ve,E) = 0$ if and only if $\chi^{(f)} = 0$. For $\ve \in  (-\ve_{s^\zero-1}, \ve_{s^\zero-1})$, the equation
\begin{equation}\label{eq:8-13nnINT-2}
\chi^{(f)}(\ve,E) = 0
\end{equation}
has exactly two solutions $E(m^+, \La; \ve) > E(m^-, \La; \ve)$, which obey
\begin{equation} \label{eq.6Eestimates1APN1FIN}
|E(m^\pm, \La; \ve) - E(m^\pm,\La(m^+);\ve)| < 4|\ve| (\delta^{(s^{(\ell+1)}+q-1)}_0)^{1/8},
\end{equation}
\begin{equation} \label{eq.6Eestimates1APFINALPM}
\begin{split}
[ a_1 (\ve, E) + |b (\ve, E)| ]|_{E = E (m^+, \La; \ve)} \ge E (m^+, \La; \ve) \\
\ge \max ( a_1 (\ve, E),  a_2 (\ve, E) + |b (\ve, E)| )|_{E = E (m^+, \La; \ve)}, \\
[ a_2 (\ve, E) - |b (\ve, E)| ]|_{E = E (m^+, \La; \ve)} \le E (m^-, \La; \ve) \\
\le \min ( a_2 (\ve, E),  a_1 (\ve, E) - |b (\ve, E)| )|_{E = E (m^-, \La; \ve)},
\end{split}
\end{equation}
where
\begin{equation}
\begin{split}\nn
a_1 (\ve, E) = v(m^+) + Q^{(s^{(\ell+1)}+q)}(m^+, \La; \ve, E), \quad \quad a_2 (\ve, E) = v(m^-) + Q^{(s^{(\ell+1)}+q)}(m^-, \La; \ve, E), \\
b (\ve, E) = |b_1 (\ve,E)|, \quad b_1 (\ve, E) = G^{(s^{(\ell+1)}+q)}(m^+, m^-, \La; \ve, E).
\end{split}
\end{equation}

The functions $E(m^\pm, \La; \ve)$ are $C^2$-smooth on the interval $(-\ve_{s_0-1}, \ve_{s_0-1})$ and obey the estimates \eqref{eq:6-13acnq0N1INT}.

$(6)$
\begin{equation} \label{eq:6specHEEN1FIN}
\begin{split}
\spec H_{\La,\ve} \cap \Big\{ |E - E(m^+,\La(m^+);\ve,E)| < \frac{(\delta^{(s^{(\ell+1)}+q-1)}_0)^{1/2}}{2} \Big\} = \{ E(m^+, \La; \ve), E(m^-, \La; \ve) \}, \\
E(m^\pm, \La;0) = v(m^\pm).
\end{split}
\end{equation}
Furthermore, assume
\begin{equation}\label{eq:6EsplitspecconddomainqUFIN}
(\delta^{(s^{(\ell+1)}+q)}_0)^4 < \min_\pm |E - E(m^\pm, \La; \ve)| < 2 \delta^{(s^{(\ell+1)}+q-1)}_0, E \in \IR.
\end{equation}
Then the matrix $(E - H_{\La,\ve})$ is invertible. Moreover,
\begin{equation}\label{eq:5inverseestiMATEqFIN}
|[(E - H_{\La,\ve})^{-1}](x,y)| \le s_{D(\cdot;\La),T,\kappa_0,|\ve|;k,\La,\mathfrak{R}}(x,y).
\end{equation}

$(7)$ Let $\vp^{(\pm)}(\La; \ve) := \vp^{(\pm)}(\cdot,\La; \ve)$ be the eigenvector corresponding to $E(m^\pm, \La; \ve)$ and normalized by $\vp^{(\pm)}(m^\pm,\La; \ve) = 1$. Then,
\begin{equation} \label{eq:6-17evdecay}
\begin{split}
|\vp^{(\pm)}(n, \La; \ve)| \le |\ve|^{1/2} \sum_{m \in \mathfrak{m}^{(\ell)}} \exp \left( -\frac{7}{8} \kappa_0|n-m| \right), \quad \text{ $n \notin \mathfrak{m}^{(\ell)}$}, \\
|\vp^{(\pm)}(m, \La; \ve)| \le 1 + \sum_{0 \le t < s^{(\ell+1)}+q} 4^{-t} \quad \text{for any $m \in \mathfrak{m}^{(\ell)}$.}
\end{split}
\end{equation}
For any $n \in \La(m^+) $, we have
\begin{equation} \label{eq:6-21ACCUPSFINCON}
|\vp^{(\pm)}(n,\La;\ve) -\vp^{(\pm)}(n,\La(m^+);\ve)| \le 2 |\ve| (\delta_0^{(s^{(\ell)}+q-1)})^5.
\end{equation}
\end{theorem}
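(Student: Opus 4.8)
The plan is to argue by induction on $q=0,1,2,\dots$. For $q=0$ the matrix lies in $GSR^{[\mathfrak{s}^{(\ell+1)}]}\bigl(\mathfrak{m}^{(\ell+1)},m^+,m^-,\La;\delta_0,\mathfrak{t}^{(\ell+1)}\bigr)$, so parts $(1)$--$(6)$ are exactly Proposition~\ref{prop:6-4INT}, and for $q\ge1$ I would assume the theorem for all $q'\le q-1$. Parts $(1)$--$(6)$ are then obtained essentially verbatim from the corresponding parts of Proposition~\ref{rem:con1smalldenomnn}, Proposition~\ref{prop:6-4} and Proposition~\ref{prop:6-4INT}. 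Concretely: one first uses the separation estimates in condition $(iv)$ of Definition~\ref{def:6-6UPFIN}, exactly as in \eqref{eq:6EdomainstimateAAAAOPsq}, to check that for every $(\ve,E)$ in the relevant domain the inductive hypothesis applies to each $H_{\La(m),\ve}$ with $m\in\cM\setminus\{m^+,m^-\}$, while Lemma~\ref{lem:6-9.1} (in the graded incarnation built into the induction, removing one of the two principal points) applies to $H_{\La(m^\pm),\ve}$; this yields invertibility of the restricted matrices together with the Section~\ref{sec.2} trajectory bounds. Part $(1)$ is then Lemma~\ref{lem:auxDfunctionsrules}, Lemma~\ref{lem:Rtraject} and condition $(v)$ (so that $D(m^\pm;\La)=4\log(\delta_0^{(s^{(\ell+1)}+q)})^{-1}\le T\mu_\La(m^\pm)^{1/5}$ by Remark~\ref{rem:3.Rs}); part $(2)$ is Proposition~\ref{prop:aux1} applied with the sets $\La(m)$ and the generic bound $|E-v(n)|\ge\delta_0^4$ from condition $(vi)$; part $(3)$ uses Lemma~\ref{lem:2Qfunctionderiv} for the comparison estimates and Lemma~\ref{lem:4stable} to absorb the $O(|\ve|(\delta_0^{(s^{(\ell+1)}+q-1)})^{12})$ perturbation $f_j-\hat f^{\pm}$ into $\mathfrak{F}^{(\ell+1)}$ at fixed $\lambda=1-4^{-\ell}$, at the cost of the weaker bound $\tau^{(f_j)}>\tau^{[\ell]}/4$; part $(4)$ is the Schur complement formula with $\La_2=\{m^+,m^-\}$; part $(5)$ is Lemma~\ref{lem:6-1ell} applied to $\chi^{(f)}$, whose hypotheses are checked from condition $(e)$ of Definition~\ref{def:4a-functions}, from the comparison with $\hat Q$ evaluated along $g_0(\ve)=E(m^\pm,\La(m^+);\ve)$, and from \eqref{eq:6-12acACCOPQNEEW1}; part $(6)$ follows from $(4)$, $(5)$ and Lemma~\ref{lem:aux5AABBCC} with $D_0=\log|\chi(\ve,E)|^{-1}$ bounded by the lower bound on $|\chi^{(f)}|$ in \eqref{eq:6-13acnq0N1INT}.

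For part $(7)$ I would start from the Schur representation of the Riesz projector at $E^{\pm}:=E(m^\pm,\La;\ve)$, exactly as in \eqref{eq:3Hinvestimatest1PQPM-2}--\eqref{eq:5Hinvestimatest1PQPM}: setting $\beta^\pm=G^{(s^{(\ell+1)}+q)}(m^\mp,m^\pm,\La;\ve,E^\pm)/\bigl(E^\pm-v(m^\mp)-Q^{(s^{(\ell+1)}+q)}(m^\mp,\La;\ve,E^\pm)\bigr)$, one reads off $|\beta^\pm|\le1$ from \eqref{eq.6Eestimates1APFINALPM} and obtains the eigenvector
\[
\vp^{(\pm)}(n,\La;\ve)=-\sum_{x\in\La\setminus\{m^+,m^-\}}(E^\pm-H_{\La\setminus\{m^+,m^-\},\ve})^{-1}(n,x)\,\bigl[h(x,m^\pm;\ve)+h(x,m^\mp;\ve)\beta^\pm\bigr]
\]
for $n\notin\{m^+,m^-\}$, with $\vp^{(\pm)}(m^\pm,\La;\ve)=1$, $\vp^{(\pm)}(m^\mp,\La;\ve)=\beta^\pm$ and $H_{\La,\ve}\vp^{(\pm)}(\La;\ve)=E^\pm\vp^{(\pm)}(\La;\ve)$. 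For $m\in\mathfrak{m}^{(\ell)}\subset\La(m^+)=\La(m^-)$ I would first prove \eqref{eq:6-21ACCUPSFINCON}: $\vp^{(\pm)}(\cdot,\La;\ve)$ and $\vp^{(\pm)}(\cdot,\La(m^+);\ve)$ are given by the same formula over $\La\setminus\{m^+,m^-\}$ and over $\La(m^+)\setminus\{m^+,m^-\}$ respectively, so their difference is controlled by the Green-function comparison of Lemma~\ref{lem:2Qfunction}/Lemma~\ref{lem:2Qfunctionderiv} (localizing it to distance $\ge R^{(s^{(\ell+1)}+q)}$ from $\La\setminus\La(m^+)$) together with $|E^\pm-E(m^\pm,\La(m^+);\ve)|<4|\ve|(\delta_0^{(s^{(\ell+1)}+q-1)})^{1/8}$ from \eqref{eq.6Eestimates1APN1FIN} and the derivative bounds \eqref{eq:6-12acACCOPQNEEW1}; combined with the inductive bound $|\vp^{(\pm)}(m,\La(m^+);\ve)|\le1+\sum_{0\le t<s^{(\ell+1)}+q-1}4^{-t}$ this gives the second line of \eqref{eq:6-17evdecay} after absorbing $2|\ve|(\delta_0^{(s^{(\ell)}+q-1)})^5$ into the new term $4^{-(s^{(\ell+1)}+q-1)}$. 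For $n\notin\mathfrak{m}^{(\ell)}$ I would bound $|(E^\pm-H_{\La\setminus\{m^+,m^-\},\ve})^{-1}(n,x)|$ by the trajectory sum from part $(2)$, decompose $\La\setminus\{m^+,m^-\}$ into the pieces $\La(m)$ and use Lemma~\ref{lem:aux5} together with Corollary~\ref{cor:auxweight1} and Lemma~\ref{lem:auxweight1}, so that any admissible trajectory entering $\La(m')\setminus\mathfrak{m}(m')$ carries a $D$-weight $\le4\log(\delta_0^{(s(m'))})^{-1}\le T\mu_{\La(m')}(\cdot)^{1/5}$ that is absorbed by the $\tfrac18\kappa_0\|\gamma\|$-margin; combining with $|h(x,m^\pm;\ve)|\le|\ve|\exp(-\kappa_0|x-m^\pm|)$ and summing over $x$, the contribution of the sub-structure centered at a given $m'\in\mathfrak{m}^{(\ell)}$ becomes $\lesssim|\ve|^{1/2}\exp(-\tfrac78\kappa_0|n-m'|)$, and summing over $m'$ yields the first line of \eqref{eq:6-17evdecay}.

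The main obstacle is exactly this last step of part $(7)$: showing that, after the Schur/functoriality bookkeeping, the eigenvector decay reorganizes into $|\ve|^{1/2}\sum_{m\in\mathfrak{m}^{(\ell)}}\exp(-\tfrac78\kappa_0|n-m|)$ and does not pick up uncontrolled $\exp(\mathrm{const}\cdot\diam\La(m'))$ factors from the resonant sub-cubes. This is precisely where the ``fractal'' boundary of the sets $\La(m)$ and the $1/5$-exponents in the definitions of $\mathcal{G}_{\La,T,\kappa_0}$ and of $\Ga_{D,T,\kappa_0}(\La,\mathfrak{R})$ are needed: one must verify that along every admissible trajectory the $D$-weights attached to the intermediate sub-structures stay dominated by the $\tfrac18\kappa_0\|\gamma\|$-part of the exponent (Corollary~\ref{cor:auxweight1}, Lemma~\ref{lem:auxweight1}), uniformly over the whole $\ell$-level hierarchy, and that the comparison estimate \eqref{eq:6-21ACCUPSFINCON} propagates without loss through all of these levels. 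Everything else is a routine transcription of the arguments already carried out for Proposition~\ref{rem:con1smalldenomnn}, Proposition~\ref{prop:6-4} and Proposition~\ref{prop:6-4INT}.
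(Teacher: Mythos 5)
For parts $(1)$--$(6)$ your delegation to Propositions~\ref{rem:con1smalldenomnn}, \ref{prop:6-4}, \ref{prop:6-4INT} and the Section~\ref{sec.2} machinery matches what the paper does (it simply says these parts are ``completely similar'' to the earlier propositions and omits them), and for the decay estimate in part $(7)$ your plan of obtaining the eigenvector through the $2\times 2$ Schur/residue representation and then invoking \eqref{eq:6-9.Hinvestimatestatement1PQ} together with Lemma~\ref{lem:auxweight1} / Corollary~\ref{cor:auxweight1} is exactly the paper's route.

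Where you genuinely depart from the paper is the comparison estimate \eqref{eq:6-21ACCUPSFINCON}. You propose to compare $\vp^{(\pm)}(\cdot,\La;\ve)$ and $\vp^{(\pm)}(\cdot,\La(m^+);\ve)$ by observing that both are given ``by the same formula'' over $\La\setminus\{m^+,m^-\}$ and $\La(m^+)\setminus\{m^+,m^-\}$ and then appealing to the Green-function comparison of Lemmas~\ref{lem:2Qfunction}/\ref{lem:2Qfunctionderiv}. The paper does something different: it truncates the already-constructed eigenvector $\vp^{(\pm)}(\cdot,\La;\ve)$ to $\La(m^+)$, shows via the decay estimate that this truncation $\widetilde{\vp}$ is an approximate eigenvector of $H_{\La(m^+),\ve}$ with error $\exp(-\kappa_0 R^{(\cdot)})$ (as in \eqref{eq:6-17AAAAevcloseBBB}), combines this with the spectral isolation \eqref{eq:6-17AAAAevcloseAA} through a standard perturbation argument to get closeness up to a phase $\zeta$ (as in \eqref{eq:6-17AAAAevclose}), and then kills the phase by using the common normalization $\vp^{(\pm)}(m^\pm,\cdot;\ve)=1$. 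This route is blind to the internal structure of $\La(m^+)$ and is therefore cleaner.

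There is a concrete gap in your route at the base of the induction, $q=0$. There $\hle$ lies in $GSR^{[\mathfrak{s}^{(\ell+1)}]}$ as in Definition~\ref{def:9-6general}, and crucially $\La(m^+)\neq\La(m^-)$ (indeed $m^-\notin\La(m^+)$). The inherited structure of $H_{\La(m^+),\ve}$ is a lower-level GSR class with principal pair $\{m^+,\bullet m^+\}$, so $\vp^{(\pm)}(\cdot,\La(m^+);\ve)$ is given by a Schur formula in which the \emph{removed pair is different} from $\{m^+,m^-\}$. Your ``same formula over a smaller set'' observation, and hence the Green-function comparison step, does not apply. You write $\La(m^+)=\La(m^-)$, which is only valid for $q\ge1$ (Definition~\ref{def:6-6UPFIN}). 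The paper's perturbation-theoretic argument sidesteps this issue entirely, which is why it treats the $q=0$ cases (OPR for the lowest level, $GSR^{[\mathfrak{s}^\one]}$ for the next) explicitly and then declares $q>0$ ``similar.'' If you want to keep your route, you would have to either restrict it to $q\ge1$ and handle $q=0$ by the paper's method, or supply an additional Schur-complement step relating a resolvent with $\{m^+,m^-\}$ removed to one with $\{m^+,\bullet m^+\}$ removed -- which is essentially what Lemma~\ref{lem:6-9.1} does and is not a small point. Finally, note that the quantity you feed into the comparison, $|E^\pm-E(m^\pm,\La(m^+);\ve)|<4|\ve|(\delta_0^{(s^{(\ell+1)}+q-1)})^{1/8}$ from \eqref{eq.6Eestimates1APN1FIN}, is a much weaker control than the $(\delta_0^{\cdot})^5$-type bounds one has in the $\cN$-class case (Proposition~\ref{prop:4-4}); the bound still closes numerically because $s^{(\ell+1)}>s^{(\ell)}$, but you should spell out that this numerology survives multiplication by $\sup|\partial_E(E-H_{\La(m^+)\setminus\{m^+,m^-\}})^{-1}|$, which the paper's argument avoids having to estimate.
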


\begin{proof}
The proof of each of the statements $(1)$--$(6)$ is completely similar to the proof of either a statement from Proposition~\ref{rem:con1smalldenomnn} or a statement from Proposition~\ref{prop:6-4}, and we omit them. Let us prove $(7)$. We discuss the cases $\ell \le 2$. For $\ell > 2$, the proof is completely similar. Let $\ell = 1$. We follow for this case the notation from Proposition~\ref{rem:con1smalldenomnn}. In particular $\mathfrak{m}^{(1)} = (m^+_0,m^-_0)$. Due to part $(7)$ in Proposition~\ref{rem:con1smalldenomnn}, the eigenvectors $\vp^{(s^\one,\pm)}(\La; \ve)$, normalized by $\vp^{(s^{(1)},\pm)}(m^\pm_0, \La; \ve) = 1$, obey \eqref{eq:6-17evdecay} with $\ell = 1$, $q = 0$. Let $\vp^{(s^{(1)}-1)}(\La^{(s^{(1)}-1)}(m^+_0);\ve)$ be the vector defined in part $(6)$ of Proposition~\ref{prop:4-4} with $H_{\La^{(s^{(1)}-1)}(m^+), \ve}$ in the role of $\hle$. Set $\widetilde{\vp}(n) = \vp^{(s^{(1)},+)}(n, \La; \ve)$, $n \in \La^{(s^{(1)}-1)}(m^+_0)$. Recall that $m^+_0 + B(R^{(s^\one-1)}) \subset \La^{(s^{(1)}-1)}(m^+_0)$ and $m^-_0 \notin \La^{(s^{(1)}-1)}(m^+_0)$. Therefore, using \eqref{eq:6-17evdecay}, one obtains
\begin{equation} \label{eq:6-17AAAAevcloseBBB}
\| (E^{(s^\one,+)}(\La; \ve) - H_{\La^{(s^{(1)}-1)}(m^+_0), \ve}) \widetilde{\vp} \| \le \exp \left( -\kappa_0 R^{(s^\one-1)} \right).
\end{equation}
It follows from part $(4)$ of Proposition~\ref{prop:4-4} that
\begin{equation} \label{eq:6-17AAAAevcloseAA}
\spec H_{\La^{(s^{(1)}-1)}(m^+_0)} \cap \{ |E^{(s^{(1)}-1)}(m^+_0,\La^{(s^{(1)}-1)}(m^+_0);\ve) - E| < \delta^{(s^\one-1)}_0\} = \{ E^{(s^{(1)}-1)}(m^+_0,\La^{(s^{(1)}-1)}(m^+_0);\ve) \}.
\end{equation}
Clearly, $\|\vp^{(s^{(1)}-1)}(\cdot,\La^{(s^{(1)}-1)}(m^+_0);\ve)\|, \|\widetilde{\vp}\| \ge 1$. Combining \eqref{eq:6-17AAAAevcloseBBB}, \eqref{eq:6-17AAAAevcloseAA} with standard perturbation theory arguments, one concludes that there exists $\zeta$ with $|\zeta| = 1$ such that
\begin{equation} \label{eq:6-17AAAAevclose}
\| \zeta \frac{\vp^{(s^{(1)}-1)}(\La^{(s^{(1)}-1)}(m^+_0);\ve)}{\|\vp^{(s^{(1)}-1)}(\La^{(s^{(1)}-1)}(m^+_0);\ve)\|} - \frac{\widetilde{\vp}}{\|\widetilde{\vp}\|} \| \le 2 \frac{\exp(-\kappa_0 R^{(s^\one-1)})}{\delta^{(s^\one-1)}_0} < \exp \left( -\frac{\kappa_0}{2} R^{(s^\one-1)} \right).
\end{equation}
Note that $\|\vp^{(s^{(1)}-1)}(\cdot,\La^{(s^{(1)}-1)}(m^+_0);\ve)\| \le 2$. Since $\vp^{(s^{(1)}-1)}(m^+_0,\La^{(s^{(1)}-1)}(m^+_0);\ve) = 1$, $\widetilde{\vp}(m^+_0) = 1$, one concludes that $\|\vp^{(s^{(1)}-1)}(\La^{(s^{(1)}-1)}(m^+_0);\ve) - \widetilde{\vp}\| \le \exp (-\frac{\kappa_0}{2} R^{(s^\one-1)})$, as claimed. This finishes part $(7)$ for $\ell = 1$, $q = 0$. The case $\ell = 1$, $q > 0$ is similar.

Let $\ell = 2$, $q = 0$. Using the notation from Definition~\ref{def:9-6}, assume that \eqref{eq:6-9-13OPQ} holds, $m^+ := m^+_0$, $m^-_ := m^+_{j_0}$. Recall that $\mathfrak{m}^{(2)} = \mathfrak{m} = ((m^+_0,m^-_0), (m^+_{j_0},m^-_{j_0}))$. Due to Proposition~\ref{prop:6-4},
\eqref{eq:6-9.Hinvestimatestatement1PQ} holds. As above, \eqref{eq:6-17evdecay} follows from \eqref{eq:6-9.Hinvestimatestatement1PQ} and Lemma~\ref{lem:auxweight1}. As in the proof of part $(7)$ of Proposition~\ref{rem:con1smalldenomnn}, one obtains
\begin{equation}\label{eq:6Hinvestimatest1PQPM}
\begin{split}
Res[(E - H_{\La})^{-1}(n,m^\pm)]|_{E = E^{(s^\one,\pm)}(m^+_0, \La; \ve)} = - \alpha^\pm \sum_{x \in \La \setminus \{m^+,m^-\}} \\
(E^{(s^\one,\pm)}(m^+, \La; \ve) - H_{\La \setminus \{m^+,m^-\}})^{-1}(n,x) [h(x,m^\pm;\ve) + h(x,m^\mp;\ve) \beta^\pm], \quad n \in \La \setminus \{m^+,m^-\}, \\
Res[(E - H_{\La})^{-1}(m^\pm,m^\pm)]|_{E = E^{(s^\one,\pm)}(m^+, \La; \ve)} = \alpha^\pm, \\
Res[(E - H_{\La})^{-1}(m^\pm,m^\mp)]|_{E = E^{(s^\one,\pm)}(m^+, \La; \ve)} = \alpha^\pm \iota^\pm
\end{split}
\end{equation}
with $0 < |\alpha^\pm| \le 1$, $|\beta^\pm|, |\iota^\pm| \le 1$. In particular,
\begin{equation}\label{eq:6drevectors}
\begin{split}
\hle \vp^{(s^\one,\pm)}(\La; \ve) = E^{(s^\one,\pm)}(m^+, \La; \ve) \vp^{(s^\one,\pm)}(\La; \ve), \\
\vp^{(s^\one,\pm)}(\La; \ve)(m^\pm) = 1, \quad \vp^{(s^\one,\pm)}(\La; \ve)(m^\mp) = \iota^\pm, \\
\vp^{(s^\one,\pm)}(\La; \ve) := (\alpha^\pm)^{-1} \bigl(Res[(E - H_{\La})^{-1}(n,m^\pm)]|_{E = E^{(s^\one,\pm)}(m^+_0, \La; \ve)} \bigr)_{n \in \La}.
\end{split}
\end{equation}
The rest of the arguments for part $(7)$ is completely similar to case $\ell = 1$.
\end{proof}

\begin{remark}\label{rem:6.principaldecay}
Assume $\hle \in GSR^{[\mathfrak{s}^{(\ell+1)}, s^{(\ell+1)}+q]} \bigl( \mathfrak{m}^{(\ell+1)}, m^+,m^-,\La; \delta_0,\mathfrak{t}^{(\ell+1)} \bigr)$. Definitions~\ref{def:9-6general} and \ref{def:6-6UPFIN} do not require any upper estimate for the quantity $\diam (\mathfrak{m}^{(\ell+1)})$. This estimate is needed for an effective application of the estimate \eqref{eq:6-17evdecay} on $|\vp^{(\pm)}(n, \La; \ve)|$ from Theorem~\ref{th:6-4FIN}. In applications we always assume the following condition,
\begin{equation}\label{eq:6princsetdiam}
\begin{split}
|m^+ - m^-| < R^{(s^{(\ell+1)}+1)}/4, \\
\mathfrak{m}^{(\ell+1)} \subset \bigcup_{+,-} \bigl( m^\pm + B(R^{(s^{(\ell+1)}-1)}/4) \bigr).
\end{split}
\end{equation}
Although the first line implies the second one, it is convenient to keep it this way for the sake of referring to them. Recall that due to Definition~\ref{def:6-6UPFIN}, $\bigl( m^\pm + B(R^{(s^{(\ell+1)}+q)} \bigr) \subset \Lambda$. Therefore, \eqref{eq:6-17evdecay} combined with \eqref{eq:6princsetdiam} yields
\begin{equation} \label{eq:6-17evdecayrefined}
|\vp^{(\pm)}(n, \La; \ve)| \le |\ve|^{1/2} 2^{\ell+2} \exp \left( -\frac{7}{8} \kappa_0 R^{(s^{(\ell+1)}+q)} \right), \quad \text{ $n \in \La \setminus \bigcup_{+,-} \bigl( m^\pm + B(R^{(s^{(\ell+1)}+q)})$.}
\end{equation}
\end{remark}

Let $H_{\La_j, \ve} \in GSR^{[\mathfrak{s}^{(\ell+1)}, s^{(\ell+1)}+q]} \bigl( \mathfrak{m}^{(\ell+1)}, m^+,m^-,\La_j; \delta_0, \mathfrak{t}^{(\ell+1)} \bigr)$, $j = 1,2$, with the same principal set $\mathfrak{m}^{(\ell+1)}$ and with the same principal points $m^+,m^-$. We denote by $v(n,j)$ the diagonal entries of $H_{\La_j,\ve}$. We assume that $v(n,1) = v(n,2)$ for $n \in \La_1 \cap \La_2$. Let $E \bigl(m^\pm, \La_j; \ve \bigr)$ be the eigenvalue defined in Theorem~\ref{th:6-4FIN} with $H_{\La_j, \ve}$ in the role of $\hle$, $j = 1,2$.

\begin{corollary}\label{cor:6.twolambdas1}
Assume that condition \eqref{eq:6princsetdiam} holds for $\La = \La_j$, $j = 1,2$. Then,
\begin{equation} \label{eq:6.twolambdas1}
\big|E \bigl(m^\pm, \La_1; \ve \bigr) -  E \bigl( m^\pm, \La_2; \ve \bigr) \big| < |\ve| (\delta_0^{(s^{(\ell+1)}+q)})^5.
\end{equation}
\end{corollary}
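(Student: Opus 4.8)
The plan is to follow the same strategy as in the proof of Corollary~\ref{cor:5.twolambdas1}: namely, to use the eigenvector produced by Theorem~\ref{th:6-4FIN}(7) for one of the two matrices, restrict (and pad by zero) to the intersection $\La_1 \cap \La_2$, and test it against the other matrix to obtain an approximate-eigenvalue estimate, then bootstrap to the claimed sharp bound via the structure of $\chi^{(f)}$. Concretely, first I would fix the sign $\pm$ and let $\vp^{(\pm)}(\La_1; \ve)$ be the eigenvector from Theorem~\ref{th:6-4FIN}(7) for $H_{\La_1,\ve}$, normalized by $\vp^{(\pm)}(m^\pm, \La_1; \ve) = 1$. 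Set $\widetilde\vp(n) = \vp^{(\pm)}(n, \La_1; \ve)$ for $n \in \La_1 \cap \La_2$ and $\widetilde\vp(n) = 0$ for $n \in \La_2 \setminus \La_1$. Since $m^\pm \in \La_1 \cap \La_2$ and, by condition~(v) of Definition~\ref{def:6-6UPFIN} applied to both $\La_1$ and $\La_2$, the balls $\bigl( m^\pm + B(R^{(s^{(\ell+1)}+q)}) \bigr)$ are contained in $\La_1$ and in $\La_2$, the refined decay estimate \eqref{eq:6-17evdecayrefined} (using that \eqref{eq:6princsetdiam} holds for $\La_j$) shows that $\vp^{(\pm)}(n, \La_1; \ve)$ is exponentially small (of size $\lesssim |\ve|^{1/2} 2^{\ell+2} \exp(-\tfrac{7}{8}\kappa_0 R^{(s^{(\ell+1)}+q)})$) on $\La_1 \setminus \bigl(\bigcup_\pm m^\pm + B(R^{(s^{(\ell+1)}+q)})\bigr)$, hence in particular on the ``boundary layer'' where the truncation error lives. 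Because $v(n,1) = v(n,2)$ on $\La_1 \cap \La_2$ and the off-diagonal entries of $H_{\La_j,\ve}$ all come from the same $h_0(m,n)$, the residual $\bigl(E(m^\pm,\La_1;\ve) - H_{\La_2,\ve}\bigr)\widetilde\vp$ is supported on this layer and is bounded by $\exp(-\tfrac{\kappa_0}{8} R^{(s^{(\ell+1)}+q)})$ (say). This gives
$$
\dist\bigl(E(m^\pm, \La_1; \ve), \spec H_{\La_2, \ve}\bigr) \le \exp\Bigl(-\tfrac{\kappa_0}{16} R^{(s^{(\ell+1)}+q)}\Bigr).
$$

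Next I would locate this spectrum inside the window controlled by Theorem~\ref{th:6-4FIN}(6). From \eqref{eq.6Eestimates1APN1FIN} applied to $\La_1$ we get $|E(m^\pm, \La_1; \ve) - E(m^\pm, \La(m^+); \ve)| < 4|\ve|(\delta_0^{(s^{(\ell+1)}+q-1)})^{1/8}$, where $\La(m^+)$ is the common $(s^{(\ell+1)}+q-1)$-set for $m^\pm$ — which is the same object for $\La_1$ and $\La_2$ by condition~(ii) of Definition~\ref{def:6-6UPFIN}. Hence $E(m^\pm,\La_1;\ve)$ lies well inside the interval $\{|E - E(m^+,\La(m^+);\ve)| < \tfrac{1}{2}(\delta_0^{(s^{(\ell+1)}+q-1)})^{1/2}\}$ from \eqref{eq:6specHEEN1FIN}, where $\spec H_{\La_2,\ve}$ consists of exactly the two points $E(m^+,\La_2;\ve) > E(m^-,\La_2;\ve)$. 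Therefore the distance estimate above forces
$$
\min_\pm \bigl| E(m^\pm, \La_1; \ve) - E(m^\pm, \La_2; \ve)\bigr| \le \exp\Bigl(-\tfrac{\kappa_0}{16} R^{(s^{(\ell+1)}+q)}\Bigr),
$$
after one checks (via \eqref{eq.6Eestimates1APN1FIN} for both $j=1,2$, and the fact that the two eigenvalues $E(m^+,\La_2;\ve)$, $E(m^-,\La_2;\ve)$ straddle $g_1$) that the $+$ eigenvalue of $\La_1$ must pair with the $+$ eigenvalue of $\La_2$ rather than with the $-$ one; this separation is quantitative because the gap $E(m^+,\La_2;\ve) - E(m^-,\La_2;\ve)$ is at least of order $\tau^\zero$ by Definition~\ref{def:6-6UPFIN}(vii) together with \eqref{eq.6Eestimates1APFINALPM}.

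Finally, to upgrade $\exp(-\tfrac{\kappa_0}{16} R^{(s^{(\ell+1)}+q)})$ to the stated $|\ve|(\delta_0^{(s^{(\ell+1)}+q)})^5$, I would iterate down the scale tower exactly as in Corollary~\ref{cor:5.twolambdas1}: by Definition~\ref{def:6-6UPFIN} (condition~(ii)) there are common sets $\La(m^\pm)$ with $H_{\La(m^\pm),\ve} \in GSR^{[\mathfrak{s}^{(\ell+1)}, s^{(\ell+1)}+q-1]}(\ldots)$, and \eqref{eq.6Eestimates1APN1FIN} chained with the inductive version of the present corollary at level $s^{(\ell+1)}+q-1$ (and so on down) converts the crude exponential bound into the polynomial-in-$\delta_0^{(\cdot)}$ bound; the key input at each step is that $R^{(u)} = (\delta_0^{(u-1)})^{-\beta_0}$ so $\exp(-\tfrac{\kappa_0}{16}R^{(u)})$ is far smaller than $|\ve|(\delta_0^{(u)})^5$ (cf.\ Remark~\ref{rem:3.Rs}). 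The main obstacle I anticipate is the bookkeeping in the pairing argument — ensuring that the $+$ eigenvalue of $\La_1$ is matched to the $+$ eigenvalue of $\La_2$ and not swapped — which requires carefully invoking the ordering $E^{(\cdot,+)} > E^{(\cdot,-)}$ together with the lower bounds on the gap from part~(5) of Theorem~\ref{th:6-4FIN} and the separation property \eqref{eq:6-13GSRUUPFIN}; everything else is a routine transcription of the scale-by-scale argument already carried out for the $\cN$-class in Corollary~\ref{cor:5.twolambdas1}.
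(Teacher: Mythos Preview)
Your overall strategy matches the paper's: truncate the eigenvector of $H_{\La_1,\ve}$, test it against $H_{\La_2,\ve}$ using the decay estimate \eqref{eq:6-17evdecayrefined}, localize the spectrum via \eqref{eq:6specHEEN1FIN}, and conclude. The crude bound $\exp(-\kappa_0 R^{(s^{(\ell+1)}+q)}/16)$ is already smaller than $|\ve|(\delta_0^{(s^{(\ell+1)}+q)})^5$ (cf.\ Remark~\ref{rem:3.Rs}), so no further ``upgrade'' iteration is needed at the end; the induction in Corollary~\ref{cor:5.twolambdas1} serves only to identify which eigenvalue of $H_{\La_2,\ve}$ is hit, not to improve the numerical estimate.

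The one genuine gap is your pairing argument. You propose to rule out $E(m^+,\La_1;\ve)$ landing near $E(m^-,\La_2;\ve)$ by invoking a gap lower bound of order $\tau^{(\ell+1)}$ (you write $\tau^\zero$, but the relevant quantity is $\tau^{(\ell+1)}$ via \eqref{eq:6-13GSRUUPFIN} and \eqref{eq.6Eestimates1APFINALPM}). In the abstract GSR framework of Definition~\ref{def:6-6UPFIN}, however, $\tau^{(\ell+1)}$ carries no lower bound in terms of $R^{(s^{(\ell+1)}+q)}$; it is merely a positive parameter, and in the applications of Section~\ref{sec.10} it can be made as small as one likes (it vanishes as $k\to k_{n^{(\ell+1)}}$). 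So the inequality $\tau^{(\ell+1)}/2 > \exp(-\kappa_0 R^{(s^{(\ell+1)}+q)}/16)$ that your argument needs is simply not available. The paper handles the pairing differently: it case-splits on whether the $\La_2$-gap is smaller than $\exp(-\tfrac{\kappa_0}{3}R^{(s^{(\ell+1)}+q)})$ (in which case both pairings give the same conclusion), and if the gap is large it uses the \emph{orthogonality} of $\vp^{(+)}(\La_1;\ve)$ and $\vp^{(-)}(\La_1;\ve)$ to show that a mis-pairing would force two eigenvalues of $H_{\La_2,\ve}$ into a tiny window, contradicting the spectral isolation. A simpler alternative, which you in fact gesture at when you mention the ordering $E^{(\cdot,+)}>E^{(\cdot,-)}$, is to run the truncation argument symmetrically in $j=1,2$ and then observe that if $a_1>a_2$, $b_1>b_2$, and each $a_i$ is $\epsilon$-close to $\{b_1,b_2\}$ and vice versa, then automatically $|a_1-b_1|,|a_2-b_2|\le\epsilon$ (consider $\max(a_1,b_1)$ and $\min(a_2,b_2)$); no gap lower bound is required.
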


\begin{proof}
The proof is similar to the proof of Corollary~\ref{cor:5.twolambdas1}. However, since the eigenvalues $E \bigl( m^\pm, \La_j; \ve \bigr)$ are almost double degenerate, some additional arguments are required. Let $\vp^{(\pm)}(\La_j; \ve)$ be the vector defined in part $(7)$ of Theorem~\ref{th:6-4FIN} with $H_{\La_j, \ve}$ in the role of $\hle$. Set $\widetilde{\vp}^{\pm}(\La_2; \ve)(n) = \vp^{(\pm)}(\La_1; \ve)(n)$ if $n \in \La_2 \cap \La_1$, $\widetilde{\vp}^{\pm}(\La_2; \ve)(n) = 0$ otherwise. It follows from Remark~\ref{rem:6.principaldecay} that
\begin{equation} \label{eq:6-17AAAAevcloseE1E2}
\| (E(m^\pm, \La_1; \ve) - H_{\La_2, \ve}) \widetilde{\vp}^\pm \| \le \exp \left( -\frac{7\kappa_0}{8} R^{(s^{(\ell+1)}+q)} \right).
\end{equation}
Since $\| \widetilde{\vp}^\pm \|\ge |\widetilde{\vp}^\pm(m^\pm)| = 1$, one has
\begin{equation} \label{eq:6-17AAAAevclose-2}
\dist (E \bigl( m^\pm,\La_1; \ve \bigr), \spec H_{\La_2, \ve}) \le \exp \left( -\frac{7\kappa_0}{8} R^{(s^{(\ell+1)}+q)} \right).
\end{equation}
Since the principal set $\mathfrak{m}^{(\ell+1)}$ is the same for both $\La_j$,  one can use induction, like in the proof of Corollary~\ref{cor:5.twolambdas1}, to verify that in fact
\begin{equation} \label{eq:6-17AAAAevclose1}
\dist (E \bigl( m^\pm, \La_1; \ve \bigr), \{ E \bigl( m^+,\La_2; \ve \bigr), E \bigl( m^-,\La_2; \ve \bigr) \}) \le \exp \left( -\frac{7\kappa_0}{8} R^{(s^{(\ell+1)}+q)} \right).
\end{equation}
If $E \bigl( m^+, \La_2; \ve \bigr) - E \bigl( m^-, \La_2; \ve \bigr) < \exp (-\frac{\kappa_0}{3} R^{(s^{(\ell+1)}+q)})$, then we are done. Assume $E \bigl( m^+, \La_2; \ve \bigr) - E \bigl( m^-, \La_2; \ve \bigr) \ge \exp (-\frac{\kappa_0}{3} R^{(s^{(\ell+1)}+q)})$. Assume $|E\bigl(m^+, \La_1; \ve \bigr) - E\bigl(m^-, \La_2; \ve \bigr)| < \exp (-\frac{7\kappa_0}{8} R^{(s^{(\ell)}+q)})$. Since $(\vp^{(+)}(\La_1; \ve), \vp^{(-)}(\La_1; \ve)) = 0$, it follows from Remark~\ref{rem:6.principaldecay} that $(\widetilde{\vp}^{+}(\La_2; \ve), \widetilde{\vp}^{-}(\La_2; \ve))| < \exp (-\frac{7\kappa_0}{8} R^{(s^{(\ell)}+q)})$. Since $\|\widetilde{\vp}^{+}(\La_2; \ve)\| \ge 1$, combined with \eqref{eq:6-17AAAAevcloseE1E2} this implies $|(\spec H_{\La_2, \ve}) \cap \{ |E - E \bigl( m^-,\La_2; \ve \bigr)| < \exp (-\frac{\kappa_0}{2} R^{(s^{(\ell)}+q)}) \}| \ge 2$. However, $E \bigl( m^+, \La_2; \ve \bigr)$ is the only eigenvalue of $H_{\La_2, \ve}$ different from $E \bigl(m^-, \La_2; \ve \bigr)$ that may belong to $\{ |E - E \bigl( m^-, \La_2; \ve \bigr)| < \exp (-\frac{\kappa_0}{2} R^{(s^{(\ell)}+q)}) \}$. This contradicts the assumption $|E \bigl(m^+ \La_2; \ve \bigr) - E \bigl(m^-, \La_2; \ve \bigr)| \ge \exp (-\frac{\kappa_0}{3} R^{(s^{(\ell)}+q)})$. Thus, $|E \bigl( m^\pm, \La_1; \ve \bigr) - E \bigl( m^-, \La_2; \ve \bigr)| < \exp (-\frac{7\kappa_0}{8} R^{(s^{(\ell)}+q)})$ is impossible. Similarly, $|E \bigl( m^\pm, \La_1; \ve \bigr) - E \bigl( m^+,\La_2; \ve \bigr)| < \exp (-\frac{7\kappa_0}{8} R^{(s^{(\ell)}+q)})$ is impossible. Since $E \bigl(m^+, \La_j; \ve \bigr) > E \bigl( m^-, \La_j; \ve \bigr)$, the statement follows from \eqref{eq:6-17AAAAevclose1}.
\end{proof}

\smallskip

Using the notation from Theorem~\ref{th:6-4FIN}, assume that the functions $h(m, n,\ve)$, $m, n \in \La$, depend also on some parameter $k \in(k_1,k_2)$, that is, $h(m,n;\ve) = h(m, n;\ve,k)$. Assume that $H_{\La,\varepsilon,k} := \bigl( h(m, n; \ve,k) \bigr)_{m, n \in \La} \in   GSR^{[\mathfrak{s}^{(\ell+1)}, s^{(\ell+1)}+q]} \bigl( \mathfrak{m}^{(\ell+1)}, m^+,m^-,\La; \delta_0,\mathfrak{t}^{(\ell+1)} \bigr)$ for all $k$. Let $Q^{(s^{(\ell+1)}+q)}(m^\pm,\La; \ve,k, E)$ etc.\ be the functions introduced in Theorem~\ref{th:6-4FIN} with $H_{\La,\varepsilon,k}$ in the role of $\hle$.

\begin{lemma}\label{lem:7differentiationFIN}
$(1)$ If $h(m,n;\ve,k)$ are $C^t$-smooth functions of $k$, then $Q^{(s^{(\ell+1)}+q)}(m^\pm,\La; \ve, E)$ etc.\ are $C^t$-smooth functions of all variables involved.

$(2)$ Assume also that $h(m,n;\ve,k)$ are $C^2$-smooth functions that for $m \neq n$ obey $|\partial^\alpha h(m,n;\ve,k)| \le B_0 \exp (-\kappa_0 |m - n|)$ for $|\alpha| \le 2$. Furthermore, assume that $|\partial^\alpha h(m,m;\ve,k)| \le B_0 \exp (\kappa_0 |m - m^+|^{1/5})$ for any $m \in \La$, $0 < |\alpha| \le 2$. Then, for $|\alpha| \le 2$, we have
\begin{equation}\label{eq:7Hinvestimatestatement1kvard}
\begin{split}
|\partial^\alpha (E - H_{\La \setminus \{ m^+, m^- \}, k})^{-1}] (x,y)| \le (3 B_0)^\alpha \mathfrak{D}^\alpha_{D(\cdot; \La \setminus \{ m^+, m^- \}), T, \kappa_0, |\ve|; \La \setminus \{ m^+, m^- \} } (x,y), \\
|\partial^\alpha Q^{(s+q)}(m^\pm, \La; \ve, k, E)| \le (3 B_0)^\alpha |\ve| \mathfrak{D}^\alpha_{D(\cdot; \La \setminus \{ m^+, m^- \} ), T, \kappa_0, |\ve|; \La \setminus \{ m^+, m^- \} } (m^\pm, m^\pm) < (3 B_0)^\alpha |\ve|^{3/2}, \\
|\partial^\alpha G^{(s+q)}(m^\pm, m^\mp, \La; \ve, k, E)| \le (3 B_0)^\alpha \mathfrak{D}^\alpha_{D(\cdot; \La \setminus \{ m^+, m^- \}), T, \kappa_0, |\ve|; \La \setminus \{ m^+, m^- \} } (m^\pm, m^\mp) \\
< (3B_0)^\alpha |\ve|^{1/2} \exp (-\kappa_0 |m^+-m^-/16|),
\end{split}
\end{equation}
\begin{equation}\label{eq:7Hinvestimatestatement1kvardE}
|\partial^\alpha E(m^\pm,\La; \ve,k, E) - \partial^\alpha v(m^\pm,k)| < (3B_0)^\alpha |\ve|^{3/2}.
\end{equation}
\end{lemma}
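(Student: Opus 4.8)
The plan is to follow the proof of Lemma~\ref{lem:5differentiation} and Lemma~\ref{lem:6differentiation} essentially verbatim, with Theorem~\ref{th:6-4FIN} now playing the role that Proposition~\ref{prop:4-4} and Proposition~\ref{rem:con1smalldenomnn} played there, and with $m^+$ in the role of the distinguished point $m_0$.

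\emph{Part $(1)$.} First I would invoke part $(1)$ of Lemma~\ref{lem:7differentiationB} (equivalently part $(1)$ of Lemma~\ref{lem:7differentiation}): since $H_{\La\setminus\{m^+,m^-\},\ve,k}$ is $C^t$-smooth in $k$ and, by part $(2)$ of Theorem~\ref{th:6-4FIN}, invertible for $(\ve,E)$ in the relevant domain, its inverse is $C^t$-smooth in $k$ with $k$-derivatives given by the usual resolvent identities. Hence $Q^{(s^{(\ell+1)}+q)}(m^\pm,\La;\ve,k,E)$ and $G^{(s^{(\ell+1)}+q)}(m^\pm,m^\mp,\La;\ve,k,E)$, being locally uniformly convergent sums (by the bounds of part $(3)$ of Theorem~\ref{th:6-4FIN}) of products of entries $h(\cdot,\cdot;\ve,k)$ and of these resolvent entries, are $C^t$-smooth in all variables. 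Finally, by parts $(4)$--$(5)$ of Theorem~\ref{th:6-4FIN}, $E(m^\pm,\La;\ve,k)$ is the solution of $\chi^{(f)}(\ve,E)=0$ with $|\partial_E\chi^{(f)}|\ge(\tau^{(f)})^2>0$ at the two roots (see \eqref{eq:6-13acnq0N1INT}), so the implicit function theorem gives that $E(m^\pm,\La;\ve,k)$ is $C^t$-smooth in $k$ as well.

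\emph{Part $(2)$, the resolvent and the $Q,G$ estimates.} The first estimate in \eqref{eq:7Hinvestimatestatement1kvard} would be obtained by a direct application of part $(2)$ of Lemma~\ref{lem:7differentiationB}, with $\La$ there replaced by $\La\setminus\{m^+,m^-\}$ and $m_0$ by $m^+$: the input hypothesis \eqref{eq:3Hinvestimatestatement1kK} is exactly \eqref{eq:6-9.FIN} of Theorem~\ref{th:6-4FIN}$(1)$--$(2)$ (using $s_{D,\dots}\le S_{D,\dots}$), the function $D(\cdot;\La\setminus\{m^+,m^-\})$ lies in the relevant class $\mathcal G$ by Theorem~\ref{th:6-4FIN}$(1)$, and the smoothness and size hypotheses on $h$ are those assumed here. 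Then I would apply the Leibniz rule with $|\alpha|\le2$ to the defining sums \eqref{eq:6-10acFIN}, bounding each $\partial^\beta h$-factor by $B_0\exp(-\kappa_0|\cdot|)$ off the diagonal and by $B_0\exp(\kappa_0|\cdot-m^+|^{1/5})$ on it, and each resolvent-derivative factor by the bound just obtained; the resulting sums are dominated by $(3B_0)^\alpha|\ve|\,\mathfrak D^\alpha_{D(\cdot;\La\setminus\{m^+,m^-\}),T,\kappa_0,|\ve|;\La\setminus\{m^+,m^-\}}(m^\pm,m^\pm)$, and by the analogous $(m^\pm,m^\mp)$-version for $G$. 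Invoking \eqref{eq:auxtrajectweightsumest8iterted}, \eqref{eq:auxtrajectweightsumest8iterted1}, \eqref{eq:auxtrajectweightsumest8iterted2} of Lemma~\ref{lem:auxweight1iterated}, together with the off-diagonal line of \eqref{eq:auxtrajectweightsumest8} that supplies the factor $\exp(-\kappa_0|m^+-m^-|/16)$ for $G$, then yields the middle estimates in \eqref{eq:7Hinvestimatestatement1kvard}.

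\emph{The eigenvalue estimate} \eqref{eq:7Hinvestimatestatement1kvardE}, which I expect to be the only step needing genuine care rather than bookkeeping. By part $(4)$ of Theorem~\ref{th:6-4FIN}, $E=E(m^+,\La;\ve,k)$ satisfies $(E-a_1)(E-a_2)=|b|^2$ with $a_1=v(m^+,k)+Q^{(s^{(\ell+1)}+q)}(m^+,\La;\ve,k,E)$, $a_2=v(m^-,k)+Q^{(s^{(\ell+1)}+q)}(m^-,\La;\ve,k,E)$, $b=G^{(s^{(\ell+1)}+q)}(m^+,m^-,\La;\ve,k,E)$, and from \eqref{eq.6Eestimates1APFINALPM} together with \eqref{eq:6-13GSRUUPFIN} one has $E-a_2\ge a_1-a_2\ge\tau^{(\ell+1)}>0$ there, so one may write $E=a_1(\ve,E,k)+|b(\ve,E,k)|^2/(E-a_2(\ve,E,k))$. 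Differentiating this identity in $k$ with $E=E(m^+,\La;\ve,k)$, solving for $\partial_kE$ using $|\partial_Ea_1|=|\partial_EQ^{(s^{(\ell+1)}+q)}(m^+,\La;\ve,k,E)|\le|\ve|<1/2$, and using $|\partial_kv(m^+,k)|\le B_0$, the middle bound of \eqref{eq:7Hinvestimatestatement1kvard} for $\partial_kQ^{(s^{(\ell+1)}+q)}$, and the smallness of $b$ and of $\partial^\beta b$ from \eqref{eq:6-12acACCOPQNEEW1} and \eqref{eq:7Hinvestimatestatement1kvard}, one gets $|\partial_kE(m^+,\La;\ve,k)-\partial_kv(m^+,k)|<3B_0|\ve|^{3/2}$; the case $m^-$ is identical with $a_1\leftrightarrow a_2$, and differentiating once more gives the $\alpha=2$ bound. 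The hard part here is checking that, since $|\ve|\le\ve_0$ is super-small relative to the $\delta_0^{(s')}$ and $\tau^{(j)}$ parameters, the implicit $E$-dependence inside $Q^{(s^{(\ell+1)}+q)}$ and $G^{(s^{(\ell+1)}+q)}$ and the correction term $|b|^2/(E-a_2)$ together contribute only $O(B_0|\ve|^{3/2})$; everything else is identical to the corresponding arguments for Lemma~\ref{lem:5differentiation} and Lemma~\ref{lem:6differentiation}, which is why the paper omits it.
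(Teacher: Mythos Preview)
Your proposal is correct and matches the paper's approach exactly: the paper simply states that the proof is completely similar to that of Lemma~\ref{lem:5differentiation} and skips it, and your outline is precisely the adaptation of that argument (together with Lemma~\ref{lem:6differentiation}) to the GSR setting, with Theorem~\ref{th:6-4FIN} supplying the resolvent bound \eqref{eq:6-9.FIN} in place of \eqref{eq:3Hinvestimatestatement1}. Your extra care in handling the implicit differentiation of the two-point equation $E=a_1+|b|^2/(E-a_2)$ for \eqref{eq:7Hinvestimatestatement1kvardE} is the natural replacement for the differentiation of \eqref{eq:4-16} carried out in \eqref{eq:4-16DDDD1}.
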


The proof of this statement is completely similar to the proof of Lemma~\ref{lem:5differentiation} and we skip it.

\section{Matrices with Inessential Resonances Associated with  $1$-Dimensional Quasi-Periodic Schr\"odinger Equations}\label{sec.7}

Let $c(n)$, $n \in \zv\setminus \{0\}$ obey
\begin{equation}\label{eq:7-5-4}
\begin{split}
\overline{c(n)} & = c(-n)\ ,\quad n \in \zv, \\
|c(n)| & \le \exp(-\kappa_0|n|)\ , \quad n \in \zv,
\end{split}
\end{equation}
where $0 < \kappa_0 \le 1/2$ is a constant.

Fix an arbitrary $\gamma \ge 1$. Given $\gamma - 1 \le |k| \le \gamma$ and $\epsilon > 0$, set $\lambda = 256 \gamma$ and consider $\ve$ with $|\ve| = \lambda^{-1} \epsilon$. With
\begin{equation} \label{eq:7-5-7}
\begin{split}
v(n; k) & = \lambda^{-1} (n \omega + k)^2\ , \quad n \in \zv\ ,\\
h_0(n, m) & = \lambda^{-1} c(n - m), \\
h(n, m; \ve, k) & = v(n; k)\ \text{if}\ m = n, \\
h(n, m; \ve, k) & = \ve\, h_0(n, m)\ \text{if}\ m \not= n,
\end{split}
\end{equation}
consider $H_{\ve, k} = \bigl( h(m, n; \ve, k) \bigr)_{m, n \in \zv}$. This is consistent with the notation in \eqref{eq:2-1}--\eqref{eq:2-4} of Section~\ref{sec.3} with
\begin{equation}\label{eq:7-5-7BBB}
B_1 = \lambda^{-1}.
\end{equation}
We denote by $H_{\La'; \ve, k}$ the submatrices $\bigl( h(m, n; \ve, k) \bigr)_{m,n \in \La'}$, $\La' \subset \zv$. We assume that the vector $\omega$ satisfies the following Diophantine condition,
\begin{equation}\label{eq:7-5-8}
|n \omega| \ge a_0 |n|^{-b_0}, \quad n \in \zv \setminus \{0\},
\end{equation}
with some $0 < a_0 < 1$, $\nu < b_0 < \infty$. Just for the sake of normalization of some estimates in this section, we assume that $\| \omega \| \le 1$, so that $|m \omega| \le |m|$ for any $m \in \IZ^\nu$.

Let $a_0$, $b_0$ be as in \eqref{eq:7-5-8}. Set $b_1 = 32 b_0$, $\beta_1 = b_1^{-1} = (32 b_0)^{-1}$. Fix an arbitrary $R_1$
with $\log R_1 \ge \max (\log (100 a_0^{-1}) , 2^{34}\beta_1^{-1} \log \kappa_0^{-1})$. Fix also $k \in \IR$. Set
\begin{equation}\label{eq:A.1A}
R^\one = R_1, \quad \delta_0^4 := \delta^\zero_0 = (R^\one)^{-\frac{1}{\beta_1}}, \quad \delta_0^{(u-1)} = \exp \bigl( - (\log R^{(u-1)})^2 \bigr), \quad u = 2, \dots, \quad R^{(u)} := \bigl( \delta_0^{(u-1)} \bigr)^{-\beta_1}.
\end{equation}
\textit{Let us remark here that the definition \eqref{eq:A.1A} is consistent with \eqref{eq:3basicparameters}. In particular, $\log \delta_0^{-1} > 2^{32} \beta_1^{-1} \log \kappa_0^{-1}$. Another remark is that, due to the Diophantine condition, one has}
\begin{equation}\label{eq:diphnores}
|m \omega| \ge a_0  |m|^{-b_0} \ge a_0 ( 48 R^{(u)})^{-b_0} > (R^{(u)})^{-2b_0} = (\delta_0^{(u-1)})^{1/16} \; \text{ if $0 < |m| \le 48 R^{(u)}$}.
\end{equation}
Define
\begin{equation}\label{eq:7K.1}
\begin{split}
k^\pm_m = - \frac{m \omega}{2} \pm \sigma(m) \; \text{ with } \; \sigma(m) = 32 (\delta^\esone_0)^{1/6} \; \text{ if $12 R^\esone < |m| \le 12 R^\es$ and } \; \sigma(0) = 32 (\delta^{(0)}_0)^{1/6}, \\
k^\pm_{m,s} = k^\pm_m \pm 64 \sum_{r \le s-1, \quad (\delta^{(r)}_0)^{1/2} \le \sigma(m)} (\delta^\ar_0)^{1/2}, \quad s \ge 1, \quad  k^\pm_{m,0} := k^\pm_m,
\end{split}
\end{equation}
where $R^\zero := 0$. Note the following identities,
$$
k^\pm_{-m} = - k^\mp_m, \quad k^\pm_{-m,s} = - k^\mp_{m,s}.
$$

\begin{lemma}\label{lem:7.setKs}
$(1)$ For $|m| \le 12 R^\one$, the intervals $(k_{m,2}^-, k_{m,2}^+)$  are disjoint. We denote by $I_j(s)$ the connected components of $\IR \setminus \bigcup_{0 < |m'| \le 12 R^\es} (k_{m',s+1}^-, k_{m',s+1}^+)$.

$(2)$ For $s \ge 2$, each $I_j(s)$ is a subinterval of some $I_k(s-1)$.

$(3)$ For $j \neq k$, $\dist(I_j(s),I_k(s)) \ge 64 (\delta^\esone_0)^{1/6}$.
\end{lemma}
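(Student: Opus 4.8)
\textbf{Proof plan for Lemma~\ref{lem:7.setKs}.}
The whole lemma is a combinatorial/arithmetic statement about the intervals $(k_{m,s}^-,k_{m,s}^+)$, so the plan is to reduce everything to the Diophantine lower bound \eqref{eq:diphnores} on $|m\omega - m'\omega|$ together with bookkeeping of the widths $\sigma(m)$ and the accumulated shifts $64\sum_{r\le s-1,\,(\delta_0^{(r)})^{1/2}\le\sigma(m)}(\delta_0^{(r)})^{1/2}$. First I would record the elementary estimate that the total shift is dominated by its largest term: since the $\delta_0^{(r)}$ decay super-geometrically (Remark~\ref{rem:3.Rs} gives $\delta_0^{(r)} < (\delta_0^{(r-1)})^8$), we have $64\sum_{r\le s-1,\,(\delta_0^{(r)})^{1/2}\le\sigma(m)}(\delta_0^{(r)})^{1/2} < 128\,\sigma(m)$ for all $s$, and in fact $< 128\,(\delta_0^{(s-2)})^{1/2}$ when $\sigma(m)=32(\delta_0^{(s-2)})^{1/6}$ is the current scale's width. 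Hence each interval $(k_{m,s}^-,k_{m,s}^+)$ is contained in the ball of radius $130\,\sigma(m)$ around $-m\omega/2$, and contains the ball of radius $\sigma(m)$ around $-m\omega/2$; this two-sided control is what drives all three parts.

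For part $(1)$: for $|m|,|m'|\le 12R^\one$ with $m\ne m'$, one has $|k_{m,2}^-$-center minus $k_{m',2}^-$-center$| = \tfrac12|(m-m')\omega| \ge \tfrac12 a_0|m-m'|^{-b_0} \ge \tfrac12 a_0(24R^\one)^{-b_0}$, which by \eqref{eq:diphnores} (applied with $u=1$, noting $|m-m'|\le 24R^\one < 48R^{(2)}$) exceeds $(\delta_0^{(0)})^{1/16}$ up to a harmless constant; since the two intervals have half-widths at most $130\,\sigma(m')\le 130\cdot 32(\delta_0^{(0)})^{1/6}$ and $(\delta_0^{(0)})^{1/6} \ll (\delta_0^{(0)})^{1/16}$ is false — so here I must be careful: $1/6 > 1/16$ means $(\delta_0^{(0)})^{1/6}$ is the \emph{smaller} quantity, so disjointness does hold, with room to spare. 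I would write this comparison explicitly, using $\log\delta_0^{-1}$ large so that $(\delta_0^{(0)})^{1/6}/(\delta_0^{(0)})^{1/16} = (\delta_0^{(0)})^{1/6-1/16}\to 0$. This gives part $(1)$, and the definition of $I_j(s)$ as connected components is then just a definition.

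For part $(2)$: I must show $\bigcup_{0<|m'|\le 12R^{(s-1)}}(k_{m',s}^-,k_{m',s}^+)\subset\bigcup_{0<|m'|\le 12R^{(s)}}(k_{m',s+1}^-,k_{m',s+1}^+)$, i.e. each scale-$(s-1)$ removed interval is swallowed by the corresponding scale-$s$ removed interval around the same center $-m'\omega/2$. This follows because passing from step $s$ to step $s+1$ only \emph{adds} one more (positive) term $64(\delta_0^{(s-1)})^{1/2}$ (when applicable) to the shift, so $(k_{m',s}^-,k_{m',s}^+)\subset(k_{m',s+1}^-,k_{m',s+1}^+)$; and the index range $12R^{(s-1)}<12R^{(s)}$ only enlarges the union. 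Hence $\IR\setminus(\text{scale }s)\subset\IR\setminus(\text{scale }s-1)$, so every $I_j(s)$ lies in a single $I_k(s-1)$ (it is connected and avoids the scale-$(s-1)$ removed set). For part $(3)$: two distinct components $I_j(s),I_k(s)$ are separated by at least one removed interval $(k_{m',s+1}^-,k_{m',s+1}^+)$ whose width is $\ge 2\sigma(m') \ge 64(\delta_0^{(s-1)})^{1/6}$ when $12R^{(s-1)}<|m'|\le 12R^{(s)}$, while for smaller $|m'|$ the width $\ge 64(\delta_0^{(s-2)})^{1/6} \ge 64(\delta_0^{(s-1)})^{1/6}$; taking the minimum over the possible separating intervals gives the bound $64(\delta_0^{(s-1)})^{1/6}$, matching $64(\delta_0^\esone_0)^{1/6}$ in the statement.

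The main obstacle is purely notational/bookkeeping: keeping straight the three scales that appear simultaneously — the index $|m|$ lives in a dyadic range $12R^\esone<|m|\le 12R^\es$ tied to $\sigma(m)=32(\delta_0^{(s-2)})^{1/6}$, the subscript $s$ on $k_{m,s}^\pm$ governs how many shift terms are summed, and the parameter $s$ in $I_j(s)$ is yet another index — and verifying in each case that the relevant separations \eqref{eq:diphnores} dominate the relevant widths. I expect no real analytic difficulty; the one point demanding genuine care is the exponent comparison $1/6$ versus $1/16$ (and in part $(1)$, checking that $24R^\one$ stays within the range $|m-m'|\le 48R^{(2)}$ needed to invoke \eqref{eq:diphnores} at level $u=1$, which is immediate since $R^{(2)}=(\delta_0^{(1)})^{-\beta_1}\gg R^\one$).
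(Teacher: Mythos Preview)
Your proposal is correct and is precisely the detailed expansion of the paper's one-line proof (``All statements follow readily from the definitions \eqref{eq:7K.1}''): the paper suppresses exactly the width-versus-separation bookkeeping and the monotonicity-in-$s$ argument that you spell out. One trivial slip: to invoke \eqref{eq:diphnores} at level $u=1$ in part~$(1)$ you need $|m-m'|\le 48R^{(1)}$, not $48R^{(2)}$, but since $24R^{(1)}<48R^{(1)}$ this is immediate and changes nothing.
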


\begin{proof}
All statements follow readily from the definitions \eqref{eq:7K.1}.
\end{proof}

\begin{lemma}\label{lem:A.2}
\begin{itemize}

\item[(1)] Let $m \in \zv$, $0 < \delta < 1/16$ be arbitrary. If $|v(m,k) - v(0,k)| < \delta^2$, then $\min (|m \omega| ,|2 k + m  \omega|) \le 32 \delta$ if $\gamma \le 4$ and $\min (|m \omega| ,|2 k + m  \omega|) \le 256 \delta^2$ if $\gamma > 4$.

\item[(2)] If $\min (|m \omega|, |2 k + m \omega|) < \delta<1$, then $|v(m,k) - v(0,k)| \le  \delta$.

\end{itemize}

Assume $s \ge 2$ and $k \in \IR \setminus \bigcup_{0 < |m'| \le 12 R^\es} (k_{m',s}^-, k_{m',s}^+)$. Then,
\begin{itemize}

\item[(3)] If $\min (|m \omega|, |2 k + m \omega|) < 32(\delta^{(s-1)}_0)^{1/2}$, then $k + m \omega \in \IR \setminus \bigcup_{|m'| \le 12 R^\es} (k_{m',s-1}^-, k_{m',s-1}^+)$.  Moreover, if in addition $\sgn (k + m\omega) = \sgn k$, then $k, k + m\omega$ belong to the same connected component of $\IR \setminus \bigcup_{|m'| \le 12 R^\es} (k_{m',s-1}^-, k_{m',s-1}^+)$. In particular, if $|v(m,k) - v(0,k)| < \delta^{(s-1)}_0$, then $k + m \omega \in \IR \setminus \bigcup_{|m'| \le 12 R^\es} (k_{m',s-1}^-, k_{m',s-1}^+)$.

\item[(4)] If $0 < |m_1 - m_2| \le 12 R^\es$, then $\max_j |v(m_j,k_1) - v(0,k_1)| \ge (\delta^{(s-1)}_0)^{1/2}$ for any $|k_1 - k| < (\delta^{(s-1)}_0)^{1/2}$. In particular, if $0 < |m_2| \le 12 R^\es$, then $|v(m_2,k_1) - v(0,k_1)| \ge (\delta^{(s-1)}_0)^{1/2}$.

\end{itemize}
\end{lemma}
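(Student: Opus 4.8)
The plan is to prove Lemma~\ref{lem:A.2} by direct elementary computation, exploiting the quadratic structure $v(m,k)=\lambda^{-1}(m\omega+k)^2$ together with the Diophantine estimate \eqref{eq:diphnores} and the arithmetic of the points $k^\pm_{m,s}$ set up in \eqref{eq:7K.1}. The overarching algebraic identity is
\begin{equation}\label{eq:Aplan1}
v(m,k)-v(0,k)=\lambda^{-1}(m\omega)(m\omega+2k)=\lambda^{-1}(m\omega)(m\omega+2k),
\end{equation}
so $\lambda\,|v(m,k)-v(0,k)|=|m\omega|\,|2k+m\omega|$. This factorization is what makes parts $(1)$ and $(2)$ essentially bookkeeping.

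For parts $(1)$ and $(2)$: in $(2)$, if $\min(|m\omega|,|2k+m\omega|)<\delta$, then one of the two factors in \eqref{eq:Aplan1} is $<\delta$ while the other is bounded by $2\gamma+O(\delta)\le\lambda$ (using $\gamma-1\le|k|\le\gamma$, $\|\omega\|\le1$, and that if $|m\omega|$ is the small factor then $|2k+m\omega|\le 2|k|+|m\omega|\le 2\gamma+1\le\lambda$; symmetrically in the other case since $|m\omega|\le|2k+m\omega|+2|k|$). Hence $|v(m,k)-v(0,k)|\le\lambda^{-1}\cdot\delta\cdot\lambda=\delta$. For $(1)$, suppose $|v(m,k)-v(0,k)|<\delta^2$, i.e.\ $|m\omega|\,|2k+m\omega|<\lambda\delta^2=256\gamma\delta^2$. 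Let $t$ be the smaller of $|m\omega|,|2k+m\omega|$ and $T$ the larger; then $T\ge|k|\ge\gamma-1$ unless both are small, and one checks $T\ge\max(|k|,1)/2$ always (if $t=|m\omega|$ then $T=|2k+m\omega|\ge 2|k|-|m\omega|=2|k|-t$; if $t\ge|k|$ then $|m\omega|\ge|k|$ too, forcing $T\ge|k|$, contradiction with $t$ minimal unless $|k|$ small). So $t< 256\gamma\delta^2/T$. When $\gamma\le4$ one has $T\ge\max(|k|,1)/2\ge$ a constant, giving $t\le 256\gamma\delta^2/(\text{const})\le 32\delta$ after using $\delta<1/16$ and $\gamma\le4$; when $\gamma>4$ one has $|k|\ge\gamma-1\ge\gamma/2$ hence $T\ge\gamma/4$, giving $t<256\gamma\delta^2/(\gamma/4)=1024\delta^2$; the constant $256$ in the statement is obtained by being slightly more careful about which factor is large. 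These are the routine calculations I would carry out in detail.

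For parts $(3)$ and $(4)$ I would combine part $(2)$, part $(1)$, and the geometry of Lemma~\ref{lem:7.setKs}. For $(3)$: if $\min(|m\omega|,|2k+m\omega|)<32(\delta^{(s-1)}_0)^{1/2}$, consider the two cases. If $|2k+m\omega|$ is the small factor, then $k+m\omega$ is within $16(\delta^{(s-1)}_0)^{1/2}$ of $-k$, and since $k\in\IR\setminus\bigcup(k^-_{m',s},k^+_{m',s})$ and the half-widths $\sigma(m')$ defining the $s-1$ level exceed $32(\delta^{(s-1)}_0)^{1/6}\gg 16(\delta^{(s-1)}_0)^{1/2}$, reflecting through $0$ and accounting for the nesting in Lemma~\ref{lem:7.setKs}(2) keeps $k+m\omega$ outside the $(s-1)$-exclusion set; here one uses that the centers $k^\pm_{m',s-1}=-m'\omega/2\pm\sigma(m')$ are symmetric under $k'\mapsto-k'$ via $k^\pm_{-m',s-1}=-k^\mp_{m',s-1}$. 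If instead $|m\omega|$ is the small factor, $m\omega$ itself is tiny (at most $32(\delta^{(s-1)}_0)^{1/2}$, far smaller than any gap width $64(\delta^{(s-2)}_0)^{1/6}$ separating distinct components in Lemma~\ref{lem:7.setKs}(3)), so $k+m\omega$ stays in the \emph{same} connected component as $k$; this also handles the ``moreover'' and ``in particular'' clauses, the latter via part $(1)$ with $\delta^2=\delta^{(s-1)}_0$ giving the factor-min bound $\le 32(\delta^{(s-1)}_0)^{1/2}$. For $(4)$: if $0<|m_1-m_2|\le 12R^{(s)}$ and $|k_1-k|<(\delta^{(s-1)}_0)^{1/2}$, suppose for contradiction that both $|v(m_j,k_1)-v(0,k_1)|<(\delta^{(s-1)}_0)^{1/2}$; then by part $(1)$ (applied with $\delta^2=(\delta^{(s-1)}_0)^{1/2}$, so $\delta=(\delta^{(s-1)}_0)^{1/4}$) each $\min(|m_j\omega|,|2k_1+m_j\omega|)\le 256(\delta^{(s-1)}_0)^{1/4}$, so $k_1\in\IR\setminus\bigcup(k^-_{m',?})$-type reasoning forces $k_1+m_1\omega$ and $k_1+m_2\omega$ both close to $-k_1$ (the case where $|m_j\omega|$ is small is excluded because $k_1\approx k$ lies outside all $\mathfrak{J}$-type intervals, so $|2k_1+m_j\omega|$ cannot be large while $|m_j\omega|$ is small for $0<|m_j|\le 12R^{(s)}$ — this uses \eqref{eq:diphnores}); subtracting gives $|(m_1-m_2)\omega|\le 512(\delta^{(s-1)}_0)^{1/4}$, contradicting the Diophantine bound \eqref{eq:diphnores} which yields $|(m_1-m_2)\omega|\ge(\delta^{(s-1)}_0)^{1/16}\gg(\delta^{(s-1)}_0)^{1/4}$ since $0<|m_1-m_2|\le 12R^{(s)}\le 48R^{(s)}$.

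The main obstacle I anticipate is part $(4)$ and the ``same component'' assertion in $(3)$: keeping careful track of exactly which of the two factors in \eqref{eq:Aplan1} is the small one in each sub-case, and verifying that the window $(\delta^{(s-1)}_0)^{1/2}$ around $k$ is genuinely smaller than every relevant gap width $\sigma(m')\sim(\delta^{(\cdot)}_0)^{1/6}$ and separation $64(\delta^{(\cdot)}_0)^{1/6}$ from Lemma~\ref{lem:7.setKs}, given that $R^{(s)}$ is polynomially large in $(\delta^{(s-1)}_0)^{-1}$ and the exponents $1/16$, $1/6$, $1/4$, $1/2$ must be compared via \eqref{eq:A.1A} and the monotonicity $\delta^{(s)}_0<(\delta^{(s-1)}_0)^8$ from Remark~\ref{rem:3.Rs}. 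The exponent arithmetic is elementary but must be done carefully to ensure the contradiction in $(4)$ actually closes.
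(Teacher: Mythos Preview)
Your approach to parts $(1)$--$(3)$ is essentially the paper's: the factorization \eqref{eq:Aplan1}, the AM-GM step $\min(|m\omega|,|2k+m\omega|)\le\lambda^{1/2}\delta$ for part $(1)$, the triangle inequality for part $(2)$, and for part $(3)$ the observation that a translation by $\le 32(\delta^{(s-1)}_0)^{1/2}$ (either of $k$ directly, or of $-k$ via the symmetry $k^\pm_{-m',s}=-k^\mp_{m',s}$) cannot cross the boundary of the $(s{-}1)$-intervals because the shrinkage from level $s$ to level $s{-}1$ is $64(\delta^{(s-1)}_0)^{1/2}$.

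There is, however, a genuine gap in your plan for part $(4)$. You write that ``the case where $|m_j\omega|$ is small is excluded because \ldots\ for $0<|m_j|\le 12R^{(s)}$''. But the hypothesis is only $0<|m_1-m_2|\le 12R^{(s)}$; there is no constraint on the individual $|m_j|$. The vectors $m_1,m_2$ may both be enormous, and then $|m_j\omega|$ can certainly be small (the Diophantine bound \eqref{eq:diphnores} gives no useful lower bound on $|m_j\omega|$ once $|m_j|>48R^{(s)}$). Consequently your reduction to the single case ``both $|2k_1+m_j\omega|$ small'' fails, and the subtraction you sketch does not in general yield $|(m_1-m_2)\omega|$ small. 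The critical sub-case your argument misses is the \emph{mixed} one: say $|m_1\omega|$ small and $|2k+m_2\omega|$ small. Subtracting here gives $|2k+(m_2-m_1)\omega|$ small, not $|(m_1-m_2)\omega|$ small; the Diophantine condition alone does not close the contradiction. What does close it is precisely the hypothesis $k\notin\bigcup_{0<|m'|\le 12R^{(s)}}(k^-_{m',s},k^+_{m',s})$: smallness of $|2k+(m_2-m_1)\omega|$ with $|m_2-m_1|\le 12R^{(s)}$ forces $k\in(k^-_{m_2-m_1,s},k^+_{m_2-m_1,s})$, a contradiction. This is exactly the case analysis the paper carries out (and it is also why the paper proves the bound first at $k_1=k$, where the complement hypothesis holds, and only then perturbs to nearby $k_1$ using $|v(m_j,k_1)-v(m_j,k)|\lesssim(\delta^{(s-1)}_0)^{1/2}$). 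Your instinct that the sub-case bookkeeping is the main obstacle is correct; you just need to keep all four sub-cases and invoke the assumption on $k$ in the mixed ones.
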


\begin{proof}
$(1)$ One has $|v(m,k) - v(0,k)| = \lambda^{-1}|m \omega| \cdot |2 k + m \omega|$. Hence $\min (\lambda^{-1/2} |m\omega|, \lambda^{-1/2} |2k + m\omega|) < \delta$. So, if $\gamma \le 4$, $\lambda \le 2^{10}$ and the claim holds. Assume now $\gamma > 4$. Assume for instance $\lambda^{-1/2} |2k + m\omega| < \delta$. In this case, $|m\omega| > 2|k| - \delta \lambda^{1/2} > 2 \gamma - 2 - \gamma^{1/2} > \gamma$. Hence, $|2k + m\omega| < \frac{\lambda}{\gamma} |v(m,k) - v(0,k)| < 256 \delta^2$. If $\lambda^{-1/2} |m\omega| < \delta$, then $|2k + m\omega| > 2|k| - \delta \lambda^{1/2} > 2\gamma - 2 - \gamma^{1/2} > \gamma$. As before it follows that $|m\omega| < 256 \delta^2$.

$(2)$ Assume that $|m \omega| < \delta$. Then, since $\lambda > 4 \max(|k|, 1)$, one has
$$
|v(m,k) - v(0,k)| \le \lambda^{-1} (2|k| + \delta)\delta < \delta.
$$
This verifies $(2)$ in this case. The verification in the second possible case is similar.

$(3)$ Assume that $|m \omega| < 32(\delta^{(s-1)}_0)^{1/2}$, $k \in \IR \setminus \bigcup_{0< |m'| \le 12 R^\es} (k_{m',s}^-, k_{m',s}^+)$. Recall that $\sigma(m') \ge 32(\delta^\esone_0)^{1/6} > 32 (\delta^\esone_0)^{1/2}$ if $0 < |m'| \le 12 R^\es$. Hence,
$$
k + m \omega \in \IR \setminus \bigcup_{0 < |m'| \le 12 R^\es} (k_{m',s}^- + 32(\delta^{(s-1)}_0)^{1/2}, k_{m',s}^+ - 32(\delta^{(s-1)}_0)^{1/2}) \subset \IR \setminus \bigcup_{0 < |m'| \le 12 R^\es} (k_{m',s-1}^-, k_{m',s-1}^+).
$$
Assume that $|(k + m \omega) - (-k)| \le 32(\delta^\esone_0)^{1/2}$. Since $-\bigcup_{0 < |m'| \le 12 R^\es} (k_{m',s}^-, k_{m',s}^+) = \bigcup_{0 < |m'| \le 12 R^\es} (k_{m',s}^-, k_{m',s}^+)$, one has $-k \in \IR \setminus \bigcup_{0 < |m'| \le 12 R^\es} (k_{m',s}^-, k_{m',s}^+)$. Therefore, $k + m \omega \in \IR \setminus \bigcup_{0 < |m'| \le 12 R^\es} (k_{m',s-1}^-, k_{m',s-1}^+)$.  Moreover, if $\sgn (k+m\omega)=\sgn k$, then $k, k + m\omega$ belong to the same connected component of $\IR \setminus \bigcup_{|m'| \le 12 R^\es} (k_{m',s-1}^-, k_{m',s-1}^+)$.
This finishes the proof of the first statement in $(3)$. The last statement in $(3)$ follows from the first one with part $(1)$ of the current lemma taken into account.

$(4)$ Recall that due to the Diophantine condition, $|(m_2 - m_1) \omega| \ge a_0 (1 + |m_2 - m_1|)^{-b_0} \ge \sigma (m_2 - m_1)$. Let $k \in \IR \setminus \bigcup_{0 < |n| \le 12 R^\es} (k^-_{n,s}, k^+_{n,s})$. We prove $(4)$ first for $k_1 = k$. Assume that $\max_j |v(m_j,k) - v(0,k)| < 9 (\delta^{(s-1)}_0)^{1/2}$. Then, due to part $(1)$, one has $\min (|m_j \omega|, |2 k + m_j \omega|) \le 32 \cdot 3 (\delta^{(s-1)}_0)^{1/4} < 128 (\delta^{(s-1)}_0)^{1/4}$, $j = 1, 2$. If $|m_j \omega| < 128 (\delta^{(s-1)}_0)^{1/4}$, $j = 1, 2$, then $256 (\delta^{(s-1)}_0)^{1/4} > |(m_2 - m_1) \omega| > \sigma (m_2 - m_1)$. Due to \eqref{eq:7K.1}, this implies $|m_2 - m_1| > 12 R^\es$, contrary to the assumption in $(4)$. Similarly, if $|2 k + m_j \omega| < 128 (\delta^{(s-1)}_0)^{1/4}$, $j = 1, 2$, then $|m_2 - m_1| > 12 R^\es$. Assume now that, for instance, $|m_1 \omega| < 128 (\delta^{(s-1)}_0)^{1/4}$ and $|2 k + m_2 \omega| < 128 (\delta^{(s-1)}_0)^{1/4}$. Then, $|2 k + (m_2 - m_1) \omega| < 256 (\delta^{(s-1)}_0)^{1/4} < \sigma ((m_2 - m_1))$, since $|m_2 - m_1| \le 12 R^\es$. Hence,
\begin{equation}\nn
\begin{split}
k \in \left( -\frac{m' \omega}{2} - 128 (\delta^{(s-1)}_0)^{1/4}, -\frac{m' \omega}{2} + 128 (\delta^{(s-1)}_0)^{1/4} \right) \subset \\
\left( -\frac{m' \omega}{2} - \frac{\sigma (m')}{2}, -\frac{m' \omega}{2} + \frac{\sigma(m')}{2} \right) \subset ( k_{m',s}^-, k_{m',s}^+ )
\end{split}
\end{equation}
with $m' = m_2 - m_1$. Combined with the assumption $k \in \IR \setminus \bigcup_{0 < |m'| \le 12 R^\es} (k_{m',s}^-, k_{m',s}^+)$, this implies $|m'| > 12 R^\es$. This contradicts $|m_2 - m_1| \le 12 R^\es$. Let now $|k_1 - k| < (\delta^{(s-1)}_0)^{1/2}$. It follows from the above arguments that in any event $|k+m_j\omega| \le \lambda + 1$. In particular, $\lambda^{-1} (|k+m_j\omega| + |k_1+m_j\omega|) \le 5$. Hence,
\begin{equation}\nn
\begin{split}
|v(m_j,k_1) - v(m_j,k)| = \lambda^{-1}|k - k_1|(|k+m_j\omega + k_1 + m_j \omega|) \le 5(\delta^{(s-1)}_0)^{1/2}, \\
|v(0,k_1) - v(0,k)| = \lambda^{-1}|k - k_1|(|k| + |k_1|) \le 3(\delta^{(s-1)}_0)^{1/2},
\end{split}
\end{equation}
and the statement follows.
\end{proof}

\begin{remark}\label{rem:11.1}
$(1)$ For any $\La \subset \IZ^\nu$, the matrix $H_{\La,\ve,k}$ obeys conditions \eqref{eq:2-2}--\eqref{eq:2-4} from Section~\ref{sec.3}. Due to statement $(1)$ of Lemma~\ref{lem:A.2}, $H_{\La,\ve,k} \in \cN^{(1)}(n_0, \Lambda, \delta'_0)$ with $\delta'_0 = \delta'_0(\La, n_0, k) := \lambda^{-1} [ \min_{m \in \La\setminus \{n_0\}} \min (|(m - n_0) \omega|, |2 k + (m - n_0) \omega|)]^2$, provided $k \notin \frac{\omega}{2} \IZ^\nu$ and $\ve$ is sufficiently small.

$(2)$ For the rest of this work we use the notation $\gamma$, $\lambda$ without reference to $\gamma \ge 1$, $\gamma-1 \le |k|\le \gamma$, $\lambda = 256\gamma$. It is convenient for technical reasons not to assume here that $\gamma$ is an integer.
\end{remark}

We will repeatedly use the following basic properties of the matrices $H_{\La,\ve,k}$.

\begin{lemma}\label{lem:basicshiftprop}
Let $\La \subset \zv$, $m \in \zv$ be arbitrary.
\begin{itemize}

\item[(1)] Consider the map $S : \La \rightarrow m + \La$, $S(n) = n + m$, $n \in \La$. Given $\psi(\cdot) \in \IC^\La$, set $S^*(\psi)(n') = \psi(n' - m)$, $n' \in (m + \La)$. The map $S^* : \psi \rightarrow S^*(\psi)$ is a unitary operator, which conjugates $H_{m + \La, \ve, k}$ with $H_{\La, \ve, k + m \omega}$.

\item[(2)] Consider the map $\cS : \La \rightarrow -\La$, $\cS(n) = -n$, $n \in \La$. Given $\psi(\cdot) \in \IC^\La$, set $\cS^*(\psi)(n') = \psi(-n')$, $n' \in -\La$. The map $\cS^* : \psi \rightarrow \cS^*(\psi)$ is a unitary operator, which conjugates $H_{\La, \ve, k}$ with $\overline{H_{-\La, \ve, -k}}$.

\item[(3)] Using the notation from $(1)$ and $(2)$, one has for any $n_0 \in \La$, $k \notin \frac{\omega}{2} \IZ^\nu$ and sufficiently small $\ve$,
\begin{equation}\label{eq:11QEbasic}
\begin{split}
Q(S(n_0), S(\La); \ve, k, E) & = Q(n_0, \La; \ve, k + m \omega, E), \\
E^{(1)}(S(n_0), S(\La); \ve, k) & = E^{(1)}(n_0, \La; \ve, k + m \omega), \\
Q(\cS(n_0), \cS(\La); \ve, k, E) & = \overline{Q(n_0, \La; \ve, -k, E)}, \\
E^{(1)}(\cS(n_0), \cS(\La); \ve, k) & = E^{(1)}(n_0, \La; \ve, -k).
\end{split}
\end{equation}

\end{itemize}

Assume that $H_{\La,\ve,k}$ with some given subsets $\cM^{(s')} \subset \La$, $s' = 1, \dots, s-1$, $\La^{(s')}(m) \subset \La$, $m \in \cM^{(s')}$ belongs to $\cN^{(s)}(n^\zero, \La, \delta_0)$ {\rm (}resp., $\hle \in GSR^{[\mathfrak{s}^{(\ell+1)}]}\bigl( \mathfrak{m}^{(\ell+1)}, \La; \delta_0,\mathfrak{t}^{(\ell+1)}\bigr)${\rm )}. Let $m_0$ be arbitrary. Then,
\begin{itemize}

\item[(4)] The matrix $H_{\La - m_0, \ve, k + m_0 \omega}$ with the subsets $\cM^{(s')} - m_0 \subset \La - m_0$, $s' = 1, \dots, s-1$, $\La^{(s')} (m - m_0) := \La^{(s')} (m) - m_0 \subset \La - m_0$, $m \in \cM^{(s')}$ belongs to $\cN^{(s)} (n^\zero - m_0, \La - m_0, \delta_0)$ {\rm (}resp., $GSR^{[\mathfrak{s}^{(\ell+1)}]}\bigl( \mathfrak{m}^{(\ell+1)}, \La; \delta_0, \mathfrak{t}^{(\ell+1)} \bigr)${\rm )}. Furthermore, let $E^{(s)}(n^\zero, \La; \ve, k)$ {\rm (}resp., $ E(m^\pm, \La; \ve, k)${\rm )} be defined as in Proposition~\ref{prop:4-4} {\rm (}resp., Theorem~\ref{th:6-4FIN}{\rm )} with $H_{\La, \ve, k}$ in the role of $\hle$. Then, $E^{(s)}(n^\zero, \La; \ve, k) = E^{(s)}(n^\zero - m_0, \La - m_0; \ve, k + m_0 \omega)$ {\rm (}resp., $E(m^+, \La; \ve, k) = E(m^+ - m_0, \La - m_0; \ve, k + m_0 \omega)${\rm )}.

\item[(5)] The matrix $H_{-\La, \ve, -k}$ with the subsets $-\cM^{(s')} \subset -\La$, $s' = 1, \dots, s-1$, $\La^{(s')}(-m) := -\La^{(s')}(m) \subset -\La$, $m \in \cM^{(s')}$ belongs to $\cN^{(s)}(-n^\zero, -\La, \delta_0)$ {\rm (}resp., $\hle \in GSR^{[\mathfrak{s}^{(\ell+1)}]}\bigl( \mathfrak{m}^{(\ell+1)}, \La; \delta_0,\mathfrak{t}^{(\ell+1)}\bigr)${\rm )}. Furthermore, $E^{(s)}(n^\zero, \La; \ve, k) = E^{(s)}(-n^\zero, -\La; \ve, -k)$ {\rm (}resp., $E(m^\pm, \La; \ve, k) = E(-m^\pm, -\La; \ve, -k)${\rm )}.

\end{itemize}
\end{lemma}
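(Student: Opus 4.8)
The plan is to establish parts (1) and (2) by direct inspection of the matrix entries \eqref{eq:7-5-7}, and then to obtain everything else from these two conjugation identities together with the inductive structure of the classes $\cN^{(s)}$ and $GSR^{[\cdots]}$.

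For part (1): $S^*$ is manifestly a bijection of $\IC^\La$ onto $\IC^{m+\La}$ preserving $\ell^2$-norms, hence unitary. The conjugation claim reduces to checking $H_{m+\La,\ve,k}\bigl(S(n'),S(n'')\bigr)=H_{\La,\ve,k+m\omega}(n',n'')$ for all $n',n''\in\La$: on the diagonal $v(n'+m;k)=\lambda^{-1}\bigl((n'+m)\omega+k\bigr)^{2}=\lambda^{-1}\bigl(n'\omega+(k+m\omega)\bigr)^{2}=v(n';k+m\omega)$, and off the diagonal $\ve\lambda^{-1}c\bigl((n'+m)-(n''+m)\bigr)=\ve\lambda^{-1}c(n'-n'')$. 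Part (2) is analogous: $\cS^*$ is a norm-preserving bijection of $\IC^\La$ onto $\IC^{-\La}$, and using $\overline{c(n)}=c(-n)$ from \eqref{eq:7-5-4} together with the reality of $v$ one gets $H_{-\La,\ve,-k}\bigl(\cS(n'),\cS(n'')\bigr)=\overline{H_{\La,\ve,k}(n',n'')}$ for all $n',n''$, which is exactly the asserted conjugation with $\overline{H_{-\La,\ve,-k}}$. Here, and in all reflection statements below, one takes $\ve\in\IR$ (consistent with the normalization $\ve=\lambda^{-1}\epsilon$); no such restriction is needed for the translation statements, where the identities persist for complex $\ve$ by analyticity.

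Part (3) is obtained by transporting the construction of Proposition~\ref{prop:4-4} through these conjugations. By Remark~\ref{rem:11.1}(1), for $k\notin\frac{\omega}{2}\IZ^\nu$ and $\ve$ small one has $H_{\La,\ve,k}\in\cN^{(1)}(n_0,\La,\delta_0')$, so $K^{(1)},Q^{(1)},E^{(1)}$ are defined; the translated and reflected matrices lie in the corresponding classes as well, since the substitutions $k\mapsto k+m\omega$ and $k\mapsto-k$ keep us off $\frac{\omega}{2}\IZ^\nu$ and leave $\delta_0'$ unchanged (distances $|m-n|$ and the quantities governing $\delta_0'$ are invariant under the relabelings). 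Because $S^*$ conjugates $H_{(m+\La)\setminus\{S(n_0)\},\ve,k}$ with $H_{\La\setminus\{n_0\},\ve,k+m\omega}$, the resolvent entries satisfy $(E-H_{(m+\La)\setminus\{S(n_0)\},\ve,k})^{-1}\bigl(S(m'),S(n')\bigr)=(E-H_{\La\setminus\{n_0\},\ve,k+m\omega})^{-1}(m',n')$; since the off-diagonal entries of $h$ are translation-invariant, the defining sum gives $Q(S(n_0),S(\La);\ve,k,E)=Q(n_0,\La;\ve,k+m\omega,E)$, and the uniqueness of $E^{(1)}$ as the solution of $E=v(\mathrm{pt})+Q$, together with $v(S(n_0);k)=v(n_0;k+m\omega)$, yields the identity for $E^{(1)}$. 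The reflection identities are proved the same way, now picking up a complex conjugation from part (2): for real $E$ one has $K^{(1)}(\cS m',\cS n',\cS\La\setminus\{\cS n_0\};\ve,k,E)=\overline{K^{(1)}(m',n',\La\setminus\{n_0\};\ve,-k,E)}$ and $h(\cS m_0,\cS m';\ve)=\overline{h(m_0,m';\ve)}$, hence $Q(\cS(n_0),\cS(\La);\ve,k,E)=\overline{Q(n_0,\La;\ve,-k,E)}$, and since the relevant root $E^{(1)}$ is real, $E^{(1)}(\cS(n_0),\cS(\La);\ve,k)=E^{(1)}(n_0,\La;\ve,-k)$.

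Finally, parts (4) and (5) follow by induction on the complexity level of the class to which $H_{\La,\ve,k}$ belongs, ranging over Definitions~\ref{def:4-1}, \ref{def:7-6}, \ref{def:9-6}, \ref{def:6-6UP}, \ref{def:9-6general}, \ref{def:6-6UPFIN}. The key observation is that every clause in these definitions is built from three kinds of data, each transported by $S$ (with $k\mapsto k+m_0\omega$) and by $\cS$ (with $k\mapsto-k$): (i) the matrix entries $h(m,n;\ve,k)$, transported by parts (1)--(2); (ii) purely metric/combinatorial data --- the inclusions $m+B(R^{(s')})\subset\La^{(s')}(m)$, the disjointness of the families $\cM^{(s')}$ and $\La^{(s')}(m)$, the conditions $D(\cdot;\La)\in\mathcal{G}_{\La,T,\kappa_0}$, and the trajectory classes $\Ga_{D,T,\kappa_0}(\La,\mathfrak{R})$ --- all of which depend only on the $\IZ^\nu$-distances $|m-n|$ and are therefore invariant under the isometries $n\mapsto n+m_0$ and $n\mapsto-n$; and (iii) the separation inequalities among the $E^{(s')}(m,\La^{(s')}(m);\ve,k)$ and the positivity and gap inequalities for the $Q$-functions, which transport by the inductive hypothesis (in the reflection case using that all these functions are real-valued at real $\ve$, so the conjugation of part (2) is invisible at that level). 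Hence the translated (resp.\ reflected) matrix, equipped with the translated (resp.\ reflected) subsets, lies in the translated (resp.\ reflected) class, and the functions $E^{(s)}(n^\zero,\La;\ve,k)$ and $E(m^\pm,\La;\ve,k)$ --- constructed in Proposition~\ref{prop:4-4} and Theorem~\ref{th:6-4FIN} from precisely this data via the Schur complement formula and the implicit-function lemmas --- are transported accordingly, giving the stated identities. The main obstacle is purely organizational: the classes are defined through a long nested chain of inductive definitions, so one must set up the induction carefully and verify invariance of each clause in turn; no individual step is hard.
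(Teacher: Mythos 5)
Your proposal is correct and follows essentially the same route as the paper: direct entry-by-entry verification of the unitary conjugations for (1)--(2), transport of $Q$ and $E^{(1)}$ through the conjugation for (3), and an induction over the nested class definitions for (4)--(5) resting on the same three observations the paper uses (matrix entries transport by (1)--(2), metric/combinatorial clauses are isometry-invariant, and the eigenvalue-separation inequalities transport by the inductive hypothesis together with uniqueness of the analytic eigenvalue branch through $v(\cdot,k)$ at $\ve=0$). Your explicit three-way categorization of the data to be transported is a slightly cleaner organization of the same argument, not a different method.
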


\begin{proof}
$(1)$ Both statements follow from the definition of the matrices $H_{\La, \ve, k}$.

$(2)$ The statements follow from the definition of the matrices $H_{\La, \ve, k}$ and the symmetry $v(n, k) = v(-n, -k)$, $n \in \zv$, $k \in \IR$.

$(3)$ For $k \notin \frac{\omega}{2} \IZ^\nu$, any $\La'$, $n'_0 \in \La'$, and sufficiently small $|\ve|$, $H_{\La', \ve, k} \in \cN^{(1)}(n'_0, \Lambda', \delta'_0)$ with $\delta'_0 = \delta'_0(\La', n'_0, k) := \lambda^{-1} [\min_{m \in \La'\setminus \{n_0\}} \min (|(m - n'_0) \omega|, |2 k + (m - n'_0) \omega|)]^2$. In particular, all functions in \eqref{eq:11QEbasic}  are well-defined for sufficiently small $|\ve|$. The identities in \eqref{eq:11QEbasic} follow from Proposition~\ref{prop:4-4} and $(1)$, $(2)$ of the present lemma.

$(4) \& (5)$ Assume that $H_{\La, \ve, k}$ with some given subsets $\cM^{(s')} \subset \La$, $s' = 1, \dots, s-1$, $\La^{(s')}(m) \subset \La$, $m \in \cM^{(s')}$ belongs to $ \cN^{(s)} (n^\zero, \La, \delta_0)$. Let $m_0$ be arbitrary. We will verify that $H_{\La - m_0, \ve, k + m_0 \omega} \in \cN^{(s)}(n^\zero - m_0, \La - m_0, \delta_0)$. The proof goes by induction over $s = 1, 2, \dots$. Note first that $v(n,k)=  v(n - m_0, k + m_0 \omega)$ for any $k, n, m_0$. Furthermore, due to part $(1)$, $H_{\La, \ve, k}$ and $H_{\La - m_0, \ve, k + m_0 \omega}$ are unitarily conjugate. In particular, these matrices have the same eigenvalues. Secondly, recall that since $H_{\La, \ve, k} \in \cN^{(s)} (n^\zero, \La, \delta_0)$, $E^{(s)}(n^\zero, \La; k, \ve)$ is the only eigenvalue of $H_{\La, \ve, k}$ which is analytic in $\ve$ and obeys $E^{(s)} (n^\zero, \La; k, 0) = v(n^\zero, k)$. If $H_{\La - m_0, \ve, k + m_0 \omega} \in \cN^{(s)} (n^\zero - m_0, \La - m_0, \delta_0)$, then $E^{(s)} (n^\zero - m_0, \La - m_0; k + m_0 \omega, \ve)$ is the only eigenvalue of $H_{\La - m_0, \ve, k + m_0 \omega}$ that is analytic in $\ve$ and obeys $E^{(s)} (n^\zero - m_0, \La - m_0; k + m_0 \omega, 0) = v(n^\zero - m_0, k + m_0 \omega) = v(n^\zero, k)$. Since the matrices have the same eigenvalues, these two functions are equal. We use these remarks for an induction argument over $s = 1, 2, \dots$. If $H_{\La, \ve, k} \in \cN^{(1)} (n^\zero, \La, \delta_0)$, then $H_{\La - m_0, \ve, k + m \omega} \in \cN^{(1)} (n^\zero - m_0, \La - m_0, \delta_0)$; see Definition~\ref{def:4-1}. Assume that the statement holds for $s' = 1, 2, \dots, s-1$ in the role of $s$. Clearly, conditions $(a)$, $(b)$, $(d)$, $(f)$ of Definition~\ref{def:4-1} hold for $H_{\La - m_0, \ve, k + m_0 \omega}$ since they hold for $H_{\La, \ve, k}$. Condition $(c)$ of Definition~\ref{def:4-1} holds due to the inductive assumption applied to each $H_{\La^\ar, \ve, k}$ with $r \le s-1$. Due to the previous remarks, we see that condition $(e)$ of Definition~\ref{def:4-1} holds for $H_{\La - m_0, \ve, k + m_0 \omega}$ since it holds for $H_{\La, \ve, k}$. This finishes the proof of $(4)$ in case $H_{\La, \ve, k} \in \cN^{(s)} (n^\zero, \La, \delta_0)$. The proof of $(5)$ in case $H_{\La,\ve,k} \in \cN^{(s)} (n^\zero, \La, \delta_0)$ is completely similar $($ of course, one should again use the fact that the matrices have the same eigenvalues and are self-adjoint $)$. The proof of both $(4)$ and $(5)$ in case $\hle \in GSR^{[\mathfrak{s}^{(\ell+1)}]}\bigl( \mathfrak{m}^{(\ell+1)}, \La; \delta_0,\mathfrak{t}^{(\ell+1)}\bigr)$ is completely similar.
\end{proof}

Given two sets $\La', \La'' \subset \IZ^\nu$, we introduce the following relation:
\begin{equation}\label{eq:7RA.1}
\La' \between \La'' \quad \text { if $\La' \cap \La'' \neq \emptyset$ and $\La' \cap (\IZ^\nu \setminus\La'') \neq \emptyset$}.
\end{equation}

Set
\begin{equation}\label{eq:A.1}
\begin{split}
\La^\one_k (0) = B(2 R^\one), \quad k \in \IR \setminus \bigcup_{0 < |m'| \le 12 R^\one} (k_{m',0}^-, k_{m',0}^+), \\
\cM^{(1)}_{k,1} = \{ m : |v(m,k) - v(0,k)| \le \delta_0/16\}, \quad k \in \IR \setminus \bigcup_{0 < |m'| \le 12 R^\two} (k_{m',1}^-, k_{m',1}^+), \\
\La^\one_k (m) = m + \La^\one_{k + m \omega} (0), \quad m \in \cM^{(1)}_{k,1}, \\
\La^\two_k (0) = B(3 R^\two) \setminus \Bigl( \bigcup_{m' \in \cM^{(1)}_{k,1} : \La^\one_{k} (m') \between B(3 R^\two))} \La^\one_{k} (m') \Bigr), \\
\cM^{(2)}_{k,2} = \{ m : |v(m,k) - v(0,k)| \le 3 \delta^\one_0/4 \}, \quad k \in \IR \setminus \bigcup_{0 < |m'| \le 12 R^{(3)}} (k_{m',2}^-, k_{m',2}^+), \\
\La^\two_k (m) = m + \La^\two_{k + m \omega} (0), \quad m \in \cM^{(2)}_{k,2}, \quad k \in \IR \setminus \bigcup_{0 < |m'| \le 12 R^{(3)}} (k_{m',2}^-, k_{m',2}^+), \\
\cM^{(s-1)}_{k,s-1} = \{ m : |v(m,k) - v(0,k)| \le 3 \delta^{(s-2)}_0/4\}, \quad k \in \IR \setminus \bigcup_{0 < |m'| \le 12 R^{(s-1)}} (k_{m',s-1}^-, k_{m',s-1}^+),\\
\cM^{(s')}_{k,s-1} = \{ m : |v(m,k) - v(0,k)| \le (3\delta^{(s'-1)}_0/4) - \sum_{s' < s'' \le s-1} \delta^{(s''-1)}_0, \\
m \notin \bigcup_{s' < s'' \le s-1} \bigcup_{m'' \in \cM^{(s'')}_{k,s-1}} \La^{(s'')}_k (m'') \}, \quad 1 < s' \le s-2, \\
\cM^{(1)}_{k,s-1} = \{ m : |v(m,k) - v(0,k)| \le (\delta^{(0)}_0/16) - \sum_{1 < s'' \le s-1} \delta^{(s''-1)}_0, \\
m \notin \bigcup_{1 < s'' \le s-1} \bigcup_{m'' \in \cM^{(s'')}_{k,s-1}} \La^{(s'')}_k(m'')\}, \quad k \in \IR \setminus \bigcup_{0 < |m'| \le 12 R^{(s-1)}} (k_{m',s-1}^-, k_{m',s-1}^+), \\
\La^{(s')}_k(m) = m + \La^{(s')}_{k + m \omega} (0), \quad m \in \cM^{(s')}_{k,s-1}, \quad k \in \IR \setminus \bigcup_{0 < |m'| \le 12 R^{(s-1)}} (k_{m',s-1}^-, k_{m',s-1}^+).
\end{split}
\end{equation}
\begin{equation}\nn
\begin{split}
\La^\es_k (0) = B(3 R^\es) \setminus \Bigl( \bigcup_{r = 1, \dots, s-1} \bigcup_{m' \in \cM^{(r)}_{k,s-1} : \La^{\ar}_{k} (m') \between B(3 R^\es))} \La^\ar_{k} (m') \Bigr), \\
k \in \IR \setminus \bigcup_{0 < |m'| \le 12 R^{(s)}} (k_{m',s-1}^-, k_{m',s-1}^+).
\end{split}
\end{equation}

\begin{remark}\label{rem:7.1oinout}
It follows from the definitions in \eqref{eq:A.1} that

$(a)$ $0 \in \cM^{(s-1)}_{k,s-1}$.

$(b)$ $\cM^{(s')}_{k,s-1} \cap \cM^{(s'')}_{ k,s-1} = \emptyset$ for any $s' < s'' \le s-1$.

$(c)$ Due to \eqref{eq:A.1}, for any $r$, we have $B(2 R^\ar) \subset \La^{(r)}_k(0) \subset B(3 R^\ar)$. In particular, $\La^{(s-1)}_k(0) \subset \La^{(s)}_k(0)$. Furthermore, we use the notation $\La^{(s')}_{k}(m)$ and not $\La^{(s')}_{k,s-1}(m)$. This is because if $m \in \cM^{(s')}_{k,s_1}$ for some $s_1 < s$, the set $m + \La^{(s')}_{k + m \omega}(0)$ is still the same.

$(d)$ If $s' < s-1$, $m \in \cM^{(s')}_{k,s-1}$, then
$$
(3 \delta^{(s')}_0/4) - \sum_{s'+1 < s'' \le s-1} \delta^{(s''-1)}_0 < |v(m,k) - v(0,k)| \le (3 \delta^{(s'-1)}_0/4) - \sum_{s' < s'' \le s-1} \delta^{(s''-1)}_0.
$$
Furthermore, it follows from the definition of the set $\cM^{(s-1)}_{k,s-1}$ and part $(4)$ of Lemma~\ref{lem:A.2} that for any  $m \in \cM^{(s-1)}_{k,s-1} \setminus \{0\}$ with $|m| \le 12 R^\es$, we have
$$
(\delta^{(s-1)}_0)^{1/2} < |v(m,k) - v(0,k)| \le 3 \delta^{(s-2)}_0/4.
$$
\end{remark}

\begin{lemma}\label{lem:A.3}
Let $s \ge 2$ and $k \in \IR \setminus \bigcup_{0 < |m'| \le 12 R^\esone} (k_{m',s-1}^-, k_{m',s-1}^+)$. Then,
\begin{itemize}

\item[(1)] If $m_i \in \cM^{(s')}_{k,s-1}$, $1 \le s' \le s-1$, $i = 1, 2$ and $m_1 \neq m_2$, then $|m_1 - m_2| > 12 R^{(s')}$, $\dist(\La^{(s')}_k(m_1), \La^{(s')}_k(m_2)) > 6 R^{(s')}$.

\item[(2)] Assume that for some $m_1, m_2 \in \zv$, $s_1 < s_2$, we have
$$
|v(m_i,k) - v(0,k)| \le (3 \delta^{(s_i-1)}_0/4) - \sum_{s_i < s'' \le s-1} \delta^{(s''-1)}_0, \quad i = 1, 2.
$$
Then,
\begin{equation}\label{eq:11centeratm'state}
|v(m_1 - m_2, k + m_2 \omega) - v(0, k + m_2 \omega)| < 3 \delta^{(s_1-1)}_0/4 - \sum_{s_1 < s'' \le s_2-1} \delta^{(s''-1)}_0.
\end{equation}

\item[(3)] Assume that for every $2 \le s' \le s-1$, the following condition holds:

$(\mathfrak{S}_{s'})$ If $k \in \IR \setminus \bigcup_{0 < |m'| \le 12 R^{(s')}} (k_{m',s'-1}^-, k_{m',s'-1}^+)$, $m_1 \in \cM^{(s_1)}_{k,s'-1}$, $s_1 \le s'-1$, $|m_1| < 12 R^{(s')}$, then either $\La^{(s_1)}_{k} (m_1) \subset \La^{(s')}_k(0)$ or $\La^{(s_1)}_{k} (m_1) \cap \La^{(s')}_k(0) = \emptyset$.

Then, for every $s \ge 2$, the following statement holds. Assume that for some $s_1 \le s-1$, $|m_1| < 12 R^\es$, we have
$$
|v(m_1,k) - v(0,k)| \le (3 \delta^{(s_1-1)}_0/4) - \sum_{s_1 < s'' \le s-1} \delta^{(s''-1)}_0.
$$
Then:

    either $(\alpha)$ $m_1 \in \La^{(s_2)}_k(m_2)$ for some $s_1 < s_2 \le s-1$, $m_2 \in \cM^{(s_2)}_{k,s-1}$,

    or $(\beta)$ $m_1 \in \cM^{(s_1)}_{k,s-1}$ and $\La^{(s_1)}_k(m_1)) \cap \La^{(s_2)}_k(m_2) = \emptyset$ for any $m_2 \in \cM^{(s_2)}_{k,s-1}$ $m_2 \neq m_1$ with $s_1 \le s_2 \le s-1$.

In case $(\alpha)$, one has $m_1 + \La^{(s_1)}_{k + m_1 \omega} (0) \subset \La^{(s_2)}_k (m_2)$.

\item[(4)] The condition $(\mathfrak{S}_{s'})$ holds for each $s' = 2, \dots, s$.

\end{itemize}
\end{lemma}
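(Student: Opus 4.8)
The plan is to prove Lemma~\ref{lem:A.3} by an interlocking induction: parts $(1)$ and $(2)$ are self-contained computations with the Diophantine condition, while parts $(3)$ and $(4)$ are proved simultaneously by induction on $s$, with $(\mathfrak{S}_{s'})$ for $s' \le s$ feeding into the proof of $(3)$ at level $s$, and $(3)$ at level $s$ (together with the construction in \eqref{eq:A.1}) feeding back into $(\mathfrak{S}_{s+1})$, i.e.\ part $(4)$ at level $s+1$.

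First I would dispatch part $(1)$. If $m_1 \ne m_2$ both lie in $\cM^{(s')}_{k,s-1}$, then by Remark~\ref{rem:7.1oinout}$(d)$ each $m_i$ satisfies $|v(m_i,k)-v(0,k)| \le 3\delta_0^{(s'-1)}/4$, so by part $(1)$ of Lemma~\ref{lem:A.2}, $\min(|m_i\omega|,|2k+m_i\omega|) \le 32(3\delta_0^{(s'-1)}/4)^{1/2}$. Running the same case analysis as in the proof of part $(4)$ of Lemma~\ref{lem:A.2} — splitting according to which of $|m_i\omega|$, $|2k+m_i\omega|$ is small, and in the mixed case using that $k$ avoids all $(k^-_{m',s-1},k^+_{m',s-1})$ with $|m'|\le 12R^{(s')}$ — one forces $|m_1-m_2|>12R^{(s')}$ via the Diophantine bound $|(m_1-m_2)\omega|\ge a_0(1+|m_1-m_2|)^{-b_0}$ and the definition \eqref{eq:7K.1} of $\sigma(\cdot)$. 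Since $\La^{(s')}_k(m_i)\subset m_i+B(3R^{(s')})$ by Remark~\ref{rem:7.1oinout}$(c)$, the distance bound $\dist>6R^{(s')}$ follows. Part $(2)$ is the cocycle-type identity: writing $v(n,k)-v(0,k)=\lambda^{-1}(n\omega)(2k+n\omega)$ and $v(m_1-m_2,k+m_2\omega)-v(0,k+m_2\omega)=\lambda^{-1}((m_1-m_2)\omega)(2(k+m_2\omega)+(m_1-m_2)\omega) = \lambda^{-1}((m_1-m_2)\omega)(2k+(m_1+m_2)\omega)$; I would bound the two factors using the two hypotheses on $m_1$ and $m_2$ (handling, as in Lemma~\ref{lem:A.2}, the dichotomy about which factor of each product is small, and using $\lambda>4\max(|k|,1)$), arriving at the claimed estimate with the telescoping sum of $\delta_0^{(s''-1)}$ absorbed correctly.

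For part $(3)$, I would argue by induction on $s$, assuming $(\mathfrak{S}_{s'})$ holds for all $s'\le s$ (this is part $(4)$, which I prove in tandem). Given $m_1$ with $|m_1|<12R^{(s)}$ and $|v(m_1,k)-v(0,k)|\le 3\delta_0^{(s_1-1)}/4 - \sum_{s_1<s''\le s-1}\delta_0^{(s''-1)}$: by the construction in \eqref{eq:A.1}, $m_1$ is a candidate for membership in $\cM^{(r)}_{k,s-1}$ for some $r$, and the greedy/nested definition of the sets $\cM^{(r)}_{k,s-1}$ puts $m_1$ either into some $\cM^{(s_2)}_{k,s-1}$ with $s_2\ge s_1$, or into $\La^{(s_2)}_k(m_2)$ for some $m_2\in\cM^{(s_2)}_{k,s-1}$, $s_2>s_1$, because the only reason $m_1$ is excluded from $\cM^{(s')}_{k,s-1}$ for $s'\le s_1$ is that it already lies in such a $\La^{(s_2)}_k(m_2)$. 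In case $(\alpha)$, to upgrade ``$m_1\in\La^{(s_2)}_k(m_2)$'' to ``$m_1+\La^{(s_1)}_{k+m_1\omega}(0)\subset\La^{(s_2)}_k(m_2)$'', I use part $(2)$ to see that $v(m_1-m_2,k+m_2\omega)$ is close to $v(0,k+m_2\omega)$, so that $m_1-m_2$ is a center of type $\le s_1$ for the shifted problem at $k+m_2\omega$; then Lemma~\ref{lem:basicshiftprop}$(4)$ translates $\La^{(s_2)}_{k}(m_2) = m_2 + \La^{(s_2)}_{k+m_2\omega}(0)$, and $(\mathfrak{S}_{s_2})$ applied at base point $k+m_2\omega$ forces $\La^{(s_1)}_{k+m_2\omega}(m_1-m_2)$ to be either contained in or disjoint from $\La^{(s_2)}_{k+m_2\omega}(0)$; since it meets it (it contains $m_1-m_2$), it is contained, and shifting back by $m_2$ gives the claim. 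In case $(\beta)$, disjointness of $\La^{(s_1)}_k(m_1)$ from the other $\La^{(s_2)}_k(m_2)$ follows from part $(1)$ when $s_2=s_1$, and from $(\mathfrak{S}_{s_2})$ plus the same translation argument when $s_2>s_1$ (the alternative ``contained'' would contradict $m_1\in\cM^{(s_1)}_{k,s-1}$, since then $m_1$ would have been absorbed).

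Finally, for part $(4)$ — proving $(\mathfrak{S}_{s+1})$ given $(3)$ at level $s+1$ and $(\mathfrak{S}_{s'})$ for $s'\le s$ — take $k\in\IR\setminus\bigcup_{0<|m'|\le 12R^{(s+1)}}(k^-_{m',s}, k^+_{m',s})$, $m_1\in\cM^{(s_1)}_{k,s}$ with $s_1\le s$ and $|m_1|<12R^{(s+1)}$; I must show $\La^{(s_1)}_k(m_1)$ is either contained in or disjoint from $\La^{(s+1)}_k(0)$. By definition \eqref{eq:A.1}, $\La^{(s+1)}_k(0)=B(3R^{(s+1)})\setminus\bigcup_{r\le s}\bigcup_{m'\in\cM^{(r)}_{k,s}:\,\La^{(r)}_k(m')\between B(3R^{(s+1)})}\La^{(r)}_k(m')$. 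If $\La^{(s_1)}_k(m_1)\subset B(3R^{(s+1)})$ and it was one of the sets removed, then it is disjoint from $\La^{(s+1)}_k(0)$ by construction; if it was not removed, it is because it does not ``$\between$'' $B(3R^{(s+1)})$, i.e.\ $\La^{(s_1)}_k(m_1)\subset B(3R^{(s+1)})$, and then I must check it does not straddle any of the removed chunks — here part $(1)$ (the removed $\La^{(r)}_k(m')$ are mutually far apart and far from each other's "type-$s_1$ neighborhoods") and a direct size comparison $R^{(s_1)}\ll R^{(r)}$ or careful use of part $(3)$ at level $s$ (to locate $m_1$ relative to the $\cM^{(r)}_{k,s}$'s) gives that $\La^{(s_1)}_k(m_1)$ sits entirely inside $\La^{(s+1)}_k(0)$. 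The main obstacle, and the step I expect to consume the most care, is exactly this combinatorial bookkeeping in part $(4)$: making sure that the greedy ``remove larger blocks first'' construction in \eqref{eq:A.1} is consistent across the scale hierarchy so that a type-$s_1$ block is never sliced by a type-$r$ block with $r>s_1$, which requires simultaneously invoking the separation estimate $(1)$, the localization dichotomy $(\mathfrak{S}_{s'})$ at all lower levels, and the translation-covariance of the whole construction from Lemma~\ref{lem:basicshiftprop}; the analytic inputs (Lemma~\ref{lem:A.2}) are comparatively routine once the geometry is set up.
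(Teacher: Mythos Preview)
Your inductive scheme matches the paper's, and parts $(1)$ and $(4)$ are essentially right. Two points need correction.

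For part $(2)$ you are working too hard. From $v(n;k)=\lambda^{-1}(n\omega+k)^2$ one has the identities $v(m_1-m_2,k+m_2\omega)=v(m_1,k)$ and $v(0,k+m_2\omega)=v(m_2,k)$, so the claim reduces to the triangle inequality
\[
|v(m_1,k)-v(m_2,k)| \le |v(m_1,k)-v(0,k)| + |v(m_2,k)-v(0,k)|,
\]
after which the arithmetic with the $\delta_0^{(s''-1)}$ sums is immediate. No case analysis on which factor of the product is small is needed.

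In part $(3)$ there is a genuine gap. In both the upgrade for case $(\alpha)$ and the disjointness for case $(\beta)$ with $s_2>s_1$, you invoke $(\mathfrak{S}_{s_2})$ at base point $k+m_2\omega$ applied to $m_1-m_2$. But the hypothesis of $(\mathfrak{S}_{s_2})$ is that $m_1-m_2\in\cM^{(s_1)}_{k+m_2\omega,\,s_2-1}$, and part $(2)$ only delivers the $v$-bound; it does \emph{not} rule out that $m_1-m_2$ has already been absorbed into some $\La^{(s'')}_{k+m_2\omega}(m'')$ with $s_1<s''\le s_2-1$, which is the other half of the definition of $\cM^{(s_1)}_{k+m_2\omega,\,s_2-1}$. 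The fix, which is exactly what the paper does, is to apply the inductive dichotomy $(3)$ at scale $s_2$ to the point $m_1-m_2$ itself. If $(\beta)$ holds for $m_1-m_2$, then indeed $m_1-m_2\in\cM^{(s_1)}_{k+m_2\omega,\,s_2-1}$ and $(\mathfrak{S}_{s_2})$ applies to it directly. If instead $(\alpha)$ holds for $m_1-m_2$, then $m_1-m_2\in\La^{(s')}_{k+m_2\omega}(m')$ for some intermediate $s_1<s'\le s_2-1$, the inductive upgrade gives $(m_1-m_2)+\La^{(s_1)}_{k+m_1\omega}(0)\subset\La^{(s')}_{k+m_2\omega}(m')$, and one applies $(\mathfrak{S}_{s_2})$ to $m'$ (which \emph{is} in $\cM^{(s')}_{k+m_2\omega,\,s_2-1}$) rather than to $m_1-m_2$. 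This sub-dichotomy on $m_1-m_2$ is the missing step in your sketch; without it the appeal to $(\mathfrak{S}_{s_2})$ is unjustified.
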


\begin{proof}
(1) Since $k \in \IR \setminus \bigcup_{0 < |m'| \le 12 R^\esone} (k_{m',s-1}^-, k_{m',s-1}^+) \subset \IR \setminus \bigcup_{0 < |m'| \le 12 R^{(s')}} (k_{m',s'}^-, k_{m',s'}^+)$, part $(4)$ of Lemma~\ref{lem:A.2} applies. Therefore, $|m_1 - m_2| > 12 R^{(s')}$. It follows from the definition of the sets $\La^\ar_k(m)$ that $\La^\ar_k(m) \subset (m + B(3 R^\ar))$ for any $m,r$. Thus the second statement in $(1)$ also holds.

$(2)$ We have
\begin{equation}\label{eq:11centeratm'}
\begin{split}
|v(m_1 - m_2, k + m_2 \omega) - v(0, k + m_2 \omega)| \le |v(m_1,k) - v(0,k)| + |v(0,k) - v(m_2,k)| \\
< 3 \delta^{(s_1-1)}_0/4 - \sum_{s_1 < s'' \le s-1} \delta^{(s''-1)}_0 + 3 \delta^{(s_2-1)}_0/4 - \sum_{s_2 < s'' \le s-1} \delta^{(s''-1)}_0 \\
< 3 \delta^{(s_1-1)}_0/4 - \sum_{s_1 < s'' \le s_2-1} \delta^{(s''-1)}_0,
\end{split}
\end{equation}
as claimed.

$(3)$ The proof goes via induction over $s = 2, 3, \dots$. Note first of all that due to part $(3)$ of Lemma~\ref{lem:A.2}, $k + m_1 \omega \in \IR \setminus \bigcup_{0 < |m'| \le 12 R^{(s_1)}} (k_{m',s_1-1}^-, k_{m',s_1-1}^+)$, so $\La^{(s_1)}_{k + m_1 \omega}(0)$ is well-defined in any event. Let $s=2$. The only possibility here is $s_1 = 1$ and there is no room for case $(\alpha)$. Due to part $(1)$ of the current lemma, one has $\dist(\La^{(1)}_k(m_1), \La^{(1)}_k(m_2)) > 6 R^{(1)}$ for any $m_2 \in \cM^{(1)}_{k,1}$, $m_2 \neq m_1$. This proves part $(3)$ for $s = 2$. It is important to note here, for the sake of the proof of $(4)$, that for $s = 2$, the proof of $(3)$ does not require any additional condition; in particular, the condition $(\mathfrak{S}_2)$ is not required. Let $s > 2$ be arbitrary. Assume that $(3)$ holds for any $s' < s$ in the role of $s$. Assume that $(\alpha)$ fails. Then, $m_1 \in \cM^{(s_1)}_{k,s-1}$, just due to the definition \eqref{eq:A.1}. With part $(1)$ of the current lemma taken into account, it suffices to consider $|m_1 - m_2| < 12 R^{(s_2)}$ with $s_1 < s_2 \le s-1$, $m_2 \in \cM^{(s_2)}_{k,s-1}$. Note that
\begin{equation}\label{eq:11centeratm'ABCDEF}
m_1 - m_2 \notin \La^{(s_2)}_{k + m_2 \omega}(0).
\end{equation}
Indeed, otherwise $m_1 \in (m_2 + \La^{(s_2)}_{k + m_2 \omega}(0)) = \La^{(s_2)}_k(m_2)$, contrary to the assumption that $(\alpha)$ fails for $m_1$. Note also that due to part $(3)$ in Lemma~\ref{lem:A.2}, $k + m_2 \omega \in \IR \setminus \bigcup_{0 < |m'| \le 12 R^{(s_2)}} (k_{m',s_2-1}^-, k_{m',s_2-1}^+)$. Since $|m_1 - m_2| < 12 R^{(s_2)}$ and \eqref{eq:11centeratm'state} holds, one can apply the inductive assumption for part $(3)$ of the current lemma to $k + m_2 \omega$ in the role of $k$, $(m_1 - m_2)$ in the role of $m_1$, and $s_2$ in the role of $s$. So, either $(\alpha)$ or $(\beta)$ hold. Consider first case $(\alpha)$, that is, assume that $m_1 - m_2 \in \La^{(s')}_{k + m_2 \omega}(m')$ for some $s_1 < s' \le s_2-1$, $m' \in \cM^{(s')}_{k + m_2 \omega, s_2-1}$. The inductive assumption for the very last statement in part $(3)$ implies that in this case one has
\begin{equation}\label{eq:11minlambdalambdain}
(m_1 - m_2) + \La^{(s_1)}_{k + m_1 \omega}(0) \subset \La^{(s')}_{k + m_2 \omega}(m').
\end{equation}
It follows from \eqref{eq:11centeratm'ABCDEF} that
\begin{equation}\label{eq:11centeratm'ABCFDE}
\La^{(s')}_{k + m_2 \omega}(m') \nsubseteqq \La^{(s_2)}_{k + m_2 \omega}(0).
\end{equation}
It follows from \eqref{eq:11centeratm'ABCFDE} and condition $(\mathfrak{S}_{s_2})$ that
\begin{equation}\label{eq:11centeratm'ABCXYZ}
\La^{(s')}_{k + m_2 \omega}(m') \cap \La^{(s_2)}_{k + m_2 \omega}(0) = \emptyset.
\end{equation}
Combining \eqref{eq:11minlambdalambdain} with \eqref{eq:11centeratm'ABCXYZ}, one obtains
\begin{equation}\label{eq:11minlambdalambdainAG}
(m_1 + \La^{(s_1)}_{k + m_1 \omega}(0)) \cap (m_2 + \La^{(s_2)}_{k + m_2 \omega}(0)) = \emptyset,
\end{equation}
which is what is claimed for $m_1$ in $(\beta)$. This finishes the proof if $(\alpha)$ holds for $m_1 - m_2$. Assume now that $(\beta)$ holds for $m_1 - m_2$, that is,
\begin{equation}\label{eq:11centeratm2a}
((m_1 - m_2) + \La^{(s_1)}_{k + m_1 \omega}(0)) \cap \La^{(s')}_{k + m_2 \omega}(m') = \emptyset
\end{equation}
for any  $s_1 \le s' \le s_2-1$, $m' \in \cM^{(s')}_{k + m_2 \omega, s_2-1}$. Since $|m_1 - m_2| < 12 R^{(s_2)}$ and \eqref{eq:11centeratm'state} holds, one has $(m_1 - m_2) \in \cM^{(s_1)}_{k + m_2 \omega, s_2-1}$. Since \eqref{eq:11centeratm'ABCDEF} holds, condition $(\mathfrak{S}_{s_2})$ implies
\begin{equation}\label{eq:11minlambdalambdainAGAG}
((m_1 - m_2) + \La^{(s_1)}_{k + m_1 \omega}(0)) \cap \La^{(s_2)}_{k + m_2 \omega}(0) = \emptyset.
\end{equation}
The relation \eqref{eq:11minlambdalambdainAGAG} implies \eqref{eq:11minlambdalambdainAG}. This finishes the inductive verification of the dichotomy in $(3)$. To finish part $(3)$, assume that $(\alpha)$ holds for $m_1$. So, $m_1 \in m_2 + \La^{(s_2)}_{k + m_2 \omega}(0)$ for some $s_1 < s_2 \le s-1$, $m_2 \in \cM^{(s_2)}_{k,s-1}$. Recall that \eqref{eq:11centeratm'state} holds. Due to the inductive assumption, either
$(\alpha)$ or $(\beta)$ holds for $(m_1 - m_2)$. Consider first the case $(\beta)$. Then, $(m_1 - m_2) \in \cM^{(s_1)}_{k + m_2 \omega, s_2-1}$. Since  $(m_1 - m_2) \in \La^{(s_2)}_{k + m_2 \omega}(0)$, due to condition $(\mathfrak{S}_{s_2})$, one has $(m_1 - m_2) + \La^{(s_1)}_{k + m_1 \omega}(0) \subset \La^{(s_2)}_{k + m_2 \omega}(0)$. This implies the second statement in part $(3)$ in this case. Consider now case $(\alpha)$, that is, $m_1 - m_2 \in \La^{(s')}_{k + m_2 \omega}(m')$ for some $s_1 < s' \le s_2-1$, $m' \in \cM^{(s')}_{k + m_2 \omega, s_2-1}$. Since $(m_1 - m_2) \in \La^{(s_2)}_{k + m_2 \omega}(0)$, due to condition $(\mathfrak{S}_{s_2})$, one has $\La^{(s')}_{k + m_2 \omega}(m') \subset \La^{(s_2)}_{k + m_2 \omega}(0)$. Furthermore, $|m_1 - m_2| < 12 R^{(s')} < 12 R^{(s_2)}$. Due to part $(2)$ of the current lemma, \eqref{eq:11centeratm'state} holds and one can apply the inductive assumption for the second statement in part $(3)$ of the current lemma with $m_1 - m_2$ in the role of $m_1$, $m'$ in the role of $m_2$, and $s_2$ in the role of $s$. Hence, $(m_1 - m_2) + \La^{(s_1)}_{k + m_1 \omega}(0) \subset \La^{(s')}_{k + m_2 \omega} (0)$. This finishes the inductive proof of $(3)$.

$(4)$ Once again the proof goes via induction over $s = 2, 3, \dots$. Let us verify $(\mathfrak{S}_2)$. The only possibility here is $s_1 = 1$ and $m_1 \in \cM^{(1)}_{k,1}$. Assume $\La_k^{(1)}(m_1) \cap \La_k^{(2)}(0) \neq \emptyset$. Then, clearly, $\La_k^{(1)}(m_1) \cap B(3 R^\two) \neq \emptyset$. Note that $\La_k^{(1)}(m_1) \cap (\IZ^\nu \setminus B(3 R^\two)) = \emptyset$. Indeed, otherwise $\La_k^{(1)}(m_1) \between B(3R^\two))$. Since
\begin{equation}\label{eq:A.1proflema1}
\La^\two_k(0) = B(3 R^\two) \setminus \Bigl( \bigcup_{m' \in \cM^{(1)}_{k,1} : \La^\one_{k}(m') \between B(3 R^\two))} \La^\one_{k}(m') \Bigr),
\end{equation}
that would imply $\La_k^{(1)}(m_1) \cap \La_k^{(2)}(0) = \emptyset$, contrary to the assumption. Let $m'$ be an arbitrary vector in the union in \eqref{eq:A.1proflema1}. Since $m_1$ is not included in the union in \eqref{eq:A.1proflema1}, $m' \neq m_1$. Due to part $(3)$ of the current lemma, $\La^\one_{ k}(m') \cap \La^\one_{k}(m_1) = \emptyset$. Combining this fact with $\La_k^{(1)}(m_1) \cap (\IZ^\nu \setminus B(3 R^\two)) = \emptyset$, one concludes that $\La_k^{(1)}(m_1) \subset \La^\two_k(0)$. This finishes the case $s = 2$. Assume that $(\mathfrak{S}_{s'})$ holds for any $s' \le s-1$. Let us verify $(\mathfrak{S}_{s})$. Using the notation from $(\mathfrak{S}_s)$, one can assume that $(m_1 + \La^{(s_1)}_{k + m_1 \omega}(0)) \cap \La^{(s)}_k(0) \neq \emptyset$. Since we assume that $(\mathfrak{S}_{s'})$ holds for any $s' \le s-1$, we can apply part $(3)$  to $m_1$. Consider first the case $(\beta)$. In this case, $m_1 \in \cM^{(s_1)}_{k,s-1}$, $(m_1+\La^{(s_1)}_{k + m_1 \omega}(0)) = \La^{(s_1)}_k(m_1)$. Recall that
\begin{equation}\label{eq:A.1proflema1AgA}
\La^\es_k(0) = B(3 R^\es) \setminus \Bigl( \bigcup_{r = 1, \dots, s-1} \bigcup_{m' \in \cM^{(r)}_{k,s-1} : \La^{\ar}_{k}(m') \between B(3 R^\es))} \La^\ar_{k}(m') \Bigr).
\end{equation}
Clearly, this implies $\La_k^{(s_1)}(m_1) \cap B(3 R^\es) \neq \emptyset$. Note that $\La_k^{(s_1)}(m_1) \cap (\IZ^\nu \setminus B(3 R^\es)) = \emptyset$. Indeed, otherwise $\La_k^{(s_1)}(m_1) \between B(3 R^\es))$. This would imply $\La_k^{(s_1)}(m_1) \cap \La_k^{(s)}(0) = \emptyset$, contrary to the assumption. Let $m'$ be an arbitrary vector in the union in \eqref{eq:A.1proflema1AgA}. Since $m_1$ is not included in the union in \eqref{eq:A.1proflema1AgA}, $m'\neq m_1$. Due to part $(3)$ of the current lemma, $\La^{(s')}_{ k}(m') \cap \La^{(s_1)}_{k}(m_1) = \emptyset$. Combining this fact with $\La_k^{(s_1)}(m_1) \cap (\IZ^\nu \setminus B(3 R^\es)) = \emptyset$, one concludes that $\La_k^{(s_1)}(m_1) \subset \La^\es_k(0)$. Consider now the case $(\alpha)$. Using the notation from case $(\alpha)$, one has $m_1 + \La^{(s_1)}_{k + m_1 \omega}(0) \subset \La^{(s_2)}_k(m_2)$. Note that for $m_2$, case $(\beta)$ takes place. Clearly $\La_k^{(s_2)}(m_2) \cap \La_k^{(s)}(0) \neq \emptyset$. Hence, $\La_k^{(s_2)}(m_2) \subset \La_k^{(s)}(0)$ and we are done.
\end{proof}

\begin{remark}\label{rem:7.1Rss1issue}
$(1)$ We remark here that in the proof of part $(3)$ in Lemma~\ref{lem:A.3}, we did not use the definition of the set $\La^{(s)}_k(0)$ from \eqref{eq:A.1}. We did use the definition of the sets $\La^{(s')}_k(m')$, $s' \le s-1$ from \eqref{eq:A.1}, part~$(1)$ of Lemma~\ref{lem:A.3} and condition $(\mathfrak{S}_{s-1})$ only. We will invoke this fact in Remark~\ref{rem:7.kawayfromzero}. We use the latter in Sections~\ref{sec.8} and \ref{sec.9}.

$(2)$ For technical reasons related to small values of $|k|$, we need to introduce for those $k$ some auxiliary sets $\La^\es_{k,sym}(m) \subset \La^\es_{k}(m)$, which give a very good ``approximation'' of $\La^\es_{k}(m)$ and at the same time obey $- \La^\es_{k,sym}(m) = \La^\es_{k,sym}(m)$; see Lemma~\ref{lem:7setLambdas}.
\end{remark}

\begin{lemma}\label{lem:12Nesreflection}
For any $k \in \IR$, $r$, $s$, we have $\cM^{(r)}_{-k} = -\cM^{(r)}_{k}$, $\La^\es_{-k}(-m) = -\La^\es_{k}(m)$.
\end{lemma}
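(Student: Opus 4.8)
The plan is to reduce everything to two elementary symmetries of the data in \eqref{eq:7-5-7} and \eqref{eq:7K.1}, and then run an induction on the scale $s$. First I would record that the explicit formula $v(n;k)=\lambda^{-1}(n\omega+k)^2$ gives $v(-n;-k)=v(n;k)$ for all $n,k$, so that $v(0;-k)=v(0;k)$ and $|v(m,-k)-v(0,-k)|=|v(-m,k)-v(0,k)|$ for every $m\in\zv$, $k\in\IR$. Second, from the identities $k^\pm_{-m}=-k^\mp_m$, $k^\pm_{-m,s}=-k^\mp_{m,s}$ recorded after \eqref{eq:7K.1}, together with the fact that $m'\mapsto -m'$ permutes $\{m':0<|m'|\le 12R^{(u)}\}$, every ``resonant'' set $\bigcup_{0<|m'|\le 12R^{(u)}}(k^-_{m',s''},k^+_{m',s''})$ is invariant under $k\mapsto -k$; consequently $k$ lies in the domain of definition of any object in \eqref{eq:A.1} precisely when $-k$ does. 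Finally I would isolate the two set-theoretic facts used repeatedly: $-B(\rho)=B(\rho)$ for every $\rho>0$, and for any $A\subset\zv$ and any symmetric $B\subset\zv$ (i.e. $-B=B$) one has $-A\between B$ iff $A\between B$, since $-(\zv\setminus B)=\zv\setminus B$.

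With these preliminaries the proof becomes an induction on $s$, with a downward sub-induction on the auxiliary index $s'$ inside the $s$-th step. The base case $s=1$ is immediate: $\La^{(1)}_{-k}(0)=B(2R^\one)=-\La^{(1)}_k(0)$; $\cM^{(1)}_{-k,1}=-\cM^{(1)}_{k,1}$ by the $v$-symmetry above; and for $m\in\cM^{(1)}_{k,1}$, since $-(k+m\omega)=-k+(-m)\omega$, we get $\La^{(1)}_{-k}(-m)=-m+\La^{(1)}_{-(k+m\omega)}(0)=-m+\bigl(-\La^{(1)}_{k+m\omega}(0)\bigr)=-\La^{(1)}_k(m)$. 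For the inductive step I would assume $\La^{(r)}_{-\ell}(0)=-\La^{(r)}_\ell(0)$ for every $r\le s-1$ and every admissible $\ell$; by the shift formula $\La^{(r)}_k(m)=m+\La^{(r)}_{k+m\omega}(0)$ and the $\pm$-invariance of the domain, this upgrades at once to $\La^{(r)}_{-k}(-m)=-\La^{(r)}_k(m)$ for all such $m$. Then I treat $\cM^{(s-1)}_{-k,s-1}=-\cM^{(s-1)}_{k,s-1}$ directly from the $v$-symmetry, and descend: assuming $\cM^{(s'')}_{-k,s-1}=-\cM^{(s'')}_{k,s-1}$ for $s''>s'$, the union $\bigcup_{s'<s''\le s-1}\bigcup_{m''\in\cM^{(s'')}_{-k,s-1}}\La^{(s'')}_{-k}(m'')$ is the negative of the corresponding union at $k$, so the condition defining $\cM^{(s')}_{-k,s-1}$ in \eqref{eq:A.1} is the reflection of the one defining $\cM^{(s')}_{k,s-1}$, yielding $\cM^{(s')}_{-k,s-1}=-\cM^{(s')}_{k,s-1}$. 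For the last object, $\La^{(s)}_{-k}(0)=B(3R^\es)\setminus\bigl(\bigcup_{r=1}^{s-1}\bigcup_{m'\in\cM^{(r)}_{-k,s-1},\ \La^{(r)}_{-k}(m')\between B(3R^\es)}\La^{(r)}_{-k}(m')\bigr)$, I would use that as $m'$ ranges over $\cM^{(r)}_{-k,s-1}$ the set $\La^{(r)}_{-k}(m')=-\La^{(r)}_k(-m')$ ranges over the negatives of the $\La^{(r)}_k(m'')$ with $m''\in\cM^{(r)}_{k,s-1}$, that $\La^{(r)}_{-k}(m')\between B(3R^\es)$ is equivalent to $\La^{(r)}_k(m'')\between B(3R^\es)$ because $B(3R^\es)$ is symmetric, and that $B(3R^\es)\setminus(-X)=-(B(3R^\es)\setminus X)$; hence $\La^{(s)}_{-k}(0)=-\La^{(s)}_k(0)$, closing the induction.

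I do not expect a genuine obstacle here; this is a compatibility/bookkeeping lemma. The only point requiring care is to keep the nested definitions in \eqref{eq:A.1} straight — in particular matching the shifted set $m+\La^{(s')}_{k+m\omega}(0)$ with the already-established reflection identity at argument $0$ for the shifted quasi-momentum (whose negative is $-k+(-m)\omega$), and checking that the ``$\between B(3R^\es)$'' selection and the ``minus the excluded union'' operation in the definition of $\La^{(s)}_k(0)$ commute with $x\mapsto -x$; both reduce to $-B(3R^\es)=B(3R^\es)$. (One could alternatively invoke the reflection operator $\cS^*\colon\psi(\cdot)\mapsto\psi(-\cdot)$ of Lemma~\ref{lem:basicshiftprop}(2), which conjugates $H_{\La,\ve,k}$ with $\overline{H_{-\La,\ve,-k}}$, but since the present statement concerns only the combinatorial construction of the sets, arguing directly as above is cleaner.)
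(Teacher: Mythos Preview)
Your proposal is correct and follows essentially the same approach as the paper: the paper's proof is a two-line sketch that records the key symmetry $v(-m,-k)=v(m,k)$, says this gives $\cM^{(r)}_{-k}=-\cM^{(r)}_{k}$, and then invokes induction on $s$ for $\La^\es_{-k}(-m)=-\La^\es_{k}(m)$. You have simply written out the details the paper leaves implicit---the downward sub-induction on $s'$ for the sets $\cM^{(s')}_{\cdot,s-1}$, the compatibility of the $\between$ relation and the set difference with $x\mapsto -x$, and the shift identity $-(k+m\omega)=-k+(-m)\omega$---all of which are straightforward bookkeeping.
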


\begin{proof}
One has $v(-m,-k) = v(m,k)$ for any $m$, $k$. This implies the first statement, $\cM^{(r)}_{-k}=-\cM^{(r)}_{k}$. Using this, one can easily verify the second statement using induction in $s$.
\end{proof}

To proceed with the definition of $\La^\es_{k,sym}(m)$, we need some combinatorics.

\begin{defi}\label{defi:5.twolambdas5}
Let $s > 0$ be an arbitrary integer. Let $A = (a_1, \ldots, a_n)$ be an arbitrary word over the alphabet $\{ 1, 2, \ldots, s \}$. We say that the word $A$ is correct if it has no sub-word $\tilde A = (a_j, \ldots, a_k)$ with $j < k$, $a_j = a_k$, and $\max_{j < i < k} a_i < a_j$. Otherwise, the word is called incorrect. By convention, each one letter word $A = (a_1)$ is correct. We denote by $\cA(s)$ the collection of all words over the alphabet $\{ 1, 2, \ldots, s \}$ and by $\cA_c(s)$ the collection of all correct words in $\cA(s)$. We also say that the word $A = (a_1,\dots,a_n)$ has length $n$.
\end{defi}

\begin{lemma}\label{lem:5.twolambdas2}
If $A = (a_1, \ldots, a_n) \in \cA_c(s)$, then $n \le 2^s - 1$.
\end{lemma}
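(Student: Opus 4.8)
The plan is to prove the bound $n \le 2^s - 1$ on the length of a correct word by induction on the size $s$ of the alphabet. The base case $s = 1$ is immediate: a word over the one-letter alphabet $\{1\}$ of length $n \ge 2$ contains the sub-word $(a_1, a_2) = (1,1)$ with $\max_{1 < i < 2} a_i$ a maximum over the empty set, which by the convention in Definition~\ref{def:aux1} (a maximum over the empty set is $-\infty$) satisfies $-\infty < 1 = a_1$, so the word is incorrect; hence $n \le 1 = 2^1 - 1$. One must be slightly careful about whether the empty-set convention of Section~\ref{sec.2} is the one intended in Definition~\ref{defi:5.twolambdas5}; if instead the inner condition $\max_{j < i < k} a_i < a_j$ is to be read as vacuously satisfied when $j + 1 = k$, the conclusion is the same, and I would state this reading explicitly at the start.

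For the inductive step, suppose the claim holds for all alphabets of size less than $s$, and let $A = (a_1, \ldots, a_n) \in \cA_c(s)$. First I would observe that the largest letter, $s$, can occur at most twice in $A$: if $s$ occurred at positions $j < k$ (with possibly other occurrences in between), then taking the first two such positions $j < k$ we would have $\max_{j < i < k} a_i < s = a_j$ (every letter strictly between is $\le s$, and cannot equal $s$ by minimality of $j,k$), making $A$ incorrect. So $s$ appears in at most two positions, say it splits $A$ into at most three blocks $B_0, B_1, B_2$ (the sub-words strictly before the first $s$, strictly between the two $s$'s, and strictly after the second $s$; some may be empty, and if $s$ appears once or zero times there are fewer blocks). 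Each $B_i$ is a word over $\{1, \ldots, s-1\}$, and each $B_i$ is itself correct, since any forbidden sub-word inside $B_i$ would also be a forbidden sub-word of $A$. By the inductive hypothesis, $|B_i| \le 2^{s-1} - 1$ for each $i$.

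The length count then gives $n \le |B_0| + |B_1| + |B_2| + 2 \le 3(2^{s-1} - 1) + 2 = 3 \cdot 2^{s-1} - 1$, which is weaker than the desired $2^s - 1 = 2 \cdot 2^{s-1} - 1$. So the main obstacle is ruling out the worst case where all three blocks simultaneously attain the maximal length $2^{s-1} - 1$; I expect that this is where a more refined combinatorial argument is needed. The fix I would pursue: show that the correctness of $A$ forces a constraint linking the blocks. Specifically, consider the ``inner'' block $B_1$ together with either neighbor. If $B_0$ and $B_1$ were both of full length $2^{s-1} - 1$, I would argue that some letter value $a$ must appear in both $B_0$ and $B_1$ (since a correct word over $\{1,\ldots,s-1\}$ of maximal length must, by the induction, use certain letters in a forced pattern — in particular it must contain the letter $s-1$, and more), and then locating the last occurrence of the largest shared value before the first $s$ and its first occurrence after, with the first $s$ sandwiched in between having all intermediate letters smaller, would produce a forbidden sub-word. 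Carrying this through, one concludes that $B_0 \cup B_1$ (as a single word over $\{1,\ldots,s-1\}$ obtained by deleting only the one $s$ between them) is still correct, hence has length $\le 2^{s-1}-1$; similarly for $B_1 \cup B_2$. Combining, $|B_0| + |B_1| + |B_2| \le (2^{s-1}-1) + (2^{s-1}-1) - |B_1| \le 2(2^{s-1}-1)$ if $B_1$ is nonempty, and a direct argument handles $B_1 = \emptyset$; in all cases $n \le 2(2^{s-1}-1) + 2 = 2^s - 1$, completing the induction. The delicate point to get right is precisely the claim that a correct word obtained by concatenating two maximal correct sub-words across a single deleted top letter remains correct — equivalently, that a forbidden sub-word spanning the deletion point cannot exist — and I would prove this by a careful case analysis on where the endpoints of a hypothetical forbidden sub-word lie relative to the deleted $s$.
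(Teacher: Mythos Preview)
Your argument contains a slip that, once corrected, makes the entire second half of the proposal unnecessary. You write that $s$ can occur \emph{at most twice}, but the very argument you give shows it occurs \emph{at most once}: you suppose $s$ appears at two positions, take the first two occurrences $j<k$, and observe that every letter strictly between them is $<s$, which already makes $A$ incorrect. (The case $k=j+1$ is covered by the empty-max convention you yourself invoke in the base case.) So a correct word contains the top letter $s$ at most one time.

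With that correction the proof is immediate, and matches the paper's exactly: if $s$ does not appear, $A\in\cA_c(s-1)$ and the bound follows by induction; if $s$ appears once at position $j$, split $A$ into $A_1=(a_1,\ldots,a_{j-1})$ and $A_2=(a_{j+1},\ldots,a_n)$, both correct words over $\{1,\ldots,s-1\}$, giving $n=(j-1)+1+(n-j)\le 2(2^{s-1}-1)+1=2^s-1$. All of your work with three blocks and the delicate concatenation argument is an artifact of the miscount and should be dropped.
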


\begin{proof}
The proof goes by induction on $s$. For $s = 1$, the only correct word is $A = (1)$. Assume that the statement holds for the alphabet $\{ 1, \ldots, s-1 \}$. If $a_j < s$ for every $j$, then $A \in \cA_c(s-1)$ and the statement holds due to the inductive assumption. Assume that $a_j = s$ for some $j$. Then, $a_k < s$ for every $k \neq j$ since otherwise $A \notin \cA_c(s)$. Let $A_1 = (a_1, \ldots, a_{j-1})$, $A_2 = (a_{j+1}, \ldots, a_n)$. Then, clearly, $A_1, A_2 \in \cA_c(s-1)$. Due to the inductive assumption $j-1 \le 2^{s-1} - 1$, $n-j \le  2^{s-1} - 1$. Hence, $n \le  2^{s} - 1$.
\end{proof}

\begin{lemma}\label{lem:5.twolambdas3}
Suppose $A = (a_1, \ldots, a_n) \notin \cA_c(s)$. Let $\tilde A = (a_j, \ldots, a_k)$ be a minimal length incorrect sub-word of $A$. Then, $a_j = a_k$, $a_i < a_j$ for any $j < i < k$ and $k-j \le 2^{a_j} - 1$.
\end{lemma}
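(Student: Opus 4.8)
The plan is to show first that a minimal-length incorrect sub-word $\tilde A$ is itself a direct witness of incorrectness, and then to bound its length by applying Lemma~\ref{lem:5.twolambdas2} to its interior.

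First I would establish the assertions $a_j = a_k$ and $a_i < a_j$ for $j < i < k$. Since $\tilde A = (a_j, \dots, a_k) \notin \cA_c(s)$, by Definition~\ref{defi:5.twolambdas5} there are indices $j \le p < q \le k$ with $a_p = a_q$ and $\max_{p < i < q} a_i < a_p$. The contiguous word $(a_p, \dots, a_q)$ is then itself incorrect (it contains itself as the required bad sub-word, the condition on the maximum being read vacuously when $q = p+1$), hence it is an incorrect sub-word of $A$ of length $q - p + 1 \le k - j + 1$. The minimality of $\tilde A$ forces $q - p + 1 = k - j + 1$, that is, $p = j$ and $q = k$. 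Therefore $a_j = a_k$ and $\max_{j < i < k} a_i < a_j$, which are the first two claims.

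Next I would derive the length bound. Consider the interior word $A' = (a_{j+1}, \dots, a_{k-1})$; by the previous step every letter of $A'$ satisfies $a_i \le a_j - 1$. If $a_j = 1$, then $A'$ is empty, so $k = j+1$ and $k - j = 1 = 2^{a_j}-1$. If $a_j \ge 2$, then $A'$ must be correct: otherwise $A'$ would be an incorrect sub-word of $A$ of length $k - j - 1 < k - j + 1$, contradicting the minimality of $\tilde A$. Since $A'$ uses only letters from $\{1, \dots, a_j - 1\}$, it is a correct word over that alphabet, so Lemma~\ref{lem:5.twolambdas2} yields $|A'| = k - j - 1 \le 2^{a_j - 1} - 1$. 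Hence $k - j \le 2^{a_j - 1} \le 2^{a_j} - 1$, using $2^{a_j-1} \ge 1$.

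The only points that require care are, first, the observation that the ``bad sub-word'' witnessing incorrectness of $\tilde A$ is itself incorrect — this is immediate from Definition~\ref{defi:5.twolambdas5} but is the conceptual crux that makes the minimality argument run — and, second, the degenerate cases where the interior $A'$ is empty (covering $a_j = 1$ as well as $k = j+1$ and $k = j+2$), where the convention that the maximum over the empty set is $-\infty$ and the reading of Lemma~\ref{lem:5.twolambdas2} for the empty word must be applied. No genuinely difficult step is involved: the argument is a short reduction resting entirely on Definition~\ref{defi:5.twolambdas5} and Lemma~\ref{lem:5.twolambdas2}.
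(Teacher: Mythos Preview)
Your proof is correct and follows essentially the same approach as the paper. The only cosmetic difference is that for the length bound the paper applies Lemma~\ref{lem:5.twolambdas2} to $A_1 = (a_{j+1},\dots,a_k) \in \cA_c(a_j)$, obtaining $k-j \le 2^{a_j}-1$ directly, whereas you apply it to the interior $(a_{j+1},\dots,a_{k-1}) \in \cA_c(a_j-1)$ and then use $2^{a_j-1} \le 2^{a_j}-1$; both are fine.
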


\begin{proof}
Obviously, a minimal length incorrect sub-word $\tilde A = (a_j, \ldots, a_k) \notin \cA_c(s)$ exists. Due to the minimality, the words $A_1 = (a_{j+1}, \ldots, a_{k})$ and $A_2 = (a_{j}, \ldots, a_{k-1})$ are correct. On the other hand, $\tilde A$ has a sub-word $(a_{j + \ell}, \ldots, a_{k - m})$ such that $a_{j + \ell} = a_{k - m}$, $a_i < a_{j + \ell}$ for any $j + \ell < i < k - m$. Since both $A_1$ and $A_2$ are correct, $\ell=0$ and $m=0$. So, $a_j = a_k$, $a_i < a_j$ for any $j < i < k$. In particular, $A_1 \in \cA_c(a_j)$. Due to Lemma~\ref{lem:5.twolambdas2}, $k - j \le 2^{a_j} - 1$.
\end{proof}

\begin{defi}\label{defi:5.twolambdas6}
$(1)$ Consider arbitrary subsets $\La', \La'' \subset \IZ^\nu$. Assume that $\La' \cap \La'' \neq \emptyset$, $\La' \nsubseteq \La''$, $\La'' \nsubseteq \La'$. In this case, we say that $\La'$ and $\La''$ are chained. A sequence  $\La^{(\ell)}$, $\ell = 1, \ldots, n$ with $n\ge 2$ is called a chain if $\La^{(\ell)}$ and  $\La^{(\ell + 1)}$ are chained for every $\ell = 1, \ldots, n-1$.

$(2)$ Let $\mathfrak{L}$ be a system of sets $\La \subset \IZ^\nu$. Let $t(\La)$ be a function $\La \in \mathfrak{L}$ with values in $\mathbb{N}$.
We say that $(\mathfrak{L},t)$ is a proper subtraction system if the following conditions hold:
$(i)$ For any $a \in \mathbb{N}$, $R_a:=\min_{\La', \La'' \in \mathfrak{L} : t(\La') = a, \; t(\La'') = a, \; \La' \neq \La''} \; \dist (\La', \La'') > 0$, $(ii)$ Let $\La \in \mathfrak{L}$ be arbitrary, $a=t(\La)+1$. There exist subsets $\Xi_j \subset \La$, $j = 1, \dots$ such that $\diam(\Xi_j) < 2^{-a} R_a$, $\La = \cup_j \Xi_j$, and if for some for some $\La' \in \mathfrak{L}$, $\La \cap \La'\neq \emptyset$, then $\Xi_{j} \cap \La'\neq \emptyset$ for any $j$.

$(3)$ Let $(\mathfrak{L},t)$ be a proper subtraction system.  Given an arbitrary set $\La_{0,0}, \subset \IZ^\nu$, we set
\begin{equation}\label{eq:5.twolambdas5NEW}
\La_{0,\ell} = \La_{0,\ell-1} \setminus \Bigl(  \bigcup_{\La \in \mathfrak{L} : \La \nsubseteq \La_{0,\ell-1}} \La\Bigr).
\end{equation}
\end{defi}

\begin{lemma}\label{lem:5.lambdachains}Let $(\mathfrak{L},t)$ be a proper subtraction system.

$(1)$ Let $\La, \La' \in \mathfrak{L}$, $\La \cap \La' \neq \emptyset$. Let $a = t(\La) + 1$. For any $x \in \La$, we have $\dist (x,\La') < 2^{-a} R_a$.

$(2)$ Let $a \in \mathbb{N}$ and let $\La^{(\ell)}$, $\ell = 1, \ldots, n$ be a chain, $\La^{(\ell)} \in \mathfrak{L}$, $t(\La^{(\ell )}) < a$, $\ell = 1, \ldots, n$. Then, $\dist(\La^{(1)}, \La^{(n)}) < (n-1) 2^{-a} R_a$.
\end{lemma}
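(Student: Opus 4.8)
The statement to prove is Lemma~\ref{lem:5.lambdachains}, consisting of two parts about proper subtraction systems.

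\medskip

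\textbf{Part (1).} Here the plan is to use the decomposition property from part $(ii)$ of Definition~\ref{defi:5.twolambdas6}. Since $\La' \in \mathfrak{L}$ and $a = t(\La) + 1$, by that definition applied to $\La$ there exist subsets $\Xi_j \subset \La$ with $\diam(\Xi_j) < 2^{-a} R_a$, with $\La = \bigcup_j \Xi_j$, and with the property that since $\La \cap \La' \neq \emptyset$, we have $\Xi_j \cap \La' \neq \emptyset$ for every $j$. Now take an arbitrary $x \in \La$. Then $x \in \Xi_j$ for some $j$, and since $\Xi_j \cap \La' \neq \emptyset$ we may pick $y \in \Xi_j \cap \La'$. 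Then $\dist(x, \La') \le |x - y| \le \diam(\Xi_j) < 2^{-a} R_a$, which is exactly the claim. This part is essentially just unwinding the definition, so no real obstacle arises.

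\medskip

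\textbf{Part (2).} The plan is to induct on the chain length $n$, using part $(1)$ at each step. For $n = 2$: $\La^{(1)}$ and $\La^{(2)}$ are chained, so $\La^{(1)} \cap \La^{(2)} \neq \emptyset$; applying part $(1)$ with $\La = \La^{(1)}$, $\La' = \La^{(2)}$ (noting $t(\La^{(1)}) < a$, so $t(\La^{(1)}) + 1 \le a$, and $R_{t(\La^{(1)})+1} \ge R_a$ requires care — I should instead apply part $(1)$ more directly: for each consecutive pair, since $t(\La^{(\ell)}) + 1 \le a$ and one needs a bound in terms of $2^{-a} R_a$, one should check that $2^{-(t(\La^{(\ell)})+1)} R_{t(\La^{(\ell)})+1}$ is controlled by $2^{-a} R_a$ — this is the one subtle point). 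For the inductive step, given a chain of length $n$, pick any point $x \in \La^{(1)}$; by part $(1)$ applied to the chained pair $\La^{(1)}, \La^{(2)}$ there is $y \in \La^{(2)}$ with $|x - y| < 2^{-a} R_a$ (after handling the index subtlety), and then by the induction hypothesis applied to the sub-chain $\La^{(2)}, \dots, \La^{(n)}$ we get $\dist(y, \La^{(n)}) < (n-2) 2^{-a} R_a$; the triangle inequality gives $\dist(x, \La^{(n)}) < (n-1) 2^{-a} R_a$, and taking the infimum over $x \in \La^{(1)}$ completes the step.

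\medskip

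\textbf{Main obstacle.} The one place requiring attention is reconciling the ``$2^{-a}R_a$'' uniform scale in the statement of part (2) with the scale ``$2^{-(t(\La^{(\ell)})+1)} R_{t(\La^{(\ell)})+1}$'' that part (1) naturally produces for each link, since $t(\La^{(\ell)})$ can vary along the chain and is only assumed $< a$. The resolution should come from the monotonicity built into the $R_a$: one expects $\min$ over a larger index set of distances to be non-increasing, i.e. $R_b \le $ something comparable, or more precisely one should verify from Definition~\ref{defi:5.twolambdas6}$(ii)$ that the decomposition of $\La^{(\ell)}$ into pieces of diameter $< 2^{-(t(\La^{(\ell)})+1)}R_{t(\La^{(\ell)})+1}$ can be compared to $2^{-a}R_a$; if the intended reading is that $2^{-b}R_b$ is decreasing in $b$ for $b \le a$, then $2^{-(t(\La^{(\ell)})+1)}R_{t(\La^{(\ell)})+1} \le 2^{-a}R_a$ whenever $t(\La^{(\ell)}) + 1 \le a$, and everything goes through cleanly. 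I would state this monotonicity explicitly (deriving it from the definition or noting it as an implicit hypothesis) before running the induction. Apart from this bookkeeping, both parts are short and direct.
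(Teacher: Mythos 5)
Part (1) is correct and is essentially the paper's argument. In part (2), however, the induction as you have organized it does not close. You pick $x \in \La^{(1)}$, apply part $(1)$ to the first link to obtain a specific $y \in \La^{(2)}$ with $|x-y|$ small, and then invoke the induction hypothesis for the sub-chain $\La^{(2)},\dots,\La^{(n)}$ to conclude $\dist(y,\La^{(n)}) < (n-2)2^{-a}R_a$. But the induction hypothesis only controls $\dist(\La^{(2)},\La^{(n)})$, an infimum over pairs of points; it says nothing about the distance from the particular $y$ you extracted. Part $(1)$, by contrast, is a universal statement (``for any $x\in\La$''), which is the right kind of bound to apply to a point handed to you. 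The correct organization is the reverse: apply the induction hypothesis to $\La^{(1)},\dots,\La^{(n-1)}$ to produce specific $x\in\La^{(1)}$, $y\in\La^{(n-1)}$ with $|x-y| < (n-2)2^{-a}R_a$, and only then invoke the universal bound of part $(1)$ on that $y$ to obtain $\dist(y,\La^{(n)}) < 2^{-a}R_a$; the triangle inequality finishes.

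On the ``main obstacle'' you flag: you are right that part $(1)$ intrinsically produces the scale $2^{-(t(\La)+1)}R_{t(\La)+1}$ rather than $2^{-a}R_a$, and that a comparison is needed --- the paper's own proof leaves this tacit as well. But you have the direction of the needed monotonicity backwards: writing $b = t(\La^{(\ell)})+1 \le a$, one needs $2^{-b}R_b \le 2^{-a}R_a$, i.e.\ $2^{-b}R_b$ must be \emph{non-decreasing} in $b$, not decreasing as you wrote. In the concrete instantiations (for instance Lemma~\ref{lem:7.lLL}) this holds because $R_a \ge R^{(a)}$ and $R^{(a)}$ grows super-exponentially in $a$, which dominates the $2^{-a}$ factor.
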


\begin{proof}
$(1)$ Let $\Xi_j \subset \La$, $j = 1, \dots$ be as in $(ii)$ of part $(2)$ of Definition~\ref{defi:5.twolambdas6}. Since $\La \cap \La' \neq \emptyset$, one has $\Xi_j \cap \La' \neq \emptyset$ for any $j$. Given $x \in \La$, there exists $j$ such that $x \in \Xi_j$. Since $\diam (\Xi_j) < 2^{-a} R_a$, the claim follows.

$(2)$ The proof goes by induction in $n = 2, \dots$. For $n = 2$, the claim is clear since $\La^{(1)} \cap \La^{(2)} \neq \emptyset$. Let $n > 2$. Assume the claim holds for any chain with $n - 1$ sets. Clearly, $\La^{(\ell)}$, $\ell = 1, \dots, n-1$ is a chain. Hence, $\dist(\La^{(1)}, \La^{(n-1)}) < (n-2) 2^{-a} R_a$. Therefore there exist $x \in \La^{(1)}, y \in \La^{(n-1)}$ such that $|x-y| < (n-2) 2^{-a} R_a$. By part $(1)$ of the current lemma, $\dist (y, \La^{(n)}) < 2^{-a} R_a$. Hence, $\dist (x, \La^{(n)}) \le |x-y| + \dist (y, \La^{(n)}) < (n-1) 2^{-a} R_a$.
\end{proof}

\begin{lemma}\label{lem:5.twolambdas4}
Let $\La_{0, \ell}$ be as in \eqref{eq:5.twolambdas5NEW}. Let
$$
\mathfrak{N}_\ell = \big\{ \La\in\mathfrak{L} : \La \cap \La_{0, \ell - 1} \neq \emptyset, \; \La \cap \bigl( \IZ^\nu \setminus \La_{0, \ell - 1} \bigl) \neq \emptyset \big\}.
$$

$(1)$ Assume that $\La_{0, \ell} \neq \La_{0, \ell - 1}$, $\ell = 1, \ldots, \ell_1$. Let $\La \in \mathfrak{N}_{\ell_1}$. There exists a chain $\La^{(\ell)}$, $\ell = 0, \ldots, \ell_1$ such that $\La^{(0)} = \La_{0,0}$, $\La^{(\ell_1)} = \La$, $\La^{(\ell)} \in \mathfrak{N}_\ell$, $\ell = 1, \ldots, \ell_1$, and in particular, $\La^{(\ell)}\neq \La^{(\ell')}$ if $\ell<\ell'$.

$(2)$ Assume that $s = \sup_{\La \in \mathfrak{L}}t(\La) < \infty$. There exists $\ell_0 < 2^s$ such that $\La_{0, \ell} = \La_{0, \ell_0}$, for any $\ell \ge \ell_0$.

$(3)$ Let $\ell_0$ be such such that $\La_{0, \ell_0+1}= \La_{0, \ell_0}$. Then, for any $\La \in\mathfrak{L}$, we have either $\La \subset \La_{0, \ell_0}$ or $\La \subset \Bigl( \IZ^\nu \setminus \La_{0, \ell_0} \Bigr)$.
\end{lemma}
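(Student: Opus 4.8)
The three parts are proved in order, with part~$(1)$ doing the real combinatorial work and parts~$(2)$ and~$(3)$ following quickly from it. Throughout I fix the proper subtraction system $(\mathfrak{L},t)$ and the initial set $\La_{0,0}$, and recall that $\La_{0,\ell}$ is obtained from $\La_{0,\ell-1}$ by deleting every $\La\in\mathfrak{L}$ not contained in $\La_{0,\ell-1}$; in particular $\La_{0,\ell}\subset\La_{0,\ell-1}$, and $\La_{0,\ell}=\La_{0,\ell-1}$ exactly when $\mathfrak{N}_\ell=\emptyset$.

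\textbf{Part~$(1)$.} I proceed by induction on $\ell_1$. Given $\La\in\mathfrak{N}_{\ell_1}$, it meets $\La_{0,\ell_1-1}$ but is not contained in it, so by definition of $\La_{0,\ell_1-1}$ as a subset of $\La_{0,\ell_1-2}$ there must be some reason $\La$ was not already deleted at step $\ell_1-1$: namely $\La\subset\La_{0,\ell_1-2}$, hence $\La$ meets $\La_{0,\ell_1-2}$. If $\La$ is not contained in $\La_{0,\ell_1-2}$ either, then $\La\in\mathfrak{N}_{\ell_1-1}$ and I can try to set $\La^{(\ell_1-1)}=\La$; but in general $\La\subseteq\La_{0,\ell_1-2}$, and the point $x\in\La\setminus\La_{0,\ell_1-1}$ was removed at step $\ell_1-1$ because it lies in some $\La'\in\mathfrak{L}$ with $\La'\not\subset\La_{0,\ell_1-2}$, i.e.\ $\La'\in\mathfrak{N}_{\ell_1-1}$; and $\La'\cap\La\neq\emptyset$ since $x$ lies in both. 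If moreover $\La\not\subseteq\La'$ and $\La'\not\subseteq\La$ then $\La$ and $\La'$ are chained. The delicate point, which I expect to be \emph{the main obstacle}, is handling the containment cases: if $\La\subseteq\La'$ then $\La$ would have been deleted at step $\ell_1-1$ (contradiction), and if $\La'\subseteq\La$ one must argue that $\La'$ can serve as the $(\ell_1-1)$-st link and that $\La'$, $\La$ being ``chained'' in the required sense still holds — here I would use that $\La'$ meets both $\La_{0,\ell_1-2}$ (it contains a point of $\La\subseteq\La_{0,\ell_1-2}$... actually $x\in\La_{0,\ell_1-1}^c$, so I must instead take $x$ from $\La\cap\La_{0,\ell_1-1}$, nonempty since $\La\in\mathfrak{N}_{\ell_1}$, to witness $\La'\cap\La_{0,\ell_1-1}$... ) so this case analysis needs to be set up carefully, distinguishing whether $\La'\in\mathfrak{N}_{\ell_1}$ or $\La'\in\mathfrak{N}_{\ell_1-1}$ and recursing. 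Once $\La^{(\ell_1-1)}\in\mathfrak{N}_{\ell_1-1}$ is produced and chained to $\La^{(\ell_1)}=\La$, the inductive hypothesis applied to $\La^{(\ell_1-1)}$ yields a chain $\La^{(0)}=\La_{0,0},\dots,\La^{(\ell_1-1)}$, and concatenation finishes the construction. Distinctness $\La^{(\ell)}\neq\La^{(\ell')}$ for $\ell<\ell'$ follows because $\La^{(\ell)}\in\mathfrak{N}_\ell$ forces $\La^{(\ell)}\subseteq\La_{0,\ell-1}$ while $\La^{(\ell')}\not\subseteq\La_{0,\ell'-1}\subseteq\La_{0,\ell}$, so $\La^{(\ell')}$ is not contained in $\La_{0,\ell'-1}$ but... more simply, $\La^{(\ell')}\not\subseteq\La_{0,\ell'-1}$ whereas $\La^{(\ell)}$ is; if they were equal, $\La^{(\ell)}$ both is and is not contained in $\La_{0,\ell}\supseteq\La_{0,\ell'-1}$, a contradiction.

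\textbf{Part~$(2)$.} Suppose, aiming for a contradiction, that $\La_{0,\ell}\subsetneq\La_{0,\ell-1}$ for all $\ell=1,\dots,2^s$. Apply part~$(1)$ with $\ell_1=2^s$: there is a chain $\La^{(0)},\dots,\La^{(2^s)}$ with the $\La^{(\ell)}$ pairwise distinct elements of $\mathfrak{L}$ (for $\ell\ge1$), each with $t(\La^{(\ell)})\le s$. Reading the sequence of values $a_\ell:=t(\La^{(\ell)})$, $\ell=1,\dots,2^s$, as a word over $\{1,\dots,s\}$ of length $2^s$, Lemma~\ref{lem:5.twolambdas2} says it cannot be correct, so it contains a minimal incorrect subword; by Lemma~\ref{lem:5.twolambdas3} this subword has the form $(a_j,\dots,a_k)$ with $a_j=a_k=:a$, all intermediate values $<a$, and $k-j\le 2^a-1$. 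Then $\La^{(j)}$ and $\La^{(k)}$ are two distinct sets in $\mathfrak{L}$ with $t=a$, so $\dist(\La^{(j)},\La^{(k)})\ge R_a$; but $\La^{(j)},\dots,\La^{(k)}$ is a chain of length $k-j+1$ whose members all have $t<a+1$... actually all have $t\le a$, so by part~$(2)$ of Lemma~\ref{lem:5.lambdachains} (with ``$a$'' there taken to be $a+1$, since the members have $t\le a<a+1$) $\dist(\La^{(j)},\La^{(k)})<(k-j)2^{-(a+1)}R_{a+1}$ — wait, the bound there is in terms of $R_{a+1}$, not $R_a$. I would instead invoke Lemma~\ref{lem:5.lambdachains}(2) in the form: a chain of sets all with $t$ strictly less than $a$ together with the two endpoints of level $a$; the cleanest route is to apply Lemma~\ref{lem:5.lambdachains}(1) repeatedly along the chain $\La^{(j)},\dots,\La^{(k)}$, noting every consecutive intersection is nonempty and using $\diam(\Xi)<2^{-a}R_a$ at level $a$, to get $\dist(\La^{(j)},\La^{(k)})<(k-j)2^{-a}R_a\le(2^a-1)2^{-a}R_a<R_a$, contradicting $\dist(\La^{(j)},\La^{(k)})\ge R_a$. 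Hence some $\ell_0<2^s$ has $\La_{0,\ell_0}=\La_{0,\ell_0-1}$, and then $\mathfrak{N}_{\ell_0}=\emptyset$ forces $\mathfrak{N}_\ell=\emptyset$ and $\La_{0,\ell}=\La_{0,\ell_0}$ for all $\ell\ge\ell_0$, since $\La_{0,\ell_0}=\La_{0,\ell_0-1}$ means nothing further is ever removed.

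\textbf{Part~$(3)$.} Let $\ell_0$ satisfy $\La_{0,\ell_0+1}=\La_{0,\ell_0}$, i.e.\ $\mathfrak{N}_{\ell_0+1}=\emptyset$. Take any $\La\in\mathfrak{L}$. If $\La\cap\La_{0,\ell_0}=\emptyset$ then $\La\subset\IZ^\nu\setminus\La_{0,\ell_0}$ and we are done. Otherwise $\La\cap\La_{0,\ell_0}\neq\emptyset$; if $\La$ were not contained in $\La_{0,\ell_0}$, it would meet both $\La_{0,\ell_0}$ and its complement, hence $\La\in\mathfrak{N}_{\ell_0+1}=\emptyset$, a contradiction. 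Therefore $\La\subset\La_{0,\ell_0}$, which completes the proof.
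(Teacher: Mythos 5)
Your overall plan matches the paper's proof (induction for $(1)$ with the same case analysis, the word-combinatorics lemmas for $(2)$, the same one-line argument for $(3)$), and parts $(2)$ and $(3)$ are essentially right, but part $(1)$ has two genuine gaps.

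\textbf{The $\La'\subseteq\La$ case.} You flag this as ``the main obstacle'' and propose patching it by recursing or by letting $\La'$ serve as the $(\ell_1-1)$-st link, but those are dead ends because the ``chained'' relation by definition forbids containment. The actual resolution is that the case simply cannot occur: since $\La'\in\mathfrak{N}_{\ell_1-1}$, it meets both $\La_{0,\ell_1-2}$ and its complement, so if $\La'\subseteq\La$ then $\La$ also meets both, i.e.\ $\La\in\mathfrak{N}_{\ell_1-1}$. But membership in $\mathfrak{N}_{\ell_1-1}$ means that every point of $\La\cap\La_{0,\ell_1-2}$ is deleted at step $\ell_1-1$, hence $\La\cap\La_{0,\ell_1-1}=\emptyset$, contradicting $\La\in\mathfrak{N}_{\ell_1}$. (This is symmetric to how you ruled out $\La\subseteq\La'$.) Once this case is eliminated, $\La$ and $\La'$ are chained and the induction closes cleanly.

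\textbf{Distinctness.} Your argument starts from ``$\La^{(\ell)}\in\mathfrak{N}_\ell$ forces $\La^{(\ell)}\subseteq\La_{0,\ell-1}$'', which is backwards: $\mathfrak{N}_\ell$ consists of sets that meet the \emph{complement} of $\La_{0,\ell-1}$, so $\La^{(\ell)}$ is precisely \emph{not} contained in $\La_{0,\ell-1}$. The second version (``$\La^{(\ell)}$ both is and is not contained in $\La_{0,\ell}$'') has the same problem, since neither $\mathfrak{N}_\ell$ nor $\mathfrak{N}_{\ell'}$ membership gives a containment. The correct argument: $\La^{(\ell)}\in\mathfrak{N}_\ell$ implies the entire set $\La^{(\ell)}\cap\La_{0,\ell-1}$ is removed at step $\ell$, so $\La^{(\ell)}\cap\La_{0,\ell}=\emptyset$; since $\La_{0,\ell'-1}\subseteq\La_{0,\ell}$ for $\ell'>\ell$, we get $\La^{(\ell)}\cap\La_{0,\ell'-1}=\emptyset$, which is incompatible with $\La^{(\ell')}\in\mathfrak{N}_{\ell'}$ (the latter requires nonempty intersection with $\La_{0,\ell'-1}$).
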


\begin{proof}
$(1)$ The proof of the first statement goes by induction over $\ell_1 = 1, 2, \ldots$. Assume that $\La_{0,1} \neq \La_{0,0}$. Then, $\mathfrak{N}_1 \neq \emptyset$. Just by the definition, $\La \in \mathfrak{N}_{1}$ if and only if $\La$ is chained with $\La_{0,0}$. Assume that the statement holds for any $\ell' = 1, 2, \ldots, \ell_1-1$ in the role of $\ell_1$. Assume that $\La \in \mathfrak{N}_{\ell_1}$. One has the following cases.

$(\alpha)$: $\La \cap \bigl( \IZ^\nu \setminus \La_{0, \ell_1 - 2} \bigl) \neq \emptyset$. Since $\La \in \mathfrak{N}_{\ell_1}$, one has
$\La \cap \La_{0, \ell_1 - 1} \neq \emptyset$, $\La \cap \La_{0, \ell_1 - 2} \neq \emptyset$. Together with the assumption of the case, this implies $\La \in \mathfrak{N}_{\ell_1 - 1}$, which in turn implies $\La \cap \La_{0, \ell_1 - 1} = \emptyset$, which contradicts $\La \in \mathfrak{N}_{\ell_1}$. Thus, this case is impossible.

$(\beta)$: $\La \cap \bigl( \bigcup_{\La' \in \mathfrak{N}_{\ell_1 - 1}} \La' \setminus \La_{0, \ell - 2} \bigl) \neq \emptyset$. In this case, there exists $\La' \in \mathfrak{N}_{\ell_1 - 1}$ such that $\La \cap \La' \neq \emptyset$. Note that $\La \subset \La'$ is impossible, since in this case one would have $\La \cap \La_{0, \ell_1 - 1} = \emptyset$, contrary to the assumption that $\La \in \mathfrak{N}_{\ell_1}$. Assume that $\La' \subset \La$. Since $\La \in \mathfrak{N}_{\ell_1 - 1}$, this would imply $\La \cap \La_{0, \ell_1 - 2}\neq \emptyset$ and $\La \cap \bigl( \IZ^\nu \setminus \La_{0, \ell_1 - 2} \bigr) \neq \emptyset$. This means $\La \in \mathfrak{N}_{\ell_1-1}$.
This is again impossible, since in this case one would have $\La \cap \La_{0, \ell_1 - 1} = \emptyset$. Thus, $\La$ is chained with $\La'$. Applying the inductive assumption to $\La'$, one obtains the statement for $\La$.
 Assume $\La^{(\ell)} = \La^{(\ell')}$, $\ell<\ell'$. Then $\La^{(\ell)} \in \mathfrak{N}_{\ell}$ and at the same time  $\La^{(\ell)} \in \mathfrak{N}_{\ell'}$. This is inconsistent with the definition of the sets $\La_{0, \ell - 1}$ and $\mathfrak{N}_{\ell}$.

$(2)$ Assume that $\La_{0, \ell} \neq \La_{0, \ell - 1}$, $\ell = 1, \ldots, \ell_1$ for some $\ell_1 \ge 2^s$. Due to part $(1)$ of the current lemma, there exists a chain $\La^{(\ell)}$, $\ell = 0, \ldots, \ell_1$ such that $\La^{(\ell)} \in \mathfrak{N}_\ell$, $\ell = 1, \ldots, \ell_1$. Consider the word $A = (a_1, \dots, a_{\ell_1})$, $a_j = t(\La^{(j)})$ over the alphabet $\{1, \dots, s\}$. Since $\ell_1 \ge 2^s$, due to Lemma~\ref{lem:5.twolambdas2}, $A \notin \cA_c(s)$. Due to Lemma~\ref{lem:5.twolambdas3}, $A$ has a sub-word $\tilde A = (a_j, \ldots, a_k)$ such that $a_j = a_k$, $a_i < a_j$ for any $j < i < k$ and $k-j \le 2^{a_j} - 1$. Due to part $(2)$ of Lemma~\ref{lem:5.lambdachains}, there exist $x \in \La^{(j+1)}, y \in \La^{(k-1)}$ with $|x-y| < (k-j-2) 2^{-a_j} R_{a_j}$. Since $\La^{(j)} \cap \La^{(j+1)} \neq \emptyset$, due to part $(1)$ of Lemma~\ref{lem:5.lambdachains} one has $\dist (x, \La^{(j)}) < 2^{-a_j} R_{a_j}$. Similarly, $\dist (y, \La^{(k)}) < 2^{-a_j} R_{a_j}$. Hence  $\dist (\La^{(j)}, \La^{(k)}) < (k-j) 2^{-a_j} R_{a_j} < R_{a_j}$. On the other hand, due to part $(1)$ of the current lemma, $\La^{(j)} \neq \La^{(k)}$. Since $t(\La^{(j)}) = t(\La^{(k)}) = a_j$, this contradicts the definition of the quantities $R_a$. Thus, there exists $\ell_0 < 2^s$ such that $\La_{0, \ell_0} = \La_{0, \ell_0 + 1}$. It follows from the definition \eqref{eq:5.twolambdas5NEW} that $\La_{0, \ell} = \La_{0, \ell_0}$, for any $\ell \ge \ell_0$.

$(3)$ This follows from the definition \eqref{eq:5.twolambdas5NEW}.
\end{proof}

Set $\cS(n) = -n$, $n \in \IZ^\nu$.

\begin{lemma}\label{lem:7mdeltaSm}
Let $s \ge 2$ and $k \in \IR \setminus \bigcup_{0 < |m'| \le 12 R^\esone} (k_{m',s-1}^-, k_{m',s-1}^+)$. Assume $|k| < \delta^{(s-2)}_0$.

$(1)$ If $|v(m,k) - v(0,k)| < \delta$, with $ \delta^{(s-2)}_0/2 \le \delta < 1/64$, then $|v(\cS(m),k) - v(0,k)| < 4 \delta/3$.

$(2)$ Let $s' < s$, $m_j \in \cM^{(s')}_{k,s-1}$, $j = 1, 2$, and assume that $\cS(m_1) \neq m_2$. Then, $\dist(\cS(\La_k^{(s')}(m_1)), \La_k^{(s')}(m_2)) > 6 R^{(s')}$.
\end{lemma}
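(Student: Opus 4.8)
The two parts both rest on the explicit formula $v(n,k)=\lambda^{-1}(n\omega+k)^2$, which gives $v(m,k)-v(0,k)=\lambda^{-1}(m\omega)(m\omega+2k)$, $v(\cS(m),k)-v(0,k)=\lambda^{-1}(m\omega)(m\omega-2k)$, and hence $v(\cS(m),k)-v(m,k)=-4\lambda^{-1}k\,(m\omega)$. For part $(1)$ I would write
\[
|v(\cS(m),k)-v(0,k)|\le |v(m,k)-v(0,k)|+4\lambda^{-1}|k|\,|m\omega|<\delta+4\lambda^{-1}|k|\,|m\omega|
\]
and bound the last term by splitting on the size of $|m\omega|$. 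If $|m\omega|\ge 4|k|$ then $|m\omega+2k|\ge|m\omega|/2$, so $\lambda^{-1}|m\omega|^2<2\delta$, and using $|k|<\delta_0^{(s-2)}\le 2\delta$, $\lambda\ge 256$, $\delta<1/64$ one gets $4\lambda^{-1}|k|\,|m\omega|<8\delta\sqrt{2\delta/\lambda}<\delta/3$; if instead $|m\omega|<4|k|\le 8\delta$ then $4\lambda^{-1}|k|\,|m\omega|<64\lambda^{-1}\delta^2<\delta/3$. The degenerate case $v(m,k)=v(0,k)$ (i.e.\ $m\omega=0$ or $m\omega=-2k$) is immediate. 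This yields $|v(\cS(m),k)-v(0,k)|<4\delta/3$.

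For part $(2)$ I would first perform a purely set-theoretic reduction. By Remark~\ref{rem:7.1oinout}(c) one has $\La^{(s')}_k(m)\subset m+B(3R^{(s')})$; since $\cS(X)=-X$ and $B(\rho)$ is centrally symmetric, $\cS(\La^{(s')}_k(m_1))\subset -m_1+B(3R^{(s')})$ (equivalently $\cS(\La^{(s')}_k(m_1))=\La^{(s')}_{-k}(\cS(m_1))$ by Lemma~\ref{lem:12Nesreflection}). Hence
\[
\dist\bigl(\cS(\La^{(s')}_k(m_1)),\La^{(s')}_k(m_2)\bigr)\ge |(-m_1)-m_2|-6R^{(s')}=|m_1+m_2|-6R^{(s')},
\]
and since $\cS(m_1)\ne m_2$ forces $m_1+m_2\ne 0$, it suffices to prove $|m_1+m_2|>12R^{(s')}$.

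To this end I would use that $m_j\in\cM^{(s')}_{k,s-1}$ gives $|v(m_j,k)-v(0,k)|<\delta_0^{(s'-1)}$ (by the definition \eqref{eq:A.1}; cf.\ Remark~\ref{rem:7.1oinout}(d)), so by part $(1)$ of Lemma~\ref{lem:A.2}, $\min(|m_j\omega|,|2k+m_j\omega|)\le 256(\delta_0^{(s'-1)})^{1/2}$ with a constant independent of $\gamma$; combining with $|k|<\delta_0^{(s-2)}\le\delta_0^{(s'-1)}$, exactly as in part $(1)$, gives $|m_j\omega|\le 258(\delta_0^{(s'-1)})^{1/2}$ in either branch, hence $|(m_1+m_2)\omega|\le 516(\delta_0^{(s'-1)})^{1/2}$. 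Since $m_1+m_2\ne 0$, the Diophantine condition \eqref{eq:7-5-8} then forces $a_0|m_1+m_2|^{-b_0}\le 516(\delta_0^{(s'-1)})^{1/2}$, i.e.\ $|m_1+m_2|\ge(a_0/516)^{1/b_0}(\delta_0^{(s'-1)})^{-1/(2b_0)}$. As $\beta_1=(32b_0)^{-1}$ and $R^{(s')}=(\delta_0^{(s'-1)})^{-\beta_1}$ by \eqref{eq:A.1A}, the right-hand side equals $(a_0/516)^{1/b_0}(R^{(s')})^{16}$, which exceeds $12R^{(s')}$ because $R^{(s')}\ge R_1$, $a_0\ge 100/R_1$ (from $\log R_1\ge\log(100a_0^{-1})$), and $\log R_1$ is enormous; this completes part $(2)$.

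The argument is elementary throughout, so the \emph{main obstacle} is not conceptual but lies in the constant bookkeeping and, above all, in the step passing from a bound on $\min(|m\omega|,|2k+m\omega|)$ to a bound on $|m\omega|$ itself — which is precisely where the hypothesis $|k|<\delta_0^{(s-2)}$ is used. In part $(2)$ one must moreover keep this bound on $|m_j\omega|$ free of $\gamma$, so that the concluding Diophantine comparison $(R^{(s')})^{16}\gg R^{(s')}$ holds uniformly in $\gamma$; this is what dictates using Lemma~\ref{lem:A.2}$(1)$ rather than a cruder direct estimate of $|m_j\omega|$.
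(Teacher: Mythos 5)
Your proof is correct throughout, and it is worth comparing the route of part~$(2)$ with the paper's. For part~$(1)$ you and the paper do essentially the same thing: both exploit the identity $v(\cS(m),k)-v(m,k)=-4\lambda^{-1}k(m\omega)$ and bound the correction term using that $|k|$ and $|m\omega|$ are both small; the paper gets the bound on $|m\omega|$ from Lemma~\ref{lem:A.2}$(1)$ plus the triangle inequality, whereas you do a direct case split on $|m\omega|\gtrless 4|k|$, but the numerology is the same. For part~$(2)$, however, the paper's proof is shorter and more modular: it first applies the just-proved part~$(1)$ to conclude $|v(\cS(m_1),k)-v(0,k)|<\delta_0^{(s'-1)}<(\delta_0^{(s'-1)})^{1/2}$, and then quotes Lemma~\ref{lem:A.2}$(4)$ with the pair $(\cS(m_1),m_2)$ to get $|\cS(m_1)-m_2|>12R^{(s')}$ in one step. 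You instead bypass both part~$(1)$ and Lemma~\ref{lem:A.2}$(4)$: you use the smallness of $|k|$ to collapse the two branches of $\min(|m_j\omega|,|2k+m_j\omega|)$ into a single bound on $|m_j\omega|$, deduce $|(m_1+m_2)\omega|\lesssim(\delta_0^{(s'-1)})^{1/2}$, and finish by comparing against the raw Diophantine condition and the defining relation $R^{(s')}=(\delta_0^{(s'-1)})^{-\beta_1}$. In effect you re-prove the relevant case of Lemma~\ref{lem:A.2}$(4)$ in situ; this is more self-contained and shows explicitly where the huge exponent $\beta_1^{-1}=32b_0$ is consumed, but at the cost of re-doing Diophantine bookkeeping that the paper has already abstracted away. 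The paper's invocation of Lemma~\ref{lem:A.2}$(4)$ is also what makes the appearance of $\cS(m_1)$ (rather than your $m_1+m_2$) natural, and it is why the paper needs part~$(1)$ in the proof of part~$(2)$, while you do not.
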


\begin{proof}
$(1)$  Note first of all that $\gamma = 1$, $\lambda = 256$. Since $|v(m,k) - v(0,k)| < \delta$, it follows from $(1)$ in Lemma~\ref{lem:A.2} that $\min (|m \omega| ,|2 k + m  \omega|) \le 32 \delta^{1/2}$. In particular,  $|m  \omega| \le 32 \delta^{1/2} + 2 |k| < 32 \delta^{1/2} + 2 \delta$. One has
\begin{equation}\label{eq:8Tmineqverif}
\begin{split}
|v(\cS(m),k) - v(0,k)| = |v(-m,k) - v(0,k)| = \lambda^{-1} |m \omega| |2 k - m \omega|\le
\lambda^{-1}|m \omega|( |2 k + m \omega| + 4|k|) \\
= |v(m,k) - v(0,k)| + 4 \lambda^{-1} |m\omega||k| < \delta + \frac{1}{64} (32 \delta^{1/2} + 2 \delta) (2 \delta) < 4 \delta/3.
\end{split}
\end{equation}

$(2)$ One has $|v(m_j,k) - v(0,k)| < 3 \delta^{(s'-1)}_0/4$, $j = 1, 2$. Note that $3 \delta^{(s'-1)}_0/4 \ge \delta^{(s-2)}_0/2$, since we assume $s' < s$. Due to part $(1)$, one also obtains $|v(\cS(m_1),k) - v(0,k)| < \delta^{(s'-1)}_0$. Due to part $(4)$ of Lemma~\ref{lem:A.2}, one has $|\cS(m_1) - m_2| > 12 R^{(s')}$, since $S(m_1) \neq m_2$. This implies $\dist (\cS(\La_k^{(s')}(m_1)), \La_k^{(s')}(m_2)) > 6 R^{(s')}$.
\end{proof}

\begin{defi}\label{defi:7.LLLL}
Assume $s \ge 2$, $|k| < \delta^{(s-2)}_0$. It follows from \eqref{eq:diphnores} and \eqref{eq:7K.1}
that $k \in \IR \setminus \bigcup_{0 < |m'| \le 12 R^\esone} (k_{m',s-1}^-, k_{m',s-1}^+)$. Let $\mathfrak{L}'$ be the collection of all sets $\La(m) := \La_k^{(s')}(m) \cup \cS(\La_k^{(s')}(m))$, $1 \le s' \le s-1$, $m \in \cM^{(s')}_{k,s-1}$. We say that $\La(m_1) \backsim \La(m_2)$ if $s_1 = s_2$, and either $m_1 = m_2$ or $\cS(m_1) = m_2$. Clearly, this is an equivalence relation on $\mathfrak{L}'$. Let $\mathfrak{M}$ be the set of equivalence classes. Clearly, each class has at most two elements in it. For each $\mathfrak{m} \in \mathfrak{M}$, set $\La(\mathfrak{m}) = \bigcup_{\La(m_1) \in \mathfrak{m}} \La(m_1)$. Set $\mathfrak{L} = \{ \La(\mathfrak{m}) : \mathfrak{m} \in \mathfrak{M} \}$. Let $\La(\mathfrak{m}) \in \mathfrak{L}$, $ \La^{(s')}(m) \cup \cS(\La^{(s')}(m)) \in \mathfrak{m}$. Set $t(\La(\mathfrak{m})) = s'$. This defines an $\mathbb{N}$-valued function on $\mathfrak{L}$. Set also $p_\mathfrak{m} = \{m,\cS(m)\}$. Clearly, the set $p_\mathfrak{m}$ depends only on $\mathfrak{m}$.
\end{defi}

\begin{lemma}\label{lem:7.lLL}
Using the notation from Definition~\ref{defi:7.LLLL}, the following statements hold.

$(1)$ For any $\La(\mathfrak{m}_j) \in \mathfrak{L}$, $j = 1, 2$, such that $t(\La(\mathfrak{m}_1)) = t(\La(\mathfrak{m}_2))$, $\mathfrak{m}_1 \neq \mathfrak{m}_2$, we have $\dist (\La(\mathfrak{m}_1), \La(\mathfrak{m}_2)) \ge 6R^{(t(\La(\mathfrak{m}_1)))}$.

$(2)$ For any $\mathfrak{m}$, we have
\begin{equation}\label{eq:7Lmsets}
\bigcup_{m \in p_\mathfrak{m}} \bigl( (m + B(2 R^{(t(\La(\mathfrak{m})))})) \bigr) \subset \La(\mathfrak{m}) \subset \bigcup_{m \in p_\mathfrak{m}} \bigl( (m + B(3 R^{(t(\La(\mathfrak{m})))})) \bigr).
\end{equation}
Furthermore, $\La(\mathfrak{m}) = \Xi(\mathfrak{m}) \cup \cS(\Xi(\mathfrak{m}))$, where $\diam (\Xi(\mathfrak{m})) \le 6 R^{(t(\La(\mathfrak{m})))}$.

$(3)$ If $\mathfrak{m}_1 \neq \mathfrak{m_2}$, then $\La(\mathfrak{m}_1) \neq \La(\mathfrak{m}_2)$.

$(4)$ The pair $(\mathfrak{L},t)$ is a proper subtraction system.

$(5)$ For any $\mathfrak{m}$, we have $\La(\mathfrak{m}) =\cS(\La(\mathfrak{m}))$.
\end{lemma}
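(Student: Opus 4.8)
The plan is to verify Lemma~\ref{lem:7.lLL} statement by statement, using the combinatorial setup of Definition~\ref{defi:7.LLLL} together with the already-established properties of the sets $\La_k^{(s')}(m)$ from Lemma~\ref{lem:A.3} and Lemma~\ref{lem:7mdeltaSm}. The structure is: parts $(1)$--$(3)$ and $(5)$ are essentially geometric facts about the sets $\La(\mathfrak{m})$, and part $(4)$ --- the assertion that $(\mathfrak{L},t)$ is a proper subtraction system in the sense of Definition~\ref{defi:5.twolambdas6} --- will be deduced from them by checking the two defining conditions $(i)$ and $(ii)$.

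First I would prove $(1)$. Fix $\La(\mathfrak{m}_1),\La(\mathfrak{m}_2)$ with $t(\La(\mathfrak{m}_1))=t(\La(\mathfrak{m}_2))=s'$ and $\mathfrak{m}_1\neq\mathfrak{m}_2$. By definition each $\La(\mathfrak{m}_j)$ is a union of at most two sets of the form $\La_k^{(s')}(m_j)\cup\cS(\La_k^{(s')}(m_j))$ with $m_j\in\cM^{(s')}_{k,s-1}$, and since $\mathfrak{m}_1\neq\mathfrak{m}_2$ the representative vectors are not related by identity or by $\cS$. So $\dist(\La(\mathfrak{m}_1),\La(\mathfrak{m}_2))$ is a minimum over finitely many pairwise distances of the four types $\dist(\La_k^{(s')}(m_1),\La_k^{(s')}(m_2))$, $\dist(\cS(\La_k^{(s')}(m_1)),\La_k^{(s')}(m_2))$, $\dist(\La_k^{(s')}(m_1),\cS(\La_k^{(s')}(m_2)))$ and $\dist(\cS(\La_k^{(s')}(m_1)),\cS(\La_k^{(s')}(m_2)))$. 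The first type is $>6R^{(s')}$ by part $(1)$ of Lemma~\ref{lem:A.3} (note $k\in\IR\setminus\bigcup_{0<|m'|\le 12R^\esone}(k^-_{m',s-1},k^+_{m',s-1})$ because $|k|<\delta_0^{(s-2)}$ via \eqref{eq:diphnores}, \eqref{eq:7K.1}); the second and third are $>6R^{(s')}$ by part $(2)$ of Lemma~\ref{lem:7mdeltaSm} (using $\cS(m_1)\neq m_2$, resp.\ $m_1\neq\cS(m_2)$); the fourth equals the first after applying the isometry $\cS$. For $(2)$, the inclusion chain follows immediately from $(c)$ in Remark~\ref{rem:7.1oinout}, namely $B(2R^{(s')})\subset\La_k^{(s')}(0)\subset B(3R^{(s')})$, translated by the representatives $m\in p_\mathfrak{m}$ and symmetrized; and setting $\Xi(\mathfrak{m})$ to be the union of the at-most-two sets $\La_k^{(s')}(m)$ over $m$ with $\La_k^{(s')}(m)\cup\cS(\La_k^{(s')}(m))\in\mathfrak{m}$ (these two vectors being $\cS$-images of one another, hence within distance $\sim|m\omega|$, which is $O(\delta_0^{1/2})$ small by Lemma~\ref{lem:A.2}(1) since $|v(m,k)-v(0,k)|$ is small and $|k|$ is small), one gets $\diam\Xi(\mathfrak{m})\le 6R^{(s')}$ and $\La(\mathfrak{m})=\Xi(\mathfrak{m})\cup\cS(\Xi(\mathfrak{m}))$. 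Part $(5)$ is immediate from the definition $\La(\mathfrak{m})=\bigcup_{\La(m_1)\in\mathfrak{m}}\La(m_1)$ with each $\La(m_1)=\La_k^{(s')}(m_1)\cup\cS(\La_k^{(s')}(m_1))$ being $\cS$-invariant as a set, so $\cS$ just permutes the pieces.

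For $(3)$: suppose $\La(\mathfrak{m}_1)=\La(\mathfrak{m}_2)$. If $t(\La(\mathfrak{m}_1))\neq t(\La(\mathfrak{m}_2))$, say $s_1<s_2$, then by $(2)$ the set $\La(\mathfrak{m}_1)$ contains some translate $m+B(2R^{(s_1)})$ but is contained in a union of two balls of radius $3R^{(s_1)}$, while $\La(\mathfrak{m}_2)$ contains a translate $m'+B(2R^{(s_2)})$ with $R^{(s_2)}\gg R^{(s_1)}$ — a contradiction on cardinalities (or diameters) for $R_1$ large. If $t(\La(\mathfrak{m}_1))=t(\La(\mathfrak{m}_2))=s'$ but $\mathfrak{m}_1\neq\mathfrak{m}_2$, part $(1)$ gives $\dist(\La(\mathfrak{m}_1),\La(\mathfrak{m}_2))>6R^{(s')}>0$, so the sets cannot be equal. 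Finally, $(4)$: condition $(i)$ of a proper subtraction system, i.e.\ $R_a=\min_{t(\La')=t(\La'')=a,\La'\neq\La''}\dist(\La',\La'')>0$, is exactly part $(1)$ (giving $R_a\ge 6R^{(a)}>0$, using $(3)$ so that distinct $\mathfrak{m}$'s give distinct sets). Condition $(ii)$: given $\La(\mathfrak{m})\in\mathfrak{L}$ with $t(\La(\mathfrak{m}))=s'$, set $a=s'+1$; take $\Xi_1=\Xi(\mathfrak{m})$, $\Xi_2=\cS(\Xi(\mathfrak{m}))$, so $\La(\mathfrak{m})=\Xi_1\cup\Xi_2$ with $\diam\Xi_j\le 6R^{(s')}<2^{-a}R_a$ provided $2^{-(s'+1)}\cdot 6R^{(s'+1)}>6R^{(s')}$, which holds since $R^{(s'+1)}=(\delta_0^{(s')})^{-\beta_1}\gg 2^{s'+1}R^{(s')}$; and if $\La(\mathfrak{m})\cap\La(\mathfrak{m}')\neq\emptyset$ for some $\La(\mathfrak{m}')\in\mathfrak{L}$ with $t(\La(\mathfrak{m}'))\le s'$ then, using part $(3)$ of Lemma~\ref{lem:A.3} on each constituent $\La_k^{(s')}(m)$ (which says any such set is either contained in or disjoint from a larger one, and symmetrically for $\cS$-images), one checks that $\La(\mathfrak{m})\subset\La(\mathfrak{m}')$ or $\La(\mathfrak{m})\supset\La(\mathfrak{m}')$; in the nontrivial case each $\Xi_j$ meets $\La(\mathfrak{m}')$. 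The main obstacle I anticipate is part $(4)$, specifically verifying condition $(ii)$: one must be careful that the decomposition $\La(\mathfrak{m})=\Xi_1\cup\Xi_2$ genuinely satisfies the ``$\Xi_j\cap\La'\neq\emptyset$ whenever $\La\cap\La'\neq\emptyset$'' requirement, which forces an appeal to the containment dichotomy of Lemma~\ref{lem:A.3}(3) applied at the level of the $\cS$-symmetrized sets, together with the scale separation $R^{(s'+1)}\gg R^{(s')}$ to absorb the factor $2^{-a}$. The other parts are routine once the distance estimates of Lemma~\ref{lem:A.3}(1) and Lemma~\ref{lem:7mdeltaSm}(2) are in hand.
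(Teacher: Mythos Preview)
Parts $(1)$, $(3)$, $(5)$ of your argument match the paper's. There are two genuine gaps.

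\textbf{Part $(2)$, construction of $\Xi(\mathfrak{m})$.} Your choice $\Xi(\mathfrak{m})=\bigcup\La_k^{(s')}(m)$ over representatives $m$ does not have small diameter. If the class $\mathfrak{m}$ has two elements, the representatives are $m_1$ and $m_2=\cS(m_1)=-m_1$, so $\La_k^{(s')}(m_1)$ sits near $m_1$ and $\La_k^{(s')}(-m_1)$ sits near $-m_1$; their distance is roughly $2|m_1|$, which is not bounded by $6R^{(s')}$. Your claim that ``$m$ and $\cS(m)$ are within distance $\sim|m\omega|$'' is simply false: $|m-\cS(m)|=2|m|$. The correct $\Xi(\mathfrak{m})$ is the part of $\La(\mathfrak{m})$ lying near one of the two centers in $p_\mathfrak{m}=\{m,-m\}$, e.g.\ $\La_k^{(s')}(m_1)\cup\cS(\La_k^{(s')}(-m_1))$, both of which are contained in $m_1+B(3R^{(s')})$.

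\textbf{Part $(4)$, condition $(ii)$.} First, Definition~\ref{defi:5.twolambdas6} requires the intersection property for \emph{every} $\La'\in\mathfrak{L}$, not only those with $t(\La')\le s'$. Second, your appeal to the containment dichotomy of Lemma~\ref{lem:A.3}(3) is both unnecessary and insufficient: that lemma concerns the unsymmetrized sets $\La_k^{(s')}(m)$, not the $\cS$-symmetrized $\La(\mathfrak{m})$, and even granting a containment $\La(\mathfrak{m}')\subset\La(\mathfrak{m})$ you would still need to show each $\Xi_j$ meets $\La(\mathfrak{m}')$, which is not automatic. The paper's argument is one line: since $\La(\mathfrak{m}')=\cS(\La(\mathfrak{m}'))$ by part $(5)$, if $\Xi(\mathfrak{m})\cap\La(\mathfrak{m}')\neq\emptyset$ then applying $\cS$ gives $\cS(\Xi(\mathfrak{m}))\cap\La(\mathfrak{m}')\neq\emptyset$ as well; and one of the two intersections is nonempty since $\La(\mathfrak{m})=\Xi(\mathfrak{m})\cup\cS(\Xi(\mathfrak{m}))$ meets $\La(\mathfrak{m}')$.
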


\begin{proof}
$(1)$ Let $ \La^{(s')}(m_j) \cup \cS(\La^{(s')}(m_j)) \in \mathfrak{m_j}$, $j = 1, 2$. Since $\mathfrak{m_1} \neq \mathfrak{m_2}$, $p_{\mathfrak{m_1}} \cap p_{\mathfrak{m_2}} = \emptyset$. Therefore, $\dist(\La^{(s')}(m_1), \La^{(s')}(m_2)) > 6 R^{(s')}$,  $\dist (\cS(\La^{(s')}(m_1)), \cS(\La^{(s')}(m_2))) > 6 R^{(s')}$. Furthermore, due to part $(2)$ of Lemma~\ref{lem:7mdeltaSm}, $\dist(\cS(\La^{(s')}(m_1)), \La^{(s')}(m_2)) > 6 R^{(s')}$, $\dist(\cS(\La^{(s')}(m_2)), \La^{(s')}(m_1)) > 6 R^{(s')}$. This implies the statement in $(1)$.

$(2)$ Let $ \La^{(s')}(m') \cup \cS(\La^{(s')}(m')) \in \mathfrak{m}$. One has
\begin{equation}\label{eq:8Lmsets1}
\bigl(m' + B(2R^{(t(\La(\mathfrak{m})))})\bigr) \subset \La^{(s')}(m') \subset \bigl(m' + B(3 R^{(t(\La(\mathfrak{m})))})\bigr).
\end{equation}
Furthermore, $\{m', \cS(m')\} = p_\mathfrak{m}$. This implies the first statement in $(2)$. The second statement in $(2)$ follows from Definition~\ref{defi:7.LLLL}.

$(3)$ Let $\mathfrak{m}_1 \neq \mathfrak{m_2}$. If $t(\La(\mathfrak{m}_1)) = t(\La(\mathfrak{m}_2))$, then $(3)$ follows from $(1)$. If $t(\La(\mathfrak{m}_1)) \neq t(\La(\mathfrak{m}_2))$, then $(3)$ follows from $(2)$.

$(4)$ Assume that $t(\La') = t(\La'')$, $\La' \neq \La''$. It follows from $(3)$ and $(1)$ that $\dist (\La', \La'') \ge R^{(t(\La'))}$. So, $(i)$ from part $(2)$ of Definition~\ref{defi:5.twolambdas6} holds with $R_a \ge R^{(a)}$. Let $\La(\mathfrak{m})$ be arbitrary, and set $a = t(\La(\mathfrak{m})) + 1$. Due to part $(2)$, one has $\La(\mathfrak{m}) = \Xi(\mathfrak{m}) \cup \cS(\Xi(\mathfrak{m}))$ with $\diam (\Xi(\mathfrak{m})) \le 6 R^{(t(\La(\mathfrak{m})))} = 6 R^{(a-1)} < 2^{-a} R^{(a)} \le 2^{-a} R_a$. Furthermore, let $\La(\mathfrak{m}')$ be arbitrary. Assume $\La(\mathfrak{m}) \cap \La(\mathfrak{m}') \neq \emptyset$. Once again, due to part $(2)$, one has $\La(\mathfrak{m}') = \Xi(\mathfrak{m}') \cup \cS(\Xi(\mathfrak{m}'))$. This implies $\Xi(\mathfrak{m}) \cap \La(\mathfrak{m}') \neq \emptyset$ and $\cS(\Xi(\mathfrak{m})) \cap \La(\mathfrak{m}') \neq \emptyset$. Hence, $(ii)$ from part $(2)$ of Definition~\ref{defi:5.twolambdas6} holds as well. This finishes the proof of $(4)$.

$(5)$ This follows readily from the definition of the sets $\La(\mathfrak{m})$.
\end{proof}

Assume $|k| < \delta^{(s-2)}_0$. For $\ell = 1, 2, \ldots$, set
\begin{equation} \label{eq:7.twolambdas5}
\mathfrak{B}(s,0) :=  B(3 R^\es), \quad \mathfrak{B}(s,\ell) = \mathfrak{B}(s, \ell-1)  \setminus \Bigl( \bigcup_{\mathfrak{m} \in \mathfrak{M} : \La(\mathfrak{m}) \between \mathfrak{B}(s,\ell-1)} \La(\mathfrak{m}) \Bigr).
\end{equation}

\begin{lemma}\label{lem:7setLambdas}
$(1)$ There exists $\ell_0 < 2^s$ such that $\mathfrak{B}(s,\ell) = \mathfrak{B}(s, \ell+1)$ for any $\ell \ge \ell_0$.

$(2)$ For any $\La \in \mathfrak{L}$, we have either $\La \subset \mathfrak{B}(s,\ell_0)$ or $\La \subset \Bigl( \IZ^\nu \setminus
\mathfrak{B}(s, \ell_0) \Bigr)$.

$(3)$ Set $\La^{(s)}_{k,sym}(0) = \mathfrak{B}(s,\ell_0)$. Then, for any $\La_k^{(s')}(m)$, we have either $\La_k^{(s')}(m) \cap \La^{(s)}_{k,sym}(0) = \emptyset$ or $\La_k^{(s')}(m) \subset \La^{(s)}_{k,sym}(0)$.

$(4)$ $\cS(\mathfrak{B}(s,\ell)) = \mathfrak{B}(s,\ell)$ for any $\ell$. In particular, $\cS(\La^{(s)}_{k,sym}(0)) = \La^{(s)}_{k,sym}(0)$.

$(5)$ For any $\ell \ge 1$, we have
\begin{equation}\label{eq:7.twolambdas5NEW}
\{ n \in \mathfrak{B}(s,\ell-1)) : \dist (n, \IZ^\nu \setminus \mathfrak{B}(s,\ell-1)) \ge 6 R^\esone \} \subset \mathfrak{B}(s,\ell) \subset \mathfrak{B}(s,\ell-1)).
\end{equation}
In particular, $B(2 R^\es)  \subset \La^{(s)}_{k,sym}(0) \subset B(3 R^\es)$.

$(6)$ $\La^{(s)}_{k,sym}(0) \subset \La^{(s)}_{k}(0)$.
\end{lemma}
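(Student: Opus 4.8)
The plan is to identify $\mathfrak{B}(s,\cdot)$ with an instance of the abstract subtraction construction \eqref{eq:5.twolambdas5NEW}, read off parts $(1)$--$(3)$ from the general theory of Lemma~\ref{lem:5.twolambdas4}, and then handle the geometric content of $(4)$--$(6)$ directly. First I would observe that, with the family $\mathfrak{L}$ and the function $t$ of Definition~\ref{defi:7.LLLL}, the recursion \eqref{eq:7.twolambdas5} coincides with \eqref{eq:5.twolambdas5NEW} for the initial set $\La_{0,0}=B(3R^\es)$: a set $\La(\mathfrak{m})\in\mathfrak{L}$ satisfies $\La(\mathfrak{m})\between\mathfrak{B}(s,\ell-1)$ precisely when $\La(\mathfrak{m})\nsubseteq\mathfrak{B}(s,\ell-1)$ and $\La(\mathfrak{m})\cap\mathfrak{B}(s,\ell-1)\neq\emptyset$, and removing a $\La(\mathfrak{m})$ disjoint from $\mathfrak{B}(s,\ell-1)$ has no effect, so by induction $\mathfrak{B}(s,\ell)=\La_{0,\ell}$. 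By Lemma~\ref{lem:7.lLL}$(4)$ the pair $(\mathfrak{L},t)$ is a proper subtraction system, and $\sup_{\La\in\mathfrak{L}}t(\La)\le s-1$. Hence Lemma~\ref{lem:5.twolambdas4}$(2)$ yields $\ell_0<2^{s-1}<2^{s}$ with $\mathfrak{B}(s,\ell)=\mathfrak{B}(s,\ell_0)$ for $\ell\ge\ell_0$, which is $(1)$, and Lemma~\ref{lem:5.twolambdas4}$(3)$ is $(2)$. Part $(3)$ follows from $(2)$ upon noting that, for $1\le s'\le s-1$ and $m\in\cM^{(s')}_{k,s-1}$, the set $\La^{(s')}_k(m)$ is contained in $\La(\mathfrak{m})\in\mathfrak{L}$, where $\mathfrak{m}$ is the equivalence class of $\La^{(s')}_k(m)\cup\cS(\La^{(s')}_k(m))$, so that the alternative "$\La(\mathfrak{m})\subset\La^{(s)}_{k,sym}(0)$ or $\La(\mathfrak{m})\cap\La^{(s)}_{k,sym}(0)=\emptyset$" descends to $\La^{(s')}_k(m)$.

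Next I would prove $(4)$ by induction on $\ell$. The ball $\mathfrak{B}(s,0)=B(3R^\es)$ is $\cS$-invariant; assuming $\cS(\mathfrak{B}(s,\ell-1))=\mathfrak{B}(s,\ell-1)$, every $\La(\mathfrak{m})$ is $\cS$-invariant by Lemma~\ref{lem:7.lLL}$(5)$, and the relation $\La(\mathfrak{m})\between\mathfrak{B}(s,\ell-1)$ is preserved upon applying $\cS$ to both arguments, so the deleted set $\bigcup_{\La(\mathfrak{m})\between\mathfrak{B}(s,\ell-1)}\La(\mathfrak{m})$ is $\cS$-invariant, and therefore so is $\mathfrak{B}(s,\ell)$. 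Taking $\ell=\ell_0$ gives $\cS(\La^{(s)}_{k,sym}(0))=\La^{(s)}_{k,sym}(0)$.

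For $(5)$, the inclusion $\mathfrak{B}(s,\ell)\subset\mathfrak{B}(s,\ell-1)$ is immediate from \eqref{eq:7.twolambdas5}. Conversely, suppose $n\in\mathfrak{B}(s,\ell-1)$ is removed at step $\ell$, so $n\in\La(\mathfrak{m})$ for some $\mathfrak{m}$ with $\La(\mathfrak{m})\between\mathfrak{B}(s,\ell-1)$. By Lemma~\ref{lem:7.lLL}$(2)$, $\La(\mathfrak{m})=\Xi(\mathfrak{m})\cup\cS(\Xi(\mathfrak{m}))$ with $\diam\Xi(\mathfrak{m})\le 6R^{(t(\La(\mathfrak{m})))}\le 6R^\esone$; since $\mathfrak{B}(s,\ell-1)$ is $\cS$-invariant by $(4)$, a point of $\La(\mathfrak{m})$ outside $\mathfrak{B}(s,\ell-1)$ together with its $\cS$-image shows that each of the two pieces $\Xi(\mathfrak{m})$, $\cS(\Xi(\mathfrak{m}))$ meets $\IZ^\nu\setminus\mathfrak{B}(s,\ell-1)$, and $n$ lies in one of them, so $\dist(n,\IZ^\nu\setminus\mathfrak{B}(s,\ell-1))\le 6R^\esone$; this is the displayed inclusion. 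Iterating it, and using the $\cS$-invariance of all $\mathfrak{B}(s,\ell)$, one proves by induction that every point of $\IZ^\nu\setminus\mathfrak{B}(s,\ell)$ lies within $\ell\cdot 6R^\esone$ of $\IZ^\nu\setminus B(3R^\es)$. Since $\ell_0<2^{s}$ and $2^{s}\cdot 6R^\esone<R^\es$ by the super-fast growth of the scales in \eqref{eq:A.1A}, any $n$ with $|n|\le 2R^\es$ must lie in $\mathfrak{B}(s,\ell_0)=\La^{(s)}_{k,sym}(0)$, so $B(2R^\es)\subset\La^{(s)}_{k,sym}(0)\subset B(3R^\es)$.

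Finally, for $(6)$ I would recall that $\La^\es_k(0)$ is obtained by deleting from $B(3R^\es)$ the sets $\La^\ar_k(m')$ with $r\le s-1$, $m'\in\cM^{(r)}_{k,s-1}$, and $\La^\ar_k(m')\between B(3R^\es)$. Since $\La^{(s)}_{k,sym}(0)\subset B(3R^\es)$ by $(5)$, it suffices to check $\La^{(s)}_{k,sym}(0)\cap\La^\ar_k(m')=\emptyset$ for each such $\La^\ar_k(m')$; were the intersection nonempty, part $(3)$ would force $\La^\ar_k(m')\subset\La^{(s)}_{k,sym}(0)\subset B(3R^\es)$, contradicting $\La^\ar_k(m')\nsubseteq B(3R^\es)$. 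The step I expect to be the main obstacle is the distance bookkeeping in $(5)$: controlling the total erosion over the up to $2^{s-1}$ subtraction rounds and comparing it with $R^\es$ relies crucially on keeping each $\mathfrak{B}(s,\ell)$ $\cS$-symmetric, so that both $\cS$-halves of every removed set $\La(\mathfrak{m})$ stay pinned to the already-deleted region -- this is exactly why the sets $\La^{(s')}_k(m)\cup\cS(\La^{(s')}_k(m))$ are bundled into the symmetric family $\mathfrak{L}$ and the subtraction is iterated rather than done in one pass. (A boundary case where $t(\La(\mathfrak{m}))=s-1$ and $\diam\Xi(\mathfrak{m})$ may equal $6R^\esone$ exactly is immaterial and I would not dwell on it.)
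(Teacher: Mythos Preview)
Your proposal is correct and follows essentially the same approach as the paper: parts $(1)$--$(2)$ are read off from Lemma~\ref{lem:5.twolambdas4} via Lemma~\ref{lem:7.lLL}$(4)$, part $(3)$ by descending the dichotomy from $\La(\mathfrak{m})$ to $\La_k^{(s')}(m)$, part $(4)$ by induction using the $\cS$-invariance of each $\La(\mathfrak{m})$, part $(5)$ by the two-piece decomposition $\La(\mathfrak{m})=\Xi(\mathfrak{m})\cup\cS(\Xi(\mathfrak{m}))$ together with the $\cS$-invariance established in $(4)$, and part $(6)$ by combining $(3)$ and $(5)$ with the definition of $\La^{(s)}_k(0)$. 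Your write-up is in fact somewhat more explicit than the paper's in parts $(5)$ and $(6)$, and your observation that the $\cS$-symmetry of $\mathfrak{B}(s,\ell-1)$ is precisely what pins both halves of $\La(\mathfrak{m})$ to the already-deleted region captures the geometric point exactly.
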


\begin{proof}
Parts $(1)$, $(2)$ follow from Lemma~\ref{lem:5.twolambdas4}.

Let $\La_k^{(s')}(m)$ be such that $\La_k^{(s')}(m) \cap \mathfrak{B}(s,\ell_0) \neq \emptyset$. Let $\mathfrak{m}$ be the equivalence class containing  $\La_k^{(s')}(m) \cup \cS(\La_k^{(s')}(m))$. Then, just by definition, $\La_k^{(s')}(m) \subset \La(\mathfrak{m})$. In particular, $\La(\mathfrak{m}) \cap \mathfrak{B}(s,\ell_0) \neq \emptyset$. This implies $\La(\mathfrak{m}) \subset \mathfrak{B}(s,\ell_0)$. Therefore, $\La_k^{(s')}(m) \subset \mathfrak{B}(s,\ell_0)$. This finishes the proof of $(3)$.

To verify $(4)$ note that $\cS(\mathfrak{B}(s,0)) = \mathfrak{B}(s,0)$. Combining this with part $(5)$ of Lemma~\ref{lem:7.lLL}, one obtains $T(\mathfrak{B}(s,\ell)) = \mathfrak{B}(s,\ell)$ for any $\ell$, as claimed.

Consider an arbitrary $\La(\mathfrak{m})$. It follows from \eqref{eq:7Lmsets} and Definition~\ref{defi:7.LLLL} that there exists $m$ such that
$$
\La(\mathfrak{m}) \subset \bigl( (m + B(6 R^{(t(\La(\mathfrak{m})))})) \bigr) \cup \mathcal{S}\bigl( (m + B(6 R^{(t(\La(\mathfrak{m})))})).
$$
Assume that  $\La(\mathfrak{m}) \between \mathfrak{B}(s,\ell-1)$. Due to part $(4)$ of the current lemma, $\cS(\mathfrak{B}(s,\ell-1)) = \mathfrak{B}(s,\ell-1)$. Hence,
$$
\{ n \in \mathfrak{B}(s,\ell-1)) : \dist (n, \IZ^\nu \setminus \mathfrak{B}(s,\ell-1)) \ge 6 R^\esone \} \subset \mathfrak{B}(s,\ell-1) \setminus \La(\mathfrak{m}).
$$
This implies \eqref{eq:7.twolambdas5NEW}. The second statement in $(5)$ follows from \eqref{eq:7.twolambdas5NEW} since $\ell_0 < 2^s$.

Statement $(6)$ follows from the definition of the sets $\La^{(s)}_{k,sym}(0)$, $\La^{(s)}_{k}(0)$.
\end{proof}

\begin{prop}\label{prop:A.3}
Let $s \ge 1$ and $k \in \IR \setminus \bigcup_{0 < |m| \le 12 R^\es} (k_{m,s-1}^-, k_{m,s-1}^+)$, $\delta_0 := (\delta_0^\zero)^{1/2}$. Let $\ve_0$, $\ve_{s}$ be as in Definition~\ref{def:4-1}. For $\ve \in (-\ve_{s},\ve_{s})$, the following statements hold.

\begin{itemize}

\item[(1)] For $s = 1$ and any $0 < |m| \le 12 R^{(1)}$, $|k_1 - k| < \delta(1) := 2\delta^{(0)}_0$, we have $|v(m,k_1) - v(0,k_1)| \ge \delta_0$. If $s \ge 2$, $0 < |m| \le 12 R^{(s)}$, $m \notin \bigcup_{1 \le r \le s-1} \bigcup_{m' \in \cM^{(r)}_{k,s-1}} \La^{(r)}_{k}(m')$, then $|v(m,k) - v(0,k)| \ge \delta_0^4$.

\item[(2)] The matrix $H_{\La^\es_k(0), \ve, k}$ belongs to $\cN^{(s)}(0, \La^\es_k(0), \delta_0)$. If $s \ge 2$ and $|k|<\delta^{(s-2)}_0$, then the matrix $H_{\La^\es_{k,sym}(0), \ve, k}$ belongs to $\cN^{(s)}(0, \La^\es_{k,sym}(0), \delta_0)$. We introduce an additional notation $\La^\es_{k,a}(0)$, which means $\La^\es_{k}(0)$ if $|k| \ge \delta^{(s-2)}_0$, and either of $\La^\es_k(0)$, $\La^\es_{k,sym}(0)$ if $|k| < \delta^{(s-2)}_0$. For $s \ge 2$, the subsets from Definition~\ref{def:4-1} are as follows: $\cM^{(r)}_{k}(\La^\es_{k,a}(0)) := \cM^{(r)}_{k,s-1} \cap \La^\es_{k,a}(0)$, $\La^\ar_k(m')$, $m' \in \cM^{(r)}_{k,s-1} (\La^\es_{k,a}(0))$, $r = 1, \dots, s-1$.

\item[(3)] Assume that $k \in \IR \setminus \bigcup_{|m| \le 12 R^{(s)}} (k_{m,s}^-, k_{m,s}^+)$. Then for any $m \in \cM^{(s)}_{k,s}$, the matrix $H_{\La^{(s)}_{k,a}(m), \ve, k}$ with the subsets $m + \cM^{(r)}_{k + m \omega}(\La^{(s)}_{k + m \omega,a }(0))$, $\La^\ar_k(m') := m' + \La^\ar_{k + m' \omega}(0)$, $r = 1, \dots, s-1$ belongs to $\cN^{(s)}(m, \La^\es_{k,a}(m), \delta^\zero_0)$.

\item[(4)]  For $|k - k_1| < \delta(1)$, the matrix $H_{\La^\one_{k}(0), \ve, k_1}$ belongs to the class $\cN^{(1)}(0, \La^\one_k(0), \delta_0)$. For $s \ge 2$, $k_1 \in (k-\delta(s), k+\delta(s))$ with $\delta(s) := 2\delta^{(s-2)}_0$, the matrix $H_{\La^\es_{k,a}(0), \ve, k_1}$ with the subsets $\cM^{(r)}_{k} (\La^\es_{k,a}(0))$, $\La^\ar_k(m')$, $m' \in \cM^{(r)}_{k} (\La^\es_{k,a}(0))$, $r = 1, \dots, s-1$  belongs to the class $\cN^{(s)} (0, \La^\es_{k,a}(0), \delta_0)$. Let $Q^{(s)} (0, \La^{(s)}_{k,a}(0); \ve, k_1, E)$, $E^{(s)}(0, \La^{(s)}_{k,a}(0); \ve, k_1)$ be defined as in Proposition~\ref{prop:4-4} with $H_{\La^{(s)}_{k,a}(0),\ve,k_1}$ in the role of $\hle$. The following estimates hold for $s = 1$, $|k_1 - k| < \delta^\zero_0/4$ or $s \ge 2$, $|k_1 - k| < \delta(s)/8$:
\begin{equation}\label{eq:7kk1compderiv}
|\partial^\alpha_{k_1} E^{(s)}(0, \La^{(s)}_{k,a}(0); \ve, k_1) - \partial^\alpha_{k_1} v(0,k_1)| < |\ve|^{17/16}, \quad \alpha \le 2.
\end{equation}
\begin{equation}\label{eq:7kk1compderivloverin}
\begin{split}
(\sgn k_1) \partial^\alpha_{k_1} E^{(1)}(0, \La^{(1)}_{k,a}(0); \ve, k_1)\ge \frac{7|k_1|}{4\lambda}, \quad 0<\alpha\le 2\\
(\sgn k_1) \partial^\alpha{k_1} E^{(s)}(0, \La^{(s)}_{k,a}(0); \ve, k_1)\ge \frac{7|k_1|}{4\lambda} - \sum_{s' \ge 1 : |k| > \delta^{(s')}_0/2} |\ve| (\delta^{(s')}_0)^5, \quad s\ge 2, \quad 0<\alpha\le 2,
\end{split}
\end{equation}
\begin{equation}\label{eq:7kk1comp}
|E^{(s)}(0, \La^{(s)}_{k,a}(0);\ve,k_1) - E^{(s)}(0, \La^{(s)}_{k,a}(0);\ve,k)| < 3 |k - k_1|.
\end{equation}
Furthermore, if $k_2 \in (k - \delta(s), k + \delta(s))$ and $k_2 \in \IR \setminus \bigcup_{|m| \le 12 R^\es} (k_{m,s-1}^-, k_{m,s-1}^+)$, so that the current proposition applies to $k_2$, then
\begin{equation}\label{eq:7kk1comp1}
|E^{(s)}(0, \La^{(s)}_{k,a}(0);\ve,k_1) - E^{(s)}(0, \La^{(s)}_{k_2,a}(0);\ve,k_1)| \le 3 |\ve| (\delta^\esone_0)^5.
\end{equation}

\item[(5)] Let $k_1 \in (k-\delta(s), k+\delta(s))$. Let $Q^{(s')} (0, \La^{(s')}_k(0); \ve, k_1, E)$, $E^{(s')}(0, \La^{(s')}_k(0); \ve, k_1)$ be defined as in Proposition~\ref{prop:4-4} with $H_{\La^{(s')}_k(0), \ve, k_1}$ in the role of $\hle$. Then, for $|\alpha| \le 2$,
\begin{equation}\label{eq:7Ederiss1compA}
|\partial^\alpha_{k_1} E^{(s)}(0, \La^{(s)}_k(0); \ve, k_1) - \partial^\alpha_{k_1} E^{(s-1)}(0, \La^{(s-1)}_k(0); \ve, k_1)| \le |\ve| (\delta^\esone_0)^5.
\end{equation}
Here, $E^{(0)}(m',\La'; k', \ve) := v(m',k')$, as usual.

\item[(6)] Let $0 < k < k' \le \gamma $, $k , k' \in \IR \setminus \bigcup_{|m| \le 12 R^\es} (k_{m,s}^-, k_{m,s}^+)$. Define $k \thicksim _s k'$ if $k, k'$ are in the same connected component of $\IR \setminus \bigcup_{0 < |m| \le 12 R^\es} (k_{m,s}^-, k_{m,s}^+)$, $k \nsim_s k'$ otherwise.

Then,
\begin{equation}\label{eq:7Ederivlower}
\begin{split}
E^{(s)}(0, \La^{(s)}_{k'}(0); \ve, k') - E^{(s)}(0, \La^{(s)}_{k}(0); \ve, k) \le \frac{9k'}{4\lambda} (k' - k)  + 3 |\ve| (\delta^\es_0)^{5} \;  \\
\text{for any $0 < k < k' \le \gamma $ if $s=1$, and for $k' - k < \delta^{(s-2)}_0$ if $s\ge 2$}, \\
\\
E^{(s)}(0, \La^{(s)}_{k'}(0); \ve, k') - E^{(s)}(0, \La^{(s)}_{k}(0); \ve, k) \\
\ge \begin{cases} \frac{7}{8\lambda} ((k')^2 - k^2) - 3 |\ve| (\delta^\zero_0)^{4} & \text{if $s = 1$}, \\ \frac{7}{8\lambda} ((k')^2 - k^2) - 3 |\ve| (\delta^\es_0)^{4} & \text{if $s \ge 2$ and $k \thicksim _s k'$}, \\ \frac{7}{8\lambda} ((k')^2 - k^2) - 8 |\ve| \sum_{s' \le s-1 : \min(k' - k,k) > \delta^{(s')}_0} (\delta^{(s')}_0)^{4} & \text{if $s\ge 2$ and $k \nsim_s k'$}. \end{cases}
\end{split}
\end{equation}

\item[(7)]
\begin{equation}\label{eq:7Ecomparison}
\begin{split}
E^{(s)}(0, \La^{(s)}_{k}(0); \ve, k) = E^{(s)}(0, \La^{(s)}_{-k}(0); \ve, -k), \\
E^{(1)}(0, \La^{(1)}_{k}(0); \ve, k_1) = E^{(1)}(0, \La^{(1)}_{k}(0); \ve, -k_1) \quad \text{if $|k|,|k_1|<\delta^{(0)}_0$}, \\
E^{(s)}(0, \La^{(s)}_{k,sym}(0); \ve, k_1) = E^{(s)}(0, \La^{(s)}_{k,sym}(0); \ve, -k_1) \quad \text{if $s \ge 2$, $|k|, |k_1| < \delta^{(s-2)}_0/2$}.
\end{split}
\end{equation}

\end{itemize}
\end{prop}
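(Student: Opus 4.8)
The plan is to prove all seven parts simultaneously by induction on $s$, the assertions at scale $s$ invoking those at all smaller scales. The base case $s=1$ is essentially immediate: for $k\notin\frac{\omega}{2}\IZ^\nu$ and $\ve$ small, Remark~\ref{rem:11.1} together with Lemma~\ref{lem:A.2}$(1)$ gives $H_{\La,\ve,k}\in\cN^{(1)}(0,\La,\delta_0)$ for any $\La\supset\{0\}$ with a suitable $\delta_0$, and part $(1)$ at $s=1$ is exactly Lemma~\ref{lem:A.2}$(4)$. Since $\La^{(1)}_k(0)=B(2R^{(1)})$ and $v(n;k_1)=\lambda^{-1}(n\omega+k_1)^2$ is explicit, parts $(4)$–$(7)$ at $s=1$ follow from Proposition~\ref{prop:4-4}$(3)$,$(4)$ — which controls $Q^{(1)}$ and its derivatives, hence $E^{(1)}=v(0;\cdot)+Q^{(1)}$ and, by implicit differentiation of $E=v(0;k_1)+Q^{(1)}(0,\La;\ve,k_1,E)$, its $k_1$-derivatives — together with the translation/reflection covariance identities of Lemma~\ref{lem:basicshiftprop}$(3)$.

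For the inductive step I would first establish part $(1)$ at scale $s$: if $0<|n|\le 12R^{(s)}$ and $|v(n;k)-v(0;k)|<\delta_0^4$, then the combinatorial dichotomy of Lemma~\ref{lem:A.3}$(3)$–$(4)$ forces $n$ into one of the removed blocks $\La^{(r)}_k(m')$; this is the entire content. Part $(2)$ is then a verification of conditions $(a)$–$(f)$ of Definition~\ref{def:4-1} for $H_{\La^{(s)}_k(0),\ve,k}$ with the prescribed subsets: $(a)$,$(b)$ come from Remark~\ref{rem:7.1oinout}$(a)$,$(b)$ and the disjointness in Lemma~\ref{lem:A.3}$(1)$; $(d)$ from the inclusions $B(2R^{(r)})\subset\La^{(r)}_k(0)\subset B(3R^{(r)})$ in Remark~\ref{rem:7.1oinout}$(c)$; $(c)$ from part $(3)$ applied at the smaller scales $r<s$, after conjugating $H_{\La^{(r)}_k(m'),\ve,k}$ to $H_{\La^{(r)}_{k+m'\omega}(0),\ve,k+m'\omega}$ by Lemma~\ref{lem:basicshiftprop}$(4)$ (legitimate because Lemma~\ref{lem:A.2}$(3)$ places $k+m'\omega$ in the admissible set); $(e)$ by converting the defining inequalities of the sets $\cM^{(s')}_{k,s-1}$ for $|v(m;k)-v(0;k)|$ into the required separation of the eigenvalues $E^{(s')}(m,\La^{(s')}(m);\ve,k)$, using $|E^{(s')}-v(m;k)|<|\ve|$ from Proposition~\ref{prop:4-4}$(4)$ and the scale-comparison part $(5)$; and $(f)$ is part $(1)$. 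The version for $\La^{(s)}_{k,sym}(0)$ (when $|k|<\delta_0^{(s-2)}$) is identical, using Lemma~\ref{lem:7setLambdas}$(3)$,$(5)$ to see that every $\La^{(r)}_k(m')$ is contained in or disjoint from $\La^{(s)}_{k,sym}(0)$ and that $B(2R^{(s)})\subset\La^{(s)}_{k,sym}(0)$. Part $(3)$ is then part $(2)$ transported by Lemma~\ref{lem:basicshiftprop}$(4)$, the hypothesis on $k+m\omega$ being supplied again by Lemma~\ref{lem:A.2}$(3)$.

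With part $(2)$/$(3)$ in hand, parts $(4)$,$(5)$ follow from Proposition~\ref{prop:4-4}: part $(5)$ is \eqref{eq:4-17AAAA} and \eqref{eq:4-12acACC} differentiated in $k_1$, using Lemma~\ref{lem:5differentiation} and Lemma~\ref{lem:7differentiation} to bound $\partial^\alpha_{k_1}Q^{(s)}$ and hence, by implicit differentiation, the $k_1$-derivatives of $E^{(s)}$; telescoping $E^{(s)}=v(0;k_1)+\sum_{s'\le s}(E^{(s')}-E^{(s'-1)})$ gives \eqref{eq:7kk1compderiv}, the lower bound \eqref{eq:7kk1compderivloverin} comes from $\partial_{k_1}v(0;k_1)=2\lambda^{-1}k_1$, $\partial^2_{k_1}v=2\lambda^{-1}$ dominating the $O(|\ve|^{1/2})$ corrections, and \eqref{eq:7kk1comp1} is the two-set comparison Corollary~\ref{cor:5.twolambdas1} (both $\La^{(s)}_k(0)$ and $\La^{(s)}_{k_2}(0)$ lie in $\cN^{(s)}(0,\cdot)$ with the common diagonal $v(\cdot;k_1)$ and share a large ball about $0$). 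Part $(7)$ follows from Lemma~\ref{lem:basicshiftprop}$(5)$ together with Lemma~\ref{lem:12Nesreflection} ($\La^{(s)}_{-k}(0)=-\La^{(s)}_k(0)$) and Lemma~\ref{lem:7setLambdas}$(4)$ ($\cS(\La^{(s)}_{k,sym}(0))=\La^{(s)}_{k,sym}(0)$), since the analytic branch $E^{(s)}$ is singled out by its value $v(0;k)=v(0;-k)$ at $\ve=0$ and is preserved by the unitary conjugation plus complex conjugation relating $H_{\La,\ve,k}$ and $\overline{H_{-\La,\ve,-k}}$.

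The main obstacle is part $(6)$, the sharp two-sided control of $E^{(s)}(0,\La^{(s)}_{k'}(0);\ve,k')-E^{(s)}(0,\La^{(s)}_k(0);\ve,k)$, above all in the regime $k\nsim_s k'$. I would split the difference into a same-set, moving-$k$ piece, estimated by integrating the derivative bounds of part $(4)$ (giving the $\frac{9k'}{4\lambda}(k'-k)$ upper bound and the $\frac{7}{8\lambda}((k')^2-k^2)$ lower bound), and a fixed-$k$, changing-set piece $E^{(s)}(0,\La^{(s)}_{k'}(0);\ve,k)-E^{(s)}(0,\La^{(s)}_k(0);\ve,k)$; for the latter, when $k,k'$ lie in the same component one applies \eqref{eq:7kk1comp1}, and when they are separated one descends scale by scale, using \eqref{eq:7Ederiss1compA} to pass from scale $s$ to $s-1$ and charging, at each scale $s'$ at which a resonance interval $(k^-_{m,s'},k^+_{m,s'})$ separates $k$ from $k'$, an extra slack of size $|\ve|(\delta_0^{(s')})^4$. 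Identifying exactly which scales contribute, and checking that the resulting sum matches the claimed estimate, is the delicate bookkeeping; everything else in the proposition is an assembly of the machinery already built in Sections~\ref{sec.3} and \ref{sec.7}.
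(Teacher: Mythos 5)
You have chosen the right scaffold — a simultaneous induction on $s$ with the sets from Section~\ref{sec.7} — and most of your assignments of which lemma does what are accurate (parts~$(1)$, $(3)$, $(7)$ as you describe, part~$(6)$ by the same interval-by-interval decomposition the paper uses). But there is a genuine gap in the one step you describe as routine: verifying condition~$(e)$ of Definition~\ref{def:4-1} when proving part~$(2)$.

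You propose to pass from the defining inequalities for $|v(m;k)-v(0;k)|$ to the required separation of eigenvalues by invoking $|E^{(s')}(m,\La^{(s')};\ve,k)-v(m;k)|<|\ve|$ (Proposition~\ref{prop:4-4}$(4)$) together with the scale-comparison of part~$(5)$. This cannot produce the lower bounds $3\delta_0^{(s-1)}$ and $\delta_0^{(s')}/2$ in \eqref{eq:4-3sge3}: the quantities $\delta_0^{(s')}$ decay doubly-exponentially in $s'$ (each step squares the exponent of the previous scale), so for $s'$ not small the $v$-separation $\sim\delta_0^{(s'-1)}$ and the target $E$-separation $\sim\delta_0^{(s')}$ are both \emph{much smaller} than $|\ve|\le\ve_0$. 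The triangle inequality $|E^{(s')}(m)-E^{(s')}(0)|\ge|v(m,k)-v(0,k)|-2|\ve|$ then has a negative right-hand side, and part~$(5)$ does not rescue this because it only compares adjacent scales at the \emph{same} lattice point. The paper's proof at \eqref{eq:12.condedefi51b} resolves this by invoking the \emph{inductive} form of parts~$(6)$ and~$(7)$:
\begin{equation*}
|E^{(s')}(0,\La^{(s')}_{k+m\omega};\ve,k+m\omega)-E^{(s')}(0,\La^{(s')}_k;\ve,k)|\ \ge\ \tfrac{7}{8\lambda}\,\bigl|(k+m\omega)^2-k^2\bigr|-12|\ve|\bigl(\delta_0^{(s')}\bigr)^4,
\end{equation*}
whose correction $12|\ve|(\delta_0^{(s')})^4\ll\delta_0^{(s')}$ is of the right size to preserve the separation. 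So part~$(6)$ at scales $s'<s$ is not a free-standing "main obstacle" you can leave for the end: it is an indispensable input to proving $\cN^{(s)}$-membership in part~$(2)$, and must enter the same inductive cycle.

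A secondary gap: for \eqref{eq:7kk1compderivloverin} you argue that $\partial_{k_1}v=2\lambda^{-1}k_1$ and $\partial^2_{k_1}v=2\lambda^{-1}$ "dominate the $O(|\ve|^{1/2})$ corrections," but this breaks down when $|k_1|$ is small (and for $s\ge2$ the relevant $k$'s can be as small as $\delta_0^{(s-2)}$, far below $|\ve|^{1/2}$). The paper handles this by the symmetry mechanism: when $|k|<\delta_0^{(s-2)}/2$ it uses $\La^{(s)}_{k,\mathrm{sym}}(0)$, for which part~$(7)$ gives the identity $E^{(s)}(0,\La^{(s)}_{k,\mathrm{sym}};\ve,k_1)=E^{(s)}(0,\La^{(s)}_{k,\mathrm{sym}};\ve,-k_1)$, forcing $\partial_{k_1}E^{(s)}|_{k_1=0}=0$; only then does the second-derivative bound $\partial^2_{k_1}E^{(s)}>7/(4\lambda)$ give $\sgn(k_1)\,\partial_{k_1}E^{(s)}\ge 7|k_1|/(4\lambda)$ with no additive loss. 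For $|k|$ larger, the bound is transported up from the appropriate lower scale via part~$(5)$, which is exactly where the sum over $s'$ in \eqref{eq:7kk1compderivloverin} comes from. You mention part~$(7)$ only in its own right, not as the mechanism making \eqref{eq:7kk1compderivloverin} possible at small $|k_1|$; without it the estimate cannot close.
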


\begin{proof}
In the proofs below we verify the statements for $H_{\La^\es_{k}(0), \ve, k}$. The verification for $H_{\La^\es_{k,sym}(0), \ve, k}$ is completely similar. Let $k \in \IR \setminus \bigcup_{|m| \le 12 R^{(1)}} (k_{m,0}^-, k_{m,0}^+)$ and suppose $|k_1 - k| < \delta(1)$. Consider $m$ satisfying $0 < |m| \le 12 R^\one$. It follows from part $(4)$ of Lemma~\ref{lem:A.2} that $|v(m,k_1) - v(0,k_1)| \ge (\delta_0^\zero)^{1/2} = \delta_0$. This verifies the first statement in $(1)$; see Definition~\ref{def:4-1}. The second statement in $(1)$ follows immediately from the definition of the sets $\cM^{(r)}_{k,s-1}$.

The proof of parts $(2)$--$(7)$ goes by induction in $s = 1, 2, \dots$. Let $s = 1$ and let  $k \in \IR \setminus \bigcup_{|m| \le 12 R^\one} (k_{m,0}^-, k_{m,0}^+)$. First of all, part $(5)$ is due to \eqref{eq:3Hinvestimatestatement1kvardE} from Lemma~\ref{lem:5differentiation}, part $(7)$ is due to part $(5)$ of Lemma~\ref{lem:basicshiftprop} and Lemma~\ref{lem:12Nesreflection}. It follows from part $(1)$ that for $|k_1 - k| < \delta(1)$, the matrix $H_{\La^\one_k(0), \ve, k_1}$ belongs to the class $\cN^{(1)}(0, \La^\one_k(0), \delta_0)$. This gives the base of the induction for part $(2)$ and for the first statement in part $(4)$. The second statement in part $(4)$ is due to \eqref{eq:3Hinvestimatestatement1kvardE} from Lemma~\ref{lem:5differentiation}. Taking into account that $\partial^\alpha_k h(m,n;k,\ve) = 0$ if $m \neq n$, and
$$
|\partial^\alpha_k h(m,m;k,\ve)| = 2 \lambda^{-1} |k + m \omega| < 8 \exp (|m|^{1/5}),
$$
so that $B_0 = 8$ in the notation of the lemma, one obtains the estimate \eqref{eq:7kk1compderiv}. Assume $|k| \le \delta^\zero_0/2$. Note that $\La^{(1)}_{k}(0) = -\La^{(1)}_{k}(0)$. Due to parts $(4)$ and $(7)$ of the current proposition, the function  $E^{(1)}(0, \La^{(1)}_{k}(0); \ve, k_1)$ is well defined, $C^2$-smooth, obeys $E^{(1)}(0, \La^{(1)}_{k}(0); \ve, k_1) = E^{(1)}(0, \La^{(1)}_{k}(0); \ve, -k_1)$ for $|k_1| < \delta^\zero_0$ and $\partial^2_{k_1} E^{(1)}(0, \La^{(1)}_{k}(0); \ve, k_1) > 7/4\lambda$ with $\lambda=256$. This implies \eqref{eq:7kk1compderivloverin}. Assume  $1 \ge k > \delta^\zero_0$. Note that $\lambda = 256$ in this case. Since $E^{(1)}(0, \La^{(1)}_k(0); \ve, k_1) = E^{(1)}(0, \La^{(1)}_{k_1}(0); \ve, k_1)$, one obtains using \eqref{eq:7kk1compderiv},
\begin{equation}\label{eq:7kk1compderivintegr}
\begin{split}
 \partial_{k_1} E^{(1)}(0, \La^{(s)}_k(0); \ve, k_1)
\ge \partial_{k_1} v(0,k_1) - |\ve|^{17/16}\\
 = (2/\lambda) k_1 - |\ve|^{17/16} > \frac{7k_1}{4\lambda} > (\delta_0)^2
\end{split}
\end{equation}
for $|k_1 - k| < \delta^\zero_0/4$ in case $k > 0$. A similar estimate holds for $k \ge 1$ and for $k < 0$. So, \eqref{eq:7kk1compderivloverin} holds in any event. The estimate \eqref{eq:7kk1comp} follows from \eqref{eq:7kk1compderiv}. The estimate \eqref{eq:7kk1comp1} is trivial for $s = 1$ since $\La^{(1)}_{k,sym}(0)=\La^{(1)}_k(0) = \La^{(1)}_{k_2}(0)$. This finishes part $(4)$ for $s = 1$.

Assume that $k \in \IR \setminus \bigcup_{|m| \le 12 R^{(2)}} (k_{m,2}^-, k_{m,2}^+)$. Then, due to Lemma~\ref{lem:A.2},  $k + m \omega \in \IR \setminus \bigcup_{|m| \le 12 R^{(2)}} (k_{m,1}^-, k_{m,1}^+)$ for any  $m \in \cM^{(2)}_{k,2}$. Therefore, due to part $(2)$ of the current lemma with $s = 1$ and part $(4)$ of Lemma~\ref{lem:basicshiftprop}, the matrix $H_{\La^{(1)}_k(m), \ve, k}$ belongs to the class $\cN^{(1)}(m, \La^\one_k(m), \delta_0)$. This is the base of the induction for part $(3)$.

We will now verify $(6)$. The upper estimate follows from \eqref{eq:7kk1compderiv}. Let us verify the lower estimate. Let $[k'_j, k_j'']$ be the connected components of the set $\IR \setminus \bigcup_{0 < |m| \le 12 R^{(1)}} (k_{m,2}^-, k_{m,2}^+)$, enumerated so that $k''_j < k'_{j+1}$. Assume $k'_i \le k < k' \le k''_i$ for some $i$. Assume also that $k' - k > \delta^\zero_0$. Set $\theta_t = k + t \delta^\zero_0$, $t = 0, \dots, t'-1$, where $t' = [(\delta^\zero_0)^{-1} (k' - k)] - 1$, $\theta_{t'} = k'$. Combining \eqref{eq:7kk1compderivloverin} with \eqref{eq:7kk1comp1}, one obtains
\begin{equation}\label{eq:7Ekkcompar2A}
E^{(1)}(0, \La^{(1)}_{\theta_r}(0); \ve, \theta_r) - E^{(1)}(0, \La^{(1)}_{\theta_{r-1}}(0); \ve, \theta_{r-1}) \ge \frac{7}{8\lambda} (\theta_r^2 - \theta_{r-1}^2) - 3 |\ve| (\delta^\zero_0)^5.
\end{equation}
Adding up \eqref{eq:7Ekkcompar2A} over $r = 1, \dots, t'$, one obtains
\begin{equation}\label{eq:7Ekkcompar2AB}
E^{(1)}(0, \La^{(1)}_{k'}; \ve,k') - E^{(1)}(0, \La^{(1)}_{k}(0); \ve,k) \ge \frac{7}{8\lambda} ((k')^2 - k^2) - 3 t' |\ve| (\delta^\zero_0)^5.
\end{equation}
Recall that in $(6)$ we assume $k' \le \gamma$. So, $t' \le (\delta^\zero_0)^{-1}$. Hence, \eqref{eq:7Ekkcompar2AB} implies in particular the lower estimate in \eqref{eq:7Ederivlower}. The argument for the case $k' - k \le \delta^\zero_0$ is completely similar. Consider now an arbitrary  $k+\gamma \ge k' > k$. Recall that $k'_{j+1} - k''_j \ge \min_{|m| < 12 R^\one} \sigma(m)> 4 \ve_0^{1/2}$. Let $[k'_j,k''_j]$ be arbitrary such that $k''_j \ge k$. It follows from the definitions in \eqref{eq:7K.1}  that $(-\delta^\zero_0, \delta^\zero_0) \subset [k'_\ell, k''_\ell]$ for some $\ell$. Since $k$ belongs to one of the $[k'_m, k''_m]$, one concludes that $k''_j \ge \delta^\zero_0 > \ve_0^{1/2}$. Due to part $(5)$, one has
\begin{equation}\label{eq:7Ekkcompar}
E^{(1)}(0, \La^{(1)}_{k'_{j+1}}(0); \ve, k'_{j+1}) - E^{(1)}(0, \La^{(1)}_{k_j''}(0); \ve, k''_{j}) \\
\ge [v(0,k'_{j+1}) - v(0,k''_j)] - 2 |\ve| > \frac{7}{8\lambda} ((k'_{j+1})^2 - (k''_{j})^2).
\end{equation}

Combining the estimates \eqref{eq:7Ekkcompar} with the estimates \eqref{eq:7Ekkcompar2AB}, and taking into account $k - k' \le 1$,
one concludes that
\begin{equation}\label{eq:7Ekkcompar3}
E^{(1)}(0, \La^{(1)}_{k'}(0); \ve, k') - E^{(1)}(0, \La^{(1)}_{k}(0); \ve, k)  \ge \frac{7k}{8\lambda} (k'^2 - k^2) - 12 |\ve| (\delta^\zero_0)^4.
\end{equation}
So, the lower estimate in \eqref{eq:7Ederivlower} holds in any event.

This finishes the case $s=1$.

Let $s \ge 2$ be arbitrary. Once again, part $(5)$ is due to \eqref{eq:4-17AAAA} from Proposition~\ref{prop:4-4} and part $(7)$ is due to part $(5)$ of Lemma~\ref{lem:basicshiftprop}. Assume that statements $(2)$--$(4)$ hold for any $s' = 1, \dots, s-1$ in the role of $s$. We will now verify that $H_{\La^\es_k(0), \ve, k} \in \cN^{(s)}(0, \La^\es_k(0), \delta_0)$. Condition $(a)$ of Definition~\ref{def:4-1} holds. Due to the definition, one has $\cM^{(r)}_{k} \cap  \cM^{(s)}_{k} = \emptyset$ if $r < s$. Due to Lemma~\ref{lem:A.3}, the second part in $(b)$ of Definition~\ref{def:4-1} holds. To verify condition $(c)$ of Definition~\ref{def:4-1}, note that $k \in \IR \setminus \bigcup_{0 < |m'| \le 12 R^\es} (k_{m',s-1}^-, k_{m',s-1}^+) \subset \IR \setminus \bigcup_{0 < |m'| \le 12 R^{(s')}} (k_{m',s'}^-, k_{m',s'}^+)$ for any $s' \le s-1$. In particular, due to the inductive assumption, part $(3)$ of the current proposition applies with $s'$ in the role of $s$. This implies condition $(c)$ of Definition~\ref{def:4-1} for $s' = s-1$. Let $s' < s-1$, $m \in \cM^{(s')}_{k}(\La^\es_k(0))$. Then, $|v(m,k) - v(0,k)| < \delta^{(s'-1)}_0$. Part $(3)$ of Lemma~\ref{lem:A.2} applies. So, $k + m \omega \in \IR \setminus \bigcup_{0 < |m'| \le 12 R^{(s')}} (k_{m',s'-1}^-, k_{m',s'-1}^+)$. Therefore, the inductive assumptions apply to $k + m \omega$ in the role of $k$ and $s'$ in the role of $s$. In particular, $H_{\La^{(s')}_{k + m \omega}(0), \ve, k + m \omega} \in \cN^{(s')}(0, \La^{(s')}_{k + m \omega}(0), \delta_0)$. Due to part $(4)$ of Lemma~\ref{lem:basicshiftprop}, this implies condition $(c)$ of Definition~\ref{def:4-1} for $s'$.

Recall that $m' + B(2 R^\ar) \subset \La^\ar(m')$ for any $m'$ and $r$. Therefore, condition $(d)$ in Definition~\ref{def:4-1} holds.

Condition $(f)$ in Definition~\ref{def:4-1} follows readily from the definition of the sets $\cM^{(r)}_{k}$.

We will now verify condition $(e)$ in Definition~\ref{def:4-1}. Let $s' \le s-1$ be arbitrary. Using the inductive assumption, and combining the estimate of part $(5)$ with $s'' = s', \dots, s-1$ in the role of $s$, one obtains
\begin{equation}\label{eq:12.condedefi51a}
|E^{(s-1)}(0, \La^{(s-1)}_k(0); \ve, k) - E^{(s')}(0, \La^{(s')}_k(0); \ve, k)| \le 2 |\ve| (\delta^{(s')}_0)^5.
\end{equation}
Let $m \in \cM^{(s')}_k$ be arbitrary. Due to part $(4)$ of Lemma~\ref{lem:basicshiftprop}, one has
\begin{equation}\label{eq:12.condedefi51aXY}
E^{(s')}(m, \La^{(s')}_k(m); \ve, k) = E^{(s')}(0, \La^{(s')}_{k + m \omega}(0); \ve, k + m \omega).
\end{equation}
Let us verify first the lower estimate in condition $(e)$ in Definition~\ref{def:4-1}. Consider the case $s' < s-1$. Recall that due to $(d)$ in Remark~\ref{rem:7.1oinout},
\begin{equation}\label{eq:12.condedefi51aABCD}
11 \delta^{(s')}_0/16 < (3 \delta^{(s')}_0/4) - \sum_{s'+1 < s'' \le s-1} \delta^{(s''-1)}_0 < |v(m,k) - v(0,k)| \le (3 \delta^{(s'-1)}_0/4) - \sum_{s' < s'' \le s-1} \delta^{(s''-1)}_0.
\end{equation}
Note that $k + m \omega \in \IR \setminus \bigcup_{0 < |m'| \le 12 R^{(s')}} (k_{m',s'}^-, k_{m',s'}^+)$, due to Lemma~\ref{lem:A.2}. Note also that $|k + m \omega|, |k| \in \IR \setminus \bigcup_{0 < |m'| \le 12 R^{(s')}} (k_{m',s'}^-, k_{m',s'}^+)$. By Lemma~\ref{lem:7.setKs}, $|k|, |k + m\omega|$ belong to the same connected component of $\IR \setminus \bigcup_{|m'| \le 12 R^\es} (k_{m',s-1}^-, k_{m',s-1}^+)$. Using the inductive assumption for parts $(6)$, $(7)$ of the current proposition, \eqref{eq:12.condedefi51aABCD} and the fact that $k, k + m\omega$ belong to the same connected component of $\IR \setminus \bigcup_{|m'| \le 12 R^\es} (k_{m',s-1}^-, k_{m',s-1}^+)$, one obtains
\begin{equation}\label{eq:12.condedefi51b}
\begin{split}
|E^{(s')}(0, \La^{(s')}_{k + m \omega}(0); \ve, k + m \omega) - E^{(s')}(0, \La^{(s')}_k(0); \ve, k)| \ge \frac {7}{8\lambda} | (k + m \omega)^2 - k^2 | - 12 |\ve| (\delta^{(s')}_0)^{4} \\
= \frac{7}{8}|v(m,k) - v(0,k)| - 12 |\ve| (\delta^{(s')}_0)^{4} \ge \frac{77}{128} \delta^{(s')}_0 -12|\ve| (\delta^{(s')}_0)^{4}.
\end{split}
\end{equation}
Combining \eqref{eq:12.condedefi51a} with \eqref{eq:12.condedefi51aXY} and \eqref{eq:12.condedefi51b}, one obtains
\begin{equation}\label{eq:12.condedefi51c}
\begin{split}
|E^{(s-1)}(0, \La^{(s-1)}_k(0); \ve, k) - E^{(s')}(m, \La^{(s')}_k(m); \ve, k)| \\
\ge \frac{77}{128} \delta^{(s')}_0 - 12|\ve| (\delta^{(s')}_0)^{4} - 2 |\ve| (\delta^{(s')}_0)^5 > \frac{\delta^{(s')}_0}{2}.
\end{split}
\end{equation}
This verifies the lower estimate in condition $(e)$ in Definition~\ref{def:4-1} for $s' < s-1$. The derivation of the upper estimate is completely similar and we omit it. This finishes the verification of condition $(e)$ in Definition~\ref{def:4-1} for $s' < s-1$. The verification in case $s' = s-1$ is completely similar. So, we have $H_{\La^\es_k(0), \ve, k} \in \cN^{(s)}(0, \La^\es_k(0), \delta_0)$, that is, part $(2)$ of the proposition holds.

The verification of part $(3)$ is completely similar to the one in case $s' = 1$.

We will now verify the first statement in $(4)$, that is, for $k_1 \in (k - \delta(s), k + \delta(s))$, the matrix $H_{\La^\es_k(0), \ve, k_1}$ with the subsets $\cM^{(r)}_{k}(\La^\es_k(0))$, $\La^\ar_k(m')$, $m' \in \cM^{(r)}_{k} (\La^\es_k(0))$, $r = 1, \dots, s-1$ obeys conditions $(a)$--$(f)$ of Definition~\ref{def:4-1}. Conditions $(a)$, $(b)$, $(d)$ hold for obvious reasons. Let $s' < s-1$, $m \in \cM^{(s')}_{k}(\La^\es_k(0))$. Then, as we explained above, $k + m \omega \in \IR \setminus \bigcup_{0 < |m'| \le 12 R^{(s')}} (k_{m',s'-1}^-, k_{m',s'-1}^+)$, and the inductive assumptions apply to $k + m \omega$ in the role of $k$ and $s'$ in the role of $s$. Since $|(k_1 + m \omega) - (k + m \omega)| < \delta(s) < \delta(s')$, $H_{\La^{(s')}_{k + m \omega}(0), \ve, k_1 + m \omega} \in \cN^{(s')} (0, \La^{(s')}_{k + m \omega}(0), \delta_0)$. Due to part $(4)$ of Lemma~\ref{lem:basicshiftprop}, $H_{\La^{(s')}_{k}(m), \ve, k_1} \in \cN^{(s')}(m, \La^{(s')}_{k}(m), \delta_0)$, that is, condition $(c)$ of Definition~\ref{def:4-1} holds. The verification of condition $(e)$ is completely similar to the one we did for $H_{\La^\es_k(0), \ve, k}$. Thus the first statement in part $(4)$ holds.

The estimate \eqref{eq:7kk1compderiv} is due to \eqref{eq:3Hinvestimatestatement1kvardE} from Lemma~\ref{lem:5differentiation}. Let us verify \eqref{eq:7kk1compderivloverin}. Assume $|k| \le \delta^{(s-2)}_0/2$. Recall that $\La^{(s)}_{k,sym}(0)=-\La^{(s)}_{k,sym}(0)$. Due to parts $(4)$ and $(7)$ of the current proposition, the function  $E^{(s)}(0, \La^{(s)}_{k,sym}(0); \ve, k_1)$ is well defined, $C^2$-smooth, obeys $E^{(s)}(0, \La^{(s)}_{k,sym}(0); \ve, k_1)=E^{(s)}(0, \La^{(1)}_{k,sym}(0); \ve, -k_1)$ for $|k_1| < \delta^{(s-2)}_0$ and $\partial^2_{k_1} E^{(s)}(0, \La^{(s)}_{k,sym}(0); \ve, k_1) > 7/4$. This implies \eqref{eq:7kk1compderivloverin} for $|k| < \delta^{(s-2)}_0/2$. For $|k| > \delta^{(s-2)}_0/2$, \eqref{eq:7kk1compderivloverin} follows from the inductive assumption regarding \eqref{eq:7kk1compderivloverin}
with $s-1$ in the role of $s$ combined with part $(5)$.

The estimate \eqref{eq:7kk1comp} follows from \eqref{eq:7kk1compderiv}. The estimate \eqref{eq:7kk1comp1} is due to Corollary~\ref{cor:5.twolambdas1}. This finishes the proof of part $(4)$.

Let us verify part $(6)$. The upper estimate follows from \eqref{eq:7kk1compderiv} and \eqref{eq:7kk1comp1}. Let us verify  the lower estimate. Let $[k'_j, k_j'']$ be the the connected components of the set $\IR \setminus \bigcup_{|m| \le 12 R^{(s)}} (k_{m,s+1}^-, k_{m,s+1}^+)$, enumerated so that $k''_j < k'_{j+1}$. If $k, k' \in [k'_j, k_j'']$ for some $j$, then the proof goes just as for $s = 1$ with use of \eqref{eq:7kk1compderivloverin}. So, assume $k \in [k'_\ell, k_\ell'']$, $k' \in [k'_m, k_m'']$, $\ell < m$. Note first of all that
\begin{equation}\label{eq:7EkkcomparsarbPr}
\begin{split}
E^{(s)}(0, \La^{(s)}_{k''_{\ell}}(0); \ve, k''_{\ell}) - E^{(s)}(0, \La^{(s)}_{k}(0); \ve, k) \ge \frac{7}{8\lambda} ((k''_{\ell})^2 - k^2) - 12 |\ve| (\delta^{(s)}_0)^{4}, \\
E^{(s)}(0, \La^{(s)}_{k'}(0); \ve, k') - E^{(s)}(0, \La^{(s)}_{k'_{m}}(0); \ve, k'_{m}) \ge \frac{7}{8\lambda} ((k')^2 - (k'_{m})^2) - 12 |\ve| (\delta^{(s)}_0)^{4}.
\end{split}
\end{equation}
Using part $(5)$ and the inductive assumption for part $(6)$ with $(s-1)$ in the role of $s$, one obtains
\begin{equation}\label{eq:7Ekkcomparsarb}
\begin{split}
E^{(s)}(0, \La^{(s)}_{k'_{m}}(0); \ve, k'_{m}) - E^{(s)}(0, \La^{(s)}_{k_\ell''}(0); \ve, k''_{\ell}) \\
\ge [E^{(s-1)}(0, \La^{(s-1)}_{k'_{m}}(0); \ve, k'_{m}) - E^{(s-1)}(0, \La^{(s-1)}_{k''_{\ell}}(0); \ve, k''_{\ell})] - 2 |\ve| (\delta^\esone_0)^5 \\
\ge \frac{7}{8\lambda} ((k'_{m})^2 - (k''_{\ell})^2) - 26 |\ve| \sum_{s' \le s-1 : k'_m-k''_\ell > \delta^{(s')}_0} (\delta^{(s')}_0)^{4} - 2 |\ve| (\delta^\esone_0)^5.
\end{split}
\end{equation}
Combining \eqref{eq:7EkkcomparsarbPr} with \eqref{eq:7Ekkcomparsarb}, one obtains the lower estimate in part $(6)$.
\end{proof}

\begin{remark}\label{rem:7.kawayfromzero}
$(0)$ Using the notation from the last proposition, let $\La^{(s,\mathbf{1})}_{k}(0)$ be such that for any $\La^{(s')}_{k}(m)$ with $s' < s$, we have either $\La^{(s')}_{k}(m) \subset \La^{(s,\mathbf{1})}_{k}(0)$ or $\La^{(s')}_{k}(m) \cap \La^{(s,\mathbf{1})}_{k}(0) = \emptyset$. Assume also that $B(R^\es) \subset \La^{(s,\mathbf{1})}_{k}(0)$. Then, Proposition~\ref{prop:A.3} applies with $\La^{(s,\mathbf{1})}_{k}(0)$ in the role of $\La^{(s)}_{k}(0)$. In particular, $H_{\La^{(s,\mathbf{1})}_k(m), \ve, k} \in \cN^{(s)}(m, \La^{(s,\mathbf{1})}_k(m), \delta^\zero_0)$.

$(1)$ Here we want to remark again that the condition $|k| > (\delta^\zero_0)^{1/2}$ has not been used anywhere except for part $(6)$ of the last proposition.

$(2)$ Let $12 R^\esone < |m^\zero| \le 12 R^\es$ be arbitrary. Assume that $k \in \IR \setminus \bigcup_{0 < |m| \le 12 R^\es, \; m \neq m^\zero} (k_{m,s-1}^-, k_{m,s-1}^+)$. Then, obviously, Proposition~\ref{prop:A.3} applies with $s - 1$ in the role of $s$. Furthermore, let $\cM^{(s')}_{k,s-1}$, $\La^{(s')}_k(m)$ be defined as in \eqref{eq:A.1}. Due to part $(2)$ of Remark~\ref{rem:7.1Rss1issue}, part $(3)$ of Lemma~\ref{lem:A.3} applies. Therefore, conditions $(a)$--$(d)$ in Definition~\ref{def:4-1} hold. The derivation of \eqref{eq:12.condedefi51b}, \eqref{eq:12.condedefi51c} for $s' \le s-1$, $m \neq m^\zero$ goes the same way as in the proof of  Proposition~\ref{prop:A.3}. Assume that $m^\zero \in \cM^\esone_{k,s-1}$. Then, for $|k_1 - k| < 2 \delta^{(s-2)}_0$, we have
\begin{equation}\label{eq:7.condeupperEXCL}
|E^{(s-1)}(m^\zero, \La^{(s-1)}_{k}(m^\zero); \ve, k_1) - E^{(s-1)}(0, \La^{(s-1)}_k(0); \ve, k_1)| \le 3 | |k_1 + m^\zero \omega| - |k_1| | + (\delta^{(s-1)}_0)^5
\end{equation}
and
\begin{equation}\label{eq:7.condPROPmzero}
|E^{(s-1)}(m^\zero, \La^{(s-1)}_{k}(m^\zero); \ve, k_1) - E^{(s-1)}(0, \La^{(s-1)}_k(0); \ve, k_1)| \ge \frac{7}{8\lambda} | (k_1 + m^\zero \omega)^2 - k_1^2 | - 12|\ve|(\delta^{(s-1)}_0)^4.
\end{equation}

$(3)$ Note that Proposition~\ref{prop:A.3E0} applies to $k=0$. For the proof of Theorem~A in Section~\ref{sec.11}, we also need to consider the matrices $(H_{\La',\ve, 0}-E)$ with $-\ve_0^{1/2} < E < 0$; see \eqref{eq:2epsilon0} in Definition~\ref{def:4-1}. The analysis of these matrices goes almost word for word as the one for the matrices in Proposition~\ref{prop:A.3}. Moreover, the same subsets $\La^\es_0(0)$ can be employed. In Proposition~\ref{prop:A.3E0} we just state the result needed for the proof of Theorem~A. We omit the proof the proposition.
\end{remark}

\begin{prop}\label{prop:A.3E0}.
Let $-\ve^{1/2} < E < 0$ be arbitrary. For each $s = 1, 2, \dots$, the matrix $(H_{\La^\es_0(0), \ve, k}-E)$ belongs to $\cN^{(s)}(0, \La^\es_0(0), \delta_0)$.
\end{prop}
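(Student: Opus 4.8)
The proposition is the $E$-shift of part $(2)$ of Proposition~\ref{prop:A.3} at $k=0$; note that Proposition~\ref{prop:A.3} is applicable at $k=0$, since, by Remark~\ref{rem:7.kawayfromzero}(1), the hypothesis $|k|>(\delta^\zero_0)^{1/2}$ is used there only in part $(6)$, and $0$ always lies in the complement $\IR\setminus\bigcup_{0<|m|\le12R^\es}(k^-_{m,s-1},k^+_{m,s-1})$. The plan is to run the proof of Proposition~\ref{prop:A.3}(2) essentially verbatim, adding the constant $-E$ to the diagonal throughout, once two elementary reductions are in place. First, $(H_{\La,\ve,0}-E)$ is again a matrix of the form \eqref{eq:2-1}--\eqref{eq:2-4} of Section~\ref{sec.3}, with diagonal $v_E(n):=\lambda^{-1}(n\omega)^2-E$ and the \emph{unchanged} off-diagonal part $\ve h_0(m,n)=\ve\lambda^{-1}c(m-n)$; since $E\in\IR$, conditions \eqref{eq:2-2}--\eqref{eq:2-4} hold verbatim, with $B_1=\lambda^{-1}\le1$ and $0<\kappa_0\le1/2$. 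The decisive point is that $v_E(n)-v_E(n')=v(n;0)-v(n';0)$ for all $n,n'$, so the arithmetic underlying the construction \eqref{eq:A.1} of the sets $\cM^{(r)}_{0,s-1}$ and $\La^\ar_0(m)$ is untouched, the combinatorial facts of Lemma~\ref{lem:A.3} apply unchanged, and $(H_{\La',\ve,0}-E)$ obeys the non-resonance condition \eqref{eq:2-5} at a pair $(m_0,\La')$ exactly when $H_{\La',\ve,0}$ does. Accordingly the same sets $\La^\es_0(0)$, together with the same subsystems $\cM^{(r)}_0(\La^\es_0(0))$ and $\La^\ar_0(m)$ furnished by Proposition~\ref{prop:A.3}(2), are the ones to use.

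Second, the spectral data shift rigidly. For any finite $\La'\subset\zv$ and any $\ve\in\IC$ one has the polynomial identity $\det\bigl(E'-(H_{\La',\ve,0}-E)\bigr)=\det\bigl((E'+E)-H_{\La',\ve,0}\bigr)$, so $\spec(H_{\La',\ve,0}-E)=\spec H_{\La',\ve,0}-E$ with multiplicities, for all complex $\ve$. By Rellich's theorem (the bullet preceding Definition~\ref{def:4-1}) together with uniqueness of the real-analytic branch through $v(m_0;0)-E$ at $\ve=0$, the eigenvalue function $E^{(s')}(m_0,\La';\ve)$ produced by Proposition~\ref{prop:4-4} for $(H_{\La',\ve,0}-E)$ equals the one produced for $H_{\La',\ve,0}$ minus the constant $E$, as an identity of analytic functions on the $\ve$-disks of Proposition~\ref{prop:4-4}; likewise the auxiliary functions are related by shifting the spectral argument, e.g. $Q^{(s')}_{\,\mathrm{shifted}}(m_0,\La';\ve,E')=Q^{(s')}(m_0,\La';\ve,E'+E)$. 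Hence every estimate of Definition~\ref{def:4-1} and of Proposition~\ref{prop:A.3} that is phrased through differences of eigenvalue functions, or through $|v(n)-v(m_0)|$, is numerically identical to its counterpart for $H_{\La,\ve,0}$, while the Green's-function bounds such as \eqref{eq:3Hinvestimatestatement1}, which involve only the off-diagonal part, are unchanged outright.

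With these two reductions the induction on $s$ reproduces the one in the proof of Proposition~\ref{prop:A.3}. For $s=1$, the first reduction gives that $(H_{\La^\one_0(0),\ve,0}-E)$ satisfies \eqref{eq:2-1}--\eqref{eq:2-5} with the same $\delta_0$ and the same $\ve_0=\ve_0(\delta_0,\kappa_0)$, hence lies in $\cN^{(1)}(0,\La^\one_0(0),\delta_0)$. For $s\ge2$ one attaches to $(H_{\La^\es_0(0),\ve,0}-E)$ the subsystems $\cM^{(r)}_0(\La^\es_0(0))=\cM^{(r)}_{0,s-1}\cap\La^\es_0(0)$, $\La^\ar_0(m)$, $r=1,\dots,s-1$, and checks conditions $(a)$--$(f)$ of Definition~\ref{def:4-1}: $(a),(b),(d)$ are purely combinatorial and hold as in the unshifted case (Lemma~\ref{lem:A.3}, Proposition~\ref{prop:A.3}); $(c)$ follows from the inductive hypothesis, because the translation conjugacy of Lemma~\ref{lem:basicshiftprop}(1) commutes with subtracting the scalar $E$, so $(H_{\La^\ar_0(m),\ve,0}-E)$ is unitarily conjugate to $(H_{\La^\ar_{m\omega}(0),\ve,m\omega}-E)$ and the induction applies with $m\omega$ (which lies outside the relevant resonance set, by Lemma~\ref{lem:A.2}(3)) in the role of $k$; and $(e),(f)$ are inherited from Proposition~\ref{prop:A.3}(2) through the rigid shift of the previous paragraph.

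The one point that is more than transcription is condition $(e)$ of Definition~\ref{def:4-1}, which demands not merely equality of differences of eigenvalues on the real $\ve$-axis, but the existence and the quantitative bounds of their analytic continuations into the complex disks $|\ve|<\ve_{s-1}$ (resp.\ $|\ve|<\ve_{s-2}$). One must therefore establish the rigid-shift identity above as an identity of \emph{analytic} functions on those disks; this is exactly what Rellich's theorem, combined with the complex-analytic continuation of $E^{(s')}$ provided by Proposition~\ref{prop:4-4}(4) via equation \eqref{eq:4-16}, supplies. Granting this, condition $(e)$ — including the case split between $s=2$ and $s\ge3$ — transcribes word for word. Finally, since $0\notin\bigcup_{0<|m|\le12R^\es}(k^-_{m,s-1},k^+_{m,s-1})$ keeps the whole recursion inside the $\cN^\es$-classes, no $OPR$- or $GSR$-type class ever arises, and the argument closes.
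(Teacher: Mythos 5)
Your proposal is correct and carries out precisely the argument the paper alludes to in Remark~\ref{rem:7.kawayfromzero}(3), namely that the analysis at $k=0$ with the constant energy shift $E$ goes word for word as in Proposition~\ref{prop:A.3}, using the same sets $\La^\es_0(0)$. The two reductions you isolate — that $v_E(n)-v_E(n')=v(n;0)-v(n';0)$ leaves the set construction and the non‑resonance arithmetic unchanged, and that all spectral data (eigenvalue branches, $Q$- and $K$-functions, Green's-function bounds) shift rigidly by $E$ as identities of analytic functions in $\ve$, so conditions (a)–(f) of Definition~\ref{def:4-1} transcribe verbatim — are exactly the points needed, and your treatment of the analytic-continuation subtlety in condition (e) and the conjugation step for condition (c) is correct.
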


\section{Matrices with an Ordered Pair of Resonances Associated with $1$--Dimensional Quasi-Periodic Schr\"odinger Equations}\label{sec.8}

\begin{defi}\label{def:simplestresonance}
Let $s \ge 1$, $q \ge 0$  $n_0 \in \IZ^\nu$, $0 < |n_0| \le 12 R^{(1)}$ if $s = 1$, and $12 R^\esone < |n_0| \le 12 R^{(s)}$ if $s \ge 2$. Assume that
\begin{equation} \label{eq:8kn0cases}
(k_{n_0} - 2 \sigma(n_0),  k_{n_0,} + 2 \sigma(n_0)) \subset \IR \setminus \bigcup_{0 < |m'| \le 12 R^{(s)}, \; m' \neq n_0} (k_{m',s-1}^-, k_{m',s-1}^+)
\end{equation}
with $k_{n_0} = -n_0 \omega/2$ and $\sigma(n_0)$ as defined in \eqref{eq:7K.1}. We set  $\cR^{(s,s)}(\omega, n_0) := (k_{n_0} - 2 \sigma(n_0), k_{n_0,} + 2 \sigma(n_0))$.
\end{defi}

\begin{remark}\label{rem:8.ksetintersect}
$(1)$ The intersection of $\cR^{(s,s)}(\omega,n_0)$ and $\IR \setminus \bigcup_{0 < |m'| \le 12 R^{(s)}} (k_{m',s-1}^-, k_{m',s-1}^+)$ is a non-empty set $\cK^{(s)}_{n_0} := (k_{n_0} - 2 \sigma(n_0),  k_{n_0,} + 2 \sigma(n_0)) \setminus (k_{n_0,s-1}^-, k_{n_0,s-1}^+)$. In particular, Proposition~\ref{prop:A.3} applies to $k \in \cK^{(s)}_{n_0}$. For technical reasons, we need to verify that in fact Proposition~\ref{prop:A.3} applies on a slightly bigger set; see part $(3)$ of Lemma~\ref{lem:8A.3} below.

$(2)$ If $k \in \cR^{(s,s)}(\omega,n_0)$, then $-k \in \cR^{(s,s)}(\omega,-n_0)$.

$(3)$ Since $|n_0| \le 12 R^\es$, one has due to \eqref{eq:diphnores}, $|k_{n_0}| > \frac12 (\delta_0^{(s-1)})^{1/16} $.
\end{remark}

\begin{lemma}\label{lem:8A.2}
Let $k \in \cR^{(s,s)}(\omega, n_0)$, $0 < |m| \le 12 R^\es$, $m \neq n_0$. Then, $| |k + m \omega| - |k| | > (\delta_0^\esone)^{1/16}/2$.
\end{lemma}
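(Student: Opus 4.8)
\textbf{Proof plan for Lemma~\ref{lem:8A.2}.}
The plan is to reduce the claim to the key combinatorial separation property \eqref{eq:7K.1}--\eqref{eq:diphnores} of the frequency $\omega$ on the scale $R^{(s)}$. First I would recall that $k \in \cR^{(s,s)}(\omega,n_0)$ means, by Definition~\ref{def:simplestresonance}, that $|k - k_{n_0}| < 2\sigma(n_0)$ with $k_{n_0} = -n_0\omega/2$ and $\sigma(n_0) \le 32(\delta_0^{(s-1)})^{1/6}$ (or the analogous expression at the top scale). Consequently $|2k + n_0\omega| < 4\sigma(n_0) < 2^7(\delta_0^{(s-1)})^{1/6}$, which is much smaller than the target quantity $(\delta_0^{(s-1)})^{1/16}/2$ since $(\delta_0^{(s-1)})^{1/6} \ll (\delta_0^{(s-1)})^{1/16}$; in particular $k$ is extremely close to $-n_0\omega/2$.

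The main step is then a case analysis on the sign of $k + m\omega$ relative to $k$, exactly parallel to the argument in Lemma~\ref{lem:A.2}(4) and Lemma~\ref{lem:7mdeltaSm}. If $\mathrm{sgn}(k+m\omega) = \mathrm{sgn}(k)$, then $\big||k+m\omega| - |k|\big| = |m\omega|$; by the Diophantine condition \eqref{eq:7-5-8} and the fact that $0 < |m| \le 12 R^{(s)}$, estimate \eqref{eq:diphnores} gives $|m\omega| \ge a_0(48 R^{(s)})^{-b_0} > (R^{(s)})^{-2b_0} = (\delta_0^{(s-1)})^{1/16}$, which beats the claimed bound. If $\mathrm{sgn}(k+m\omega) = -\mathrm{sgn}(k)$, then $\big||k+m\omega| - |k|\big| = |2k + m\omega|$; here I would write $2k + m\omega = (2k + n_0\omega) + (m - n_0)\omega$, so that $|2k+m\omega| \ge |(m-n_0)\omega| - |2k + n_0\omega|$. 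Since $m \neq n_0$ and $|m - n_0| \le 24 R^{(s)} < 48 R^{(s)}$, the Diophantine estimate \eqref{eq:diphnores} applies to $m - n_0$ and yields $|(m-n_0)\omega| > (\delta_0^{(s-1)})^{1/16}$, while $|2k + n_0\omega| < 2^7(\delta_0^{(s-1)})^{1/6}$ is negligible by comparison; subtracting leaves more than $(\delta_0^{(s-1)})^{1/16}/2$. The degenerate sub-case $k + m\omega = 0$ (or $k = 0$) is handled trivially, or simply absorbed into one of the two sign cases.

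The only mildly delicate point is bookkeeping of the numerical constants: one must check that $48 R^{(s)} \ge \max(|m|, |m-n_0|)$ for all admissible $m$ (which holds since $|n_0|, |m| \le 12 R^{(s)}$, hence $|m - n_0| \le 24 R^{(s)}$), and that $\sigma(n_0)$, being of order $(\delta_0^{(s-1)})^{1/6}$, is indeed smaller than $\tfrac14(\delta_0^{(s-1)})^{1/16}$ for $R_1$ large enough --- this is guaranteed by the standing assumption $\log R_1 \ge 2^{34}\beta_1^{-1}\log\kappa_0^{-1}$ and the defining relations \eqref{eq:A.1A}, since then $\delta_0^{(s-1)} < 1$ is small and the exponent $1/6 > 1/16$ makes $(\delta_0^{(s-1)})^{1/6}$ the smaller power. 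I do not expect any real obstacle here; the proof is a short two-case estimate built entirely on \eqref{eq:diphnores} and the defining inequality for $\cR^{(s,s)}(\omega,n_0)$, and I would simply write ``All statements follow from \eqref{eq:diphnores}, \eqref{eq:7K.1}, and Definition~\ref{def:simplestresonance}'' after displaying the two cases.
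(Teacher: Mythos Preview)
Your proof is correct and follows essentially the same approach as the paper's: a two-case sign analysis reducing to the Diophantine estimate \eqref{eq:diphnores}, applied either to $m$ (same-sign case, giving $||k+m\omega|-|k|| = |m\omega|$) or to $m-n_0$ (opposite-sign case, giving $||k+m\omega|-|k|| = |2k+m\omega| \ge |(m-n_0)\omega| - 2|k-k_{n_0}|$). The paper writes out only the opposite-sign case explicitly and dismisses the rest as ``similar,'' so your treatment is in fact slightly more complete.
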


\begin{proof}
Assume $k_{n_0} > 0$, $k + m \omega \le 0$. Then,
\begin{equation} \label{eq:8kn0casesA1}
| |k + m \omega| - |k| | = |2 k + m \omega| \ge |(m - n_0) \omega| - 2 |k - k_{n_0}| \ge (\delta_0^\esone)^{1/16} - 4 \sigma(n_0) > (\delta_0^\esone)^{1/16}/2.
\end{equation}
The verification for the rest of the cases is similar.
\end{proof}

\begin{lemma}\label{lem:8A.3}
Let $k \in \cR^{(s,s)}(\omega,n_0)$. Then,
\begin{itemize}

\item[(0)] $n_0 \in \cM^\esone_{k,s-1}$.

\item[(1)] The subsets in \eqref{eq:A.1} are well-defined. For $|k'-k| < 2 \delta^{(s-2)}_0$, each matrix $H_{\La^{(r)}_k(m), \ve, k'}$, $r \le s-1$ belongs to the class $\cN^{(r)}(m, \La^\ar_k(m), \delta^\zero_0)$.

\item[(2)] Let $m_0^+ = 0$, $m^-_0 = n_0$. For $|k' - k| < (\delta^{(s-1)}_0)^{1/6}$, the matrices $H_{\La^{(s')}_k(m), \ve, k'}$ obey all conditions stated in Definition~\ref{def:8-1a} $($ except for the fact that the set $\La$ is not defined $)$.

\item[(3)] Assume that $|k - k_{n_0}| > (\delta^\esone)^{7/8}$. Let  $\La^\es_k(0)$ be as in \eqref{eq:A.1}. Then, $H_{\La^\es_k(0), \varepsilon, k} \in \cN^{(s)} \bigl( 0, \La^\es_k(0); \delta_0 \bigr)$.

\end{itemize}
\end{lemma}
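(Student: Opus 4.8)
\textbf{Proof plan for part (3) of Lemma~\ref{lem:8A.3}.} The goal is to verify that $H_{\La^\es_k(0),\ve,k}$ belongs to $\cN^{(s)}\bigl(0,\La^\es_k(0);\delta_0\bigr)$ under the extra hypothesis $|k-k_{n_0}|>(\delta^\esone)^{7/8}$, using the subsets $\cM^{(r)}_k(\La^\es_k(0)):=\cM^{(r)}_{k,s-1}\cap\La^\es_k(0)$ and $\La^\ar_k(m')$, $r=1,\dots,s-1$, exactly as in part~(2) of Proposition~\ref{prop:A.3}. The point is that part~(2) of Proposition~\ref{prop:A.3} is stated for $k\in\IR\setminus\bigcup_{0<|m|\le 12R^\es}(k^-_{m,s-1},k^+_{m,s-1})$, but here $k$ lies in $\cR^{(s,s)}(\omega,n_0)$, which may well intersect the excluded interval $(k^-_{n_0,s-1},k^+_{n_0,s-1})$ around $n_0$; so Proposition~\ref{prop:A.3} does not literally apply, and one has to redo its verification tracking the effect of that single ``missing'' resonance $n_0$.

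First I would record the combinatorial inputs: by Definition~\ref{def:simplestresonance}, $k$ is away from \emph{all} resonance intervals $(k^-_{m',s-1},k^+_{m',s-1})$ with $0<|m'|\le 12R^\es$, $m'\neq n_0$, so the only possible obstruction is at $m'=n_0$. By part~(0) of the present lemma, $n_0\in\cM^\esone_{k,s-1}$; hence $n_0$ is already one of the centers handled by the inductive construction \eqref{eq:A.1}, and its set $\La^\esone_k(n_0)$ is part of the excision defining $\La^\es_k(0)$. Thus the structure of $\La^\es_k(0)$ is genuinely the one from \eqref{eq:A.1}, and parts~(a)--(d), (f) of Definition~\ref{def:4-1} follow verbatim from Lemma~\ref{lem:A.3} and the definition of the sets $\cM^{(r)}_{k,s-1}$, exactly as in the proof of Proposition~\ref{prop:A.3} --- note (as in Remark~\ref{rem:7.1Rss1issue}(1)) that the proof of part~(3) of Lemma~\ref{lem:A.3} uses only the definition of the lower-scale sets $\La^{(s')}_k(m')$, $s'\le s-1$, together with part~(1) of that lemma and condition $(\mathfrak{S}_{s-1})$, none of which requires $k$ to avoid the $n_0$-interval. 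So the whole argument reduces to condition~(e) of Definition~\ref{def:4-1}: the separation estimates between $E^{(s-1)}(0,\La^{(s-1)}_k(0);\ve,k)$ and the eigenvalues $E^{(s')}(m,\La^{(s')}_k(m);\ve,k)$ for $m\in\cM^{(s')}_{k}(\La^\es_k(0))$, $s'\le s-1$.

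For all centers $m\neq n_0$ the verification of (e) is identical to that in Proposition~\ref{prop:A.3}: one uses part~(3) of Lemma~\ref{lem:A.2} to move $k$ to $k+m\omega$, applies the inductive assumptions (parts (6),(7) of Proposition~\ref{prop:A.3} at scale $s-1$ together with part~(5)), and invokes $(d)$ of Remark~\ref{rem:7.1oinout} for the size of $|v(m,k)-v(0,k)|$, giving $|E^{(s-1)}(0,\La^{(s-1)}_k(0);\ve,k)-E^{(s')}(m,\La^{(s')}_k(m);\ve,k)|\in[\delta^{(s')}_0/2,\delta^{(s'-1)}_0)$. The genuinely new case is $m=n_0$, $s'=s-1$, and here the hypothesis $|k-k_{n_0}|>(\delta^\esone)^{7/8}$ is exactly what is needed. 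By Lemma~\ref{lem:8A.2}, $||k+n_0\omega|-|k||>(\delta^\esone_0)^{1/16}/2$, but that only gives a crude bound; the sharp one comes from computing $|v(n_0,k)-v(0,k)|=\lambda^{-1}|n_0\omega|\,|2k+n_0\omega|$. Since $2k+n_0\omega=2(k-k_{n_0})$, this equals $2\lambda^{-1}|n_0\omega|\,|k-k_{n_0}|$, and using $|k-k_{n_0}|>(\delta^\esone)^{7/8}$ together with the lower bound $|n_0\omega|\ge(\delta^\esone_0)^{1/16}$ (cf.\ \eqref{eq:diphnores}, valid since $|n_0|\le 12R^\es$) and the upper bound $|n_0\omega|\le|n_0|\le 12R^\es$ with $\lambda$ bounded, one gets
\begin{equation}\label{eq:8n0lowerbound}
3\delta^{(s-1)}_0 < |v(n_0,k)-v(0,k)| < \delta^{(s-2)}_0,
\end{equation}
which is precisely the hypothesis of condition~(e) (the case $m\neq m_0$ in \eqref{eq:4-3sge3}) at the level of diagonal entries. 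From \eqref{eq:8n0lowerbound} the eigenvalue separation follows by the same perturbative reasoning: $E^{(s-1)}(n_0,\La^{(s-1)}_k(n_0);\ve,k)=E^{(s-1)}(0,\La^{(s-1)}_{k+n_0\omega}(0);\ve,k+n_0\omega)$ by Lemma~\ref{lem:basicshiftprop}(4), and by part~(4) of Proposition~\ref{prop:4-4} each $E^{(s-1)}$ is within $|\ve|$ of the corresponding $v$, so $|E^{(s-1)}(n_0,\dots)-E^{(s-1)}(0,\dots)|$ differs from $|v(n_0,k)-v(0,k)|$ by at most $2|\ve|\ll\delta^\esone_0$, keeping it inside $(3\delta^{(s-1)}_0-2|\ve|,\delta^{(s-2)}_0+2|\ve|)\subset(2\delta^{(s-1)}_0,2\delta^{(s-2)}_0)$, which implies the required two-sided bound in~(e). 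The main obstacle, then, is not any deep estimate but the bookkeeping: one must check that removing $k$ from the $n_0$-resonance interval does not spoil the \emph{other} clauses of Proposition~\ref{prop:A.3}'s proof --- in particular that the applications of part~(3) of Lemma~\ref{lem:A.2} (which needs $k\notin\bigcup(k^-_{m',s-1},k^+_{m',s-1})$ only for the relevant $m'$) still go through for $m'\neq n_0$, and that $n_0$ itself, being one of the excised centers, creates no conflict with the construction \eqref{eq:A.1} of $\La^\es_k(0)$. Once condition~(e) is in hand for all $m$, all hypotheses of Definition~\ref{def:4-1} are met and $H_{\La^\es_k(0),\ve,k}\in\cN^{(s)}\bigl(0,\La^\es_k(0);\delta_0\bigr)$, completing the proof.
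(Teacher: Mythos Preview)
Your overall strategy is correct and matches the paper's: reduce to verifying condition~(e) of Definition~\ref{def:4-1} at the single exceptional center $m=n_0$, and use $|k-k_{n_0}|>(\delta^\esone)^{7/8}$ together with $|n_0\omega|>(\delta^\esone_0)^{1/16}$ to bound $|v(n_0,k)-v(0,k)|=2\lambda^{-1}|n_0\omega|\,|k-k_{n_0}|$ from below. Your estimate \eqref{eq:8n0lowerbound} for the diagonal entries is fine.

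The gap is in your last step, converting the $v$-separation into an $E^{(s-1)}$-separation. You invoke $|E^{(s-1)}-v|<|\ve|$ from \eqref{eq:4-17AAAA} and then assert ``$2|\ve|\ll\delta^\esone_0$''. This is false: by Remark~\ref{rem:3.Rs} one has $\delta^{(1)}_0<\ve_0/2$, and more generally $\delta^{(s-1)}_0$ is super-exponentially small in $s$ while $\ve_0$ is a fixed scale-independent constant. Since the signal $|v(n_0,k)-v(0,k)|$ is only of order $(\delta^\esone_0)^{15/16}$, an error of size $2|\ve|$ (which can be close to $2\ve_0$) completely swamps it, and your interval $(3\delta^\esone_0-2|\ve|,\delta^{(s-2)}_0+2|\ve|)$ gives no lower bound at all.

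The paper avoids this by using the sharper comparison \eqref{eq:7.condPROPmzero} from part~(2) of Remark~\ref{rem:7.kawayfromzero}, namely
\[
|E^{(s-1)}(n_0,\La^{(s-1)}_k(n_0);\ve,k)-E^{(s-1)}(0,\La^{(s-1)}_k(0);\ve,k)|\ge \tfrac{7}{8\lambda}\,|(k+n_0\omega)^2-k^2|-12|\ve|(\delta^\esone_0)^4,
\]
whose error term $12|\ve|(\delta^\esone_0)^4$ is harmless. This estimate is not the crude $|E-v|<|\ve|$; it comes from the monotonicity of $E^{(s-1)}(0,\cdot;\ve,\cdot)$ in $k$ established in part~(6) of Proposition~\ref{prop:A.3}, applied along the segment between $|k|$ and $|k+n_0\omega|$. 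So the missing ingredient in your argument is precisely that monotonicity input (or equivalently a direct appeal to Remark~\ref{rem:7.kawayfromzero}(2)), not just the zeroth-order perturbative bound on each eigenvalue separately.
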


\begin{proof}
Clearly, $k \in \IR \setminus \bigcup_{0 < |m'| \le 12 R^{(s-1)}} (k_{m',s-1}^-, k_{m',s-1}^+)$. One has
\begin{equation}\label{eq:8-5-4UUU}
\begin{split}
|n_0 \omega| < 2 |k| + 1, \\
|v(n_0,k) - v(0,k)| = 2\lambda^{-1} |n_0 \omega| \cdot |k - k_{n_0}| < 4 \lambda^{-1}(2|k|+1) \sigma(n_0) < 256 (\delta^\esone_0)^{1/6} < 3 \delta^{(s-2)}_0/4,
\end{split}
\end{equation}
which means $n_0 \in \cM^\esone_{k,s-1}$, due to the definitions in \eqref{eq:A.1}.

Part $(1)$ is due to part $(2)$ of Remark~\ref{rem:7.kawayfromzero} after Proposition~\ref{prop:A.3}.

To prove $(2)$, note that all conditions except \eqref{eq:4-3AAAAAmnotm0} and \eqref{eq:4-3AAAAA} are due to part $(2)$ of Remark~\ref{rem:7.kawayfromzero}. Furthermore, due to part $(2)$ of Remark~\ref{rem:7.kawayfromzero}, one has for any $m \in \cM^\esone_{k,s-1}$,
\begin{equation}\label{eq:8.condPROP}
|E^{(s-1)}(m, \La^{(s-1)}_{k}(m); \ve, k') - E^{(s-1)}(0, \La^{(s-1)}_k(0); \ve, k')| \ge \frac{7}{8\lambda} | (k' + m \omega)^2 - (k')^2 | - 12 |\ve| (\delta^{(s-1)}_0)^4.
\end{equation}
Take here $m \neq 0,n_0$. Then, combining \eqref{eq:8.condPROP} with  with
Lemma~\ref{lem:8A.2}, one obtains condition \eqref{eq:4-3AAAAAmnotm0}. Once again, due to part $(2)$ of Remark~\ref{rem:7.kawayfromzero}, one has
\begin{equation}\label{eq:8.condeupperEXCL}
\begin{split}
|E^{(s-1)}(n_0, \La^{(s-1)}_{k}(n_0); \ve, k') - E^{(s-1)}(0, \La^{(s-1)}_k(0); \ve, k')| \\
\le  3| |k' + n_0 \omega| - |k'| | + (\delta^{(s-1)}_0)^5 \le 6\sigma(n_0)+ 6 |k' - k_{n_0}| + (\delta^{(s-1)}_0)^5
< (\delta^{(s-1)}_0)^{1/8},
\end{split}
\end{equation}
as required in condition \eqref{eq:4-3AAAAA}.

Assume that $|k - k_{n_0}| > (\delta^\esone)^{7/8}$. To prove part $(3)$, we need to verify the lower estimate in the first line in condition \eqref{eq:4-3sge3} in Definition~\ref{def:4-1} only. Assume for instance, $|k| \le 1$. In this case, $\lambda = 256$. Recall that due to \eqref{eq:diphnores}, $|n_0 \omega| > (\delta_0^{(s-1)})^{1/16}$. Due to part $(2)$ of Remark~\ref {rem:7.kawayfromzero}, one has
\begin{equation}\label{eq:8.condedefi51b}
\begin{split}
|E^{(s-1)}(n_0, \La^{(s-1)}_{k}(n_0); \ve, k) - E^{(s-1)}(0, \La^{(s-1)}_k(0); \ve, k)| \ge \frac{7}{8\times 256} | (k + n_0 \omega)^2 - k^2 | -
(\delta^{(s-1)}_0)^5 > 3 \delta^{(s-1)}_0 \\
= \frac{7}{8\times 256} 2 |n_0 \omega| |k - k_{n_0}| - (\delta^{(s-1)}_0)^5 > 3 \delta^{(s-1)}_0,
\end{split}
\end{equation}
as required. The case $|k| \ge 1$ is completely similar.
\end{proof}

\begin{remark}\label{rem:8.ksetintersectA}
From this point to the end of Proposition~\ref{prop:8.1}, we always assume that $k \in \cR^{(s,s)}(\omega,n_0)$; and moreover,
\begin{equation}\label{eq:8realPR}
|k - k_{n_0}| \le (\delta^\esone)^{3/4}.
\end{equation}
On the set $(\delta^\esone)^{7/8} < |k - k_{n_0}| \le (\delta^\esone)^{3/4}$, we will be able to apply both Proposition~\ref{prop:A.3} and
Proposition~\ref{prop:8.1}.
\end{remark}

Let $T$ be the reflection map $T(n) = -n + n_0$. For $s > 1$, due to Lemma~\ref{lem:8A.3}, the subsets in \eqref{eq:A.1} are well-defined, and each matrix $H_{\La^{(r)}_k(m), \ve, k}$, $r \le s-1$ belongs to the class $\cN^{(r)}(m, \La^\ar_k(m), \delta^\zero_0)$. Assume that $n_0 \in \La^\es_k(0)$. Then, $H_{\La^\es_k(0), \varepsilon, k} \in \widehat{OPR^{(s)}} \bigl( 0, n_0, \La^\es_k(0); \delta_0 \bigr)$. We will now re-define the set $\La^\es_k(0)$ so that $H_{\La^\es_k(0), \varepsilon, k} \in OPR^{(s)} \bigl( 0, n_0, \La^\es_k(0); \delta_0, \tau^{(0)} \bigr)$. To this end we will define the set $\La^\es_k(0)$ so that $T(\La^\es_k(0)) = \La^\es_k(0)$, where $T(n) = -n + n_0$. \textit{Provided that $k \not= k_{n_0}$, this symmetry will imply condition \eqref{eq:5-13NNNN1} in Definition~\ref{def:5-2a} with some $\tau^{(0)} = \tau^{(0)}(k) > 0$}. For $s = 1$, set
\begin{equation}\label{eq:8A.1definsets}
\La^\one_k(0) = B(3 R^\one) \cup T(B(3R^\one)).
\end{equation}
For $s > 1$, the ``new'' set $\La^\es_k(0)$ will be a ``relatively small perturbation'' of the set
$$
\mathfrak{B}(n_0,s) := B(3 R^\es) \cup T(B(3R^\es)).
$$

\begin{lemma}\label{lem:8mdeltaTm}
$(1)$ If $|v(m,k) - v(0,k)| < \delta$, with $(\delta^{(s-1)}_0)^{1/2}/4 \le \delta\le 1/256$, then $|v(T(m),k) - v(0,k)| < 4 \delta/3$.

$(2)$ Let $s \ge 2$, $1 \le s' \le s-1$, $m_j \in \cM^{(s')}_{k,s-1}$, $j = 1, 2$, and assume that $T(m_1) \neq m_2$. Then, $\dist(T(\La^{(s')}(m_1)), \La^{(s')}(m_2)) > 6 R^{(s')}$.
\end{lemma}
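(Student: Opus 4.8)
\textbf{Proof plan for Lemma~\ref{lem:8mdeltaTm}.}

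The statement is the exact analogue of Lemma~\ref{lem:7mdeltaSm}, where the reflection $\cS(n)=-n$ is replaced by the shifted reflection $T(n)=-n+n_0$, and the point $0$ is replaced by the pair $\{0,n_0\}$. So the plan is to mimic the proof of Lemma~\ref{lem:7mdeltaSm} step by step, keeping careful track of the extra terms coming from the $n_0$-shift. The crucial structural fact I will use is that under the standing assumption $k\in\cR^{(s,s)}(\omega,n_0)$ with $|k-k_{n_0}|\le(\delta^\esone)^{3/4}$ (Remark~\ref{rem:8.ksetintersectA}), $k$ is very close to $k_{n_0}=-n_0\omega/2$, so that $2k+n_0\omega$ is small; this plays the role that $k$ being small played in Lemma~\ref{lem:7mdeltaSm}. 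Note also that $\gamma=1$ and $\lambda=256$ here, by Remark~\ref{rem:8.ksetintersect}(3) combined with $|k_{n_0}|>\tfrac12(\delta^\esone_0)^{1/16}$ and $|k-k_{n_0}|\le(\delta^\esone)^{3/4}$, so $|k|$ is bounded away from large values.

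For part $(1)$: I would compute $v(T(m),k)-v(0,k)=v(n_0-m,k)-v(0,k)=\lambda^{-1}|(n_0-m)\omega|\cdot|2k+(n_0-m)\omega|$. Write $2k+(n_0-m)\omega=(2k+n_0\omega)-m\omega$ and $(n_0-m)\omega=n_0\omega-m\omega$. Since $|v(m,k)-v(0,k)|=\lambda^{-1}|m\omega|\cdot|2k+m\omega|<\delta$, part $(1)$ of Lemma~\ref{lem:A.2} gives $\min(|m\omega|,|2k+m\omega|)\le 32\delta^{1/2}$ (we are in the regime $\gamma\le 4$). The assumption $|k-k_{n_0}|\le(\delta^\esone)^{3/4}$ gives $|2k+n_0\omega|=2|k-k_{n_0}|\le 2(\delta^\esone)^{3/4}$, which is much smaller than $\delta^{1/2}\ge(\delta^{(s-1)}_0)^{1/4}/2$ (here I use $\delta^\esone<(\delta^{(s-2)}_0)^8$ from Remark~\ref{rem:3.Rs} and the lower bound $\delta\ge(\delta^{(s-1)}_0)^{1/2}/4$, so the cross terms are lower order). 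Then, exactly as in \eqref{eq:8Tmineqverif}, I bound
$$
|v(T(m),k)-v(0,k)|=\lambda^{-1}|n_0\omega-m\omega|\,|2k+n_0\omega-m\omega|\le |v(m,k)-v(0,k)|+\lambda^{-1}\big(\text{small corrections}\big)<4\delta/3,
$$
where the corrections involve the product of $|2k+n_0\omega|$ (tiny) with the bounded quantities $|m\omega|$, $|n_0\omega|\le 2|k|+1$, and $\lambda^{-1}|n_0\omega|\,|2k+m\omega|$ type terms which are absorbed since $|2k+n_0\omega|$ is so small. The bookkeeping is routine once one uses that $|2k+n_0\omega|\ll\delta$.

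For part $(2)$: from $m_j\in\cM^{(s')}_{k,s-1}$ one has $|v(m_j,k)-v(0,k)|\le 3\delta^{(s'-1)}_0/4$, and since $s'\le s-1$ this quantity is $\ge(\delta^{(s-1)}_0)^{1/2}/4$; so part $(1)$ applies with $\delta=3\delta^{(s'-1)}_0/4$ and gives $|v(T(m_1),k)-v(0,k)|<\delta^{(s'-1)}_0$. Now $T(m_1)\neq m_2$ by hypothesis, and $|v(T(m_1),k)-v(0,k)|,|v(m_2,k)-v(0,k)|<\delta^{(s'-1)}_0<\delta^{(s-1)}_0$ up to the usual exponent shift, so part $(4)$ of Lemma~\ref{lem:A.2} (applied to $T(m_1)$ and $m_2$) yields $|T(m_1)-m_2|>12R^{(s')}$, whence $\dist(T(\La^{(s')}(m_1)),\La^{(s')}(m_2))>6R^{(s')}$ because $\La^{(s')}(m)\subset m+B(3R^{(s')})$ for every $m$. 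The main obstacle — really the only delicate point — is verifying in part $(1)$ that the cross terms produced by expanding around $n_0$ instead of $0$ are genuinely lower order than $\delta$; this is where I will need to invoke $|k-k_{n_0}|\le(\delta^\esone)^{3/4}$ together with $\delta^\esone<(\delta^{(s-2)}_0)^8$ and the lower bound $\delta\ge(\delta^{(s-1)}_0)^{1/2}/4$ to close the estimate with room to spare.
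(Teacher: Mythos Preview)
Your plan is essentially correct, and part~(2) matches the paper's argument exactly. There is, however, one genuine error in your setup for part~(1): the claim that $\gamma=1$ and $\lambda=256$ is wrong. Remark~\ref{rem:8.ksetintersect}(3) gives a \emph{lower} bound $|k_{n_0}|>\tfrac12(\delta^\esone_0)^{1/16}$, not an upper bound; nothing here prevents $|k|$ (and hence $\gamma$) from being arbitrarily large. Fortunately this does not break the argument: what you actually need is that $\lambda^{-1}|m\omega|$, $\lambda^{-1}|n_0\omega|$, and $\lambda^{-1}|2k+m\omega|$ are bounded by absolute constants, and this follows from $|n_0\omega|,|m\omega|<2|k|+1$ (the latter via Lemma~\ref{lem:A.2}(1) in either the $\gamma\le4$ or $\gamma>4$ regime) combined with $\lambda^{-1}(|k|+1)\le 1/128$ for every $\gamma$.

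For part~(1) the paper takes a slightly cleaner route than your direct expansion. Instead of writing out $\lambda^{-1}|(n_0-m)\omega|\,|2k+(n_0-m)\omega|$ and tracking cross terms, it uses the triangle inequality $|v(T(m),k)-v(0,k)|\le|v(T(m),k)-v(m,k)|+|v(m,k)-v(0,k)|$ together with the exact factorization
\[
v(T(m),k)-v(m,k)=\lambda^{-1}\bigl((n_0-2m)\omega\bigr)\bigl(2k+n_0\omega\bigr),
\]
so the tiny factor $|2k+n_0\omega|\le 2(\delta^\esone)^{3/4}$ appears immediately, multiplied by $\lambda^{-1}(|n_0\omega|+2|m\omega|)\lesssim\lambda^{-1}(|k|+1)$. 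This sidesteps the bookkeeping you flag as ``routine.'' Your direct-expansion approach can also be made to work (write $a=m\omega$, $b=n_0\omega$, $c=2k$, and bound $|b-a|\,|c+b-a|\le(|c+a|+|c+b|)(|a|+|c+b|)$ with $c+b$ tiny), but the paper's identity is the more transparent way to isolate the small parameter.
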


\begin{proof}
$(1)$ Since $k \in \cR^{(s,s)}(\omega,n_0)$, one has $|n_0 \omega| < 2 |k| + 1$. Since $|v(m,k) - v(0,k)| < \delta$, it follows from $(1)$ in Lemma~\ref{lem:A.2} that $|m \omega| < 2 |k| + 1$. One has
\begin{equation}\label{eq:8Tmineqverif-2}
\begin{split}
|v(T(m),k) - v(0,k)| \le |v(T(m),k) - v(m,k)| + |v(m,k) - v(0,k)| \\
\le \lambda^{-1}(|n_0 \omega| + 2 |m \omega|) |2 k + n_0 \omega| + \delta < 8 \lambda^{-1} (|k| + 1) (\delta^\esone)^{3/4} + \delta < 4 \delta/3.
\end{split}
\end{equation}

$(2)$ One has $|v(m_j,k) - v(0,k)| < 3 \delta^{(s'-1)}/4 < (\delta^{(s'-1)})^{1/2}/4$, $j = 1, 2$. Due to part $(1)$, one also obtains $|v(T(m_1),k) - v(0,k)| < (\delta^{(s'-1)})^{1/2}$. Due to part $(4)$ of Lemma~\ref{lem:A.2}, one has $|T(m_1) - m_2| > 12 R^{(s')}$, since $T(m_1) \neq m_2$. This implies $\dist(T(\La^{(s')}(m_1)), \La^{(s')}(m_2)) > 6 R^{(s')}$.
\end{proof}

\begin{defi}\label{defi:8.LLL}
Let $\mathfrak{L}'$ be the collection of all sets $\La(m) := \La^{(s')}(m) \cup T(\La^{(s')}(m))$, $1 \le s' \le s-1$, $m \in \cM^{(s')}_{k,s-1}$. We say that $\La(m_1) \approx \La(m_2)$ if $s_1 = s_2$, and either $m_1 = m_2$ or $T(m_1) = m_2$. Clearly, this is an equivalence relation on $\mathfrak{L}'$. Let $\mathfrak{M}$ be the set of equivalence classes. Clearly, each class has at most two elements in it. For each $\mathfrak{m} \in \mathfrak{M}$, set $\La(\mathfrak{m}) = \bigcup_{\La(m_1) \in \mathfrak{m}} \La(m_1)$. Set $\mathfrak{L} = \{ \La(\mathfrak{m}) : \mathfrak{m} \in \mathfrak{M} \}$. Let $\La(\mathfrak{m}) \in \mathfrak{L}$, $ \La^{(s')}(m) \cup T(\La^{(s')}(m)) \in \mathfrak{m}$. Set $t(\La(\mathfrak{m})) = s'$. This defines an $\mathbb{N}$-valued function on $\mathfrak{L}$. Set also $p_\mathfrak{m} = \{m,T(m)\}$. Clearly, the set $p_\mathfrak{m}$ depends only on $\mathfrak{m}$.
\end{defi}

\begin{lemma}\label{lem:8.lLL}
Using the notation from Definition~\ref{defi:8.LLL}, the following statements hold.

$(1)$ For any $\La(\mathfrak{m}_j) \in \mathfrak{L}$, $j = 1, 2$, such that $t(\La(\mathfrak{m}_1)) = t(\La(\mathfrak{m}_2))$, $\mathfrak{m}_1 \neq \mathfrak{m}_2$, we have $\dist (\La(\mathfrak{m}_1), \La(\mathfrak{m}_2)) \ge R^{(t(\La(\mathfrak{m}_1)))}$.

$(2)$ For any $\mathfrak{m}$, we have
\begin{equation}\label{eq:8Lmsets}
\bigcup_{m \in p_\mathfrak{m}} \bigl( (m + B(2 R^{(t(\La(\mathfrak{m})))})) \bigr) \subset \La(\mathfrak{m}) \subset \bigcup_{m \in p_\mathfrak{m}} \bigl( (m + B(3 R^{(t(\La(\mathfrak{m})))})) \bigr).
\end{equation}
Furthermore, $\La(\mathfrak{m}) = \Xi(\mathfrak{m}) \cup T(\Xi(\mathfrak{m}))$, where $\diam (\Xi(\mathfrak{m})) \le 6 R^{(t(\La(\mathfrak{m})))}$.

$(3)$ If $\mathfrak{m}_1 \neq \mathfrak{m_2}$, then $\La(\mathfrak{m}_1) \neq \La(\mathfrak{m}_2)$.

$(4)$ The pair $(\mathfrak{L},t)$ is a proper subtraction system; see Definition~\ref{defi:5.twolambdas6}.

$(5)$ For any $\mathfrak{m}$, we have $\La(\mathfrak{m}) = T(\La(\mathfrak{m}))$.
\end{lemma}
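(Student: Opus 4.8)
\textbf{Proof proposal for Lemma~\ref{lem:8.lLL}.}
The plan is to follow the same scheme as in the proof of Lemma~\ref{lem:7.lLL}, substituting the reflection map $T(n) = -n+n_0$ for the reflection $\cS(n) = -n$ throughout, and invoking Lemma~\ref{lem:8mdeltaTm} in place of Lemma~\ref{lem:7mdeltaSm}. The only structural facts about $T$ and $\cS$ that were used in the proof of Lemma~\ref{lem:7.lLL} are that they are involutive affine isometries of $\IZ^\nu$; since $T$ has these properties as well, the arguments transfer verbatim. I will spell out the five parts in the order they appear.

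First, for part $(1)$, let $\La^{(s')}(m_j)\cup T(\La^{(s')}(m_j))\in\mathfrak{m}_j$, $j=1,2$, with $\mathfrak{m}_1\neq\mathfrak{m}_2$, so that $p_{\mathfrak{m}_1}\cap p_{\mathfrak{m}_2}=\emptyset$ (in particular $m_1\neq m_2$ and $T(m_1)\neq m_2$). By part $(1)$ of Lemma~\ref{lem:A.3}, $\dist(\La^{(s')}(m_1),\La^{(s')}(m_2)) > 6 R^{(s')}$, and since $T$ is an isometry also $\dist(T(\La^{(s')}(m_1)),T(\La^{(s')}(m_2))) > 6 R^{(s')}$; by part $(2)$ of Lemma~\ref{lem:8mdeltaTm}, $\dist(T(\La^{(s')}(m_1)),\La^{(s')}(m_2)) > 6 R^{(s')}$ and $\dist(T(\La^{(s')}(m_2)),\La^{(s')}(m_1)) > 6 R^{(s')}$. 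Taking the minimum over the four pieces gives the claimed bound. Part $(2)$: for $\La^{(s')}(m')\cup T(\La^{(s')}(m'))\in\mathfrak{m}$, the inclusion $\bigl(m'+B(2R^{(s')})\bigr)\subset\La^{(s')}(m')\subset\bigl(m'+B(3R^{(s')})\bigr)$ holds by the definition of the sets in \eqref{eq:A.1}, and since $p_\mathfrak{m}=\{m',T(m')\}$ and $T$ is an isometry, applying $T$ gives the analogous inclusion around $T(m')$; taking the union over the (at most two) elements of $\mathfrak{m}$ yields \eqref{eq:8Lmsets}. Setting $\Xi(\mathfrak{m})=\La^{(s')}(m')$ (which has $\diam\le 6R^{(s')}$ from the above), we get $\La(\mathfrak{m})=\Xi(\mathfrak{m})\cup T(\Xi(\mathfrak{m}))$. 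Part $(3)$ follows: if $t(\La(\mathfrak{m}_1))=t(\La(\mathfrak{m}_2))$ then part $(1)$ gives $\dist(\La(\mathfrak{m}_1),\La(\mathfrak{m}_2))>0$ so the sets differ; if $t(\La(\mathfrak{m}_1))\neq t(\La(\mathfrak{m}_2))$, the two-sided size bounds of \eqref{eq:8Lmsets} (with incomparable radii $R^{(\cdot)}$) force the sets to differ.

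For part $(4)$ I will verify the two conditions of Definition~\ref{defi:5.twolambdas6}. Condition $(i)$: if $t(\La')=t(\La'')=a$ and $\La'\neq\La''$, then by parts $(3)$ and $(1)$ we get $\dist(\La',\La'')\ge R^{(a)}$, so $R_a\ge R^{(a)}>0$. Condition $(ii)$: given $\La(\mathfrak{m})$ with $a:=t(\La(\mathfrak{m}))+1$, write $\La(\mathfrak{m})=\Xi(\mathfrak{m})\cup T(\Xi(\mathfrak{m}))$ with $\diam(\Xi(\mathfrak{m}))\le 6R^{(a-1)}<2^{-a}R^{(a)}\le 2^{-a}R_a$ (using $R^{(a)}=(\delta_0^{(a-1)})^{-\beta_1}$ and the lower bound on $\log R_1$, exactly as in the proof of Lemma~\ref{lem:7.lLL}(4)); take the two pieces $\Xi_1=\Xi(\mathfrak{m})$, $\Xi_2=T(\Xi(\mathfrak{m}))$. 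If $\La(\mathfrak{m})\cap\La(\mathfrak{m}')\neq\emptyset$, then since $\La(\mathfrak{m}')=\Xi(\mathfrak{m}')\cup T(\Xi(\mathfrak{m}'))$ is itself $T$-invariant, the intersection meets $\Xi_1$ and, applying $T$, meets $\Xi_2$ as well. Finally part $(5)$ is immediate from the definition $\La(\mathfrak{m})=\bigcup_{\La(m_1)\in\mathfrak{m}}\bigl(\La^{(s')}(m_1)\cup T(\La^{(s')}(m_1))\bigr)$ and $T^2=\mathrm{id}$, which show $T$ permutes the summands.

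I do not expect a genuine obstacle here; this lemma is the exact analogue of Lemma~\ref{lem:7.lLL} with $\cS$ replaced by $T$, and all the inputs (Lemma~\ref{lem:A.3}, Lemma~\ref{lem:8mdeltaTm}, the structure of the sets \eqref{eq:A.1}) are already available. The only point requiring a small amount of care is the disjointness claim $\dist(T(\La^{(s')}(m_1)),\La^{(s')}(m_2))>6R^{(s')}$ when $T(m_1)=m_2$ is excluded: this is precisely part $(2)$ of Lemma~\ref{lem:8mdeltaTm}, which in turn relies on part $(1)$ of that lemma (the $v$-estimate $|v(T(m),k)-v(0,k)|<\tfrac43\delta$) together with part $(4)$ of Lemma~\ref{lem:A.2}; one must check that the hypothesis $(\delta^{(s-1)}_0)^{1/2}/4\le\delta\le1/256$ of Lemma~\ref{lem:8mdeltaTm}(1) is met, which holds because $m_j\in\cM^{(s')}_{k,s-1}$ forces $|v(m_j,k)-v(0,k)|<3\delta^{(s'-1)}_0/4$ and $s'\le s-1$. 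I will close part $(4)$'s numerics by citing the computation in the proof of Lemma~\ref{lem:7.lLL}(4) verbatim.
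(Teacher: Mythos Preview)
Your proposal is correct and follows essentially the same approach as the paper's own proof, which likewise transfers the argument of Lemma~\ref{lem:7.lLL} verbatim with $T$ in place of $\cS$ and Lemma~\ref{lem:8mdeltaTm} in place of Lemma~\ref{lem:7mdeltaSm}. Your treatment of part~$(4)(ii)$ (using the $T$-invariance of $\La(\mathfrak{m}')$ to pass from $\Xi_1\cap\La(\mathfrak{m}')\neq\emptyset$ to $\Xi_2\cap\La(\mathfrak{m}')\neq\emptyset$) is slightly more explicit than the paper's, but the content is identical.
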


\begin{proof}
$(1)$ Let $ \La^{(s')}(m_j) \cup T(\La^{(s')}(m_j)) \in \mathfrak{m_j}$, $j = 1, 2$. Since $\mathfrak{m_1} \neq \mathfrak{m_2}$, $p_{\mathfrak{m_1}} \cap p_{\mathfrak{m_2}} = \emptyset$. Therefore, $\dist(\La^{(s')}(m_1), \La^{(s')}(m_2)) > 6 R^{(s')}$,  $\dist(T(\La^{(s')}(m_1)), T(\La^{(s')}(m_2))) > 6 R^{(s')}$. Furthermore, due to part $(2)$ of Lemma~\ref{lem:8mdeltaTm}, $\dist(T(\La^{(s')}(m_1)), \La^{(s')}(m_2)) > 6 R^{(s')}$, $\dist(T(\La^{(s')}(m_2)), \La^{(s')}(m_1)) > 6 R^{(s')}$. This implies the statement in $(1)$.

$(2)$ Let $ \La^{(s')}(m') \cup T(\La^{(s')}(m')) \in \mathfrak{m}$. One has
\begin{equation}\label{eq:8Lmsets1-2}
\bigl(m' + B(2R^{(t(\La(\mathfrak{m})))})\bigr) \subset \La^{(s')}(m') \subset \bigl(m' + B(3 R^{(t(\La(\mathfrak{m})))})\bigr).
\end{equation}
Furthermore, $\{m', T(m')\} = p_\mathfrak{m}$. This implies the first statement in $(2)$. The second statement in $(2)$ follows from Definition~\ref{defi:8.LLL}.

$(3)$ Let $\mathfrak{m}_1 \neq \mathfrak{m_2}$. If $t(\La(\mathfrak{m}_1)) = t(\La(\mathfrak{m}_2))$, then $(3)$ follows from $(1)$. If $t(\La(\mathfrak{m}_1)) \neq t(\La(\mathfrak{m}_2))$, then $(3)$ follows from $(2)$.

Now we will verify $(4)$. Assume that $t(\La') = t(\La'')$, $\La' \neq \La''$. It follows from $(3)$ and $(1)$ that $\dist (\La', \La'') \ge R^{(t(\La'))}$. So, $(i)$ from part $(2)$ of Definition~\ref{defi:5.twolambdas6} holds with $R_a \ge R^{(a)}$. Let $\La(\mathfrak{m})$ be arbitrary, $a = t(\La(\mathfrak{m}) + 1$. Due to part $(2)$, one has $\La(\mathfrak{m}) = \Xi(\mathfrak{m}) \cup T(\Xi(\mathfrak{m}))$ with $\diam (\Xi(\mathfrak{m})) \le 6 R^{(t(\La(\mathfrak{m})))} = 6 R^{(a-1)} < 2^{-a} R^{(a)} \le 2^{-a} R_a$. Furthermore, let $\La(\mathfrak{m}')$ be arbitrary. Assume $\La(\mathfrak{m}) \cap \La(\mathfrak{m}') \neq \emptyset$. Once again, due to part $(2)$, one has $\La(\mathfrak{m}) = \Xi(\mathfrak{m}') \cup T(\Xi(\mathfrak{m}'))$. This implies $\Xi(\mathfrak{m}) \cap \La(\mathfrak{m}') \neq \emptyset$ and $T(\Xi(\mathfrak{m})) \cap \La(\mathfrak{m}') \neq \emptyset$. Hence, $(ii)$ from part $(2)$ of the Definition~\ref{defi:5.twolambdas6} holds as well. This finishes $(4)$.

Part $(5)$ follows from the definition of the sets $\La(\mathfrak{m})$.
\end{proof}

Set
\begin{equation} \label{eq:8.twolambdas5}
\begin{split}
\mathfrak{B}(n_0,s) := B(3 R^\es) \cup T(B(3 R^\es)), \\
\mathfrak{B}(n_0,s,\ell) = \mathfrak{B}(n_0, s, \ell-1)  \setminus \Bigl( \bigcup_{\mathfrak{m} \in \mathfrak{M} : \La(\mathfrak{m}) \between \mathfrak{B}(n_0,s,\ell-1)} \La(\mathfrak{m})\Bigr)
\end{split}
\end{equation}
for $\ell = 1, 2, \ldots$.

\begin{lemma}\label{lem:8setLambdas}
$(1)$ There exists $\ell_0 < 2^s$ such that $\mathfrak{B}(n_0,s,\ell) = \mathfrak{B}(n_0, s, \ell+1)$ for any $\ell \ge \ell_0$.

$(2)$ For any $\La \in \mathfrak{L}$, we have either $\La \subset \mathfrak{B}(n_0,s,\ell_0)$ or $\La \subset \Bigl( \IZ^\nu \setminus
\mathfrak{B}(n_0, s, \ell_0) \Bigr)$.

$(3)$ Set $\La^{(s,\mathbf{1})}_k(0) = \mathfrak{B}(n_0,s,\ell_0)$. Then, for any $\La^{(s')}(m)$, we have either $\La^{(s')}(m) \cap \La^{(s,\mathbf{1})}_k(0) = \emptyset$ or $\La^{(s')}(m) \subset \La^{(s,\mathbf{1})}_k(0)$.

$(4)$ $T(\mathfrak{B}(n_0,s,\ell)) = \mathfrak{B}(n_0,s,\ell)$ for any $\ell$. In particular, $T(\La^{(s,\mathbf{1})}_k(0)) = \La^{(s,\mathbf{1})}_k(0)$.

$(5)$ For any $\ell \ge 1$, we have
\begin{equation}\label{eq:8.twolambdas5NEW}
\{ n \in \mathfrak{B}(n_0,s,\ell-1)) : \dist (n, \IZ^\nu \setminus \mathfrak{B}(n_0,s,\ell-1)) \ge 6 R^\esone \} \subset
\mathfrak{B}(n_0,s,\ell) \subset \mathfrak{B}(n_0,s,\ell-1)).
\end{equation}
In particular, $B(2 R^\es) \cup (n_0 + B(2 R^\es)) \subset \La^{(s,\mathbf{1})}_k(0) \subset B(3 R^\es) \cup (n_0 + B(3 R^\es))$.
\end{lemma}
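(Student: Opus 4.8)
The plan is to treat Lemma~\ref{lem:8setLambdas} as the ordered-pair analog of Lemma~\ref{lem:7setLambdas}: the five assertions have the same structure, with the reflection $\cS(n)=-n$ replaced by $T(n)=-n+n_0$, with Definition~\ref{defi:7.LLLL} and Lemma~\ref{lem:7.lLL} replaced by Definition~\ref{defi:8.LLL} and Lemma~\ref{lem:8.lLL}, and with the symmetrized set produced by the iteration \eqref{eq:8.twolambdas5} starting from $\mathfrak{B}(n_0,s)=B(3R^\es)\cup T(B(3R^\es))$. The only structural input needed is part $(4)$ of Lemma~\ref{lem:8.lLL}, which asserts that $(\mathfrak{L},t)$ is a proper subtraction system; once that is in hand, the abstract combinatorial Lemma~\ref{lem:5.twolambdas4} applies directly to \eqref{eq:8.twolambdas5} with $\La_{0,0}=\mathfrak{B}(n_0,s)$ and $\La_{0,\ell}=\mathfrak{B}(n_0,s,\ell)$.

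Parts $(1)$ and $(2)$ will follow immediately. Since the values of $t$ lie in $\{1,\dots,s-1\}$, one has $\sup_{\La\in\mathfrak{L}}t(\La)\le s-1$, so part $(2)$ of Lemma~\ref{lem:5.twolambdas4} yields some $\ell_0<2^s$ with $\mathfrak{B}(n_0,s,\ell)=\mathfrak{B}(n_0,s,\ell_0)$ for all $\ell\ge\ell_0$, which is $(1)$, and part $(3)$ of the same lemma gives the dichotomy $(2)$. For $(3)$: if $\La^{(s')}_k(m)\cap\La^{(s,\mathbf 1)}_k(0)\neq\emptyset$, let $\mathfrak{m}$ be the equivalence class containing $\La^{(s')}_k(m)\cup T(\La^{(s')}_k(m))$; then $\La^{(s')}_k(m)\subset\La(\mathfrak{m})$ by construction, so $\La(\mathfrak{m})\cap\La^{(s,\mathbf 1)}_k(0)\neq\emptyset$, and $(2)$ forces $\La(\mathfrak{m})\subset\La^{(s,\mathbf 1)}_k(0)$, hence $\La^{(s')}_k(m)\subset\La^{(s,\mathbf 1)}_k(0)$.

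For $(4)$ I would argue by induction on $\ell$. The base case is $T(\mathfrak{B}(n_0,s,0))=T(B(3R^\es))\cup T^2(B(3R^\es))=T(B(3R^\es))\cup B(3R^\es)=\mathfrak{B}(n_0,s,0)$, using $T^2=\mathrm{id}$. For the inductive step, apply $T$ to \eqref{eq:8.twolambdas5}: by the induction hypothesis $T$ fixes $\mathfrak{B}(n_0,s,\ell-1)$, by part $(5)$ of Lemma~\ref{lem:8.lLL} it fixes each $\La(\mathfrak{m})$, and the condition $\La(\mathfrak{m})\between\mathfrak{B}(n_0,s,\ell-1)$ is $T$-invariant, so the removal step commutes with $T$. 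For part $(5)$, the inclusion $\mathfrak{B}(n_0,s,\ell)\subset\mathfrak{B}(n_0,s,\ell-1)$ is immediate from \eqref{eq:8.twolambdas5}. For the other inclusion, fix a straddling $\mathfrak{m}$; by part $(2)$ of Lemma~\ref{lem:8.lLL}, $\La(\mathfrak{m})=\Xi(\mathfrak{m})\cup T(\Xi(\mathfrak{m}))$ with $\diam\Xi(\mathfrak{m})\le 6R^{(t(\La(\mathfrak{m})))}\le 6R^\esone$, and $\La(\mathfrak{m})$ contains a point $y\notin\mathfrak{B}(n_0,s,\ell-1)$; since $\mathfrak{B}(n_0,s,\ell-1)$ is $T$-invariant by $(4)$, one may take $y$ in whichever of the two pieces $\Xi(\mathfrak{m})$, $T(\Xi(\mathfrak{m}))$ contains a given $n\in\La(\mathfrak{m})\cap\mathfrak{B}(n_0,s,\ell-1)$ (replacing $y$ by $T(y)$ if necessary), so that $\dist(n,\IZ^\nu\setminus\mathfrak{B}(n_0,s,\ell-1))\le|n-y|\le 6R^\esone$; intersecting over all straddling $\mathfrak{m}$ gives \eqref{eq:8.twolambdas5NEW}. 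Iterating \eqref{eq:8.twolambdas5NEW} at most $\ell_0<2^s$ times from $\mathfrak{B}(n_0,s,0)=B(3R^\es)\cup(n_0+B(3R^\es))$ yields $B(3R^\es-6\ell_0R^\esone)\cup(n_0+B(3R^\es-6\ell_0R^\esone))\subset\La^{(s,\mathbf 1)}_k(0)\subset B(3R^\es)\cup(n_0+B(3R^\es))$, and since $6\cdot 2^sR^\esone<R^\es$ by the super-exponential growth of $R^\es$ built into \eqref{eq:A.1A}, the final assertion follows.

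Because all five statements reduce mechanically to Lemma~\ref{lem:5.twolambdas4} and Lemma~\ref{lem:8.lLL}, the genuine difficulty is not located inside this lemma at all: it sits in verifying that $(\mathfrak{L},t)$ is a proper subtraction system, i.e.\ in Lemma~\ref{lem:8.lLL}, which in turn hinges on Lemma~\ref{lem:8mdeltaTm} separating a cell $\La^{(s')}(m)$ from the reflected cell $T(\La^{(s')}(m'))$ whenever $T(m)\neq m'$. Within the present argument the only point demanding care is the bookkeeping in $(5)$ — exploiting the $T$-invariance from $(4)$ to place the witnessing exterior point $y$ in the correct half of $\La(\mathfrak{m})$, so that every removed point genuinely lies in a collar of width $6R^\esone$ about the boundary — together with the elementary check that $6\cdot 2^s R^\esone<R^\es$, which is where the tower-like growth of the scales $R^{(u)}$ in \eqref{eq:A.1A} is used.
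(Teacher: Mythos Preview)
Your proof is correct and follows the same approach as the paper: parts $(1)$ and $(2)$ from Lemma~\ref{lem:5.twolambdas4}, part $(3)$ via the equivalence class containing $\La^{(s')}_k(m)\cup T(\La^{(s')}_k(m))$, part $(4)$ by induction using $T$-invariance of the initial set and of each $\La(\mathfrak{m})$, and part $(5)$ from the diameter bound on the pieces $\Xi(\mathfrak{m})$. Your treatment of $(5)$ is in fact more careful than the paper's: the paper asserts $\diam(\La(\mathfrak{m}))\le 6R^{(s-1)}$, which is not literally true since $\La(\mathfrak{m})=\Xi(\mathfrak{m})\cup T(\Xi(\mathfrak{m}))$ can have its two pieces separated by roughly $|n_0|$; your use of the $T$-invariance of $\mathfrak{B}(n_0,s,\ell-1)$ to relocate the exterior witness into the correct piece is exactly the missing step (and is what the paper does explicitly in the analogous Lemma~\ref{lem:7setLambdas}).
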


\begin{proof}
Parts $(1)$, $(2)$ follow from Lemma~\ref{lem:5.twolambdas4}.

Let $\La^{(s')}(m)$ be such that $\La^{(s')}(m) \cap \mathfrak{B}(n_0,s,\ell_0) \neq \emptyset$. Let $\mathfrak{m}$ be the equivalence class containing  $\La^{(s')}(m) \cup T(\La^{(s')}(m))$. Then, just by definition, $\La^{(s')}(m) \subset \La(\mathfrak{m})$. In particular, $\La(\mathfrak{m}) \cap \mathfrak{B}(n_0,s,\ell_0) \neq \emptyset$. This implies $\La(\mathfrak{m}) \subset \mathfrak{B}(n_0,s,\ell_0)$. Therefore, $\La^{(s')}(m) \subset \mathfrak{B}(n_0,s,\ell_0)$. This finishes the proof of $(3)$.

To verify $(4)$ note that $T(\mathfrak{B}(n_0,s,0)) = \mathfrak{B}(n_0,s,0)$. Combining this with part $(5)$ of Lemma~\ref{lem:8.lLL}, one obtains $T(\mathfrak{B}(n_0,s,\ell)) = \mathfrak{B}(n_0,s,\ell)$ for any $\ell$, as claimed.

It follows from \eqref{eq:8Lmsets} that $\diam (\La(\mathfrak{m})) \le 6 R^\esone$ for any $\mathfrak{m}$. Combining this with \eqref{eq:8.twolambdas5}, one obtains \eqref{eq:8.twolambdas5NEW}. The second statement in $(5)$ follows from \eqref{eq:8.twolambdas5NEW} since $\ell_0<2^s$.
\end{proof}

\begin{remark}\label{rem:7.Laksetsambig}
$(1)$ If $k \in \cR^{(s,s)}(\omega,n_0)$, then $-k \in \cR^{(s,s)}(\omega,-n_0)$, $|-k - k_{-n_0}| = |k - k_{n_0}|$, and $\La^{(s,\mathbf{1})}_{-k}(0) = -\La^{(s,\mathbf{1})}_k(0)$.

$(2)$ Let $(\delta^\esone)^{7/8} < |k - k_{n_0}| \le (\delta^\esone)^{3/4}$. Due to part $(3)$ in Lemma~\ref{lem:8A.3}, the subset $\La^\es_k$ is well-defined by \eqref{eq:A.1}; moreover, $H_{\La^\es_k(0), \varepsilon, k} \in \cN^{(s)} \bigl( 0, \La^\es_k(0); \delta_0 \bigr)$. The notation $\La^{(s,\mathbf{1})}_k(0)$ is introduced to avoid ambiguity.
\end{remark}

\begin{prop}\label{prop:8.1}
Assume that  $k \in \cR^{(s,s)}(\omega,n_0)$, $|k - k_{n_0}| \le (\delta^\esone)^{3/4}$.

Assume $k_{n_0} > 0 $. Let $\ve_0$, $\ve_{s}$ be as in Definition~\ref{def:4-1}. Let $\ve \in (-\ve_{s},\ve_{s})$.
\begin{itemize}

\item[(1)] If $s = 1$, then for any $0 < |m| \le 12 R^{(1)}$, $m \neq n_0$, and any $|k_1 - k| < \delta(1) := |\ve| (\delta^{(0)}_0)^5$, we have $|v(m,k_1) - v(0,k_1)| \ge \delta_0$. If $s \ge 2$, $0 < |m| \le 12 R^{(s)}$, $m \notin \bigcup_{1 \le r \le s-1} \bigcup_{m' \in \cM^{(r)}_{k,s-1}} \La^{(r)}_{k}(m')$, then $|v(m,k) - v(0,k)| \ge \delta_0/2$.

\item[(2)] For any $k_{n_0} < k' \le k_{n_0} + (\delta^\esone)^{3/4}$, we have $H_{\La^{(s,\mathbf{1})}_k(0), \ve, k'} \in OPR^{(s)} \bigl( 0, n_0, \La^{(s,\mathbf{1})}_k(0); \delta_0, \tau^\zero \bigr)$, $\tau^\zero = [\min(2 \ve_0^{3/4}, k_{n_0}/256)] |k' - k_{n_0}|$. For any $k_{n_0} - (\delta^\esone)^{3/4} \le k' < k_{n_0}$, we have $H_{\La^{(s,\mathbf{1})}_k(0), \ve, k'} \in OPR^{(s)} \bigl( n_0, 0, \La^{(s,\mathbf{1})}_k(0); \delta_0, \tau^\zero \bigr)$.

\item[(3)] For $k_{n_0} < k' \le k_{n_0} + (\delta^\esone)^{3/4}$, we denote by $E^{(s,\pm)}(0, \La^{(s,\mathbf{1})}_k(0); \ve, k')$ the functions defined in Proposition~\ref{prop:8-5n} with $H_{\La^{(s,\mathbf{1})}_k(0), \ve, k'}$ in the role of $\hle$. Similarly, for $k_{n_0} - (\delta^\esone)^{3/4} \le k' < k_{n_0}$, we denote by $E^{(s,\pm)}(n_0, \La^{(s,\mathbf{1})}_k(0); \ve, k')$ the functions defined in Proposition~\ref{prop:8-5n}. Then, with $k^\zero:=\min(\ve_0^{3/4}, k_{n_0}/512)$, one has
\begin{equation}\label{eq:8Ekderivatives}
\begin{split}
\partial_\theta E^{(s,+)}(0, \La^{(s,\mathbf{1})}_k(0); \ve, k_{n_0} + \theta) > (k^\zero)^2 \theta, \quad \theta > 0, \\
\partial_\theta E^{(s,-)}(0, \La^{(s,\mathbf{1})}_k(0); \ve, k_{n_0} + \theta) < -(k^\zero)^2 \theta, \quad \theta > 0,
\end{split}
\end{equation}
\begin{equation}\label{eq:8Esymmetry}
E^{(s,\pm)}(0, \La^{(s,\mathbf{1})}_k(0); \ve, k_{n_0} + \theta) = E^{(s,\pm)}(n_0, \La^{(s,\mathbf{1})}_k(0); \ve, k_{n_0} - \theta), \quad \theta > 0,
\end{equation}
\begin{equation}\label{eq:8Efirstder}
|\partial_\theta E^{(s,\pm)}(0, \La^{(s,\mathbf{1})}_k(0); \ve, k_{n_0} + \theta) | \le 2,
\end{equation}
\begin{equation}\label{eq:9Ekk1EG}
|E^{(s,\pm)}(0, \La^{(s,\mathbf{1})}_k(0); \ve, k_1) - E^{(s, \pm)}(0, \La^{(s,\mathbf{1})}_{k_1}(0); \ve, k_1)| < |\ve|(\delta^\es_0)^5, \quad 0 < |k_1 - k_{n_0}| < |\ve| (\delta^\esone)^{3/4}
\end{equation}
\begin{equation}\label{eq:9Esplit}
E^{(s,+)}(0, \La^{(s,\mathbf{1})}_k(0); \ve, k') - E^{(s, -)}(0, \La^{(s,\mathbf{1})}_k(0); \ve, k') > (k^\zero |k' - k_{n_0}|)^2/2.
\end{equation}

\item[(4)]
\begin{equation}\label{eq:8kk1comp1}
|E^{(s,\pm)}(0, \La^{(s,\mathbf{1})}_k(0); \ve, k') - E^{(s-1)}(0, \La^{(s-1)}_k(0); \ve, k')| \le 4 |\ve|(\delta^\esone_0)^{1/8}.
\end{equation}
Here, $E^{(0)}(m',\La';\ve,k') := v(m',k')$, as usual. In particular,
\begin{equation}\label{eq:8kk1comp1EpEm}
|E^{(s,+)}(0, \La^{(s,\mathbf{1})}_k(0); \ve, k) - E^{(s,+)}(0, \La^{(s,\mathbf{1})}_{k'}(0); \ve, k')| \le 9|\ve|(\delta^\esone_0)^{1/8}.
\end{equation}

\item[(5)] Let $k \in \IR \setminus \bigcup_{|m| \le 12 R^\es, \; m \neq n_0} (k_{m,s}^-, k_{m,s}^+)$, $(\delta^\esone)^{7/8} < |k - k_{n_0}| \le (\delta^\esone)^{3/4}$. Then, $H_{\La^{(s,\mathbf{1})}_k(0), \varepsilon, k} \in \cN^{(s)} \bigl( 0, \La^{(s,\mathbf{1})}_k(0); \delta_0 \bigr)$. Furthermore,
$$
E^{(s)}(0, \La^{(s,\mathbf{1})}_k(0); \ve, k) = \begin{cases} E^{(s,+)}(0, \La^{(s,\mathbf{1})}_k(0);\ve,k) \quad \text {if $(\delta^\esone)^{7/8} < k - k_{n_0} \le (\delta^\esone)^{3/4}$}, \\ E^{(s,-)}(0, \La^{(s,\mathbf{1})}_k(0); \ve, k) \quad \text {if $(\delta^\esone)^{7/8} < k_{n_0} - k \le (\delta^\esone)^{3/4}$}.
\end{cases}
$$

\item[(6)] For any $k_{n_0} < k' \le k_{n_0} + (\delta^\esone)^{3/4}$, we have $H_{\La^{(s,\mathbf{1})}_k(0), \ve, k'} \in OPR^{(s)} \bigl(0, n_0, \La^{(s,\mathbf{1})}_k(0); \delta_0, \tau^\zero \bigr)$. For any $k_{n_0} - (\delta^\esone)^{3/4} \le k' < k_{n_0}$, we have $H_{\La^{(s,\mathbf{1})}_k(0), \ve, k'} \in OPR^{(s)} \bigl(n_0,0, \La^\es_k(0); \delta_0, \tau^\zero\bigr)$. Furthermore,
$$
E^{(s,\pm)}(0, \La^{(s,\mathbf{1})}_k(0); \ve, k') = E^{(s,\pm)}(0, \La^{(s,\mathbf{1})}_k(0); \ve, -k').
$$

\end{itemize}
\end{prop}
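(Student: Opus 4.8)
The plan is to reduce everything to the general theory of Sections~\ref{sec.3} and \ref{sec.5}, letting the reflection symmetry built into $\La^{(s,\mathbf{1})}_k(0)$ do the decisive work. Write $T$ for the involution $T(n)=n_0-n$. By Lemma~\ref{lem:8setLambdas} the set $\La^{(s,\mathbf{1})}_k(0)$ is $T$-invariant, contains $B(2R^\es)\cup(n_0+B(2R^\es))$, and is the union of the ``free'' region with entire sets $\La^{(s')}_k(m)$; hence by Remark~\ref{rem:7.kawayfromzero}$(0)$ and Lemma~\ref{lem:8A.3}$(1)$ every sub-block $H_{\La^{(s')}_k(m),\ve,k'}$ lies in $\cN^{(s')}(m,\La^{(s')}_k(m),\delta^\zero_0)$ for $|k'-k|$ small. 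Part $(1)$ is then immediate: for $s=1$ it is Lemma~\ref{lem:A.2}$(4)$, and for $s\ge2$ it follows from the definition of $\cM^{(r)}_{k,s-1}$ and Lemma~\ref{lem:8A.3}$(1)$--$(2)$, exactly as in part $(1)$ of Proposition~\ref{prop:A.3}.

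For parts $(2)$ and $(6)$ I would first note that $H_{\La^{(s,\mathbf{1})}_k(0),\ve,k'}\in\widehat{OPR^{(s)}}(0,n_0,\La^{(s,\mathbf{1})}_k(0);\delta_0)$: conditions $(a)$--$(d)$,$(f)$ of Definition~\ref{def:8-1a} are read off from the structure of $\La^{(s,\mathbf{1})}_k(0)$ just recalled together with part $(1)$, while the $E^{(s-1)}$-separation estimates \eqref{eq:4-3AAAAAmnotm0}, \eqref{eq:4-3AAAAA} are precisely what the proof of Lemma~\ref{lem:8A.3}$(2)$ provides (that proof uses only $\cM^{(r)}_{k,s-1}$, not the particular set). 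The crucial step is to upgrade this to $OPR^{(s)}$, i.e.\ to verify \eqref{eq:5-13NNNN1} of Definition~\ref{def:5-2a}. Here I exploit $T$-invariance: the unitary $(U\psi)(n)=\psi(n_0-n)$ conjugates $H_{\La^{(s,\mathbf{1})}_k(0),\ve,k_{n_0}+\theta}$ to $\overline{H_{\La^{(s,\mathbf{1})}_k(0),\ve,k_{n_0}-\theta}}$ and exchanges $0$ with $n_0$, because $v(T(n),k_{n_0}+\theta)=v(n,k_{n_0}-\theta)$ (using $n_0\omega=-2k_{n_0}$) and $c(T(m)-T(n))=\overline{c(m-n)}$. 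Since $Q^\es$ and $|G^\es|$ are real and self-conjugate by \eqref{eq:5-11selfadj}, this yields $Q^\es(n_0,\La;\ve,k_{n_0}+\theta,E)=Q^\es(0,\La;\ve,k_{n_0}-\theta,E)$ and $|G^\es(0,n_0,\La;\ve,k_{n_0}+\theta,E)|=|G^\es(0,n_0,\La;\ve,k_{n_0}-\theta,E)|$. Setting $a_1(\theta)=v(0,k_{n_0}+\theta)+Q^\es(0,\La;\ve,k_{n_0}+\theta,E)$, $a_2(\theta)=v(n_0,k_{n_0}+\theta)+Q^\es(n_0,\La;\ve,k_{n_0}+\theta,E)$ and using $v(n_0,k_{n_0}+\theta)=v(0,k_{n_0}-\theta)$ one gets the key identity $a_2(\theta)=a_1(-\theta)$, whence
\[
a_1(\theta)-a_2(\theta)=\int_{-\theta}^{\theta}\partial_t a_1(t)\,dt,\qquad \partial_t a_1(t)=2\lambda^{-1}(k_{n_0}+t)+\partial_t Q^\es .
\]
With $|\partial_t Q^\es|\le C|\ve|^{3/2}$ (Lemma~\ref{lem:5differentiation}/\ref{lem:6differentiation}) and $|t|\le(\delta^\esone)^{3/4}\ll k_{n_0}$ (Remark~\ref{rem:8.ksetintersect}$(3)$), and using that $|\ve|$ is small relative to the scale so the error is negligible against $\lambda^{-1}k_{n_0}$, one obtains $\partial_t a_1(t)\ge\lambda^{-1}k_{n_0}$ uniformly in $E$, hence $a_1(\theta)-a_2(\theta)\ge 2\theta\lambda^{-1}k_{n_0}\ge\tau^\zero$ for $\theta>0$; for $\theta<0$ the inequality reverses, which is why the principal points become $(n_0,0)$ there (one replaces $\tau^\zero$ by $\min(\tau^\zero,(\delta^\esone)^3)$ if the normalization of Definition~\ref{def:5-2a} requires it). The symmetry clause of $(6)$ follows from Lemma~\ref{lem:basicshiftprop}$(2)$,$(5)$ together with $\La^{(s,\mathbf{1})}_{-k}(0)=-\La^{(s,\mathbf{1})}_k(0)$ (Remark~\ref{rem:7.Laksetsambig}$(1)$).

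Given $OPR^{(s)}$-membership, part $(3)$ follows from Proposition~\ref{prop:8-5n}: $E^{(s,\pm)}$ are the two roots of $\chi(\ve,E)=0$, which by that proof are the roots $\zeta_\pm$ of $\chi^{(f)}$ with $f\in\mathfrak{F}^{(1)}_{\mathfrak{g}^\one}(f_1,f_2,b^2)$, $f_i=E-a_i$. The identities $a_2(\theta)=a_1(-\theta)$ and $|b(\theta)|=|b(-\theta)|$ make $\chi^{(f)}$ symmetric in $\theta$, so $\partial_\theta\chi^{(f)}|_{\theta=0}=0$ and $\zeta_\pm(k_{n_0}+\theta)=\zeta_\pm(k_{n_0}-\theta)$, which is \eqref{eq:8Esymmetry}. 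Checking condition $(k^\zero)$ of Lemma~\ref{4.fcontinuedfrac}$(8)$ — $\partial_\theta a_1>k^\zero$, $\partial_\theta a_2<-k^\zero$ (the computation above), $|f_i|,|\partial^2_\theta b^2|<(k^\zero)^2/8$ (from \eqref{eq:5-12acACC}-type bounds and the smallness of $\ve$) — that lemma gives $\partial_\theta\chi^{(f)}<-(k^\zero)^2\theta$ for $\theta>0$, and Lemma~\ref{lem:4zetatheta} converts this into \eqref{eq:8Ekderivatives}; \eqref{eq:8Efirstder} comes from \eqref{eq:6-1''}--\eqref{eq:6-1'''} (which sandwich $E^{(s,\pm)}$ between $a_1,a_2,|b|$, whose $\theta$-derivatives are all $\le2$); and \eqref{eq:9Esplit} combines the gap estimate of \eqref{eq:8-13acnq0} with $\partial^2_\theta\chi^{(f)}\le-(k^\zero)^2$. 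The set-comparison \eqref{eq:9Ekk1EG} is proved by the restricted-eigenvector argument: Proposition~\ref{prop:8-5n}$(1)$ gives exponential decay of $\vp^{(s,\pm)}$ off $\{0,n_0\}$, $B(2R^\es)\cup(n_0+B(2R^\es))$ lies in both sets, so one transplants the eigenvector and separates the two eigenvalues by the gap, exactly as in Corollary~\ref{cor:5.twolambdas1}. Finally, \eqref{eq:8kk1comp1} is \eqref{eq.5Eestimates1AP} after identifying the $(s-1)$-set of $m^+_0$ with $\La^{(s-1)}_k(0)$ (reading $E^{(0)}=v$ for $s=1$), and \eqref{eq:8kk1comp1EpEm} follows by inserting $E^{(s-1)}$ into a triangle inequality and using \eqref{eq:7kk1comp}, \eqref{eq:7kk1comp1}; for $(5)$, when $(\delta^\esone)^{7/8}<|k-k_{n_0}|\le(\delta^\esone)^{3/4}$ Lemma~\ref{lem:8A.3}$(3)$ (valid for $\La^{(s,\mathbf{1})}_k(0)$ by Remark~\ref{rem:7.kawayfromzero}$(0)$) gives $H_{\La^{(s,\mathbf{1})}_k(0),\ve,k}\in\cN^{(s)}(0,\La^{(s,\mathbf{1})}_k(0);\delta_0)$, and since $E^{(s,+)}-E^{(s,-)}\ge(k^\zero(\delta^\esone)^{7/8})^2/2\gg(\delta^\es_0)^4$ the simple analytic eigenvalue $E^{(s)}$ with $E^{(s)}(0)=v(0,k)$ must be the one among $E^{(s,\pm)}$ with the matching value at $\ve=0$, i.e.\ $E^{(s,+)}$ for $k>k_{n_0}$ and $E^{(s,-)}$ for $k<k_{n_0}$ by \eqref{eq:5specHEE}.

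The hard part will be the verification of \eqref{eq:5-13NNNN1} in part $(2)$: one must check that $T$-invariance really produces the pointwise-in-$E$ inequality $a_1\ge a_2+\tau^\zero$ with the stated constant, uniformly over the entire $E$-range demanded by Definition~\ref{def:5-2a}, and — this is the most delicate point — that all the smallness hypotheses (for $\tau^\zero$, for condition $(k^\zero)$ of Lemma~\ref{4.fcontinuedfrac}$(8)$, and for the error $|\partial_t Q^\es|$ against $\lambda^{-1}k_{n_0}$) survive even when $k_{n_0}$, hence $k^\zero$, is as small as $\tfrac12(\delta^\esone)^{1/16}$. Tracking the $\gamma$-dependence of $\lambda=256\gamma$ in that regime and confirming that $\La^{(s,\mathbf{1})}_k(0)$ inherits every structural hypothesis of Definition~\ref{def:8-1a} from Lemma~\ref{lem:8setLambdas} is the bookkeeping one cannot avoid.
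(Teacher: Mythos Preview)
Your overall architecture matches the paper's: reduce to $\widehat{OPR^{(s)}}$ via Lemma~\ref{lem:8A.3}, then use the $T$-symmetry of $\La^{(s,\mathbf{1})}_k(0)$ to get the crucial identity $a_2(\theta,E)=a_1(-\theta,E)$ and hence \eqref{eq:5-13NNNN1}; invoke Proposition~\ref{prop:8-5n} and Lemmas~\ref{4.fcontinuedfrac}(8), \ref{lem:4zetatheta} for part~(3); and identify $E^{(s)}$ with $E^{(s,\pm)}$ via the initial condition at $\ve=0$ for part~(5). Your recognition that the small-$k_{n_0}$ regime is the delicate point in part~(2) is correct: the paper handles it by a ``step back to scale $r$'' argument, choosing $r<s$ with $(\delta^{(r)}_0)^{1/2}<k_{n_0}\le(\delta^{(r-1)}_0)^{1/2}$, using the second-derivative bound and the symmetry $\La^{(r)}_{k,sym}(0)=-\La^{(r)}_{k,sym}(0)$ at that scale to get $\partial_\theta(v+Q^{(r)})>(k_{n_0}+\theta)/256$, and then carrying this up to scale $s$ via the telescoping estimate \eqref{eq:8hHinvestimatestatement1kvard}. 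This is more than bookkeeping; it is a specific mechanism you should spell out.

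There is one genuine gap. Your argument for \eqref{eq:8Efirstder} --- that the sandwich \eqref{eq:6-1''}--\eqref{eq:6-1'''} together with ``$\theta$-derivatives of $a_1,a_2,|b|$ all $\le 2$'' gives the bound --- is not valid as stated: a pointwise sandwich $f\le g\le h$ with $|f'|,|h'|\le 2$ does \emph{not} imply $|g'|\le 2$, and here the bounds $a_i(\theta,\zeta_\pm(\theta))$, $|b(\theta,\zeta_\pm(\theta))|$ themselves depend on $\zeta_\pm$, so differentiating reintroduces $\zeta_\pm'$. The paper takes a different route: it invokes the Hellmann--Feynman identity
\[
\partial_\theta E^{(s,\pm)}=\|\vp^{(s,\pm)}\|^{-2}\sum_{n}|\vp^{(s,\pm)}(n)|^2\,\partial_\theta v(n,k_{n_0}+\theta),
\]
and then uses the exponential decay of $\vp^{(s,\pm)}$ off $\{0,n_0\}$ (from part~(7) of Proposition~\ref{rem:con1smalldenomnn}) together with $|\partial_\theta v(n,k_{n_0}+\theta)|\le 1+\min(|n|,|n-n_0|)$ to sum. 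An alternative that stays closer to your line would be a careful implicit-function computation of $\partial_\theta\zeta_\pm=-\partial_\theta\chi/\partial_E\chi$ at $E=\zeta_\pm$, using $|\partial_\theta a_i|\le 1/64$, $|\partial_E a_i|\le|\ve|$, and the lower bound on $E-a_2$ from \eqref{eq:6-1''}; this can be made to give $|\partial_\theta\zeta_\pm|\le 2$, but it is not what your sentence says.
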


\begin{proof}
The verification of part $(1)$ goes the same way as in Proposition~\ref{prop:A.3}.

It follows from $(2)$ in Lemma~\ref{lem:8A.3} that $H_{\La^{(s,\mathbf{1})}_k(0), \ve, k'} \in \widehat{OPR^{(s)}} \bigl( 0, n_0, \La^{(s,\mathbf{1})}_k(0); \delta_0 \bigr)$. Now we will verify \eqref{eq:5-13NNNN1} in Definition~\ref{def:5-2a}. \textit{As we mentioned before, the symmetry $T(\La^{(s,\mathbf{1})}_k(0)) = \La^{(s,\mathbf{1})}_k(0)$ plays a crucial role for that matter}. Set for convenience $\La = \La^{(s,\mathbf{1})}_k(0)$, $m_0^+ = 0$, $m_0^- = n_0$, $\La_{m^+_0,m_0^-} = \La^{(s,\mathbf{1})}_k(0) \setminus \{0, n_0\}$. As in \eqref{eq:5-10ac}, consider the functions
\begin{equation} \label{eq:8-10acbasicfunctions}
\begin{split}
K^{(s)}(m, n, \La; \ve, k_{n_0} + \theta, E) = (E - H_{\La_{m^+_0,m_0^-}, \ve, k_{n_0} + \theta})^{-1}(m,n), \quad m, n \in \La_{m^+_0,m_0^-}, \\
Q^{(s)}(m_0^\pm, \La; \ve, k_{n_0} + \theta, E) \\
= \sum_{m',n' \in \La_{m^+_0,m_0^-}} h(m_0^\pm, m'; \ve, k_{n_0} + \theta) K^{(s)}(m', n'; \La; \ve, k_{n_0} + \theta, E) h(n', m_0^\pm; \ve, k_{n_0} + \theta), \\
G^\es(m^\pm_0, m^\mp_0, \La; \ve, k_{n_0} + \theta, E) = h(m_0^\pm, m_0^\mp; \ve, k_{n_0} + \theta) \\
+ \sum_{m', n' \in \La_{m^+_0,m_0^-}} h(m_0^\pm, m'; \ve, k_{n_0} + \theta) K^{(s)}(m', n'; \La; \ve, k_{n_0} + \theta, E) h(n', m_0^\mp; \ve, k_{n_0} + \theta)
\end{split}
\end{equation}
with $|\theta| < (\delta_0^\esone)^{3/4}$,
\begin{equation}\label{eq:8.domainOPAAAA}
\begin{split}
|\ve| < \ve_0, \quad |E - v(m_0^+)| < \delta_0/4, \quad \text { in case $s = 1$}, \\
|\ve| < \ve_{s-2} := \ve_0 - \sum_{1 \le s' \le s-2} \delta^{(s')}_0, \quad \big| E - E^{(s-1)}(m^+_0, \La^{(s-1)}_k(m_0^+); \ve, k_{n_0} + \theta) \big| < \rho_0 := 2 \delta^{(s-1)}_0, \quad s \ge 2.
\end{split}
\end{equation}
One can estimate $\partial_\theta Q^{(s)}(m_0^\pm, \La; \ve, k_{n_0} + \theta, E)$, $\partial^2_{\theta, \theta} Q^{(s)}(m_0^\pm, \La; \ve, k_{n_0} + \theta, E)$ using Lemma~\ref{lem:5differentiation}. Like in the proof of Proposition~\ref{prop:A.3}, taking into account that
\begin{equation} \label{eq:8-10partialderivv}
\begin{split}
\partial_\theta h(m, n; \ve, k_{n_0} + \theta) = 0, \quad \text{ if $m \neq n$}, \\
|\partial_\theta h(m, m; \ve, k_{n_0} + \theta)| = 2\lambda^{-1} |k_{n_0} + \theta + m \omega| < 8 \exp (|m|^{1/5}), \\
\partial^2_\theta h(m, m; \ve, k_{n_0} + \theta) = 2\lambda^{-1},
\end{split}
\end{equation}
one obtains for $\alpha \le 2$,
\begin{equation}\label{eq:8hHinvestimatestatement1kvard}
\begin{split}
|\partial^\alpha_{\theta} Q^{(s)}(m_0^\pm, \La; \ve, k_{n_0} + \theta, E)| < |\ve|^{4/3} = (\epsilon \lambda^{-1})^{4/3}, \\
|\partial^\alpha_{\theta} Q^{(s)}(m_0^\pm, \La; \ve, k_{n_0} + \theta, E) - \partial^\alpha_{\theta} Q^{(s-1)}(m_0^\pm, \La(m_0^\pm); \ve, k_{n_0} + \theta, E)| < |\ve| (\delta^\esone_0)^5, \\
|\partial^\alpha_{\theta} G^\es(m^\pm_0, m^\mp_0, \La; \ve, k_{n_0} + \theta, E) < |\ve|^{4/3} \exp \Big( -\frac{\kappa_0}{4} |n_0| \Big) = (\epsilon \lambda^{-1})^{4/3} \exp \Big( -\frac{\kappa_0}{4} |n_0| \Big).
\end{split}
\end{equation}

Now we invoke the symmetry $T(\La) = \La$. Note that $T(m^\pm_0) = m^\mp_0$. In particular, $T(\La_{m^+_0, m_0^-}) = \La_{m^+_0, m_0^-}$. Note also that $h(m, n; \ve, k) = \ve c(n - m)$ for $m \neq n$, that is, it does not depend on $k$. Finally, note that for $k' = k_{n_0} + \theta$, we have $-(k' + n_0 \omega) = (k_{n_0} - (k'-k_{n_0})) = k_{n_0} - \theta$. Using Lemma~\ref{lem:basicshiftprop}, one obtains
\begin{equation} \label{eq:8-basicfunctionsrelat}
\begin{split}
K^{(s)}(m, n, \La; \ve, k_{n_0} + \theta, E) = K^{(s)}(T(m), T(n), \La; \ve, k_{n_0} - \theta, E), \quad m, n \in \La_{m^+_0, m_0^-}, \\
Q^{(s)}(m_0^\pm, \La; \ve, k_{n_0} + \theta, E) = Q^{(s)}(m_0^\mp, \La; \ve, k_{n_0} - \theta, E), \\
G^\es(m^\pm_0, m^\mp_0, \La; \ve, k_{n_0} + \theta, E) = G^\es(m^\mp_0, m^\pm_0, \La; \ve, k_{n_0} - \theta, E).
\end{split}
\end{equation}
Note also that $v(m^-_0, k_{n_0} + \theta) = v(m^+_0, k_{n_0} - \theta)$.

Assume first $k_{n_0} > 1$. Note that $k_{n_0} \le \gamma = \lambda/256 \le k_{n_0} + 1 \le 2 k_{n_0}$. For $-(\delta^\esone)^{3/4} < \theta < (\delta^\esone)^{3/4}$, we have
\begin{equation}\label{eq:8hHinvestimatestatement1kvard1}
\begin{split}
1/64 > 2 \lambda^{-1} k_{n_0} + 2 (\delta^\esone)^{3/4} + \ve_0^{4/3} > \partial_{\theta} (v(m^+_0, k_{n_0} + \theta) + Q^{(s)}(m_0^+, \La; \ve, k_{n_0} + \theta, E)) \\
> 2 \lambda^{-1} k_{n_0} - 2 (\delta^\esone)^{3/4} - \ve_0^{4/3} > 1/512 - 2 (\delta^\esone)^{3/4}- \ve_0^{4/3} > 1/1024.
\end{split}
\end{equation}
So, for $(\delta^\esone)^{3/4}>\theta > 0$, we have
\begin{equation}\label{eq:8hHinvestimatestatement1kvard2}
v(m^+_0, k_{n_0} + \theta) + Q^{(s)}(m_0^+, \La; \ve, k_{n_0} + \theta, E) - v(m^-_0, k_{n_0} + \theta) - Q^{(s)}(m_0^-, \La; \ve, k_{n_0} + \theta, E) > \theta/512.
\end{equation}
Taking here $\theta = k' - k_{n_0}$, one concludes that condition \eqref{eq:5-13NNNN1} in Definition~\ref{def:5-2a} holds with $H_{\La^{(s,\mathbf{1})}_k(0), \ve, k'}$ in the role of $\hle$. Thus, $H_{\La^{(s,\mathbf{1})}_k(0), \ve, k'} \in OPR^{(s)} \bigl( 0, n_0, \La^\es_k(0); \delta_0, \tau^\zero \bigr)$ if $k_{n_0} < k' \le k_{n_0} + (\delta^\esone)^{3/4}$. If $k_{n_0} - (\delta^\esone)^{3/4} \le k' < k_{n_0}$, one just has to switch the roles of $0$ and $n_0$.

Assume now $1 \ge k_{n_0} > 256 (2 (\delta^\esone)^{3/4} + \ve_0^{4/3})$. Note that in this case, $\lambda = 256$. Like above, one concludes that
\begin{equation}\label{eq:8hHinvestimatestatement1kvard2AGU}
\begin{split}
1/128 > \partial_{\theta} (v(m^+_0, k_{n_0} + \theta) + Q^{(s)}(m_0^+, \La; \ve, k_{n_0} + \theta, E)) > 2 \lambda^{-1} k_{n_0} - 2 (\delta^\esone)^{3/4} - \ve_0^{4/3} \ge 2 \ve_0^{4/3}, \quad |\theta| < \delta^\esone)^{3/4}, \\
v(m^+_0, k_{n_0} + \theta) + Q^{(s)}(m_0^+, \La; \ve, k_{n_0} + \theta, E) - v(m^-_0, k_{n_0} + \theta) - Q^{(s)}(m_0^-, \La; \ve, k_{n_0} + \theta, E) \\
> 2 \ve_0^{3/4} \theta, \quad 0 < \theta < \delta^\esone)^{3/4}.
\end{split}
\end{equation}

Finally, assume $0 < k_{n_0} \le 256 (2 (\delta^\esone_0)^{3/4} + \ve_0^{3/4})$. Once again, $\lambda = 256$. Find $r$ such that $(\delta^{(r)}_0)^{1/2} < k_{n_0} \le (\delta^{(r-1)}_0)^{1/2}$. Note first of all that in this case $($ see Remark~\ref{rem:8.ksetintersect} $)$,
\begin{equation}\label{eq:8hHinvestimatsmallkbasics}
|n_0| > 12 R^\ar, \quad\quad\quad s>r, \quad\quad k_{n_0} > (\delta^{(s-1)}_0)^{1/4}.
\end{equation}
Let for instance $k > k_{n_0}$, so that $m^+_0 = 0$. Recall that due to parts $(2)$ and $(4)$ of Proposition~\ref{prop:A.3}, the function $Q^{(r)}(0, \La^{(r)}_k; \ve, k_{n_0} + \theta, E)$ is well defined for $0 < k_{n_0} \le \delta^{(r-1)}_0/2$, $|\theta| < \delta^{(r-1)}_0$. Furthermore, due to \eqref{eq:8hHinvestimatestatement1kvard}, one has
\begin{equation}\label{eq:8hHinvestimatsmallk}
\partial^2_{\theta} (v(0, k_{n_0} + \theta) + Q^{(r)}(0, \La^{(r)}_k; \ve, k_{n_0}+\theta, E)) > 2 \lambda^{-1} - (\epsilon \lambda^{-1})^{4/3} > 1/256.
\end{equation}
Recall also that $v(0,k') + Q^{(r)}(0, \La^{(r)}_k; \ve, k', E) = v(0, -k') + Q^{(r)}(0, \La^{(r)}_k; \ve, -k', E)$. For $k_{n_0} + \theta >0$, this implies
\begin{equation}\label{eq:8hHinvestimatestatsmallk1}
\partial_{\theta} (v(0, k_{n_0} + \theta) + Q^{(r)}(0, \La^{(r)}_k; \ve, k_{n_0}+\theta, E)) > (k_{n_0} + \theta)/256.
\end{equation}
Now, we invoke the second estimate in \eqref{eq:8hHinvestimatestatement1kvard}, applied to $Q^{(r_1)}(0, \La^{(r_1)}_k; \ve, k_{n_0} + \theta, E))$, with $r_1$ in the role of $s$, running $r_1 = r+1, \dots, s$. This yields
\begin{equation}\label{eq:8hHinvestimatestatsmallk1r1}
|\partial_{\theta}  Q^{(r)}(0, \La^{(r)}_k; \ve, k_{n_0} + \theta, E)) - \partial_\theta Q^{(s)}(0, \La^{(s)}_k; \ve, k_{n_0} + \theta, E)| < 2 |\ve| (\delta^{(r)}_0)^5 < (k_{n_0} + \theta)/512.
\end{equation}
Combining \eqref{eq:8hHinvestimatestatsmallk1} with \eqref{eq:8hHinvestimatestatsmallk1r1}, one obtains
\begin{equation}\label{eq:8hHinvestimatestatsmallk2}
\begin{split}
\partial_{\theta} (v(0, k_{n_0} + \theta) + Q^{(s)}(0, \La^{(s)}_k; \ve, k_{n_0} + \theta, E)) > k_{n_0}/512, \quad  |\theta| < (\delta^\esone)^{3/4}, \\
v(m^+_0, k_{n_0} + \theta) + Q^{(s)}(m_0^+, \La; \ve, \theta, E) - v(m^-_0, k_{n_0} + \theta) - Q^{(s)}(m_0^-, \La; \ve, \theta, E) \\
> k_{n_0} \theta/256, \quad 0 < \theta < (\delta^\esone)^{3/4}.
\end{split}
\end{equation}
This finishes the proof of part $(2)$.

To prove part $(3)$, recall that due to Proposition~\ref{prop:8-5n}, $E^{(s,\pm)}(0, \La^{(s,\mathbf{1})}_k(0); \ve,k')$, $k' = k_{n_0} + \theta$, $\theta > 0$, are the solutions of the equation
\begin{equation} \label{eq:8Eequation}
\begin{split}
& \chi(\ve, \theta, E) := \bigl( E - v(m_0^+, k_{n_0} + \theta) - Q^\es(m^+_0, \La; \ve, k_{n_0} + \theta, E) \bigr) \cdot \bigl( E - v(m_0^-, k_{n_0} + \theta) - Q^\es(m_0^-, \La ; \ve, k_{n_0} + \theta, E) \bigr) \\
& \qquad - \big| G^\es(m^+_0, m^-_0, \La; \ve, k_{n_0} + \theta, E) \big|^2 = 0.
\end{split}
\end{equation}

To proceed with the verification of \eqref{eq:8Ekderivatives}--\eqref{eq:8kk1comp1}, note first of all that part $(4)$, which is \eqref{eq:8kk1comp1}, is due to \eqref{eq.5Eestimates1AP} from Proposition~\ref{prop:8-5n} combined with the fact that $H_{\La^{(s,\mathbf{1})}_k(0), \ve, k'} \in OPR^{(s)} \bigl( 0, n_0, \La^{(s,\mathbf{1})}_k(0); \delta_0, \tau^\zero \bigr)$.

To verify \eqref{eq:8Ekderivatives}, we invoke Lemma~\ref{lem:4zetatheta} with $\ell = 1$, $a_1 = v(m_0^+, k_{n_0} + \theta) + Q^\es(m^+_0, \La; \ve, k_{n_0} + \theta, E)$, $a_2 = v(m_0^+, k_{n_0} - \theta) + Q^\es(m^+_0, \La; \ve, k_{n_0} - \theta, E)$, $b = G^\es(m^+_0, m^-_0, \La; \ve, k_{n_0} + \theta, E)$, $\theta < (\delta^\esone)^{3/4}$. For that we first invoke part $(9)$ of Lemma~\ref{4.fcontinuedfrac}. Let us verify the validity of the needed conditions. First of all we need the conditions $|E - a_i|, b^2, |\partial_{E,\theta}^{\alpha_1,\alpha_2} a_i|, |\partial_{E,\theta}^{\alpha_1,\alpha_2} b^2| < 1/64$ from Definition~\ref{def:4a-functions}. The condition $|E - a_i| < 1/64$ is due to Proposition~\ref{prop:8-5n}. The rest is due to \eqref{eq:8hHinvestimatestatement1kvard} $($ note that $|\partial^\alpha_\theta v(m_0^+, k_{n_0} + \theta)| < 1/256$ $)$. Now we turn to the conditions of $(9)$ in Lemma~\ref{4.fcontinuedfrac}. Due to \eqref{eq:8hHinvestimatestatement1kvard1}, \eqref{eq:8hHinvestimatestatement1kvard2AGU},
\eqref{eq:8hHinvestimatestatsmallk2}, one has $\partial_\theta a_1 < -\min(\ve_0^{3/4}, k_{n_0}/512) = -k^\zero$. Due to \eqref{eq:8hHinvestimatestatement1kvard}, $|\partial^\alpha_{\theta} |b|^2| < \exp(-\frac{\kappa_0}{4} |n_0|) < \exp(-\frac{\kappa_0}{4} R^\esone) < (\delta^{(s-1)}_0)^2$, $\alpha \le 2$. Combining this with $|k_{n_0}| > (\delta^{(s-1)}_0)^{1/2}$, one concludes that $|\partial^2_\theta b^2| < (k^\zero)^2/8$. Recall also that $a_2(\ve,E,\theta) = a_1(\ve,E,-\theta)$, $|b(\ve,E,\theta) = |b(\ve,E,-\theta)|$. In particular, $\chi(\ve,E,\theta) = \chi(\ve,E,-\theta)$. Thus, all conditions needed for $(9)$ in Lemma~\ref{4.fcontinuedfrac} hold. Hence, \eqref{eq:4a-thetaest} from $(9)$ of Lemma~\ref{4.fcontinuedfrac} holds. Now \eqref{eq:8Ekderivatives} follows from Lemma~\ref{lem:4zetatheta}.

The identity \eqref{eq:8Esymmetry} is due to the symmetry \eqref{eq:8-basicfunctionsrelat}. The estimate \eqref{eq:9Esplit} follows from \eqref{eq:8Ekderivatives}.

To verify \eqref{eq:8Efirstder} we invoke the following Feynman formula, well-known in the general perturbation theory of Hermitian matrices. Let $H(\theta) = (H(m,n;\theta)_{m,n \in \La}$, $|\La| < \infty$ be a real analytic Hermitian matrix-function of $\theta \in (\theta_1,\theta_2)$. Let $E_m(\theta)$, $\psi_m(n;\theta)$, $m,n \in \La$, be a real analytic parametrization of the eigenvalues and normalized eigenvectors of $H(\theta)$, respectively. Due to Rellich's Theorem, such a parametrization always exists. Then,
\begin{equation}\label{eq:8Feynman}
\partial_\theta E_m = \sum_{n \in \La}| \psi_m(n;\theta)|^2 \partial_\theta H(n,n;\theta).
\end{equation}
Thus,
\begin{equation}\label{eq:8Feynman1}
\partial_\theta E^{(s,\pm)}(0, \La^{(s,\mathbf{1})}_k(0); \ve, k_{n_0} + \theta) = \| \varphi^{(s,\pm)} \|^{-2} \sum_{n \in \La} |\varphi^{(s,\pm)}(n;\ve,\theta)|^2 \partial_\theta v(n,k_{n_0} + \theta),
\end{equation}
where $\varphi^{(s,\pm)}(n;\ve,\theta)$ stands for the eigenvector corresponding to $E^{(s,\pm)}(0, \La^{(s,\mathbf{1})}_k(0); \ve, k_{n_0} + \theta)$. Due to \eqref{eq:5evdecaY} from part $(7)$ of Proposition~\ref{rem:con1smalldenomnn}, one has $\| \varphi^{(s,\pm)} \| \ge 1$ and
\begin{equation}\label{eq:8evdecaY}
|\vp^{(s,\pm)}(n; \ve,\theta)| \le |\ve|^{1/3} [\exp(-\frac{7\kappa_0}{8} |n - 0|) + \exp(-\frac{7\kappa_0}{8} |n - n_0|)], \quad n \notin \{0,n_0\}.
\end{equation}
Note that $|\partial_\theta v(n,k_{n_0} + \theta)| = 2 \lambda^{-1} |k_{n_0} + \theta + n\omega| = 2 \lambda^{-1} |-k_{n_0} + \theta + (n - n_0)\omega| \le 1 + |n - n_0|$. Similarly, $|\partial_\theta v(n,k_{n_0} + \theta)| \le 1 + |n|$. Combining \eqref{eq:8Feynman1} and \eqref{eq:8evdecaY} with these estimates and taking into account that $|\ve| < \ve_0$, one obtains \eqref{eq:8Efirstder}.

The estimate \eqref{eq:9Ekk1EG} follows from Corollary~\ref{cor:6.twolambdas1}.

Assume $(\delta^\esone)^{7/8} < |k - k_{n_0}| \le (\delta^\esone)^{3/4}$. The proof of $H_{\La^{(s,\mathbf{1})}_k(0), \varepsilon, k} \in \cN^{(s)} \bigl( 0, \La^{(s,\mathbf{1})}_k(0); \delta_0 \bigr)$ goes the same way as the proof of part $(3)$ of Lemma~\ref{lem:8A.3}; see also part $(0)$ of Remark~\ref{rem:7.kawayfromzero}. To finish $(5)$, consider for instance the case $k_{n_0} + (\delta^\esone)^{7/8} < k \le k_{n_0} + (\delta^\esone)^{3/4}$. Recall that due to \eqref{eq:5specHEE} from Proposition~\ref{prop:8-5n}, one has
\begin{equation} \label{eq:8specHEE}
\begin{split}
\spec H_{\La^{(s,\mathbf{1})}_k(0), \ve, k} \cap \{ |E - E^\esone(0, \La^\esone_k(0); \ve, k)| < 8 (\delta^\esone_0)^{1/4} \} \\
= \{ E^{(s,+)}(0, \La^{(s,\mathbf{1})}_k(0); \ve, k), E^{(s, -)}(0, \La^{(s,\mathbf{1})}_k(0); \ve, k) \}, \\
E^{(s,+)}(0, \La^{(s,\mathbf{1})}_k(0); 0, k) = v(0,k), \quad , \quad E^{(s,-)}(0, \La^{(s,\mathbf{1})}_k(0); 0, k) = v(n_0,k).
\end{split}
\end{equation}
On the other hand, $E^{(s)}(0, \La^{(s,\mathbf{1})}_k(0); \ve, k)$ is the only eigenvalue of $H_{\La^{(s,\mathbf{1})}_k(0), \ve, k}$, which is an analytic function of $\ve$ and obeys $E^{(s)}(0, \La^{(s,\mathbf{1})}_k(0); 0, k) = v(0,k)$. Hence $E^{(s,+)}(0, \La^{(s,\mathbf{1})}_k(0); \ve, k) = E^{(s)}(0, \La^{(s,\mathbf{1})}_k(0); \ve, k)$. This finishes the proof of $(5)$ in this case. The proof for the second possible case is completely similar.

The arguments for part $(6)$ are completely similar to the arguments for part $(7)$ of Proposition~\ref{prop:A.3} and we skip them.
\end{proof}

\begin{remark}\label{rem:8.komega}
Assume that  $k = k_{n_0}$. Assume for instance $k_{n_0} > 0$. Proposition~\ref{prop:8.1} says that for any $k_{n_0} < k' \le k_{n_0} + (\delta^\esone)^{3/4}$, we have $H_{\La^{(s,\mathbf{1})}_k(0), \ve, k'} \in OPR^{(s)} \bigl( 0, n_0, \La^{(s,\mathbf{1})}_k(0); \delta_0, \tau^\zero \bigr)$, and  for any $k_{n_0} - (\delta^\esone)^{3/4} \le k' < k_{n_0}$, we have $H_{\La^{(s,\mathbf{1})}_k(0), \ve, k'} \in OPR^{(s)} \bigl( n_0, 0, \La^{(s,\mathbf{1})}_k(0); \delta_0, \tau^\zero \bigr)$. It does not say anything about $k' = k_{n_0}$. The only reason for that is that the expression on the left-hand side of \eqref{eq:8hHinvestimatestatement1kvard2} vanishes for $\theta = 0$. In fact, due to the symmetry \eqref{eq:8-basicfunctionsrelat}, one has
\begin{equation}\label{eq:8hHinvestimatestatement1kvard20}
v(m^+_0, k_{n_0}) + Q^{(s)}(m_0^+, \La; \ve, k_{n_0}, E) = v(m^-_0, k_{n_0}) + Q^{(s)}(m_0^-, \La; \ve, k_{n_0}, E).
\end{equation}
In Proposition~\ref{prop:8.10} we analyze the case $k' = k_{n_0}$ via the limit $k' \rightarrow k_{n_0}$ with $k' \neq k_{n_0}$.
\end{remark}

\begin{prop}\label{prop:8.10}
Let $\ve \in (-\ve_{s},\ve_{s})$.
\begin{itemize}

\item[(1)] The limits
\begin{equation}\label{eq:8kk1comp1lim}
E^{(s,\pm)}(0, \La^{(s)}_{k_{n_0}}(0); \ve, k_{n_0}):=\lim_{k_1\rightarrow k_{n_0}} E^{(s,\pm)}(0, \La^{(s)}_{k_{n_0}}(0); \ve, k_1)
\end{equation}
exist. Moreover,
\begin{equation} \label{eq:8specHEEAAA}
\begin{split}
\spec H_{\La^{(s)}_{k_{n_0}}(0), \ve, k_{n_0}} \cap \{ E : |E - E^{(s-1)}(0,\La^{(s-1)}_{k_{n_0}}(0); \ve, k_{n_0})| < 8 (\delta^{(s-1)}_0)^{1/4} \} \\
= \{ E^{(s,+)}(0, \La^{(s)}_{k_{n_0}}(0); \ve, k_{n_0}), E^{(s,-)}(0, \La^{(s)}_{k_{n_0}}(0); \ve, k_{n_0}) \}.
\end{split}
\end{equation}
Finally, $E^{(s,+)}(0, \La^{(s)}_{k'}(0); \ve, k_{n_0}) \ge E^{(s,-)}(0, \La^{(s)}_{k_{n_0}}(0); \ve, k_{n_0})$.

\item[(2)] $E = E^{(s,\pm)}(0, \La^{(s)}_{k_{n_0}}(0); \ve, k_{n_0})$ obeys the following equation,
\begin{equation} \label{eq:8Eequation0}
E - v(0, k_{n_0}) - Q^\es(0,\La^{(s)}_{k_{n_0}}(0); \ve, E) \mp \big| G^\es(0,n_0,\La^{(s)}_{k_{n_0}}(0); \ve, E) \big| = 0,
\end{equation}
where
\begin{equation} \label{eq:8-basicfunctions0}
\begin{split}
Q^\es(0,\La^{(s)}_{k_{n_0}}(0); \ve, E) := Q^{(s)}(m_0^+, \La; \ve, k_{n_0} , E), \\
G^\es(0,n_0,\La^{(s)}_{k_{n_0}}(0); \ve, E) := G^\es(m^+_0, m^-_0, \La; \ve, k_{n_0}, E);
\end{split}
\end{equation}
see \eqref{eq:8-10acbasicfunctions}.

\end{itemize}
\end{prop}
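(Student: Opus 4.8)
The plan is to treat both statements as consequences of Proposition~\ref{prop:8.1} combined with the quantitative implicit function theory developed in Section~\ref{sec.4} (specifically Lemma~\ref{lem:6-1ellM} and the smoothness/monotonicity statements in Lemma~\ref{4.fcontinuedfrac}), taking the limit $k_1\to k_{n_0}$ along $k_1\ne k_{n_0}$. First I would fix the notation from the proof of Proposition~\ref{prop:8.1}: write $m_0^+=0$, $m_0^-=n_0$, $\La=\La^{(s)}_{k_{n_0}}(0)$, and let $K^{(s)}$, $Q^{(s)}(m_0^\pm,\La;\ve,k,E)$, $G^{(s)}(m^\pm_0,m^\mp_0,\La;\ve,k,E)$ be the functions in \eqref{eq:8-10acbasicfunctions}. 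The key point is that these functions are jointly analytic (hence continuous) in $(k,E)$ on the domain \eqref{eq:8.domainOPAAAA} with $k=k_{n_0}+\theta$, $|\theta|<(\delta_0^{\esone})^{3/4}$ — this is exactly what Lemma~\ref{lem:5schur1} (via its analogue in Proposition~\ref{prop:8-5n}) and the estimates \eqref{eq:8hHinvestimatestatement1kvard} give us — so the definitions in \eqref{eq:8-basicfunctions0} make sense by setting $\theta=0$, with no limiting process needed for them.

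The heart of the matter is then the existence of the limits \eqref{eq:8kk1comp1lim}. Here I would invoke Lemma~\ref{lem:6-1ellM} with $\ell=1$, using the splitting from the proof of Proposition~\ref{prop:8.1}: set $f_1(\ve,E,\theta)=E-v(m_0^+,k_{n_0}+\theta)-Q^{(s)}(m_0^+,\La;\ve,k_{n_0}+\theta,E)$, $f_2(\ve,E,\theta)=E-v(m_0^-,k_{n_0}+\theta)-Q^{(s)}(m_0^-,\La;\ve,k_{n_0}+\theta,E)$, $b^2=|G^{(s)}(m^+_0,m^-_0,\La;\ve,k_{n_0}+\theta,E)|^2$, with $g_{\ell-1,\pm}$ the two edge functions $E^{(s,\pm)}(0,\La^{(s)}_{k'}(0);\ve,k')$ restricted to $k'$ near $k_{n_0}$. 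The symmetry identities \eqref{eq:8-basicfunctionsrelat}, which say $T(\La)=\La$ forces $a_2(\ve,E,\theta)=a_1(\ve,E,-\theta)$ and $|b(\ve,E,\theta)|=|b(\ve,E,-\theta)|$, together with the monotonicity $\partial_\theta a_1<-k^\zero$ established in the proof of Proposition~\ref{prop:8.1}, guarantee that $\chi^{(f)}$ extends $C^2$-smoothly (indeed analytically in $\ve$) across $\theta=0$ and that its two roots $\zeta_\pm(\theta)=E^{(s,\pm)}(0,\La^{(s)}_{k'}(0);\ve,k_{n_0}+\theta)$ vary continuously; hence the limits $\theta\to 0$ exist. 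The inequality $E^{(s,+)}\ge E^{(s,-)}$ at $\theta=0$ then follows by continuity from the strict inequality $\zeta_+(\theta)>\zeta_-(\theta)$ for $\theta\ne0$, and equation \eqref{eq:8Eequation0} is obtained by passing to the limit in \eqref{eq:8Eequation}, using \eqref{eq:8hHinvestimatestatement1kvard20}: at $\theta=0$ the discriminant $(a_1-a_2)^2+4|b|^2$ degenerates to $4|b|^2$, so the two roots of the quadratic become $a_1\pm|b|$, which is precisely \eqref{eq:8Eequation0}.

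The spectral identity \eqref{eq:8specHEEAAA} I would get the same way as in part $(3)$ of Proposition~\ref{prop:8-5n} / part $(5)$ of Proposition~\ref{prop:8.1}: part $(2)$ of Lemma~\ref{lem:5schur1} shows $E\in\spec H_{\La,\ve,k_{n_0}}$ in the relevant window if and only if $\chi(\ve,k_{n_0},E)=0$, and by the previous paragraph this equation has exactly the two solutions $E^{(s,\pm)}(0,\La^{(s)}_{k_{n_0}}(0);\ve,k_{n_0})$; at $\ve=0$ these are $v(0,k_{n_0})=v(n_0,k_{n_0})$, a double eigenvalue, and they split for $\ve\ne0$ exactly according to \eqref{eq:8Eequation0}. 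The main obstacle I anticipate is the bookkeeping needed to check that all the hypotheses $(\alpha)$--$(\delta)$ of Lemma~\ref{lem:6-1ellM} genuinely survive the limit $\theta\to0$ uniformly — in particular that $g_{\ell-1,+}(0)=g_{\ell-1,-}(0)$ (the two edges of the ``gap'' coalesce at $k=k_{n_0}$, which is why Lemma~\ref{lem:6-1ellM} rather than Lemma~\ref{lem:6-1ell} is the right tool) and that the quantitative smallness conditions on $|b|$, $|\chi^{(f)}(x,g_{\ell-1,\pm})|$ hold with the same $\rho$ throughout. All of these follow from the estimates \eqref{eq:8hHinvestimatestatement1kvard}, \eqref{eq:8kk1comp1}, \eqref{eq:9Ekk1EG} already proved in Proposition~\ref{prop:8.1}, so the argument is essentially a compactness/continuity packaging of what is there; I would keep that part brief since it is routine.
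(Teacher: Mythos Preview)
Your overall strategy is sound, but the specific invocation of Lemma~\ref{lem:6-1ellM} is not quite the right framework here, and it introduces a mild circularity: that lemma is set up to produce $\zeta_\pm(x)$ as functions of the $x=\ve$ variable with $\theta$ held fixed, and its hypotheses $(\alpha)$--$(\delta)$ require the functions $g_{\ell-1,\pm}$ as input --- but you propose to take these to be $E^{(s,\pm)}$ themselves, which are only defined for $\theta\ne 0$. What actually carries your argument is the observation buried in it: $\chi(\ve,E,\theta)$ is $C^2$ in all variables across $\theta=0$ (since $Q^{(s)}$ and $G^{(s)}$ are, by \eqref{eq:8hHinvestimatestatement1kvard}) and uniformly convex in $E$, so its two ordered zeros depend continuously on $\theta$. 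That is enough, and you should say so directly rather than routing through Lemma~\ref{lem:6-1ellM}.

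The paper's proof of part $(1)$ is more elementary still and avoids the characteristic equation entirely: it observes that $H_{\La,\ve,k'}$ is a $C^2$ matrix-function of $k'$, that $E^{(s-1)}(0,\La^{(s-1)}_{k_{n_0}}(0);\ve,k')$ is $C^2$ in $k'$ (Remark~\ref{rem:7.kawayfromzero}), and that for $k'>k_{n_0}$ the spectrum in the relevant window consists of exactly $\{E^{(s,+)},E^{(s,-)}\}$ with $E^{(s,+)}>E^{(s,-)}$ (part $(3)$ of Proposition~\ref{prop:8-5n}); continuity of eigenvalues then gives the limits and \eqref{eq:8specHEEAAA} at once. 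For part $(2)$ your degenerate-quadratic argument is exactly what the paper does. To pin down which sign goes with $E^{(s,+)}$, the paper passes the inequality $E^{(s,+)}\ge a_2+|b|$ from \eqref{eq:8-13acnq0} of Proposition~\ref{prop:8-5n} to the limit (at $\theta=0$ this reads $E^{(s,+)}-a_1-|b|\ge 0$, forcing the factor $E-a_1-|b|$ to vanish), rather than arguing from the ordering of $E^{(s,\pm)}$ as you do; both routes work, but the paper's makes the matching immediate.
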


\begin{proof}
We will consider the case $s \ge 2$. For $s = 1$, the argument is completely similar. Let, for instance, $0 < k_{n_0} < k' \le k_{n_0} + (\delta^\esone)^{3/4}$. By Proposition~\ref{prop:8.1}, $H_{\La^{(s)}_{k_{n_0}}(0), \ve, k'} \in OPR^{(s)} \bigl( 0, n_0, \La^{(s)}_{k_{n_0}}(0); \delta_0, \tau^\zero \bigr)$. Due to part $(3)$ of Proposition~\ref{prop:8-5n}, one has
\begin{equation} \label{eq:8specHEEAAA1}
\begin{split}
\spec H_{\La^{(s)}_{k_{n_0}}(0), \ve, k'} \cap \{ E : |E - E^{(s-1)}(0,\La^{(s-1)}_{k_{n_0}}(0); \ve, k')| < 8 (\delta^{(s-1)}_0)^{1/4} \} \\
= \{ E^{(s,+)}(0, \La^{(s)}_{k_{n_0}}(0); \ve, k'), E^{(s,-)}(0, \La^{(s)}_{k_{n_0}}(0); \ve, k') \}.
\end{split}
\end{equation}
Due to Remark~\ref{rem:7.kawayfromzero}, $E^{(s-1)}(0,\La^{(s-1)}_{k_{n_0}}(0); \ve, k')$ is a $C^2$-smooth function of $k'$, $|k'- k_{n_0}| < 2 \delta^{(s-2)}_0$. Clearly, $H_{\La^{(s)}_{k_{n_0}}(0), \ve, k'}$ is a $C^2$-smooth matrix-function of $k'$. Finally, recall that
$E^{(s,+)}(0, \La^{(s)}_{k_{n_0}}; \ve, k')>E^{(s,-)}(0, \La^{(s)}_{k_{n_0}}(0); \ve, k')$. Combining all that, one concludes that part $(1)$ is valid
$($ of course, only continuity of the functions involved matters here $)$.

To prove part $(2)$, recall that $E^{(s,\pm)}(0, \La^{(s)}_{k_{n_0}}(0); \ve,k')$ are the only two solutions of the equation \eqref{eq:8Eequation} with $k' = k_{n_0} + \theta$. Recall also that the functions $Q^{(s)}(m_0^\pm, \La; \ve, k_{n_0} + \theta, E)$, $G^\es(m^\pm_0, m^\mp_0, \La; \ve, k_{n_0} + \theta, E)$ defined in \eqref{eq:8-10acbasicfunctions} are $C^2$-smooth in the domain $|\theta| < (\delta_0^\esone)^{3/4}$, $\big| E - E^{(s-1)}(m^+_0, \La^{(s-1)}_k(m_0^+); \ve, k_{n_0} + \theta) \big| < 2 \delta^{(s-1)}_0$.
Taking also into account \eqref{eq:8hHinvestimatestatement1kvard20}, one concludes that $E = E^{(s,\pm)}(0, \La^{(s)}_{k_{n_0}}(0); \ve, k_{n_0})$ obeys the following equation,
\begin{equation} \label{eq:8Eequation01}
(E - v(0, k_{n_0}) - Q^\es(0,\La^{(s)}_{k_{n_0}}(0); \ve, E))^2 - \big| G^\es(0,n_0,\La^{(s)}_{k_{n_0}}(0); \ve, E) \big|^2 = 0.
\end{equation}
Recall now that due to part $(2)$ of Proposition~\ref{prop:8-5n}, one has with $k' = k_{n_0} + \theta$,
\begin{equation} \label{eq:8-13acnq0quasi}
\begin{split}
E^{(s,\pm)}(0, \La^{(s)}_{k_{n_0}}(0); \ve, k')\ge v(n_0, k_{n_0}) + Q^\es(n_0,\La^{(s)}_{k_{n_0}}(0); \ve,\theta, E^{(s,\pm)}(0, \La^{(s)}_{k_{n_0}}(0); \ve, k')) \\
+ | G^\es(0,n_0,\La^{(s)}_{k_{n_0}}(0); \ve, E^{(s,\pm)}(0, \La^{(s)}_{k_{n_0}}(0); \ve, k')) \big|.
\end{split}
\end{equation}
Combining \eqref{eq:8Eequation01} with \eqref{eq:8-13acnq0quasi}, one concludes that $E = E^{(s,+)}(0, \La^{(s)}_{k_{n_0}}(0); \ve, k_{n_0})$ obeys \eqref{eq:8Eequation0}. The argument for $E = E^{(s,-)}(0, \La^{(s)}_{k_{n_0}}(0); \ve, k_{n_0})$ is similar.
\end{proof}

\begin{remark}\label{rem:8.6sharperssoneestimate}
For $|k - k_{n_0}| \rightarrow 0$, we need a stronger version of the estimate \eqref{eq:8kk1comp1EpEm} in Proposition~\ref{prop:8.1}. For that we invoke Remark~\ref{rem:7.sharperssoneestimate} from Section~\ref{sec.5}.
\end{remark}

\begin{corollary}\label{cor:8.sharperesesone}
Using the notation from Proposition~\ref{prop:8.1}, the following estimate holds,
\begin{equation}\label{eq:8kk1comp1sharperrr}
|E^{(s,+)}(0, \La^{(s,\mathbf{1})}_{k_{n_0}}(0); \ve, k_1) -E^{(s,-)}(0, \La^{(s,\mathbf{1})}_{k_{n_0}}(0); \ve, k_1)| \le 2 |\ve| \exp(-\frac{\kappa_0}{2} |n_0|),
\end{equation}
provided $|k_1 - k_{n_0}|$ is small enough. In particular, using the notation from Proposition~\ref{prop:8.10}, one has
\begin{equation}\label{eq:8kk1comp1sharperrrlim}
|E^{(s,+)}(0, \La^{(s,\mathbf{1})}_{k_{n_0}}(0); \ve, k_{n_0}) - E^{(s,-)}(0, \La^{(s,\mathbf{1})}_{k_{n_0}}(0); \ve,k_{n_0} )| \le 2 |\ve| \exp(-\frac{\kappa_0}{2} |n_0|).
\end{equation}
\end{corollary}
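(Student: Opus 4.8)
The plan is to pin the gap $E^{(s,+)}-E^{(s,-)}$ to twice the size of the off-diagonal coupling $G^{(s)}(0,n_0,\La^{(s,\mathbf{1})}_{k_{n_0}}(0);\ve,E)$ between the two far-apart principal points $0$ and $n_0$, and then to use that this coupling decays like $\exp(-\tfrac78\kappa_0|n_0|)$ because it is a Green-function hopping amplitude over a distance $|n_0|$. Throughout I write $\La:=\La^{(s,\mathbf{1})}_{k_{n_0}}(0)$.

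First I would prove the endpoint estimate \eqref{eq:8kk1comp1sharperrrlim}, i.e.\ the case $k_1=k_{n_0}$. By Proposition~\ref{prop:8.10} the limiting eigenvalues $E^{(s,\pm)}(0,\La;\ve,k_{n_0})$ exist and each obeys \eqref{eq:8Eequation0}, namely $E-v(0,k_{n_0})-Q^{(s)}(0,\La;\ve,E)\mp|G^{(s)}(0,n_0,\La;\ve,E)|=0$. Subtracting the ``$-$''\,equation at $E^{(s,-)}$ from the ``$+$''\,equation at $E^{(s,+)}$ gives $E^{(s,+)}-E^{(s,-)}=\bigl[Q^{(s)}(0,\La;\ve,E^{(s,+)})-Q^{(s)}(0,\La;\ve,E^{(s,-)})\bigr]+|G^{(s)}(0,n_0,\La;\ve,E^{(s,+)})|+|G^{(s)}(0,n_0,\La;\ve,E^{(s,-)})|$, and since $|\partial_E Q^{(s)}(0,\La;\ve,E)|\le|\ve|$ by \eqref{eq:5-12acACC} the first bracket is at most $|\ve|\bigl(E^{(s,+)}-E^{(s,-)}\bigr)$, so $(1-|\ve|)\bigl(E^{(s,+)}-E^{(s,-)}\bigr)\le 2\sup_E|G^{(s)}(0,n_0,\La;\ve,E)|$, the supremum over the $E$-window around $E^{(s-1)}(0,\La^{(s-1)}_{k_{n_0}}(0);\ve,k_{n_0})$ in which the relevant functions are defined. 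Here the symmetry $T(\La)=\La$, which produced \eqref{eq:8hHinvestimatestatement1kvard20}, is exactly what makes the two ``diagonal'' branches $v+Q$ at $0$ and at $n_0$ coincide at $k=k_{n_0}$, so that nothing but $G^{(s)}$ survives in the difference.

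The key input is then $\sup_E|G^{(s)}(0,n_0,\La;\ve,E)|\le|\ve|\exp(-\tfrac78\kappa_0|n_0|)$. Since $H_{\La,\ve,k_{n_0}}\in OPR^{(s)}(0,n_0,\La;\delta_0,\tau^\zero)=OPR^{(s,s)}(\dots)$, the estimate \eqref{eq:5-12acACCOPQ} of Proposition~\ref{rem:con1smalldenomnn} applies with $q=0$, $m_0^+=0$, $m_0^-=n_0$, yielding $|G^{(s)}(0,n_0,\La;\ve,E)|\le 8|\ve|^{3/2}\exp(-\tfrac78\kappa_0|n_0|)\le|\ve|\exp(-\tfrac78\kappa_0|n_0|)$; alternatively one reopens $G^{(s)}=h(0,n_0;\ve)+\sum_{m',n'}h(0,m';\ve)K^{(s)}(m',n')h(n',n_0;\ve)$, bounding the direct hopping term by $|\ve|\lambda^{-1}|c(-n_0)|\le|\ve|\exp(-\kappa_0|n_0|)$ and the resolvent chain, via Proposition~\ref{prop:5-4I}(2) together with Corollary~\ref{cor:auxweight1}/Lemma~\ref{lem:auxweight1}, by $C|\ve|^{5/2}\exp(-\tfrac78\kappa_0|n_0|+2\bar D)$ with $2\bar D\le 8(\log R^{(s-1)})^2$ absorbed because $|n_0|>12R^{(s-1)}$ forces $\kappa_0|n_0|$ to dwarf $(\log R^{(s-1)})^2$. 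Feeding this back, $E^{(s,+)}-E^{(s,-)}\le\frac{2|\ve|}{1-|\ve|}\exp(-\tfrac78\kappa_0|n_0|)\le 2|\ve|\exp(-\tfrac12\kappa_0|n_0|)$, because $|\ve|\le\ve_0(\kappa_0,a_0,b_0)$ is small enough that $\tfrac1{1-|\ve|}\le\exp(\tfrac38\kappa_0|n_0|)$ for every $|n_0|\ge1$; this is \eqref{eq:8kk1comp1sharperrrlim}. Finally \eqref{eq:8kk1comp1sharperrr} follows from this by continuity: the limit relation \eqref{eq:8kk1comp1lim} of Proposition~\ref{prop:8.10} gives $E^{(s,\pm)}(0,\La;\ve,k_1)\to E^{(s,\pm)}(0,\La;\ve,k_{n_0})$ as $k_1\to k_{n_0}$, so for $|k_1-k_{n_0}|$ small enough the gap at $k_1$ exceeds that at $k_{n_0}$ by at most, say, $|\ve|\exp(-\tfrac34\kappa_0|n_0|)$, which the slack between $\exp(-\tfrac78\kappa_0|n_0|)$ and $\exp(-\tfrac12\kappa_0|n_0|)$ absorbs.

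The step I expect to be the main obstacle is the sharp decay exponent for $G^{(s)}$: the generic off-diagonal estimate recorded in Lemma~\ref{lem:5schur1} (equation \eqref{eq:5-12acACC}) only gives the exponent $\tfrac14\kappa_0$, which is strictly below the $\tfrac12\kappa_0$ needed here, so one must either invoke the sharper $\tfrac78\kappa_0$-estimate \eqref{eq:5-12acACCOPQ} that is valid in the class $OPR^{(s,s)}$, or redo the Green-function sum by hand — in either case carefully checking that the $D$-weight corrections $\exp(O((\log R^{(s-1)})^2))$ are negligible against $\exp(-\tfrac18\kappa_0|n_0|)$, which is precisely where the standing hypothesis $12R^{(s-1)}<|n_0|$ (Definition~\ref{def:simplestresonance}, Remark~\ref{rem:8.ksetintersect}(3)) enters. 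As the cross-check suggested by Remark~\ref{rem:8.6sharperssoneestimate}, one observes in addition that at $k=k_{n_0}$ the $T$-symmetry forces the two scale-$(s-1)$ eigenvalues at $0$ and $n_0$ to coincide, so condition \eqref{eq:4-3AAAAANEW} of Remark~\ref{rem:7.sharperssoneestimate} holds vacuously and \eqref{eq.5Eestimates1APNEW} provides an independent sharpening of \eqref{eq:8kk1comp1}.
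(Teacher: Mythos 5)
Your proof is correct but proceeds by a genuinely different route from the paper's. The paper's argument constructs an auxiliary $\cN^{(s-1)}$-subset $\La'(0)$ with $B(R)\subset\La'(0)=-\La'(0)$ and $|n_0|/8<R<|n_0|/4$, sets $\La'(n_0)=n_0+\La'(0)$, uses the shift/reflection symmetry of Lemma~\ref{lem:basicshiftprop} to force $E^{(s-1)}(0,\La'(0);\ve,k_{n_0}+\theta)=E^{(s-1)}(n_0,\La'(n_0);\ve,k_{n_0}-\theta)$, and thus to push the separation condition \eqref{eq:4-3AAAAA} down to $\exp(-R)$, whereupon it invokes the sharpened $\bigl|E^{(s,\pm)}-E^{(s-1)}\bigr|$ bound \eqref{eq.5Eestimates1APNEW} from Remark~\ref{rem:7.sharperssoneestimate}. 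You instead work directly at the scale-$s$ characteristic equation \eqref{eq:8Eequation0} at $k_{n_0}$, subtract the ``$+$'' and ``$-$'' branches, use $|\partial_E Q^{(s)}|\le|\ve|$ to isolate the off-diagonal coupling, and then close the argument with the Green-function decay $\sup_E|G^{(s)}(0,n_0,\cdot)|\lesssim|\ve|\exp(-\tfrac78\kappa_0|n_0|)$, finally recovering the $k_1\neq k_{n_0}$ statement by continuity from \eqref{eq:8kk1comp1lim}. Both arguments ultimately locate the smallness of the gap in the off-diagonal hopping across distance $|n_0|$, but your proof is more direct and avoids constructing a fresh symmetric subset; the paper's route is chosen to reuse the general-purpose Remark~\ref{rem:7.sharperssoneestimate} that is needed elsewhere. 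You have also correctly identified the one delicate point, namely that the crude $\exp(-\tfrac14\kappa_0|n_0|)$ decay of $G^{(s)}$ recorded in \eqref{eq:5-12acACC} does not give the $\tfrac12\kappa_0$ exponent uniformly in $|n_0|$ (since $\ve_0$ is fixed while $|n_0|$ grows), so the sharper $\exp(-\tfrac78\kappa_0|n_0|)$ estimate from \eqref{eq:5-12acACCOPQ} (equivalently, a direct estimate of the hopping chain via Lemma~\ref{lem:auxweight1} with the $D$-weight corrections absorbed by $|n_0|>12R^{(s-1)}$) is genuinely required; your handling of that step is appropriate.
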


\begin{proof}
We consider the case $s \ge 2$. The case $s = 1$ is completely similar. Using the subsets $\La^{(s')}_{k_{n_0}}(m)$ with $s' < s-1$, one can define a subset $\La'(0)$ so that the following conditions hold: $(i)$ $H_{\La'(0), \varepsilon, k} \in \cN^{(s-1)} \bigl( 0, \La'(0); \delta_0 \bigr)$ if $|k - k_{n_0}| < (\delta^\esone)^{3/4}$, $(ii)$ $\La'(0) = -\La'(0)$, $(iii)$ $\La'(0) \supset B(R)$, where $|n_0|/8 < R < |n_0|/4$. Set $\La'(n_0) = n_0 + \La'(0)$. Then, $H_{\La'(n_0), \varepsilon, k} \in \cN^{(s-1)} \bigl( n_0, \La'(n_0); \delta_0 \bigr)$ if $|k - k_{n_0}| < (\delta^\esone)^{3/4}$. Furthermore, let $E^{(s-1)}(0, \La'(0); \ve, k)$, $E^{(s-1)}(n_0, \La'(n_0); \ve, k)$ be the corresponding eigenvalues. Due to Lemma~\ref{lem:basicshiftprop}, one has  $E^{(s-1)}(0, \La'(0); \ve, k_{n_0} + \theta) = E^{(s-1)}(n_0, \La'(n_0); \ve, k_{n_0} - \theta)$. In particular,
\begin{equation} \label{eq:8-3AAAAANEW}
\big| E^{(s-1)}(0, \La'(0); \ve, k_{n_0} + \theta) - E^{(s-1)}(n_0, \La'(n_0); \ve, k_{n_0} + \theta) \big| \le \exp(-R),
\end{equation}
provided $\theta > 0$ is small enough. Thus both conditions mentioned in Remark~\ref{rem:7.sharperssoneestimate} hold. This implies the claim.
\end{proof}

\section{Matrices with Ordered Pairs of Resonances Associated with $1$--Dimensional Quasi-Periodic Schr\"odinger Equations: General Case}\label{sec.9}

Let us start with the following

\begin{lemma}\label{lem:9A.2}
Assume that $q \ge 1$ and $k \in \IR \setminus \bigcup_{0 < |m'| \le 12 R^{(s+q-1)}, \; m' \neq n_0} (k_{m',s+q-1}^-, k_{m',s+q-1}^+)$. Then,

$(0)$ $|k| > (\delta^\esone_0)^{1/16}/4$.

$(1)$ Let $\delta < \delta^\esone_0$. If $|v(m,k) - v(0,k)| < \delta$, then either $(\mathfrak{a})$ $|m \omega| < 2^{12} \delta^{15/16}$, \quad $|2 k + m \omega| > \delta^{1/16}/4$, \quad $|(k + m \omega) - k_{n_0}| < 2^{12} \delta^{15/16} + |k - k_{n_0}|$, or $(\mathfrak{b})$  $|2 k + m \omega| <  2^{12} \delta^{15/16}$, \quad $|m \omega| > \delta^{1/16}/4$, \quad $|(k + (m - n_0) \omega) - k_{n_0}| < 2^{12} \delta^{15/16} + 3 |k - k_{n_0}|$. In both cases, $| |k + m \omega| - |k_{n_0}| | < 2^{12} \delta^{15/16} + 3 |k - k_{n_0}|$. Finally, $n_0 \neq \pm 2m$.

$(1)'$ Suppose $|v(m,k) - v(0,k)| < \delta < \delta^\esone_0$. In case $(\mathfrak{a})$, we have
\begin{equation}\label{eq:9vofmmplusn}
\begin{split}
|v(m + n_0, k) - v(0, k)| \le 2^{6}\delta^{15/16} + |k - k_{n_0}|, \\
|(m + n_0) \omega| > (\delta^\esone_0)^{1/16}/2, \quad |v(m - n_0, k) - v(0, k)| > (\delta^\esone_0)^{1/16}/256,
\end{split}
\end{equation}
and we have case $(\mathfrak{b})$ for $m + n_0$. In case $(\mathfrak{b})$, we have
\begin{equation}\label{eq:9vofmmplusnB}
\begin{split}
|v(m - n_0, k) - v(0, k)| \le 2^{6}\delta^{15/16} + |k - k_{n_0}|, \\
|(2 k + (m - n_0) \omega| > (\delta^\esone_0)^{1/16}/2, \quad |v(m + n_0, k) - v(0, k)| > (\delta^\zero_0)^{1/16}/256,
\end{split}
\end{equation}
and we have case $(\mathfrak{a})$ for $m - n_0$.

$(2)$ Assume $|v(m,k) - v(0,k)| < \delta^{(s'-1)}_0$, $1 \le s' \le s-1$. Then, $k + m \omega \in \IR \setminus \bigcup_{0 < |m'| \le 12 R^{(s-1)}} \quad (k_{m',s'}^-, k_{m',s'}^+)$.

$(3)$ Assume $|v(m,k) - v(0,k)| < \delta^{(s'-1)}_0$, $s \le s' \le s+q-1$. Then, in case $(\mathfrak{a})$, one has $|(k + m \omega) - k| < 4(\delta^{(s'-1)}_0)^{15/16}$, $k + m \omega \in \IR \setminus \bigcup_{0 < |m'| \le 12 R^{(s')}, \; m' \neq n_0} (k_{m',s'-1}^-, k_{m',s'-1}^+)$. In case $(\mathfrak{b})$, one has $|(k + m \omega) - (-k)| < 4(\delta^{(s'-1)}_0)^{15/16}$, $k + m \omega \in \IR \setminus \bigcup_{0 < |m'| \le 12 R^{(s')}, \; m' \neq -n_0} (k_{m',s'-1}^-, k_{m',s'-1}^+)$.

$(4)$ If $1 \le s' \le s-1$, $0 < |m_1 - m_2| \le 12 R^{(s')}$, then $\max |v(m_i,k) - v(0,k)| \ge (\delta^{(s'-1)}_0)^{1/2}$. If $s \le s' \le s+q-1$, $|v(m_i,k) - v(0,k)| < (\delta^{(s'-1)}_0)^{1/2}$, $i = 1, 2$ and $0 < |m_1 - m_2| \le  12 R^{(s')}$, then $m_1 - m_2 \in \{n_0, -n_0\}$. Furthermore, if we have case $(\mathfrak{a})$ for $m_1$, then we have case $(\mathfrak{b})$ for $m_2$.

$(4)'$ Assume $|k - k_{n_0}| < 2 \sigma(n_0)$. Then, $k \in \IR \setminus \bigcup_{0 < |m'| \le 36 R^{(s)}, \; m' \neq n_0} (k_{m',s+q-1}^-, k_{m',s+q-1}^+)$. Furthermore, if $|v(m_i,k) - v(0,k)| < (\delta^{(s-1)}_0)^{1/2}$, $i = 1, 2$ and $0 < |m_1 - m_2| \le  36 R^{(s)}$, then $m_1 - m_2 \in \{n_0, -n_0\}$.

$(5)$ Assume $|k - k_{n_0}| > (\delta^\esone)^{7/8}$. If $|v(m_i,k) - v(0,k)| < \delta^{(s'-1)}_0$, $s \le s' \le s+q-1$, $i = 1, 2$, and $m_1 \neq m_2$, then $|m_1 - m_2| > 12 R^{(s')}$.
\end{lemma}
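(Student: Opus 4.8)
\textbf{Proof plan for part $(5)$ of Lemma~\ref{lem:9A.2}.}
The plan is to argue by contradiction: suppose $s\le s'\le s+q-1$, $i=1,2$, that $|v(m_i,k)-v(0,k)|<\delta^{(s'-1)}_0$, and that $0<|m_1-m_2|\le 12R^{(s')}$; I want to derive $|k-k_{n_0}|\le(\delta^\esone)^{7/8}$, contradicting the hypothesis. The starting point is part $(4)$ of the current lemma, which under exactly these hypotheses forces $m_1-m_2\in\{n_0,-n_0\}$ and, more precisely, says that if $m_1$ is in case $(\mathfrak a)$ then $m_2$ is in case $(\mathfrak b)$ (and vice versa). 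So without loss of generality $m_2-m_1=n_0$ (the case $m_1-m_2=n_0$ being handled by swapping indices), $m_1$ is in case $(\mathfrak a)$, and $m_2$ is in case $(\mathfrak b)$.

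Next I would feed this into the location estimates of part $(1)$. Applying case $(\mathfrak a)$ to $m_1$ with $\delta=\delta^{(s'-1)}_0$ gives $|(k+m_1\omega)-k_{n_0}|<2^{12}(\delta^{(s'-1)}_0)^{15/16}+|k-k_{n_0}|$, i.e. $|m_1\omega+2k_{n_0}|$ is controlled — more usefully, since $k_{n_0}=-n_0\omega/2$ we get $|m_1\omega - n_0\omega + \text{(small)}| $ small relative to $|k-k_{n_0}|$ plus a $(\delta^{(s'-1)}_0)^{15/16}$ error. Applying case $(\mathfrak b)$ to $m_2=m_1+n_0$ gives $|(k+(m_2-n_0)\omega)-k_{n_0}|<2^{12}(\delta^{(s'-1)}_0)^{15/16}+3|k-k_{n_0}|$, that is $|(k+m_1\omega)-k_{n_0}|<2^{12}(\delta^{(s'-1)}_0)^{15/16}+3|k-k_{n_0}|$. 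These two should be combined with the triangle inequality; the key is that in case $(\mathfrak a)$ for $m_1$ we additionally have $|m_1\omega|<2^{12}(\delta^{(s'-1)}_0)^{15/16}$, so $k+m_1\omega$ is within $2^{12}(\delta^{(s'-1)}_0)^{15/16}$ of $k$, while case $(\mathfrak b)$ for $m_2$ puts $k+m_2\omega = k+m_1\omega+n_0\omega$ within $2^{12}(\delta^{(s'-1)}_0)^{15/16}$ of $-k$. Subtracting, $|2k+n_0\omega|=2|k-k_{n_0}|$ is bounded by $2^{13}(\delta^{(s'-1)}_0)^{15/16}$. Since $s'\ge s$, $\delta^{(s'-1)}_0\le\delta^{(s-1)}_0=\delta^\esone_0$, and by the inter-scale inequalities (cf.\ Remark~\ref{rem:3.Rs}, $\delta^\es_0<(\delta^\esone_0)^{8}$, and the definitions in \eqref{eq:A.1A}) one has $2^{13}(\delta^{(s'-1)}_0)^{15/16}<(\delta^\esone_0)^{7/8}$ once $\delta_0$ is small enough, which it is by our standing smallness assumptions. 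This contradicts $|k-k_{n_0}|>(\delta^\esone)^{7/8}$.

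For the remaining case $m_1-m_2=n_0$, I would simply relabel: now $m_2$ is in case $(\mathfrak a)$ and $m_1$ in case $(\mathfrak b)$, and the same computation with the roles of $m_1,m_2$ swapped yields the same bound $2|k-k_{n_0}|<2^{13}(\delta^{(s'-1)}_0)^{15/16}$, again a contradiction. One also needs to double-check the degenerate possibility flagged in part $(1)$, namely $n_0=\pm 2m_i$, which part $(1)$ already excludes under $|v(m_i,k)-v(0,k)|<\delta$; this rules out any coincidence forcing $m_1=m_2$. Hence $|m_1-m_2|>12R^{(s')}$, as claimed.

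\textbf{Main obstacle.} The routine part is the triangle-inequality bookkeeping; the one genuinely delicate point is making sure the numerical constants and the exponents line up — specifically that the accumulated error $2^{13}(\delta^{(s'-1)}_0)^{15/16}$, coming from possibly the largest admissible scale $s'=s+q-1$, still beats the threshold $(\delta^\esone)^{7/8}$ uniformly in $q$. This is where I would lean on the super-exponential gap $\delta^{(u)}_0=\exp(-(\log R^{(u)})^2)$ together with $R^{(u)}=(\delta^{(u-1)}_0)^{-\beta_1}$ from \eqref{eq:A.1A}, which makes $\delta^{(s'-1)}_0$ with $s'-1\ge s-1$ no larger than $\delta^\esone_0$ and hence $(\delta^{(s'-1)}_0)^{15/16}\le(\delta^\esone_0)^{15/16}$; since $15/16>7/8$ and $\delta^\esone_0$ is tiny, the constant $2^{13}$ is absorbed. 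I would state this explicitly as the one inequality to verify and otherwise treat the rest as bookkeeping.
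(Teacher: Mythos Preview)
Your argument is correct and matches the paper's approach: the key step in both is the triangle inequality $2|k-k_{n_0}|=|2k+n_0\omega|\le|m_1\omega|+|2k+m_2\omega|<2^{13}(\delta^{(s'-1)}_0)^{15/16}$ in the mixed $(\mathfrak a)$/$(\mathfrak b)$ case, which contradicts $|k-k_{n_0}|>(\delta^\esone_0)^{7/8}$. The only organizational difference is that the paper splits off $s'>s$ and disposes of it more cheaply via the cruder $(\delta^{(s'-1)}_0)^{1/2}$ bound, whereas you run the finer $15/16$ estimate uniformly after quoting part~$(4)$; both work.
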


\begin{proof}
$(0)$ Recall that due to \eqref{eq:diphnores}, we have $|k_{n_0}| > (\delta^{(s-1)})^{1/16}/2$. This implies $(0)$.

$(1)$ Assume $|v(m,k) - v(0,k)| < \delta < \delta^{(s-1)}_0$. Due to part $(1)$ of Lemma~\ref{lem:A.2}, one has $\min (|m \omega| ,|2 k + m  \omega|) \le 32\delta^{1/2}$ if $\gamma\le 4$, $\min (|m \omega| ,|2 k + m  \omega|) \le 256\delta$ if $\gamma\ge 4$. Consider first the case $\gamma \le 4$. Then $\lambda \le 1024$. Assume $|m \omega| \le 32 \delta^{1/2}$. Then, using $(0)$, one obtains $|2 k + m \omega| > 2 |k| - |m \omega| > (\delta^{(s-1)}_0)^{1/16}/4$, $|m \omega| = \lambda |v(m,k) - v(0,k)| |2 k + m \omega|^{-1} < 2^{12} \delta (\delta^\zero_0)^{-1/16} =  2^{12}\delta^{15/16}$, as claimed in $(1)$. Furthermore, $|(k + m \omega) - k_{n_0}| < 2^{12} \delta^{15/16} + |k - k_{n_0}|$.  This establishes all inequalities in case $(\mathfrak{a})$. The estimation in case $(\mathfrak{b})$ is completely similar. Note that $(\mathfrak{a})$ and $(\mathfrak{b})$ obviously exclude each other. Finally, assume $n_0 = \pm 2m$. Due to \eqref{eq:diphnores}, one has $|n_0 \omega| > (\delta^\esone_0 )^{1/16} > \delta^{1/2}$. Therefore, we cannot have case $(\mathfrak{a})$. So, we must have case $(\mathfrak{b})$. Then, $|4 k + \iota n_0 \omega)| < \delta^{1/2}$ for some $\iota \in \{-1,1\}$. Recall that $|2 k + n_0 \omega| < 64 (\delta^\esone_0)^{1/6}$. Hence, $|n_0 \omega| < \delta^{1/2} + 64 (\delta^\esone_0)^{1/6} < 65 (\delta^\esone_0)^{1/6}$. This contradiction with \eqref{eq:diphnores} proves that $n_0 = \pm 2m$ is impossible. Consider now the case $\gamma \ge 4$. Assume $|m \omega| < 256 \delta$. Then, $|2 k + m \omega| > 2 |k| - |m \omega| > 3 k/2 > \lambda$, $|m \omega| = \lambda |v(m,k) - v(0,k)| |2 k + m \omega|^{-1} < \delta$. Furthermore, $|(k + m \omega) - k_{n_0}| <\delta + |k - k_{n_0}|$. This establishes all inequalities in case $(\mathfrak{a})$. The estimation in case $(\mathfrak{b})$ is completely similar. The proof of the rest is completely similar to the case $\gamma\le 4$.

$(1)'$ Recall first of all that $\lambda^{-1} |k_{n_0}| \le 1/256$, $\lambda^{-1} |k_{n_0}| > 1/512$ if $|k_{n_0}| \ge 1$, $\lambda = 256$ in case $|k_{n_0}| < 1$. In case $(\mathfrak{a})$, one has
\begin{equation}\nn
\begin{split}
|v(m + n_0, k) - v(0, k)| = 2 \lambda^{-1} |m \omega + n_0 \omega| |k + (m \omega/2) - k_{n_0}| \le (2^{6} \delta^{15/16} + |k - k_{n_0}|), \\
|v(m - n_0, k) - v(0, k)| = 2 \lambda^{-1} |m \omega - n_0 \omega| |k + (m \omega/2) - k_{n_0} +2k_{n_0} | \\
\ge 2 \lambda^{-1} ((\delta^\esone_0)^{1/16} - 2^{12}  \delta^{15/16})(2|k_{n_0}| -  2^{12}\delta^{15/16} - |k - k_{n_0}|) > (\delta^\esone_0)^{1/16}/256,
\end{split}
\end{equation}
as claimed in \eqref{eq:9vofmmplusn}. Furthermore, $|(m + n_0) \omega| \ge |n_0 \omega| - |m \omega| > (\delta^{(s-1)})^{1/16} - \delta^{1/2} > (\delta^{(s-1)})^{1/16}/2$. Clearly, we have case $(\mathfrak{b})$ for $m + n_0$. The verification of \eqref{eq:9vofmmplusn} in case $(\mathfrak{b})$ goes in a similar way.

$(2)$ Note that $k \in \IR \setminus \bigcup_{0 < |m'| \le 12 R^{(s-1)}} (k_{m',s-1}^-, k_{m',s-1}^+)$ since $n_0 > 12 R^\esone$. Therefore, this part follows from part  $(3)$ in Lemma~\ref{lem:A.2} .

$(3)$ Assume $|v(m,k) - v(0,k)| < \delta^{(s'-1)}_0$, $s \le s' \le s+q-1$. In case $(\mathfrak{a})$, one has $|(k + m \omega) - k| = |m \omega| < 4 (\delta^{(s'-1)}_0)^{15/16}$. This implies, in particular, $k + m \omega \in \IR \setminus \bigcup_{0 < |m'| \le 12 R^{(s')}, \; m' \neq n_0} (k_{m',s'-1}^-, k_{m',s'-1}^+)$; see the definition \eqref{eq:7K.1}. In case $(\mathfrak{b})$, one has $|(k + m \omega) - (-k)| = |2 k + m \omega| < 4(\delta^{(s'-1)}_0)^{15/16}$. Note that $-k \in \IR \setminus \bigcup_{0 < |m'| \le 12 R^{(s+q)}, \quad m' \neq -n_0} (k_{m',s'-1}^-, k_{m',s'-1}^+)$. This implies, in particular,  $k + m \omega \in \IR \setminus \bigcup_{0 < |m'| \le 12 R^{(s')}, \quad m' \neq -n_0} (k_{m',s'-1}^-, k_{m',s'-1}^+)$. This finishes part $(3)$.

$(4)$ The proof of the first statement in $(4)$ goes the same way as the proof of part $(4)$ in Lemma~\ref{lem:A.2} since $k \in \IR \setminus \bigcup_{0 < |m'| \le 12 R^{(s-1)}} (k_{m',s'-1}^-, k_{m',s'-1}^+)$. To prove the second statement, assume, for instance, that we have case $(\mathfrak{a})$ for $m_1$ and case $(\mathfrak{b})$ for $m_2$. In this case,
\begin{equation}\label{eq:9sqnonres}
\begin{split}
k \in \left( -\frac{m'\omega}{2} - (\delta^{(s'-1)}_0)^{1/4} , -\frac{m'\omega}{2} + (\delta^{(s'-1)}_0)^{1/4} \right) \subset \\
\left( -\frac{m'\omega}{2} - \frac{\sigma(m')}{2} , -\frac{m'\omega}{2} + \frac{\sigma(m')}{2} \right) \subset ( k_{m',s'-1}^-, k_{m',s'-1}^+),
\end{split}
\end{equation}
where $m' = m_2 - m_1$. Assume $|m'| \le 12 R^{(s')}$. Then the only possibility is $m' = n_0$, that is, $m_2 - m_1 = n_0$. The proof for the rest of the cases is similar.

$(4)'$ Since $|k - k_{n_0}| < 2 \sigma(n_0)$, the first statement in $(4)'$ follows from \eqref{eq:7K.1} combined with \eqref{eq:diphnores}.
$($ If $q > 1$, then $R^{(s+q-1)} > 48 R^{(s)}$ and therefore $k \in \IR \setminus \bigcup_{0 < |m'| \le 48 R^{(s)}, \; m' \neq n_0} (k_{m',s+q-1}^-, k_{m',s+q-1}^+)$ without any additional condition for $k$. $)$ Applying the same arguments as in part $(4)$, one obtains the second statement in $(4)'$.

$(5)$ Assume $|k - k_{n_0}| > (\delta^\esone)^{7/8}$, $|v(m_i,k) - v(0,k)| < (\delta^{(s'-1)}_0)$, $s \le s' \le s+q-1$, $i = 1, 2$ and $m_1 \neq m_2$. Assume first that $s < s' \le s+q-1$. To prove the statement in this case we again repeat the arguments from the proof of part $(4)$ in Lemma~\ref{lem:A.2}. Note that in this case, $(\delta^{(s'-1)}_0)^{1/2} < (\delta^\esone)^{7/8}$. So, the only possibility is $|m_1 - m_2| > 12 R^{(s')}$. This proves the statement for $s' > s$. Consider now the case $s' = s$. If we have case $(\mathfrak{a})$ for both $m_1, m_2$, then $2^{13} (\delta^{(s-1)}_0)^{15/16} > |(m_2 - m_1) \omega|$. Due to \eqref{eq:diphnores}, this implies $|m_2 - m_1| > 48 R^\es$. Similarly, if we have case $(\mathfrak{b})$ for both $m_1, m_2$, then $|m_2 - m_1| > 48 R^\es$. Assume now, for instance, that we have case $(\mathfrak{a})$ for  $m_1$ and case $(\mathfrak{b})$ for $m_2$. Then, $|m_1 \omega| < 2^{12} (\delta^{(s-1)}_0)^{15/16}$, $|2 k + m_2 \omega| < 2^{12} (\delta^{(s-1)}_0)^{15/16}$. Assume also that $|m_2 - m_1|\le 12 R^\es$. Then $m_2 = m_1 + n_0$, as before. But this implies
$$
|2 k + m_2 \omega| = |2 k + m_1 \omega + n_0 \omega| \ge 2 |k - k_{n_0}| - |m_1 \omega| > 2 (\delta^{(s-1)}_0)^{7/8} - 2^{12} (\delta^{(s-1)}_0)^{15/16} > (\delta^{(s-1)}_0)^{7/8}.
$$
This contradiction implies $|m_2 - m_1| > 12 R^\es$, as claimed.
\end{proof}

\begin{remark}\label{rem:9.1abcases}
In this section and later in this work, when we refer to the cases $(\mathfrak{a})$ and $(\mathfrak{b})$, we mean cases $(\mathfrak{a})$ and $(\mathfrak{b})$ of Lemma~\ref{lem:9A.2}.
\end{remark}

In this section we use the same notation as in Section~\ref{sec.8}. We always assume that $k \in \cR^{(s,s)}(\omega,n_0)$. In particular,
\begin{equation}\label{eq:9realPR}
|k - k_{n_0}| \le 2 \sigma(n_0) = 64(\delta^\esone)^{1/6}.
\end{equation}

\begin{defi}\label{defi:8reflectionchoiceq}
For $q \ge 0$, let $\cR^{(s,s+q)}(\omega,n_0)$ be the set of $k \in \IR \setminus \bigcup_{0 < |m'| \le 12 R^{(s+q)}, \; m \neq n_0} (k_{m',s+q-1}^-, k_{m',s+q-1}^+)$, $0 < |k - k_{n_0}| \le 2 \sigma(n_0)$. For $1 \le r \le s-1$, $k' \in \cR^{(s,s)}(\omega,n_0)$, let $\La^{(r)}_{k'}(0)$ be the sets from Proposition~\ref{prop:A.3} $($ see also \eqref{eq:A.1} for the definitions $)$. For $0 < |k' - k_{n_0}| \le (\delta^\esone_0)^{3/4}$, let $\La^{(s,\mathbf{1})}_{k'}(0)$ be the set from Proposition~\ref{prop:8.1}.

Let $q \ge 2$. Assume that the sets $\La^{(s',\mathbf{1})}_{k'}(0)$ are already defined for all $s \le s' \le s+q-1$, provided
\begin{equation}\label{eq:9kintcond}
0 < |k' - k_{n_0}| < (\delta^\esone_0)^{3/4} - 4 \sum_{s-1 \le t \le s'-1} (\delta^{(t)}_0)^{15/16}
\end{equation}
and $k' \in \cR^{(s,s')}(\omega,n_0)$. Assume also that the sets $\La^{(s')}_{k'}(0)$ are already defined for all $1 \le s' \le s+q-1$, provided
\begin{equation}\label{eq:9koutcond}
(\delta^\esone_0)^{7/8} + 4 \sum_{s-1 \le t \le s'} (\delta^{(t)}_0)^{15/16} < |k' - k_{n_0}| < 2 \sigma(n_0)
\end{equation}
and $k' \in \cR^{(s,s')}(\omega,n_0)$. Assume the same for $-n_0$ in the role of $n_0$. Assume also that $\La^{(s')}_{k'}(0) \subset \La^{(s'')}_{k'}(0)$ if $s' < s'' \le s+q-1$, and $\La^{(s',\mathbf{1})}_{k'}(0) \subset \La^{(s'',\mathbf{1})}_{k'}(0)$ if $s \le s' < s'' \le s+q-1$.

$(\mathbb{A})$ Assume that for $k' = k$, we have \eqref{eq:9kintcond} with $s' = s + q$ and $k' \in \cR^{(s,s+q-1)}(\omega,n_0)$. Assume also that $k \notin \frac{\omega}{2}\zv$, so that $|k + m \omega| \neq |k_{n_0}| >0$, provided $n_0 \notin \{2m, -2m\}$.

$(1)$ Let $m$ be such that $|v(m,k) - v(0,k)| < 3 \delta^{(s+q-2)}_0/4$, $|m| < 12 R^{(s+q)}$. Assume also that $v(m,k) > v(0,k_{n_0})$ if $v(0,k) > v(0,k_{n_0})$ (resp., assume that $v(m,k) < v(0,k_{n_0})$ if $v(0,k) < v(0,k_{n_0})$). Then we say that $m \in \cM^{(s+q-1,+)}_{k,s+q-1}$ (resp., $m \in \cM^{(s+q-1,-)}_{k,s+q-1}$). Combining \eqref{eq:9kintcond} with $(1)$, $(3)$ from Lemma~\ref{lem:9A.2} and with the fact that $|k + m \omega| \neq |k_{n_0}| > 0$, unless $n_0 \notin \{2m, -2m\}$, one concludes that $\La^{(s+q-1,\mathbf{1})}_{k+m\omega}(0)$ is well-defined. We set $\La^{(s+q-1)}_{k}(m) = m + \La^{(s+q-1,\mathbf{1})}_{k+m\omega}(0)$.

$(2)$ Given $s \le s' \le s+q-2$, assume that for any $s' < s'' \le s+q-1$, the sets $\cM^{(s'',+)}_{k,s+q-1}$, $\La^{(s'')}_{k,s+q-1}(m'')$ are already defined. Let $m$ be such that $|v(m,k) - v(0,k)| \le (3 \delta^{(s'-1)}_0/4) - \sum_{s' < s'' \le s+q-1} \delta^{(s''-1)}_0$, $|m| < 12 R^{(s+q)}$. Assume also that $m \notin \bigcup_{s' < s'' \le s+q-1} \bigcup_{m'' \in \cM^{(s'',+)}_{k,s+q-1}} \La^{(s'')}_{k}(m'')$. Then we say that $m \in \cM^{(s',+)}_{k,s+q-1}$ if $v(m,k) > v(0,k_{n_0})$, $v(0,k) > v(0,k_{n_0})$, respectively, $m \in \cM^{(s',-)}_{k,s+q-1}$ if $v(m,k) < v(0,k_{n_0})$, $v(0,k) < v(0,k_{n_0})$. We set $\La^{(s')}_{k}(m) = m + \La^{(s',\mathbf{1})}_{k+m\omega}(0)$. As in $(1)$ above, $\La^{(s',\mathbf{1})}_{k+m\omega}(0)$ is well-defined.

$(3)$ Given $s' < s$, assume that for any $s' < s'' \le s+q-1$, the sets $\cM^{(s'')}_{k,s+q-1}$, $\La^{(s'')}_{k,s+q-1}(m'')$ are already defined. Let $m$ be such that $|v(m,k) - v(0,k)| \le (3 \delta^{(s'-1)}_0/4) - \sum_{s' < s'' \le s+q-1} \delta^{(s''-1)}_0$, $|m| < 12 R^{(s+q)}$. Assume also that $m \notin \bigcup_{s' < s'' \le s+q-1} \bigcup_{m'' \in \cM^{(s'')}_{k,s+q-1}} \La^{(s'')}_{k}(m'')$. Then we say that $m \in \cM^{(s')}_{k,s+q-1}$ and we set $\La^{(s')}_{k}(m) = m + \La^{(s')}_{k+m\omega}(0)$.

$(4)$ For $s \le s'\le s+q-1$, we enumerate the points of $\cM^{(s',\pm)}_{k,s+q-1}$ as $m^\pm_j$, $j \in J^{(s')}$. Set
\begin{equation}\label{def:9mpm}
\begin{split}
m_j^- = \begin{cases} m_j^+ + n_0 & \text{if $v(0,k) > v(0,k_{n_0})$, $\sgn(k+m^+_j\omega) = - \sgn (n_0 \omega)$}, \\
m^+_j - n_0 & \text{if $v(0,k) > v(0,k_{n_0})$, $\sgn (k + m_j^+ \omega) = \sgn (n_0 \omega)$}, \end{cases} \\
m_j^+ = \begin{cases} m_j^- + n_0 & \text{if $v(0,k) < v(0,k_{n_0})$, $\sgn (k+m_j^-\omega) = - \sgn (n_0 \omega)$}, \\
m_j^- - n_0 & \text{if $v(0,k) < v(0,k_{n_0})$, $\sgn (k + m_j^- \omega) = \sgn (n_0 \omega)$}, \end{cases} \\
\La^{(s')}_{k}(m_j^-) = \La^{(s')}_{k}(m_j^+), \quad \cM^{(s',\mathbf{1})}_{k,s+q-1} := \cM^{(s',+)}_{k,s+q-1} \cup \cM^{(s',-)}_{k,s+q-1}.
\end{split}
\end{equation}

$(\mathbb{B})$ Let $k$ be as in \eqref{eq:9koutcond} with $s' = s+q-1$. Then we define $\cM^{(s')}_{k,s+q-1}$, $\La^{(s')}_{k}(m'')$ just as in \eqref{eq:A.1}.
\end{defi}

\begin{remark}\label{rem:9.1inout}
$(1)$ In the last definition and for the rest of this work, we do not use the notation $\La^{(s+q-1,\mathbf{1})}_{k}(m)$ for any $m$ except $m = 0$. This simplifies the statements in what follows. For $m = 0$, we use the notation $\La^{(s+q-1,\mathbf{i})}_{k}(m)$, which includes both possibilities. We use also the notation $\cM^{(s',\mathbf{i})}_{k,s+q-1}$, which includes all possibilities. None of that will produce ambiguity since in the proofs, we always specify the cases the notation applies to.

$(2)$ If $r \le s-1$, then $B(2 R^\ar) \subset \La^{(r)}_k(0) \subset B(3 R^\ar)$. This property holds due to \eqref{eq:A.1}. For $r = s$, we have
\begin{equation}\label{eq:9lambdasetssize}
\begin{split}
B(2 R^\ar) \cup (n_0 + B(2 R^\ar)) \subset \La^{(r,\mathbf{1})}_k(0) \subset B(3 R^\ar) \cup (n_0 + B(3 R^\ar)) \quad \text {if $\sgn (k) = - \sgn (n_0 \omega)$}, \\
B(2 R^\ar) \cup (-n_0 + B(2 R^\ar)) \subset \La^{(r,\mathbf{1})}_k(0) \subset B(3 R^\ar) \cup (-n_0 + B(3 R^\ar)) \quad \text {if $\sgn (k) = \sgn (n_0 \omega)$}, \\
B(2 R^\ar) \subset \La^{(r)}_k(0) \subset B(3 R^\ar) \quad \text{if part $(\mathbb{B})$ in Definition~\ref{defi:8reflectionchoiceq} applies.}
\end{split}
\end{equation}

For $r = s$, the first two relations in \eqref{eq:9lambdasetssize}, addressing the case $|k - k_{n_0}| \le (\delta^\esone)^{3/4}$, are due to part~$(5)$ of Lemma~\ref{lem:8setLambdas}. The third one, addressing the case  $|k - k_{n_0}| > (\delta^\esone)^{7/8}$, is due to part~$(c)$ of Remark~\ref{rem:7.1oinout}. For $r > s$, we will establish \eqref{eq:9lambdasetssize} inductively in the corresponding domains of $k$. Note that \eqref{eq:9lambdasetssize} implies in particular $\La^{(r,\mathbf{i})}_k(0) \subset \La^{(r',\mathbf{i})}_k(0)$, $\La^{(r)}_k(0) \subset \La^{(r')}_k(0)$ for any $s \le r < r'$.

$(3)$  $\cM^{(s_1,+)}_{ k,s+q-1} \cap \cM^{(s_2,+)}_{ k,s+q-1} = \emptyset$, $\cM^{(s_1)}_{ k,s+q-1} \cap \cM^{(s_2+)}_{ k,s+q-1} = \emptyset$, $\cM^{(s_1)}_{ k,s+q-1} \cap \cM^{(s_2)}_{ k,s+q-1} = \emptyset$ if $v(0,k) > v(0,k_{n_0})$, $s_1 < s_2$. Respectively, $\cM^{(s_1,-)}_{ k,s+q-1} \cap \cM^{(s_2,-)}_{ k,s+q-1} = \emptyset$, $\cM^{(s_1)}_{ k,s+q-1} \cap \cM^{(s_2-)}_{k,s+q-1} = \emptyset$, $\cM^{(s_1)}_{ k,s+q-1} \cap \cM^{(s_2)}_{ k,s+q-1} = \emptyset$ if $v(0,k) < v(0,k_{n_0})$, $s_1 < s_2$.

$(4)$ $0 \in \cM^{(s+q-1,+)}_{k,s+q-1}$ if $v(0,k) > v(0,k_{n_0})$; $0 \in \cM^{(s+q-1,-)}_{k,s+q-1}$ if $v(0,k) < v(0,k_{n_0})$. For notational convenience, we assume that $0 \in J^{(s+q-1)}$.

$(5)$ Let $s \le s'$, $v(m,k) > v(0,k_{n_0})$, $v(0,k) > v(0,k_{n_0})$, $m \in \cM^{(s',+)}_{k,s+q-1}$, or $v(m,k) < v(0,k_{n_0})$, $v(0,k) < v(0,k_{n_0})$, $m \in \cM^{(s',-)}_{k,s+q-1}$, or let $s' < s$, $m \in \cM^{(s')}_{k,s+q-1}$. Then,
\begin{equation}\label{eq:9lambdasetscC}
(3 \delta^{(s')}_0/4) - \sum_{s'+1 < s'' \le s+q-1} \delta^{(s''-1)}_0 < |v(m,k) - v(0,k)| \le (3 \delta^{(s'-1)}_0/4) - \sum_{s' < s'' \le s+q-1} \delta^{(s''-1)}_0.
\end{equation}

$(6)$ For any $|n| < 5 R^{(s+q)}$ such that $n \notin \bigl( \bigcup_{1 \le s' \le s+q} \bigcup_{m \in \cM^{(s',i)}_{k,s+q-1}} \La^{(s')}_k(m)\bigr)$, we have $|v(n,k) - v(0,k)| \ge (\delta_0)^4$.
\end{remark}

\begin{remark}\label{rem:9.1abcasesQ1}
In Lemma~\ref{lem:9A.2} and Definition~\ref{defi:8reflectionchoiceq} we assume that $k$ belongs to the complement of $\bigcup_{0 < |m'| \le 12 R^{(s+q-1)}, \; m' \neq n_0} (k_{m',s+q-1}^-, k_{m',s+q-1}^+)$ instead of the complement of $\bigcup_{0 < |m'| \le 12 R^{(s+q)}, \; m' \neq n_0} (k_{m',s+q-1}^-, k_{m',s+q-1}^+)$ because of the further development in Section~\ref{sec.10}. The latter condition is needed only in Proposition~\ref{prop:9.1}.
\end{remark}

\begin{lemma}\label{lem:9mdeltaTmCor}
Assume that \eqref{eq:9lambdasetssize} holds for all $r \ge s$.

$(1)$  $m \in \La^{(s')}_{k}(m)$ for any $s'$ and any $m \in \cM^{(s')}_{k,s+q-1}$.

$(2)$ Let $k$ be as in \eqref{eq:9kintcond} with $s' = s+q$. Then $\cM^{(s_1)}_{k,s+q-1} \cap \cM^{(s_2)}_{k,s+q-1} = \emptyset$ for any $1 \le s_1 < s_2 < s$, $\cM^{(s_1)}_{k,s+q-1} \cap \cM^{(s_2,\mathbf{1})}_{k,s+q-1} = \emptyset$ for any $1 \le s_1 < s \le s_2$, $\cM^{(s_1,\mathbf{1})}_{k,s+q-1} \cap \cM^{(s_2,\mathbf{1})}_{k,s+q-1} = \emptyset$ for any $s \le s_1 < s_2$.
\end{lemma}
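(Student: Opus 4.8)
\textbf{Proof plan for Lemma~\ref{lem:9mdeltaTmCor}.}
The plan is to reduce both statements to the explicit description of the sets $\La^{(s')}_k(m)$ as translates of the ``normalized'' sets $\La^{(s',\mathbf{1})}_{k'}(0)$, $\La^{(s')}_{k'}(0)$, together with the size estimates \eqref{eq:9lambdasetssize} and the separation/structure facts already proved in Lemmas~\ref{lem:9A.2}, \ref{lem:A.2}, \ref{lem:A.3}. For part $(1)$ the claim is essentially definitional: by Definition~\ref{defi:8reflectionchoiceq}, for $s' < s$ one has $\La^{(s')}_k(m) = m + \La^{(s')}_{k+m\omega}(0)$; for $s \le s' \le s+q-1$ one has $\La^{(s')}_k(m^\pm_j) = m^\pm_j + \La^{(s',\mathbf{1})}_{k+m^\pm_j\omega}(0)$ (with the case distinction on which of $m^+_j,m^-_j$ is the ``generator''); and for $s' = s+q-1$ in case $(\mathbb{B})$ the sets are defined as in \eqref{eq:A.1}. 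In every case the relevant normalized set contains $B(2R^{(s')})\ni 0$ by \eqref{eq:9lambdasetssize}, so after translation $m\in m+B(2R^{(s')})\subset\La^{(s')}_k(m)$. First I would simply run through these three cases and invoke \eqref{eq:9lambdasetssize} (for $s'\ge s$) or the inclusion $B(2R^{(s')})\subset\La^{(s')}_{k'}(0)$ from \eqref{eq:A.1} (for $s'<s$); this is a short verification with no real content.

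For part $(2)$, the emptiness statements are proved by contradiction using the value function $v(\cdot,k)$. First I would record from Remark~\ref{rem:9.1inout}(5) (specifically \eqref{eq:9lambdasetscC}) that membership of $m$ in $\cM^{(s')}_{k,s+q-1}$ or $\cM^{(s',\mathbf{1})}_{k,s+q-1}$ pins down $|v(m,k)-v(0,k)|$ to the dyadic window
\[
(3\delta^{(s')}_0/4)-\textstyle\sum_{s'+1<s''\le s+q-1}\delta^{(s''-1)}_0 \;<\; |v(m,k)-v(0,k)| \;\le\; (3\delta^{(s'-1)}_0/4)-\textstyle\sum_{s'<s''\le s+q-1}\delta^{(s''-1)}_0 .
\]
Since $\delta^{(s'')}_0<(\delta^{(s''-1)}_0)^8$ (Remark~\ref{rem:3.Rs}), these windows for distinct levels $s_1<s_2$ are disjoint: the upper endpoint of the level-$s_1$ window is of order $\delta^{(s_1-1)}_0$ while the lower endpoint of the level-$s_2$ window is of order $\delta^{(s_2)}_0\le\delta^{(s_1)}_0\ll\delta^{(s_1-1)}_0$, so no $m$ can lie in both windows simultaneously. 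Hence $\cM^{(s_1)}_{k,s+q-1}\cap\cM^{(s_2)}_{k,s+q-1}=\emptyset$ for $s_1<s_2<s$, and likewise $\cM^{(s_1)}_{k,s+q-1}\cap\cM^{(s_2,\mathbf{1})}_{k,s+q-1}=\emptyset$ for $s_1<s\le s_2$ and $\cM^{(s_1,\mathbf{1})}_{k,s+q-1}\cap\cM^{(s_2,\mathbf{1})}_{k,s+q-1}=\emptyset$ for $s\le s_1<s_2$: in each case the two windows are at different scales. Here one must be slightly careful, since for levels $\ge s$ the label set is $\cM^{(s',\mathbf{1})}_{k,s+q-1}=\cM^{(s',+)}_{k,s+q-1}\cup\cM^{(s',-)}_{k,s+q-1}$; but the window estimate \eqref{eq:9lambdasetscC} holds verbatim for every element of $\cM^{(s',+)}_{k,s+q-1}$ and every element of $\cM^{(s',-)}_{k,s+q-1}$, so the disjointness argument is unaffected by the $\pm$ bookkeeping.

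There is one subtlety I would flag as the main (minor) obstacle: the definitions of $\cM^{(s',\mathbf{i})}_{k,s+q-1}$ in Definition~\ref{defi:8reflectionchoiceq}(2)--(3) already explicitly \emph{exclude} points lying in $\bigcup_{s'<s''}\bigcup_{m''\in\cM^{(s'')}}\La^{(s'')}_k(m'')$, so one could worry that the disjointness is being used circularly in the very definition. The clean way around this is to note that the exclusion in the definition concerns the \emph{sets} $\La^{(s'')}_k(m'')$, not the label sets $\cM^{(s'')}$; the disjointness of the label sets claimed in part $(2)$ is a consequence purely of the dyadic-window estimate \eqref{eq:9lambdasetscC} and $\delta^{(s'')}_0<(\delta^{(s''-1)}_0)^8$, which do not refer back to part $(2)$. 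So the argument is non-circular: I would state \eqref{eq:9lambdasetscC} as the input, do the two-line scale comparison, and conclude. The whole proof is essentially bookkeeping over the cases $s_1,s_2$ relative to $s$, plus one invocation of the scale separation $\delta^{(u)}_0<(\delta^{(u-1)}_0)^8$.
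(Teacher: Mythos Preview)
Your treatment of part $(1)$ is correct and matches the paper's.

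For part $(2)$ your argument has a genuine gap. You assert that the window estimate \eqref{eq:9lambdasetscC} ``holds verbatim for every element of $\cM^{(s',+)}_{k,s+q-1}$ and every element of $\cM^{(s',-)}_{k,s+q-1}$''; this is not what Remark~\ref{rem:9.1inout}(5) actually states, and it is in general false. When, say, $v(0,k)>v(0,k_{n_0})$, only $\cM^{(s',+)}$ is defined via the $v$-threshold and carries the dyadic window; the companion set $\cM^{(s',-)}$ is defined \emph{ex post} by $m^-_j=m^+_j\pm n_0$ in \eqref{def:9mpm}. For such $m^-_j$, Lemma~\ref{lem:9A.2}$(1)'$ only gives $|v(m^-_j,k)-v(0,k)|\lesssim(\delta^{(s'-1)}_0)^{15/16}+|k-k_{n_0}|$, a quantity dominated by $|k-k_{n_0}|$ and essentially independent of the level $s'$. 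Hence your scale-separation argument does not apply on that half, and you have not ruled out, e.g., $\cM^{(s_1,-)}_{k,s+q-1}\cap\cM^{(s_2,-)}_{k,s+q-1}\neq\emptyset$.

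The paper proves part $(2)$ by exactly the mechanism you flagged and then set aside: the exclusion clause $m\notin\bigcup_{s_1<s''\le s+q-1}\bigcup_{m''}\La^{(s'')}_k(m'')$ in Definition~\ref{defi:8reflectionchoiceq}, combined with part $(1)$. If $m$ lies in a level-$s_1$ class and also in $\cM^{(s_2,\mathbf{1})}$ with $s_1<s_2$, then part $(1)$ together with $\La^{(s_2)}_k(m^-_j)=\La^{(s_2)}_k(m^+_j)$ puts $m$ into one of the excluded sets $\La^{(s_2)}_k(m'')$, a contradiction. There is no circularity: the $\cM^{(s')}$ are built top-down in $s'$, so the exclusion at level $s_1$ refers only to already-constructed higher levels, and part $(2)$ is a direct consequence. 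Your window route can be repaired, but it needs two additional ingredients you did not supply: the sign observation $v(m,k)\gtrless v(0,k_{n_0})$ separating $\cM^{(\cdot,+)}$ from $\cM^{(\cdot,-)}$ across levels, and the involution $m^\pm_j\leftrightarrow m^\mp_j$ to reduce the ``$-$'' case to the ``$+$'' case.
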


\begin{proof}
$(1)$. The statement follows from \eqref{eq:9lambdasetssize} and the definition \eqref{def:9mpm}.

$(2)$ The statement follows from the condition $m \notin \bigcup_{s' < s'' \le s+q-1} \bigcup_{m'' \in \cM^{(s'')}_{k,s+q-1}} \La^{(s'')}_{k}(m'')$ in Definition~\ref{defi:8reflectionchoiceq} and part $(1)$ of the current lemma.
\end{proof}

\begin{lemma}\label{lem:9A.3}
Assume that \eqref{eq:9lambdasetssize} holds for all $r \ge s$.

$(1)$ Assume that for some $m_1$, we have
\begin{equation}\label{eq:9afrprelim}
|v(m_1,k) - v(0,k)| \le (3 \delta^{(s'-1)}_0/4) - \sum_{s' < s'' \le s+q-1} \delta^{(s''-1)}_0.
\end{equation}
Let  $m_2 \in \cM^{(s')}_{k,s+q-1}$. Then, either $m_1 \in \La^{(s')}_k(m_2)) \cap \cM^{(s')}_{k,s^\one-1}$ or
\begin{equation}\label{eq:9m1m2cases}
|m_1 - m_2| \ge \begin{cases} 36 R^{(s)} & \text{if $s' = s$ and $\La^{(s')}_k(m_2)$ is defined as in $(\mathbb{A})$ of Definition~\ref{defi:8reflectionchoiceq}}, \\ 12 R^{(s')} & \text{otherwise}. \end{cases}
\end{equation}

$(1)'$ For any $s'$ and any $m_1$, $m_2$, either $\La^{(s')}_{k,s+q-1}(m_1) = \La^{(s')}_{k,s+q-1}(m_2)$, or  $\dist(\La^{(s')}_{k,s+q-1}(m_1), \La^{(s')}_{k,s+q-1}(m_2)) > 5 R^{(s')}$.

$(2)$ Assume that for some $1 \le s_1 < s_2 \le s+q-1$, $m_1, m_2$, we have
\begin{equation}\label{eq:9afr}
|v(m_i,k) - v(0,k)| \le (3 \delta^{(s_i-1)}_0/4) - \sum_{s_i < s'' \le s+q-1} \delta^{(s''-1)}_0, \quad i = 1, 2.
\end{equation}
Then,
\begin{equation}\label{eq:11centeratm'state0}
|v(m_1 - m_2, k + m_2 \omega) - v(0, k + m_2 \omega)| < 3 \delta^{(s_1-1)}_0/4 - \sum_{s_1 < s'' \le s_2 - 1} \delta^{(s''-1)}_0.
\end{equation}

$(3)$ Assume that for any $s \le s' \le s+q-1$, the following condition holds:

$(\mathfrak{S}_{s'})$ Let $k \in \IR \setminus \bigcup_{0 < |m'| \le 12 R^{(s'-1)}, \; m' \neq n_0} (k_{m',s'-1}^-, k_{m',s'-1}^+)$, $m_1 \in \cM^{(s_1)}_{k,s'-1}$, $s_1 \le s'-1$ , $|m_1| < 12 R^{(s')}$. Then, either $\La^{(s_1)}_{k}(m_1) \subset \La^{(s')}_k(0)$ or $\La^{(s_1)}_{k}(m_1) \cap \La^{(s')}_k(0) = \emptyset$.

Then, the following statement holds.

Assume that for some $s \le s_1 \le s+q-1$, $|m_1| < 5 R^{(s+q)}$, we have
\begin{equation}\label{eq:9afrYYZ}
|v(m_1,k) - v(0,k)| \le (3 \delta^{(s_1-1)}_0/4) - \sum_{s_1 < s'' \le s+q-1} \delta^{(s''-1)}_0.
\end{equation}
If
\begin{equation}\label{eq:9kintcondCOND}
0 < |k - k_{n_0}| < (\delta^\esone_0)^{3/4} - \sum_{s-1 \le t \le s+q-1} (\delta^{(t)}_0)^{15/16},
\end{equation}
then assume also that $v(m_1,k) > v(0,k_{n_0})$ if $v(0,k) > v(0,k_{n_0})$, and respectively, $v(m_1,k) < v(0,k_{n_0})$ if $v(0,k) < v(0,k_{n_0})$. Then,

    either $(\alpha)$ $m_1 \in \La^{(s_2)}_k(m_2)$ for some $s_1 < s_2 \le s+q-1$, $m_2 \in \cM^{(s_2)}_{k,s+q-1}$,

    or $(\beta)$ $m_1 \in \cM^{(s_1)}_{k,s+q-1}$ and $\La^{(s_1)}_k(m_1)) \cap \La^{(s_2)}_k(m_2) = \emptyset$ for any $m_2 \in \cM^{(s_2)}_{k,s-1}$ $m_2 \neq m_1$ with $s_1 \le s_2 \le s-1$.
\end{lemma}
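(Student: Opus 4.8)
\textbf{Proof proposal for the dichotomy in part (3) of Lemma~\ref{lem:9A.3}.}

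The plan is to imitate closely the proof of part $(3)$ of Lemma~\ref{lem:A.3} in Section~\ref{sec.7}, running an induction on $q = 0, 1, 2, \dots$ (equivalently on $s+q$), and to track carefully the new ``twisting by $n_0$'' phenomenon: whereas in the translation-invariant case shifting by $m_2$ sent one system of sets into itself, here shifting by $m_2$ may interchange the role of $\La^{(\cdot,\mathbf 1)}_{\cdot}$-sets built around $0$ with those built around $n_0$ (the cases $(\mathfrak a)$ and $(\mathfrak b)$ of Lemma~\ref{lem:9A.2}). Concretely: First I would reduce, using part $(1)$ of the present lemma (the analogue of part $(1)$ of Lemma~\ref{lem:A.3}), to the situation where $(\alpha)$ fails, so that $m_1 \in \cM^{(s_1)}_{k,s+q-1}$ by the very definition in Definition~\ref{defi:8reflectionchoiceq}, and where we must rule out $\La^{(s_1)}_k(m_1) \cap \La^{(s_2)}_k(m_2) \neq \emptyset$ for $m_2 \in \cM^{(s_2)}_{k,s+q-1}$, $m_2 \neq m_1$, $s_1 \le s_2 \le s+q-1$. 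By part $(1)'$ and part $(1)$ of the present lemma, this forces $|m_1 - m_2| < 12 R^{(s_2)}$ (or $< 36 R^{(s)}$ in the exceptional case $s_2 = s$ with $\La^{(s_2)}_k(m_2)$ defined as in $(\mathbb A)$), so only the ``close'' case needs work.

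Next I would record the key shift identity: $m_1 - m_2 \notin \La^{(s_2,\mathbf i)}_{k+m_2\omega}(0)$ (otherwise $m_1 \in \La^{(s_2)}_k(m_2)$, contradicting the failure of $(\alpha)$), and I would verify, using parts $(2)$ and $(3)$ (or $(4)'$) of Lemma~\ref{lem:9A.2}, that $k + m_2\omega$ lies in the appropriate punctured complement so that Proposition~\ref{prop:8.1}, Proposition~\ref{prop:9.1}, and Definition~\ref{defi:8reflectionchoiceq} apply with $k+m_2\omega$ in the role of $k$. Here the crucial bookkeeping is the alternative: if $|k - k_{n_0}|$ is small (case \eqref{eq:9kintcondCOND}) and we are in case $(\mathfrak a)$ for $m_2$, then $k + m_2\omega$ is still close to $k_{n_0}$ and the $\La^{(\cdot,\mathbf 1)}$-sets around $0$ persist; whereas in case $(\mathfrak b)$ for $m_2$ one has $k + m_2\omega$ close to $-k_{n_0}$, so one works with $-n_0$ in the role of $n_0$, and the ``$0$-centered'' set gets relabelled as the ``$n_0$-centered'' one — the sign-hypotheses $v(m_1,k)>v(0,k_{n_0})$ etc. in the statement are exactly what keeps this relabelling consistent. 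Part $(1)'$ of Lemma~\ref{lem:9A.2} (with \eqref{eq:9vofmmplusn}, \eqref{eq:9vofmmplusnB}) is what guarantees that $m_1 - m_2$ satisfies the hypotheses of the inductive version of part $(3)$ applied at level $s_2$.

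Then I would run the inductive dichotomy exactly as in \eqref{eq:11centeratm'ABCDEF}--\eqref{eq:11minlambdalambdainAGAG} of Lemma~\ref{lem:A.3}: apply the inductive hypothesis of part $(3)$ to $k+m_2\omega$, $(m_1 - m_2)$, $s_2$; in sub-case $(\alpha)$ for $m_1 - m_2$ one gets $(m_1-m_2) + \La^{(s_1,\mathbf i)}_{k+m_1\omega}(0) \subset \La^{(s')}_{k+m_2\omega}(m')$ for some $s_1 < s' \le s_2 - 1$, $m' \in \cM^{(s')}_{k+m_2\omega,s_2-1}$, combined with condition $(\mathfrak S_{s_2})$ applied to $\La^{(s')}_{k+m_2\omega}(m')$ versus $\La^{(s_2,\mathbf i)}_{k+m_2\omega}(0)$ to conclude the disjointness claimed in $(\beta)$ for $m_1$; in sub-case $(\beta)$ for $m_1 - m_2$ one gets $(m_1-m_2) \in \cM^{(s_1)}_{k+m_2\omega,s_2-1}$ and again invokes $(\mathfrak S_{s_2})$. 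The base case $q$ small (i.e. $s_1 = s$ or $s+q-1 = s$) reduces, via Lemma~\ref{lem:8A.3} and Proposition~\ref{prop:8.1}, to the part $(1)'$/part $(1)$ spacing estimates with no additional hypothesis, exactly as the $s=2$ base case in Lemma~\ref{lem:A.3} used no condition $(\mathfrak S)$. I also need the ``last statement'' $m_1 + \La^{(s_1,\mathbf i)}_{k+m_1\omega}(0) \subset \La^{(s_2)}_k(m_2)$ in case $(\alpha)$ for $m_1$, proved by the same secondary induction as at the end of the proof of Lemma~\ref{lem:A.3}.

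\textbf{Main obstacle.} The hard part will be the $(\mathfrak a)/(\mathfrak b)$ case-splitting: one has to check that after shifting by $m_2$ the correct ``anchor'' ($0$ or $n_0$, or $-n_0$) is tracked, that the interval conditions \eqref{eq:9kintcond}/\eqref{eq:9koutcond} are preserved with the shifted $k$ (using the quantitative bounds $|(k+m\omega) - k_{n_0}| < 2^{12}\delta^{15/16} + 3|k-k_{n_0}|$ of Lemma~\ref{lem:9A.2}(1), and the geometric-series room $\sum_{t}(\delta^{(t)}_0)^{15/16}$ built into the definitions), and that the sign hypotheses on $v(m_i,k)$ versus $v(0,k_{n_0})$ propagate correctly under the shift — this is the one genuinely new ingredient compared to Section~\ref{sec.7}, and it is why the statement of part $(3)$ carries the extra clause about $v(m_1,k)$ when \eqref{eq:9kintcondCOND} holds. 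Everything else is a faithful transcription of the combinatorial argument already carried out in Lemma~\ref{lem:A.3}.
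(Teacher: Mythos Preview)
Your proposal is correct and follows essentially the same approach as the paper. The paper's own proof of part $(3)$ is in fact just one line---``With part $(1)$ of the current lemma in mind, the proof goes word for word as the proof of $(3)$ in Lemma~\ref{lem:A.3}''---and your detailed $(\mathfrak a)/(\mathfrak b)$ bookkeeping and sign-tracking is precisely the content the paper compresses into the phrase ``with part $(1)$ in mind.''
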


\begin{proof}
$(1)$ Assume $s' \ge s$. Consider the case when \eqref{eq:9kintcondCOND} holds, so that part $(\mathbb{A})$ in Definition~\ref{defi:8reflectionchoiceq} applies. Assume for instance $v(0,k) > v(0,k_{n_0})$. Due to Definition~\ref{defi:8reflectionchoiceq}, one can assume that $\La^{(s')}_{k}(m_2) = m_2 + \La^{(s',\mathbf{1})}_{k + m_2 \omega}(0)$, with $|k + m_2 \omega| > |n_0 \omega|/2$,
$|v(m_2,k) - v(0,k)|) \le 3 \delta^{(s'-1)}_0/4$. Due to part $(4)$ of Lemma~\ref{lem:9A.2}, either $|m_1 - m_2| > C(s') R^{(s')}$, or $m_1 - m_2 \in \{0,n_0,-n_0\}$, where $C(s') = 36$ if $s' = s$ and $C(s') = 12$ otherwise. Assume $m_1 - m_2 \in \{0, n_0, -n_0\}$. If $m_1 = m_2$, then we are done. Assume  $m_1 - m_2 \in \{n_0, -n_0\}$. Note that due to part $(1)$ in Lemma~\ref{lem:9A.2}, one has $||k + m_1 \omega| - |k_{n_0}|| < 2^{12} (\delta^{(s'-1)}_0)^{15/16} + 3 |k - k_{n_0}| < (\delta^{(s'-1)}_0)^{1/2}$. Consider the case $\sgn(k + m_2 \omega) = - \sgn (n_0 \omega)$. Note that in this case, $|k + m_2 \omega - n_0 \omega| > |n_0 \omega|$. This implies $m_1 \neq m_2 - n_0$, that is, $m_1 = m_2 + n_0$. Due to \eqref{def:9mpm}, \eqref{eq:9lambdasetssize}, $m_2 + n_0 \omega \in \La^{(s')}_{k}(m_2) \cap \cM^{(s',\mathbf{1})}_{k,s+q-1}$. The proof in case when \eqref{eq:9kintcondCOND} holds and $\sgn(k + m_2 \omega) = \sgn (n_0 \omega)$ is similar. This finishes the case when part $(\mathbb{A})$ applies and $s' \ge s$. The verification for the rest of the cases follows straight from parts $(4)$, $(5)$ of Lemma~\ref{lem:9A.2}.

$(1)'$ This part follows from part $(1)$ of the current lemma combined with \eqref{eq:9lambdasetssize} and \eqref{def:9mpm}.

$(2)$ The proof goes word for word as the proof of $(2)$ in Lemma~\ref{lem:A.3}.

$(3)$ With part $(1)$ of the current lemma in mind, the proof goes word for word as the proof of $(3)$ in Lemma~\ref{lem:A.3}.
\end{proof}

\begin{lemma}\label{lem:9mdeltaTm}
Let $0 < |k - k_{n_0}| < (\delta^{(s+q-1)}_0)^{1/16}$.

$(1)$ If $|v(m,k) - v(0,k)| < \delta$ with $\delta \ge (\delta^{(s-1)}_0)^{1/32}$, then $|v(T(m),k) - v(0,k)| < 4 \delta/3$.

Assume also that \eqref{eq:9lambdasetssize} holds for all $s \le r \le s+q-1$.

$(2)$ Let $s' < s$, $m_j \in \cM^{(s')}_{k,s+q-1}$, $j = 1, 2$. Then, either $T(m_1) = m_2$ or $\dist(T(\La_k^{(s')}(m_1)), \La_k^{(s')}(m_2)) > 6 R^{(s')}$. Let $s \le s'$, $m_j \in \cM^{(s')}_{k,s+q-1}$, $j = 1, 2$. Then, either $T(\La_k^{(s')}(m_1)) \cap \La_k^{(s')}(m_2) \neq \emptyset$ or $\dist(T(\La_k^{(s')}(m_1)), \La_k^{(s')}(m_2)) > 5 R^{(s')}$. In the former case, $(\La_k^{(s')}(m_1) \cap \cM^{(s')}_{k,s+q-1}) = \La_k^{(s')}(m_{2}) \cap \cM^{(s')}_{k,s+q-1}$.
\end{lemma}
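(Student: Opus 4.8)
\textbf{Plan of proof for Lemma~\ref{lem:9mdeltaTm}.} The structure mirrors the proof of Lemma~\ref{lem:8mdeltaTm} in Section~\ref{sec.8}, but now we must track the reflection $T(n) = -n + n_0$ through the more delicate $(\mathfrak{a})$/$(\mathfrak{b})$ dichotomy of Lemma~\ref{lem:9A.2} and the fact that for scales $s' \ge s$ the sets $\La_k^{(s')}(m)$ come in $T$-invariant (or nearly $T$-invariant) pairs $\{m_j^+, m_j^-\}$. I would first prove part $(1)$ directly, then bootstrap from it to part $(2)$ using the diophantine separation estimate in part $(4)$ of Lemma~\ref{lem:A.2} for the sub-$s$ scales and in parts $(4)$, $(4)'$, $(5)$ of Lemma~\ref{lem:9A.2} for the scales $s' \ge s$.

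For part $(1)$: recall $|v(m,k) - v(0,k)| = \lambda^{-1}|m\omega| \cdot |2k + m\omega|$ and that $k \in \cR^{(s,s)}(\omega,n_0)$ forces $|k - k_{n_0}| < (\delta^{(s+q-1)}_0)^{1/16}$ hence $|n_0\omega| < 2|k|+1$. From $|v(m,k)-v(0,k)| < \delta$ and $(1)$ of Lemma~\ref{lem:A.2} one gets $\min(|m\omega|, |2k+m\omega|) \le 32\delta^{1/2}$ (or $\le 256\delta^2$ for $\gamma > 4$), and in either case $|m\omega| \le 2|k|+1$. Then I would estimate, exactly as in \eqref{eq:8Tmineqverif-2},
\begin{equation*}
|v(T(m),k) - v(0,k)| \le |v(T(m),k) - v(m,k)| + |v(m,k) - v(0,k)| \le \lambda^{-1}(|n_0\omega| + 2|m\omega|)\,|2k+n_0\omega| + \delta,
\end{equation*}
and since $|2k + n_0\omega| = 2|k - k_{n_0}| < 2(\delta^{(s+q-1)}_0)^{1/16}$, the first term is bounded by $8\lambda^{-1}(|k|+1)(\delta^{(s+q-1)}_0)^{1/16}$, which is $\ll \delta/3$ given the hypothesis $\delta \ge (\delta^{(s-1)}_0)^{1/32}$ (here one uses $(\delta^{(s+q-1)}_0)^{1/16} \ll (\delta^{(s-1)}_0)^{2}$ from Remark~\ref{rem:3.Rs}). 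This yields $|v(T(m),k)-v(0,k)| < 4\delta/3$.

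For part $(2)$, first the case $s' < s$: if $m_j \in \cM^{(s')}_{k,s+q-1}$ then $|v(m_j,k)-v(0,k)| < 3\delta^{(s'-1)}_0/4$, so by part $(1)$ (with $\delta = 3\delta^{(s'-1)}_0/4 \ge (\delta^{(s-1)}_0)^{1/32}$) we get $|v(T(m_1),k)-v(0,k)| < \delta^{(s'-1)}_0$; then if $T(m_1) \neq m_2$, part $(4)$ of Lemma~\ref{lem:A.2} gives $|T(m_1) - m_2| > 12R^{(s')}$, and since $\La_k^{(s')}(\cdot) \subset (\cdot) + B(3R^{(s')})$ we conclude $\dist(T(\La_k^{(s')}(m_1)), \La_k^{(s')}(m_2)) > 6R^{(s')}$. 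For $s' \ge s$, I would argue similarly but now use $|v(m_j,k)-v(0,k)| < (\delta^{(s'-1)}_0)^{1/2}$ (which holds because $m_j$ is a principal point of a pair, cf.\ \eqref{eq:9lambdasetscC} and the $(\mathfrak{a})$/$(\mathfrak{b})$ description) together with parts $(4)$, $(4)'$, $(5)$ of Lemma~\ref{lem:9A.2}: either $|T(m_1)-m_2| > C R^{(s')}$ (with $C = 36$ when $s'=s$, which covers $\La_k^{(s')}$ as in $(\mathbb{A})$, and $C = 12$ otherwise, using $\La_k^{(s')}(\cdot) \subset (\cdot)+B(3R^{(s')})$ from \eqref{eq:9lambdasetssize} to get the $> 5R^{(s')}$ separation), or $T(m_1) - m_2 \in \{0, n_0, -n_0\}$. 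In the latter case, when $T(m_1) - m_2 \in \{0, n_0, -n_0\}$, the $T$-invariance built into the pair structure \eqref{def:9mpm} and the containments \eqref{eq:9lambdasetssize} force $T(\La_k^{(s')}(m_1)) \cap \La_k^{(s')}(m_2) \neq \emptyset$, and moreover $\La_k^{(s')}(m_1) \cap \cM^{(s')}_{k,s+q-1}$ and $\La_k^{(s')}(m_2) \cap \cM^{(s')}_{k,s+q-1}$ coincide (since $\La_k^{(s')}(m_1) = \La_k^{(s')}(\bullet m_1)$ for the partner and $T$ swaps $m_j^+ \leftrightarrow m_j^-$ up to a shift by $n_0$).

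\textbf{Expected main obstacle.} The delicate point is the bookkeeping in the last case of part $(2)$: showing that $T(m_1)-m_2 \in \{0,n_0,-n_0\}$ actually implies $T(\La_k^{(s')}(m_1)) \cap \La_k^{(s')}(m_2) \neq \emptyset$ \emph{together with} the equality of the $\cM^{(s')}_{k,s+q-1}$-traces, rather than just approximate overlap. This requires unwinding the precise definition \eqref{def:9mpm} of $m_j^-$ in terms of $m_j^+$ and the sign of $k + m_j^+\omega$ relative to $\sgn(n_0\omega)$, checking that the reflection $T$ sends the relevant shifted cube $m_1 + B(\cdot)$ (or its $\pm n_0$ translate) exactly onto $m_2 + B(\cdot)$ up to the inductively controlled perturbation of the boundary. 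One must also verify the sub-case $T(m_1) - m_2 = 0$ genuinely occurs and is consistent (i.e.\ $m_1, m_2$ symmetric about $k_{n_0}$-geometry), and rule out spurious near-coincidences using the quantitative separations $> (\delta^{(s-1)}_0)^{1/16}/2$ from \eqref{eq:diphnores}. Everything else is routine once part $(1)$ is in hand.
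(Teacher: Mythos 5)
Your proposal is substantially correct but routes part $(2)$ through lower-level ingredients than the paper does. Your treatment of part $(1)$ matches the paper (the paper simply refers back to the proof of Lemma~\ref{lem:8mdeltaTm}, and your $(\delta^{(s+q-1)}_0)^{1/16} \ll (\delta^{(s-1)}_0)^{2}$ observation is the right way to make the $4\delta/3$ bound land). For part $(2)$, the paper's proof is two lines: apply part $(1)$ to get $|v(T(m_1),k)-v(0,k)| \le \delta^{(s'-1)}_0$, then invoke part $(1)$ of Lemma~\ref{lem:9A.3} applied to $T(m_1)$ in the role of $m_1$ and to $m_2$. That lemma already delivers exactly the dichotomy "either $T(m_1) \in \La^{(s')}_k(m_2)\cap\cM^{(s')}_{k,s^{(1)}-1}$, or $|T(m_1)-m_2|$ exceeds $36R^{(s)}$ (respectively $12R^{(s')}$)", with the $\{0,n_0,-n_0\}$ bookkeeping already absorbed into its proof. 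You instead unroll the proof of Lemma~\ref{lem:9A.3}$(1)$ by hand — going back to parts $(4)$, $(4)'$, $(5)$ of Lemma~\ref{lem:9A.2} and the pair structure in \eqref{def:9mpm} — and you correctly flag the resulting bookkeeping as the "main obstacle." That obstacle disappears once you recognize that Lemma~\ref{lem:9A.3}$(1)$, which has already been stated and proved by this point, is precisely the tool designed for it: the conclusion $T(m_1)\in\La^{(s')}_k(m_2)\cap\cM^{(s')}$ immediately gives $T(\La^{(s')}_k(m_1))\cap\La^{(s')}_k(m_2)\neq\emptyset$ (since $m_1\in\La^{(s')}_k(m_1)$), and the equality of $\cM^{(s')}$-traces then follows from the pair structure and the $T$-covariance of the construction. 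So your argument is not wrong, but it duplicates work the paper has already packaged, and the "main obstacle" you anticipate is already solved inside Lemma~\ref{lem:9A.3}$(1)$.
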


\begin{proof}
$(1)$ The proof of this part is completely similar to the proof of part $(1)$ of Lemma~\ref{lem:8mdeltaTm}.

$(2)$ It follows from $(1)$ of the current lemma that $|v(T(m_1),k) - v(0,k)|) \le \delta^{(s'-1)}_0$. Applying part $(1)$ of Lemma~\ref{lem:9A.3} to $T(m_1)$ and $m_2$, one obtains the statement.
\end{proof}

\begin{defi}\label{defi:9.LLL}
Assume $0 < |k - k_{n_0}| < (\delta^{(s+q-1)}_0)^{1/16}$. Using the notation from Definition~\ref{defi:8reflectionchoiceq}, assume that for any $s \le s'\le s+q-1$, condition $(\mathfrak{S}_{s'})$ holds. Let $\mathfrak{L}'$ be the collection of all sets $\La(m) := \La^{(s')}_k(m) \cup T(\La^{(s')}_k(m))$, $1 \le s' \le s+q-1$, $m \in \cM^{(s')}_{k,s+q-1}$. We say that $\La(m_1) \approx \La(m_2)$ if $s_1 = s_2$, and $\La(m_1) \cap \La(m_2) \neq \emptyset$. It follows from part $(3)$ of Lemma~\ref{lem:9A.3} and part $(2)$ of Lemma~\ref{lem:9mdeltaTm} that this is indeed an equivalence relation on $\mathfrak{L}'$. Let $\mathfrak{M}$ be the set of equivalence classes. It follows from part $(2)$ of Lemma~\ref{lem:9mdeltaTm} that each class has at most two elements in it. For each $\mathfrak{m} \in \mathfrak{M}$, set $\La(\mathfrak{m}) = \bigcup_{\La(m_1) \in \mathfrak{m}} \La(m_1)$. Set $\mathfrak{L} = \{\La(\mathfrak{m}) : \mathfrak{m} \in \mathfrak{M}\}$. Let $\La(\mathfrak{m}) \in \mathfrak{L}$, $ \La^{(s')}_k(m^+_j) \cup T(\La^{(s')}_k(m^+_j)) \in \mathfrak{m}$. Set $t(\La(\mathfrak{m})) = s'$. This defines an $\mathbb{N}$-valued function on $\mathfrak{L}$. Set also $p_\mathfrak{m} = \bigcup_{\La^{(s')}_k(m)) \in \mathfrak{m}} \La^{(s')}_k(m)) \cap \cM^{(s')}_{k,s^\one-1}$.
\end{defi}

In the next lemma we use Definition~\ref{defi:5.twolambdas6} from Section~\ref{sec.7}.

\begin{lemma}\label{lem:9.lLL}
Using the notation from Definition~\ref{defi:9.LLL}, assume in addition that condition \eqref{eq:9lambdasetssize} holds. Then,

$(1)$ For any $\La(\mathfrak{m}_j) \in \mathfrak{L}$, $j = 1, 2$, such that $t(\La(\mathfrak{m}_1)) = t(\La(\mathfrak{m}_2))$, $\mathfrak{m}_1 \neq \mathfrak{m}_2$, we have $\dist (\La(\mathfrak{m}_1), \La(\mathfrak{m}_2)) > R^{(t(\La(\mathfrak{m}_1)))}$.

$(2)$ For any $\mathfrak{m}$,
\begin{equation}\label{eq:9Lmsets}
\bigcup_{m \in p_\mathfrak{m}} \bigl( (m + B(2 R^{(t(\La(\mathfrak{m})))})) \bigr) \subset\La(\mathfrak{m}) \subset \bigcup_{m \in p_\mathfrak{m}} \bigl( (m + B(3 R^{(t(\La(\mathfrak{m})))})) \bigr).
\end{equation}

$(3)$ If $\mathfrak{m}_1 \neq \mathfrak{m_2}$, then $\La(\mathfrak{m}_1) \neq \La(\mathfrak{m}_2)$.

$(4)$ The pair $(\mathfrak{L},t)$ is a proper subtraction system.

$(5)$ For any $\mathfrak{m}$, $\La(\mathfrak{m}) = T(\La(\mathfrak{m}))$.
\end{lemma}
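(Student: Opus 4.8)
\textbf{Proof plan for Lemma~\ref{lem:9.lLL}.}
The proof will follow very closely the pattern of Lemma~\ref{lem:8.lLL} and Lemma~\ref{lem:7.lLL}, the two earlier ``proper subtraction system'' lemmas, adapting the arguments to the present ``general case'' setting in which the building blocks are the sets $\La^{(s')}_k(m)$ of Definition~\ref{defi:8reflectionchoiceq} rather than the cubes of Section~\ref{sec.7} or the sets of Section~\ref{sec.8}. First I would fix notation: throughout, $T$ is the reflection $T(n) = -n + n_0$, and for $\La(\mathfrak{m}_j) \in \mathfrak{L}$ we write $\La^{(s')}_k(m_j^\pm) \cup T(\La^{(s')}_k(m_j^\pm)) \in \mathfrak{m}_j$ with $s' = t(\La(\mathfrak{m}_j))$, so $p_{\mathfrak{m}_j}$ is the set of centers of the level-$s'$ sets inside $\La(\mathfrak{m}_j)$.

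For part $(1)$: if $t(\La(\mathfrak{m}_1)) = t(\La(\mathfrak{m}_2)) = s'$ and $\mathfrak{m}_1 \neq \mathfrak{m}_2$, then by the definition of the equivalence classes, for every $m_1 \in p_{\mathfrak{m}_1}$ and $m_2 \in p_{\mathfrak{m}_2}$ both $\La^{(s')}_k(m_1)$ and $T(\La^{(s')}_k(m_1))$ are disjoint from both $\La^{(s')}_k(m_2)$ and $T(\La^{(s')}_k(m_2))$, and moreover their distances exceed $R^{(s')}$; this is exactly what part $(1)'$ of Lemma~\ref{lem:9A.3} gives for the pairs $(\La^{(s')}_k(m_1), \La^{(s')}_k(m_2))$ and $(\La^{(s')}_k(T(m_1)), \La^{(s')}_k(m_2))$ (using $|v(T(m_1),k) - v(0,k)| < 4\delta/3$ from part $(1)$ of Lemma~\ref{lem:9mdeltaTm} to see $T(m_1)$ still lies in the relevant resonance set), together with the analogous bound with $T$ applied to $m_2$. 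Since $\La(\mathfrak{m}_j)$ is the union of the two cubes $\La^{(s')}_k(m_j^\pm) \cup T(\La^{(s')}_k(m_j^\pm))$, the minimal pairwise distance $> R^{(s')}$ survives the union. Part $(2)$ is immediate from \eqref{eq:9lambdasetssize} applied to each $\La^{(s')}_k(m)$ with $m \in p_\mathfrak{m}$ (which gives $m + B(2R^{(s')}) \subset \La^{(s')}_k(m) \subset m + B(3R^{(s')})$), then taking the union over the two centers and their reflections; part $(3)$ then splits into the two cases $t(\La(\mathfrak{m}_1)) = t(\La(\mathfrak{m}_2))$, handled by $(1)$, and $t(\La(\mathfrak{m}_1)) \neq t(\La(\mathfrak{m}_2))$, handled by $(2)$ together with the scale separation $R^{(s'+1)} \gg R^{(s')}$. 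Part $(5)$ is immediate: $T$ is an involution and $\La(\mathfrak{m})$ is by construction $T$-invariant, since it contains $\La^{(s')}_k(m)$ together with $T(\La^{(s')}_k(m))$ for each $m \in p_\mathfrak{m}$.

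The real content is part $(4)$, that $(\mathfrak{L},t)$ is a proper subtraction system in the sense of Definition~\ref{defi:5.twolambdas6}: I need $(i)$ $R_a := \min_{\La' \neq \La'',\, t(\La') = t(\La'') = a} \dist(\La',\La'') > 0$, which follows from parts $(3)$ and $(1)$ with $R_a \ge R^{(a)}$; and $(ii)$ for each $\La(\mathfrak{m})$ with $a = t(\La(\mathfrak{m})) + 1$, a decomposition $\La(\mathfrak{m}) = \bigcup_j \Xi_j$ with $\diam(\Xi_j) < 2^{-a} R_a$ and with the property that if $\La(\mathfrak{m}) \cap \La(\mathfrak{m}') \neq \emptyset$ then each $\Xi_j$ meets $\La(\mathfrak{m}')$. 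Here I take $\Xi(\mathfrak{m})$ to be the ``half'' of $\La(\mathfrak{m})$ near the centers in $p_\mathfrak{m}$ and $T(\Xi(\mathfrak{m}))$ the reflected half, so $\La(\mathfrak{m}) = \Xi(\mathfrak{m}) \cup T(\Xi(\mathfrak{m}))$; by part $(2)$, $\diam(\Xi(\mathfrak{m})) \le 6 R^{(t(\La(\mathfrak{m})))} = 6 R^{(a-1)} < 2^{-a} R^{(a)} \le 2^{-a} R_a$. If $\La(\mathfrak{m}) \cap \La(\mathfrak{m}') \neq \emptyset$, then because $\La(\mathfrak{m}')$ is also $T$-invariant (part $(5)$ applied to $\mathfrak{m}'$) and $\La(\mathfrak{m})$ is $T$-invariant, the intersection forces both $\Xi(\mathfrak{m}) \cap \La(\mathfrak{m}') \neq \emptyset$ and $T(\Xi(\mathfrak{m})) \cap \La(\mathfrak{m}') \neq \emptyset$ — exactly as in the proof of part $(4)$ of Lemma~\ref{lem:8.lLL}. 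The one point that requires a little care, and which I expect to be the main (minor) obstacle, is verifying that the equivalence relation $\approx$ on $\mathfrak{L}'$ really is an equivalence relation and that each class has at most two elements — this is where the ``general case'' differs most from Section~\ref{sec.8}, since now a level-$s'$ set can, a priori, overlap several others; but Definition~\ref{defi:9.LLL} already records that transitivity follows from part $(3)$ of Lemma~\ref{lem:9A.3} and the two-element bound from part $(2)$ of Lemma~\ref{lem:9mdeltaTm}, so in the proof I would simply invoke those two facts and then run the symmetry argument above verbatim.
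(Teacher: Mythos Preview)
Your proposal is correct and follows essentially the same approach as the paper, which simply states that parts $(1)$, $(3)$, $(4)$, $(5)$ go ``word for word'' as in Lemma~\ref{lem:8.lLL} and that part $(2)$ is just the assumed condition~\eqref{eq:9lambdasetssize}. One small quantitative point: for $s' \ge s$ the set $\La^{(s')}_k(m)$ is a two-cube union, so your bound $\diam(\Xi(\mathfrak{m})) \le 6 R^{(t(\La(\mathfrak{m})))}$ should be loosened to something like $6 R^{(s')} + |n_0| \le 18 R^{(s')}$, but this is still comfortably $< 2^{-a} R^{(a)}$ by the super-exponential growth of $R^{(\cdot)}$, so the argument is unaffected.
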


\begin{proof}
Note first of all that part $(2)$ is just condition \eqref{eq:9lambdasetssize}, which we assume in this lemma. The proof of parts $(1)$, $(3)$, $(4)$, $(5)$ goes word for word as the proof of the parts $(1)$, $(3)$, $(4)$, $(5)$ of Lemma~\ref{lem:8.lLL}. The only detail that has to be mentioned regarding $(3)$ is that $T(n_0) = 0$ and $T(B(R)) = n_0 - B(R) = n_0 + B(R)$ for any $R$.
\end{proof}

Using the notation from Definition~\ref{defi:9.LLL}, assume in addition that condition \eqref{eq:9lambdasetssize} holds. For $\ell = 1, 2, \ldots$, set
\begin{equation} \label{eq:9.twolambdas5}
\begin{split}
\mathfrak{B}(n_0,s+q) := B(3 R^{(s+q)}) \cup (n_0 + B(3 R^{(s+q)})), \\
\mathfrak{B}(n_0,s+q,\ell) := \mathfrak{B}(n_0,s+q,\ell-1) \setminus \Bigl( \bigcup_{\mathfrak{m} \in \mathfrak{M} : \La(\mathfrak{m}) \between \mathfrak{B}(n_0,s+q,\ell-1)} \La(\mathfrak{m}) \Bigr).
\end{split}
\end{equation}

\begin{lemma}\label{lem:9setLambdas}
Using the notation from the definition \eqref{eq:9.twolambdas5}, the following statements hold.

$(1)$ There exists $\ell_0 < 2^{s+q}$ such that $\mathfrak{B}(n_0,s+q,\ell) = \mathfrak{B}(n_0,s+q,\ell+1)$ for any $\ell \ge \ell_0$.

$(2)$ For any $\La \in \mathfrak{L}$, either $\La \subset \mathfrak{B}(n_0,s+q,\ell_0)$ or $\La \subset \Bigl( \IZ^\nu \setminus
\mathfrak{B}(n_0,s+q,\ell_0) \Bigr)$.

$(3)$ Set $\La^{(s+q)}_k(0) = \mathfrak{B}(n_0,s+q,\ell_0)$. Then for any $\La^{(s')}(m)$, either $\La^{(s')}(m) \cap \La^{(s+q)}_k(0) = \emptyset$ or $\La^{(s')}(m) \subset \La^{(s+q)}_k(0)$.

$(4)$ $T(\mathfrak{B}(n_0,s+q,\ell)) = \mathfrak{B}(n_0,s+q,\ell)$ for any $\ell$. In particular, $T(\La^{(s+q)}_k(0)) = \La^{(s+q)}_k(0)$.

$(5)$ For any $\ell \ge 1$,
\begin{equation} \label{eq:9.twolambdas5NEW}
\{ n \in \mathfrak{B}(n_0,s+q,\ell-1)) : \dist (n, \IZ^\nu \setminus \mathfrak{B}(n_0,s,\ell-1)) \ge 3 R^{(s+q-1)} \} \subset
\mathfrak{B}(n_0,s+q,\ell) \subset \mathfrak{B}(n_0,s+q,\ell-1)).
\end{equation}
\end{lemma}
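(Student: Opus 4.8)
\textbf{Proof plan for Lemma~\ref{lem:9setLambdas}.} The plan is to follow the exact same template as the proofs of Lemma~\ref{lem:7setLambdas} in Section~\ref{sec.7} and Lemma~\ref{lem:8setLambdas} in Section~\ref{sec.8}, now that all the combinatorial infrastructure is in place. First I would observe that by Lemma~\ref{lem:9.lLL}, the pair $(\mathfrak{L},t)$ is a proper subtraction system in the sense of Definition~\ref{defi:5.twolambdas6}, with $\sup_{\La \in \mathfrak{L}} t(\La) \le s+q-1 < \infty$ since every $\La \in \mathfrak{L}$ has $t(\La) = s'$ for some $1 \le s' \le s+q-1$. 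The sequence $\mathfrak{B}(n_0,s+q,\ell)$ defined in \eqref{eq:9.twolambdas5} is then exactly the sequence $\La_{0,\ell}$ from \eqref{eq:5.twolambdas5NEW} of Definition~\ref{defi:5.twolambdas6}, with $\La_{0,0} = \mathfrak{B}(n_0,s+q) = B(3 R^{(s+q)}) \cup (n_0 + B(3 R^{(s+q)}))$; indeed the condition $\La(\mathfrak{m}) \between \mathfrak{B}(n_0,s+q,\ell-1)$ appearing in \eqref{eq:9.twolambdas5} is precisely $\La(\mathfrak{m}) \cap \mathfrak{B}(n_0,s+q,\ell-1) \neq \emptyset$ together with $\La(\mathfrak{m}) \nsubseteq \mathfrak{B}(n_0,s+q,\ell-1)$, which is what \eqref{eq:5.twolambdas5NEW} subtracts (sets $\La$ with $\La \nsubseteq \La_{0,\ell-1}$ that actually intersect). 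Here we also invoke the standing assumption \eqref{eq:9lambdasetssize} so that Lemma~\ref{lem:9.lLL} is applicable.

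With that identification, parts $(1)$ and $(2)$ follow immediately from Lemma~\ref{lem:5.twolambdas4}: part $(2)$ of that lemma gives the existence of $\ell_0 < 2^s$ with $\La_{0,\ell} = \La_{0,\ell_0}$ for $\ell \ge \ell_0$ — I would just note the crude bound $\ell_0 < 2^{s+q}$ claimed here, using $s+q-1$ in place of $s$ in Lemma~\ref{lem:5.twolambdas4}$(2)$ — and part $(3)$ of that lemma gives the dichotomy in part $(2)$ of the present lemma for every $\La \in \mathfrak{L}$. For part $(3)$ of the present lemma, I would argue exactly as in the proof of Lemma~\ref{lem:8setLambdas}$(3)$: given $\La^{(s')}_k(m)$ with $\La^{(s')}_k(m) \cap \La^{(s+q)}_k(0) \neq \emptyset$, let $\mathfrak{m}$ be the equivalence class of $\La^{(s')}_k(m) \cup T(\La^{(s')}_k(m))$ in the sense of Definition~\ref{defi:9.LLL}, so $\La^{(s')}_k(m) \subset \La(\mathfrak{m})$; then $\La(\mathfrak{m}) \cap \La^{(s+q)}_k(0) \neq \emptyset$, hence by part $(2)$ we get $\La(\mathfrak{m}) \subset \La^{(s+q)}_k(0)$, whence $\La^{(s')}_k(m) \subset \La^{(s+q)}_k(0)$.

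For part $(4)$, I would start from $T(\mathfrak{B}(n_0,s+q,0)) = \mathfrak{B}(n_0,s+q,0)$, which holds because $T(B(3R^{(s+q)})) = n_0 + B(3R^{(s+q)})$ and $T(n_0 + B(3R^{(s+q)})) = B(3R^{(s+q)})$; then combine this with part $(5)$ of Lemma~\ref{lem:9.lLL} (every $\La(\mathfrak{m})$ is $T$-invariant) to propagate $T$-invariance through the recursion \eqref{eq:9.twolambdas5} by induction on $\ell$, exactly as in the proof of Lemma~\ref{lem:8setLambdas}$(4)$. For part $(5)$, I would use \eqref{eq:9Lmsets} from Lemma~\ref{lem:9.lLL} (combined with $T$-invariance of $\mathfrak{B}(n_0,s+q,\ell-1)$ from part $(4)$) to see that each set $\La(\mathfrak{m})$ removed at step $\ell$ has $\diam(\La(\mathfrak{m})) \le 6 R^{(s+q-1)}$ — more precisely each $\La(\mathfrak{m})$ is contained in a union of balls of radius $3R^{(s+q-1)}$ around points of $p_\mathfrak{m}$, and those points lie in $\La(\mathfrak{m})$ — so removing them can only shrink the set within distance $3R^{(s+q-1)}$ of the previous boundary; this gives the two inclusions in \eqref{eq:9.twolambdas5NEW}. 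The statement $B(2R^{(s+q)}) \cup (n_0 + B(2R^{(s+q)})) \subset \La^{(s+q)}_k(0) \subset B(3R^{(s+q)}) \cup (n_0 + B(3R^{(s+q)}))$ — needed to close the induction on \eqref{eq:9lambdasetssize} — would then follow by iterating \eqref{eq:9.twolambdas5NEW} $\ell_0 < 2^{s+q}$ times and using $2^{s+q} \cdot 3 R^{(s+q-1)} < R^{(s+q)}$, which holds by the rapid growth of the scales $R^{(u)}$ in \eqref{eq:A.1A}. The main obstacle is purely bookkeeping rather than conceptual: one must be careful that the equivalence relation $\approx$ of Definition~\ref{defi:9.LLL} genuinely has classes of size $\le 2$ and that $(\mathfrak{L},t)$ is proper — but both of these are already supplied by Lemma~\ref{lem:9mdeltaTm}$(2)$ and Lemma~\ref{lem:9.lLL}, so the proof is a direct transcription of the Section~\ref{sec.8} argument with $T(n) = -n + n_0$ in the role it already played there.
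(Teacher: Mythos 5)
Your proposal is correct and takes the same approach as the paper: the paper's proof of this lemma is the one-line observation that it ``goes word for word as the corresponding proof of parts $(1)$--$(5)$ of Lemma~\ref{lem:8setLambdas}'', and that is exactly the template you reproduce (proper subtraction system via Lemma~\ref{lem:9.lLL}, then Lemma~\ref{lem:5.twolambdas4} for $(1)$ and $(2)$, the equivalence-class argument for $(3)$, $T$-invariance propagated through the recursion for $(4)$, and the ball-containment estimate \eqref{eq:9Lmsets} combined with $T$-invariance for $(5)$). One small imprecision: the flat claim $\diam(\La(\mathfrak m))\le 6R^{(s+q-1)}$ is not literally true since $p_{\mathfrak m}$ can contain points separated by $|n_0|$, which can be as large as $12R^{(s)}$; what the argument actually needs (and what you correctly state a sentence later, together with the $T$-invariance of $\mathfrak{B}(n_0,s+q,\ell-1)$ from part $(4)$) is the covering by balls of radius $3R^{(s+q-1)}$ from \eqref{eq:9Lmsets}, so this is a phrasing issue rather than a gap.
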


\begin{proof}
The proof of parts $(1)$--$(5)$ goes word for word as the corresponding proof of parts $(1)$--$(5)$ of Lemma~\ref{lem:8setLambdas}.
\end{proof}

\begin{lemma}\label{lem:9conditionLambda}
Assume $k \in \cR^{(s,s+q-1)}(\omega,n_0)$, $q \ge 1$. Assume also that $0 < |k - k_{n_0}| < (\delta^{(s+q-1)}_0)^{1/16}$.

$(1)$ Definition~\ref{defi:8reflectionchoiceq} and  definition \eqref{eq:9.twolambdas5} inductively define the sets $\La^{(s')}_k(0)$ for $s' = s, \dots, s+q$, so that condition $(\mathfrak{S}_{s'})$ holds for any $s \le s' \le s+q-1$ and condition \eqref{eq:9lambdasetssize} holds.

$(2)$  $\La^{(s_1)}_k(m_1) \cap \La^{(s_2)}_k(m_2) = \emptyset$ for any $m_i \in \cM^{(s_i)}_{k,s+q-1}$, $i = 1, 2$, unless $s_1 = s_2$, $\La^{(s_1)}_k(m_1) = \La^{(s_2)}_k(m_2)$.
\end{lemma}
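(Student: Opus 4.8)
\textbf{Proof plan for Lemma~\ref{lem:9conditionLambda}.}
The statement splits into two parts, and part~$(1)$ is the substantive one: it asserts that the inductive construction in Definition~\ref{defi:8reflectionchoiceq} together with the ``subtraction'' procedure \eqref{eq:9.twolambdas5} is well-posed, producing sets $\La^{(s')}_k(0)$ for $s' = s,\dots,s+q$ that obey the separation property $(\mathfrak{S}_{s'})$ and the sandwich bounds \eqref{eq:9lambdasetssize}. The plan is to run an induction on $q$ (equivalently, on the scale $s' = s, s+1, \dots, s+q$), exactly mirroring the proof of Lemma~\ref{lem:A.3} in Section~\ref{sec.7} and the proof of the analogous facts in Section~\ref{sec.8}, but keeping track of the reflection $T(n) = -n+n_0$ and of the dichotomy between the ``$k$ close to $k_{n_0}$'' regime (part $(\mathbb{A})$ of Definition~\ref{defi:8reflectionchoiceq}, where the sets are $T$-symmetric unions of two cubes) and the ``$k$ away from $k_{n_0}$'' regime (part $(\mathbb{B})$, where the construction collapses to the single-cube construction of \eqref{eq:A.1}). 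The base case $q = 1$ (scale $s' = s$) is already supplied: for $|k - k_{n_0}| \le (\delta^{(s-1)})^{3/4}$ the set $\La^{(s,\mathbf 1)}_k(0)$ and its properties come from Lemma~\ref{lem:8setLambdas} and Proposition~\ref{prop:8.1}, while for $|k - k_{n_0}| > (\delta^{(s-1)})^{7/8}$ they come from part $(\mathbb B)$ and Proposition~\ref{prop:A.3} (with the overlapping range $(\delta^{(s-1)})^{7/8} < |k-k_{n_0}| \le (\delta^{(s-1)})^{3/4}$ handled by part $(3)$ of Lemma~\ref{lem:8A.3} and part $(5)$ of Proposition~\ref{prop:8.1}); the lower sandwich inclusion at scale $s$ is \eqref{eq:9lambdasetssize}, already recorded.

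For the inductive step, suppose $\La^{(s'')}_k(0)$ has been defined and satisfies $(\mathfrak{S}_{s''})$ and \eqref{eq:9lambdasetssize} for all $s \le s'' \le s' - 1$, where $s' \le s+q$. First I would verify that all the sets $\La^{(s'')}_k(m)$, $m \in \cM^{(s'')}_{k,s+q-1}$, with $s'' < s'$, entering the construction of $\La^{(s')}_k(0)$ are legitimately defined: this uses parts $(1)$--$(3)$ of Lemma~\ref{lem:9A.2} to check that the shifted quasi-momenta $k + m\omega$ lie in the appropriate complements of $\bigcup (k^-_{m',\cdot}, k^+_{m',\cdot})$, so that the already-constructed sets apply after translation (via Lemma~\ref{lem:basicshiftprop}), distinguishing cases $(\mathfrak a)$ and $(\mathfrak b)$ which determine whether $\La^{(s'')}_k(m) = m + \La^{(s'',\mathbf 1)}_{k+m\omega}(0)$ is centered ``near $0$'' or ``near $n_0$''. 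Next, the key combinatorial input is Lemma~\ref{lem:9A.3}: part~$(1')$ gives the $5R^{(s'')}$-separation of the translates at each fixed scale, and part~$(3)$ gives the $(\alpha)$/$(\beta)$ dichotomy (``either $m_1$ is swallowed by a higher-scale set, or it is a $\cM$-point whose set is disjoint from all others''), whose proof is structured so that $(\mathfrak{S}_{s''})$ for $s'' < s'$ suffices. With these in hand, one checks that the collection $\mathfrak L$ of $T$-symmetrized sets $\La(\mathfrak m) = \La^{(s'')}_k(m) \cup T(\La^{(s'')}_k(m))$ together with the scale function $t$ forms a proper subtraction system in the sense of Definition~\ref{defi:5.twolambdas6} — this is Lemma~\ref{lem:9.lLL}, whose hypotheses are exactly $(\mathfrak{S}_{s''})_{s''<s'}$ and the already-known \eqref{eq:9lambdasetssize} at lower scales, and Lemma~\ref{lem:9mdeltaTm} (the ``$T(m)$ is also resonant'' estimates) — and then Lemma~\ref{lem:5.twolambdas4} guarantees the subtraction process \eqref{eq:9.twolambdas5} terminates at some $\ell_0 < 2^{s'}$, yielding $\La^{(s')}_k(0)$ with property $(\mathfrak{S}_{s'})$ (this is parts $(2)$--$(3)$ of Lemma~\ref{lem:9setLambdas}). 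The sandwich \eqref{eq:9lambdasetssize} at scale $s'$ then follows from part~$(5)$ of Lemma~\ref{lem:9setLambdas} together with $\ell_0 < 2^{s'}$ and $\diam(\La(\mathfrak m)) \le 6 R^{(s'-1)} \ll R^{(s')}$, plus the $T$-invariance from part~$(4)$. This closes the induction and establishes part~$(1)$; part~$(2)$ is then immediate, since $(\mathfrak{S}_{s'})$ for all $s' \le s+q-1$ plus part~$(1')$ of Lemma~\ref{lem:9A.3} force any two $\La^{(s_1)}_k(m_1)$, $\La^{(s_2)}_k(m_2)$ to be either equal (only possible when $s_1 = s_2$ and $m_1, m_2$ are a $T$-paired principal pair, by \eqref{def:9mpm}) or disjoint.

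The main obstacle, and the place where genuine care is needed rather than transcription, is the bookkeeping around the two regimes of $k$ relative to $k_{n_0}$ and the precise thresholds \eqref{eq:9kintcond}, \eqref{eq:9koutcond}, \eqref{eq:9kintcondCOND}: one must make sure that at each scale $s''$, the perturbative widening $4\sum_{s-1 \le t \le s''}(\delta^{(t)})^{15/16}$ of the ``inside'' and ``outside'' windows still leaves $k$ in the domain where the lower-scale sets were constructed, and that the extra hypothesis ``$v(m_1,k)$ lies on the same side of $v(0,k_{n_0})$ as $v(0,k)$'' in part~$(3)$ of Lemma~\ref{lem:9A.3} is automatically satisfied by the $\cM^{(\cdot,\pm)}$ membership conditions of Definition~\ref{defi:8reflectionchoiceq}. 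In other words, the geometric/combinatorial core is entirely supplied by the Section~\ref{sec.7}--\ref{sec.8} machinery applied verbatim; the work is in confirming that the resonance-set estimates of Lemma~\ref{lem:9A.2} (which already account for the $n_0$-shift and the $(\mathfrak a)$/$(\mathfrak b)$ splitting) propagate cleanly through the scales without the error terms overwhelming the gaps $\sigma(n_0) = 64(\delta^{(s-1)})^{1/6}$ and $(\delta^{(s-1)})^{1/16}$. I expect no surprises beyond this, since $q$ is bounded (the construction will terminate once $\tau^{\zero}$-type conditions fail, as in the surrounding propositions), so the accumulated errors $\sum_t (\delta^{(t)})^{15/16}$ remain a small fraction of the leading thresholds.
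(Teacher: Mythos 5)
Your proposal follows essentially the same route as the paper: base case $(\mathfrak{S}_s)$ from part $(3)$ of Lemma~\ref{lem:8setLambdas}, then an induction on the scale $s' = s+1,\dots,s+q$ in which part $(3)$ of Lemma~\ref{lem:9A.3} is unlocked by the already-established $(\mathfrak{S}_{s''})_{s''<s'}$, Definition~\ref{defi:9.LLL} and Lemmas~\ref{lem:9.lLL}, \ref{lem:9setLambdas} then supply the proper subtraction system, terminate the iteration \eqref{eq:9.twolambdas5}, and yield $(\mathfrak{S}_{s'})$ and \eqref{eq:9lambdasetssize} at the new scale; part $(2)$ is read off from $(\mathfrak{S}_{s'})$ plus part $(3)$ of Lemma~\ref{lem:9A.3}. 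The paper's proof is considerably terser, and some of your discussion of the $(\mathbb{A})/(\mathbb{B})$ dichotomy is not needed here since the hypothesis $0 < |k-k_{n_0}| < (\delta^{(s+q-1)}_0)^{1/16}$ keeps $k$ in the interior regime at every scale under consideration, but the skeleton of the argument and the lemmas invoked match.
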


\begin{proof}
$(1)$ For $q = 1$, the condition $(\mathfrak{S}_{s})$ holds due to part $(3)$ in Lemma~\ref{lem:8setLambdas}. Therefore, part $(3)$ of Lemma~\ref{lem:9A.3} applies with $q = 1$. Furthermore, Definition~\ref{defi:9.LLL} applies and Lemma~\ref{lem:9.lLL} applies. This defines $\La^{(s+1)}_k(0)$ via \eqref{eq:9.twolambdas5}, and Lemma~\ref{lem:9setLambdas} applies. Due to part $(3)$ in Lemma~\ref{lem:9setLambdas}, the condition $(\mathfrak{S}_{s+1})$ holds. These arguments define  $\La^{(s+q')}_k(0))$, $q' = 1, \dots, q$. So, part $(1)$ of the current lemma holds.

$(2)$ Since condition $(\mathfrak{S}_{s'})$ holds for $s' \ge s$, part $(2)$ follows from part $(3)$ of Lemma~\ref{lem:9A.3}.
\end{proof}

\begin{defi}\label{defi:9.LLLIII}
Using the notation from Definition~\ref{defi:8reflectionchoiceq}, assume that $(\delta^{(s+q')}_0)^{1/16} \le |k - k_{n_0}| < (\delta^{(s+q'-1)}_0)^{1/16}$ for some $q' \le q$. Define $\La^{(s')}_k(0)$ for $s \le s' \le s+q'$ via Lemma~\ref{lem:9setLambdas}. If $q' < q$, define $\La^{(s')}_k(m)$ for $s+q' < s' \le s+q-1$, and $\La^{(s')}_k(0)$ for $s+q' < s' \le s+q$, inductively as in \eqref{eq:A.1}, that is, by setting
\begin{equation}\label{eq:9A.1LLLL}
\begin{split}
\La^{(s')}_k(0) = B(3 R^{(s')}) \setminus \Bigl( \bigcup_{r \le s'-1} \bigcup_{m' \in \cM^{(r)}_{k,s'-1} : \La^{\ar}_{k}(m') \between B(3 R^{(s')}))} \La^\ar_{k}(m') \Bigr), \\
\La^{(s')}_k(m) = m + \La^{(s')}_{k + m \omega} (0).
\end{split}
\end{equation}
\end{defi}

\begin{lemma}\label{lem:9conditionLambdaAAA}
Assume $k \in \cR^{(s,s+q-1)}(\omega,n_0)$, $q \ge 1$. Assume also that $0 < |k - k_{n_0}| < 2 \sigma(n_0)$.

$(1)$ Definition~\ref{defi:8reflectionchoiceq}, Lemma~\ref{lem:9setLambdas} and Definition~\ref{defi:9.LLLIII} inductively define the sets $\La^{(s')}_k(0)$ for $s' = s, \dots, s+q$ so that condition $(\mathfrak{S}_{s'})$ holds for any $s \le s' \le s+q-1$. If $|k - k_{n_0}| < (\delta^{(r)}_0)^{1/16}$, \eqref{eq:9lambdasetssize} holds. If $|k - k_{n_0}| \ge (\delta^{(r)}_0)^{1/16}$, then
\begin{equation}\label{eq:9lambdasetssizeAG}
B(2 R^\ar) \subset \La^{(r)}_k(0) \subset B(3 R^\ar).
\end{equation}

$(2)$ $\La^{(s_1)}_k(m_1)) \cap \La^{(s_2)}_k(m_2) = \emptyset$ for any $m_i \in \cM^{(s_i)}_{k,s+q-1}$, $i = 1, 2$, unless $s_1 = s_2$, $\La^{(s_1)}_k(m_1) = \La^{(s_2)}_k(m_2)$.
\end{lemma}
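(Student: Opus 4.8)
\textbf{Plan for the proof of Lemma~\ref{lem:9conditionLambdaAAA}.} The statement is a routine ``bookkeeping'' upgrade of Lemma~\ref{lem:9conditionLambda} that handles the full range $0 < |k - k_{n_0}| < 2\sigma(n_0)$ by splitting it into the dyadic layers on which $|k-k_{n_0}|$ lives. First I would fix $q' \le q$ with $(\delta^{(s+q')}_0)^{1/16} \le |k - k_{n_0}| < (\delta^{(s+q'-1)}_0)^{1/16}$ (with the convention that $q'=0$ covers $(\delta^{(s)}_0)^{1/16}\le |k-k_{n_0}|<2\sigma(n_0)$, using that $2\sigma(n_0)=64(\delta^\esone_0)^{1/6}<(\delta^{(s-1)}_0)^{1/16}$ by \eqref{eq:A.1A}). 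On the layer $0 < |k-k_{n_0}| < (\delta^{(s+q')}_0)^{1/16}$, which for $q'\ge 1$ means $|k-k_{n_0}|$ is below the threshold used in Lemma~\ref{lem:9conditionLambda} applied with $q'$ in the role of $q$, I invoke that lemma directly: it inductively builds $\La^{(s')}_k(0)$ for $s\le s'\le s+q'$ via Definition~\ref{defi:8reflectionchoiceq} and \eqref{eq:9.twolambdas5}, establishing conditions $(\mathfrak{S}_{s'})$ for $s\le s'\le s+q'-1$ and \eqref{eq:9lambdasetssize} in that range. For the remaining scales $s+q' < s' \le s+q$ I appeal to Definition~\ref{defi:9.LLLIII}, which extends the construction via the ``ordinary'' recursion \eqref{eq:9A.1LLLL}; here the relevant bound is \eqref{eq:9lambdasetssizeAG}, coming from part~$(c)$ of Remark~\ref{rem:7.1oinout} (i.e.\ $B(2R^\ar)\subset\La^{(r)}_k(0)\subset B(3R^\ar)$), exactly as in \eqref{eq:A.1}.

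\textbf{Verifying $(\mathfrak{S}_{s'})$ on the mixed range.} The substantive point in part $(1)$ is that condition $(\mathfrak{S}_{s'})$ continues to hold for the scales $s'$ at which we switch from the ``reflected'' construction to the ordinary one. For $s' \le s+q'-1$ this is supplied by Lemma~\ref{lem:9conditionLambda}(1). For $s' = s+q'$ itself, the set $\La^{(s+q')}_k(0)=\mathfrak{B}(n_0,s+q',\ell_0)$ is produced by Lemma~\ref{lem:9setLambdas}, and part~$(3)$ of that lemma gives precisely the dichotomy $\La^{(s_1)}(m_1)\subset\La^{(s+q')}_k(0)$ or $\La^{(s_1)}(m_1)\cap\La^{(s+q')}_k(0)=\emptyset$, which is $(\mathfrak{S}_{s+q'})$. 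For $s+q' < s' \le s+q-1$ the sets are built by \eqref{eq:9A.1LLLL} and the argument is identical to the proof of part~$(4)$ of Lemma~\ref{lem:A.3}: one uses part~$(1')$ of Lemma~\ref{lem:9A.3} (the $5R^{(s')}$-separation of distinct $\La^{(s')}$-sets) together with the inductive validity of $(\mathfrak{S}_{s'-1})$, so the verification goes word for word as in Lemma~\ref{lem:A.3}. I should also check compatibility at the interface: that $\La^{(s+q')}_k(0)$ as produced by Lemma~\ref{lem:9setLambdas} satisfies $B(2R^{(s+q')})\subset\La^{(s+q')}_k(0)\subset B(3R^{(s+q')})$ so that $\La^{(s+q'+1)}_k(0)$ defined by \eqref{eq:9A.1LLLL} is non-degenerate; this is part~$(5)$ of Lemma~\ref{lem:9setLambdas}, since $\ell_0<2^{s+q'}$ and the iterated removals \eqref{eq:9.twolambdas5NEW} strip off only a $2^{s+q'}\cdot 3R^{(s+q'-1)}<R^{(s+q')}$-wide collar.

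\textbf{The disjointness statement $(2)$.} Once the sets and conditions $(\mathfrak{S}_{s'})$ are in hand, part $(2)$ is immediate from part~$(3)$ of Lemma~\ref{lem:9A.3}: that lemma's dichotomy $(\alpha)$/$(\beta)$, valid because all the $(\mathfrak{S}_{s'})$ hold for $s'\ge s$, rules out partial overlaps, and combined with part~$(1')$ of Lemma~\ref{lem:9A.3} (distinct same-scale $\La$-sets are $5R^{(s')}$-separated, hence disjoint) and the nesting $\La^{(r)}_k(0)\subset\La^{(r')}_k(0)$ for $r<r'$, one concludes that $\La^{(s_1)}_k(m_1)\cap\La^{(s_2)}_k(m_2)=\emptyset$ unless $s_1=s_2$ and the two sets coincide. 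This mirrors part~$(2)$ of Lemma~\ref{lem:9conditionLambda} verbatim.

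\textbf{Expected main obstacle.} The only genuine difficulty is the interface bookkeeping: one must check that the two constructions (reflected, for $s'\le s+q'$; ordinary, for $s'>s+q'$) glue consistently at the scale $s+q'$ — i.e.\ that the sets $\cM^{(s')}_{k,s+q-1}$ and $\La^{(s')}_k(m)$ referenced in Definition~\ref{defi:9.LLLIII} are exactly the ones produced at the previous step, and that the reflection map $T(n)=-n+n_0$, which governs the symmetry on the layers $|k-k_{n_0}|<(\delta^{(s+q'-1)}_0)^{1/16}$, simply drops out of the argument once $|k-k_{n_0}|\ge(\delta^{(s+q')}_0)^{1/16}$ (because on that layer the resonance at $n_0$ is ``resolved at scale $s+q'$'' and no longer creates a near-doubled eigenvalue, cf.\ the role of the threshold $(\delta^\esone)^{7/8}$ in Lemma~\ref{lem:8A.3}(3) and Proposition~\ref{prop:8.1}(5)). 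Everything else is a transcription of the already-proved Lemmas~\ref{lem:A.3}, \ref{lem:9A.3}, \ref{lem:9setLambdas} and \ref{lem:9conditionLambda}, so I would keep the write-up short and refer to those proofs rather than reproduce them.
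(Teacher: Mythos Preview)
Your approach is essentially identical to the paper's: both fix the dyadic layer $q'$ (the paper calls it $q_1$) in which $|k-k_{n_0}|$ lies, invoke Lemma~\ref{lem:9conditionLambda} with $q'$ in the role of $q$ to obtain $(\mathfrak{S}_{s'})$ for $s\le s'\le s+q'$ and the sets up through that scale, and then verify $(\mathfrak{S}_{s'})$ for $s+q'<s'\le s+q-1$ by induction using the ordinary construction \eqref{eq:9A.1LLLL} together with part~$(3)$ of Lemma~\ref{lem:9A.3}, exactly as in the proof of part~$(4)$ of Lemma~\ref{lem:A.3}. Your write-up is in fact more explicit than the paper's about the interface checks (the size bound from Lemma~\ref{lem:9setLambdas}(5) and the role of Lemma~\ref{lem:9setLambdas}(3) at scale $s+q'$), but the argument is the same.
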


\begin{proof}
To prove both statements, we only need to verify condition $(\mathfrak{S}_{s'})$. Due to Lemma~\ref{lem:9conditionLambda}, this condition holds if $|k - k_{n_0}| < (\delta^{(s'-1)}_0)^{1/16}$. Assume $(\delta^{(s+q_1)}_0)^{1/16} \le |k - k_{n_0}| < (\delta^{(s+q_1-1)}_0)^{1/16}$ for some $q_1 < q$. The verification goes by induction, starting with $s' = s + q_1$. Assume that condition $(\mathfrak{S}_{s'})$ holds for any $s+q_1 \le s' \le q-1$. Then, part $(3)$ of Lemma~\ref{lem:9A.3} applies. Since $\La^{(s+q)}_k(0)$ is defined via \eqref{eq:9A.1LLLL}, the verification of condition $(\mathfrak{S}_{s+q})$ goes the same way as in the proof of part $(4)$ of Lemma~\ref{lem:A.3}.
\end{proof}

\begin{remark}\label{rem:9.1abcasesQ1AG}
In Lemma~\ref{lem:9conditionLambdaAAA}, we assume that $k \in \IR \setminus \bigcup_{0 < |m'| \le 12 R^{(s+q-1)}, \; m' \neq n_0} (k_{m',s+q-1}^-, k_{m',s+q-1}^+)$, instead of $k \in \IR \setminus \bigcup_{0 < |m'| \le 12 R^{(s+q)}, \; m' \neq n_0} (k_{m',s+q-1}^-, k_{m',s+q-1}^+)$.
\end{remark}

\begin{prop}\label{prop:9.1}
\begin{itemize}

\item[(I)]
Set
\begin{equation}\label{eq:9kintcondNN}
\mathcal{I}(s,q) := \{ k' : |k' - k_{n_0}| < (\delta^\esone_0)^{3/4} - \sum_{s-1 \le t \le s+q} (\delta^{(t)}_0)^{31/32} \}.
\end{equation}

Assume that $k \in \cR^{(s,s+q)}(\omega,n_0) \cap \mathcal{I}(s,q) $. Let $\ve_0$, $\ve_{s}$ be as in Definition~\ref{def:4-1}. Let $\ve \in (-\ve_{s},\ve_{s})$.
\begin{itemize}

\item[(1)] If $|k| > |k_{n_0}|$, then for any $k' \in \mathcal{I}(s,q)$ with $|k'| > |k_{n_0}|$, $|k'-k| < \delta_0^{(s+q-1)}$, one has $H_{\La^{(s+q)}_k(0),\ve,k'} \in OPR^{(s,s+q)} \bigl( 0,n_0, \La^{(s+q)}_k(0); \delta_0, \tau^\zero \bigr)$,  $\tau^\zero = \tau^\zero(k') = |k_{n_0}| | |k'| - |k_{n_0}| |$. If $|k| < |k_{n_0}|$, then for any $k' \in \mathcal{I}(s,q)$ with $|k'| < |k_{n_0}|$, $|k' - k| < \delta_0^{(s+q-1)}$, one has $H_{\La^{(s+q)}_k(0),\ve,k'}\in OPR^{(s,s+q)}\bigl(n_0,0, \La^{(s+q)}_k(0); \delta_0,\tau^\zero\bigr)$.

\item[(2)] Let $E^{(s+q,\pm)}(\La^{(s+q)}_k(0); \ve,k')$ be the functions defined in Proposition~\ref{rem:con1smalldenomnn} with $H_{\La^{(s+q)}_k(0),\ve,k'}$ in the role of $\hle$. Assume that $k_{n_0} > 0$. Then, with $k^\zero:=\min(\ve_0^{3/4}, k_{n_0}/512)$, one has
\begin{equation}\label{eq:9Ekderivatives}
\begin{split}
\partial_\theta E^{(s+q,+)}(0, \La^{(s+q,\mathbf{1})}_k(0); \ve, k_{n_0} + \theta) > (k^\zero)^2 \theta, \quad \theta > 0, \\
\partial_\theta E^{(s+q,-)}(0, \La^{(s+q,\mathbf{1})}_k(0); \ve, k_{n_0} + \theta) < -(k^\zero)^2 \theta, \quad \theta > 0,
\end{split}
\end{equation}
\begin{equation}\label{eq:9Esymmetry}
E^{(s+q,\pm)}(0, \La^{(s+q,\mathbf{1})}_k(0); \ve, k_{n_0} + \theta) = E^{(s+q,\pm)}(n_0, \La^{(s+q,\mathbf{1})}_k(0); \ve, k_{n_0} - \theta), \quad \theta > 0,
\end{equation}
\begin{equation}\label{eq:9Efirstder}
|\partial_\theta E^{(s+q,\pm)}(0, \La^{(s+q)}_k(0); \ve, k_{n_0} + \theta)| \le 2,
\end{equation}
\begin{equation}\label{eq:9kk1La}
|E^{(s+q,\pm)}(0, \La^{(s+q)}_k(0);\ve,k_1) - E^{(s+q,\pm)}(0, \La^{(s+q-1)}_{k_1}(0);\ve,k_1)| \le |\ve| (\delta^{(s+q)}_0)^5.
\end{equation}

If $0 < |k - k_{n_0}| < (\delta^{(s+q-1)}_0)^{1/16}/2$,
\begin{equation}\label{eq:9Esymmetry-2}
E^{(s+q,\pm)}(0, \La^{(s+q)}_k(0); \ve, k_{n_0} + \theta) = E^{(s+q,\pm)}(n_0, \La^{(s+q)}_k(0); \ve, k_{n_0} - \theta), \quad 0 < \theta < (\delta^{(s+q-1)}_0)^{1/16}/2.
\end{equation}

\end{itemize}

\item[(II)] Assume that $k \in \cR^{(s,s+q)}(\omega,n_0)$ and
\begin{equation}\label{eq:9koutcondState}
(\delta^\esone_0)^{7/8} + \sum_{s-1 \le t \le s+q-1} (\delta^{(t)}_0)^{31/32} < |k - k_{n_0}| < 2 \sigma(n_0).
\end{equation}
Then, $H_{\La^{(s+q,\mathbf{1})}_k(0), \varepsilon, k} \in \cN^{(s+q)} \bigl( 0, \La^{(s+q,\mathbf{1})}_k(0); \delta_0 \bigr)$. Furthermore,
$$
E^{(s+q)}(0, \La^{(s+q,\mathbf{1})}_k(0); \ve, k) = \begin{cases} E^{(s+q,+)}(0, \La^{(s+q,\mathbf{1})}_k(0);\ve,k) & \text{if $k > k_{n_0}$}, \\ E^{(s+q,-)}(0, \La^{(s+q,\mathbf{1})}_k(0); \ve, k) & \text{if $k < k_{n_0}$}. \end{cases}
$$

\end{itemize}
\end{prop}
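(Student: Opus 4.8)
\textbf{Proof plan for Proposition~\ref{prop:9.1}.}
The plan is to build Proposition~\ref{prop:9.1} as the ``general $q$'' analogue of Proposition~\ref{prop:8.1} and Proposition~\ref{prop:8.10}, using the combinatorial machinery of Lemmas~\ref{lem:9A.2}--\ref{lem:9conditionLambdaAAA} together with the abstract ordered-pair theory from Section~\ref{sec.5} (Definition~\ref{def:7-6}, Proposition~\ref{rem:con1smalldenomnn}) and the differentiation estimates (Lemma~\ref{lem:6differentiation}). The proof goes by induction on $q$, the base case $q=0$ being precisely Proposition~\ref{prop:8.1} together with Proposition~\ref{prop:8.10}. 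For the inductive step I would first fix $k$ and $k'$ as in part~(I)(1), invoke Lemma~\ref{lem:9conditionLambda} to produce the sets $\La^{(s')}_k(0)$, $s\le s'\le s+q$, with the symmetry $T(\La^{(s+q)}_k(0))=\La^{(s+q)}_k(0)$ (Lemma~\ref{lem:9setLambdas}(4)) and with all the non-resonance/containment conditions $(\mathfrak S_{s'})$ in force. Then the key point is to verify that $H_{\La^{(s+q)}_k(0),\ve,k'}$ satisfies all the conditions (i)--(vii) of Definition~\ref{def:7-6} so that it lies in the class $OPR^{(s,s+q)}$. Conditions (i)--(iv) are structural and follow from the construction of the $\La^{(s')}_k(m)$ together with Remark~\ref{rem:9.1inout}; condition (v), the separation of the eigenvalues $E^{(s',\pm)}$ at the various scales, is exactly the content of the estimates \eqref{eq:9lambdasetscC} combined with part~(6) and part~(5) of Proposition~\ref{prop:A.3} applied (after the shift $k\mapsto k+m\omega$, via Lemma~\ref{lem:basicshiftprop}(4)) at the relevant submatrices; condition (vi) is Remark~\ref{rem:9.1inout}(6); and condition (vii), the crucial positivity inequality \eqref{eq:5-13OPQ}, is obtained by differentiating in $\theta$ exactly as in the derivation of \eqref{eq:8hHinvestimatestatement1kvard2}, \eqref{eq:8hHinvestimatestatement1kvard2AGU}, \eqref{eq:8hHinvestimatestatsmallk2} in Proposition~\ref{prop:8.1}, using Lemma~\ref{lem:6differentiation} to control $\partial_\theta Q^{(s+q)}$ and $\partial_\theta G^{(s+q)}$, now with the extra term $\partial_\theta Q^{(s+q)}-\partial_\theta Q^{(s+q-1)}$ controlled by the last line of \eqref{eq:5-12acACCOPQ}. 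The sign of $\partial_\theta a_1$ is driven by $\partial_\theta v(0,k_{n_0}+\theta)=2\lambda^{-1}(k_{n_0}+\theta)$, which is bounded below by $k^\zero$ precisely as in the three cases ($k_{n_0}>1$, $1\ge k_{n_0}>256(\dots)$, $k_{n_0}$ tiny) of Proposition~\ref{prop:8.1}; the small-$k_{n_0}$ case again requires passing through the lower-scale $Q^{(r)}$ and telescoping the differences via the second estimate in \eqref{eq:8hHinvestimatestatement1kvard} applied to the $GSR$/$OPR$-hierarchy differences.

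For part~(I)(2), once $H_{\La^{(s+q)}_k(0),\ve,k'}\in OPR^{(s,s+q)}$, the functions $E^{(s+q,\pm)}$ are supplied by Proposition~\ref{rem:con1smalldenomnn}(5), and I would deduce \eqref{eq:9Ekderivatives} by invoking Lemma~\ref{lem:4zetatheta} with $\ell\ge 2$ (since at level $q\ge1$ the relevant continued-fraction-function lies in $\mathfrak F^{(\ell)}$ with $\ell\ge 2$): one checks the hypotheses \eqref{eq:4a-thetaest1} of part~(9) of Lemma~\ref{4.fcontinuedfrac}, namely $|\partial_\theta\chi^{(f_i)}|>(\tau^{(f_i)})^4$ and opposite signs, where $f_1,f_2$ are the $Q^{(s+q)}$-functions for the two principal points; the opposite sign is again forced by the reflection symmetry $T$ that identifies $a_2(\ve,E,\theta)$ with $a_1(\ve,E,-\theta)$ (the analogue of \eqref{eq:8-basicfunctionsrelat}), which I would establish from Lemma~\ref{lem:basicshiftprop} using $-(k'+n_0\omega)=k_{n_0}-\theta$ for $k'=k_{n_0}+\theta$ and the fact that off-diagonal entries $h(m,n;\ve,k)=\ve c(n-m)$ are $k$-independent. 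The identity \eqref{eq:9Esymmetry} is then immediate from this same reflection, \eqref{eq:9Efirstder} follows from the Feynman--Hellmann formula \eqref{eq:8Feynman} combined with the eigenvector decay \eqref{eq:5evdecaY} of Proposition~\ref{rem:con1smalldenomnn}(7) (exactly as \eqref{eq:8Efirstder} was derived), \eqref{eq:9kk1La} is Corollary~\ref{cor:6.twolambdas1} (or Corollary~\ref{cor:5.twolambdas1} as appropriate), and \eqref{eq:9Esymmetry-2} comes from applying the reflection symmetry directly to $\La^{(s+q)}_k(0)$ when $T(\La^{(s+q)}_k(0))=\La^{(s+q)}_k(0)$, which holds in the range $|k-k_{n_0}|<(\delta^{(s+q-1)}_0)^{1/16}/2$ by Lemma~\ref{lem:9conditionLambda}.

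For part~(II), the hypothesis \eqref{eq:9koutcondState} places $k$ in the ``away from $k_{n_0}$'' regime, where part~$(\mathbb B)$ of Definition~\ref{defi:8reflectionchoiceq} and Definition~\ref{defi:9.LLLIII} apply and the sets $\cM^{(s')}_{k,s+q-1}$, $\La^{(s')}_k(m)$ are defined exactly as in \eqref{eq:A.1}; I would then verify $H_{\La^{(s+q,\mathbf 1)}_k(0),\ve,k}\in\cN^{(s+q)}(0,\La^{(s+q,\mathbf 1)}_k(0);\delta_0)$ by checking conditions (a)--(f) of Definition~\ref{def:4-1} following the template of Proposition~\ref{prop:A.3}(2) and Lemma~\ref{lem:8A.3}(3): the only nontrivial point is the lower bound in the first line of \eqref{eq:4-3sge3} for the pair $(0,n_0)$, which uses $|n_0\omega|>(\delta^{(s+q-1)}_0)^{1/16}$ (from \eqref{eq:diphnores}) and $|k-k_{n_0}|>(\delta^\esone_0)^{7/8}$ to force $|E^{(s+q-1)}(n_0,\dots)-E^{(s+q-1)}(0,\dots)|>3\delta^{(s+q-1)}_0$, precisely as in \eqref{eq:8.condedefi51b}. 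The final identification of $E^{(s+q)}(0,\cdot;\ve,k)$ with $E^{(s+q,+)}$ or $E^{(s+q,-)}$ follows by the uniqueness of the analytic branch through $v(0,k)$ at $\ve=0$, combined with \eqref{eq:6specHEEAAA}-type spectral localization from Proposition~\ref{rem:con1smalldenomnn}(6), exactly as in the last paragraph of the proof of Proposition~\ref{prop:8.1}.

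The main obstacle I anticipate is the careful bookkeeping in condition (vii) of Definition~\ref{def:7-6} (the positivity \eqref{eq:5-13OPQ}) in the regime of very small $|k_{n_0}|$: here one must pass from the full-scale $Q^{(s+q)}$ down to the largest scale $r<s$ at which $|k_{n_0}|$ is not yet ``tiny,'' apply the $\theta$-convexity of $v(0,k_{n_0}+\theta)+Q^{(r)}$ there (using that $\La^{(r)}_k(0)$ is symmetric under $k\mapsto -k$, hence $\partial_\theta$ vanishes at $\theta=0$ and $\partial^2_\theta>1/256$), and then telescope the scale-differences $\partial_\theta Q^{(r_1)}-\partial_\theta Q^{(r_1-1)}$ for $r<r_1\le s+q$ using the bounds in \eqref{eq:5-12acACCOPQ}; keeping all the exponents ($15/16$, $31/32$, $7/8$, $3/4$, $5$, $12$, etc.) consistent across the hierarchy, and ensuring the intervals $\mathcal I(s,q)$ in \eqref{eq:9kintcondNN} shrink slowly enough that the inductive sets $\La^{(s')}_k(0)$ and those for $k'$ agree on the relevant cubes, is where the real work lies — but it is a routine if lengthy adaptation of Proposition~\ref{prop:8.1}, not a new idea.
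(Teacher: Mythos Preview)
Your proposal is correct and follows essentially the same approach as the paper, which simply states that part~(I) is ``completely similar to the proof of Proposition~\ref{prop:8.1}'' and part~(II) is ``completely similar to the proof of Proposition~\ref{prop:A.3}'' and omits all details; you have accurately fleshed out what that means. One minor slip: for $OPR^{(s,s+q)}$ matrices the relevant continued-fraction-function is still of level $\ell=1$ (see the proof of Proposition~\ref{rem:con1smalldenomnn}(5), where $f\in\mathfrak F^{(1)}_{\mathfrak g^{(1)}_\pm}$), so \eqref{eq:9Ekderivatives} is obtained via part~(8) of Lemma~\ref{4.fcontinuedfrac} and Lemma~\ref{lem:4zetatheta} with $\ell=1$, exactly as in Proposition~\ref{prop:8.1}, not via part~(9) with $\ell\ge2$ --- the higher-level functions only enter with the $GSR$ classes in Section~\ref{sec.6}.
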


\begin{proof}
The proof of $(I)$ is completely similar to the proof of Proposition~\ref{prop:8.1} and we omit it. The proof of $(II)$ is completely similar to the proof of Proposition~\ref{prop:A.3} and we omit it as well.
\end{proof}

We also need the following version of Proposition~\ref{prop:8.10}.

\begin{prop}\label{prop:9.10}
Let $\ve \in (-\ve_{s},\ve_{s})$.

$(1)$ The limits
\begin{equation}\label{eq:9kk1comp1lim}
E^{(s+q,\pm)}(0, \La^{(s+q)}_{k_{n_0}}(0); \ve, k_{n_0}):=\lim_{k_1\rightarrow k_{n_0}} E^{(s+q,\pm)}(0, \La^{(s+q)}_{k_{n_0}}(0); \ve, k_1)
\end{equation}
exist. Furthermore,
\begin{equation} \label{eq:9specHEEAAA}
\begin{split}
\spec H_{\La^{(s+q)}_{k_{n_0}}(0), \ve, k_{n_0}} \cap \{ E : \min_\pm |E - E^{(s+q-1,\pm )}(0,\La^{(s+q-1)}_{k_{n_0}}(0); \ve, k_{n_0})| < 8 (\delta^{(s+q-1)}_0)^{1/4} \} \\
= \{ E^{(s,+)}(0, \La^{(s+q)}_{k_{n_0}}(0); \ve, k_{n_0}), E^{(s+q,-)}(0, \La^{(s+q)}_{k_{n_0}}(0); \ve, k_{n_0}) \},
\end{split}
\end{equation}
\begin{equation}\label{eq:9kk1comp1limap1}
|E^{(s+q,\pm)}(0, \La^{(s+q)}_{k_{n_0}}(0); \ve, k_{n_0}) - E^{(s+q-1,\pm)}(0, \La^{(s+q)}_{k_{n_0}}(0); \ve, k_{n_0})| \le |\ve| \delta_0^{(s+q-1)},
\end{equation}
$E^{(s+q,+)}(0, \La^{(s+q)}_{k_{n_0}}(0)(0); \ve, k_{n_0}) \ge E^{(s+q,-)}(0, \La^{(s+q)}_{k_{n_0}}(0); \ve, k_{n_0})$.

$(1)'$ Let $\min_\pm |E - E^{(s+q-1,\pm)} \bigl( 0, \La^{(s+q-1)}_{k_{n_0}}(0); \ve,k_{n_0} \bigr)| < 2 \delta_0^{(s+q-1)}$. The matrix $(E - H_{\La^{(s+q)}_{k_{n_0}}(0) \setminus \{0, n_0\},\ve,k_{n_0}})$ is invertible. Moreover,
\begin{equation}\label{eq:9Hinvestimatestatement1PQ}
\begin{split}
|[(E - H_{\La^{(s+q)}_{k_{n_0}}(0) \setminus \{0,n_0\},\ve,k_{n_0}})^{-1}](m,n)| \\
\le \begin{cases} 3 |\ve|^{1/2} \exp(-\frac{7}{8} \kappa_0 |m-n| + 8 \kappa_0 \log \delta_0^{-1} (\min (\mu^{(s+q,0)}(m),\mu^{(s+q,0)}(n) )^{1/5}) & \text{if $m\neq n$}, \\ 2 \exp(8 \kappa_0 \log \delta_0^{-1} (\mu^{(s+q,0)}(m))^{1/5}) & \text{if $m = n$},
\end{cases}
\end{split}
\end{equation}
$\mu^{(s+q,0)}(m) := \dist(m, \zv \setminus [\La^{(s+q)}_{k_{n_0}}(0) \setminus \{0, n_0\}])$.

$(2)$ $E = E^{(s+q,\pm)}(0, \La^{(s+q)}_{k_{n_0}}(0); \ve, k_{n_0})$ obeys the following equation,
\begin{equation} \label{eq:9Eequation0}
E - v(0, k_{n_0}) - Q^{(s+q)}(0,\La^{(s+q)}_{k_{n_0}}(0); \ve, E)  \mp \big| G^{(s+q)}(0,n_0,\La^{(s+q)}_{k_{n_0}}(0); \ve, E) \big| = 0,
\end{equation}
where
\begin{equation} \label{eq:9-10acbasicfunctions}
\begin{split}
Q^{(s+q)}(0,\La^{(s+q)}_{k_{n_0}}(0); \ve, E) \\
= \sum_{m',n' \in \La^{(s+q)}_{k_{n_0}}(0) \setminus \{0,n_0\}} h(m_0^\pm, m'; \ve, k_{n_0}) [(E - H_{\La^{(s+q)}_{k_{n_0}}(0) \setminus \{0,n_0\}, \ve,k_{n_0}})^{-1}] (m',n') h(n', m_0^\pm; \ve, k_{n_0} ), \\
G^{(s+q)}(0,n_0,\La^{(s+q)}_{k_{n_0}}(0); \ve, E) = h(m_0^\pm, m_0^\mp; \ve, k_{n_0} ) \\
+ \sum_{m', n' \in \La^{(s+q)}_{k_{n_0}}(0) \setminus \{0,n_0\}} h(m_0^\pm, m'; \ve, k_{n_0}) [(E - H_{\La^{(s+q)}_{k_{n_0}}(0) \setminus \{0,n_0\},\ve,k_{n_0}})^{-1}] (m',n') h(n', m_0^\mp; \ve, k_{n_0}).
\end{split}
\end{equation}
\end{prop}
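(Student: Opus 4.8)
\textbf{Proof plan for Proposition~\ref{prop:9.10}.}
The plan is to mirror, almost verbatim, the argument given for Proposition~\ref{prop:8.10}, replacing the functions $Q^{(s)}(m_0^\pm,\La;\ve,k,E)$, $G^{(s)}(m_0^\pm,m_0^\mp,\La;\ve,k,E)$ and the eigenvalues $E^{(s,\pm)}(0,\La^{(s)}_{k_{n_0}}(0);\ve,k')$ from that context by their ``$(s+q)$-level'' analogues, and replacing the reference to Proposition~\ref{prop:8.1} and Proposition~\ref{prop:8-5n} by the corresponding references to Proposition~\ref{prop:9.1} (part (I)) and Proposition~\ref{rem:con1smalldenomnn}. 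First I would fix $\ve\in(-\ve_s,\ve_s)$ and, say, the case $k_{n_0}>0$ with $k'>k_{n_0}$; the case $k'<k_{n_0}$ and the case $k_{n_0}<0$ are obtained by symmetry (using parts~(4),(5) of Lemma~\ref{lem:basicshiftprop}) exactly as before. For such $k'$ close to $k_{n_0}$ we have, by part~(I)(1) of Proposition~\ref{prop:9.1}, $H_{\La^{(s+q)}_{k_{n_0}}(0),\ve,k'}\in OPR^{(s,s+q)}\bigl(0,n_0,\La^{(s+q)}_{k_{n_0}}(0);\delta_0,\tau^\zero\bigr)$, so Proposition~\ref{rem:con1smalldenomnn} applies and produces the two $C^2$-smooth eigenvalues $E^{(s+q,\pm)}(0,\La^{(s+q)}_{k_{n_0}}(0);\ve,k')$ with $E^{(s+q,+)}\ge E^{(s+q,-)}$, the spectral separation statement \eqref{eq:5specHEEAAA}, and the estimate \eqref{eq.5Eestimates1APqq} locating them near $E^{(s+q-1,\pm)}$.

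For part~(1), the key point is that the ingredient functions in \eqref{eq:9-10acbasicfunctions} --- namely $Q^{(s+q)}(m_0^\pm,\La;\ve,k_{n_0}+\theta,E)$ and $G^{(s+q)}(m_0^\pm,m_0^\mp,\La;\ve,k_{n_0}+\theta,E)$, built from $(E-H_{\La^{(s+q)}_{k_{n_0}}(0)\setminus\{0,n_0\},\ve,k_{n_0}+\theta})^{-1}$ --- are $C^2$-smooth (in fact analytic in $E$) in the full bidisk $|\theta|<(\delta_0^{(s+q-1)})^{1/2}$, $|E-E^{(s+q-1)}(\cdot;\ve,k_{n_0}+\theta)|<2\delta_0^{(s+q-1)}$. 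This is exactly part~(3) of Proposition~\ref{rem:con1smalldenomnn} applied with the $k$-dependent matrix $H_{\La,\ve,k}$ as in Lemma~\ref{lem:7differentiationFIN} / Lemma~\ref{lem:6differentiation}. The functions $E^{(s+q-1,\pm)}(0,\La^{(s+q-1)}_{k_{n_0}}(0);\ve,k')$ are $C^2$-smooth in $k'$ (from the inductive structure of Proposition~\ref{prop:9.1}(I) together with Remark~\ref{rem:9.1inout}, or directly from part~(I)(2) estimates), and $H_{\La^{(s+q)}_{k_{n_0}}(0),\ve,k'}$ is manifestly $C^2$ in $k'$. Since $E^{(s+q,+)}\ge E^{(s+q,-)}$ on the punctured neighborhood and both are continuous (being roots of the smooth characteristic function $\chi(\ve,k',E)$ of \eqref{eq:5-13NNNNOPQ} with $\partial^2_E\chi>1/8$), the one-sided limits as $k_1\to k_{n_0}$ exist; continuity of $\spec H_{\La,\ve,k}$ in $k$ and of $E^{(s+q-1,\pm)}$ then upgrades \eqref{eq:5specHEEAAA} and \eqref{eq.5Eestimates1APqq} to \eqref{eq:9specHEEAAA} and \eqref{eq:9kk1comp1limap1}, and the ordering $E^{(s+q,+)}\ge E^{(s+q,-)}$ passes to the limit. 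Part~(1)$'$ is just the invertibility and inverse bound \eqref{eq:3Hinvestimatestatement1PQ} (equivalently \eqref{eq:6-9.Hinvestimatestatement1PQ}) from Proposition~\ref{rem:con1smalldenomnn}(2), with $D(\cdot;\La)$ as in part~(1) of that proposition rewritten in the explicit form $D(x;\La)=4\kappa_0\log\delta_0^{-1}\cdot(\text{local radius})^{1/5}$ consistent with \eqref{eq:auxtrajectweightsumest8} of Lemma~\ref{lem:auxweight1}; one just reads off $T=4\kappa_0\log\delta_0^{-1}$ and translates $\mu_\La(m)$ into $\mu^{(s+q,0)}(m)$.

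For part~(2), I would use the symmetry $T(n)=-n+n_0$ together with $T(\La^{(s+q)}_{k_{n_0}}(0))=\La^{(s+q)}_{k_{n_0}}(0)$ (Lemma~\ref{lem:9setLambdas}(4)), exactly as in the proof of Proposition~\ref{prop:8.10}(2): this symmetry and $v(m^-_0,k_{n_0}+\theta)=v(m^+_0,k_{n_0}-\theta)$, $h(m,n;\ve,k)=\ve c(n-m)$ for $m\ne n$, yield the reflection identities \eqref{eq:8-basicfunctionsrelat}-type relations for $Q^{(s+q)}$, $G^{(s+q)}$, hence $a_1(\ve,E)|_{\theta}=a_2(\ve,E)|_{-\theta}$ and $\chi(\ve,\theta,E)=\chi(\ve,-\theta,E)$. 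Passing to $\theta=0$ in the characteristic equation \eqref{eq:5-13NNNNOPQ} gives $\bigl(E-v(0,k_{n_0})-Q^{(s+q)}(0,\cdot;\ve,E)\bigr)^2=|G^{(s+q)}(0,n_0,\cdot;\ve,E)|^2$, and the sign is fixed by the inequality in part~(5) of Proposition~\ref{rem:con1smalldenomnn} (the lines \eqref{eq:8-13acnq0A} giving $E^{(s+q,+)}-v(m^-_0)-Q^{(s+q)}(m^-_0)\ge|G^{(s+q)}|$ and the reversed inequality for $E^{(s+q,-)}$), taken in the limit $\theta\to0$, precisely as in \eqref{eq:8-13acnq0quasi}. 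I expect the only genuinely delicate point is bookkeeping: confirming that the parameter ranges for $k'$ near $k_{n_0}$ (the condition $\mathcal I(s,q)$ in \eqref{eq:9kintcondNN} and the inclusion $k'\in\cR^{(s,s+q)}(\omega,n_0)$) remain valid under the limit $k'\to k_{n_0}$ and that the domains in which $Q^{(s+q)}$, $G^{(s+q)}$ are defined do not degenerate --- but this is handled by the same slack in the exponents ($3/4$ versus $7/8$, and the $(\delta_0^{(t)})^{31/32}$ corrections) that was arranged for in Definition~\ref{defi:8reflectionchoiceq} and used in Proposition~\ref{prop:9.1}. Everything else is a line-by-line transcription of Sections~\ref{sec.8} Propositions~\ref{prop:8.10} and of the $q$-step machinery of Section~\ref{sec.5}.
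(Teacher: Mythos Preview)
Your proposal is correct and follows essentially the same route as the paper: the paper's proof says that part~(1) goes ``just like the proof of $(1)$ in Proposition~\ref{prop:8.10}'', derives part~$(1)'$ by applying Proposition~\ref{prop:9.1}(I) and Proposition~\ref{rem:con1smalldenomnn}(2) at nearby $k'$, translating the $s_{D,\dots}$ bound into the explicit form via Lemma~\ref{lem:auxweight1}, and then passing to the limit $k'\to k_{n_0}$, and says part~(2) goes ``just like the one for $(2)$ in Proposition~\ref{prop:8.10}''. The only place to tighten your write-up is part~$(1)'$: since $\tau^\zero(k_{n_0})=0$, Proposition~\ref{rem:con1smalldenomnn}(2) does not apply directly at $k_{n_0}$, so make the limiting step explicit there (as the paper does), and note that your parenthetical ``$D(x;\La)=4\kappa_0\log\delta_0^{-1}\cdot(\text{local radius})^{1/5}$'' overstates things --- $D$ is only bounded by $T\mu^{1/5}$, and it is precisely Lemma~\ref{lem:auxweight1} that converts this into the explicit $2T\mu^{1/5}=8\kappa_0\log\delta_0^{-1}\cdot\mu^{1/5}$ appearing in \eqref{eq:9Hinvestimatestatement1PQ}.
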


\begin{proof}
The proof of $(1)$  goes just like the proof of $(1)$ in Proposition~\ref{prop:8.10}. Let us verify $(1)'$. Let, for instance, $k_{n_0} > 0$. Due to part $(I)$ of Proposition~\ref{prop:9.1}, one has $H_{\La^{(s+q)}_{k_{n_0}}(0),\ve,k'} \in OPR^{(s,s+q)} \bigl( 0,n_0, \La^{(s+q)}_{k_{n_0}}(0); \delta_0,\tau^\zero \bigr)$ for any $0 < k' - k_{n_0} < \delta_0^{(s+q-1)}$. Due to part $(2)$ of Proposition~\ref{rem:con1smalldenomnn},
\begin{equation}\label{eq:9Hinvestimatestatement1PQk1}
|[(E - H_{\La^{(s+q)}_{k_{n_0}}(0) \setminus \{0,n_0\},\ve,k'})^{-1}](m,n)| \le s_{D(\cdot;\La^{(s+q)}_{k_{n_0}}(0)\setminus\{0,n_0\}),T,\kappa_0,|\ve|;\La^{(s+q)}_{k_{n_0}}(0) \setminus \{0,n_0\},\mathfrak{R}}(m,n);
\end{equation}
see \eqref{eq:3Hinvestimatestatement1PQ}. It follows from Lemma~\ref{lem:auxweight1} that
\begin{equation}\label{eq:9Hinvestimatestatement1PQk2}
\begin{split}
s_{D(\cdot;\La^{(s+q)}_{k_{n_0}}(0) \setminus \{0,n_0\}),T,\kappa_0,|\ve|;\La^{(s+q)}_{k_{n_0}}(0) \setminus \{0,n_0\},\mathfrak{R}}(m,n) \le \\
\begin{cases} 3 |\ve|^{1/2} \exp(-\frac{7}{8} \kappa_0 |m - n| + 8 \kappa_0 \log \delta_0^{-1} (\min (\mu^{(s+q,0)}(m),\mu^{(s+q,0)}(n) )^{1/5}) & \text{if $m \neq n$}, \\ 2 \exp (8 \kappa_0 \log \delta_0^{-1} (\mu^{(s+q,0)}(m))^{1/5}) & \text{if $m = n$}. \end{cases}
\end{split}
\end{equation}
Taking $k' \rightarrow k_{n_0}$ in \eqref{eq:9Hinvestimatestatement1PQk1}, one obtains \eqref{eq:9Hinvestimatestatement1PQ}. This verifies $(1)'$. The verification of $(2)$ goes just like the one for $(2)$ in Proposition~\ref{prop:8.10}.
\end{proof}

\section{Matrices with a Graded System of Ordered Pairs of Resonances Associated with $1$--Dimensional Quasi-Periodic Schr\"odinger Equations}\label{sec.10}

\begin{defi}\label{def:10.simplestresonance}
Using the notation from Proposition~\ref{prop:9.1}, let $q \ge 2$, $n_1 \in \IZ^\nu$, $12R^{(s+q-1)} < |n_1| \le 12R^{(s+q)}$ be such that
\begin{equation}\label{eq:10kintcondNN}
\begin{split}
(k_{n_1} - 2 \sigma(n_1), k_{n_1} + 2 \sigma(n_1)) \cap (k_{n_{0}} - 2 \sigma(n_0), k_{n_{0}} + \sigma(n_0)) \neq \emptyset, \\
(k_{n_1} - 2 \sigma(n_1), k_{n_1} + 2 \sigma(n_1)) \subseteq \IR \setminus \bigcup_{R^\es \le |m'| \le 12 R^{(s+q)}, \quad m' \notin \{n_0,n_1\}} (k_{m',s+q-1}^-, k_{m',s+q-1}^+).
\end{split}
\end{equation}
Set $s^\zero := s$, $s^\one := s + q$, $\mathfrak{s}^\one = (s^\zero,s^\one)$. Let $\cR^{(\mathfrak{s}^{(1)},s^{(1)})}(\omega,n_1)$ be the non-empty set in the first line of \eqref{eq:10kintcondNN}.
\end{defi}

\begin{lemma}\label{lem:10A.3}
Let $k \in \cR^{(\mathfrak{s}^{(1)},s^{(1)})}(\omega,n_1)$.

$(1)$ The subsets $\cM^{(s')}_{k,s^{(1)}-1}$, $\La^{(s')}_k(m)$, $s' \le s^{(1)}-1$ from Definition~\ref{defi:8reflectionchoiceq} are well-defined. Furthermore, $H_{\La^{(s')}_k(m),\ve,k} \in \cN^{(s')}(m,\La^\ar_k(m),\delta^\zero_0)$ , $s' \le s^\zero-1$. If
\begin{equation}\label{eq:10koutcondm1inn0}
|k_{n_1} - k_{n_0}| < (\delta^{(s^\zero-1)}_0)^{3/4} - 4 \sum_{s^\zero - 1 \le t \le s^\one-2} (\delta^{(t)}_0)^{15/16} - 2 \sigma(n_1),
\end{equation}
then $H_{\La^{(s')}_k(m^+_j),\ve,k} \in OPR^{(s')}(m^+_j,m^-_j,\La^\ar_k(m^+_j),\delta^\zero_0,\tau^\zero)$, $\tau^\zero = \delta^\zero_0 |k - k_{n_0}|$ for any $s^\zero \le s' \le s^{(1)}-1$ and any $m^+_j$.

$(2)$ $|k - k_{n_0}| > (\delta^{(s^\one-1)}_0)^{1/15}$, $\tau^\zero > (\delta^{(s^\one-1)}_0)^{1/14}$.

$(3)$ Assume \eqref{eq:10koutcondm1inn0} holds. Then conditions $(i)$--$(iv)$ and $(vi)$ from Definition~\ref{def:7-6} hold. Furthermore,  $n_1 \in \cM^{(s^\one-1)}_{k,s^\one-1}$, that is, $n_1 \in \{m^+_{j_0},m_{j_0}^-\}$ for some $j_0 \in J^{(s^\one-1)}$. Conditions \eqref{eq:6EVsplitdefs1a}--\eqref{eq:6-9EVsplitdefs1a} from Definition~\ref{def:9-6} hold. Finally, assume that $|k - k_{n_1}| > (\delta^{(s^\one-1)}_0)^{7/8}$.  Let $\La^{(s^\one)}_k(0)$ be as in Definition~\ref{defi:9.LLLIII}. Then, $H_{\La^{(s^\one)}_k(0),\ve,k} \in OPR^{(s^\one)}(0,n_0,\La^{(s+q)}_k(0),\delta^\zero_0,\tau^\zero)$, just as in Proposition~\ref{prop:9.1}.
\end{lemma}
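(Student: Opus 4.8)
\textbf{Plan for the proof of Lemma~\ref{lem:10A.3}.}
The proof follows the same template as the proofs of Lemma~\ref{lem:8A.3} and Lemma~\ref{lem:9A.2}--\ref{lem:9conditionLambdaAAA}, combined with the general machinery of Section~\ref{sec.6}. The organizing observation is that $k\in\cR^{(\mathfrak{s}^{(1)},s^{(1)})}(\omega,n_1)$ forces $k$ to lie simultaneously close to $k_{n_0}$ and to $k_{n_1}$, with $n_0$ a ``resonance of scale $s^\zero$'' and $n_1$ a ``resonance of scale $s^\one=s+q$.'' First I would verify $(1)$: by the second line of \eqref{eq:10kintcondNN}, $k$ lies in the complement of all the $(k_{m',s+q-1}^-,k_{m',s+q-1}^+)$ for $R^{(s)}\le|m'|\le 12R^{(s+q)}$ except $m'\in\{n_0,n_1\}$, so after re-centering at each $m^+_j$ (Lemma~\ref{lem:basicshiftprop}(4)) I can invoke Proposition~\ref{prop:A.3}, part~(2) of Remark~\ref{rem:7.kawayfromzero}, and, under the assumption \eqref{eq:10koutcondm1inn0}, Proposition~\ref{prop:9.1}(I) to conclude $H_{\La^{(s')}_k(m^+_j),\ve,k}\in OPR^{(s')}$ with the stated $\tau^\zero$. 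The set $\La^{(s')}_k(m)$ is well-defined by Definition~\ref{defi:8reflectionchoiceq} together with Lemma~\ref{lem:9conditionLambdaAAA}(1); the verification that $\tau^\zero=\delta_0^\zero|k-k_{n_0}|$ works uses that the symmetry $T(n)=-n+n_0$ applied to the blocks around each $m^+_j$ produces the comparison $v(m^+_j,k)+Q\ge v(m^-_j,k)+Q+\tau^\zero$, exactly as in the derivation of \eqref{eq:8hHinvestimatestatement1kvard2} in the proof of Proposition~\ref{prop:8.1}.

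For $(2)$ I would argue as in Remark~\ref{rem:8.ksetintersect}(3) and \eqref{eq:diphnores}: since $|n_0|\le 12R^{(s)}$, the Diophantine condition gives $|n_0\omega|>(\delta_0^{(s-1)})^{1/16}$, hence $|k_{n_0}|>\tfrac12(\delta_0^{(s-1)})^{1/16}$; on the other hand $k$ is in the complement of $(k^-_{n_0,s+q-1},k^+_{n_0,s+q-1})$, so $|k-k_{n_0}|\ge\sigma(n_0)$ minus the cumulative shifts, and one checks $|k-k_{n_0}|>(\delta_0^{(s^\one-1)})^{1/15}$ by a direct estimate of the geometric-type sum $\sum_{r}(\delta_0^{(r)})^{1/2}$ against $\delta_0^{(s-1)}$ (using Remark~\ref{rem:3.Rs}, $\delta^{(s')}<(\delta^{(s'-1)})^8$). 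Then $\tau^\zero=\delta_0^\zero|k-k_{n_0}|\ge\delta_0^4(\delta_0^{(s^\one-1)})^{1/15}>(\delta_0^{(s^\one-1)})^{1/14}$ follows again from the fast decay of $\delta_0^{(r)}$.

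For $(3)$, the main work is checking the axioms of Definition~\ref{def:7-6} (for $OPR^{(s,s+q)}$) and of Definition~\ref{def:9-6} (conditions \eqref{eq:6EVsplitdefs1a}--\eqref{eq:6-9EVsplitdefs1a}). Conditions (i)--(iv) and (vi) of Definition~\ref{def:7-6} follow from the set-theoretic bookkeeping of Definition~\ref{defi:8reflectionchoiceq} together with Lemma~\ref{lem:9A.3}(1),(1)$'$ (disjointness and $\diam$ control of the $\La^{(s')}_k(m)$) and Lemma~\ref{lem:9conditionLambdaAAA}(2); the inclusion $(m+B(R^{(s')}))\subset\La^{(s')}(m)$ is \eqref{eq:9lambdasetssize}/\eqref{eq:9lambdasetssizeAG}. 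That $n_1\in\cM^{(s^\one-1)}_{k,s^\one-1}$ is the analogue of \eqref{eq:8-5-4UUU}: the first line of \eqref{eq:10kintcondNN} gives $|k-k_{n_1}|\le 2\sigma(n_0)+2\sigma(n_1)$, hence $|v(n_1,k)-v(0,k)|=2\lambda^{-1}|n_1\omega||k-k_{n_1}|$ is $\lesssim(\delta_0^{(s^\one-2)})^{1/6}<3\delta_0^{(s^\one-2)}/4$, and the sign condition in Definition~\ref{defi:8reflectionchoiceq} places $n_1$ in $\cM^{(s^\one-1,\pm)}$; so $n_1=m^\pm_{j_0}$ for some $j_0$. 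The splitting estimates \eqref{eq:6EVsplitdefs1a}--\eqref{eq:6-9EVsplitdefs1a} are then obtained exactly as \eqref{eq:8.condPROP}--\eqref{eq:8.condedefi51b}: for $j\neq j_0$ one uses Lemma~\ref{lem:8A.2}-type separation $||k+m\omega|-|k||$ combined with the quantitative lower bound of Proposition~\ref{prop:A.3}(6) on $E^{(s^\one-1,\pm)}$, while for $j=j_0$ one uses the upper bound \eqref{eq:9Ekk1EG} (or its $OPR$-analogue from Proposition~\ref{prop:9.1}(2)) together with $|k_{n_1}-k|\le 2\sigma(n_1)$ to get the $(\delta_0^{(s^\one-1)})^{5/8}$ bound of \eqref{eq:6-9EVsplitdefs1a}. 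Finally, under $|k-k_{n_1}|>(\delta_0^{(s^\one-1)})^{7/8}$, the matrix $H_{\La^{(s^\one)}_k(0),\ve,k}$ stays in the ``non-resonant'' regime for the pair $(n_1,T(n_1))$ while still being resonant for $(0,n_0)$; here I would mimic part~(5) of Proposition~\ref{prop:8.1} together with Proposition~\ref{prop:9.1}(II) to conclude $H_{\La^{(s^\one)}_k(0),\ve,k}\in OPR^{(s^\one)}(0,n_0,\La^{(s+q)}_k(0),\delta_0^\zero,\tau^\zero)$. The hard part will be the bookkeeping in (3): keeping straight which of the two reflection maps ($\cS$ for the $n_1$-pair, $T$ for the $n_0$-pair) governs which block, checking that the sets $\La^{(s')}_k(m)$ produced by Definition~\ref{defi:8reflectionchoiceq} are the correct building blocks for the graded class $GSR^{[\mathfrak{s}^{(1)}]}$, and verifying that condition $(\mathbb{A})$ of Definition~\ref{defi:8reflectionchoiceq} (the hypothesis $k\notin\tfrac\omega2\zv$ and $0<|k-k_{n_0}|<\ldots$) is met under \eqref{eq:10koutcondm1inn0}; once that is in place, the estimates are all of a type already carried out in Sections~\ref{sec.8}--\ref{sec.9}.
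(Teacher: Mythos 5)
Your outline for parts $(1)$ and $(3)$ follows the paper's route. The paper disposes of $(1)$ in one line by noting that $k\in\IR\setminus\bigcup_{0<|m'|\le 12R^{(s^\one-1)},\,m'\neq n_0}(k^-_{m',s^\one-1},k^+_{m',s^\one-1})$ and invoking Proposition~\ref{prop:9.1} with $q-1$ in the role of $q$; your list of supporting lemmas is a reasonable unpacking of the same reduction. For $(3)$ the paper verifies the axioms of Definitions~\ref{def:7-6} and \ref{def:9-6} by citing Lemma~\ref{lem:9conditionLambda} and Remarks~\ref{rem:9.1inout}, \ref{rem:9.1abcasesQ1AG}, identifies $n_1\in\cM^{(s^\one-1)}_{k,s^\one-1}$ by direct estimates on $|v(n_0,k)-v(0,k)|$ and $|v(n_1\pm n_0,k)-v(0,k)|$, and derives the splitting conditions from \eqref{eq:9Ekderivatives} and \eqref{eq:9kk1La}; you name the right ingredients, even if you leave the computation \eqref{eq:10.condPROP} unwritten.

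Part $(2)$ contains a genuine gap. You assert that ``$k$ is in the complement of $(k^-_{n_0,s+q-1},k^+_{n_0,s+q-1})$,'' but that is false in the regime of interest. The second line of \eqref{eq:10kintcondNN} explicitly removes both $m'=n_0$ and $m'=n_1$ from the family of excluded intervals, and the first line, together with the definition of $\cR^{(\mathfrak{s}^{(1)},s^{(1)})}(\omega,n_1)$ as the intersection in that line, places $k$ {\it inside} $(k_{n_0}-2\sigma(n_0),k_{n_0}+\sigma(n_0))$, hence generically inside $(k^-_{n_0,s+q-1},k^+_{n_0,s+q-1})$. This is precisely what makes the situation a double resonance: $k$ is simultaneously close to $k_{n_0}$ and $k_{n_1}$, so you cannot get a lower bound on $|k-k_{n_0}|$ from $k$ lying outside the $n_0$-interval. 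The correct mechanism (and the one the paper uses) is a triangle inequality whose big term is a Diophantine separation between the {\it two resonance centers}:
\begin{equation*}
|k-k_{n_0}|\ \ge\ |k_{n_1}-k_{n_0}|-|k-k_{n_1}|\ \ge\ \tfrac12|(n_1-n_0)\omega|-2\sigma(n_1),
\end{equation*}
where $|(n_1-n_0)\omega|>(\delta_0^{(s^\one-1)})^{1/16}$ comes from \eqref{eq:diphnores} (since $0<|n_1-n_0|<13R^{(s^\one)}$) and $|k-k_{n_1}|<2\sigma(n_1)=O\bigl((\delta_0^{(s^\one-1)})^{1/6}\bigr)$ is built into $k\in\cR^{(\mathfrak{s}^{(1)},s^{(1)})}(\omega,n_1)$. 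Because $1/16<1/6$ in the exponent, the difference is still at least $(\delta_0^{(s^\one-1)})^{1/15}$, after which $\tau^\zero=\delta_0^\zero|k-k_{n_0}|>(\delta_0^{(s^\one-1)})^{1/14}$ follows as you computed. So your target estimate is correct, but your derivation rests on a non-hypothesis; you need the Diophantine separation of $k_{n_0}$ from $k_{n_1}$ rather than the separation of $k$ from the $n_0$-interval.
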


\begin{proof}
$(1)$ Clearly, $k \in \IR \setminus \bigcup_{0 < |m'| \le 12 R^{(s^\one-1)},\quad m' \neq m^\zero} (k_{m',s^\one-1}^-,k_{m',s^\one-1}^+)$. So, part Proposition~\ref{prop:9.1} applies to $q-1$ in the role of $q$. This implies all statements in $(1)$.

$(2)$ Using \eqref{eq:diphnores} and \eqref{eq:7K.1}, one obtains $|k - k_{n_0}| > |k_{n_1} - k_{n_0}| - 2 (\delta^{(s^\one-1)}_0)^{1/6} > (\delta^{(s^\one-1)}_0)^{1/15}$, $\tau^\zero = \delta^\zero_0 |k - k_{n_0}| > (\delta^{(s^\one-1)}_0)^{1/14}$ since $0 < |n_1 - n_0| < 13 R^{(s^\one)}$; see \eqref{eq:diphnores}.

$(3)$ Assume \eqref{eq:10koutcondm1inn0} holds. Conditions (i)--(iv) and (vi) from Definition~\ref{def:7-6} hold due to Lemma~\ref{lem:9conditionLambda} and Remarks~\ref{rem:9.1inout} and \ref{rem:9.1abcasesQ1AG}. One has $|k - k_{n_0}| \le |k_{n_1} - k_{n_0}| + |k - k_{n_1}| < (\delta^{(s^\one-1)}_0)^{3/4}$, $||k + m_{0,1}\omega| - |k_{n_0}|| \le ||k_{n_1}| - |k_{n_0}|| + ||k| - |k_{n_1}|| < (\delta^{(s^\one-1)}_0)^{3/4}$. Hence, $|v(n_0,k) - v(0,k)| < 2 (\delta^{(s^\one-1)}_0)^{3/4} 2 (|k_{n_0} + 1|) < 3 (\delta^{(s^\one-2)}_0)/4$. Similarly, $|v(n_1+n_0,k) - v(0,k)| < 3 (\delta^{(s^\one-2)}_0)/4$ if it is case $(\mathfrak{a})$ for $m_1$ and $|v(n_1-n_0,k) - v(0,k)| < 3 (\delta^{(s^\one-2)}_0)/4$ if it is case $(\mathfrak{b})$. This implies $n_1 \in \cM^{(s^\one-1)}_{k,s^\one-1}$. Let $m \in \cM^{(s^\one-1)}_{k,s^{(1)}-1}$, $\La^{(s^\one-1)}_{k}(m) = m + \La^{(s^\one-1)}_{k+m\omega}(0)$. Assume for instance that it is case $(\mathfrak{a})$. Recall that $|m| < 12 R^{(s+q)}$ and in particular $|m\omega| > (\delta^{(s^\one-1)}_0)^{1/16}$; see \eqref{eq:diphnores}.
Using \eqref{eq:9Ekderivatives} and \eqref{eq:9kk1La} with $q-1$ in the role of $q$, one obtains
\begin{equation}\label{eq:10.condPROP}
\begin{split}
|E^{(s^\one-1,+)}(m, \La^{(s^\one-1)}_{k}(m);\ve,k) - E^{(s^\one-1,+)}(0, \La^{(s^\one-1)}_k(0);\ve,k)| \\
= |E^{(s^\one-1,+)}(0, \La^{(s^\one-1)}_{k+m\omega}(0);\ve,k+m\omega) - E^{(s^\one-1,+)}(0, \La^{(s^\one-1)}_k(0);\ve,k)| \\
\ge |E^{(s^\one-1,+)}(0, \La^{(s^\one-1)}_{k+m\omega}(0);\ve,k+m\omega) - E^{(s^\one-1,+)}(0, \La^{(s^\one-1)}_{k+m\omega}(0);\ve,k)| \\
- |E^{(s^\one-1,+)}(0, \La^{(s^\one-1)}_{k+m\omega}(0);\ve,k) - E^{(s^\one-1,+)}(0, \La^{(s^\one-1)}_k(0);\ve,k)| \\
\ge \delta_0^\zero (|k+m\omega|-|k||)^2 - (\delta^{(s^\one-1)}_0)^5 = \delta_0^\zero (|m\omega|)^2 - (\delta^{(s^\one-1)}_0)^5 > (\delta^{(s^\one-1)}_0)^{1/7} > \delta^{(s^\one-1)}_0.
\end{split}
\end{equation}
This implies \eqref{eq:6EVsplitdefs1a}. The verification in case $(\mathfrak{b})$ is similar. The verification of \eqref{eq:6EVsplitdefAs1b}, \eqref{eq:6-9EVsplitdefs1a} is similar. Finally, assume that $|k - k_{n_1}| > (\delta^{(s^\one-1)}_0)^{7/8}$. Then an estimation like \eqref{eq:10.condPROP} works for $m = n_{1}$. This implies $H_{\La^{(s^\one)}_k(0),\ve,k} \in OPR^{(s^\one)}(0, n_0, \La^{(s+q)}_k(0),\delta^\zero_0,\tau^\zero)$, just as in Proposition~\ref{prop:9.1}.
\end{proof}

Set
\begin{equation}\label{eq:10.mapT1}
T_1(n) = n_1 - n, \quad n \in \zv.
\end{equation}

\begin{lemma}\label{lem:10mdeltaTm}
Assume $|k - k_{n_1}| < (\delta^{(s^\one-1)}_0)^{3/4 }$.

$(1)$ If $|v(m,k) - v(0,k)| < \delta$ with $\delta \ge (\delta^{(s^\one-1)}_0)^{1/2}/4$, then $|v(T_1(m),k) - v(0,k)| < 4 \delta/3$.

$(2)$ Let $m_j \in \cM^{(s')}_{k,s^\one-1}$, $j=1,2$. Then either $T_1(\La^{(s')}(m_1)) \cap \La^{(s')}(m_2) \neq \emptyset$ or $\dist(T_1(\La^{(s')}(m_1)),\La^{(s')}(m_2)) > 5 R^{(s')}$. In the former case, $T_1 \bigl(\La^{(s')}(m_1) \cap \cM^{(s')}_{k,s^\one-1} \bigr) = \La^{(s')}(m_2)) \cap \cM^{(s')}_{k,s^\one-1}$.
\end{lemma}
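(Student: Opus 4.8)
\textbf{Plan of proof for Lemma~\ref{lem:10mdeltaTm}.} The strategy is to mimic, essentially verbatim, the proofs of the three analogous reflection lemmas already established in this paper: Lemma~\ref{lem:8mdeltaTm} (reflection $T(n)=-n+n_0$ in Section~\ref{sec.8}), Lemma~\ref{lem:9mdeltaTm} (the $q$-dependent version in Section~\ref{sec.9}), and the combinatorial consequence Lemma~\ref{lem:9A.3}. The only new ingredient is that the reflection is now the ``second-level'' map $T_1(n)=n_1-n$ rather than $T(n)=-n+n_0$, and the relevant scale is $s^{\one}=s+q$ rather than $s$; structurally nothing changes.

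\textbf{Part (1).} First I would record the arithmetic input. Since $k\in\cR^{(\mathfrak{s}^{(1)},s^{(1)})}(\omega,n_1)$ with $|k-k_{n_1}|<(\delta^{(s^\one-1)}_0)^{3/4}$, we have $|2k+n_1\omega|=2|k-k_{n_1}|<2(\delta^{(s^\one-1)}_0)^{3/4}$, and by \eqref{eq:diphnores} together with $12R^{(s+q-1)}<|n_1|\le 12R^{(s+q)}$ we get $|n_1\omega|<2|k|+1$. Given $|v(m,k)-v(0,k)|<\delta$ with $\delta\ge(\delta^{(s^\one-1)}_0)^{1/2}/4$, part $(1)$ of Lemma~\ref{lem:A.2} gives $\min(|m\omega|,|2k+m\omega|)\le 32\delta^{1/2}$, hence $|m\omega|<2|k|+1$ as well. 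Then, exactly as in \eqref{eq:8Tmineqverif-2}, I would write
$$
|v(T_1(m),k)-v(0,k)|\le |v(T_1(m),k)-v(m,k)|+|v(m,k)-v(0,k)|
\le \lambda^{-1}(|n_1\omega|+2|m\omega|)|2k+n_1\omega|+\delta,
$$
and bound the first term by $8\lambda^{-1}(|k|+1)(\delta^{(s^\one-1)}_0)^{3/4}$, which is $<\delta/3$ because $(\delta^{(s^\one-1)}_0)^{3/4}$ is vastly smaller than $\delta\ge(\delta^{(s^\one-1)}_0)^{1/2}/4$ and $\lambda\ge 256\max(|k|,1)$. This gives $|v(T_1(m),k)-v(0,k)|<4\delta/3$.

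\textbf{Part (2).} Here I would apply part $(1)$ to each $m_j\in\cM^{(s')}_{k,s^\one-1}$: by Remark~\ref{rem:9.1inout}(5), $|v(m_j,k)-v(0,k)|\le 3\delta^{(s'-1)}_0/4$, which is $\ge(\delta^{(s^\one-1)}_0)^{1/2}/4$, so $|v(T_1(m_1),k)-v(0,k)|<\delta^{(s'-1)}_0$. Now $T_1(m_1)$ satisfies \eqref{eq:9afrprelim} of Lemma~\ref{lem:9A.3}, so part $(1)$ (or $(1)'$) of that lemma, applied to the pair $T_1(m_1)$ and $m_2$, yields the dichotomy: either $T_1(m_1)\in\La^{(s')}_k(m_2)\cap\cM^{(s')}_{k,s^\one-1}$, in which case $\La^{(s')}_k(T_1(m_1))=\La^{(s')}_k(m_2)$ by Lemma~\ref{lem:9conditionLambdaAAA}(2) and hence $T_1(\La^{(s')}(m_1))\cap\La^{(s')}(m_2)\ne\emptyset$; or $|T_1(m_1)-m_2|\ge 12R^{(s')}$ (resp.\ $36R^{(s)}$ in the $\mathbb A$-case when $s'=s$), which by the containments \eqref{eq:9lambdasetssize} gives $\dist(T_1(\La^{(s')}(m_1)),\La^{(s')}(m_2))>5R^{(s')}$. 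For the final assertion, I would note that in the first case $T_1$ maps $\La^{(s')}(m_1)$ bijectively onto a set meeting $\La^{(s')}(m_2)$; since both sets are of the form $m+\La^{(s',\mathbf i)}_{k+m\omega}(0)$ obtained from the same inductive construction (via the shift identities of Lemma~\ref{lem:basicshiftprop} and the symmetry of the ``sym''/reflected building blocks), and since $\cM^{(s')}_{k,s^\one-1}$ restricted to each such set depends only on the set, one concludes $T_1(\La^{(s')}(m_1)\cap\cM^{(s')}_{k,s^\one-1})=\La^{(s')}(m_2)\cap\cM^{(s')}_{k,s^\one-1}$, which is exactly the claim; this is the step done identically in the last sentence of Lemma~\ref{lem:9mdeltaTm}(2).

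\textbf{Main obstacle.} The routine calculations in Part $(1)$ are harmless; the genuinely delicate point is the last assertion of Part $(2)$: that $T_1$ carries the $\cM$-centers inside $\La^{(s')}(m_1)$ onto those inside $\La^{(s')}(m_2)$. One must verify that the reflected set $T_1(\La^{(s')}(m_1))$ really coincides with $\La^{(s')}(m_2)$ (not merely intersects it) and that the reflection respects the $\cM^{(s')}$-structure, which requires tracking how $T_1$ interacts with the two possible ``types'' ($(\mathfrak a)$ vs $(\mathfrak b)$ of Lemma~\ref{lem:9A.2}) that the centers can have relative to $n_0$ — the reflection through $n_1$ composes with the reflection through $n_0$ implicit in the pairing $m^+_j\leftrightarrow m^-_j$. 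I expect to handle this exactly as in Lemma~\ref{lem:9mdeltaTm}(2), invoking Lemma~\ref{lem:9conditionLambdaAAA}(2) to upgrade ``intersects'' to ``equals,'' and invoking the shift/reflection identities of Lemma~\ref{lem:basicshiftprop} together with Lemma~\ref{lem:12Nesreflection} to transport the $\cM$-labels; no new idea beyond what is already in Section~\ref{sec.9} should be needed.
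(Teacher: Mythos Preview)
Your proposal is correct and matches the paper's approach essentially verbatim: the paper proves part~(1) by saying it is ``completely similar to the proof of part~(1) of Lemma~\ref{lem:8mdeltaTm}'' (exactly the calculation you wrote out), and proves part~(2) by applying part~(1) to get $|v(T_1(m_1),k)-v(0,k)|\le\delta^{(s'-1)}_0$ and then invoking part~(1) of Lemma~\ref{lem:9A.3} with $T_1(m_1)$ in the role of $m_1$. Your identification of the final assertion about the $\cM$-centers as the step requiring care is reasonable, though the paper's proof is terse enough that it subsumes this under ``one obtains the statement'' without further comment; your plan to handle it via Lemma~\ref{lem:9conditionLambdaAAA}(2) and the structure from Section~\ref{sec.9} is the right one.
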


\begin{proof}
$(1)$ The proof of this part is completely similar to the proof of part $(1)$ of Lemma~\ref{lem:8mdeltaTm}.

$(2)$ It follows from $(1)$ of the current lemma that $|v(T_1(m_1),k) - v(0,k)|) \le \delta^{(s'-1)}_0$. Applying part $(1)$ of Lemma~\ref{lem:9A.3} to $T_1(m_1)$ and $m_2$, one obtains the statement.
\end{proof}

\begin{defi}\label{defi:10.LLL}
Assume that
\begin{equation}\label{eq:9kintcondCONDDEF}
\begin{split}
0 < |k - k_{n_0}| < (\delta^\esone_0)^{3/4} -\sum_{s-1 \le t \le s+q-1} (\delta^{(t)}_0)^{31/32}, \\
|k - k_{n_1}| < (\delta^{(s^\one-1)}_0)^{3/4}.
\end{split}
\end{equation}
Let $\mathfrak{L}'$ be the collection of all sets $\La(m) := \La^{(s')}(m) \cup T_1(\La^{(s')}(m))$, $1 \le s' \le s+q-1$, $m \in \cM^{(s')}_{ k,s+q-1}$. We say that $\La(m_1) \approx \La(m_2)$ if $s_1 = s_2$ and $\La(m_1) \cap \La(m_2) \neq \emptyset$. It follows from  part $(3)$ of Lemma~\ref{lem:9A.3} and part $(2)$ of Lemma~\ref{lem:10mdeltaTm} that this is indeed an equivalence relation on $\mathfrak{L}'$. Let $\mathfrak{M}$ be the set of equivalence classes. It follows from this definition and part $(2)$ of Lemma~\ref{lem:10mdeltaTm} that each class has at most two elements in it. For each $\mathfrak{m} \in \mathfrak{M}$, set $\La(\mathfrak{m}) = \bigcup_{\La(m_1) \in \mathfrak{m}} \La(m_1)$. Set $\mathfrak{L} = \{ \La(\mathfrak{m}) : \mathfrak{m} \in \mathfrak{M} \}$. Let $\La(\mathfrak{m}) \in \mathfrak{L}$, $\La^{(s')}(m) \in \mathfrak{m}$. Set $t(\La(\mathfrak{m})) = s'$. This defines an $\mathbb{N}$-valued function on $\mathfrak{L}$. Set also $p_\mathfrak{m} = \bigcup_{\La^{(s')}_k(m)) \in \mathfrak{m}} \La^{(s')}_k(m) \cap \cM^{(s')}_{k,s^\one-1}$.
\end{defi}

\begin{lemma}\label{lem:10.lLL}
$(1)$ For any $\La(\mathfrak{m}_j) \in \mathfrak{L}$, $j=1,2$, such that $t(\La(\mathfrak{m}_1)) = t(\La(\mathfrak{m}_2))$, $\mathfrak{m}_1 \neq \mathfrak{m}_2$, we have $\dist (\La(\mathfrak{m}_1), \La(\mathfrak{m}_2)) > R^{(t(\La(\mathfrak{m}_1)))}$.

$(2)$ For any $\mathfrak{m}$, we have
\begin{equation}\label{eq:10Lmsets}
\bigcup_{m \in p_\mathfrak{m}} \bigl((m + B(2R^{(t(\La(\mathfrak{m})))}) \subset \La(\mathfrak{m}) \subset \bigcup_{m \in p_\mathfrak{m}} \bigl((m + B(3R^{(t(\La(\mathfrak{m})))}).
\end{equation}

$(3)$ The pair $(\mathfrak{L},t)$ is a proper subtraction system.

$(4)$ For any $\mathfrak{m}$, we have $\La(\mathfrak{m}) = T_1(\La(\mathfrak{m}))$.
\end{lemma}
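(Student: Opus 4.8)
\textbf{Proof proposal for Lemma~\ref{lem:10.lLL}.} The plan is to mimic, essentially verbatim, the proof of Lemma~\ref{lem:9.lLL} (and before it, Lemma~\ref{lem:8.lLL}), replacing the reflection $T$ by $T_1(n) = n_1 - n$ throughout, and keeping track of the fact that now the relevant hypotheses on $k$ are the pair of conditions \eqref{eq:9kintcondCONDDEF}. Since Definition~\ref{defi:10.LLL} already packages the sets $\La(m)$, the equivalence relation $\approx$, the classes $\mathfrak{M}$, the sets $\La(\mathfrak{m})$, the function $t$, and the pairs $p_{\mathfrak m}$ exactly as in the $T$-case, the four assertions to be proved are the literal analogues of parts $(1)$--$(4)$ of Lemma~\ref{lem:9.lLL}, and I would organize the argument the same way.

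First, for part $(1)$: take $\La^{(s')}(m_j^{\pm})\cup T_1(\La^{(s')}(m_j^{\pm}))\in\mathfrak{m}_j$ with $t(\La(\mathfrak m_1))=t(\La(\mathfrak m_2))=s'$ and $\mathfrak m_1\neq\mathfrak m_2$. Then $p_{\mathfrak m_1}\cap p_{\mathfrak m_2}=\emptyset$, so by part $(1')$ of Lemma~\ref{lem:9A.3} the four sets $\La^{(s')}_k(m_1)$, $\La^{(s')}_k(m_2)$ are pairwise at distance $>5R^{(s')}$ (or equal within a class, but they are not, since $\mathfrak m_1\neq\mathfrak m_2$), and by part $(2)$ of Lemma~\ref{lem:10mdeltaTm} also $\dist\bigl(T_1(\La^{(s')}_k(m_1)),\La^{(s')}_k(m_2)\bigr)>5R^{(s')}$ and $\dist\bigl(T_1(\La^{(s')}_k(m_2)),\La^{(s')}_k(m_1)\bigr)>5R^{(s')}$; combining these (together with the trivial $\dist(T_1 X,T_1 Y)=\dist(X,Y)$) yields $\dist(\La(\mathfrak m_1),\La(\mathfrak m_2))>R^{(t(\La(\mathfrak m_1)))}$. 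Part $(2)$ is just \eqref{eq:9lambdasetssize} / part $(2)$ of Lemma~\ref{lem:9.lLL} restated for the sets $\La(\mathfrak m)$: each $\La(\mathfrak m)$ is a union of sets $\La^{(s')}_k(m)$ over $m\in p_{\mathfrak m}$, together with their $T_1$-images, and for $m\in p_{\mathfrak m}$ we have $m+B(2R^{(s')})\subset\La^{(s')}_k(m)\subset m+B(3R^{(s')})$, while $T_1(m+B(R))=(n_1-m)+B(R)$ and $n_1-m\in p_{\mathfrak m}$ as well by the last statement of part $(2)$ of Lemma~\ref{lem:10mdeltaTm}; this gives the two inclusions in \eqref{eq:10Lmsets}.

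For part $(4)$, $\La(\mathfrak m)=T_1(\La(\mathfrak m))$ follows directly from the definition of $\La(\mathfrak m)$ as a union of sets of the form $X\cup T_1(X)$ together with $T_1\circ T_1=\mathrm{id}$ and the fact that $\approx$ is $T_1$-stable. Part $(3)$ — that $(\mathfrak L,t)$ is a proper subtraction system — is verified as in part $(4)$ of Lemma~\ref{lem:8.lLL}: for condition $(i)$ of Definition~\ref{defi:5.twolambdas6}, distinct $\La(\mathfrak m')\neq\La(\mathfrak m'')$ with the same value $a=t$ are at distance $\ge R^{(a)}$ by part $(1)$ above (and by part $(3)$ of Lemma~\ref{lem:9.lLL}'s analogue, i.e.\ the observation that distinct classes give distinct sets, which here follows from parts $(1)$ and $(2)$), so $R_a\ge R^{(a)}$; for condition $(ii)$, given $\La(\mathfrak m)$ with $a=t(\La(\mathfrak m))+1$, write $\La(\mathfrak m)=\Xi(\mathfrak m)\cup T_1(\Xi(\mathfrak m))$ where $\Xi(\mathfrak m)$ is the union of the at most one ``representative'' piece $\La^{(s')}_k(m)$ (plus the possibly second class element, still of diameter $\le 6R^{(a-1)}$), so $\diam(\Xi(\mathfrak m))\le 6R^{(a-1)}<2^{-a}R^{(a)}\le 2^{-a}R_a$, and whenever $\La(\mathfrak m)\cap\La(\mathfrak m')\neq\emptyset$ one gets, again using part $(2)$, that both $\Xi(\mathfrak m)\cap\La(\mathfrak m')\neq\emptyset$ and $T_1(\Xi(\mathfrak m))\cap\La(\mathfrak m')\neq\emptyset$. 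I do not expect any genuine obstacle here: the only point requiring care — and the one I would write out — is the bookkeeping showing that the hypotheses \eqref{eq:9kintcondCONDDEF} are exactly what is needed to invoke Lemma~\ref{lem:9A.3} (parts $(1')$, $(3)$), Lemma~\ref{lem:9conditionLambda}, and Lemma~\ref{lem:10mdeltaTm} with the correct value $s^{(1)}-1=s+q-1$, so that all the ``$T_1$ moves a resonant point to a resonant point, or far away'' dichotomies are available; everything else is the same chaining-and-diameter argument already carried out twice in the excerpt.
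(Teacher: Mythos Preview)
Your proposal is correct and is exactly the approach the paper takes: the paper's entire proof reads ``The proof is completely similar to the proof of Lemma~\ref{lem:9.lLL}.'' Your expanded sketch correctly identifies the ingredients (Lemma~\ref{lem:9A.3}~$(1')$, Lemma~\ref{lem:10mdeltaTm}~$(2)$, and the Definition~\ref{defi:5.twolambdas6} verification) and the only point of care, namely that the hypotheses \eqref{eq:9kintcondCONDDEF} feed the needed lemmas at level $s^{(1)}-1$.
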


\begin{proof}
The proof is completely similar to the proof of Lemma~\ref{lem:9.lLL}.
\end{proof}

Set
\begin{equation} \label{eq:10.twolambdas5}
\begin{split}
\mathfrak{B}(n_1) := B(3R^{(s^\one)}) \cup (n_1 + B(3R^{(s^\one)})), \\
\mathfrak{B}(n_1,\ell) = \mathfrak{B}(n_1,\ell-1) \setminus \Bigl( \bigcup_{\mathfrak{m} \in \mathfrak{M} : \La(\mathfrak{m}) \between \mathfrak{B}(n_1,\ell-1)} \La(\mathfrak{m})\Bigr),
\end{split}
\end{equation}
$\ell = 1, 2, \ldots$.

\begin{lemma}\label{lem:10setLambdas}
$(1)$ There exists $\ell_0 < 2^{s^\one}$ such that $\mathfrak{B}(n_1,\ell) = \mathfrak{B}(n_1,\ell-1)$ for any $\ell \ge \ell_0$.

$(2)$ For any $\La \in \mathfrak{L}$, we have either $\La \subset \mathfrak{B}(n_1,\ell_0)$ or $\La \subset \Bigl( \IZ^\nu \setminus \mathfrak{B}(n_1,\ell_0) \Bigr)$.

$(3)$ Set $\La^{(s^\one)}_k(0) = \mathfrak{B}(n_1,\ell_0)$. Then, for any $\La^{(s')}(m)$, we have either $\La^{(s')}(m) \cap \La^{(s^\one)}_k(0) = \emptyset$ or $\La^{(s')}(m) \subset \La^{(s^\one)}_k(0)$.

$(4)$ $T_1(\mathfrak{B}(n_1,\ell)) = \mathfrak{B}(n_1,\ell)$ for any $\ell$. In particular, $T_1(\La^{(s^\one)}_k(0)) = \La^{(s^\one)}_k(0)$.

$(5)$ For any $\ell \ge 1$, we have
\begin{equation}\label{eq:9.twolambdas5NEW-2}
\{ n \in \mathfrak{B}(n_1,\ell-1)) : \dist(n,\IZ^\nu \setminus \mathfrak{B}(n_1,\ell-1)) \ge 3 R^{(s^\one-1)} \} \subset \mathfrak{B}(n_1,\ell) \subset \mathfrak{B}(n_1,\ell-1)).
\end{equation}
\end{lemma}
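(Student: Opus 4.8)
\textbf{Proof plan for Lemma~\ref{lem:10setLambdas}.}
The plan is to prove the five statements exactly as the analogous statements in Lemmas~\ref{lem:8setLambdas} and \ref{lem:9setLambdas} were proved, invoking the machinery of proper subtraction systems from Section~\ref{sec.7}. The key structural inputs are already in place: by Definition~\ref{defi:10.LLL} the collection $\mathfrak{L}$ together with the function $t$ is set up, and by Lemma~\ref{lem:10.lLL} the pair $(\mathfrak{L},t)$ is a proper subtraction system, with $R_a \ge R^{(a)}$ in the notation of Definition~\ref{defi:5.twolambdas6}, and moreover each $\La(\mathfrak{m})$ decomposes as $\Xi(\mathfrak{m}) \cup T_1(\Xi(\mathfrak{m}))$ with $\diam(\Xi(\mathfrak{m})) \le 6 R^{(t(\La(\mathfrak{m})))}$ by \eqref{eq:10Lmsets}.

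First I would observe that the iteration \eqref{eq:10.twolambdas5} is precisely the iteration \eqref{eq:5.twolambdas5NEW} of Definition~\ref{defi:5.twolambdas6} applied to the proper subtraction system $(\mathfrak{L},t)$, with initial set $\mathfrak{B}(n_1) = B(3R^{(s^\one)}) \cup (n_1 + B(3R^{(s^\one)}))$, except that in \eqref{eq:10.twolambdas5} one only subtracts the $\La(\mathfrak{m})$ that are \emph{chained} with $\mathfrak{B}(n_1,\ell-1)$ (i.e.\ meet it but are not contained in it), which is exactly the set $\mathfrak{N}_\ell$ from Lemma~\ref{lem:5.twolambdas4}. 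Since $\sup_{\La\in\mathfrak{L}}t(\La) \le s^\one - 1 < \infty$, part~$(2)$ of Lemma~\ref{lem:5.twolambdas4} gives an $\ell_0 < 2^{s^\one}$ with $\mathfrak{B}(n_1,\ell) = \mathfrak{B}(n_1,\ell_0)$ for all $\ell \ge \ell_0$, which is statement~$(1)$; and part~$(3)$ of that lemma gives statement~$(2)$, namely that each $\La\in\mathfrak{L}$ is either contained in $\mathfrak{B}(n_1,\ell_0) = \La^{(s^\one)}_k(0)$ or disjoint from it. Statement~$(3)$ then follows: given any $\La^{(s')}(m)$ with $\La^{(s')}(m) \cap \La^{(s^\one)}_k(0) \neq \emptyset$, let $\mathfrak{m}$ be the equivalence class of $\La^{(s')}(m) \cup T_1(\La^{(s')}(m))$; then $\La^{(s')}(m) \subset \La(\mathfrak{m})$, so $\La(\mathfrak{m})$ meets $\La^{(s^\one)}_k(0)$, hence by $(2)$ is contained in it, hence so is $\La^{(s')}(m)$. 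For statement~$(4)$ I would argue by induction on $\ell$: $T_1(\mathfrak{B}(n_1)) = \mathfrak{B}(n_1)$ since $T_1(B(3R^{(s^\one)})) = n_1 + B(3R^{(s^\one)})$ and $T_1(n_1 + B(3R^{(s^\one)})) = B(3R^{(s^\one)})$; and assuming $T_1(\mathfrak{B}(n_1,\ell-1)) = \mathfrak{B}(n_1,\ell-1)$, the collection of $\mathfrak{m}$ subtracted at step $\ell$ is $T_1$-invariant because $T_1(\La(\mathfrak{m})) = \La(\mathfrak{m})$ by part~$(4)$ of Lemma~\ref{lem:10.lLL} and $T_1$ preserves the chaining relation with the $T_1$-invariant set $\mathfrak{B}(n_1,\ell-1)$; this gives $T_1(\mathfrak{B}(n_1,\ell)) = \mathfrak{B}(n_1,\ell)$, and in particular $T_1(\La^{(s^\one)}_k(0)) = \La^{(s^\one)}_k(0)$.

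Finally, for statement~$(5)$: if $n \in \mathfrak{B}(n_1,\ell-1)$ is removed at step $\ell$, then $n$ lies in some $\La(\mathfrak{m})$ chained with $\mathfrak{B}(n_1,\ell-1)$; by \eqref{eq:10Lmsets} and Definition~\ref{defi:10.LLL} (using $T_1$-invariance of $\mathfrak{B}(n_1,\ell-1)$ from part~$(4)$) such a $\La(\mathfrak{m})$ is contained in a union of a ball of radius $6R^{(t(\La(\mathfrak{m})))} \le 6 R^{(s^\one-1)}$ and its $T_1$-image, so $n$ is within distance $6 R^{(s^\one-1)}$—and more carefully, within distance $3R^{(s^\one-1)}$ after using the sharper diameter bound as in Lemma~\ref{lem:9setLambdas}(5)—of $\IZ^\nu \setminus \mathfrak{B}(n_1,\ell-1)$; contrapositively, every $n$ at distance $\ge 3R^{(s^\one-1)}$ from the complement survives, which is the left inclusion in \eqref{eq:9.twolambdas5NEW-2}, while the right inclusion is immediate from \eqref{eq:10.twolambdas5}. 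I do not anticipate a genuine obstacle here: the entire content has been packaged into the abstract combinatorial framework of Section~\ref{sec.7}, and the only point requiring a little care is matching the constant $3R^{(s^\one-1)}$ in \eqref{eq:9.twolambdas5NEW-2} with the diameter estimate \eqref{eq:10Lmsets} (which gives $6R^{(s^\one-1)}$ naively), handled exactly as in the proof of Lemma~\ref{lem:9setLambdas}(5) by exploiting that $\La(\mathfrak{m})$ is a union of two pieces each of diameter controlled by $3R^{(s^\one-1)}$ when one accounts for the symmetry; alternatively one simply notes $6R^{(s^\one-1)} < 2^{-s^\one}R^{(s^\one)}$ so the bookkeeping with $\ell_0 < 2^{s^\one}$ still closes.
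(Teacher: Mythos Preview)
Your proposal is correct and takes essentially the same approach as the paper: the paper's proof simply states that parts $(1)$--$(5)$ go word for word as in Lemma~\ref{lem:8setLambdas}, and your detailed write-up unpacks precisely that argument via Lemma~\ref{lem:5.twolambdas4} (for $(1)$ and $(2)$), the inclusion $\La^{(s')}(m)\subset\La(\mathfrak{m})$ (for $(3)$), induction on $\ell$ using $T_1$-invariance from Lemma~\ref{lem:10.lLL} (for $(4)$), and the diameter bound \eqref{eq:10Lmsets} (for $(5)$). Your caution about the constant $3R^{(s^\one-1)}$ versus $6R^{(s^\one-1)}$ is well placed but inessential---the paper itself uses $6R^{(s-1)}$ in Lemma~\ref{lem:8setLambdas} and $3R^{(s+q-1)}$ in Lemma~\ref{lem:9setLambdas} without comment, and either bound suffices for the subsequent applications.
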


\begin{proof}
The proof of parts $(1)$--$(5)$ goes word for word as the proof of parts $(1)$--$(5)$ of Lemma~\ref{lem:8setLambdas}.
\end{proof}

\begin{prop}\label{prop:10.1}
$(I)$ Let $k$ be as in \eqref{eq:9kintcondCONDDEF}. Set $\mathfrak{m}^{(1)} := \{ 0, n_0, n_1, n_1-n_0 \}$, $\mathfrak{s}^{(1)} = (s^\zero,s^\one)$. If $|k| > |k_{n_1}|$, then $H_{\La^{(s^\one)}_k(0), \ve, k} \in GSR^{(\mathfrak{s}^{(1)})} \bigl( \mathfrak{m}^{(1)},0,n_1, \La^{(s^\one)}_k(0); \delta_0,\mathfrak{t}^\one \bigr)$, $\mathfrak{t}^\one = (\tau^\zero,\tau^\one)$, $\tau^{(r)} = \tau^{(r)}(k) = |k_{n_r}| ||k| - |k_{n_r}||$. If $|k| < |k_{n_1}|$, then $H_{\La^{(s^\one)}_k(0), \ve, k'} \in GSR^{(\mathfrak{s}^{(1)})} \bigl( \mathfrak{m}^{(1)},n_1,0, \La^{(s^\one)}_k(0); \delta_0,\mathfrak{t}^\one \bigr)$.

$(II)$ Let $k \in \IR \setminus \big[ \bigcup_{R^\es \le |m'| \le 12 R^{(s+q)}, \quad m' \notin \{n_0,n_1\}} (k_{m',s+q-1}^-,k_{m',s+q-1}^+) \cup \{k_{n_0},k_{n_1}\}$. One can define $\La^{(s^\one)}_k(0)$ so that $H_{\La^{(s^\one)}_k(0), \ve, k}$ belongs to one of the classes introduced in Sections~\ref{sec.3}, \ref{sec.5}, \ref{sec.6} with $0$ being either the principal point or one of the two principal points.

$(III)$ Let $k$ be as in part $(II)$. There exists a unique real-analytic function $E(0,\La^{(s^\one)}_k(0);\ve,k)$ of $\ve \in (-\ve_{s-1}, \ve_{s-1})$ such that $E(0,\La^{(s^\one)}_k(0);\ve,k)$ is a simple eigenvalue of $H_{\La^{(s^\one)}_k(0), \ve, k}$ and $E(0,\La^{(s^\one)}_k(0);\ve,k) = v(0,k)$. Moreover,
\begin{equation}\label{eq:10Esymmetry}
E(0,\La^{(s^\one)}_k(0);\ve,k) = E(0,\La^{(s^\one)}_{-k}(0);\ve,-k),
\end{equation}
\begin{equation}\label{eq:10Ekk1EG}
\begin{split}
(k^\zero)^2 (k - k_1)^2 - 3 |\ve| (\delta^{(s^{(1)})}_0)^{4} - 10 |\ve| \sum_{s'\ge s \delta^{(s')}_0 < \min (k-k_1,k_1)} (\delta^{(s')}_0)^{4} < E(0,\La^{(s^\one)}_k(0);\ve,k) - E(0,\La^{(s^\one)}_{k_1}(0);\ve,k_1) \\
< \frac{2k}{\lambda} (k - k_1) + \sum_{k_1 < k_{n_\ell} < k} 2 |\ve| (\delta^{(s^{(\ell)}-1)}_0)^{1/8} + 2 |\ve| (\delta^{(s^\one)}_0)^5, \quad 0 < k_1 < k,\quad \gamma-1\le k_1 \le  \gamma.
\end{split}
\end{equation}
where $k^\zero:=\min(\ve_0^{3/4}, k_{n_0}/512)$ and $\gamma$ is the same as in the Definition \eqref{eq:7-5-7}.
\end{prop}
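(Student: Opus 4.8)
\textbf{Plan of proof for Proposition~\ref{prop:10.1}.}
The strategy is to bootstrap from the two inductive families already constructed in Sections~\ref{sec.8} and \ref{sec.9}, exactly as Proposition~\ref{prop:9.1} was bootstrapped from Proposition~\ref{prop:8.1}. For part $(I)$, I would first invoke Lemma~\ref{lem:10A.3} to see that, under \eqref{eq:9kintcondCONDDEF}, conditions $(i)$--$(iv)$ and $(vi)$ of Definition~\ref{def:7-6} hold for $H_{\La^{(s^\one)}_k(0),\ve,k}$, that $n_1\in\cM^{(s^\one-1)}_{k,s^\one-1}$, and that the split estimates \eqref{eq:6EVsplitdefs1a}--\eqref{eq:6-9EVsplitdefs1a} of Definition~\ref{def:9-6} are satisfied. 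The set $\La^{(s^\one)}_k(0)$ built via \eqref{eq:10.twolambdas5} and Lemma~\ref{lem:10setLambdas} has the crucial reflection symmetry $T_1(\La^{(s^\one)}_k(0))=\La^{(s^\one)}_k(0)$ (Lemma~\ref{lem:10setLambdas}$(4)$), which forces the ordering condition \eqref{eq:6-9-13OPQ} or \eqref{eq:6-9-13OPQminus} of Definition~\ref{def:9-6}, via the same Lemma~\ref{lem:basicshiftprop}-type identities for $Q^{(s^\one)}$ that were used in the proof of Proposition~\ref{prop:8.1} (the analogue of \eqref{eq:8-basicfunctionsrelat} with $T$ replaced by $T_1$). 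The sign of $v(0,k)-v(0,k_{n_1})$ selects which of the two alternatives holds, whence $H_{\La^{(s^\one)}_k(0),\ve,k}\in GSR^{(\mathfrak{s}^{(1)})}(\mathfrak{m}^{(1)},0,n_1,\La^{(s^\one)}_k(0);\delta_0,\mathfrak{t}^\one)$ with $\tau^{(r)}=|k_{n_r}|\,||k|-|k_{n_r}||$ coming out of the two-sided comparison estimates just as $\tau^\zero$ did before; the value $\tau^\one>(\delta_0^{(s^\one-1)})^{1/14}$ from Lemma~\ref{lem:10A.3}$(2)$ guarantees the hypothesis $\tau^\zero>(\delta_0^{(s+q-1)})^{1/4}$ of Definition~\ref{def:9-6} on the relevant range.

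For part $(II)$, the case analysis on $k$ runs as follows: if $|k-k_{n_1}|>(\delta_0^{(s^\one-1)})^{7/8}$ and $k$ lies in the indicated complement, Lemma~\ref{lem:10A.3}$(3)$ together with Proposition~\ref{prop:9.1}$(II)$ puts $H_{\La^{(s^\one)}_k(0),\ve,k}$ into the class $\cN^{(s^\one)}$ with $0$ the principal point; if instead $|k-k_{n_1}|$ is small (so \eqref{eq:9kintcondCONDDEF} can be arranged) and $k\neq k_{n_1}$, part $(I)$ gives membership in a $GSR$ class with $0$ one of the two principal points; and for $k$ away from the resonance strip of $n_1$ but near $k_{n_0}$ one falls back to Proposition~\ref{prop:9.1} itself. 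Patching these together on the stated $k$-domain, using that the strips $(k^-_{m',s+q-1},k^+_{m',s+q-1})$ for $R^\es\le|m'|\le 12R^{(s+q)}$, $m'\notin\{n_0,n_1\}$ are removed, gives $(II)$.

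For part $(III)$, existence and uniqueness of the real-analytic branch $E(0,\La^{(s^\one)}_k(0);\ve,k)$ with $E(0,\La^{(s^\one)}_k(0);0,k)=v(0,k)$ is Rellich's theorem plus simplicity (the $0$-eigenvalue is simple at $\ve=0$ by the nonresonance statement in Lemma~\ref{lem:9A.2} / Remark~\ref{rem:9.1inout}$(6)$), together with the spectral localization statements (parts $(6)$ of the $GSR$ propositions and part $(II)$) identifying this branch with the appropriate $E(m^+,\La;\ve,k)$ or $E^{(s^\one)}$ function. The symmetry \eqref{eq:10Esymmetry} follows from Lemma~\ref{lem:basicshiftprop}$(5)$, which conjugates $H_{\La,\ve,k}$ with $\overline{H_{-\La,\ve,-k}}$, combined with $\La^{(s^\one)}_{-k}(0)=-\La^{(s^\one)}_k(0)$ (an analogue of Lemma~\ref{lem:12Nesreflection} for the new sets, proved by the same reflection-invariance of the construction). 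The two-sided estimate \eqref{eq:10Ekk1EG} is obtained by chaining the local comparison estimates across the parameter interval $[k_1,k]$: on each subinterval lying in a single connected component of the relevant complement one uses the $k$-derivative bounds \eqref{eq:9Efirstder} / \eqref{eq:9Ekderivatives} (and for $\cN$-pieces, Proposition~\ref{prop:A.3}$(6)$ and \eqref{eq:7kk1compderivloverin}), picking up the convexity lower bound $(k^\zero)^2(k-k_1)^2$ from the second derivative being $\ge 7/(4\lambda)$ near each $k_{n_\ell}$; each time the path crosses a resonance $k_{n_\ell}$ (i.e. $k_1<k_{n_\ell}<k$) one loses at most $2|\ve|(\delta_0^{(s^{(\ell)}-1)})^{1/8}$ from \eqref{eq.5Eestimates1AP}-type jumps, and each set-change $\La^{(s')}_{k}(0)\to\La^{(s')}_{k'}(0)$ costs $|\ve|(\delta^{(\cdot)}_0)^5$ by the Corollary~\ref{cor:6.twolambdas1} / \eqref{eq:9kk1La} comparisons; summing these along the (finitely many) pieces gives the displayed bounds, with the $\sum_{s'\ge s}$ and $\sum_{k_1<k_{n_\ell}<k}$ terms accounting respectively for the component-crossings of lower-scale origin sets and for the resonance-crossings.

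\textbf{Main obstacle.} The genuinely delicate point is the bookkeeping in part $(III)$: one must verify that along the segment from $k_1$ to $k$ the matrix $H_{\La,\ve,\cdot}$ stays inside the hypotheses of the appropriate proposition \emph{uniformly}, switching correctly between $\cN$-type, $OPR$-type and $GSR$-type regimes as $k$ enters and leaves the resonance strips of $n_0$ and $n_1$, and that the reflection-symmetric sets $\La^{(s')}_{k}(0)$ deform controllably (via Corollaries~\ref{cor:5.twolambdas1} and \ref{cor:6.twolambdas1}) so that the error terms telescope to exactly the sums written in \eqref{eq:10Ekk1EG}. Keeping the constants $(k^\zero)^2$, the $(\delta^{(s^{(\ell)}-1)}_0)^{1/8}$ losses, and the $(\delta^{(s')}_0)^4$ convexity defects consistent across all the regime changes is where the argument requires care; everything else is a direct transcription of the Section~\ref{sec.8}--\ref{sec.9} arguments with $T,n_0$ replaced by $T_1,n_1$ at the top scale.
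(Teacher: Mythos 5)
Your proposal is correct and follows essentially the same route as the paper's (very terse) proof, which simply states that part $(I)$ is proved exactly as Proposition~\ref{prop:8.1}, part $(II)$ exactly as Proposition~\ref{prop:A.3}, and that \eqref{eq:10Ekk1EG} is a ``simple combination'' of \eqref{eq:8Ekderivatives} and \eqref{eq:7Ederivlower}. Your reconstruction identifies the correct key ingredients --- the $T_1$-reflection symmetry of $\La^{(s^\one)}_k(0)$ from Lemma~\ref{lem:10setLambdas}$(4)$ forcing the ordering condition of Definition~\ref{def:9-6} via the $T_1$-analogue of \eqref{eq:8-basicfunctionsrelat}, Lemma~\ref{lem:10A.3} supplying the inductive hypotheses and the lower bound on $\tau^\zero$, the patching for $(II)$, and the telescoping of derivative/crossing/set-change estimates for $(III)$ --- and in fact spells out more detail than the paper does; the one slip is a purely notational one where you write $\tau^\one>(\delta_0^{(s^\one-1)})^{1/14}$ when citing Lemma~\ref{lem:10A.3}$(2)$, whereas that lemma gives this bound for $\tau^\zero$ (which is the quantity needed against $(\delta^{(s+q-1)}_0)^{1/4}$ in Definition~\ref{def:9-6}), so the intended inequality is correct.
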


\begin{proof}
The proof of $(I)$ is completely similar to the proof of Proposition~\ref{prop:8.1} and we omit it. The proof of $(II)$ is completely similar to the proof of Proposition~\ref{prop:A.3} and we omit it as well. The existence of $E(0,\La^{(s^\one)}_k(0);\ve,k)$, its analyticity and uniqueness follows from
part $(II)$. The proof of \eqref{eq:10Ekk1EG}
$(III)$ is a simple combination
of the \eqref{eq:8Ekderivatives} from Proposition~\ref{prop:8.1} and \eqref{eq:7Ederivlower} from Proposition~\ref{prop:A.3}.
\end{proof}

\begin{defi}\label{def:10.finitereset} Set
\begin{equation}\label{eq:10K.1}
\begin{split}
\mathcal{I}_n = ( k_n - (\delta^\es_0)^{3/4}, k_n + (\delta^\es_0)^{3/4} ) \quad \text{if $12R^\esone < |n| \le 12 R^\es$}, \\
\mathcal{R}(k) = \{ n \in \zv \setminus \{0\} : k \in \mathcal{I}_n\}, \quad \mathcal{G} = \{ k : |\mathcal{R}(k)| < \infty \}.
\end{split}
\end{equation}
Let $k \in \mathcal{G}$ be such that $|\mathcal{R}(k)| > 0$. We enumerate the points of $\mathcal{R}(k)$ as $n^{(\ell)}(k)$, $\ell = 0,\dots,\ell(k)$, $1+\ell(k) = |\mathcal{R}(k)|$, so that $|n^{(\ell)}(k)| < |n^{(\ell+1)}(k)|$; see Lemma~\ref{lem:10resetdiscr1} below. Let $s^{(\ell)}(k)$ be defined so that $12 R^{(s^{(\ell)}(k)-1)} < n^{(\ell)}(k) \le 12 R^{(s^{(\ell)}(k))}$, $\ell = 0,\dots,\ell(k)$. Set
\begin{equation}\label{eq:10mjdefi}
\begin{split}
T_{m}(n) = m - n, \quad m,n \in \mathbb{Z}^\nu, \\
\mathfrak{m}^{(0)}(k) = \{ 0,n^{(0)}(k)\}, \quad \mathfrak{m}^{(\ell)}(k) = \mathfrak{m}^{(\ell-1)}(k) \cup T_{n^{(\ell)}(k)} (\mathfrak{m}^{(\ell-1)}(k)), \quad \ell = 1,\dots,\ell(k).
\end{split}
\end{equation}
\end{defi}

\begin{lemma}\label{lem:10resetdiscr1}
Assume $m_1 \in \mathcal{R}(k)$. Let $12 R^{(s_1-1)} < |m_1| \le 12 R^{(s_1)}$. Then,

$(1)$ $|m_1\omega| > (\delta^{(s_1-1)})^{1/16}$, $|k| > (\delta^{(s_1-1)})^{1/16}/2$.

$(2)$ $\sgn(k) = -\sgn (m_1\omega)$.

$(3)$ If $m_2 \in \mathcal{R}(k)$, $m_1 \neq m_2$, then $|m_1| \neq |m_2|$. If $|m_1| < |m_2|$, then, in fact, $|m_2| > R^{(s_1+1)}/2$.
\end{lemma}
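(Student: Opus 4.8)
\textbf{Proof plan for Lemma~\ref{lem:10resetdiscr1}.} The three claims are all consequences of the Diophantine condition \eqref{eq:7-5-8}, the definitions \eqref{eq:7K.1}, and the definition \eqref{eq:10K.1} of the resonance set $\mathcal{R}(k)$; the proof is largely bookkeeping with the various scale parameters. First I would recall the basic non-resonance estimate \eqref{eq:diphnores}: if $0 < |m| \le 48 R^{(u)}$, then $|m\omega| \ge a_0(48 R^{(u)})^{-b_0} > (R^{(u)})^{-2b_0} = (\delta_0^{(u-1)})^{1/16}$. Applying this with $u = s_1$ and $m = m_1$ (legitimate since $|m_1| \le 12 R^{(s_1)} < 48 R^{(s_1)}$) gives $|m_1\omega| > (\delta^{(s_1-1)})^{1/16}$, the first half of $(1)$. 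For the second half, $m_1 \in \mathcal{R}(k)$ means $k \in \mathcal{I}_{m_1} = (k_{m_1} - (\delta^{(s_1)}_0)^{3/4}, k_{m_1} + (\delta^{(s_1)}_0)^{3/4})$ with $k_{m_1} = -m_1\omega/2$, so $|k| \ge |k_{m_1}| - (\delta^{(s_1)}_0)^{3/4} = |m_1\omega|/2 - (\delta^{(s_1)}_0)^{3/4} > (\delta^{(s_1-1)})^{1/16}/2 - (\delta^{(s_1)}_0)^{3/4}$; since $\delta^{(s_1)}_0 < (\delta^{(s_1-1)}_0)^8$ (Remark~\ref{rem:3.Rs}), the subtracted term is negligible and I get $|k| > (\delta^{(s_1-1)})^{1/16}/2$.

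For part $(2)$, I would use that $|k - k_{m_1}| < (\delta^{(s_1)}_0)^{3/4}$ is much smaller than $|k_{m_1}| = |m_1\omega|/2 > (\delta^{(s_1-1)})^{1/16}/2$, so $k$ and $k_{m_1} = -m_1\omega/2$ have the same sign; hence $\sgn(k) = \sgn(-m_1\omega) = -\sgn(m_1\omega)$.

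For part $(3)$, suppose $m_2 \in \mathcal{R}(k)$ with $12 R^{(s_2-1)} < |m_2| \le 12 R^{(s_2)}$ and $m_1 \ne m_2$. If $|m_1| = |m_2|$ then $s_1 = s_2$, and both $k_{m_1}, k_{m_2}$ lie within $(\delta^{(s_1)}_0)^{3/4}$ of $k$, so $|(m_1 - m_2)\omega| = 2|k_{m_1} - k_{m_2}| < 4(\delta^{(s_1)}_0)^{3/4}$; but $m_1 \ne m_2$ and $|m_1 - m_2| \le 24 R^{(s_1)} < 48 R^{(s_1)}$, so \eqref{eq:diphnores} forces $|(m_1 - m_2)\omega| > (\delta^{(s_1-1)}_0)^{1/16}$, which (again using $\delta^{(s_1)}_0 < (\delta^{(s_1-1)}_0)^8$) is a contradiction. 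So $|m_1| \ne |m_2|$; assume WLOG $|m_1| < |m_2|$, i.e.\ $s_1 \le s_2$. Now suppose for contradiction that $|m_2| \le R^{(s_1+1)}/2$, so $s_2 = s_1 + 1$ and $|m_1 - m_2| \le |m_1| + |m_2| \le 12 R^{(s_1)} + R^{(s_1+1)}/2 < 48 R^{(s_1+1)}$. As before $|(m_1 - m_2)\omega| = 2|k_{m_1} - k_{m_2}| \le 2(|k - k_{m_1}| + |k - k_{m_2}|) < 2((\delta^{(s_1)}_0)^{3/4} + (\delta^{(s_1+1)}_0)^{3/4}) < 4(\delta^{(s_1)}_0)^{3/4}$. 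Applying \eqref{eq:diphnores} with $u = s_1+1$ and $m = m_1 - m_2$ (note $m_1 \ne m_2$) gives $|(m_1-m_2)\omega| > (\delta^{(s_1)}_0)^{1/16}$, contradicting the previous upper bound. Hence $|m_2| > R^{(s_1+1)}/2$, completing part $(3)$.

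The only mild subtlety — the ``main obstacle'', though it is minor — is getting the constants in \eqref{eq:diphnores} to line up with the radii $48 R^{(u)}$ when $m_1 - m_2$ ranges over differences of two resonant momenta at possibly different scales; one must always invoke \eqref{eq:diphnores} at the \emph{larger} of the two scales so that the hypothesis $|m| \le 48 R^{(u)}$ is satisfied, and then the exponent gap $\delta^{(u)}_0 < (\delta^{(u-1)}_0)^8$ from Remark~\ref{rem:3.Rs} absorbs all the $(\delta^{(\cdot)}_0)^{3/4}$ error terms. Everything else is a direct substitution of the definitions.
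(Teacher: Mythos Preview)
Your proof is correct and follows essentially the same approach as the paper: all three parts rest on the Diophantine estimate \eqref{eq:diphnores} applied at the appropriate scale, together with $\delta^{(s)}_0 < (\delta^{(s-1)}_0)^8$. The paper's treatment of $(3)$ is slightly more streamlined---it shows in one stroke that $|(m_1-m_2)\omega|<4(\delta^{(s_1)}_0)^{3/4}$ forces $|m_1-m_2|>R^{(s_1+1)}$, which gives both conclusions simultaneously---and your assertion ``so $s_2=s_1+1$'' is not quite right (one could also have $s_2=s_1$ when $|m_1|<|m_2|\le 12R^{(s_1)}$), but your bounds on $|m_1-m_2|$ and $|(m_1-m_2)\omega|$ hold in either case, so the argument goes through unchanged.
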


\begin{proof}
Part $(1)$ follows from \eqref{eq:diphnores}. Part $(2)$ follows from $(1)$, see \eqref{eq:10K.1}. To prove $(3)$, one can assume that $|m_2| \ge |m_1|$. Since $|2k + m_i\omega| < 2(\delta^{(s_i)}_0)^{3/4}$, $i=1,2$ and $|m_2| \ge |m_1|$, one has $|m_1\omega - m_2\omega| < 4(\delta^{(s_1)}_0)^{3/4}$. It follows from \eqref{eq:diphnores} that $|m_1 - m_2| > R^{(s_1+1)}$. This implies $(3)$.
\end{proof}

\begin{lemma}\label{lem:10resetn0}
Let $n^{(0)} \in \zv \setminus \{0\}$. Then, $(1)$ $n^\zero \in \mathcal{R}(k_{n^\zero})$, $(2)$ $|n^\zero| = \max_{m \in \mathcal{R}(k_{n^\zero})} |m|$. In particular, $k_{n^\zero} \in \mathcal{G}$, $s^{(\ell(k_{n^\zero}))}(k_{n^\zero}) = s(n^\zero)$, where $12 R^{(s(n^\zero)-1)} < |m_1| \le 12 R^{(s(n^\zero))}$.
\end{lemma}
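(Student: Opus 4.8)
\textbf{Plan of proof for Lemma~\ref{lem:10resetn0}.}
The plan is to unwind the definitions in \eqref{eq:10K.1} and exploit the Diophantine structure already extracted in Lemma~\ref{lem:10resetdiscr1}. First I would fix $n^{(0)} \in \zv \setminus \{0\}$ and let $s_0$ be determined by $12 R^{(s_0-1)} < |n^{(0)}| \le 12 R^{(s_0)}$. For part $(1)$, the point $k_{n^{(0)}} = -n^{(0)}\omega/2$ satisfies $2 k_{n^{(0)}} + n^{(0)}\omega = 0$, so $|2 k_{n^{(0)}} + n^{(0)}\omega| = 0 < 2(\delta^{(s_0)}_0)^{3/4}$, which is exactly the membership condition $k_{n^{(0)}} \in \mathcal{I}_{n^{(0)}}$, hence $n^{(0)} \in \mathcal{R}(k_{n^{(0)}})$. (One should note $\sigma(n^{(0)})$ does not interfere here: $\mathcal{I}_n$ in \eqref{eq:10K.1} is the fixed-radius interval $(k_n - (\delta^{(s)}_0)^{3/4}, k_n + (\delta^{(s)}_0)^{3/4})$, centered at $k_n = -n\omega/2$, not the shifted $k^\pm_n$.)

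The crux is part $(2)$: showing $|n^{(0)}| = \max_{m \in \mathcal{R}(k_{n^{(0)}})} |m|$. Suppose toward a contradiction there is $m \in \mathcal{R}(k_{n^{(0)}})$ with $|m| > |n^{(0)}|$ (by part $(3)$ of Lemma~\ref{lem:10resetdiscr1} the norms in $\mathcal{R}(k)$ are distinct, so the only alternative to $|n^{(0)}|$ being the max is a strictly larger one). Let $12 R^{(s_1-1)} < |m| \le 12 R^{(s_1)}$; since $|m| > |n^{(0)}|$ and norms are separated by a full scale jump, in fact $s_1 \ge s_0$ and indeed $|m| > R^{(s_0+1)}/2$ by part $(3)$ of Lemma~\ref{lem:10resetdiscr1}. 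Membership $m \in \mathcal{I}_m$ at $k = k_{n^{(0)}}$ gives $|2 k_{n^{(0)}} + m\omega| < 2(\delta^{(s_1)}_0)^{3/4}$, i.e.\ $|(m - n^{(0)})\omega| = |m\omega - n^{(0)}\omega| = |2k_{n^{(0)}} + m\omega| < 2(\delta^{(s_1)}_0)^{3/4} \le 2(\delta^{(s_0)}_0)^{3/4}$. Now $m - n^{(0)} \neq 0$, and I would bound $|m - n^{(0)}| \le |m| + |n^{(0)}| \le 24 R^{(s_1)}$; by the Diophantine consequence \eqref{eq:diphnores} (valid since $|m - n^{(0)}| \le 48 R^{(s_1)}$), $|(m - n^{(0)})\omega| > (\delta^{(s_1-1)}_0)^{1/16}$. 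Combining, $(\delta^{(s_1-1)}_0)^{1/16} < 2(\delta^{(s_1)}_0)^{3/4}$; since $\delta^{(s_1)}_0 < (\delta^{(s_1-1)}_0)^8$ by Remark~\ref{rem:3.Rs}, the right side is less than $2(\delta^{(s_1-1)}_0)^6$, and for $\delta^{(s_1-1)}_0$ small (which holds by \eqref{eq:A.1A}) this is incompatible with the left side — a contradiction. Hence no such $m$ exists and $|n^{(0)}|$ is the maximal norm in $\mathcal{R}(k_{n^{(0)}})$.

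From parts $(1)$ and $(2)$ the remaining assertions follow immediately: $\mathcal{R}(k_{n^{(0)}})$ is contained in the ball of radius $|n^{(0)}|$, hence finite, so $k_{n^{(0)}} \in \mathcal{G}$; and in the enumeration $n^{(\ell)}(k_{n^{(0)}})$, $\ell = 0, \dots, \ell(k_{n^{(0)}})$, ordered by increasing norm (Lemma~\ref{lem:10resetdiscr1}$(3)$ guarantees this is well-defined and strictly increasing), the top index $\ell(k_{n^{(0)}})$ corresponds to the vector of maximal norm, which by part $(2)$ has the same norm as $n^{(0)}$; since distinct elements of $\mathcal{R}$ have distinct norms, $n^{(\ell(k_{n^{(0)}}))}(k_{n^{(0)}}) = n^{(0)}$ up to the sign ambiguity already built into the indexing, and in any case $12 R^{(s^{(\ell(k_{n^{(0)}}))}(k_{n^{(0)}}) - 1)} < |n^{(0)}| \le 12 R^{(s^{(\ell(k_{n^{(0)}}))}(k_{n^{(0)}}))}$, which is precisely the defining relation for $s(n^{(0)})$. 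Therefore $s^{(\ell(k_{n^{(0)}}))}(k_{n^{(0)}}) = s(n^{(0)})$. The only delicate point is the comparison of the Diophantine lower bound with the upper bound coming from membership in $\mathcal{I}_m$ at a larger scale; the gap in exponents ($1/16$ versus $3/4$ composed with the scale recursion $\delta^{(s)}_0 < (\delta^{(s-1)}_0)^8$) is comfortable, so this is routine once set up, but it is the one place where one must be careful to invoke \eqref{eq:diphnores} at the correct scale and to check the range hypothesis $|m - n^{(0)}| \le 48 R^{(s_1)}$.
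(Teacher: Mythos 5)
Your proof is correct and follows essentially the same route as the paper's: part (1) is immediate since $k_{n^{(0)}}$ is the center of $\mathcal{I}_{n^{(0)}}$, and part (2) is the same contradiction — $|(m-n^{(0)})\omega| = 2|k_{n^{(0)}} - k_m|$ is small by membership in $\mathcal{I}_m$ yet large by \eqref{eq:diphnores} since $|m - n^{(0)}| \le 2|m| \le 24R^{(s_1)} < 48R^{(s_1)}$. You are slightly more careful than the paper about the factor of $2$ in the membership bound and you spell out the $\delta^{(s)}_0 < (\delta^{(s-1)}_0)^8$ comparison, which the paper leaves implicit; these are cosmetic improvements, not a different argument. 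One small inaccuracy at the end: there is no "sign ambiguity" in the enumeration — $-n^{(0)} \notin \mathcal{R}(k_{n^{(0)}})$ since $k_{n^{(0)}} \in \mathcal{I}_{-n^{(0)}}$ would require $|n^{(0)}\omega| < (\delta^{(s_0)}_0)^{3/4}$, which is ruled out by \eqref{eq:diphnores}, so in fact $n^{(\ell(k_{n^{(0)}}))}(k_{n^{(0)}}) = n^{(0)}$ exactly.
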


\begin{proof} Statement $(1)$ is obvious. Assume $m \in \mathcal{R}(k_{n^\zero})$, $m \neq n^\zero$, $|m| > |n^\zero|$. Then $|(m - n^\zero)\omega| = 2 |k_{n^\zero} - k_{m}| < (\delta^\es_0)^{3/4}$, where $12 R^\esone < |m| \le 12 R^\es$. This contradicts \eqref{eq:diphnores} since $|m - n^\zero| \le 2 |m| < 24 R^\es$. This proves the first statement in $(2)$. The other statements in $(2)$ follow from this one.
\end{proof}

In the next theorem we finalize the results on matrices associated with quasi-periodic Schr\"{o}dinger equations. We skip the proofs since they are completely similar to those we have done before.

\begin{thmd}
$(I)$ Let $k \in \mathcal{G} \setminus \frac{\omega}{2} (\mathbb{Z}^\nu\setminus \{0\})$ be such that $|\mathcal{R}(k)| > 0$. Let $\mathfrak{m}^{(\ell)}(k)$, $s^{(\ell)}(k)$, $\ell(k)$ be as in Definition~\ref{def:10.finitereset}. Given $q \ge 0$, there exists $\La^{(s^{(\ell(k))}(k)+q)}_k(0) \subset \mathbb{Z}^\nu$ such that $H_{\La^{(s^{(\ell(k))}(k)+q)}_k(0), \ve, k} \in GSR^{[\mathfrak{s}^{(\ell(k))}(k),s^{(\ell)}(k)+q]} \bigl( \mathfrak{m}^{(\ell(k))}(k),m^+(k),m^-(k), \La^{(s^{(\ell)}(k)+q)}_k(0); \delta_0,\mathfrak{t}^{(\ell(k))}(k)\bigr)$, $\mathfrak{t}^{(\ell)}(k) = (\tau^\zero(k),\dots,\tau^{(\ell)}(k))$, $\tau^{(r)}(k) = |k_{n_r}| ||k| - |k_{n_r}||$, $m^+(k) = 0$, $m^-(k) = n^{(\ell(k))}(k)$ if $|k| > |k_{n^{(\ell)}(k)}|$, $m^-(k) = 0$, $m^+(k) = n^{(\ell(k))}(k)$ if $|k| < |k_{n^{(\ell)}(k)}|$.

$(II)$ For each $k \in \mathcal{G}$ and each $s$, there exists $\La^{(s)}_k(0)$ such that $H_{\La^{(s)}_k(0), \ve, k} \in \cN^{(s+q)} \bigl( 0, \La^{(s)}_k(0); \delta_0 \bigr)$ if $\mathcal{R}(k) = \emptyset$, $H_{\La^{(s)}_k(0), \ve, k} \in GSR^{[\mathfrak{s}^{(\ell(k))}(k),s+q]} \bigl( \mathfrak{m}^{(\ell(k))}(k),m^+(k),m^-(k), \La^{(s^{(\ell)}(k)+q)}_k(0); \delta_0,\mathfrak{t}^{(\ell(k))}(k) \bigr)$ if $\mathcal{R}(k) \neq \emptyset$ and $s = s^{(\ell)}(k)+q$. Moreover, $\La^{(s-1)}_k(0)$ serves the role of the $(s-1)$-subset in the corresponding definition, see the definitions in Sections~\ref{sec.3}, \ref{sec.5}, \ref{sec.6}. Let $-\ve_0^{1/2} < E < 0$ be arbitrary. For each $s = 1, 2, \dots$, the matrix $(H_{\La^\es_0(0), \ve, k}-E)$ belongs to $\cN^{(s)}(0, \La^\es_0(0), \delta_0)$, see Proposition~\ref{prop:A.3E0}.

$(III)$ Let $k \in \mathcal{G}$. There exists a unique real-analytic function $E(0,\La^{(s)}_k(0);\ve,k)$ of $\ve \in (-\ve_{0}/2, \ve_{0}/2)$ such that
$E(0,\La^{(s)}_k(0);\ve,k)$ is a simple eigenvalue of $H_{\La^{(s)}_k(0), \ve, k}$ and $E(0,\La^{(s)}_k(0);\ve,k) = v(0,k)$. Moreover, the following conditions hold:
\begin{equation}\label{eq:10Eksquaire}
|E(0,\La^{(s)}_k(0);\ve,k) - v(0,k)| < \ve^{1/2},
\end{equation}
\begin{equation}\label{eq:10EsymmetryT}
E(0,\La^{(s)}_k(0);\ve,k) = E(0,\La^{(s)}_{-k}(0);\ve,-k),
\end{equation}
\begin{equation}\label{eq:10Ekk1EGT}
\begin{split}
(k^\zero)^2 (k - k_1)^2 - 3 |\ve| (\delta^{(s)}_0)^{4} - 10 |\ve| \sum_{\delta^{(s')}_0 < \min (k-k_1,k_1)} (\delta^{(s')}_0)^{4} < E(0,\La^{(s)}_k(0);\ve,k) - E(0,\La^{(s)}_{k_1}(0);\ve,k_1) \\
< \frac{2k}{\lambda} (k - k_1) + \sum_{k_1 < k_{n} < k, \quad s(n) \le s} 2 |\ve| (\delta^{(s(n)-1)}_0)^{1/8} + 2 |\ve| (\delta^{(s)}_0)^5, \quad 0 < k_1 < k,\quad \gamma-1\le k_1 \le  \gamma.
\end{split}
\end{equation}
where $s(n)$ is defined via $12 R^{(s(n)-1)} < |n| \le 12 R^{(s(n))}$,
$k^\zero:=\min(\ve_0^{3/4}, k_{n^\zero}/512)$ and $\gamma$ is the same as in the Definition \eqref{eq:7-5-7}.

$(IV)$ Let $n^{(0)} \in \zv \setminus \{0\}$ and $s \ge s^{(\ell(k_{n^\zero}))}$. Assume, for instance, $k_{n^\zero} > 0$.

$(1)$ The limits
\begin{equation}\label{eq:10kk1comp1lim}
E^\pm(0, \La^{(s)}_{k_{n^\zero}}(0); \ve, k_{n^\zero}) := \lim_{k_1 \rightarrow k_{n^\zero} \pm 0} E(0, \La^{(s)}_{k_{n^\zero}}(0); \ve, k_1)
\end{equation}
exist,
\begin{equation}\label{eq:10kk1comp1gapsize}
0 \le E^+(0, \La^{(s)}_{k_{n^\zero}}(0); \ve, k_{n^\zero}) - E^-(0, \La^{(s)}_{k_{n^\zero}}(0); \ve, k_{n^\zero}) \le 2|\ve|\exp \Big(-\frac{\kappa_0}{2} |n^\zero| \Big)
\end{equation}
Furthermore,
\begin{equation}\label{eq:10specHEEAAA}
\begin{split}
\spec H_{\La^{(s)}_{k_{n^\zero}}(0), \ve, k_{n^\zero}} \cap \{ E : \min_{\pm} |E - E^\pm (0,\La^{(s-1)}_{k_{n^\zero}}(0); \ve, k_{n^\zero})| < 8 (\delta^{(s-1)}_0)^{1/4} \} \\
= \{ E^+(0, \La^{(s)}_{k_{n^\zero}}(0); \ve, k_{n^\zero}),E^-(0, \La^{(s)}_{k_{n^\zero}}(0); \ve, k_{n^\zero}) \}.
\end{split}
\end{equation}
\begin{equation}\label{eq:10kk1comp1limapp}
|E^\pm(0, \La^{(s)}_{k_{n^\zero}}(0); \ve, k_{n^\zero}) - E^\pm(0, \La^{(s-1)}_{k_{n^\zero}}(0); \ve, k_{n^\zero})| \le |\ve|\delta^{(s-1)}_0,
\end{equation}
$E^{+}(0, \La^{(s)}_{k_{n^\zero}}(0); \ve, k_{n^\zero}) \ge E^{-}(0, \La^{(s)}_{k_{n^\zero}}(0); \ve, k_{n^\zero})$.

$(2)$ Provided $\min_{\pm} |E - E^\pm \bigl(0, \La^{(s)}_{k_{n^\zero}}(0); \ve,k_{n^\zero} \bigr)| < 2 \delta_0^{(s-1)}$, the matrix $(E - H_{\La^{(s)}_{k_{n^\zero}}(0) \setminus \{0, n^\zero\},\ve,k_{n^\zero}})$ is invertible. Moreover,
\begin{equation}\label{eq:10Hinvestimatestatement1PQ}
\begin{split}
|[(E - H_{\La^{(s)}_{k_{n^\zero}}(0) \setminus \{0,n^\zero\},\ve,k_{n^\zero}})^{-1}](m,n)| \\
\le \begin{cases} |\ve|^{1/2} \exp(-\frac{31}{32} \kappa_0 |m-n| + 8 \kappa_0 \log \delta_0^{-1} (\min (\mu^{(s)}(m),\mu^{(s+q)}(n) )^{1/5}) & \text{if $m \neq n$ and $|m-n| > 2 |n^\zero|$}, \\
2 \exp (8 \kappa_0 \log \delta_0^{-1}( \mu^{(s)}(m))^{1/5}) & \text{if $m = n$}, \end{cases}
\end{split}
\end{equation}
$\mu^{(s)}(m) := \dist(m,\zv \setminus \La^{(s)}_{k_{n^\zero}}(0))$. Finally, if
$$
E\in\bigl( E^- \bigl(0, \La^{(s)}_{k_{n^\zero}}(0); \ve,k_{n^\zero} \bigr)+\delta, E^+\bigl(0, \La^{(s)}_{k_{n^\zero}}(0); \ve,k_{n^\zero} \bigr)-\delta\bigr),
$$
$\delta>0$, then
\begin{equation}\label{eq:10Hinvestimatestatement1PQrep}
\begin{split}
|[(E - H_{\La^{(s)}_{k_{n^\zero}}(0),\ve,k_{n^\zero}})^{-1}](m,n)|\le \begin{cases} \exp(-\frac{1}{2} \kappa_0 |m-n|) & \text{if $|m-n| > 8\max (|n^\zero|, \log \delta^{-1})$}, \\ \delta^{-1} & \text{for any $m,n$.} \end{cases}
\end{split}
\end{equation}

$(3)$ $E = E^\pm(0, \La^{(s)}_{k_{n^\zero}}(0); \ve, k_{n^\zero})$ obeys the following equation
\begin{equation}\label{eq:10Eequation0}
E - v(0, k_{n^\zero}) - Q^{(s)}(0,\La^{(s)}_{k_{n^\zero}}(0); \ve, E)  \mp \big| G^{(s)}(0,n^\zero,\La^{(s)}_{k_{n^\zero}}(0); \ve, E) \big| = 0,
\end{equation}
where
\begin{equation} \label{eq:10-10acbasicfunctions}
\begin{split}
Q^{(s)}(0,\La^{(s)}_{k_{n^\zero}}(0); \ve, E) \\
= \sum_{m',n' \in \La^{(s)}_{k_{n^\zero}}(0)\setminus\{0,n^\zero\}} h(0, m'; \ve, k_{n^\zero}) [(E - H_{\La^{(s)}_{k_{n^\zero}}(0) \setminus \{0,n^\zero\},\ve,k_{n^\zero}})^{-1}](m',n') h(n',0; \ve, k_{n_0} ), \\
G^{(s)}(0,n^\zero,\La^{(s)}_{k_{n^\zero}}(0); \ve, E) = h(0, n^\zero; \ve, k_{n^\zero} ) \\
+ \sum_{m', n' \in \La^{(s)}_{k_{n^\zero}}(0) \setminus \{0,n^\zero\}} h(0, m'; \ve, k_{n^\zero}) [(E - H_{\La^{(s)}_{k_{n^\zero}}(0) \setminus \{0,n^\zero\},\ve,k_{n^\zero}})^{-1}](m',n') h(n', n^\zero; \ve, k_{n^\zero}).
\end{split}
\end{equation}

$(V)$  If
$$
E \in \bigl( E \bigl(0, \La^{(s)}_{0}(0); \ve,0 \bigr)-\ve_0^{1/2}/2, E \bigl(0, \La^{(s)}_{0}(0); \ve,0 \bigr)-\delta\bigr),
$$
$0 < \delta < \ve_0^{1/2}/2$, then
\begin{equation}\label{eq:10Hinvestimatestatkzero}
|[(E - H_{\La^{(s)}_{0}(0),\ve,0})^{-1}](m,n)|\le \begin{cases} \exp(-\frac{1}{2} \kappa_0 |m-n|) & \text{if $|m-n| > 8\max (|n^\zero|, \log \delta^{-1})$}, \\ \delta^{-1} & \text{for any $m,n$,} \end{cases}
\end{equation}
see Proposition~\ref{prop:A.3E0}.
\end{thmd}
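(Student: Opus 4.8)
Theorem~D is the $1$-dimensional Schr\"odinger specialization of the abstract multi-scale machinery of Sections~\ref{sec.2}--\ref{sec.6}, the concrete sets $\La^{(s)}_k(0)$ being supplied by the combinatorics of Sections~\ref{sec.7}--\ref{sec.10}. The plan is to prove $(I)$--$(IV)$ by a double induction: an outer induction on the number of resonant points $\ell(k)+1=|\mathcal{R}(k)|$ recorded in Definition~\ref{def:10.finitereset}, and, for each fixed $\ell$, an inner induction on the scale $s$. The base of the outer induction is the resonance-free case $\mathcal{R}(k)=\emptyset$, which is precisely Proposition~\ref{prop:A.3} together with Remark~\ref{rem:7.kawayfromzero}; at this level $H_{\La^{(s)}_k(0),\ve,k}\in\cN^{(s)}(0,\La^{(s)}_k(0);\delta_0)$, $\La^{(s-1)}_k(0)$ plays the role of the $(s-1)$-subset, and the shifted matrix $(H_{\La^{(s)}_0(0),\ve,0}-E)$ for $-\ve_0^{1/2}<E<0$ is handled by Proposition~\ref{prop:A.3E0}, giving the last assertion of $(II)$ and part $(V)$. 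The existence, real-analyticity and uniqueness of the branch $E(0,\La^{(s)}_k(0);\ve,k)$ with $E(0,\ldots;0,k)=v(0,k)$, as well as \eqref{eq:10Eksquaire}, are then read off from the simple-zero statements in Proposition~\ref{prop:4-4}$(4)$ combined with Rellich's theorem, since $v(0,k)$ is the unique eigenvalue of $H_{\La^{(s)}_k(0),0,k}$ congruent to the origin.

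\textbf{Inductive step in $\ell$.} Assume the statement for all $k'$ with $|\mathcal{R}(k')|\le\ell$, fix $k$ with $|\mathcal{R}(k)|=\ell+1$, and let $n^{(\ell)}:=n^{(\ell(k))}(k)$ be the resonant point of largest norm, $12R^{(s^{(\ell)}-1)}<|n^{(\ell)}|\le 12R^{(s^{(\ell)})}$. By Lemma~\ref{lem:10resetdiscr1} the remaining resonant points have norm $<R^{(s^{(\ell)})}/2$, so $k$ lies in the ``fresh'' zone $(k_{n^{(\ell)}}-2\sigma(n^{(\ell)}),k_{n^{(\ell)}}+2\sigma(n^{(\ell)}))$ and meets no half-lattice point of norm $\le 12R^{(s^{(\ell)})}$ other than $n^{(\ell)}$ (this is Definition~\ref{def:simplestresonance} for $\ell=0$ and Definition~\ref{def:10.simplestresonance} for $\ell\ge 1$, with $\mathfrak{s}^{(1)}=(s,s^{(1)})$). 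One splits the relevant $k$-interval into three overlapping regimes. In $(a)$, $|k-k_{n^{(\ell)}}|\le(\delta_0^{(s^{(\ell)}-1)})^{3/4}$, one builds the reflection-$T_\ell$-invariant set $\La^{(s^{(\ell)})}_k(0)$, $T_\ell(n)=n^{(\ell)}-n$, by the proper-subtraction-system construction of Sections~\ref{sec.8}--\ref{sec.10} (Definitions~\ref{defi:8.LLL},~\ref{defi:9.LLL},~\ref{defi:10.LLL} and Lemmas~\ref{lem:8setLambdas},~\ref{lem:9setLambdas},~\ref{lem:10setLambdas}), and verifies via Lemmas~\ref{lem:9A.2},~\ref{lem:9A.3},~\ref{lem:10A.3} and the outer-inductive hypothesis that all hypotheses of Definition~\ref{def:7-6}/\ref{def:9-6}/\ref{def:9-6general}/\ref{def:6-6UPFIN} hold --- the separation property $(\mathfrak{S}_{s'})$, the eigenvalue-splitting inequalities \eqref{eq:6EVsplitdefs1a}--\eqref{eq:6-9EVsplitdefs1a}, and the diameter constraint \eqref{eq:6princsetdiam} --- so that $H_{\La^{(s^{(\ell)})}_k(0),\ve,k}\in GSR^{[\mathfrak{s}^{(\ell)},s^{(\ell)}]}(\ldots)$. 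In $(b)$, $k=k_{n^{(\ell)}}$, one passes to the one-sided limits of Propositions~\ref{prop:8.10},~\ref{prop:9.10} and their $GSR$-analogue, obtaining also the characteristic equation \eqref{eq:10Eequation0} and the Green-function bound \eqref{eq:10Hinvestimatestatement1PQ}. In $(c)$, $|k-k_{n^{(\ell)}}|>(\delta_0^{(s^{(\ell)}-1)})^{7/8}$, the symmetry is dropped and one re-enters the $\cN^{(s^{(\ell)})}$-class (resp.\ the lower $GSR$-class) through Lemma~\ref{lem:8A.3}$(3)$, Proposition~\ref{prop:8.1}$(5)$, Proposition~\ref{prop:9.1}$(II)$, Proposition~\ref{prop:10.1}$(II)$. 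For $q\ge 1$ one then iterates the $OPR^{(s,s+q)}$/$GSR^{[\ldots,s+q]}$ construction scale by scale exactly as in Propositions~\ref{rem:con1smalldenomnn},~\ref{prop:6-4UUP},~\ref{prop:6-4INT} and Theorem~\ref{th:6-4FIN}. Running this for all $k\in\mathcal{G}$ and all residual $k$ outside the listed half-lattice points yields $(I)$ and $(II)$.

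\textbf{Parts $(III)$ and $(IV)$.} For the symmetry \eqref{eq:10EsymmetryT} one uses $v(n,k)=v(-n,-k)$, the identities $\La^{(s)}_{-k}(0)=-\La^{(s)}_k(0)$ (Lemma~\ref{lem:12Nesreflection} in the $\cN$-case; part~$(4)$ of Lemmas~\ref{lem:8setLambdas},~\ref{lem:9setLambdas},~\ref{lem:10setLambdas} together with $T_\ell(n^{(\ell)})=0$), and part~$(5)$ of Lemma~\ref{lem:basicshiftprop}, which conjugates $H_{\La,\ve,k}$ to $\overline{H_{-\La,\ve,-k}}$ and forces the two analytic branches to coincide. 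The two-sided estimate \eqref{eq:10Ekk1EGT} is the central ``margin'' bound: partition $[k_1,k]$ at the half-lattice points $k_n$ with $\gamma-1\le k_n<k$, $s(n)\le s$ lying between them, apply \eqref{eq:7Ederivlower} of Proposition~\ref{prop:A.3}$(6)$ on each resonance-free sub-interval and the resonance-crossing estimates \eqref{eq:8kk1comp1EpEm}, \eqref{eq:9Ekderivatives}/\eqref{eq:9kk1La}, \eqref{eq:10Ekk1EG} of Propositions~\ref{prop:8.1},~\ref{prop:9.1},~\ref{prop:10.1} on each sub-interval containing a $k_n$, and add; the upper bound accrues $2|\ve|(\delta_0^{(s(n)-1)})^{1/8}$ per crossing, while the lower bound uses the strict convexity/positivity of the continued-fraction-functions (Lemma~\ref{4.fcontinuedfrac}$(7)$--$(9)$, Lemma~\ref{lem:4zetatheta}) which, together with the reflection symmetry at each $k=k_n$, produces the quadratic lower term $(k^\zero)^2(k-k_1)^2$ minus the accumulated errors. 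For $(IV)$, apply the construction with $k\to k_{n^\zero}\pm 0$: Proposition~\ref{prop:8.10} (only $n^\zero$ resonant), Proposition~\ref{prop:9.10}, or the $GSR$-analogue, give the existence of $E^\pm$, the spectral identity \eqref{eq:10specHEEAAA}, the closeness \eqref{eq:10kk1comp1limapp}, the characteristic equation \eqref{eq:10Eequation0}, and the Green-function bound \eqref{eq:10Hinvestimatestatement1PQ} (the decay factor coming from Lemma~\ref{lem:auxweight1}); the gap bound \eqref{eq:10kk1comp1gapsize} follows from Remark~\ref{rem:7.sharperssoneestimate}/Corollary~\ref{cor:8.sharperesesone}, since the off-diagonal block $G^{(s)}(0,n^\zero,\ldots)$ has size $\le|\ve|\exp(-\tfrac78\kappa_0|n^\zero|)$ and $E^+-E^-\le 2|G^{(s)}|+O(|\ve|\,\mathrm{err})$ because $\La^{(s-1)}_{k_{n^\zero}}(0)$ contains balls of radius $\sim|n^\zero|$ around $0$ and its reflection; and \eqref{eq:10Hinvestimatestatement1PQrep} is obtained by reinstating the two rows/columns $\{0,n^\zero\}$ via the Schur complement formula, using $|\chi(\ve,E)|\gtrsim\delta$ on $(E^-+\delta,E^+-\delta)$ and part~$(2)$ of Proposition~\ref{prop:8-5n}.

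\textbf{Main obstacle.} The genuine difficulty is purely combinatorial bookkeeping: checking that the $T_\ell$-invariant ``fractal'' sets $\La^{(s)}_k(0)$ simultaneously satisfy \emph{all} the hypotheses of the graded-resonance definitions at \emph{every} scale and for \emph{every} $k$ in the relevant interval --- the mutual separation $(\mathfrak{S}_{s'})$, the long chain of eigenvalue-splitting inequalities, and the diameter bound \eqref{eq:6princsetdiam} --- and that these survive the Schur-complement iteration with the correct propagation of the functions $D(\cdot;\La)$ into $\mathcal{G}_{\La,T,\kappa_0}$. A secondary, more analytic, difficulty is gluing the three regimes $(a)$, $(b)$, $(c)$ continuously across the transition values $k_{n^{(\ell)}}\pm(\delta_0^{(\cdot)})^{7/8}$ and at $k=k_{n^{(\ell)}}$, so that \eqref{eq:10Ekk1EGT} holds with a \emph{uniform} quadratic lower bound; this uses exactly the sharper closeness estimates of Remarks~\ref{rem:7.sharperssoneestimate},~\ref{rem:8.6sharperssoneestimate} at the resonant centers.
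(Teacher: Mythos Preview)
Your proposal is correct and follows exactly the route the paper intends: the paper itself omits the proof of Theorem~D entirely, stating only that ``we skip the proofs since they are completely similar to those we have done before,'' i.e., Theorem~D is meant to be assembled from Propositions~\ref{prop:A.3}, \ref{prop:A.3E0}, \ref{prop:8.1}, \ref{prop:8.10}, \ref{prop:9.1}, \ref{prop:9.10}, \ref{prop:10.1} together with Theorem~\ref{th:6-4FIN} and the combinatorial Lemmas~\ref{lem:8setLambdas}, \ref{lem:9setLambdas}, \ref{lem:10setLambdas}. Your double induction (outer on $|\mathcal{R}(k)|$, inner on the scale $s$), the three overlapping regimes around each $k_{n^{(\ell)}}$, the symmetry argument via Lemma~\ref{lem:basicshiftprop}$(5)$, and the gap bound via Corollary~\ref{cor:8.sharperesesone} are precisely the ingredients the paper has been preparing; your sketch is in fact more explicit about the logical structure than the paper itself.
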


\section{Proof of the Main Theorems}\label{sec.11}

Consider the Schr\"{o}dinger operator
\begin{equation} \label{eq:17-1}
\big[ H y \big](x) := -y''(x) + V(x) y(x), \quad x \in \IR^1,
\end{equation}
where $V(x)$ is a quasi-periodic function,
\begin{align}
V(x) & = \sum_{n \in \zv \setminus \{ 0 \}} c(n) e^{2 \pi i n \omega x}\ , \quad x \in \IR^1, \label{eq:17-2} \\[6pt]
\omega & = \omap \in \IR^\nu \ ,\quad \nu \ge 2, \label{eq:17-3}
\end{align}
with
\begin{equation}\label{eq:17-4}
\begin{split}
\overline{c(n)} & = c(-n), \quad n \in \zv \setminus \{ 0 \}, \\
|c(n)| & \le \ve \exp(-\kappa_0|n|), \quad n \in \zv \setminus \{ 0 \},
\end{split}
\end{equation}
where $\ve, \kappa_0 > 0$.

We denote by $\widehat{f}(k)$ the Fourier transform of a function $f(x)$,
\begin{equation}\label{eq:11Fourier}
\widehat{f}(k) := \int_\mathbb{R} e^{-2 \pi i k x} f(x) \, dx,
\end{equation}
$x, k \in \IR$. Let $\cS(\IR)$ be the space of Schwartz functions $f(x)$, $x \in \IR $. Let $g(k)$ be a measurable function that, for any $a > 0$, decays faster than $|k|^{-a}$ as $|k| \rightarrow \infty$. Let $\psi = \check{g}$ be its inverse Fourier transform. Then $\psi$ belongs to the domain of $H$ and the following identity holds:
\begin{equation} \label{eq:17-1FtransH}
\widehat{H\psi}(k) =  (2 \pi)^2k^2 \widehat{\psi}(k) +\sum_{m \in \IZ^\nu \setminus \{0\}} c(-m) \widehat{\psi} (k+m\omega).
\end{equation}
In particular, this identity holds for any $f \in \cS(\IR)$.  Set $H_{ k} = \bigl(h(m, n; k)\bigr)_{m, n \in \zv}$, where
\begin{equation} \label{eq:12-a7}
\begin{split}
h(n, m; k) & = (2\pi )^2 (n\omega + k)^2,\quad \text{if}\ m = n, \\
h(n, m;k) & = c(n-m), \quad \text{if}\ m \not= n, \\
\end{split}
\end{equation}
Clearly, for each $k$, the matrix $H_{k}$ defines a self-adjoint operator in $\ell^2(\zv)$. Due to \eqref{eq:12-a7}, one has for any $m, n, \ell \in \zv$,
\begin{equation} \label{eq:12cocyclic}
H_{ k+\ell\omega}(m,n) = H_{k}(m+\ell,n+\ell).
\end{equation}

Let $k > 0$ be arbitrary. If $k \ge 3/4$, pick an arbitrary $\gamma \ge 1$ such that $\gamma - 1/4 \le |k| \le \gamma-1/2$. If $0 < k < 3/4$, set $\gamma = 1$. For $k < 0$, we pick the same $\gamma$ as for $|k|$. Define $\tilde {H}_{k} = \bigl(\tilde {h}(m, n;k)\bigr)_{m, n \in \zv}$ similarly to \eqref{eq:7-5-7} from Section~\ref{sec.7}, that is, set $\lambda = 256 \gamma$ and
\begin{equation} \label{eq:11-5-7}
\begin{split}
v(n; k) & = \lambda^{-1}(n \omega + k)^2\ , \quad n \in \zv\ ,\\
\tilde {h}(n, m;k) & = v(n; k)\ \text{if}\ m = n, \\
\tilde {h}(n, m;k) & =  \lambda^{-1}(2\pi )^{-2}c(n - m),\ \text{if}\ m \not= n.
\end{split}
\end{equation}
Define also  $\tilde {H}_{\epsilon,k} = \bigl(\tilde {h}(m, n;\epsilon, k)\bigr)_{m, n \in \zv}$ with
$\tilde {h}(n, n;\epsilon,k)=\tilde {h}(n, n;k)$, $\tilde {h}(n, m;\epsilon,k)=\epsilon \tilde {h}(n, m;k)$, if $m\neq n$.

\begin{proof}[Proof of  Theorem C]
Using the notation from Theorem D, let $k \in \mathcal{G} \setminus \frac{\omega}{2} \mathbb{Z}^\nu$. Note first of all that due to \eqref{eq:A.1A}, the set $\mathfrak{G}$ in Theorem C obeys $\mathfrak{G} \subset \mathcal{G}$. This is because the intervals $\mathcal{I}_n$
in Definition~\ref{def:10.finitereset} are smaller than the intervals $\mathfrak{J}_n$; see \eqref{eq:1K.1}, \eqref{eq:A.1A}, \eqref{eq:diphnores}. By Theorem~D, there exists $\ve_0 = \ve_0(\kappa_0,\omega)$ such that if $|\epsilon| < 2$ and $|\ve| := \lambda^{-1} \epsilon < \ve_0$, then for each $s$, there exists $\La^{(s)}_k(0)$ such that $\tilde {H}_{\La^{(s)}_k(0),\epsilon,k}$ belongs to one of the classes introduced in Sections~\ref{sec.3}, \ref{sec.5}, \ref{sec.6} with $0$ being either the principal point or one of the two principal points. Moreover, $\La^{(s-1)}_k(0)$ serves the role of the $(s-1)$-subset in the corresponding definition in Sections~\ref{sec.3}, \ref{sec.5}, \ref{sec.6}. Assume for instance $\tilde H_{\La^{(s)}_k(0),\epsilon,k} \in GSR^{[\mathfrak{s}^{(\ell(k))}(k),s]} \bigl( \mathfrak{m}^{(\ell(k))}(k),m^+(k),m^-(k), \La^{(s^{(\ell(k))}(k)+q)}_k(0); \delta_0, \mathfrak{t}^{(\ell(k))}(k) \bigr)$, with $s = s^{(\ell(k))}(k)+q$, $q = 1,\dots$, $m^+(k) = 0$; see the notation in Theorem~D. Let $\tilde {E}(0,\La^{(s)}_k(0);\epsilon,k)$ be the eigenvalue from part $(III)$ of Theorem D. Set $\tilde {E}(0,\La^{(s)}_k(0);k) = \tilde{E} (0, \La^{(s)}_k(0); 1, k)$. Now we invoke Theorem~\ref{th:6-4FIN} from Section~\ref{sec.6}. Recall that due to part $(5)$ of Theorem~\ref{th:6-4FIN}, one has
\begin{equation} \label{eq.11Eestimates1APN1FIN}
|\tilde{E} (0,\La^{(s)}_k(0);k) - \tilde{E} (0,\La^{(s-1)}_k(0);k)| < 4 \ve (\delta^{(s-1)}_0)^{1/8}.
\end{equation}
Therefore the limit
\begin{equation} \label{eq.11Eestimates1APN1FIN1}
\tilde {E}(k) = \lim_{s \rightarrow \infty} \tilde {E}(0,\La^{(s)}_k(0);k)
\end{equation}
exists. Furthermore, using the notation of part $(7)$ of Theorem~\ref{th:6-4FIN}, denote by $\vp^{(+)}(\La^{(s)}_k;k) := \vp^{(+)} (\cdot, \La^{(s)}_k; k)$ the eigenvector corresponding to $\tilde {E}(0,\La^{(s)}_k(0);k)$ and normalized by $\vp^{(+)}(0,\La^{(s)}_k;k)) = 1$. Due to part $(7)$ of Theorem~\ref{th:6-4FIN}, one has
\begin{equation} \label{eq:11-17evdecay}
\begin{split}
|\vp^{(+)}(n,\La^{(s)}_k;k)| \le |\ve|^{1/2} \sum_{m \in \mathfrak{m}^{(\ell(k))}(k)} \exp \Big( -\frac{7}{8} \kappa_0 |n-m| \Big), \quad \text{ $n \notin \mathfrak{m}^{(\ell(k))}(k)$}, \\
|\vp^{(+)}(m,\La^{(s)}_k;k)| \le 1 + \sum_{0 \le t < s} 4^{-t} \quad \text{for any $m \in \mathfrak{m}^{(\ell(k))}(k)$},
\end{split}
\end{equation}
\begin{equation} \label{eq:11-21ACCUPSFINCON}
|\vp^{(+)}(n,\La^{(s)}_k;k) -\vp^{(+)}(n,\La^{(s-1)}_k;k)| \le 2 |\ve| (\delta_0^{(s-1)})^5.
\end{equation}
It follows from \eqref{eq:11-17evdecay} and \eqref{eq:11-21ACCUPSFINCON} that for each $n \in \mathbb{Z}^\nu$, the limit
\begin{equation} \label{eq:11philimfin}
\vp(n;k) = \lim_{s \rightarrow \infty} \vp^{(+)}(n,\La^{(s)}_k;k)
\end{equation}
exists and obeys $\vp(0 ;k) = 1$,
\begin{equation} \label{eq:11-17evdecay1}
\begin{split}
|\vp(n ;k)| \le \ve^{1/2} \sum_{m \in \mathfrak{m}^{(\ell)}} \exp \Big( -\frac{7}{8} \kappa_0 |n-m| \Big), \quad \text{ $n \notin \mathfrak{m}^{(\ell(k))}(k)$}, \\
|\vp(m;k)| \le 2 \quad \text{for any $m \in \mathfrak{m}^{(\ell(k))}(k)$}.
\end{split}
\end{equation}
It follows also from \eqref{eq:11-17evdecay} and \eqref{eq:11-21ACCUPSFINCON} that
\begin{equation} \label{eq:11philimH-2}
\tilde {H}_{k} \vp(k) = \tilde {E}(k) \vp(k).
\end{equation}
Note that $H_{k} = \lambda (2\pi )^2 \tilde {H}_{k}$. This implies
\begin{equation} \label{eq:11philimH-3}
H_{k} \vp(k) = E(k) \vp(k).
\end{equation}
with $E(k) = \lambda (2\pi )^2 \tilde {E}(k)$. This finishes the proof of part $(1)$ of Theorem~C.

$(2)$ It follows from \eqref{eq:10EsymmetryT} and \eqref{eq:10Ekk1EGT} in Theorem~D that
\begin{equation}\label{eq:11EsymmetryT}
E(k) = E(-k),
\end{equation}

\begin{equation}\label{eq:11Ekk1EGT}
\begin{split}
(k^\zero)^2 (k - k_1)^2  - 10 |\ve| \sum_{\delta^{(s')}_0 < \min (k-k_1,k)} (\delta^{(s')}_0)^{4} < E(k) - E(k_1) \\
< \frac{2k}{\lambda} (k - k_1) + 2|\ve|\sum_{k_1 < k_{n} < k}(\delta^{(s(n)-1)}_0)^{1/8} , \quad 0 < k_1 < k,\quad \gamma-1\le k_1 \le  \gamma.
\end{split}
\end{equation}
where $s(n)$ is defined via $12 R^{(s(n)-1)} < |n| \le 12 R^{(s(n))}$, $k^\zero := \min(\ve_0^{3/4}, k_{n^\zero}/512)$, and $\gamma$ is the same as in the definition \eqref{eq:7-5-7}. Note that the quantity $\delta(n)$ in \eqref{eq:1Ekk1EGT} of Theorem~C obeys $\delta(n) > 2 (\delta^{(s(n) - 1)}_0)^{1/8}$. It follows from the first inequality in \eqref{eq:11Ekk1EGT} that
\begin{equation}\label{eq:11Ekk1EGT1}
E(k) - E(k_1) > \frac{(k^\zero)^2 (k - k_1)^2}{2}.
\end{equation}
Thus, \eqref{eq:1Ekk1EGT} in Theorem~C follows from \eqref{eq:11Ekk1EGT}. Finally, due to Lemma~\ref{lem:basicshiftprop}, one has $\vp^{(\pm)}(n,\La^{(s)}_{-k};-k) = \overline{\vp^{(\pm)}(-n,\La^{(s)}_k;k)}$. This implies $\vp(n;-k) = \overline{\vp(-n;k)}$, as claimed. This finishes the proof of part $(2)$.

$(3)$ We apply Theorem D. Let $n^{(0)} \in \zv \setminus \{0\}$ and $s > s^{(\ell(k_{n^\zero}))}$. Assume for instance that $k_{n^\zero} > 0$. Using \eqref{eq:10Ekk1EGT}, one has for $0 < \theta < \delta^{(s-1)}_0$,
\begin{equation}\label{eq:11kk1comp1lim}
|E^\pm(0, \La^{(s)}_{k_{n^\zero}}(0);k_{n^\zero}) - E(0, \La^{(s)}_{k_{n^\zero}}(0);k_{n^\zero}\pm \theta)| < 2 (|k_{n^\zero}|+1) \theta + 2 |\ve| (\delta^{(s)}_0)^5,
\end{equation}
since the sum on the right-hand side of \eqref{eq:10Ekk1EGT} is over the empty set. Due to \eqref{eq:10kk1comp1limapp},
\begin{equation}\label{eq:11kk1comp1limapp}
|E^\pm(0, \La^{(s)}_{k_{n^\zero}}(0);k_{n^\zero}) - E^\pm(0, \La^{(s-1)}_{k_{n^\zero}}(0);k_{n^\zero})| \le \ve \delta^{(s-1)}_0.
\end{equation}

Therefore the limit
\begin{equation} \label{eq.11Eestimates1APN1FIN1-2}
E^\pm(k_{n^\zero}) = \lim_{s \to \infty} E^\pm(0, \La^{(s)}_{k_{n^\zero}}(0);k_{n^\zero})
\end{equation}
exists,
\begin{equation}\label{eq:11kk1comp1limapp11}
|E^\pm(k_{n^\zero}) - E^\pm(0, \La^{(s-1)}_{k_{n^\zero}}(0);k_{n^\zero})| \le 2 \ve\delta^{(s-1)}_0.
\end{equation}
Due to \eqref{eq:10Ekk1EGT}, one obtains also
\begin{equation}\label{eq:11kk1comp1limapp-2}
|E^\pm(k_{n^\zero}) - E(k_{n^\zero}\pm\theta)| \\
\le 2(k_{n_0}+1) \theta + \sum_{n : \text{$k_{n}$ is between $k_{n^\zero}$ and $k_{n^\zero} \pm \theta$}} 2\ve (\delta^{(s(n)-1)}_0)^{1/8}.
\end{equation}

Assume now that $E^+(0;k_{n^\zero}) > E^-(0;k_{n^\zero}) > 0$. Let $E^+(0;k_{n^\zero}) > E > E^-(0;k_{n^\zero}) > 0$. Let $s > s^{(\ell(k_{n^\zero}))}$ be large enough so that $\sigma(E) := \min (E^+(0;k_{n^\zero}) - E, E - E^-(0;k_{n^\zero}) > \delta^{(s)}_0$. Then, due to \eqref{eq:11kk1comp1limapp11}, one has $E^+(0, \La^{(s)}_{k_{n^\zero}}(0);k_{n^\zero}) - E, E - E^-(0, \La^{(s)}_{k_{n^\zero}}(0);k_{n^\zero}) > \sigma(E) - \rho_s$, where $\rho_s \to 0$ as $s \to \infty$. Due to \eqref{eq:10specHEEAAA} in Theorem~D, the matrix $(E - H_{\La^{(s)}_{k_{n^\zero}}(0), \ve, k_{n^\zero}})$ is invertible, moreover $\|(E - H_{\La^{(s)}_{k_{n^\zero}}(0), \ve, k_{n^\zero}})^{-1}\|\le 2\sigma(E)^{-1}$, provided $\sigma(E)/2> \rho_s$. Since $B(0,R^\es)\subset \La^{(s)}_{k_{n^\zero}}(0)$ and $R^\es\rightarrow +\infty$ with $s\rightarrow +\infty$, one has
$$
\|[(E - H_{\La^{(s)}_{k_{n^\zero}}(0), \ve, k_{n^\zero}})-(E - H_{\ve, k_{n^\zero}})]f\|\rightarrow 0
$$
for any $f$ supported on a finite subset of $\mathbb{Z}^\nu$. Due to part $(1)$ of Lemma~\ref{lem:13.1A}, $(E - H_{\ve, k_{n^\zero}})$ is invertible. Due to part $(3)$ of Lemma~\ref{lem:13.1A}, $(E - H_{\ve, k})$ is invertible for any $k$ as claimed in part $(3)$ of Theorem~C.
\end{proof}

\begin{lemma}\label{lem:13.1A}
$(1)$ Let $A$, $A_s$, $s=1,\dots$ be self-adjoint operators acting in the Hilbert space $\mathcal{L}$, $\mathcal{L}_s$ respectively, $\mathcal{L} \supset \mathcal{L}_s$. Let $\mathcal{D}_{A}$, $\mathcal{D}_{A_s}$ be the domains of the operators $A$ and $A_s$, respectively. Assume that $(a)$ each $A_s$ is invertible, and moreover $B := \sup_s \|A_s^{-1}\| < \infty$, $(b)$ there exists a dense set $\mathcal{D} \subset \mathcal{D}_{A}$ such that for any $f \in \mathcal{D}$, there exists $s_f$ such that $f \in \mathcal{D}_{A_s}$ for $s \ge s_f$ and $\|(A - A_s)f\| \rightarrow 0$ as $s \rightarrow \infty$. Then $A$ is invertible, and $\|A^{-1}\|\le B$.

$(2)$ Using the notations of $(1)$, assume in addition that the following conditions hold: $(c)$ the set $\mathcal{D}$ contains an orthonormal basis $\{g_n\}_{n \in \mathbb{N}}$ of the space $\mathcal{L}$, $(d)$ $\sup_s |\la A_s^{-1} g_m, g_n \ra| \le \rho(m,n)$ with $S^2 := \sup_m \sum_{n} \rho(m,n)^2 < \infty$. Then $|\la A^{-1} g_m, g_n\ra|\le \rho(m,n)$ for any $m,n$.

$(3)$ Assume that for some $k_0$, $E \in \IR$ the operator $(E - H_{k_0})$ is invertible. Then $(E - H_{k})$ is invertible for every $k$.
\end{lemma}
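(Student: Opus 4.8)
\textbf{Proof proposal for Lemma~\ref{lem:13.1A}.}

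The plan is to prove the three parts in order, since part $(3)$ reduces to parts $(1)$ and $(2)$ together with the cocycle relation \eqref{eq:12cocyclic}. For part $(1)$, I would argue as follows. Fix $f \in \mathcal{D}$ and let $s \ge s_f$. Since $A_s$ is invertible and self-adjoint, $\|f\| = \|A_s^{-1}A_sf\| \le B\|A_sf\| \le B(\|Af\| + \|(A_s - A)f\|)$. Letting $s \to \infty$ and using condition $(b)$ gives $\|f\| \le B\|Af\|$ for all $f \in \mathcal{D}$; since $A$ is closed and $\mathcal{D}$ is a core (dense in $\mathcal{D}_A$ in the graph norm — I would either assume this as part of hypothesis $(b)$ or note it follows since $\mathcal{D} \subset \mathcal{D}_A$ is dense in $\mathcal{L}$ and $A$ is self-adjoint, so one can pass to the closure), the bound $\|f\| \le B\|Af\|$ extends to all $f \in \mathcal{D}_A$. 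This shows $A$ is injective with closed range and the inverse on the range is bounded by $B$. Self-adjointness then forces the range to be dense, hence all of $\mathcal{L}$, so $A$ is invertible with $\|A^{-1}\| \le B$.

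For part $(2)$, I would exploit weak convergence of the inverses. By part $(1)$, $A^{-1}$ exists and is bounded. The key claim is that $A_s^{-1}g \to A^{-1}g$ (in $\mathcal{L}$, after extending $A_s^{-1}$ by zero on $\mathcal{L}_s^\perp$, or rather working in $\mathcal{L}_s$ and noting $g$ may need truncation) for each $g$ in the basis. This follows from the resolvent-type identity: writing $h = A^{-1}g$, one has $A_s^{-1}g - h = A_s^{-1}(g - A_sh) = A_s^{-1}(Ah - A_sh) = A_s^{-1}(A - A_s)h$ once $h \in \mathcal{D}$; for general $h \in \mathcal{D}_A$ one approximates $h$ by elements of $\mathcal{D}$ using the uniform bound $B$ and condition $(b)$. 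Hence $|\la A^{-1}g_m, g_n\ra| = \lim_s |\la A_s^{-1}g_m, g_n\ra| \le \rho(m,n)$. The finiteness $S^2 < \infty$ is what guarantees the approximation arguments make sense (it controls tails of expansions in the basis), but the pointwise bound on matrix entries is the direct consequence.

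For part $(3)$, I would apply parts $(1)$ and $(2)$ with $A = E - H_k$, $\mathcal{L} = \ell^2(\zv)$, and $A_s = E - H_{\La^{(s)}_k(0),\ve,k}$ acting on $\ell^2(\La^{(s)}_k(0))$, using $\mathcal{D}$ the finitely supported sequences (which contains the standard orthonormal basis $\{\delta_n\}$). The invertibility of $(E - H_{k_0})$ for one $k_0$ is not directly what is used; rather, the mechanism is that from Theorem~D one gets, whenever $E$ lies strictly between the gap edges $E^\pm(k_{n^\zero})$, uniform-in-$s$ bounds $\|A_s^{-1}\| \le 2\sigma(E)^{-1}$ and off-diagonal decay via \eqref{eq:10Hinvestimatestatement1PQrep}, giving hypothesis $(a)$ and $(d)$; hypothesis $(b)$ holds because $B(0,R^\es) \subset \La^{(s)}_k(0)$ and $R^\es \to \infty$, so $(A - A_s)f \to 0$ for finitely supported $f$; and finally the cocycle relation \eqref{eq:12cocyclic} transfers invertibility of $(E - H_{k})$ for the special value $k = k_{n^\zero}$ (or any $k \in \mathfrak{G}\setminus\mathcal{K}(\omega)$ via the analogous construction) to all $k$ by translation $n \mapsto n + \ell$, since this is a unitary conjugation carrying $H_k$ to $H_{k+\ell\omega}$ and gaps in the spectrum are translation invariant. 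I expect the main obstacle to be the bookkeeping in part $(2)$: making the passage from the matrix-entry bounds on $A_s^{-1}$ to those on $A^{-1}$ fully rigorous requires either weak-$*$ compactness plus identification of the limit, or a direct resolvent-difference estimate, and one must be careful that $A^{-1}$ is genuinely the weak limit and not merely a weak limit point — this is where the uniform bound $B$ and the core property of $\mathcal{D}$ do the real work.
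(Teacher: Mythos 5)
Your part $(1)$ is essentially the paper's proof, with the same structure: a lower bound $\|Af\| \ge B^{-1}\|f\|$ on $\mathcal{D}$, then injectivity/closed-range plus self-adjointness give surjectivity. You are right to flag the core issue (the paper also passes over it), but that is the paper's sin too, not yours.

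For part $(2)$ you and the paper take genuinely different routes. The paper does not attempt a resolvent-difference identity at all. It observes that, by hypothesis $(d)$, the vectors $A_s^{-1}g_m$ all lie in the Hilbert cube $\{x : |x_n| \le \rho(m,n)\}$, which is norm-compact because $\sum_n \rho(m,n)^2 < \infty$. After a diagonal extraction, one gets $h_m = \lim_j A_{s_j}^{-1}g_m$ in norm for every $m$, and then a short three-$\varepsilon$ estimate (test against $Ag_n$ and use $\|(A-A_{s_j})g_n\|\to 0$) shows $Ah_m = g_m$, hence $h_m = A^{-1}g_m$ and the matrix-entry bound passes to the limit. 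This is precisely the ``weak-$*$ compactness plus identification of the limit'' route you contemplate as an alternative, except that the compactness is actually in norm, which makes the identification step painless. Your primary suggestion (the resolvent-type identity $A_s^{-1}g - h = A_s^{-1}(A-A_s)h$) runs straight into the difficulties you yourself list: $h = A^{-1}g$ is generally not in $\mathcal{D}$, not in $\mathcal{L}_s$, and the approximation-in-graph-norm argument re-opens the core question. The paper's compactness argument simply never needs to touch $A^{-1}g$ before it knows it is a limit of nicely bounded vectors; that is what it buys.

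Part $(3)$ has a genuine gap: your proposed argument does not prove the stated lemma. The statement is the abstract translation-covariance fact: invertibility of $(E - H_{k_0})$ for one $k_0$ implies it for all $k$. You explicitly set this hypothesis aside (``The invertibility of $(E - H_{k_0})$ for one $k_0$ is not directly what is used'') and instead invoke Theorem~D and finite-volume truncations $A_s = E - H_{\La^{(s)}_k(0),\ve,k}$ at the target $k$. That uses information at $k$ which part $(3)$ does not assume; part $(3)$ only grants invertibility at the single $k_0$. The correct proof takes $A_s := E - H_{k_0 + \ell_s\omega}$, where $\ell_s \in \zv$ is chosen so that $k_0 + \ell_s\omega \to k$ (possible since $\{\ell\omega : \ell \in \zv\}$ is dense in $\IR$ under the Diophantine condition). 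By the cocycle relation \eqref{eq:12cocyclic}, each $H_{k_0+\ell\omega} = U_\ell H_{k_0} U_\ell^{-1}$ with $U_\ell$ the translation, so each $A_s$ is invertible with $\|A_s^{-1}\| = \|(E - H_{k_0})^{-1}\|$; this gives hypothesis $(a)$ with $B = \|(E - H_{k_0})^{-1}\|$. Then $(A_s - A)f \to 0$ for finitely supported $f$ because the diagonal entries converge pointwise and the off-diagonal entries are $k$-independent; this gives $(b)$. Apply part $(1)$. Your closing sentence about the cocycle and unitary conjugation is the right idea, but it is not connected to a correct choice of approximating sequence $A_s$, and your main body argues something else entirely.
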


\begin{proof}
(1) One has $\|A_s f\| \ge B^{-1}\|f\|$ for any $f \in \mathcal{D}_{A_s}$. This implies $\|A f\| \ge B^{-1}\|f\|$ for any $f \in \mathcal{D}$. Since $\mathcal{D} \subset \mathcal{D}_{A}$ is dense, the statement follows.

$(2)$ Recall that the set $\{x = (x_n) \in \ell^2(\mathbb{N}) : |x_n| \le \epsilon(n)\}$ is $\|\cdot\|$-compact, provided $\sum_n \epsilon(n)^2 < \infty$. With $m$ being fixed, consider the sequence $A_s^{-1}g_m$, $s = 1, 2, \ldots$. Therefore, it follows from the condition $(d)$ in $(2)$ that this sequence
has a $\|\cdot\|$-convergent subsequence. Using a standard diagonalization argument, one concludes that there exists a subsequence $s_j$ such that $h_m := \lim_{j \rightarrow \infty} A_{s_j}^{-1}g_m$ exists in the $\|\cdot\|$-sense for every $m$. Let $m,n$, $\varepsilon > 0$ be arbitrary. Find $j_0$ such that $\|h_m - A_{s_{j_0}}^{-1}g_m\| < \varepsilon$ and $\|A g_n - A_{s_{j_0}}g_n\| < \varepsilon$. Then one has
\begin{equation} \label{eq:11inverseapproxi}
\begin{split}
|\la A h_m,g_n\ra-\la g_m,g_n\ra|=|\la h_m,A g_n\ra-\la g_m,g_n\ra|\le |\la A_{s_{j_0}}^{-1}g_m,A_{s_{j_0}}g_n\ra -\la g_m, g_n \ra| \\
+ \|h_m-A_{s_{j_0}}^{-1}g_m\|\|A g_n\| + \|A_{s_{j_0}}^{-1}g_m\| \|A g_n-A_{s_{j_0}}g_n\| \le \varepsilon \|A g_n\| + S\varepsilon.
\end{split}
\end{equation}
Hence, $A h_m = g_m$, that is, $h_m = A^{-1}g_m$. Due to condition $(d)$ one has
\begin{equation} \label{eq:11inverseapproxi1}
|\la A^{-1}g_m,g_n\ra|=|\la h_m,g_n\ra|=|\lim_{j\rightarrow \infty} \la A_{s_j}^{-1}g_m,g_n\ra |\le \rho(m,n),
\end{equation}
as claimed.

$(3)$  Recall that
\begin{equation} \label{eq:12cocyclicProof}
H_{k_0 + \ell \omega}(m,n) = H_{k_0}(m + \ell,n + \ell)
\end{equation}
for any $\ell$. Given $t \in \mathbb{Z}^\nu$ and $f(\cdot)\in \ell^2(\mathbb{Z}^\nu)$, set $U_t f(n) := f(n-t)$, $n \in \mathbb{Z}^\nu$. Clearly, $U_t$ is a unitary operator. Furthermore, $U_t (a(m,n))_{m,n \in \mathbb{Z}^\nu} U_t^{-1} = (a(m+t,n+t))_{m,n \in \mathbb{Z}^\nu}$ for any self-adjoint operator $A = (a(m,n))_{m,n \in \mathbb{Z}^\nu}$ whose domain contains the standard basis vectors $e_n$, $n \in \mathbb{Z}^\nu$. Combining this with \eqref{eq:12cocyclicProof} one concludes that $H_{k_0 + \ell\omega}$ is unitarily conjugated to $H_{ k_0}$. In particular,  $\|(E - H_{k_0 + \ell\omega})^{-1} = \|(E - H_{k_0})^{-1}\|$ for any $\ell$.
Given $k$, there exists a sequence $\ell_s$ such that $(k_0 + \ell_s) \omega \rightarrow k$. Then $\|[(E - H_{k_0 + \ell_s \omega}) - (E - H_{k})]f\| \rightarrow 0$ for any $f$ supported on a finite subset of $\mathbb{Z}^\nu$. Therefore the statement follows from part $(1)$.
\end{proof}

To prove Theorem~A we need the following lemma.

\begin{lemma}\label{lem:13.1}
$(1)$ Assume that for some $E \in \IR$, there exist $\gamma(E) > 0$, $B(E) < \infty$ such that for any $k$, $x, y \in \zv$, we have
\begin{equation} \label{eq:13-7}
|(E - H_{k})^{-1}(x,y)| \le B(E) \exp(-\gamma(E) |x-y|).
\end{equation}
Then, $(E - H)$ is invertible.

$(2)$ Let $n^\zero \in \zv \setminus \{0\}$. Assume, for instance, $k_{n^\zero} > 0$. Let
$$
E^\pm(k_{n^\zero}) = \lim_{k \to k_{n^\zero} \pm 0, \; k \in \mathfrak{G} \setminus \mathcal{K}(\omega)} E(k), \quad \text{ for $k_m > 0$,}
$$
as in Theorem~A. Assume $E^-(k_{n^\zero})<E^\pm(k_{n^\zero})$. Let $E \in (E^-(k_{n^\zero}) + \delta,
E^+(k_{n^\zero}) - \delta)$, $\delta > 0$ arbitrary. Then, for every $k$, we have
\begin{equation}\label{eq:11Hinvestimatestatement1PQreprep2}
|[(E - H_{k})^{-1}](m,n)|\le \begin{cases} \exp(-\frac{1}{2} \kappa_0 |m-n|) &  \text{if $|m-n| > 8\max (|n^\zero|, \log \delta^{-1})$}, \\
\delta^{-1} & \text{for any $m,n$.}
\end{cases}
\end{equation}

$(3)$ For every $E \in (E(0) -\ve_0^{1/2}/2, E(0))$ and every $k$, we have
\begin{equation}\label{eq:11Hinvestimatestatementkzero}
|[(E - H_{k})^{-1}](m,n)|\le \begin{cases} \exp(-\frac{1}{2} \kappa_0 |m-n|) &  \text{if $|m-n| > 8\max (|n^\zero|, \log \delta^{-1})$}, \\
\delta^{-1} & \text{for any $m,n$.}
\end{cases}
\end{equation}
\end{lemma}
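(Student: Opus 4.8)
\textbf{Proof proposal for Lemma~\ref{lem:13.1}.}

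The plan is to deduce all three statements from the Fourier-space picture developed in Section~\ref{sec.11}, where $H$ acting on $\psi = \check g$ corresponds to $H_k$ acting fiberwise via \eqref{eq:17-1FtransH} and \eqref{eq:12-a7}, combined with the resolvent estimates collected in Theorem~D, parts $(IV)$ and $(V)$. For part $(1)$, I would first argue that a uniform-in-$k$ off-diagonal exponential decay bound \eqref{eq:13-7} forces $(E - H_k)$ to be boundedly invertible with $\|(E-H_k)^{-1}\| \le C(E,\gamma(E),B(E))$ independent of $k$; this is a Schur-test estimate, summing $B(E)\exp(-\gamma(E)|x-y|)$ over $y$ (or $x$). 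Then, using the direct-integral / Floquet decomposition of $H$ over $k$ (or, concretely, the density argument of Lemma~\ref{lem:13.1A}(3) applied to the translates $H_{k_0 + \ell\omega}$ together with \eqref{eq:12cocyclic}), the uniform fiberwise invertibility upgrades to invertibility of $H$ on $L^2(\IR)$: one verifies that $(E-H)$ is bounded below on the dense set of Schwartz functions and has dense range. This is the easiest of the three and is essentially bookkeeping.

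For part $(2)$, the key input is Theorem~D, part $(IV)(2)$, specifically the estimate \eqref{eq:10Hinvestimatestatement1PQrep} for $[(E - H_{\La^{(s)}_{k_{n^\zero}}(0),\ve,k_{n^\zero}})^{-1}](m,n)$ valid on the open gap interval $\bigl( E^-(0,\La^{(s)}_{k_{n^\zero}}(0);\ve,k_{n^\zero}) + \delta, E^+(0,\La^{(s)}_{k_{n^\zero}}(0);\ve,k_{n^\zero}) - \delta\bigr)$. First I would pass to the limit $s\to\infty$: by \eqref{eq:11kk1comp1limapp11} (established in the proof of Theorem~C) the gap edges $E^\pm(0,\La^{(s)}_{k_{n^\zero}}(0);\ve,k_{n^\zero})$ converge to $E^\pm(k_{n^\zero})$, so for $E$ strictly inside the limiting gap the finite-volume resolvent bounds hold with $s$-independent constants once $s$ is large enough that the edge shift $\rho_s$ is $<\delta/2$. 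Then I would use Lemma~\ref{lem:13.1A}(1) and (2) with $A = E - H_{k_{n^\zero}}$, $A_s = E - H_{\La^{(s)}_{k_{n^\zero}}(0),\ve,k_{n^\zero}}$ (after restoring the scaling $H_{k} = \lambda(2\pi)^2 \tilde H_k$): the hypotheses $(a)$–$(d)$ of that lemma are exactly the uniform norm bound and the uniform matrix-element decay $\exp(-\tfrac12\kappa_0|m-n|)$ from \eqref{eq:10Hinvestimatestatement1PQrep}, so we get the same bound \eqref{eq:11Hinvestimatestatement1PQreprep2} for $(E - H_{k_{n^\zero}})^{-1}$. Finally, translation covariance \eqref{eq:12cocyclic} propagates the bound from $k = k_{n^\zero}$ to $k = k_{n^\zero} + \ell\omega$ for all $\ell$, and then Lemma~\ref{lem:13.1A}(3) (a strong-limit / density argument along $(k_{n^\zero}+\ell_s)\omega \to k$) gives it for arbitrary $k$; here one must check that the exponential decay bound is preserved under the strong limit, which is where Lemma~\ref{lem:13.1A}(2) with the summable majorant $\rho(m,n) = \exp(-\tfrac12\kappa_0|m-n|)$ does the work. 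Part $(3)$ is handled identically, replacing Theorem~D~$(IV)(2)$ by Theorem~D~$(V)$, i.e. the bound \eqref{eq:10Hinvestimatestatkzero} at $k=0$ on the interval $\bigl(E(0,\La^{(s)}_0(0);\ve,0) - \ve_0^{1/2}/2, E(0,\La^{(s)}_0(0);\ve,0) - \delta\bigr)$, with the limit $E(0,\La^{(s)}_0(0);\ve,0)\to E(0)$ supplied by \eqref{eq.11Eestimates1APN1FIN}.

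The main obstacle I anticipate is not any single estimate but the careful matching of domains of $E$ across the $s\to\infty$ limit: the finite-volume bounds in Theorem~D are stated on intervals whose endpoints are the finite-volume gap edges, and one must quantify the rate $\rho_s \to 0$ (from \eqref{eq:11kk1comp1limapp11}, $\rho_s \lesssim \ve\,\delta^{(s-1)}_0$) so that, for $E$ at a fixed distance $\delta$ from the true gap edges $E^\pm(k_{n^\zero})$, the finite-volume intervals eventually contain $E$ with uniform margin $\delta/2$, keeping the constants in \eqref{eq:10Hinvestimatestatement1PQrep} under control. A secondary subtlety is verifying that the "$|m-n| > 8\max(|n^\zero|,\log\delta^{-1})$" regime in which the exponential bound holds — as opposed to the merely polynomial $\delta^{-1}$ bound for all $m,n$ — survives both the $s\to\infty$ limit (fine, the threshold is $s$-independent) and the $k$-limit via Lemma~\ref{lem:13.1A}(3); for the latter one applies part (2) of that lemma with $\rho(m,n) = \exp(-\tfrac12\kappa_0|m-n|)\,\mathbf{1}_{|m-n|>8\max(|n^\zero|,\log\delta^{-1})} + \delta^{-1}\mathbf{1}_{|m-n|\le 8\max(|n^\zero|,\log\delta^{-1})}$, whose $\ell^2$-norm in $n$ is finite, so the convergent-subsequence argument goes through. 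Everything else — the Schur test in part $(1)$, the scaling bookkeeping $H_k = \lambda(2\pi)^2\tilde H_k$, and the translation-covariance step — is routine.
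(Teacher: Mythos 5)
Parts $(2)$ and $(3)$ of your proposal match the paper's proof step for step: invoke \eqref{eq:10Hinvestimatestatement1PQrep} (resp.\ \eqref{eq:10Hinvestimatestatkzero}) from Theorem~D, take $s\to\infty$ via Lemma~\ref{lem:13.1A}(2), transport from $k_{n^\zero}$ (resp.\ $0$) to $k_{n^\zero}+t\omega$ by the cocycle identity \eqref{eq:12cocyclicinv}, and then pass to arbitrary $k$ by choosing $\ell_s$ with $(k_{n^\zero}+\ell_s)\omega\to k$ and applying Lemma~\ref{lem:13.1A}(2) again with the majorant $\rho(m,n)$ you describe. Your remark on tracking the gap-edge shift $\rho_s$ is correct but cheap, since the paper states the finite-volume bound with the $s$-dependent edges and only needs $E$ to lie strictly inside for all large $s$.

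Part $(1)$ is where I would push back. The uniform Schur-test bound $\sup_k\|(E-H_k)^{-1}\|<\infty$ is indeed the right computational ingredient, but the bridge from there to invertibility of $(E-H)$ on $L^2(\IR)$ is not a ``direct-integral / Floquet decomposition,'' and Lemma~\ref{lem:13.1A}(3) does not help you here --- that lemma only transfers invertibility of the \emph{discrete} operator $(E-H_{k_0})$ to $(E-H_k)$ for other $k$; it says nothing about the continuum operator $H$ on $L^2(\IR)$. Because the orbit $\{k+n\omega:n\in\zv\}$ is dense in $\IR$, there is no measurable fundamental domain, so $H$ is not a genuine direct integral of the $H_k$ and the naive Fubini $\int_k\|\varphi_k\|_{\ell^2}^2\,dk$ diverges. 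The paper avoids this entirely: it \emph{constructs} the inverse by setting $g(k)=\sum_n(E-H_k)^{-1}(0,n)\widehat f(k+n\omega)$ as in \eqref{eq:13-8}, verifies algebraic consistency through the cocycle identity \eqref{eq:13-8b}--\eqref{eq:13-8b-2}, gets $\|g\|_2\lesssim\|\widehat f\|_2$ via the Cauchy--Schwarz estimate \eqref{eq:13-9} (a sum over the discrete index $n$, not an integral over a fundamental domain), and only then uses the Fourier identity \eqref{eq:17-1FtransH} to conclude $(E-H)\check g=f$. Your ``bounded below on Schwartz functions'' reformulation can be made to work if you write the bound-from-below as a statement about the frequency-side operator with kernel supported on the lines $k'=k+n\omega$ and apply the Schur test to that kernel, but as written you have asserted the conclusion without the construction that actually carries the Schur bound from $\ell^2(\zv)$ to $L^2(\IR)$. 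This is a genuine gap, not a matter of polish.
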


\begin{proof}
$(1)$ For any $k$, we have
\begin{equation} \label{eq:13-a}
\begin{split}
\sum_{x \in \zv} |(E - H_{k})(m,x)| |(E - H_{k})^{-1}(x,n)| \\
\lesssim (k + m\omega)^2 B(E) \exp(-\gamma(E) |m-n|) + 4^\nu \ve_0 B(E) (\gamma_1(E))^{-\nu} \exp(-\gamma_1(E) |m-n|), \quad m \neq n, \\
\sum_{x \in \zv} |(E - H_{k})(m,x)| |(E - H_{k})^{-1}(x,m)| \\
\lesssim (k + m\omega)^2 B(E) + 4^\nu  B(E) (\gamma_1(E))^{-\nu} \exp(-\gamma_1(E) |m-n|), \\
\sum_{x \in \zv} (E - H_{k})(m,x) (E - H_{k})^{-1}(x,n) = \delta_{m,n},
\end{split}
\end{equation}
where $\delta_{m,n}$ is the Kronecker symbol, $\gamma_1(E) = \frac{1}{2} \min (\kappa_0, \gamma(E))$. In particular, for any $k$ and for any bounded $\psi : \zv \to \IC$, we have
\begin{equation} \label{eq:13-b}
\sum_{x \in \zv} (E - H_{k})(m,x) (E - H_{k})^{-1}(x,n) \psi(n) = \psi(m), \quad m \in \zv,
\end{equation}
and the series converges absolutely.

Let $f \in \cS(\IR^1)$ be arbitrary. Set
\begin{equation} \label{eq:13-8}
g(k) = \sum_{n \in \zv} (E - H_{k})^{-1}(0,n) \widehat{f} (k + n\omega).
\end{equation}
Note that due to the identity \eqref{eq:12cocyclic}, one has for any $k$ and any $m, n, \ell \in \zv$,
\begin{equation} \label{eq:12cocyclicinv}
(E - H_{k + \ell \omega})^{-1}(m,n) = (E - H_{k})^{-1}(m + \ell, n + \ell).
\end{equation}
Using \eqref{eq:12cocyclicinv}, one obtains for any $k$ and any $m \in \zv$,
\begin{equation} \label{eq:13-8b}
\begin{split}
g(k + m\omega) = \sum_{n \in \zv} (E - H_{k + m\omega})^{-1}(0,n) \widehat{f} (k + m\omega + n\omega) \\
= \sum_{n \in \zv} (E - H_{ k})^{-1}(m,n+m) \widehat{f} (k + m\omega + n\omega) = \sum_{n \in \zv} (E - H_{k})^{-1}(m,n) \widehat{f} (k + n\omega).
\end{split}
\end{equation}
Combining this with \eqref{eq:13-b}, one obtains for any $k$,
\begin{equation} \label{eq:13-8b-2}
\sum_{m \in \zv} (E - H_{k})(0,m) g(k + m\omega) = \widehat{f}(k).
\end{equation}
It follows from the definition \eqref{eq:13-8} that for any $h \in L^2(\IR)$, one has
\begin{equation} \label{eq:13-9}
\Big| \int_\IR g(k) h(k) \, dk \Big| \lesssim 2^\nu B(E) (\gamma(E))^{-\nu} \| \widehat{f} \|_{2} \| h \|_2.
\end{equation}
Hence, $\| g \|_2 \lesssim 2^\nu B(E) (\gamma(E))^{-\nu} \| \widehat{f} \|_{2}$. In particular, there exists the inverse Fourier transform $\psi := \check{g} \in L^2(\IR)$, $\| \psi \|_2 = \| g \|_2 \le M(E) \|f\|_2$, where $M(E)$ is a constant. Furthermore, since $\widehat{f} \in \cS(\IR)$, one obtains using condition \eqref{eq:13-7}, $\lim_{|k| \to \infty} |k|^a |g(k)| = 0$ for any $a > 0$. Therefore, \eqref{eq:17-1FtransH} holds. Combining \eqref{eq:17-1FtransH} with \eqref{eq:13-8b-2}, one obtains
\begin{equation} \label{eq:17-1FtransHa}
\widehat{[(E - H) \psi]}(k) = \widehat{f}(k).
\end{equation}
So,
\begin{equation} \label{eq:17-1FtransHaIn}
(E - H) \psi = f, \quad \| \psi \|_2 \le M(E) \| f \|_2.
\end{equation}
Since $f \in \cS(\IR)$ is arbitrary, $(E - H)$ is invertible.

$(2)$ Let
$$
E \in \bigl( E^- \bigl(0, \La^{(s)}_{k_{n^\zero}}(0); \ve,k_{n^\zero} \bigr)+\delta, E^+\bigl(0, \La^{(s)}_{k_{n^\zero}}(0); \ve,k_{n^\zero} \bigr)-\delta\bigr),
$$
$\delta > 0$. Then due to \eqref{eq:10Hinvestimatestatement1PQrep} from part $(IV)$ of Theorem~D, one has
\begin{equation}\label{eq:11Hinvestimatestatement1PQreprep}
|[(E - H_{\La^{(s)}_{k_{n^\zero}}(0),\ve,k_{n^\zero}})^{-1}](m,n)| \le \begin{cases} \exp(-\frac{1}{2} \kappa_0 |m-n|) & \text{if $|m-n| > 8\max (|n^\zero|, \log \delta^{-1})$}, \\ \delta^{-1} & \text{for any $m,n$.} \end{cases}
\end{equation}
Since $\|(E - H_{\La^{(s)}_{k_{n^\zero}}(0),\ve,k_{n^\zero}})f - (E - H_{k_{n^\zero}})f\| \rightarrow 0$ as $s \rightarrow \infty$, for any $f$ supported on a finite subset of $\mathbb{Z}^\nu$, part $(2)$ of Lemma~\ref{lem:13.1A} applies. Thus,
\begin{equation}\label{eq:11Hinvestimatestatement1PQreprep5}
|[(E - H_{k_{n^\zero}})^{-1}](m,n)| \le \begin{cases} \exp(-\frac{1}{2} \kappa_0 |m-n|) & \text{if $|m-n| > 8\max (|n^\zero|, \log \delta^{-1})$}, \\ \delta^{-1} & \text{for any $m,n$.} \end{cases}
\end{equation}
Let $U_t f(n) := f(n-t)$, $n \in \mathbb{Z}^\nu$ be the unitary operator defined in the proof of Lemma~\ref{lem:13.1A}. Then, as we saw in the
proof of Lemma~\ref{lem:13.1}, $U_t(E- H_{k})U_t^{-1}=(E-H_{k+t\omega})$. This implies $U_t (E- H_{k})^{-1} U_t^{-1} = (E - H_{k+t\omega})^{-1}$. Hence,
\begin{equation}\label{eq:11Hinvestimatestatement1PQreprep1}
\begin{split}
|(E - H_{k_{n^\zero}+t\omega})(m,n)| = |(E - H_{k_{n^\zero}})^{-1}(m+t,n+t)|\\
\le \begin{cases} \exp(-\frac{1}{2} \kappa_0 |m-n|) & \text{if $|m-n| > 8\max (|n^\zero|, \log \delta^{-1})$}, \\ \delta^{-1} & \text{for any $m,n$.} \end{cases}
\end{split}
\end{equation}
Given $k$, there exists a sequence $\ell_s$ such that $(k_{n^\zero} + \ell_s) \omega \rightarrow k$. Then $\|[(E - H_{k_n^\zero + \ell_s \omega}) - (E - H_{k})]f\| \rightarrow 0$ for any $f$ supported on a finite subset of $\mathbb{Z}^\nu$. Therefore the statement follows from part $(2)$ of Lemma~\ref{lem:13.1A}.

$(3)$ The proof is completely similar to the proof of $(2)$ with part $(V)$ of Theorem~D being invoked.
\end{proof}

\begin{proof}[Proof of Theorem A]
Given $k \in \IR$ and $\vp(n) : \zv \rightarrow \mathbb{C}$ such that $|\varphi(n)| \le C_\varphi |n|^{-\nu-1}$, where $C_\varphi$ is a constant, set
\begin{equation} \label{eq:11-5}
y_{\vp, k}(x) = \sum_{n \in \zv}\, \vp(n) e \bigl( (n\omega + k)x \bigr).
\end{equation}
The function $y_{\vp, k}(x)$ satisfies equation \eqref{eq:1-1} if and only if
\begin{equation} \label{eq:111-6}
(2 \pi)^2 (n\omega + k)^2 \vp(n) + \sum_{m \in \zv\setminus \{0\}}\, c(n - m) \vp(m) = E \vp(n)
\end{equation}
for any $n \in \zv$. Let $E(k)$ and $(\vp(n; k))_{n \in \zv}$ be as in Theorem~C. Then,
$$
\psi(k, x) = \sum_{n \in \zv} \vp(n; k) e((n\omega + k)x)
$$
obeys equation \eqref{eq:1-1} with $E = E(k)$, that is,
\begin{equation}\label{eq:11.sco}
H \psi \equiv - \psi''(k,x) + V(x) \psi(k,x) = E(k) \psi(k, x).
\end{equation}
Due to $(1)$ and $(2)$ from Theorem~C, conditions $(a)$--$(c)$ in Theorem~A hold. Due to part $(2)$ of Lemma~\ref{lem:13.1}, one has
$$
\spec H \cap (E^-(k_m), E^+( k_m)) = \emptyset \text{ if $E^-(k_m) < E^+(k_m)$} .
$$
Due to part $(2)$ in Lemma~\ref{lem:13.1} one has
$$
\spec H \cap (E(0) - \ve_0^{1/2}/2, E(0)) = \emptyset.
$$
It is well-known that
$$
\spec H \subset  [0, \infty) + \{ \ve V(x) : x \in \mathbb{R} \}.
$$
It is easy to see that  $|V(x)|\le (4\kappa_0^{-1})^\nu$. Hence, $|\ve| |V(x)|<\ve_0^{1/2}/4$ for any $x$ and any $|\ve|<\ve_0$, see the definition of $\ve_0$ in \eqref{eq:2epsilon0} from Definition~\ref{def:4-1}. Thus, one concludes that
$$
\spec H \subset  [E(0), \infty) \setminus \bigcup_{m \in \zv \setminus \{0\} : E^-(k_m) < E^+(k_m)} (E^-(k_m), E^+( k_m)).
$$

Recall the following well-known general fact in the spectral theory of Sturm-Liouville equations. If for some $E \in \mathbb{R}$, there exists a bounded smooth function which obeys equation \eqref{eq:1-1}, that is,
\begin{equation}\label{eq:11.sco-2}
- y'' + V(x) y(x) = E y(x),
\end{equation}
then $E \in \spec H$. For any $k \in \mathcal{G} \setminus \mathcal{K} (\omega)$, the function $\psi(k,x)$ is bounded. Hence, $E(k) \in \spec H$. It follows from \eqref{eq:11Ekk1EGT} that $E(k)$ is continuous at each point $k \in \mathcal{G} \setminus \mathcal{K} (\omega)$. It follows also from \eqref{eq:11Ekk1EGT} that $E(k)$ is monotone for $k \in \mathcal{G} \setminus \mathcal{K} (\omega)$, $k > 0$. Recall also that $E(-k) = E(k)$. Finally, due to
\eqref{eq:10Eksquaire}, $E(k) \to \infty$ when $k \to \infty$. One concludes that
$$
\overline{\{ E(k) : k \in \mathcal{G} \setminus \mathcal{K}(\omega) \}} = [E(0), \infty) \setminus \bigcup_{m \in \zv \setminus \{0\} : E^-(k_m) < E^+(k_m)} (E^-(k_m), E^+(k_m)).
$$

Hence,
$$
\spec H \supset [\uE, \infty) \setminus \bigcup_{m \in \zv \setminus \{0\} : E^-(k_m) < E^+(, k_m)} (E^-(k_m), E^+( k_m)).
$$
This finishes the proof of Theorem~A.
\end{proof}

\begin{proof}[Proof of part $(1)$ of Theorem~B]
Let $n^\zero \in \zv \setminus \{0\}$ be arbitrary. We assume that $k_{n^\zero} = - \frac{n^\zero \omega}{2} > 0$. The case $k_{n^\zero} = - \frac{n^\zero \omega}{2} < 0$ is similar. Due to \eqref{eq:10kk1comp1gapsize} of part $(4)$ of Theorem~D, one has
\begin{equation}\label{eq:11kk1comp1gapsize}
0 \le E^+(0, \La;k_{n^\zero}) - E^-(0, \La;k_{n^\zero}) \le 2\ve\exp \Big(-\frac{\kappa_0}{2} |n^\zero| \Big),
\end{equation}
where $\La = \La^{(s)}_{k_{n^\zero}}(0)$, $s = s^{(\ell(k_{n^\zero}))}(k_{n^\zero})$, $\ell = \ell(k_{n^\zero})$. It follows now from \eqref{eq.11Eestimates1APN1FIN1} that
\begin{equation}\label{eq:11kk1comp1gapsizelim}
0 \le E^+(k_{n^\zero}) - E^-(k_{n^\zero}) \le 2\ve\exp \Big(-\frac{\kappa_0}{2}|n^\zero| \Big),
\end{equation}
as claimed in part $(1)$ of Theorem~B.
\end{proof}

To prove part $(2)$ of Theorem~B, it is convenient to establish a few lemmas first.

\begin{lemma}\label{lem:11.gapinequalities}
Using the notation from the proof of part $(1)$ of Theorem~B, for any $n^\zero$, the Fourier coefficient $c(n^\zero)$ obeys the following estimate,
\begin{equation} \label{eq:11gapsineqstatement}
\begin{split}
|c(n^\zero)| \le \ve_0^{-1} \exp( \kappa_0 |n^\zero|) (E^+(\La;k_{n^\zero}) - E^-(\La;k_{n^\zero})) \\
+ \sum_{m', n' \in \La \setminus \{ 0,n^\zero \}} |c(m')| s_{D(\cdot;\La \setminus \{0,n^\zero\}),T,\kappa_0,\ve;\La \setminus \{0,n^\zero\}, \mathfrak{R}}(m',n') |c(n'- n^\zero)|.
\end{split}
\end{equation}
Here, as usual, $T = 4 \kappa_0 \log \delta_0^{-1}$.
\end{lemma}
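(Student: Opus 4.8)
\textbf{Proof proposal for Lemma~\ref{lem:11.gapinequalities}.}
The plan is to extract $c(n^\zero)$ from the characteristic equation \eqref{eq:10Eequation0} of Theorem~D, which governs the gap edges $E^\pm(0,\La^{(s)}_{k_{n^\zero}}(0);\ve,k_{n^\zero})$. Recall from part~$(IV)(3)$ of Theorem~D that both edges $E = E^\pm(\La;k_{n^\zero})$ satisfy
$$
E - v(0,k_{n^\zero}) - Q^{(s)}(0,\La;\ve,E) \mp |G^{(s)}(0,n^\zero,\La;\ve,E)| = 0 ,
$$
so subtracting the two equations (taken at their respective values of $E$) gives
$$
E^+(\La;k_{n^\zero}) - E^-(\La;k_{n^\zero}) = \bigl[\, Q^{(s)} + |G^{(s)}|\,\bigr]\big|_{E^+} - \bigl[\, Q^{(s)} - |G^{(s)}|\,\bigr]\big|_{E^-} ,
$$
and hence $|G^{(s)}(0,n^\zero,\La;\ve,E^\pm)| \le \tfrac12\bigl(E^+(\La;k_{n^\zero})-E^-(\La;k_{n^\zero})\bigr) + \tfrac12|Q^{(s)}|_{E^+}-Q^{(s)}|_{E^-}|$; the last difference is controlled by $\partial_E Q^{(s)}$, which by \eqref{eq:5-12acACCOPQ}-type estimates is $O(|\ve|)$ times $(E^+-E^-)$, a harmless lower-order correction. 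So up to such corrections $|G^{(s)}(0,n^\zero,\La;\ve,E)| \lesssim E^+(\La;k_{n^\zero})-E^-(\La;k_{n^\zero})$.

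Next I would unfold the definition of $G^{(s)}$ from \eqref{eq:10-10acbasicfunctions}:
$$
G^{(s)}(0,n^\zero,\La;\ve,E) = h(0,n^\zero;\ve,k_{n^\zero}) + \sum_{m',n' \in \La\setminus\{0,n^\zero\}} h(0,m';\ve,k_{n^\zero})\,(E-H_{\La\setminus\{0,n^\zero\}})^{-1}(m',n')\,h(n',n^\zero;\ve,k_{n^\zero}).
$$
Now $h(0,n^\zero;\ve,k_{n^\zero}) = \ve\tilde h_0(0,n^\zero)$, and after rescaling back to the original normalization (recall $H_k = \lambda(2\pi)^2\tilde H_k$, $|\ve| = \lambda^{-1}\epsilon$, and $\tilde h_0$ involves $\lambda^{-1}(2\pi)^{-2}c$), the leading term is a fixed nonzero multiple of $c(n^\zero)$. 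Solving for that term,
$$
|c(n^\zero)| \le C\,\ve_0^{-1}\exp(\kappa_0|n^\zero|)\,|G^{(s)}(0,n^\zero,\La;\ve,E)| + (\text{the double sum}),
$$
where the factor $\ve_0^{-1}\exp(\kappa_0|n^\zero|)$ absorbs the $\ve$ in front of $c$ and the exponential weight $\exp(-\kappa_0|n^\zero|)$ carried by $h_0$; combining with the bound $|G^{(s)}|\lesssim E^+-E^-$ gives the first term of \eqref{eq:11gapsineqstatement}. For the double sum one invokes the Green function estimate \eqref{eq:10Hinvestimatestatement1PQ}, or more precisely the cleaner bound $|(E-H_{\La\setminus\{0,n^\zero\}})^{-1}(m',n')| \le s_{D(\cdot;\La\setminus\{0,n^\zero\}),T,\kappa_0,\ve;\La\setminus\{0,n^\zero\},\mathfrak{R}}(m',n')$ that holds at $E$ strictly between the edges (this is exactly the content underlying \eqref{eq:10Hinvestimatestatement1PQ} combined with Lemma~\ref{lem:auxweight1}), together with $|h(0,m';\ve,k_{n^\zero})| = \ve|\tilde h_0| \le |c(m')|$ (in the original normalization) and likewise $|h(n',n^\zero;\ve,k_{n^\zero})| \le |c(n'-n^\zero)|$. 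This produces the sum $\sum_{m',n'\in\La\setminus\{0,n^\zero\}} |c(m')|\, s_{D(\cdot;\La\setminus\{0,n^\zero\}),T,\kappa_0,\ve;\La\setminus\{0,n^\zero\},\mathfrak{R}}(m',n')\, |c(n'-n^\zero)|$, which is the second term of \eqref{eq:11gapsineqstatement}.

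The main obstacle I anticipate is bookkeeping the normalization constants precisely: the statement has the clean factor $\ve_0^{-1}\exp(\kappa_0|n^\zero|)$ with no stray numerical constant, so one must track carefully how the $\lambda$, $(2\pi)^2$, and $\ve = \lambda^{-1}\epsilon$ factors cancel between $h(0,n^\zero;\ve,k_{n^\zero})$, the rescaling $H_k = \lambda(2\pi)^2\tilde H_k$, and the definition \eqref{eq:17-4} of $c(n)$ — in particular verifying that at $\epsilon = 1$ the coefficient of $c(n^\zero)$ in $G^{(s)}$ is exactly $\ve_0$ times a quantity $\ge 1$ after multiplying by $\exp(\kappa_0|n^\zero|)$, so that dividing through gives precisely the stated constant. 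A secondary point requiring care is justifying that one may evaluate $G^{(s)}$ at a single admissible $E$ strictly between $E^-(\La;k_{n^\zero})$ and $E^+(\La;k_{n^\zero})$ (or pass to the limit $s\to\infty$ so that $E^\pm(\La;k_{n^\zero})\to E^\pm(k_{n^\zero})$) while keeping the Green function bound \eqref{eq:10Hinvestimatestatement1PQ} uniform; this is exactly where the invertibility of $(E-H_{\La\setminus\{0,n^\zero\}})$ on the open gap, established in part~$(IV)(2)$ of Theorem~D, is used. Everything else is a routine triangle-inequality estimate on the Schur-complement expansion.
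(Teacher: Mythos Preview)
Your overall strategy---subtracting the two instances of \eqref{eq:10Eequation0} and then expanding $G^{(s)}$---is exactly the paper's. The bound $|G^{(s)}|\lesssim E^+-E^-$ via $|\partial_E Q^{(s)}|=O(|\ve|)$ is valid (this estimate is stated in \eqref{eq:6-12acACCOPQNEEW1} and persists at $k=k_{n^\zero}$ by continuity, Proposition~\ref{prop:9.10}), and it is in fact simpler than what the paper does. But your accounting for the factor $\ve_0^{-1}\exp(\kappa_0|n^\zero|)$ is wrong, and this reveals a genuine misunderstanding.

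In the original normalization \eqref{eq:12-a7}, the off-diagonal entry is $h(0,n^\zero;k)=c(n^\zero)$ \emph{directly}; see \eqref{eq:11-10acbasicfunctionsBineq} in the paper's own proof, where $b=c(n^\zero)+\sum c(m')[\cdots]c(n'-n^\zero)$. There is no ``$\ve$ in front of $c$'' and no ``exponential weight $\exp(-\kappa_0|n^\zero|)$'' to absorb---the latter is an \emph{upper bound} on $|c|$ from \eqref{eq:17-4}, not a coefficient. So your extraction step should read $|c(n^\zero)|\le |G^{(s)}|+|\text{sum}|$, with no prefactor. Combined with your (correct) bound $|G^{(s)}|\le (1+O(|\ve|))(E^+-E^-)$, this gives the \emph{stronger} inequality $|c(n^\zero)|\le 2(E^+-E^-)+\text{sum}$, which of course implies \eqref{eq:11gapsineqstatement}. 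Your ``main obstacle'' about tracking $\lambda,(2\pi)^2,\ve$ is a red herring.

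The paper's route to the stated factor is entirely different and uses the continued-fraction machinery of Section~\ref{sec.4}. Via Proposition~\ref{prop:6-4INT}, $f_1=E-v(0)-Q^{(s)}$ lies in $\mathfrak{F}^{(\ell-1)}$, so $f_1=\chi^{(f_1)}/\mu^{(f_1)}$. The paper then uses $|\partial_E\chi^{(f_1)}|\le 1$ (Lemma~\ref{4.fcontinuedfrac}(2)) to get $|\chi^{(f_1)}|_{E^+}-\chi^{(f_1)}|_{E^-}|\le 2(E^+-E^-)$, and the lower bound $|\mu^{(f_1)}|\ge \ve_0\exp(-\kappa_0|n^\zero|)$ (from Lemma~\ref{4.fcontinuedfrac}(4) and the formula $\tau^{(f_1)}=\tau^{(\ell)}(k_{n^\zero})$) to divide through. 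That is where $\ve_0^{-1}\exp(\kappa_0|n^\zero|)$ really comes from in the paper.
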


\begin{proof}
We use equation \eqref{eq:10Eequation0} from part $(IV)$ of Theorem~D,
\begin{equation} \label{eq:11Eequation0}
[E - v(0, k_{n^\zero}) - Q^{(s)}(0,\La; \ve, k_{n^\zero}, E)  \mp \big| G^{(s)}(0,n^\zero,\La; \ve, k_{n^\zero}, E) \big|]|_{E = E^\pm(\La; \ve, k_{n^\zero}} = 0,
\end{equation}
where
\begin{equation} \label{eq:11-10acbasicfunctions}
\begin{split}
Q^{(s)}(0,\La; k_{n^\zero}+\theta, E) \\
= \sum_{m', n' \in \La \setminus \{0,n^\zero\}} h(0, m';k_{n^\zero}+\theta) [(E - H_{\La \setminus \{0, n^\zero\}, k_{n^\zero} + \theta})^{-1}](m',n') h(n',0;k_{n_0}+\theta), \\
G^{(s)}(0,n^\zero,\La;k_{n^\zero}+\theta, E) = h(0, n^\zero;k_{n^\zero}+\theta) \\
+ \sum_{m', n' \in \La \setminus \{0,n^\zero\}} h(0, m';k_{n^\zero}+\theta) [(E - H_{\La \setminus \{0,n^\zero\},k_{n^\zero}+\theta})^{-1}](m',n') h(n', n^\zero;k_{n^\zero}+\theta),
\end{split}
\end{equation}
$0 \le \theta \le (\delta^\es_0)^{3/4}$; see the notation in the proof of part $(1)$ above. Set
\begin{equation} \label{eq:11-10acbasicfunctions-2}
\begin{split}
a_1 = v(0,k_{n^\zero}+\theta) + Q^{(s)}(0,\La;k_{n^\zero}+\theta, E), \\
a_2 = v(0,k_{n^\zero}-\theta) + Q^{(s)}(0,\La;k_{n^\zero}-\theta, E), \\
b = G^{(s)}(0,n_0,\La;k_{n^\zero}+\theta, E),
\end{split}
\end{equation}
$f_i := E - a_i$, $i = 1,2$, $f = f_1 - |b|^2 f_2^{-1}$. Due to Proposition~\ref{prop:6-4INT}, $f \in \mathfrak{F}^{(\ell)}_{\mathfrak{g}^{(\ell - 1)},1} (f_1,f_2,b^2)$, provided $\theta > 0$. Now we invoke Lemma~\ref{4.fcontinuedfrac}. Due to part $(2)$ of that lemma, the functions $\mu^{(f_j)}$, $\chi^{(f_j)}$ are $C^2$-smooth, $|\partial^\alpha \mu^{(f_j)}|, |\partial^\alpha \chi^{(f_j)}| \le 1$, $|\alpha| \le 2$. It follows from \eqref{eq:11Eequation0} that
\begin{equation} \label{eq.11Eestimates1APFINALPM1}
[ f_1 (k_{n^\zero}, E) - |b (E)| ]|_{E=E^+ (\La;k_{n^\zero})} - [ f_1 (k_{n^\zero}, E) + |b (k_{n^\zero}, E)| ]|_{E=E^- ( \La;k_{n^\zero})} = 0.
\end{equation}
Hence,
\begin{equation} \label{eq.11Eestimates1APFINALB1}
|b(k_{n^\zero}, E)| |_{E = E^+(\La;k_{n^\zero})} \le |f_1 (k_{n^\zero}, E) |_{E = E^+(\La;k_{n^\zero})} - f_1 (k_{n^\zero}, E) |_{E = E^-(\La;k_{n^\zero})}|.
\end{equation}
Recall that $\chi^{(f_j)} = \mu^{(f_j)}f_j$. One has
\begin{equation} \label{eq.11Eestimates1APFINALB2}
\begin{split}
|\mu^{(f_1)}f_1 (k_{n^\zero}, E) |_{E = E^+(\La;k_{n^\zero})} - \mu^{(f_1)}f_1 (k_{n^\zero}, E) |_{E = E^-(\La; k_{n^\zero})}| \le E^+(\La;k_{n^\zero}) - E^-(\La;k_{n^\zero}), \\
|\mu^{(f_1)}f_1 (k_{n^\zero}, E) |_{E = E^+(\La;k_{n^\zero})} - \mu^{(f_1)}|_{E = E^+(\La;k_{n^\zero})} f_1(k_{n^\zero},E)
|_{E = E^-(\La;k_{n^\zero})}| \\
\le E^+(\La;k_{n^\zero}) - E^-(\La;k_{n^\zero}) + |\mu^{(f_1)}|_{E = E^+(\La;k_{n^\zero})} - \mu^{(f_1)}|_{E = E^-(\La; k_{n^\zero})}| \sup |f_1 (k_{n^\zero},E)| \\
\le 2 (E^+(\La;k_{n^\zero}) - E^-(\La;k_{n^\zero})).
\end{split}
\end{equation}
Recall also that due to part $(4)$ in Lemma~\ref{4.fcontinuedfrac}, one has $ |\mu^{(f_i)}| \ge 2^{-2^{\ell-1}+1} \tau^{(f_{i})}$. Let $n^{(\ell')}(k_{n^\zero})$, $\ell' = 0, \dots, \ell = \ell(k_{n^\zero})$ be as in Definition~\ref{def:10.finitereset}. Here $n^{(\ell)}(k_{n^\zero}) = n^\zero$, see Definition~\ref{def:10.finitereset} and Lemma~\ref{lem:10resetn0}. By Theorem~D, $\tau^{(f_{1})} = \tau^{(\ell)}(k_{n^\zero}) = |k_{n^{(\ell-1)}}| ||k_{n^\zero}|-|k_{n^{(\ell-1)}}||$. This implies $|\mu^{(f_i)}| > \ve_0 \exp(-\kappa_0 |n^\zero|)$. Therefore,
\begin{equation} \label{eq.11Eestimates1APFINALB4}
|b(k_{n^\zero},E)| |_{E = E^+(\La;k_{n^\zero})} \le \ve_0^{-1} \exp(\kappa_0 |n^\zero|) (E^+(\La;k_{n^\zero}) - E^-(\La; k_{n^\zero})).
\end{equation}
Recall that
\begin{equation} \label{eq:11-10acbasicfunctionsBineq}
b(k_{n^\zero},E) = c(n^\zero) + \sum_{m', n' \in \La \setminus \{0,n^\zero\}} c(m') [(E - H_{\La \setminus \{ 0, n^\zero \}, k_{n^\zero}})^{-1}] (m',n') c(n'- n^\zero)
\end{equation}
and also that
\begin{equation}\label{eq:11Hinvestimatestatement1PQ}
|[(E - H_{\La \setminus \{0,n^\zero\},k_{n^\zero}})^{-1}](m,n)| \le s_{D(\cdot; \La \setminus \{0,n^\zero\}), T, \kappa_0, \ve; \La \setminus \{0,n^\zero\}, \mathfrak{R}}(m,n).
\end{equation}
Therefore, \eqref{eq:11gapsineqstatement} follows from \eqref{eq.11Eestimates1APFINALB4}.
\end{proof}

It is convenient to introduce the following notation: $\La'(n^\zero) = \La \setminus \{0,n^\zero\}$, $\La(n^\zero) = \zv \setminus \{0,n^\zero\}$,
\begin{equation} \label{eq:11-10snotation}
s(n^\zero;m',n') = \begin{cases} s_{D(\cdot;\La(n^\zero)),T,\kappa_0,\ve;\La(n^\zero),\mathfrak{R}}(m',n'), \quad \text{if $m', n' \in \La'(n^\zero)$}, \\
0, \quad \text{if $m' \in \La(n^\zero) \setminus \La'(n^\zero)$, or $n' \in \La(n^\zero) \setminus \La'(n^\zero)$, or both.}
\end{cases}
\end{equation}
We re-write \eqref{eq:11gapsineqstatement} in the following form,
\begin{equation} \label{eq:11-10acbasicfunctionsBineq1}
\begin{split}
|c(n^\zero)| \le \ve_0^{-1} \exp (\kappa_0 |n^\zero|) (E^+(\La;k_{n^\zero}) - E^-(\La;k_{n^\zero})) \\
+ \sum_{m', n' \in \La(n^\zero)} |c(m')| s(n^\zero;m',n') |c(n'- n^\zero)|.
\end{split}
\end{equation}

In the next lemma we recall the main properties of the sum $s(n^\zero;m',n')$ from Section~\ref{sec.2}, stated in a form convenient for our goals.

\begin{lemma}\label{lem:11.sumsproperties}
Let $s(n^\zero;m',n')$ be as in \eqref{eq:11-10snotation}.

$(1)$
\begin{equation}\label{eq:11auxtrajectweightO}
\begin{split}
s(n^\zero;m,n) \le \sum_{\gamma \in \Ga_{n^\zero}(m, n)} w_{n^\zero} (\gamma), \\
w_{n^\zero} (\gamma) = \Big[ \prod w(n_j,n_{j+1}) \Big] \exp \Big( \sum_{1 \le j \le k} D_{n^\zero}(n_j) \Big).
\end{split}
\end{equation}
Here $w(m,n) := |c(n-m)|$, $\Ga_{n^\zero}(m, n)$ stands for a set of trajectories $\gamma = (n_1,\dots,n_k)$, $k := k(\gamma) \ge 1$, $n_j \in \La(n^\zero)$, $n_1 = m$, $n_k = n$, $n_{j+1} \neq n_j$, $D_{n^\zero}(x) > 0$, $x \in \zv \setminus \{0,n^\zero\}$. Moreover, the following conditions hold:

$(i)$ $D_{n^\zero}(x) \le T \mu_{n^\zero}(x)^{1/5}$ for any $x$ such that $D_{n^\zero}(x) \ge 4 T \kappa_0^{-1}$, where $\mu_{n^\zero}(x) = \min (|x|,|x-n^\zero|)$, $T = 4 \kappa_0 \log \delta_0^{-1}$,

$(ii)$
\begin{equation}\label{eq:11auxtrajectweight5NNNNN}
\begin{split}
\min (D_{n^\zero}(n_{i}), D_{n^\zero}(n_{j})) \le T \| (n_{i}, \dots, n_{j}) \|^{1/5} \\
\text{for any $i < j$ such that $\min (D_{n^\zero}(n_{i}), D_{n^\zero}(n_{j})) \ge 4 T \kappa_0^{-1}$}, \quad \text{unless $j = i + 1$}.
\end{split}
\end{equation}
Moreover,
\begin{equation}\label{eq:11auxtrajectweight5NNNNN1}
\begin{split}
\text{if $\min (D_{n^\zero}(n_{i}), D_{n^\zero}(n_{i+1})) > T |(n_{i} - n_{i+1})|^{1/5}$}, \quad \text{for some $i$, then }, \\
\min (D_{n^\zero}(n_{j'}), D_{n^\zero}(n_{i})) \le T \| (n_{j'}, \dots, n_{i}) \|^{1/5}, \quad \min (D_{n^\zero}(n_{i}), D_{n^\zero}(n_{j''})) \le T \| (n_{i}, \dots, n_{j''}) \|^{1/5}, \\
\min (D_{n^\zero}(n_{j'}), D_{n^\zero}(n_{i+1})) \le T \|(n_{j'}, \dots, n_{i+1}) \|^{1/5}, \quad \min (D_{n^\zero}(n_{i+1}), D_{n^\zero}(n_{j''})) \le T \| (n_{i+1}, \dots, n_{j''}) \|^{1/5}, \\
\text{for any $j' < i < i+1 < j''$.}
\end{split}
\end{equation}

$(2)$ Assume that for all $n \in \zv$, we have $|c(n)| \le \tilde \ve \exp (- \tilde \kappa |n|)$ with $\tilde \ve \le \ve_0$, $\tilde \kappa \ge \kappa_0$. Let $\gamma = (n_1, \dots, n_{k}) \in \Ga_{n^\zero} := \bigcup_{m,n} \Ga_{n^\zero}(m, n)$. Set $M = 4 T \kappa_0^{-1}$, $\bar D(\gamma) = \max_j D(n_j)$, $t_D(\gamma) := \frac{\log \bar D(\gamma)}{\log M}$, $\vartheta_t = \sum_{0 < s \le t} 2^{-5s}$. Then,
\begin{equation}\label{eq:11auxtrajectweight2}
w_{n^\zero}(\gamma) \le \begin{cases}  \tilde \ve^{k(\gamma)-1} \exp (- \tilde \kappa \| \gamma \| + k (\gamma) M^5) & \text{if $t_D(\gamma) \le 5$}, \\
\tilde \ve^{k(\gamma)-1} \exp (- \tilde \kappa (1 - \vartheta_{t_D(\gamma)+1}) \| \gamma \| + 2 \bar D(\gamma)) & \text{if $t_D(\gamma) > 5$}. \end{cases}
\end{equation}
Furthermore, $\bar D(\gamma) \le 2 T [\min (|n_1|,|n^\zero-n_k|)^{1/5} + \| \gamma \|^{1/5}]$.
\end{lemma}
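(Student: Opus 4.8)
The plan is to read Lemma~\ref{lem:11.sumsproperties} as a translation of the estimates of Section~\ref{sec.2} into the notation \eqref{eq:11-10snotation}. Recall that in the proof of Lemma~\ref{lem:11.gapinequalities} the bound \eqref{eq:11Hinvestimatestatement1PQ} identifies $s(n^\zero;m,n)$, for $m,n\in\La'(n^\zero)$, with $s_{D(\cdot;\La'(n^\zero)),T,\kappa_0,\ve;\La'(n^\zero),\mathfrak{R}}(m,n)$, where $D(\cdot;\La'(n^\zero))$ is the $D$-function attached by Theorem~D to $\La'(n^\zero)=\La^{(s)}_{k_{n^\zero}}(0)\setminus\{0,n^\zero\}$ (via Proposition~\ref{prop:4-4}, Proposition~\ref{rem:con1smalldenomnn}, Proposition~\ref{prop:6-4INT}, Theorem~\ref{th:6-4FIN}, and, for the deletion of the two principal points, Lemma~\ref{lem:6-9.1}(1) or Proposition~\ref{prop:9.10}(1$'$), according to which class $H_{\La^{(s)}_{k_{n^\zero}}(0),\ve,k_{n^\zero}}$ belongs to). By the definitions \eqref{eq:auxtrajectweightsumdef} and \eqref{eq:auxtrajectweight111111}, $s_{D,T,\kappa_0,\ve;\La'(n^\zero),\mathfrak{R}}(m,n)=\sum_{k\ge1}\ve^{k-1}\sum_{\gamma\in\Ga_{D,T,\kappa_0}(m,n;k,\La'(n^\zero),\mathfrak{R})}w_{D,\kappa_0}(\gamma)$. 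Since the off-diagonal entries of $E-H_{\La'(n^\zero),k_{n^\zero}}$ are $-c(n-m)$ with $|c(n-m)|\le\ve\exp(-\kappa_0|n-m|)$, the relevant weight is $w(m,n)=\ve^{-1}|c(n-m)|$, so for a trajectory $\gamma=(n_1,\dots,n_k)$ one has $\ve^{k-1}w_{D,\kappa_0}(\gamma)=\bigl[\prod_{1\le j\le k-1}|c(n_{j+1}-n_j)|\bigr]\exp\bigl(\sum_{1\le j\le k}D(n_j)\bigr)=w_{n^\zero}(\gamma)$ in the notation \eqref{eq:11auxtrajectweightO}. Taking $\Ga_{n^\zero}(m,n):=\Ga_{D(\cdot;\La'(n^\zero)),T,\kappa_0}(m,n;\La'(n^\zero),\mathfrak{R})$ and $D_{n^\zero}:=D(\cdot;\La'(n^\zero))$ (extended off $\La'(n^\zero)$ by the default value $4T\kappa_0^{-1}$, which only enlarges the sum) gives the equality behind \eqref{eq:11auxtrajectweightO}; conditions \eqref{eq:11auxtrajectweight5NNNNN} and \eqref{eq:11auxtrajectweight5NNNNN1} are precisely part~(6) of Definition~\ref{def:aux1} defining $\Ga_{D,T,\kappa_0}(m,n;\La'(n^\zero),\mathfrak{R})$.

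Next I would check condition~(i) and the $\bar D(\gamma)$-bound of part~(2). For~(i): by construction $D(\cdot;\La'(n^\zero))\in\mathcal{G}_{\La'(n^\zero),T,\kappa_0}$, i.e.\ (Definition~\ref{def:aux1}(5) with $\bar\La=\IZ^\nu$) $D(x)\le T\,\mu_{\La'(n^\zero),\IZ^\nu}(x)^{1/5}$ whenever $D(x)\ge4T\kappa_0^{-1}$, with $\mu_{\La'(n^\zero),\IZ^\nu}(x)=\dist(x,\IZ^\nu\setminus\La'(n^\zero))$. Since $0,n^\zero\notin\La'(n^\zero)$, we have $\IZ^\nu\setminus\La'(n^\zero)\supset\{0,n^\zero\}$, hence $\mu_{\La'(n^\zero),\IZ^\nu}(x)\le\min(|x|,|x-n^\zero|)=\mu_{n^\zero}(x)$, which is exactly~(i). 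For the last assertion of part~(2), with $\ell$ chosen so that $D(n_\ell)=\bar D(\gamma)$, the same inequality gives $\bar D(\gamma)\le T\,\mu_{\La'(n^\zero),\IZ^\nu}(n_\ell)^{1/5}$; since $\mu_{\La'(n^\zero),\IZ^\nu}(n_\ell)\le\mu_{\La'(n^\zero),\IZ^\nu}(n_1)+|n_1-n_\ell|\le|n_1|+\|\gamma\|$ and likewise $\le|n^\zero-n_k|+\|\gamma\|$, subadditivity of $t\mapsto t^{1/5}$ together with taking the smaller of the two bounds yields $\bar D(\gamma)\le T(\min(|n_1|,|n^\zero-n_k|)^{1/5}+\|\gamma\|^{1/5})\le 2T[\min(|n_1|,|n^\zero-n_k|)^{1/5}+\|\gamma\|^{1/5}]$ --- this is the argument already carried out in the proof of Corollary~\ref{cor:auxweight1}.

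For the main bound \eqref{eq:11auxtrajectweight2}: under $|c(n)|\le\tilde\ve\exp(-\tilde\kappa|n|)$ with $\tilde\ve\le\ve_0$, $\tilde\kappa\ge\kappa_0$, one has $w_{n^\zero}(\gamma)=[\prod w(n_j,n_{j+1})]\exp(\sum_jD(n_j))\le\tilde\ve^{k(\gamma)-1}\exp(-\tilde\kappa\|\gamma\|+\sum_jD(n_j))=\tilde\ve^{k(\gamma)-1}W_{D,\tilde\kappa}(\gamma)$, so \eqref{eq:11auxtrajectweight2} is just Corollary~\ref{cor:auxweight1}/Lemma~\ref{lem:auxweight} applied to $W_{D,\tilde\kappa}$ in place of $W_{D,\kappa_0}$. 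The point to verify is that the induction of Lemma~\ref{lem:auxweight} survives the replacement $\kappa_0\rightsquigarrow\tilde\kappa$ in the exponent (keeping $M=4T\kappa_0^{-1}$, $t_D(\gamma)$ and $\vartheta_t$ defined with the original $\kappa_0$): in every step (e.g.\ \eqref{eq:auxtrajectweight16}) a term of the form $2T\kappa_0^{-1}M^{-4(\cdot)}$ was absorbed into $\kappa_0\cdot(\text{small})$; since $\kappa_0\cdot(\text{small})\le\tilde\kappa\cdot(\text{small})$, it is now absorbed into $\tilde\kappa\cdot(\text{small})$, so the coefficient $\kappa_0(1-\vartheta_{t_D(\gamma)+1})$ in \eqref{eq:auxtrajectweight2}/\eqref{eq:auxtrajectweight30} becomes $\tilde\kappa(1-\vartheta_{t_D(\gamma)+1})$ with unchanged additive constants $kM^5$, $2\bar D(\gamma)$; the case $t_D(\gamma)\le5$ is immediate. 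Dropping the excess $\vartheta$-factors on the non-principal side of $\ell$ (as in the passage to \eqref{eq:auxtrajectweight30}) gives \eqref{eq:11auxtrajectweight2} in the stated form.

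The argument is, in the end, bookkeeping: the one point that deserves genuine care --- and which I regard as the main obstacle --- is ensuring that the $D$-function produced by the (possibly deeply iterated $GSR^{[\cdot]}$) construction for $\La^{(s)}_{k_{n^\zero}}(0)$, \emph{after deleting both principal points $0$ and $n^\zero$}, is still a bona fide $\mathcal{G}_{\La'(n^\zero),\IZ^\nu,T,\kappa_0}$ function, so that $\mu_{\La'(n^\zero),\IZ^\nu}(x)\le\min(|x|,|x-n^\zero|)$ may be used. This is exactly the structural information recorded inductively in Proposition~\ref{prop:4-4}(1), Proposition~\ref{rem:con1smalldenomnn}(1), Proposition~\ref{prop:6-4INT}(1), Theorem~\ref{th:6-4FIN}(1), together with Lemma~\ref{lem:6-9.1}(1) and Proposition~\ref{prop:9.10}(1$'$) for the removal of the two principal points, so no new estimate is needed; everything else is unwinding Section~\ref{sec.2}.
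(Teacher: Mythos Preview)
Your proposal is correct and matches the paper's treatment: the paper explicitly presents this lemma as a recall of Section~\ref{sec.2} material (``In the next lemma we recall the main properties of the sum $s(n^\zero;m',n')$ from Section~\ref{sec.2}, stated in a form convenient for our goals'') and gives no proof, and your translation---identifying $\Ga_{n^\zero}(m,n)$ with $\Ga_{D,T,\kappa_0}(m,n;\La'(n^\zero),\mathfrak{R})$, reading conditions~(i)--(ii) off Definition~\ref{def:aux1}(5)--(6), and recovering \eqref{eq:11auxtrajectweight2} from Lemma~\ref{lem:auxweight}/Corollary~\ref{cor:auxweight1} with $\tilde\kappa$ in place of $\kappa_0$---is exactly the intended unwinding. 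Your flagging of the $\mathcal G$-class membership of $D(\cdot;\La'(n^\zero))$ after deletion of the two principal points as the one nontrivial structural input is apt; note only that the propositions you cite (e.g.\ Proposition~\ref{prop:6-4INT}(1), Theorem~\ref{th:6-4FIN}(1)) literally record $D(\cdot;\La\setminus\{m^+,m^-\})\in\mathcal G_{\La\setminus\{m^+,m^-\},\,\IZ^\nu\setminus\{m^+,m^-\},\,T,\kappa_0}$ rather than the $\bar\La=\IZ^\nu$ version you invoke, so the step ``$\mu_{\La'(n^\zero),\IZ^\nu}(x)\le\min(|x|,|x-n^\zero|)$'' deserves the extra care you already signal.
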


In the next lemma we establish an estimate similar to \eqref{eq:11auxtrajectweight2} under a slightly weaker condition on $|c(n)|$, and also an estimate for the sum of such terms.

\begin{lemma}\label{lem:11scaleddecayestimate}
Let $\tilde \ve \le \ve_0$, $\tilde \kappa \ge 2 \kappa_0$, $R_1 \ge 2^{30} (\kappa_0^{-1}T)^2$. Set $R_t = 5 R_{t-1}/4$, $\rho_{t-1} = 2^{-10} t^{-2}$, $t = 2,\dots$, $\sigma_t = \sum_{1 \le \ell \le t} \rho_\ell$. Assume
\begin{equation} \label{eq:11scaledconditionaldecay}
|c(p)| \le \begin{cases} \tilde \ve \exp ( - \tilde \kappa |p|) & \text{if $ 0 < |p| \le R_2$}, \\ \tilde \ve \exp ( - \frac{15}{16} (1 - \sigma_{3t}) \tilde \kappa |p|) & \text{if $R_{t-1} < |p| \le R_t$, $3 \le t \le t_0$.}\end{cases}
\end{equation}
For $t \ge 1$, let $\Gamma^{(t)}_{n^\zero}$ be the set of trajectories $\gamma = (n_1, \dots, n_{k}) \in \Ga_{n^\zero}$ with $\| \gamma \| \le 2 R_t$ and $\max_j |n_{j+1} - n_j| \le R_{t+1}$. Then, for any $\gamma \in \Gamma^{(t)}_{n^\zero}$ with $t \le t_0-1$, we have
\begin{equation} \label{eq:11gamscaledconditionaldecay}
w_{n^\zero}(\gamma) \le \tilde \ve^{k(\gamma)-1} \exp \Big( -\frac{15}{16} (1 - \sigma_{3t+4}) \tilde \kappa \| \gamma \| + 2 \bar D(\gamma) + k(\gamma)M \Big).
\end{equation}
Furthermore,
\begin{equation} \label{eq:11gamscaledconditionaldecaysum1statement}
\sum_{\gamma \in \Ga_{n^\zero}(m, n) : k(\gamma) \ge 2, \| \gamma \| \le R_{t_0}} w_{n^\zero}(\gamma) \le \exp(- 2 T (\min (|m|,|n^\zero-n|)^{1/5}) \exp \Big( -\frac{15}{16} (1 - \sigma_{3t_0+2}) \tilde \kappa |n-m| \Big).
\end{equation}
\end{lemma}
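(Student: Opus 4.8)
The plan is to import the multi-scale weight estimate that underlies Lemma~\ref{lem:auxweight}, Corollary~\ref{cor:auxweight1} and Lemma~\ref{lem:auxweight1} (collected, in the form needed here, as Lemma~\ref{lem:11.sumsproperties}) and to run it with a decay rate for $c$ that is allowed to degrade by a controlled amount from one scale to the next. The two assertions are proved in order: first the per-trajectory bound \eqref{eq:11gamscaledconditionaldecay}, then the summed bound \eqref{eq:11gamscaledconditionaldecaysum1statement}.

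The first step is the elementary observation that, \emph{along} a single trajectory $\gamma=(n_1,\dots,n_{k})\in\Gamma^{(t)}_{n^\zero}$, the coefficient $c$ behaves as if it had a uniform decay rate. Indeed, by the definition of $\Gamma^{(t)}_{n^\zero}$ every edge $p=n_{j+1}-n_j$ satisfies $|p|\le R_{t+1}$, so \eqref{eq:11scaledconditionaldecay} gives $|c(p)|\le\tilde\ve\exp(-\kappa'|p|)$ with $\kappa':=(1-\sigma_{3t+3})\tilde\kappa$: for $|p|\le R_2$ this uses $|c(p)|\le\tilde\ve\exp(-\tilde\kappa|p|)$ together with $\tilde\kappa\ge\kappa'$, while for $R_{u-1}<|p|\le R_u$ with $u\le t+1$ it uses $|c(p)|\le\tilde\ve\exp(-\tfrac{15}{16}(1-\sigma_{3u})\tilde\kappa|p|)$ together with $\tfrac{15}{16}(1-\sigma_{3u})<\tfrac{15}{16}<1-2^{-9}\le1-\sigma_{3t+3}$ (all the $\sigma_\ell$ being $<2^{-10}\sum_{\ell\ge1}\ell^{-2}<2^{-9}$). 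Moreover $\kappa_0\le\kappa'$, since $\tilde\kappa\ge2\kappa_0$ and $1-\sigma_{3t+3}>\tfrac12$, hence $|c(p)|\le\exp(-\kappa_0|p|)$ and the standing hypotheses of Section~\ref{sec.2} on the weight $w(\cdot,\cdot)=|c(\cdot-\cdot)|$ hold; together with the function $D_{n^\zero}$ and the trajectory constraints \eqref{eq:11auxtrajectweight5NNNNN}--\eqref{eq:11auxtrajectweight5NNNNN1}, this places us exactly in the setting of Lemma~\ref{lem:11.sumsproperties}(2), with $\kappa'$ in the role of the decay rate of $c$.

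For \eqref{eq:11gamscaledconditionaldecay} I would then substitute $\kappa'$ into the two alternatives of \eqref{eq:11auxtrajectweight2} and compare with the target rate $\tfrac{15}{16}(1-\sigma_{3t+4})\tilde\kappa$. When $t_D(\gamma)>5$ one has $w_{n^\zero}(\gamma)\le\tilde\ve^{\,k-1}\exp(-\kappa'(1-\vartheta_{t_D(\gamma)+1})\|\gamma\|+2\bar D(\gamma))$, and since $1-\vartheta_{t_D(\gamma)+1}\ge 30/31>\tfrac{15}{16}$ and $1-\sigma_{3t+3}\ge1-\sigma_{3t+4}$ we get $\kappa'(1-\vartheta_{t_D(\gamma)+1})\ge\tfrac{15}{16}(1-\sigma_{3t+4})\tilde\kappa$, which is \eqref{eq:11gamscaledconditionaldecay} (the extra $k(\gamma)M$ on the right only helps). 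When $t_D(\gamma)\le5$ one has $w_{n^\zero}(\gamma)\le\tilde\ve^{\,k-1}\exp(-\kappa'\|\gamma\|+kM^5)$, and here I would recover the claimed form by re-running the $t_D\le5$ case of the proof of Lemma~\ref{lem:auxweight}: the constraints \eqref{eq:11auxtrajectweight5NNNNN}--\eqref{eq:11auxtrajectweight5NNNNN1} force any trajectory carrying more than a bounded number of points with $D_{n^\zero}$-value comparable to $\bar D(\gamma)$ to be correspondingly long, so $\sum_j D_{n^\zero}(n_j)$ is controlled by $2\bar D(\gamma)+k(\gamma)M$ up to a remainder absorbed into $\tilde\ve^{\,k-1}$ by $\tilde\ve\le\ve_0\le\exp(-(8T\kappa_0^{-1})^5)$. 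An induction on $t$ closes this step, the base case $t=1$ being the regime where all edges are $\le R_2$ and the unweakened bound $|c|\le\tilde\ve\exp(-\tilde\kappa|\cdot|)$ applies outright.

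For \eqref{eq:11gamscaledconditionaldecaysum1statement} I would sum \eqref{eq:11gamscaledconditionaldecay} — valid with $t=t_0-1$, since $\|\gamma\|\le R_{t_0}$ forces $\gamma\in\Gamma^{(t_0-1)}_{n^\zero}$ — over $\gamma\in\Ga_{n^\zero}(m,n)$ with $k(\gamma)\ge2$: use $\|\gamma\|\ge|m-n|$ to peel off $\exp(-\tfrac{15}{16}(1-\sigma_{3t_0+2})\tilde\kappa|m-n|)$, use the bound $\bar D(\gamma)\le T\min(|m|,|n^\zero-n|)^{1/5}+T\|\gamma\|^{1/5}$ from Lemma~\ref{lem:11.sumsproperties}(2) together with the re-centering that turns $\exp(+2T\min(\cdot)^{1/5})$ into $\exp(-2T\min(\cdot)^{1/5})$, exactly as in the passage from the first to the second bound in Lemma~\ref{lem:auxweight1}, and finally the geometric series in $k$ together with the count $\sum_{\gamma\in\Ga(m,n;k,\zv)}\exp(-\alpha\|\gamma\|)<(8\alpha^{-1})^{(k-1)\nu}$ of Lemma~\ref{lem:2gammasum} to absorb $\exp(2T\|\gamma\|^{1/5}+k(\gamma)M)$; this absorption is available because $\tilde\ve\le\ve_0$ is small relative to $T$ and $\kappa_0$ and $R_1\ge2^{30}(\kappa_0^{-1}T)^2$, and because the residual rate budget $\tfrac{15}{16}(\sigma_{3t_0+2}-\sigma_{3t_0+1})\tilde\kappa=\tfrac{15}{16}\rho_{3t_0+2}\tilde\kappa$ is positive. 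I expect the main obstacle to lie precisely in these two ``re-centering'' passages — bounding $\sum_j D_{n^\zero}(n_j)$ by $2\bar D(\gamma)+k(\gamma)M$ in the $t_D\le5$ regime of \eqref{eq:11gamscaledconditionaldecay}, and trading $\exp(+2T\min(\cdot)^{1/5})$ for $\exp(-2T\min(\cdot)^{1/5})$ in \eqref{eq:11gamscaledconditionaldecaysum1statement} — i.e.\ in checking that the tiny per-scale budget $\rho_{3t+4}=2^{-10}(3t+4)^{-2}$ indeed covers the loss incurred at scale $t$, which is where the specific choices of the $R_t$, the $\rho_\ell$, and the $\tfrac{15}{16}$ prefactor are used.
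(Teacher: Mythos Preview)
Your first step contains an inequality running the wrong way, and the error is not cosmetic: it is the point where the whole approach breaks. You claim that for edges $p$ with $R_{u-1}<|p|\le R_u$, $3\le u\le t+1$, the hypothesis $|c(p)|\le\tilde\ve\exp(-\tfrac{15}{16}(1-\sigma_{3u})\tilde\kappa|p|)$ together with $\tfrac{15}{16}(1-\sigma_{3u})<\tfrac{15}{16}<1-2^{-9}\le1-\sigma_{3t+3}$ gives $|c(p)|\le\tilde\ve\exp(-\kappa'|p|)$ with $\kappa'=(1-\sigma_{3t+3})\tilde\kappa$. But your chain of inequalities says the hypothesis rate is \emph{smaller} than $\kappa'$, so the hypothesis bound is \emph{weaker} than the one you want. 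The best uniform rate available on edges of length up to $R_{t+1}$ is $\tfrac{15}{16}(1-\sigma_{3t+3})\tilde\kappa$, not $(1-\sigma_{3t+3})\tilde\kappa$. Feeding that corrected rate into \eqref{eq:11auxtrajectweight2} costs a further factor $(1-\vartheta_{t_D+1})\ge\tfrac{30}{31}$, leaving you with a rate of order $(\tfrac{15}{16})^2\tilde\kappa$, which is strictly below the target $\tfrac{15}{16}(1-\sigma_{3t+4})\tilde\kappa$; the shortfall is roughly $\tfrac{1}{16}$ and cannot be absorbed by the budget $\rho_{3t+4}=2^{-10}(3t+5)^{-2}$. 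In short, the $\tfrac{15}{16}$ in the hypothesis already records the price paid for the $D$-weights at earlier scales, and plugging everything back into \eqref{eq:11auxtrajectweight2} pays that price a second time.

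The paper avoids this double-counting by genuinely using the induction on $t$ that you mention only in passing. A trajectory $\gamma\in\Gamma^{(t)}_{n^\zero}$ is split either at its unique edge of length $>R_t$ (if there is one; there can be at most one since $\|\gamma\|\le 2R_t$), or at the first index where the partial length exceeds $2R_{t-1}$. Either way the two pieces $\gamma_1,\gamma_2$ lie in $\Gamma^{(t-1)}_{n^\zero}$ and the inductive estimate \eqref{eq:11gamscaledconditionaldecay} at level $t-1$ is applied to each. The recombination produces $2\bar D(\gamma_1)+2\bar D(\gamma_2)$ in the exponent, and the trajectory constraints \eqref{eq:11auxtrajectweight5NNNNN}--\eqref{eq:11auxtrajectweight5NNNNN1} are then used to bound one of these two by $2T\|\gamma\|^{1/5}$; the condition $\|\gamma\|\ge R_t\ge 2^{30}(\kappa_0^{-1}T)^2(5/4)^{t-1}$ is exactly what makes $2T\|\gamma\|^{1/5}\le\tfrac14\rho_{3t+4}\tilde\kappa\|\gamma\|$, so this surplus is absorbed by the tiny per-scale budget. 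The sum bound \eqref{eq:11gamscaledconditionaldecaysum1statement} then follows along the lines you sketch, slicing over $R_t\le\|\gamma\|\le R_{t+1}$ and using Lemma~\ref{lem:2gammasum}; no sign-flip of $\exp(2T\min(\cdot)^{1/5})$ is needed, since $\bar D(\gamma)\le 2T[\min(|m|,|n^\zero-n|)^{1/5}+\|\gamma\|^{1/5}]$ and the $\min$-term is simply moved to the other side.
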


\begin{proof}
The proof of \eqref{eq:11gamscaledconditionaldecay} goes by induction in $t = 1, 2, \dots$. Let $\gamma = (n_1,\dots,n_{k}) \in \Gamma^{(1)}_{n^\zero}$. Then, in particular, $\max_j |n_{j+1}-n_j| \le R_{2}$. Due to \eqref{eq:11scaledconditionaldecay}, one has $w(n_j,n_{j+1}) \le \tilde \ve \exp(-\tilde \kappa |n_j-n_{j+1}|)$. Hence \eqref{eq:11auxtrajectweight2} applies. Note that $1 - \vartheta_{t_D(\gamma)+1} > 15/16$. This implies \eqref{eq:11gamscaledconditionaldecay} for $t = 1$ in both cases in \eqref{eq:11auxtrajectweight2}.

Let $\Gamma^{(t)}_{n^\zero,0}$ be the set of trajectories $\gamma = (n_1,\dots,n_{k}) \in \Gamma^{(t)}_{n^\zero}$ with $\max_j |n_{j+1}-n_j| \le R_t$, $\Gamma^{(t)}_{n^\zero,1} = \Gamma^{(t)}_{n^\zero} \setminus \Gamma^{(t)}_{n^\zero,0}$.

Let $\gamma = (n_1,\dots,n_{k}) \in \Gamma^{(t)}_{n^\zero,1}$. Then there exists $j_0$ such that $|n_{j_0+1} - n_{j_0}| > R_t$. Note that $|n_{j+1} - n_{j}| < R_t$ for any $j \neq j_0$, since $\| \gamma \| \le 2 R_t$. Let $\gamma_1 = (n_1,\dots,n_{j_0})$, $\gamma_2 = (n_{j_0+1},\dots,n_k)$. Note that $\| \gamma_1 \| + \| \gamma_2 \| < R_t < 2 R_{t-1}$ since $\| \gamma \| \le 2 R_t$ and $|n_{j_0+1} - n_{j_0}| > R_t$. Therefore $\gamma_1,\gamma_2 \in \Gamma^{(t-1)}_{n^\zero}$. Hence, the inductive assumption applies,
\begin{equation} \label{eq:11gamscaledconditionaldecay2}
\begin{split}
w_{n^\zero}(\gamma_i) \le \tilde \ve^{k(\gamma_i)-1} \ve \exp \Big(-\frac{15}{16} (1 - \sigma_{3t+1}) \tilde \kappa \| \gamma_i \| + 2 \bar D(\gamma_i) + k(\gamma_i)M \Big), \quad i=1,2, \\
w_{n^\zero}(\gamma) = w_{n^\zero}(\gamma_1) |c(n_{j_0+1} - n_{j_0})| w_{n^\zero}(\gamma_2) \\
\le \tilde \ve^{k(\gamma)-1} \exp \Big( -\frac{15}{16} (1 - \sigma_{3t+1}) \tilde \kappa (\|\gamma_1\| + \|\gamma_2\|) -\frac{15}{16} (1 - \sigma_{3t+3}) \tilde \kappa |n_{j_0+1} - n_{j_0}| + 2 \bar D(\gamma_1) + 2 \bar D(\gamma_2) + k(\gamma)M \Big).
\end{split}
\end{equation}
Let $D(n_{j_i}) = \bar D(\gamma_i)$, $i=1,2$. We have the following cases:

$(a)$ Assume $j_1 < j_0$. In this case due to \eqref{eq:11auxtrajectweight5NNNNN}, one has
\begin{equation}\label{eq:11auxtrajectweight5NUPP}
2 \min (\bar D(\gamma_1), \bar D(\gamma_2)) \le 2 T \|\gamma \|^{1/5} < \frac{\rho_{3t+4}}{4} \tilde \kappa (\|\gamma_1\| + \|\gamma_2\| + |n_{j_0+1} - n_{j_0}|)
\end{equation}
since $\|\gamma\| > R_t \ge 2^{30} (\kappa_0^{-1}T)^2 (5/4)^{t-1}$, $t \ge 2$. Combining \eqref{eq:11gamscaledconditionaldecay2} with \eqref{eq:11auxtrajectweight5NUPP}, one obtains \eqref{eq:11gamscaledconditionaldecay}.

$(b)$ Assume $j_0+1 < j_2$. Similarly to case $(a)$, one verifies \eqref{eq:11gamscaledconditionaldecay}.

$(c)$ Assume $j_1 = j_0$, $j_0+1 = j_2$. Let $\gamma'_1 = (n_1,\dots,n_{j_0-1})$, $\gamma'_2 = (n_{j_0+2},\dots,n_k)$. Once again, applying the inductive assumption, one obtains
\begin{equation} \label{eq:11gamscaledconditionaldecay7}
\begin{split}
w_{n^\zero}(\gamma'_i) \le \tilde \ve^{k(\gamma_i)-1} \exp \Big( -\frac{15}{16} (1 - \sigma_{3t+1}) \tilde \kappa \|\gamma'_i\| + 2 \bar D(\gamma'_i) + k(\gamma_i)M \Big), \quad i=1,2, \\
w_{n^\zero}(\gamma) = w_{n^\zero}(\gamma'_1) \exp(D_{n^\zero}(n_{j_0})) |c(n_{j_0+1} - n_{j_0})| \exp(D_{n^\zero} (n_{j_0+1})) w_{n^\zero}(\gamma'_2) \\
\le \tilde \ve^{k(\gamma)-1} \exp \Big(-\frac{15}{16} (1 - \sigma_{3t+1}) \tilde \kappa (\|\gamma_1\| + \|\gamma_2\|) -\frac{15}{16} (1 - \sigma_{3t+3}) \tilde \kappa |n_{j_0+1} - n_{j_0}| \Big) \times \\
\exp(2 \bar D(\gamma_1) + 2 \bar D(\gamma_2) + D_{n^\zero}(n_{j_0}) + D_{n^\zero}(n_{j_0+1}) + k(\gamma)M).
\end{split}
\end{equation}
One has $2 D_{n^\zero}(\gamma'_1) = 2 \min (D_{n^\zero}(\gamma'_1), D_{n^\zero}(n_{j_0})) < 2 T \|\gamma\|^{1/5} < \rho_{3t+4} \tilde \kappa (\|\gamma_1\| + \|\gamma_2\| + |n_{j_0+1} - n_{j_0}| )/4$. Similarly, $2 D_{n^\zero}(\gamma'_2) < \rho_{3t+4} \tilde \kappa (\|\gamma_1\| + \|\gamma_2\| + |n_{j_0+1} - n_{j_0}|)/4$. Therefore, \eqref{eq:11gamscaledconditionaldecay} follows from \eqref{eq:11gamscaledconditionaldecay7}.

Thus, \eqref{eq:11gamscaledconditionaldecay} holds for $\gamma \in \Gamma^{(t)}_{n^\zero,1}$ in any event. Let $\gamma = (n_1,\dots,n_{k}) \in \Gamma^{(t)}_{n^\zero,0}$. Assume $\|(n_1,\dots,n_{k})\| \le 2 R_{t-1}$. Recall that $\max_j |n_{j+1} - n_j| \le R_t$ since $\gamma = (n_1,\dots,n_{k}) \in \Gamma^{(t)}_{n^\zero,0}$. Hence, in this case the inductive assumption applies and even a stronger estimate than \eqref{eq:11gamscaledconditionaldecay} holds. Assume $\|(n_1,\dots,n_{k})\| > 2 R_{t-1}$. Then there exists $j_0$ such that $\|(n_1,\dots,n_{j_0})\| \le 2 R_{t-1}$, $\|(n_1,\dots,n_{j_0+1})\| > 2 R_{t-1}$. Let $\gamma_1 = (n_1,\dots,n_{j_0})$, $\gamma_2 = (n_{j_0+1},\dots,n_k)$. Note that $\|\gamma_2\| = \|\gamma\| - \|(n_1,\dots,n_{j_0+1})\| < 2 R_t - 2 R_{t-1} < 2 R_{t-1}$ since $R_t < 2 R_{t-1}$. Therefore $\gamma_1,\gamma_2 \in \Gamma^{(t-1)}_{n^\zero}$. Hence, the inductive assumption applies,
\begin{equation} \label{eq:11gamscaledconditionaldecay10}
\begin{split}
w_{n^\zero}(\gamma_i) \le \tilde \ve^{k(\gamma_i)-1} \exp \Big( -\frac{15}{16} (1 - \sigma_{3t+1}) \tilde \kappa \|\gamma_i\| + 2 \bar D(\gamma_i) + k(\gamma_i) M \Big), \quad i=1,2, \\
w_{n^\zero}(\gamma) = w_{n^\zero}(\gamma_1) |c(n_{j_0+1} - n_{j_0})| w_{n^\zero}(\gamma_2) \\
\le \tilde \ve^{k(\gamma)-1} \exp \Big( -\frac{15}{16} (1 - \sigma_{3t+1}) \tilde \kappa (\|\gamma_1\| + \|\gamma_2\|) - \frac{15}{16} (1 - \sigma_{3t+3}) \tilde \kappa |n_{j_0+1} - n_{j_0}| + 2 \bar D(\gamma_1) + 2 \bar D(\gamma_2) + k(\gamma)M \Big).
\end{split}
\end{equation}
Let $D(n_{j_i}) = \bar D(\gamma_i)$, $i=1,2$. We have the following cases:

$(\alpha)$ Assume $j_1 < j_0$. In this case, due to \eqref{eq:11auxtrajectweight5NNNNN}, one has
\begin{equation}\label{eq:11auxtrajectweight5NUPPal}
2 \min (\bar D(\gamma_1), \bar D(\gamma_2)) \le 2 T \| \gamma \|^{1/5} < \frac{\rho_{3t+4}}{4} \tilde \kappa (\|\gamma_1\| + \|\gamma_2\| + |n_{j_0+1} - n_{j_0}|).
\end{equation}
Combining \eqref{eq:11gamscaledconditionaldecay10} with \eqref{eq:11auxtrajectweight5NUPPal}, one obtains \eqref{eq:11gamscaledconditionaldecay}.

$(\beta)$ Assume $j_0+1 < j_2$. Similarly to case $(\alpha)$, one verifies \eqref{eq:11gamscaledconditionaldecay}.

$(\gamma)$ Assume $j_1 = j_0$, $j_0+1 = j_2$. Let $\gamma'_1 = (n_1,\dots,n_{j_0-1})$, $\gamma'_2 = (n_{j_0+2},\dots,n_k)$. Once again, applying the inductive assumption, one obtains
\begin{equation} \label{eq:11gamscaledconditionaldecay7be}
\begin{split}
w_{n^\zero}(\gamma'_i) \le \tilde \ve^{k(\gamma'_i)-1} \exp \Big(-\frac{15}{16} (1 - \sigma_{3t+1}) \tilde \kappa \|\gamma'_i\| + 2 \bar D(\gamma'_i) + k(\gamma_i') M \Big), \quad i=1,2, \\
w_{n^\zero}(\gamma) = w_{n^\zero}(\gamma'_1) |c(n_{j_0-1} - n_{j_0})|  \exp(D_{n^\zero}(n_{j_0})) |c(n_{j_0+1} - n_{j_0})| \exp(D_{n^\zero} (n_{j_0+1})) \\
|c(n_{j_0+2} - n_{j_0+1})| w_{n^\zero}(\gamma'_2) \le \tilde \ve^{k(\gamma)-1} \exp(-\frac{15}{16} (1 - \sigma_{3t+1}) \tilde \kappa (\|\gamma'_1\| + \|\gamma'_2\|)) \\
\times \exp(- \frac{15}{16} (1 - \sigma_{3t+3}) \tilde \kappa (|n_{j_0} - n_{j_0-1}| + |n_{j_0+1} - n_{j_0}| + |n_{j_0+2} - n_{j_0+1}|)) \times \\
\exp(2 \bar D(\gamma'_1) + 2 \bar D(\gamma'_2) +  D_{n^\zero}(n_{j_0}) +  D_{n^\zero}(n_{j_0+1}) + k(\gamma)M).
\end{split}
\end{equation}
One has $2 \bar D_{n^\zero}(\gamma'_1) = 2 \min (\bar D_{n^\zero}(\gamma'_1), D_{n^\zero}(n_{j_0})) < 2 T \|\gamma\|^{1/5} < \rho_{3t+4} \tilde \kappa (\|\gamma'_1\| + \|\gamma'_2\| + |n_{j_0+2} - n_{j_0-1}| )/4$. Similarly, $2 \bar D_{n^\zero}(\gamma'_2) < \rho_{3t+4} \tilde \kappa (\|\gamma_1\| + \|\gamma_2\| + |n_{j_0+1} - n_{j_0}| )/4$. Therefore, \eqref{eq:11gamscaledconditionaldecay} follows from \eqref{eq:11gamscaledconditionaldecay7be}.

Thus \eqref{eq:11gamscaledconditionaldecay} holds in any event. Recall that $\bar D(\gamma) \le 2 T [(\min (|m|, |n^\zero-n|)^{1/5} + \|\gamma\|^{1/5}]$, $\gamma \in \Ga_{n^\zero}(m, n)$; see Lemma~\ref{lem:11.sumsproperties}. Set $w'_{n^\zero}(\gamma) = \exp(- 2 T (\min (|m|,|n^\zero-n|)^{1/5}) w_{n^\zero}(\gamma)$. Note that $2 T \|\gamma\|^{1/5} < \rho_{3t+5} \tilde \kappa \|\gamma\|/4$ if $\|\gamma\| \ge R_t$. Recall also the elementary estimate of Lemma~\ref{lem:2gammasum}: for any $\alpha, k > 0$,
\begin{equation}\label{eq:11auxtrajectweight21a}
\sum_{\gamma \in \Ga(m,n;k,\zv)} \exp(-\alpha \|\gamma\|) < (8 \alpha^{-1})^{(k-1)\nu}.
\end{equation}
Set $\Ga^{(t)}_{n^\zero}(m, n) = \Ga_{n^\zero}(m, n)\cap\Ga^{(t)}_{n^\zero}$. Note that if $\gamma \in \Ga_{n^\zero}$ and $\|\gamma\| \le R_{t+1}$, then $\gamma \in \Ga^{(t)}_{n^\zero}$ and \eqref{eq:11gamscaledconditionaldecay} applies. Finally, $\|\gamma\| \ge |n-m|$ for any $\gamma \in \Ga_{n^\zero}(m, n)$. Taking all that into account, one obtains
\begin{equation} \label{eq:11gamscaledconditionaldecaysum1}
\begin{split}
\sum_{\gamma \in \Ga_{n^\zero}(m, n) : k(\gamma) \ge 2, \|\gamma\| \le R_{t_0}} w'_{n^\zero}(\gamma) \le \sum_{t \le t_0-1} \sum_{\gamma \in \Ga_{n^\zero}(m, n), k(\gamma) \ge 2, R_t \le \|\gamma\| \le R_{t+1}} w'_{n^\zero}(\gamma) \\
\le e^M \sum_{t \le t_0-1} \sum_{\gamma \in \Ga_{n^\zero}(m, n),k(\gamma) \ge 2, R_t \le \|\gamma\| \le R_{t+1}} (e^M \tilde \ve)^{k(\gamma)-1} \exp(-\frac{15}{16} (1 - \sigma_{3t+4}) \tilde \kappa \|\gamma\| + \rho_{3t+5} \tilde \kappa \|\gamma\|/4) \\
\le e^M \sum_{k \ge 1} (e^M \tilde \ve)^{k-1} \exp(-\frac{15}{16} (1 - \sigma_{3t_0+2}) \tilde \kappa |n-m|) \sum_{k(\gamma) = k} \exp(-\rho_{3t_0+2} \tilde \kappa \|\gamma\|/4) \\
\le e^M \exp(-\frac{15}{16} (1 - \sigma_{3t_0+2}) \tilde \kappa |n-m|) \sum_{k \ge 1} (e^M \tilde \ve)^{k-1}(8 \alpha^{-1})^{(k-1)\nu} \le \exp(-\frac{15}{16} (1 - \sigma_{3t_0+2}) \tilde \kappa |n-m|).
\end{split}
\end{equation}
Here, in the last step, $\alpha = \rho_{3t+5}\tilde \kappa/4$, and we have used $\tilde \ve < \ve_0$.
\end{proof}

\begin{proof}[Proof of part $(2)$ of Theorem~B]
Set $R_1 = 2^{30}(\kappa_0^{-1}T)^2$, $R_t = 5R_{t-1}/4$, $\rho_{t-1} = 2^{-10} t^{-2}$, $t = 2,\dots$, $\sigma_t = \sum_{1 \le \ell \le t} \rho_\ell$ as in Lemma~\ref{lem:11scaleddecayestimate}. Set also $\ve^\zero_0 := \exp(-2 R_2)$. One can see that $\ve^\zero_0 < \ve_0^4$. Assume that
\begin{equation} \label{eq:11theoremBgapcondition}
E^+(\La;k_{n^\zero}) - E^-(\La;k_{n^\zero}) \le \ve^\zero \exp(-\kappa^\zero |n^\zero|), \quad \text{ for all $n^\zero \in \zv \setminus \{0\}$},
\end{equation}
where $\ve^\zero < \ve^\zero_0$, $\kappa^\zero > 4 \kappa_0$. We re-write \eqref{eq:11-10acbasicfunctionsBineq1} in the following form,
\begin{equation} \label{eq:11-10acbasicfunctionsBineq2}
|c(n^\zero)| \le (\ve^\zero)^{3/4} \exp(-3 \kappa^\zero |n^\zero|/4) + \sum_{m', n' \in \La(n^\zero)} |c(m')| s(n^\zero;m',n') |c(n'- n^\zero)|.
\end{equation}

Assume that with some $(\ve^\zero)^{1/2} < \hat \ve \le \ve^\zero_0$ and $\kappa_0 \le \hat \kappa \le \kappa^\zero/2$, we have $|c(p)|\le \hat \ve \exp(-\hat \kappa |p|)$ for $|p| > 0$. Set $\tilde \ve = \hat \ve/2$, $\tilde \kappa = 7 \hat \kappa/6$. We claim that in this case, in fact,
\begin{equation} \label{eq:11scaledconditionaldecayY}
|c(p)| \le \begin{cases} \tilde \ve \exp(-\tilde \kappa |p|) & \text {if $0 < |p| \le R_2$}, \\ \tilde \ve \exp(-\frac{15}{16} (1 - \sigma_{3t}) \tilde \kappa |p|) & \text {if $R_{t-1} < |p| \le R_t$, $t \ge3$}.
\end{cases}
\end{equation}
It is important to note here that $\frac{15}{16} (1 - \sigma_{3t}) \tilde \kappa > (\frac{15}{16})^2 \frac{7}{6} \hat \kappa := L \hat \kappa$, with $L > 1$. This allows one to iterate the argument and Theorem~B follows.

The verification of the claim goes by induction in $t$, starting with the first line in \eqref{eq:11scaledconditionaldecayY}, and then with the help of Lemma~\ref{lem:11scaleddecayestimate}. The idea is to run $n^\zero$ in \eqref{eq:11-10acbasicfunctionsBineq2} and to combine the inequalities which we have for different $n^\zero$. To this end it is convenient to replace $n^\zero$ in the notation. To verify the first line in \eqref{eq:11scaledconditionaldecayY}, we invoke \eqref{eq:11gamscaledconditionaldecaysum1statement} from Lemma~\ref{lem:11scaleddecayestimate} with $\hat \ve$ in the role of $\tilde \ve$ and $\hat \kappa$ in the role of $\tilde \kappa$. Note that condition \eqref{eq:11scaledconditionaldecay} of Lemma~\ref{lem:11scaleddecayestimate} holds for any $t$ for trivial reasons. So,
\begin{equation}\label{eq:11auxtrajectweightO5B}
\begin{split}
\sum_{m,n \in \La_{p}} |c(m)| s(p;m,n) |c(p-n)| \\
\le \hat \ve^2 \sum_{m,n \in \La_{p}} \exp(-\hat \kappa |m|) \exp \Big( -\frac{15}{16} (1 - \sigma_{3t_0+2}) \hat \kappa |n-m| + 2 T (\min(|m|,|n-p|))^{1/5} \Big) \exp(-\hat \kappa |p-n|).
\end{split}
\end{equation}
Using the elementary estimates of Lemma~\ref{lem:2gammasum}, one obtains from \eqref{eq:11auxtrajectweightO5B} that
\begin{equation}\label{eq:11auxtrajectweightO5B1}
\sum_{m,n \in \La_{p}} |c(m)| s(p;m,n) |c(p-n)| \le \hat \ve^{3/2}/4 \le \tilde \ve (\ve^\zero_0)^{1/2}/2 < (\tilde \ve /2) \exp(-R_2).
\end{equation}
It follows from \eqref{eq:11-10acbasicfunctionsBineq2} combined with \eqref{eq:11auxtrajectweightO5B1} that for any $|p| > 0$, we have
\begin{equation}\label{eq:11auxtrajectweightO5B2}
|c(p)| < (\ve^\zero)^{3/4} \exp(-3 \kappa^\zero |p|/4) + (\tilde \ve/2) \exp(-R_2) < \tilde \ve \exp(-\tilde \kappa R_2).
\end{equation}
This verifies the first line in \eqref{eq:11scaledconditionaldecay}.

Assume now that for some $\ell \ge 2$, \eqref{eq:11scaledconditionaldecay} holds for any $0 < |p| \le R_{t}$ and any $t \le \ell$. Let $|q| > R_\ell$ be arbitrary. For $t \ge 1$, let $\Gamma^{(t)}_{q}$ be the set of trajectories $\gamma = (n_1,\dots,n_{k}) \in \Ga_{q}$ with $\|\gamma\| \le 2 R_t$ and $\max_j |n_{j+1} - n_j| \le R_{t+1}$. Let $\Ga^{(t)}_q(m,n) = \Ga^{(t)}_{q} \cap \Ga_{q}(m,n)$. We have
\begin{equation}\label{eq:11auxtrajectweightO5}
\begin{split}
\sum_{m,n \in \La_{q}} |c(m)| s(q;m,n) |c(q-n)| \le \sum_{m,n} |c(m)| |c(q-n)| \sum_{\gamma \in \Ga_{q}(m,n)} w_{q} (\gamma) \\
\le \Sigma_1 + \Sigma_2 + \Sigma_3 := \sum_{m,n : |m|,|n-q| \le R_\ell} |c(m)| |c(q-n)| \sum_{\gamma \in \Ga^{(\ell-1)}_{q}(m,n)} w_{q} (\gamma) \\
\sum_{m,n} |c(m)| |c(q-n)| \sum_{\gamma \in \Ga_{q}(m,n), \quad \|\gamma\| > 2 R_{\ell-1}} w_{q} (\gamma) + \Sigma_3.
\end{split}
\end{equation}
Here the sum $\Sigma_3$ is over the cases when $\|\gamma\| \le 2 R_{\ell-1}$ and either $\max (|m|,|q-n|) > R_\ell$ or $\gamma = (n_1,\dots,n_{k})$ obeys $\max_j |n_{j+1}-n_j| > R_{\ell}$, or both.

Using \eqref{eq:11gamscaledconditionaldecaysum1statement} from Lemma~\ref{lem:11scaleddecayestimate} with $\ell$ in the role of $t_0$ and the inductive assumption, one obtains
\begin{equation}\label{eq:11auxtrajectweightO6}
\begin{split}
\Sigma_1 \le \tilde \ve^2 \sum_{m,n} \exp \Big( -\frac{15}{16} (1 - \sigma_{3 \ell}) \tilde \kappa |m| \Big) \exp \Big( -\frac{15}{16} (1 - \sigma_{3 \ell + 2}) \tilde \kappa |m-n| + 2 T (\min(|m|,|q-n|))^{1/5} \Big) \times \\
\exp \Big( -\frac{15}{16} (1 - \sigma_{3\ell}) \tilde \kappa|q-n| \Big).
\end{split}
\end{equation}
Note that $2 T (\min(|m|,|q-n|))^{1/5}) < \frac{1}{4} \rho_{3 \ell + 3} (|m| + |m-n| + |q-n|)$ since $|q| > R_\ell$. Estimating the sum in \eqref{eq:11auxtrajectweightO6}, one obtains $($ see Lemma~\ref{lem:2gammasum} $)$
\begin{equation}\label{eq:11auxtrajectweightO6a}
\Sigma_1 \le \tilde \ve^{3/2} \exp \Big( -\frac{15}{16} (1 - \sigma_{3\ell+2} - \frac{1}{4} \rho_{3\ell+3}) \tilde \kappa |q| \Big).
\end{equation}

To estimate the sum $\Sigma_2$, we use Lemma~\ref{lem:11scaleddecayestimate} with $\hat \ve$ in the role of $\tilde \ve$ and $\hat \kappa$ in the role of $\tilde\kappa$:
\begin{equation}\label{eq:11auxtrajectweightO15}
\begin{split}
\Sigma_2 \le \sum_{m,n} |c(m)| |c(q-n)| \sum_{t \ge \ell-1} \sum_{\gamma \in \Ga_{q}(m, n), k(\gamma) \ge 2, R_t \le \|\gamma\| \le R_{t+1}} w_{q}(\gamma) \le \sum_{m,n} |c(m)| |c(q-n)| \times \\
\exp(2 T (\min(|m|,|q-n|))^{1/5}) e^M \big[ \sum_{\gamma \in \Ga_{q}(m, n),k(\gamma) \ge 2, 2 R_{\ell-1} \le \|\gamma\| \le R_{\ell}} + \sum_{t \ge \ell} \sum_{\gamma \in \Ga_{q}(m,n),k(\gamma) \ge 2, R_t \le \|\gamma\| \le R_{t+1}} \big] \\
(e^M \hat \ve)^{k(\gamma) - 1} \exp \Big( -\frac{15}{16} (1 - \sigma_{3t+4}) \hat \kappa \|\gamma\| + 2 T \|\gamma\|^{1/5} \Big) \le \hat \ve^2 \sum_{m,n} \exp(-\hat \kappa (|m| + |q-n|) + 2 T (\min(|m|,|q-n|))^{1/5}) \times \\
\exp \Big( -\frac{15}{16} (1 - \sigma_{3\ell+2}) \hat \kappa \times (2 R_{\ell-1}) \Big) \le \hat \ve^{3/2} \exp \Big( -\frac{15}{16} (1 - \sigma_{3\ell+2} - \frac{1}{4} \rho_{3\ell+3}) \hat \kappa \times (2R_{\ell-1}) \Big) \\
< \hat\ve^{3/2} \exp \Big( -\frac{15}{16} (1 - \sigma_{3\ell+2} - \frac{1}{4} \rho_{3\ell+3}) \tilde \kappa R_{\ell+1} \Big).
\end{split}
\end{equation}

Let us now estimate $\Sigma_3$. Given $r,s \in \La_q$ with $|s-r| > R_\ell$, denote by $\Ga_{q;r,s}$ the set of trajectories $\gamma = (n_1,\dots,n_{k}) \in \bar \Ga_{q}$ with $\|\gamma\| \le 2 R_{\ell-1}$ and such that
\begin{equation}\label{eq:11gammarsplit}
\gamma = \gamma' \cup \gamma'',
\end{equation}
where $r$ is the endpoint of $\gamma'$ and $s$ is the starting point of $\gamma''$. Note that since $\|\gamma\| \le 2 R_{\ell-1}$, one has $|n_{j+1} - n_j| \le R_\ell$ for all $j$ with one exception when $n_{j+1} = s$, $n_j = r$. In particular, the inductive assumption applies to $\gamma'$, $\gamma''$. Denote by $\Sigma'_3$ the part of sum $\Sigma_3$ with $\gamma \in \Ga_{q;r,s}$ and with $|m|, |q-n| \le R_\ell$. Then just as in the above derivations, one obtains
\begin{equation}\label{eq:11auxtrajectweightO16}
\Sigma'_3 \le \tilde \ve^{3/2} \sum_{|r-s| > R_\ell} \exp \Big( -\frac{15}{16} (1 - \sigma_{3\ell+2}) \tilde \kappa |r| \Big) |c(r-s)| \exp \Big( -\frac{15}{16} (1 - \sigma_{3 \ell + 2}) \tilde \kappa|q-s| \Big).
\end{equation}
The estimation of the rest of the sum $\Sigma_3$ is similar. One has
\begin{equation}\label{eq:11auxtrajectweightO17}
\begin{split}
\Sigma_3 \le \tilde \ve^{3/2} \sum_{|r-s| > R_\ell} \exp \Big( -\frac{15}{16} (1 - \sigma_{3\ell+2}) \tilde \kappa |r| \Big) |c(r-s)| \exp \Big( -\frac{15}{16} (1 - \sigma_{3 \ell + 2}) \tilde \kappa |q-s| \Big) \\
+ 2 \tilde \ve^{3/2} \sum_{|r| > R_\ell} |c(r)| \exp \Big( -\frac{15}{16} (1 - \sigma_{3\ell+2}) \tilde \kappa |q-r| \Big) \\
\le \tilde \ve^{3/2} \sum_{R_\ell < |r-s| \le R_{\ell+1}} \exp \Big( -\frac{15}{16} (1 - \sigma_{3\ell+2}) \tilde \kappa|r| \Big) |c(r-s)| \exp \Big( -\frac{15}{16} (1 - \sigma_{3 \ell + 2}) \tilde \kappa |q-s| \Big) \\
+ 2 \tilde \ve^{3/2} \sum_{R_\ell < |r| \le R_{\ell+1}} |c(r)| \exp \Big( -\frac{15}{16} (1 - \sigma_{3 \ell + 2}) \tilde \kappa |q-r| \Big) \\
\hat \ve^{3/2} \exp \Big( -\frac{15}{16} (1 - \sigma_{3 \ell + 2} - \frac{1}{4} \rho_{3 \ell + 3}) \tilde \kappa R_{\ell + 1} \Big).
\end{split}
\end{equation}

Now we invoke \eqref{eq:11-10acbasicfunctionsBineq2}. For $|q| > R_\ell$, one obtains
\begin{equation} \label{eq:11-10acbasicfunctionsBineq2F1}
\begin{split}
|c(q)| \le (\ve^\zero)^{3/4} \exp(- \kappa^\zero |q|/2) + \sum_{1 \le i \le 3} \Sigma_i \le (\ve^\zero)^{3/4} \exp(-\tilde\kappa|q|) \\
+ \tilde \ve^{3/2} \exp \Big( -\frac{15}{16} (1 - \sigma_{3 \ell + 2} - \frac{1}{4} \rho_{3 \ell + 3}) \tilde \kappa |q| \Big) + 2 \hat \ve^{3/2} \exp \Big( -\frac{15}{16} (1 - \sigma_{3 \ell + 2} - \frac{1}{4} \rho_{3 \ell + 3}) \tilde \kappa R_{\ell + 1} \Big) \\
+ \tilde \ve^{3/2} \sum_{R_\ell < |r-s| \le R_{\ell + 1}} \exp \Big( -\frac{15}{16} (1 - \sigma_{3 \ell + 2}) \tilde \kappa |r| \Big) |c(r-s)| \exp \Big( -\frac{15}{16} (1 - \sigma_{3 \ell + 2}) \tilde \kappa |q-s| \Big) \\
+ 2 \tilde \ve^{3/2} \sum_{R_\ell < |r| \le R_{\ell + 1}} |c(r)| \exp \Big( -\frac{15}{16} (1 - \sigma_{3 \ell + 2}) \tilde \kappa |q-r| \Big).
\end{split}
\end{equation}

Here we have replaced $\kappa^\zero/2$ by $\tilde \kappa < \kappa^\zero/2$. Now we consider $R_\ell < |q| \le R_{\ell+1}$. We replace $R_{\ell+1}$ in the exponent by a smaller quantity $|q|$ and we obtain a self-contained system of inequalities for $|c(q)|$ with $R_\ell < |q| \le R_{\ell+1}$. This allows us to iterate  \eqref{eq:11-10acbasicfunctionsBineq2F1}. It is convenient to replace the multiple sums via summation over trajectories $\gamma = (n_0,\dots,n_k) \in \Ga(0,q)$. Set
\begin{equation} \label{eq:11-10acbasicfunctionsBineq2F2}
\begin{split}
\ve' = \tilde \ve^{1/4}, \quad \kappa'= \frac{15}{16} (1 - \sigma_{3\ell+2} - \frac{1}{4} \rho_{3\ell+3}) \tilde \kappa, \quad w'(m,n) = \ve' \exp(-\kappa' |m-n|), \\
w'((n_0,\dots,n_k) = \prod w'(n_j,n_{j+1}).
\end{split}
\end{equation}
Iterating \eqref{eq:11-10acbasicfunctionsBineq2F1} $N$ times, one obtains
\begin{equation} \label{eq:11-10acbasicfunctionsBineq2F3}
\begin{split}
|c(q)| \le \tilde \ve^{3/2} (\sum_{0 \le k \le N} 4^k \ve'^k) \exp \Big( -\frac{15}{16} (1 - \sigma_{3 \ell + 2} - \frac{1}{4} \rho_{3 \ell + 3}) \tilde \kappa R_{\ell + 1} \Big) \\
+ \tilde \ve \sum_{1 \le k \ge 3N} 4^{k} \ve'^k \sum_{\gamma \in \Ga(0,q) : k(\gamma) = k} w'(\gamma) + 4^N \ve'^N.
\end{split}
\end{equation}
Taking here $N$ large enough and evaluating the sums over $\gamma$ as before, one obtains \eqref{eq:11scaledconditionaldecay}.
\end{proof}

\end{document}